\documentclass[openany]{amsart}
\usepackage{amssymb, amsfonts, lacromay}
\usepackage{mathrsfs,comment}
\usepackage[usenames,dvipsnames]{color}
\usepackage[normalem]{ulem}
\usepackage{url}
\usepackage{tikz-cd}
\synctex=1

\usepackage[all,arc,2cell]{xy}
\UseAllTwocells 
\usepackage{enumerate}             

\usepackage{todonotes}
%%% hyperref stuff is taken from AGT style file
\usepackage{hyperref}  
\hypersetup{%
  bookmarksnumbered=true,%
  bookmarks=true,%
  colorlinks=black,%
  linkcolor=true,%
  citecolor=true,%
  filecolor=true,%Ä
  menucolor=black,%
  pagecolor=black,%
  urlcolor=black,%
  pdfnewwindow=true,%
  pdfstartview=FitBH}

%
%  \autoref is very crude.  It uses counters to distinguish environments
%  so that if say {lemma} uses the {theorem} counter, then autrorefs
%  which should come out Lemma X.Y in fact come out Theorem X.Y.  To
%  correct this give each its own counter eg:
%                 \newtheorem{theorem}{Theorem}[section]
%                 \newtheorem{lemma}{Lemma}[section]
%  and then equate the counters by commands like:
%                 \makeatletter
%                  \let\c@lemma\c@theorem
%                  \makeatother
%
%  To work correctly the environment name must have a corrresponding 
%  \XXXautorefname defined.  The following command does the job:
%
\def\makeautorefname#1#2{\expandafter\def\csname#1autorefname\endcsname{#2}}
%
%  Some standard autorefnames.  If the environment name for an autoref 
%  you need is not listed below, add a similar line to your TeX file:
%  
%\makeautorefname{equation}{Equation}%
\def\equationautorefname~#1\null{(#1)\null}
\makeautorefname{footnote}{footnote}%
\makeautorefname{item}{item}%
\makeautorefname{figure}{Figure}%
\makeautorefname{table}{Table}%
\makeautorefname{part}{Part}%
\makeautorefname{appendix}{Appendix}%
\makeautorefname{chapter}{Chapter}%
\makeautorefname{section}{Section}%
\makeautorefname{subsection}{Section}%
\makeautorefname{subsubsection}{Section}%
\makeautorefname{theorem}{Theorem}%
\makeautorefname{thm}{Theorem}%
\makeautorefname{sta}{Statement}%
\makeautorefname{cor}{Corollary}%
\makeautorefname{lem}{Lemma}%
\makeautorefname{prop}{Proposition}%
\makeautorefname{pro}{Property}
\makeautorefname{conj}{Conjecture}%
\makeautorefname{defn}{Definition}%
\makeautorefname{notn}{Notation}
\makeautorefname{notns}{Notations}
\makeautorefname{rem}{Remark}%
\makeautorefname{rems}{Remarks}%
\makeautorefname{quest}{Question}%
\makeautorefname{exmp}{Example}%
\makeautorefname{ax}{Axiom}%
\makeautorefname{claim}{Claim}%
\makeautorefname{assn}{Assumption}%
\makeautorefname{asses}{Assumptions}%
\makeautorefname{countcom}{Assumption}%
\makeautorefname{countcoms}{Assumptions}%
\makeautorefname{countmcom}{Assumption}%
\makeautorefname{KMZ0}{Construction}%
\makeautorefname{prob}{Problem}%
\makeautorefname{warn}{Warning}%
\makeautorefname{obs}{Observation}%
\makeautorefname{conv}{Convention}%

%\usepackage[inline]{showlabels}
%\renewcommand{\showlabelfont}{\tiny\tt\color{olive}}
  
%
%                  *** End of hyperref stuff ***

%theoremstyle{plain} --- default
\newtheorem{thm}{Theorem}[section]

\newtheorem{cor}{Corollary}[section]
\newtheorem{prop}{Proposition}[section]
\newtheorem{lem}{Lemma}[section]

\newtheorem{conj}{Conjecture}[section]

\theoremstyle{definition}
\newtheorem{defn}{Definition}[section]

\newtheorem{KMZ0}{Construction}[section]
\newtheorem{exmp}{Example}[section]
\newtheorem{notn}{Notation}[section]
\newtheorem{notns}{Notations}[section]

\newtheorem{quest}[defn]{Question}
\newtheorem{rem}{Remark}[section]

\newtheorem{warn}{Warning}[section]

\newtheorem{conv}{Convention}[section]

\newcounter{assn}
\renewcommand{\theassn}{\Alph{assn}}
\newenvironment{ass}[1][]{\refstepcounter{assn}\par\medskip\noindent
   \textbf{Assumption~\theassn.} \rmfamily}{\medskip}

 \newcounter{countcom}
\renewcommand{\thecountcom}{\Alph{countcom}$_{com}$}
\newenvironment{asscom}[1][]{\refstepcounter{countcom}\par\medskip\noindent
   \textbf{Assumption~\thecountcom.} \rmfamily}{\medskip}
   
    \newcounter{countmcom}
\renewcommand{\thecountmcom}{\Alph{countmcom}$_{mcom}$}
\newenvironment{assmcom}[1][]{\refstepcounter{countmcom}\par\medskip\noindent
   \textbf{Assumption~\thecountmcom.} \rmfamily}{\medskip}

%%%% hack to get fullref working correctly
\makeatletter
\let\c@obs=\c@thm
\let\c@sta=\c@thm
\let\c@cor=\c@thm
\let\c@prop=\c@thm
\let\c@lem=\c@thm
\let\c@prob=\c@thm
\let\c@con=\c@thm
\let\c@constbghj=\c@thm
\let\c@conj=\c@thm
\let\c@defn=\c@thm
\let\c@notn=\c@thm
\let\c@notns=\c@thm
\let\c@exmp=\c@thm
\let\c@ax=\c@thm
\let\c@pro=\c@thm
\let\c@ass=\c@thm
\let\c@warn=\c@thm
\let\c@rem=\c@thm
\let\c@asscom=\c@thm
\let\c@assmcom=\c@thm
\let\c@conv=\c@thm
\let\c@sch=\c@thm
\let\c@equation\c@thm
\numberwithin{equation}{section}
\makeatother

\definecolor{orange}{rgb}{1,0.5,0}

\newcommand{\Dalg}{\bD\big[\PI[\ul{G\sT}]\big]}
\newcommand{\DGalg}{\bD_G\big[\PI_G[\ul{G\sT}]\big]}

\newcommand{\gen}{$\bF_\bullet$}

\newcommand{\bj}{\mathbf{j}}
\newcommand{\bk}{\mathbf{k}}
\newcommand{\bm}{\mathbf{m}}
\newcommand{\bn}{\mathbf{n}}

\renewcommand{\bB}{\overline }

\newcommand{\OGop}{G\mathscr{O}^{op}[\sT]}
\newcommand{\Cp}{\mathbb{C}^{\mathrm{pre}}}

%\input xy
%\xyoption{all}
%\newcommand{\sbt}{{\,\begin{picture}(-1,1)(0.5,-1)\circle*{1.8}\end{picture}\hspace{.05cm}}}

%\renewcommand{\thechapter}{\Roman{chapter}}

% \setcounter{chapter}{-1}

\bibliographystyle{plain}
 
\title{Group completions and the homotopical monadicity theorem}

\author{Hana Jia Kong}
\address{School of Mathematical Sciences, Zhejiang University, Hangzhou, China}
\email{hana.jia.kong@gmail.com}

\author{J. Peter May}
\address{Department of Mathematics, The University of Chicago, Chicago, IL 60637}
\email{may@math.uchicago.edu}

\author{Foling Zou}
\address{Institute of Mathematics, Chinese Academy of Sciences, Beijing, China}
\email{zoufoling@amss.ac.cn}

%\varthetanks{Received by the editor May 8, 2000}

\subjclass{Primary 55P42, 55P43, 55P91;\\
Secondary 18A25, 18E30, 55P48, 55U35}

\begin{document}

\begin{abstract}  We abstract and generalize homotopical monadicity statements, placing in a single conceptual framework a range of old and recent recognition and characterization  principles in iterated loop space theory in classical, equivariant, and multiplicative frameworks. A short logical predecessor \cite{KMZ0} gives a formal homotopical analog of the classical categorical monadicity theorem. That applies to any adjunction relating any categories in which one can reasonably do homotopy theory.  The price of the generality is that one cannot easily do calculations with the results.  We show here how to extend the result to a calculationally meaningful recognition principle by bringing in a new monad nicely related to the categorical monad of the adjunction.  With a suitable notion of group completion and an approximation theorem, tweaking the proof leads to a generalized homotopical monadicity theorem that encompasses all of the known applications and some new ones.

 This paper is divided into three parts.  In the first, we give the general abstract theory, including an attempt at $\infty$-categorical reinterpretation, and treat the classical examples with structured spaces or $G$-spaces as input.  In the second, we develop a general context of composite adjunctions that feeds into the first.   It specializes, quite differently, to give infinite loop space machines that take either orbital presheaves or algebras over categories of operators as input.  A paper growing out of this theory \cite{KMZ2} will focus on new constructions and applications when the starting category is that of orbital presheaves of spaces and the output is $G$-spectra.   In the brief third part, we show how the multiplicative theory fits directly into the frameworks of the first and second parts.   New multiplicative constructions that are work in progress \cite{May26} will give new input that feeds into this multiplicative theory.
\end{abstract}

\maketitle

\tableofcontents

%\mainmatter

\part*{Introduction}

We give a conceptual reinterpretation of work that started with the introduction of operads in ``The geometry of iterated loop spaces'' \cite{MayGeo}.  The word ``geometry'' was meant to convey that the structures involved really were geometric at heart.   They implicitly reduced the homotopical study of iterated loop spaces to the study of configuration spaces.   The same idea works equivariantly \cite{GM3, MMO} and multiplicatively \cite{MayMult, Rant1}, but the equivariant multiplicative theory here is not in the earlier literature.\footnote{Equivariant multiplicative theory based on the Segal machine is given in \cite{GMMO, GMMO3}, but it is not general enough for the applications we have in mind and shoots through the space level, going as directly as possible from categorical input to spectrum level output. }

\subsection{Introduction to Part 1, the classical theory}

We start with a general conceptual reinterpretation of material in \cite{Rant1}.  That focused on direct comparison with the proof of the categorical monadicity theorem. Here we axiomatize a general homotopical variant of that classical categorical context. The axiomatization simplifies things by focusing attention on exactly what is needed. {It should serve as a guide to future applications.  A motivic application is work in progress \cite{Ajay}.}  We emphasize that the theory here primarily focuses on monads, not necessarily those coming from operads.  

In brief, adjoint pairs of functors $(\SI,\OM)$  are ubiquitous in mathematics, and so are monads.   The right adjoint 
$\OM\colon \sS\rtarr \sT$ always carries extra structure that is encoded by its taking values in the category $\GA[\sT]$ of algebras over the adjunction monad 
$\GA = \OM\SI$.  {As recalled in \cite{KMZ0}, the categorical monadicity theorem explains when the resulting functor $\OM_{\GA}\colon  \sS \rtarr \GA[\sT]$ to $\GA$-algebras is an equivalence of categories.   A homotopical generalization there explains quite generally when the same context gives an equivalence of homotopy categories.  That can be viewed as a special case of the theory here, albeit one with much less precise structure.}  In homotopical contexts, $\OM$ very often takes values in the category  $\bC[\sT]$ of $\bC$-algebras for some other monad $\bC$.  Moreover, the more precise axioms for this structure 
are generally not satisfied by the adjunction monad $\GA$, and that precision encodes both formal and  calculational information that is invisible to  $\GA$.

Writing $\OM_{\bC}\colon  \sS\rtarr  \bC[\sT]$  for the resulting more structured version of $\OM$, we find that $\OM_{\bC}$ has a  left adjoint  $\SI_{\bC}\colon \bC[\sT] \rtarr  \sS$ which we think of as a {\em monadically coequalized} variant of $\SI$. When applied to a free $\bC$-algebra $\bC X$, the coequalizer simplifies to  give an isomorphism  $\SI_{\bC}\bC  X \iso  \SI X$,  and the unit of the adjunction restricts on free $\bC$-algebras to  give a map of monads from $\bC$ to the adjunction monad $\GA$; see \autoref{alphamon} and \autoref{eye}. Approximating general $\bC$-algebras $Y$ by well-behaved approximations $\overline{Y}$, one finds a recognition principle of the following general form.

\begin{thm}\label{recprin}  There is a functor $\mathrm{Bar} \colon \bC[\sT] \rtarr \bC[\sT]$, written $Y\mapsto \overline{Y}$, and a natural equivalence  $\ze\colon \overline{Y} \rtarr Y$ such that the unit $\et_{\bC}\colon \overline{Y} \rtarr \OM_{\bC} \SI_{\bC} \overline{Y}$  is a group completion and is therefore an equivalence if $Y$ is grouplike.\end{thm}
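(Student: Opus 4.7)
The plan is to take $\overline{Y}$ to be the two-sided monadic bar construction $B(\bC,\bC,Y)$, realized from the simplicial $\bC$-algebra with $n$-simplices $\bC^{n+1}Y$, faces induced by $\mu_{\bC}$ and the action $\bC Y\to Y$, and degeneracies induced by $\et_{\bC}$. The augmentation $\ze\colon B(\bC,\bC,Y)\to Y$ is a natural equivalence because, on the level of underlying objects in $\sT$, the augmented simplicial object has extra degeneracies from $\et_{\bC}$ applied to the outer $\bC$, yielding a simplicial homotopy equivalence $B(\bC,\bC,Y)\simeq Y$ (this is the standard observation). Since $\bar Y$ is itself a $\bC$-algebra, this upgrades to an equivalence of $\bC$-algebras.

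The next step is to identify $\OM_{\bC}\SI_{\bC}\overline{Y}$. Because $\SI_{\bC}$ is a left adjoint it commutes with the colimits defining the geometric realization, and because $\SI_{\bC}\bC X\iso \SI X$ on free $\bC$-algebras (the property singled out in \autoref{alphamon}), one obtains a natural identification $\SI_{\bC}B(\bC,\bC,Y)\iso B(\SI,\bC,Y)$, where the left factor $\SI$ is a $\bC$-functor via the monad map $\al\colon \bC\to\GA$. Applying $\OM$ and remembering the $\bC$-algebra structure yields $\OM_{\bC}\SI_{\bC}\overline{Y}\iso B(\GA,\bC,Y)$ as $\bC$-algebras (again via $\al$). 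Under this identification, the unit $\et_{\bC}\colon \overline{Y}\to \OM_{\bC}\SI_{\bC}\overline{Y}$ becomes the map $B(\al,\mathrm{id},\mathrm{id})\colon B(\bC,\bC,Y)\to B(\GA,\bC,Y)$ induced levelwise by $\al\colon \bC\to\GA$.

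The final, and hardest, step is to show that this map $B(\al,1,1)$ is a group completion. The strategy is to reduce to what the framework imposes on the monad map $\al$ on free generators. Indeed, in simplicial degree $n$ the map is just $\al_{\bC^{n}Y}\colon \bC(\bC^{n}Y)\to\GA(\bC^{n}Y)$, i.e.\ the canonical approximation map of monads evaluated on an object of $\sT$. The hypothesis we need to extract from the earlier axiomatic setup is precisely that this map is a group completion on every object; this is the abstract analogue of the classical approximation theorem $\bC X\to \OM\SI X$ of \cite{MayGeo}. Assuming this levelwise group completion, one concludes that the geometric realization is a group completion because the bar construction is proper and group completion commutes with realization of proper simplicial objects (either by the bar/homology spectral sequence argument of May or, equivalently, by localizing at the homology theory which inverts $\pi_0$). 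The main obstacle is making the levelwise group completion statement precise within the abstract axiomatics, as it is where all of the genuine homotopical content lives; everything else is the formal bar-construction machinery. Once the group completion claim is in hand, the grouplike case is immediate, since a group completion between grouplike objects is a weak equivalence.
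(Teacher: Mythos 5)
Your skeleton matches the paper's: take $\overline{Y}=B(\bC,\bC,Y)$, get $\ze$ from the extra degeneracy, use $\SI_{\bC}\bC X\iso\SI X$ to identify $\SI_{\bC}\overline{Y}\iso B(\SI,\bC,Y)$, and reduce everything to showing a bar-level map induced by $\al$ is a group completion. But two of your steps have genuine gaps. First, you claim that ``applying $\OM$'' yields an isomorphism $\OM_{\bC}\SI_{\bC}\overline{Y}\iso B(\GA,\bC,Y)$. This is false: $\OM$ is a right adjoint and does not commute with realization. What exists is only the natural map $\ga\colon B(\OM_{\bC}\SI,\bC,Y)\rtarr \OM_{\bC}B(\SI,\bC,Y)$ of \autoref{ass6}(vi), and it is a weak equivalence only under the hypothesis that the simplicial object is levelwise $\OM$-connective (\autoref{ass6}(vii)) --- historically the hardest ingredient of the whole theory, which your ``isomorphism'' silently absorbs. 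Relatedly, identifying the unit $\et_{\bC}$ on $\overline{Y}$ with $\ga\com B(\al,\id,\id)$ is not automatic; it is the content of \autoref{Yeah}, proved by a diagram chase using \autoref{eye2}, not a formal consequence of a nonexistent isomorphism.

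Second, your key step --- passing from the levelwise statement that $\al\colon\bC X\rtarr\OM_{\bC}\SI X$ is a group completion (which is exactly \autoref{ass5}, so there is nothing further to ``extract'') to the conclusion that $B(\al,\id,\id)$ is a group completion --- is justified by appeal to ``group completion commutes with realization of proper simplicial objects'' via a homology spectral sequence or localization argument. That argument lives in the concrete homological setting of \autoref{Hopf} and is not available in the abstract axiomatics of \autoref{screwy}, where group completions are only required to satisfy the closure properties (i) and (ii). The paper closes precisely this gap with the group completion functor $(\bG,g)$ of \autoref{ass4}: applying $\bG$ levelwise converts $\al$ and $g$ into levelwise weak equivalences between grouplike objects (using (i), (ii)(d)), realization preserves levelwise weak equivalences by \autoref{ass6}(iii), and then (ii)(a) and (ii)(c) of \autoref{screwy} yield that $B\al$ is a group completion. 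So you have misplaced the difficulty: the levelwise input is an assumption, while the realization step is where the axiomatic work happens, and your proposal leaves it unproved.
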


In turn, this leads to  a characterization principle of the following general form, {which can be viewed as a variant homotopical monadicity theorem.}

\begin{thm}\label{adjequiv}  $(\SI_{\bC},\OM_{\bC})$ induces an adjoint equivalence from the homotopy category of grouplike $ \bC$-algebras in $\sT$ to that of $\OM$-connective objects in $\sS$.
\end{thm}

The terms {\em group completion}, {\em grouplike}, and {\em $\OM$-connective} have axiomatized meanings and can specialize to have different meanings in different contexts. {The notion of $\OM$-connectivity has nothing to do with the monad $\bC$ and is discussed in \cite[Section 2.3]{KMZ0}; it specializes to standard definitions in most contexts.  In contrast, no notion of group completion is relevant to the homotopical monadicity theorem of \cite{KMZ0}.  We shall have two  very different ideas of grouplike objects and group completions. The first is characterized axiomatically (Definitions \ref{screwy} and \ref{screwy2}),  but the examples are concrete and homological or  homotopical (for example Definitions \ref{Hopf} and \ref{Hopf2}); that idea is the one that leads both to our proofs and to myriads of explicit calculations.  The other is formal and conceptual (\autoref{fgplike}) and is characterized by a universal property on passage to homotopy categories (\autoref{Univ}), but it is not geared towards calculations.  We shall find an equivalence between them under appropriate assumptions (\autoref{new1}).  }

We emphasize that despite its obvious specialization in topology, $(\SI,\OM)$ can be any suitable adjoint pair in our general axiomatic context. Here $\overline{Y}$ might come as a cofibrant approximation of $Y$, but we focus on the highly structured choice given by the monadic bar construction $\overline{Y} = B(\bC,\bC,Y)$; it is often itself an example of a cofibrant approximation (see \autoref{MODEL}), but that is not our emphasis. {As discussed for example in \cite[Section 3.1]{MMO}, it is a kind of formal resolution that appears quite generally in ``derived homotopical algebra".}

For convenience in discussing applications and with potential new applications in mind, we begin with standing ``Assumptions" that together will lead to results of the form given in Theorems \ref{recprin} and \ref{adjequiv}.  

In \autoref{CONTEXT1} we establish our categorical context, explaining a general adjunction $(\SI_{\bC},\OM_{\bC})$ that will have many specializations.  While $\SI_{\bC}$ was defined in passing in previous papers, its centrality to our theory is a new perspective. 

In \autoref{CONTEXT2}, we explain our homotopical assumptions, giving remarks as we go about how they are verified in special cases.
The most substantial is \autoref{ass5}, the approximation theorem, which requires an always present natural map of monads $\BC X\rtarr \GA X$ to give a group completion for all $X$.  

{
In \autoref{BARCON}, we explain how these assumptions fit together to prove various versions of Theorems \ref{recprin} and \ref{adjequiv}.  \autoref{recprin0} gives the original concrete version of  \autoref{recprin}, before introduction of $\SI_{\bC}$. \autoref{recprin1} gives a slightly more detailed version, and \autoref{BPQthm} shows that a very general version of the Barratt-Priddy-Quillen theorem is an immediate consequence.  \autoref{equiv} elaborates \autoref{adjequiv}.   Theorems \ref{recprinnew} and \ref{adjequivnew} restate Theorems \ref{recprin} and \ref{adjequiv} in terms of the more conceptual abstract version of group completion.

In \autoref{CONTEXT3}, we say a bit about model categories and sketch how our proof of \autoref{adjequiv}  should imply an equivalence of associated  $\infty$-categories.  One piece of  further work  needed is the proof of a conjectured enhancement (\autoref{DKThm?}) of work of Dwyer and Kan \cite{DK2}.  The only other further work needed is a proof that $\SI_{\bC}\OM_{\bC}Z$ is $\OM$-connective when $Z$ is so (see \autoref{When}).  In the rest of the paper we are being old-fashioned and taking little interest in model categories or $\infty$-categories.  We care more about when precise point-set level categories and concepts are equivalent.  Many decades of concrete applications and calculations show that this perspective has some value.  One should be eclectic.
}

The original topological examples are developed in \autoref{SPACEG}.  They use monads associated to operads of topological spaces, with $\sT$ being the category of based spaces and $\sS$ being either a category of based spaces or a category of spectra, and similarly equivariantly. 

Nonequivariantly, we specialize to $n$-fold loop spaces in \autoref{SPACES}, summarizing our specializations of  Theorems \ref{recprin} and \ref{adjequiv} in  \autoref{space}.  We find that every $(n-1)$-connected or, synonymously, ``$n$-connective'', space is equivalent to the coequalized $n$-fold suspension of an $E_n$-space.  The emphasis is on a new perspective on the group completion of $E_n$ spaces to $n$-fold loop spaces.  When $n\geq 2$, two different classical notions (Definitions \ref{Hopf}, \ref{Hopf2}) of group completion are shown to be equivalent to each other and to the formal conceptual notion of \autoref{fgplike}.  

In the case $n=1$ only one of the classical notions, that of \autoref{Hopf2}, applies.  In this case, the approximation theorem is given a conceptual proof in \autoref{Moore}, which is largely a new perspective on material in a paper \cite{Fied} of Fiedorowicz.  Here the focus shifts to a comparison of two monads in $\sT$ with two analogous monads in the category $\sV$ that is the natural home of Moore loop spaces.  A first example, \autoref{adj0}, of the composite adjunction context of Part 2 plays a central role.  

Equivariantly, Sections \ref{GSPACES} and \ref{MooreG} give the analogous examples of $V$-fold loop $G$-spaces for a representation $V$ of $G$ and of $1$-fold loop $G$-spaces. Here the two classical notions of group completion are Definitions \ref{HopfG} and \ref{HopfG2} and our specializations of Theorems \ref{recprin} and \ref{adjequiv} are given in \autoref{Gspace}.  

While $G$ is finite in \autoref{GSPACES}, it can be any topological group in \autoref{MooreG}.  As far as we know, the material of \autoref{MooreG} is new, but we have written \autoref{Moore} in such a way that its equivariant generalization is immediate. The main results here, Theorems \ref{approxGn=1} and \ref{keyGFied}, are variants of the approximation theorem that follow by passage to fixed points from their nonequivariant versions, Theorems \ref{approxn=1} and \ref{keyFied}.

Passing to $n=\infty$, we give foundations in \autoref{SPECTRA} and rework the equivariant operadic infinite loop space machine in  \autoref{SpaceSpectra}. This shows how to construct $G$-spectra from $E_\infty$ $G$-spaces when $G$ is finite. To avoid duplicative exposition, the nonequivariant case is viewed as the case $G=\{e\}$ of the equivariant case.  The homological definition of group completion of $E_{\infty}$ $G$-spaces is the same as for $E_V$ $G$-spaces when $\bR^2\subset V$ (\autoref{HopfG}).  It is again equivalent to the conceptual definition of \autoref{fgplike}.  Every $E_{\infty}$-$G$-space group completes to the $0$th $G$-space of a connective $G$-spectrum, and every connective $G$-spectrum is equivalent to the coequalized suspension $G$-spectrum of a grouplike $E_{\infty}$ $G$-space, \autoref{Gspace}.  

Here $G$-spectra mean genuine $G$-spectra, incorporating representations.  There is also a very partial generalization from finite to compact Lie groups (\autoref{finiteindex}),  There is a parallel theory for classical (or naive) $G$-spectra, which ignores representations.  That works for any topological group $G$ (\autoref{classical}).

We should say something about our spectral foundations of \autoref{SPECTRA}. There are many good concrete point-set level categories of spectra and of $G$-spectra.  Only one is known to fit into the achingly elementary formal context of \autoref{ass1} that is our starting point.  No category of spectra can both fit into that context and be symmetric monoidal with the sphere spectrum as unit, but all good categories give equivalent homotopy categories, and in fact equivalent $\infty$-categories. 

\subsection{Introduction to Part 2, composite adjunctions}

In Part 2, we build a general context that will specialize to apply to infinite loop space machines that are defined in two quite different equivariant contexts.   The first concerns algebras over monads in categories related to categories of operators {(the forerunner of $\infty$-operads)}, such algebras being suitable covariant functors.  The second concerns algebras over monads in categories of orbital presheaves,  such algebras being suitable contravariant functors.   At first sight, this seems farfetched since we shall see many quite significant differences between these two contexts.  Nevertheless, both are special cases of a single general context, which itself is a special case of our original context.  We explain  the general context in \autoref{ABCDEF} and its specializations to categories of operators  in \autoref{catop} and to orbital presheaves in \autoref{orbpre}.  Paradoxically, the agreement of context allows us both to highlight precisely how the differences play out and to discover surprising parallels and connections.  

The categories of operators specialization is largely classical, starting from \cite{MT}.   Aside from the conceptual context, the main new feature is the introduction of a new kind of operad, called a $G$-operad.  Instead of having $G$-spaces $\sC(n)$ for non-negative integers $n$, it has $G$-spaces $\sC(\bn^{\al})$ for finite $G$-sets $\bn^{\al}$ determined by homomorphisms $\al\colon G\rtarr \SI_n$.  These operads and their associated monads fill a longstanding conceptual gap in the relationship between equivariant operads and monads on the one hand and the two kinds of equivariant categories of operators that appear prominently in \cite{MMO, GMMO3} and elsewhere. 

The operadic presheaf context can be viewed as a conceptual reinterpretation of an early paper of Costenoble and Waner \cite{CW}.   The general theory is described here, but its application requires non-obvious examples of monads in the category of orbital presheaves.   They are constructed from $G$-operads, but not in an obvious way.  The construction is worked out in \cite{KMZ2}, where examples may be found.  

{
We warn the reader that combinatorial details of equivariance will be slighted in this paper.  We say the bare minimum that is needed to show how the category of operators context dictates new $G$-operads and their monads that fit into our general framework.  Conventions and definitions are expanded in \cite{KMZ2}, and proofs omitted here are supplied there, where they are needed.   Our focus here is on conceptualization.
}
%Here we restrict ourselves to the minimum amount of detail needed to make sense of the present contextualization.

\subsection{Introduction to Part 3, the multiplicative theory}

{
We apply the context of \autoref{ass1} multiplicatively in \autoref{Mult1}.  This entails working with monad pairs as introduced by Beck \cite{Beck}.  We recall their basic theory in \autoref{pairs}.  We show how to construct monad pairs $(\bC,\bJ_0)$ from suitable operad pairs in \autoref{pairs2} and we specialize this general theory to  $G$-spaces in \autoref{pairs3}. }

We review the 1980's \cite{LMS} definition of $E_{\infty}$ ring $G$-spectra in \autoref{SPECTRA2}.  These are defined in the present context in \autoref{SPECTRA}, but they are equivalent to any modern reincarnation of the notion.  Their 1990's equivalents \cite{EKMM} were studied in \cite{EMGrings, EHCT, Gmod}.  For example, \cite{Gmod} shows how to construct Brown-Peterson $G$-spectra (not $E_{\infty}$ of course) for any compact Lie group $G$.  Since these older contexts are not well known today, a review is necessary.  %A variant view of $E_{\infty}$ ring spectra is work in progress \cite{Mop}.

Taking $\sT$ and $\sS$ in our general theory to be (based) $\bJ_0$-$G$-spaces and $\bJ_0$-$G$-spectra, we establish a multiplicative special case of \autoref{ass1} in \autoref{MultThy}.  We then check the remaining assumptions and show how to construct $E_{\infty}$-ring $G$-spectra from $E_{\infty}$ ring $G$-spaces in \autoref{RingThy}.  Although not in the present context, the theory in this section has been known since the 1990's but it has not been written up before.\footnote{That time gap is mainly due to the unfashionability of equivariant stable homotopy theory in the following decades.}

We apply the composite adjunction context of \autoref{ass7} multiplicatively in \autoref{Mult2}.   We establish a general context involving maps of monad pairs in different categories in \autoref{Beck2}. While elementary, such a notion does not seem to appear in the literature.  We indicate how the theory specializes to categories of operators in \autoref{OpsMult}, where examples include equivariant generalizations of those summarized in \cite{Rant2}.  We indicate how the theory specializes to orbital presheaves in \autoref{PreMult},  where interesting applications are work in progress.  In \autoref{Finish}, we briefly indicate, without details, how the whole theory, additive and multiplicative,  can be generalized by composing the orbital presheaf and categories of operators adjunctions. 

\subsection{Remarks on recent related papers}
The theory here starts from a conceptual and equivariant elaboration of the exposition of \cite[Sections 8 and 9]{Rant1}.  The specialization to $G$-spaces is largely a contextualization and summary exposition of parts of \cite{GM3, MMO};  \cite{GM3} focuses on applications that start from categorical input (which we do not repeat here), and \cite{MMO} focuses on the development of the equivariant Segal machine and its comparison with the equivariant operadic machine.  {Neither of those papers deals with orbital presheaves or the multiplicative theory in our Parts 2 and 3 and in the sequels \cite{KMZ2, May26}. The paper \cite{GMMO3} develops the equivariant and multiplicative Segal machine, with no consideration of operadic or space-level structure. It is geared towards application in \cite{GM2} to the description of the category of genuine $G$-spectra as a category of presheaves of nonequivariant spectra.  It fails to deal with commutativity and $E_{\infty}$ ring $G$-spectra.  Those problems are solved in work in progress \cite{May26}.}

\part{The classical theory}

\section{The categorical context}\label{CONTEXT1}

\subsection{\autoref{ass1}:  an adjunction $(\SI,\OM)$ and a related monad $\bC$}

{Categorical monadicity, homotopical monadicity, and the present theory all start with an adjunction $\SI \dashv \OM$.  The first two work with its associated monad $\GA = \OM\SI$, but that has conceptual defects and is of little calculational use.  We focus here on better structured monads $\bC$.}

\begin{ass}\label{ass1}  We assume given an adjoint pair of functors $(\SI,\OM)$,  $\SI\colon \sT \rtarr \sS$ and $\OM\colon \sS\rtarr \sT$, together with a monad $\bC$ in $\sT$ and a natural action of $\bC$ on $\OM Y$ for objects $Y\in \sS$.  We assume further that $\sT$ and $\sS$ are cocomplete.
\end{ass}

\begin{notns}  We write $\vartheta\colon \bC \OM Y \rtarr \OM Y$  for the given natural action.  
We let $\bC[\sT]$ denote the category of $\bC$-algebras in $\sT$, and we let $\OM_{\bC}\colon \sS \rtarr \bC[\sT]$ denote the functor $\OM$, but viewed as taking values in $\bC[\sT]$.   Thus we write $\OM Y$ when forgetting that it is a $\bC$-algebra  and $\OM_{\bC} Y$ when remembering the action of $\bC$.
\end{notns}

\begin{rem}\label{bspt1} \autoref{ass1} sometimes needs minor but essential technical modification, as for example in based spaces $\sU_{\ast}$, which is cocomplete, versus nondegenerately based spaces 
$\sT$, which is not.  Formally, we then have a monad  $\bC$ on $\sU_{\ast}$ which restricts to a monad on $\sT$. We then view $\sT$ as a preferred subcategory of good objects of $\sU_{\ast}$, to which we restrict whenever possible.  

The idea is that $\sU_*$ is essential for formal generality, but $\sT$ is essential for homotopical precision.  Such cofibration issues are handled with care in \cite{MMO} and will only be mentioned in passing here.  The difference is of no practical importance since ``whiskering'' and ``bearding'' constructions of \cite[Proposition 1.6]{MT} and \cite[Proposition 2.12 and Section 11]{MMO} show that general objects of interest can always be replaced by equivalent good objects.
\end{rem}

Except where otherwise specified, we always assume that we are in the context given in \autoref{ass1}, even if we neglect to say so.  We shall define a ``monadically coequalized'' variant $\SI_{\bC}$ of $\SI$ and prove the following basic result in \autoref{MON2}.\footnote{$\SI_{\bC}$ was first defined in  \cite[(8.4)]{Rant1}, but its importance was not understood then.}

\begin{prop}\label{keyadj} The functor $\SI_{\bC}\colon \bC[\sT] \rtarr \sS$ is left adjoint to $\OM_{\bC}$.
\end{prop}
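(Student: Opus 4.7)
The plan is to construct $\SI_{\bC}$ explicitly as a monadic coequalizer and then verify the adjunction by a direct computation of morphism sets. First, I build a natural transformation $\alpha\colon \bC \rtarr \OM\SI$ out of the given data by setting
\[
  \alpha_X \;=\; \vartheta_{\SI X}\circ \bC(\et_X)\colon \bC X \rtarr \OM\SI X,
\]
where $\et$ is the unit of $(\SI,\OM)$. For a $\bC$-algebra $(X,\xi)$, I then define $\SI_{\bC}(X,\xi)$ as the coequalizer in $\sS$ of the parallel pair
\[
  \SI\xi,\ \varepsilon_{\SI X}\circ \SI(\alpha_X)\colon \SI\bC X \rightrightarrows \SI X,
\]
where $\varepsilon$ is the counit of $(\SI,\OM)$. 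This coequalizer exists because $\sS$ is cocomplete, and $\SI_{\bC}$ becomes a functor on $\bC[\sT]$ via the universal property together with the naturality of $\alpha$ in $X$.

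To prove the adjunction, fix $Y\in\sS$ and apply $\sS(-,Y)$ to the defining coequalizer. The result is an equalizer in sets, which the original adjunction $(\SI,\OM)$ rewrites as the equalizer of two arrows
\[
  \sT(X,\OM Y)\rightrightarrows \sT(\bC X,\OM Y).
\]
The crux is to identify these arrows as $g\mapsto g\circ \xi$ (coming from $\SI\xi$) and $g\mapsto \vartheta_Y\circ \bC g$ (coming from $\varepsilon_{\SI X}\circ \SI\alpha_X$). The first is immediate from naturality of the adjunction bijection. For the second, write $f\colon \SI X\rtarr Y$ for the adjunct of $g$ and apply naturality of $\vartheta$ with respect to $f$, which gives $\OM f\circ \vartheta_{\SI X} = \vartheta_Y\circ \bC(\OM f)$; precomposing with $\bC(\et_X)$ and using the triangle identity of $(\SI,\OM)$ together with naturality of $\et$ then yields the claimed formula. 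Once both arrows are so identified, the equalizer coincides by definition with the set of $\bC$-algebra homomorphisms $(X,\xi)\rtarr \OM_{\bC} Y$, producing the required natural bijection
\[
  \sS\bigl(\SI_{\bC}(X,\xi),Y\bigr)\iso \bC[\sT]\bigl((X,\xi),\OM_{\bC} Y\bigr).
\]

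Naturality of this bijection in both variables is routine, following from the universal property of the coequalizer together with naturality of $\alpha$, $\vartheta$, and the original adjunction. The only step requiring real care is the diagram chase identifying the second parallel arrow as $g\mapsto \vartheta_Y\circ \bC g$; this is precisely where the hypothesized compatibility between the $\bC$-action on $\OM Y$ and the adjunction $(\SI,\OM)$ gets used, so I expect it to be the main substantive point of the argument. Everything else is formal manipulation with coequalizers, units, and counits.
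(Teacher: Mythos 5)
Your proposal is correct and takes essentially the same route as the paper: you define $\SI_{\bC}$ as the coequalizer of $\SI\tha$ and $\be = \epz\com\SI\al$ with $\al = \vartheta\com\bC\et$, and the adjunction is verified by the same adjoint diagram chase (naturality of $\vartheta$ and $\et$ plus a triangle identity) identifying precomposition with $\be$, after transposing along $(\SI,\OM)$, with $g\mapsto \vartheta\com\bC g$, so that maps out of the coequalizer correspond exactly to $\bC$-algebra maps into $\OM_{\bC}Y$. The only difference is presentational: you apply $\sS(-,Y)$ to the coequalizer and compare equalizers of hom-sets, whereas the paper states the same content as a factorization claim for the adjoint $\epz\com\SI f$ checked in one large commutative diagram.
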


We fix notations for the units and counits of our adjunctions.  

\begin{notns}\label{alpha}  Let $\et\colon  \id \rtarr \OM\SI$ and $\epz\colon \SI\OM\rtarr \id$ be the unit and counit of the adjunction $(\SI, \OM)$.
Let $\et_{\bC}\colon  \id \rtarr \OM_{\bC}\SI_{\bC}$ and $\epz_{\bC}\colon \SI_{\bC}\OM_{\bC}\rtarr \id$ be the unit and counit of  $(\SI_{\bC},\OM_{\bC})$.
\end{notns}

Recall that the adjunction isomorphism is the composite
\begin{equation}\label{adj4} 
\xymatrix@1{  \sS(\SI_{\bC} X, Y)  \ar[r]^-{\OM_{\bC}} & \sT(\OM_{\bC}\SI_{\bC} X, \OM_{\bC} Y) \ar[r]^-{\et_{\bC}^*} & \sT(X, \OM_{\bC} Y)\\}
\end{equation} 

{There are two obvious examples of $\bC$ that satisfy \autoref{ass1}: $\bC$ could be the identity functor, in which case the theory is trivial and uninteresting, or it could be the adjunction monad $\GA = \OM\SI$.   In \cite{KMZ0}, we interpret the categorical monadicity theorem as characterizing when $\et_{\GA}$ and $\epz_{\GA}$ are isomorphisms. We interpret the homotopical monadicity theorem of \cite{KMZ0} as characterizing when $\eta_{\GA}$ is a weak homotopy equivalence and therefore when $\et_{\GA}^*$ and $\OM_{\GA}$ in \autoref{adj4} are isomorphisms or weak homotopy equivalences.  Here we focus on a more useful analog for $\et_{\bC}$, bringing  back  $\epz_{\bC}$ in \autoref{When}.}  %See \autoref{wowwow} and \autoref{insight}.

Aside from the notation, the following definition is standard.

\begin{defn} The (free, forgetful) adjunction $(\bF_{\bC},\bU_{\bC})$ of a monad $\bC$ is given by $\bF_{\bC} X = (\bC X, \mu)$ and $\bU_{\bC}(Y,\tha) = Y$. Note that the adjunction monad $\bU_{\bC}\bF_{\bC}$ coincides with the monad $\bC$.  
\end{defn}

{Clearly $\OM = \bU_{\bC}\com \OM_{\bC}$.  The following untwisting isomorphism is a formal consequence (see \autoref{formalBPQ1} and \autoref{formalBPQ2}).    It is a formal precursor of the Barratt--Priddy--Quillen theorem  (see \autoref{BPQthm}). 
} 

\begin{prop}\label{formalBPQ}  For $X$ in $\sT$,  $\SI_{\bC}(\bC X)$ is naturally isomorphic to $\SI X$. 
\end{prop}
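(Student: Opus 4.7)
The plan is to give a purely formal proof from the uniqueness of adjoints, exactly as the factorization $\OM = \bU_{\bC}\com \OM_{\bC}$ suggests.

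First, I observe that composing the two adjunctions $(\bF_{\bC},\bU_{\bC})$ and $(\SI_{\bC},\OM_{\bC})$ produces an adjoint pair $(\SI_{\bC}\com \bF_{\bC},\, \bU_{\bC}\com \OM_{\bC})$ between $\sT$ and $\sS$. Since $\OM=\bU_{\bC}\com \OM_{\bC}$ by construction of $\OM_{\bC}$, the right adjoint of this composite is literally $\OM$. But $\SI$ is also left adjoint to $\OM$, so by the uniqueness of left adjoints there is a canonical natural isomorphism
\[
\SI_{\bC}\com \bF_{\bC} \iso \SI,
\]
which specializes at $X\in \sT$ to the required natural isomorphism $\SI_{\bC}(\bC X)\iso \SI X$.

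The only small thing to verify is that the unit and counit of the composite adjunction are the expected ones and that the isomorphism is the specific map induced by the universal properties; this is a routine check using \autoref{adj1}--\autoref{adj3} applied twice. Concretely, for $Y\in\sS$ we have
\[
\sS(\SI_{\bC}\bC X, Y)\iso \bC[\sT]\big(\bC X, \OM_{\bC} Y\big)\iso \sT(X,\OM Y)\iso \sS(\SI X, Y),
\]
where the middle isomorphism is the free--forgetful adjunction $(\bF_{\bC},\bU_{\bC})$. The composite is natural in $Y$, so the Yoneda lemma delivers the claimed natural isomorphism $\SI_{\bC}(\bC X)\iso \SI X$ and, tracking the identity through the chain, identifies it as the canonical comparison map.

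I expect no real obstacle: \autoref{keyadj} (to be proved in \autoref{MON2}) already supplies the adjunction $(\SI_{\bC},\OM_{\bC})$, so the argument really is just the two-step composition of adjunctions together with uniqueness of adjoints. The only mild subtlety is notational care in distinguishing $\OM$ from $\OM_{\bC}$ and confirming that the forgetful functor $\bU_{\bC}$ does nothing to underlying objects, which is exactly the content of the relation $\OM=\bU_{\bC}\com\OM_{\bC}$ built into \autoref{ass1}. The alternative, more concrete verification promised by the authors — comparing their forthcoming formulas \autoref{split3} and \autoref{split4} for $\SI_{\bC}$ on free $\bC$-algebras as a coequalizer — will simply recover the same isomorphism by exhibiting the coequalizer diagram as a split (reflexive) coequalizer whose colimit is visibly $\SI X$.
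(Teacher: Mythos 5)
Your proof is correct and follows essentially the paper's own route: the result is exactly the ``formal consequence'' of $\OM = \bU_{\bC}\com\OM_{\bC}$ that the paper intends, namely composing the adjunctions $(\bF_{\bC},\bU_{\bC})$ and $(\SI_{\bC},\OM_{\bC})$ and invoking uniqueness of left adjoints (equivalently, your hom-set chain plus Yoneda). Your closing remark about the coequalizer comparison is also precisely the paper's ``more concrete proof'' carried out in \autoref{compat}, where \autoref{split4} with $Y=\bC X$ coequalizes the same pair as the split coequalizer \autoref{split3}.
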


These isomorphisms of functors imply the following result.

\begin{prop}\label{alphaC} The adjunction $(\SI,\OM)$ can be identified with the composite $(\SI_{\bC}\com \bF_{\bC}, \bU_{\bC}\com \OM_{\bC})$ of the adjunctions  
$(\SI_{\bC},\OM_{\bC})$ and $(\bF_{\bC},\bU_{\bC})$.
\end{prop}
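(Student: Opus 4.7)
The plan is to combine Proposition \ref{formalBPQ} with the standard fact that adjunctions compose. For the right adjoints, the equality $\OM = \bU_{\bC}\com \OM_{\bC}$ is immediate from the definition of $\OM_{\bC}$ as $\OM$ viewed as landing in $\bC[\sT]$, as already noted just before Proposition \ref{formalBPQ}. For the left adjoints, since $\bF_{\bC} X = (\bC X,\mu)$, Proposition \ref{formalBPQ} yields a natural isomorphism $\SI_{\bC}\com \bF_{\bC}\iso \SI$ of functors $\sT\rtarr \sS$.

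Next, the pair $(\SI_{\bC}\com \bF_{\bC},\, \bU_{\bC}\com \OM_{\bC})$ is automatically an adjunction, namely the composite of the adjunctions $(\bF_{\bC},\bU_{\bC})$ and $(\SI_{\bC},\OM_{\bC})$, with unit and counit assembled in the standard way from the units and counits of the two factors. Since any two left adjoints to a given right adjoint are uniquely naturally isomorphic in a way compatible with the units, the equality $\OM = \bU_{\bC}\com \OM_{\bC}$ together with the natural isomorphism $\SI_{\bC}\com \bF_{\bC}\iso \SI$ of Proposition \ref{formalBPQ} upgrades canonically to an identification of the composite adjunction with $(\SI,\OM)$.

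The one coherence point --- not really an obstacle --- is that the isomorphism of Proposition \ref{formalBPQ} intertwines the composite unit with $\et$. This is forced by uniqueness of adjoints, but it can also be traced directly from the construction of $\SI_{\bC}$ promised in \autoref{MON2}: on a free $\bC$-algebra $\bC X$ the coequalizer defining $\SI_{\bC}\bC X$ collapses to give the isomorphism $\SI_{\bC}\bC X\iso \SI X$, and under this identification the value of $\et_{\bC}$ at $\bC X$, composed with the monad-unit $X\rtarr \bC X$, reproduces the original $\et\colon X\rtarr \OM\SI X$. This is precisely the compatibility needed to identify the composite adjunction with $(\SI,\OM)$.
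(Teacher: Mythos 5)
Your proposal is correct and follows the same route as the paper, which simply asserts that the equality $\OM = \bU_{\bC}\com\OM_{\bC}$ together with the natural isomorphism of \autoref{formalBPQ} implies the identification of adjunctions; you have merely filled in the standard uniqueness-of-adjoints argument and the unit compatibility (which, as you note, also follows concretely from the collapse of the coequalizer on free algebras, cf.\ \autoref{compat} and \autoref{eye2}).
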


It is illuminating to display this composite adjunction in a diagram.  We will see many specializations.

\begin{equation}\label{adj5} 
\xymatrix{
\sT    \ar@<.5ex>[rr]^{\SI}  \ar@<.5ex>[dr]^{\bF_{\bC} }& & \sS   \ar@<.5ex>[ll]^{\OM}   \ar@<.5ex>[dl]^{\OM_{\bC}}\\
&   \ar@<.5ex>[ul]^{\bU_{\bC}}  \bC[\sT]    \ar@<.5ex>[ur]^{\SI_{\bC}}& \\}
\end{equation}

\subsection{The map of monads $\al\colon \bC\rtarr \GA = \OM\SI$ and its adjoint $\be$}\label{MON}

Assuming \autoref{ass1}, we here relate the monads $\bC$ and $\GA=\OM\SI$.

\begin{defn} For $X\in  \sT$, define $\al\colon  \bC X \rtarr \OM\SI X$ to be the composite
\[   \xymatrix@1{  \al\colon \bC X\ar[r]^{\bC \et}   &  \bC\OM\SI X  \ar[r]^{\vartheta} &  \OM\SI X, \\}  \]
where $\vartheta$ is the action assumed in \autoref{ass1}. 
Define $\be\colon  \SI\bC X \rtarr \SI X$ to be the adjoint of $\al$, namely the composite
$$
   \xymatrix@1{  \be\colon \SI \bC X  \ar[r]^{\SI \al}  & \SI \OM\SI X  \ar[r]^{\epz} &  \SI X. \\} 
$$
\end{defn}

Most of the following categorical observations about these maps were first noticed in special cases
\cite[Theorem 5.2 and Examples 9.5]{MayGeo}, but the same formal diagram chasing proofs work in general, as in \cite{Rant1}.   The starting point is that $\al \colon \bC X \rtarr \OM_{\bC}\SI X$ is a map of $\bC$-algebras, which is immediate from the definitions. 

\begin{lem}\label{FreeFree}  The map $\al\colon \bC X\rtarr \OM_{\bC} \SI X$ is the map of $\bC$-algebras given by the universal property 
of the free functor  $\bF_{\bC}$, applied to the map $\et\colon X \rtarr \OM\SI X$.  \end{lem}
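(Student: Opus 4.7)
The plan is to observe that this lemma is essentially a direct unwinding of the universal property of the free $\bC$-algebra functor $\bF_{\bC}$, together with a verification that the defining formula for $\al$ matches the canonical formula produced by the (free, forgetful) adjunction.

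First I would recall the explicit form of the adjunction bijection
\[
\bC[\sT]\bigl(\bF_{\bC}X, (Y,\vartheta_Y)\bigr) \iso \sT(X,Y).
\]
Writing $\eta^{\bC}_X\colon X \rtarr \bC X$ for the unit of the monad $\bC$ (the unit of $(\bF_{\bC},\bU_{\bC})$), the inverse of this bijection sends a morphism $f\colon X\rtarr Y$ in $\sT$ to the unique $\bC$-algebra map $\widetilde{f}\colon \bC X \rtarr (Y,\vartheta_Y)$ satisfying $\widetilde{f}\com \eta^{\bC}_X = f$; explicitly,
\[
\widetilde{f} \;=\; \vartheta_Y \com \bC f.
\]

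Next I would specialize to the $\bC$-algebra $\OM_{\bC}\SI X = (\OM\SI X,\vartheta)$ furnished by \autoref{ass1}, taking $f$ to be the unit map $\et\colon X \rtarr \OM\SI X$ of the adjunction $(\SI,\OM)$. The above formula then produces
\[
\widetilde{\et} \;=\; \vartheta \com \bC\et \;\colon\; \bC X \rtarr \OM_{\bC}\SI X,
\]
which is exactly the composite defining $\al$. Hence $\al = \widetilde{\et}$, which is the content of the lemma once we know $\al$ is a $\bC$-algebra map.

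The remaining verification, that $\vartheta \com \bC\et$ really is a map of $\bC$-algebras, is the standard diagram chase for $\widetilde{f}$ in the free-forgetful adjunction: it uses associativity of the $\bC$-action $\vartheta$ on $\OM\SI X$ together with naturality of $\vartheta$ applied to $\bC\et\colon \bC X \rtarr \bC\OM\SI X$. I do not expect any real obstacle here; the only subtlety is bookkeeping the two units, $\eta^{\bC}_X$ for the monad $\bC$ and $\et$ for the adjunction $(\SI,\OM)$, and keeping straight that $\OM_{\bC}$ simply remembers the $\bC$-action $\vartheta$ that is present but suppressed in the notation $\OM Y$.
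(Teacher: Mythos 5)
Your proposal is correct and is essentially the paper's own argument: the paper likewise identifies $\al = \vartheta\com\bC\et$ as the unique map of $\bC$-algebras out of the free algebra $\bC X$ extending $\et$, checking the algebra-map property via the unit and product diagrams and invoking freeness of $\bC X$ for uniqueness (this is recorded as part (ii) of \autoref{alphamon}). One cosmetic slip in your sketch: the verification that $\vartheta\com\bC\et$ respects the actions uses naturality of the monad product $\mu$ at $\et$ together with the associativity axiom for the action $\vartheta$ on $\OM\SI X$, rather than ``naturality of $\vartheta$ applied to $\bC\et$'', but the standard chase goes through exactly as you expect.
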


\begin{thm}\label{alphamon}  The following results hold in the context of \autoref{ass1}.
\begin{enumerate}[(i)]
\item  \label{item:1} $\al\colon \bC \rtarr \GA = \OM\SI$ is a map of monads in $\sT$.
\item  \label{item:2} $\al\colon \bC X\rtarr \OM_{\bC} \SI X$ is the unique map of $\bC$-algebras whose composite
with the unit $\et\colon X \rtarr \bC X$ of $\bC$ is the unit $\et\colon X \rtarr \OM\SI X$ of the adjunction $\SI\dashv\OM$. 
\item  \label{item:3}  The natural action $\vartheta\colon \bC \OM  \rtarr \OM $ is the composite 
$$ \xymatrix@1{ \bC\OM  \ar[r]^-{\al}  &  \GA \OM \ar[r]^-{\OM\epz} &  \OM,\\} $$ 
that is, the pullback along $\al$ of the action  $\OM\epz\colon \GA \OM Y\rtarr \OM Y$ of $\GA$.
\item  \label{item:4} $(\SI,\be)$ is a $\bC$-functor with action the composite
$$ \xymatrix@1{\SI\bC \ar[r]^-{\al}  & \SI \GA \ar[r]^-{\epz} &  \SI, \\} $$
that is, the pullback  along $\SI\al$ of the action  $\epz\colon \SI\GA \rtarr \SI$ of $\GA$.
\end{enumerate}
\end{thm}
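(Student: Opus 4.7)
The plan is to prove (iii) first, since the other three items flow from it: (i) combines (iii) with the monoid axioms for the $\bC$-action on $\OM\SI X$, (ii) falls out of \autoref{FreeFree} plus the universal property of the free functor, and (iv) is the adjoint transpose of (iii) combined with (i). All arguments are formal manipulations invoking naturality, the triangle identities of $(\SI,\OM)$, and the associativity and unit axioms of the $\bC$-action.

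For (iii), I would substitute $X=\OM Y$ into the definition $\al_X = \vartheta_{\SI X}\com \bC\et_X$ and postcompose with $\OM\epz_Y$. Naturality of $\vartheta$ applied to the morphism $\epz_Y\colon \SI\OM Y\rtarr Y$ lets $\OM\epz_Y$ slide past $\vartheta_{\SI\OM Y}$, producing $\vartheta_Y\com \bC(\OM\epz_Y\com \et_{\OM Y})$; the triangle identity collapses the parenthesized composite to the identity, leaving $\vartheta_Y = \OM\epz_Y\com \al_{\OM Y}$, which is exactly the asserted pullback description.

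For (i), the unit condition $\al\com\et_{\bC}=\et$ is immediate from naturality of $\et_{\bC}$ together with the unit axiom for the $\bC$-action on $\OM\SI X$. The multiplication condition $\mu_{\GA,X}\com\al_{\GA X}\com\bC\al_X = \al_X\com\mu_{\bC,X}$ is verified by using (iii) to identify $\mu_{\GA,X}\com\al_{\GA X}$ with $\vartheta_{\SI X}$ and then unfolding $\bC\al_X = \bC\vartheta_{\SI X}\com \bC\bC\et_X$; the resulting identity reduces to associativity of the $\bC$-action on $\OM\SI X$ combined with naturality of $\mu_{\bC}$ along $\et_X$. Part (ii) is then a one-line consequence of \autoref{FreeFree}: the universal property of $\bF_{\bC}X$ extends any map $X\rtarr \OM\SI X$ in $\sT$ uniquely to a $\bC$-algebra map out of $\bC X$, and $\al$ is the extension of $\et$. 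Finally, for (iv) I would note that $(\SI,\epz\SI)$ is automatically a $\GA$-functor (the required unit and associativity axioms are the triangle identity and naturality of $\epz$ respectively), and that pulling a $\GA$-functor structure back along any monad map -- here the $\al$ of (i) -- formally produces a $\bC$-functor structure; the pulled-back action on $\SI\bC$ is precisely $\be = \epz\com\SI\al$, so both structural axioms for $(\SI,\be)$ are inherited. The only nonroutine point in the whole proof is keeping the bookkeeping straight between the two occurrences of $\al$ in the horizontal composite $\al_{\GA X}\com\bC\al_X$; once (iii) is in hand, the remainder is diagram-chasing in the spirit of \cite[Theorem 5.2]{MayGeo}.
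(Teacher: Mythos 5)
Your proof is correct, and every step you chase is one the paper also chases; the only genuine difference is the order of deduction. The paper proves (i) first, verifying the product condition $\al\com\mu = \mu\com\al\com\bC\al$ by a single larger diagram (two naturality squares, a triangle identity, a tautological triangle, and the rectangle expressing that $\vartheta$ is a $\bC$-action), and then reads off (iii) from the short computation $\vartheta = \vartheta\com\bC\OM\epz\com\bC\et = \OM\epz\com\vartheta\com\bC\et = \OM\epz\com\al$ --- which is exactly your argument for (iii). You reverse this: proving (iii) first lets you identify $\mu_{\GA}\com\al\GA$ with $\vartheta\SI$, so the product condition in (i) collapses to associativity of $\vartheta$ plus naturality of $\mu_{\bC}$ along $\et$, replacing the paper's big diagram; moreover, under that identification the monad-map product condition is literally the statement that $\al$ is a map of $\bC$-algebras, which is what you need (together with your unit computation and freeness of $\bC X$) to quote \autoref{FreeFree} for (ii) without circularity, since your (iii) uses only the definition of $\al$, naturality of $\vartheta$, and a triangle identity. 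Your unit verification in (i) and your treatment of (iv) --- $(\SI,\epz\SI)$ is a $\GA$-functor by a triangle identity and naturality of $\epz$, and pulling back along the monad map $\al$ gives precisely $\be=\epz\com\SI\al$ --- coincide with the paper's. The reorganization is harmless and arguably tidier bookkeeping, though it buys nothing beyond regrouping the same cells; one small caution is your use of $\et_{\bC}$ for the unit of the monad $\bC$, which in this paper is reserved (\autoref{alpha}) for the unit of the adjunction $(\SI_{\bC},\OM_{\bC})$.
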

\begin{proof}  We will recall the definitions of $\bC$-algebras and $\bC$-functors in \autoref{MON2}. 
 To prove~(\ref{item:1}), we see that $\alpha$ is compatible
    with the units, $\alpha \circ \eta = \eta$, by the following commutative diagram:
    \begin{equation*}
      \begin{tikzcd}
        &\bC\ar[rd,"\bC\eta"']\ar[rrd,"\alpha"]&&\\
        I\ar[rd,"\eta"']\ar[ru, "\eta"]&&\bC\Gamma\ar[r,"\vartheta", xshift=-5pt]&\Gamma\\
        &\Gamma\ar[ru,"\eta"]\ar[rru,equal]&&
      \end{tikzcd}
    \end{equation*}
We see that $\alpha$ is compatible with the product, $\al \com \mu = \mu \com \al \com \bC \al$, by the
 following commutative diagram:
\begin{equation*}
\xymatrix{
 \bC\bC \ar[r]^{\bC\bC\eta}\ar[dd]_{\mu}
& \bC\bC\GA  \ar[r]^-{\bC\vartheta} \ar[dd]_{\mu}  
&   \bC\GA\ar[r]^-{\bC\eta}\ar[dr]^{=} \ar[dd]^{\vartheta}
& \bC\GA\GA \ar[d]^{\bC\Omega\epsilon} \ar[r]^{\vartheta}
& \GA\GA \ar[dd]^{\Omega\epsilon=\mu}\\
& & &\bC\GA \ar[dr]^{\vartheta}&  \\
\bC \ar[r]^{\bC\eta} 
& \bC \GA \ar[r]^-{\vartheta}
& \GA\ar[rr]^{=}  & & \GA\\}
\end{equation*}
Here the left rectangle and right pentagon are naturality diagrams, the small triangle commutes by a triangle identity, the large triangle commutes tautologically, and the remaining rectangle commutes since $\vartheta$ is an action of $\bC$.  For (ii), the unit diagram and the left half of the product diagram above show that $\al$ is a map of $\bC$-algebras, and uniqueness holds since $\bC$ is the free $\bC$-algebra functor. 

For (iii), a triangle identity and naturality give that
$$ \vartheta = \vartheta\com \bC\OM\epsilon\com \bC\et = \OM \epz\com\vartheta\com \bC\et = \OM\epz \com \al. $$
Similarly, for (iv),  it is clear that $\SI$ is a $\GA$-functor via $\epz\colon \SI\GA \rtarr \SI$, and then it is immediate from the definition of $\be$ that it is a $\bC$-functor by pullback along $\bC\al$.
\end{proof}

Taken together, parts (i) and (ii)  have the following converse, which encodes a reformulation of \autoref{ass1}.

\begin{thm}\label{converse}  Let $(\SI,\OM)$ be an adjunction, $\SI\colon \sT \rtarr \sS$ and $\OM\colon \sS\rtarr \sT$, with associated monad 
$\GA = \OM \SI$ in $\sT$.  Let 
$\al\colon \bC \rtarr \GA$ be a map of monads in $\sT$.
\begin{enumerate}[(i)]
\item    $\bC$ acts naturally on objects $\OM Y$ via the pullback action $\vartheta$:
$$ \xymatrix@1{ \bC\OM  \ar[r]^-{\al}  &  \GA \OM \ar[r]^-{\OM \epz} &  \OM.}$$
\item   The given map of monads $\al$ is the composite
$$ \xymatrix@1{ \bC \ar[r]^-{\bC \et} &  \bC \GA \ar[r]^-{\vartheta} & \GA.\\} $$
\end{enumerate}
\end{thm}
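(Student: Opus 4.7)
The plan is to verify the two items by straightforward diagram chases, using only the monad map axioms for $\al$, the triangle identities for the adjunction $(\SI,\OM)$, and naturality. Nothing deep is at stake; the content is the standard fact that a map of monads pulls back algebras, specialized to the fact that $\OM Y$ is tautologically a $\GA$-algebra.

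For part (i), I would first recall the general principle: if $\al\colon \bC \rtarr \GA$ is a map of monads and $(Z,\xi\colon \GA Z\rtarr Z)$ is any $\GA$-algebra, then $(Z,\xi\com \al_Z)$ is a $\bC$-algebra. The unit axiom $\xi\com\al\com\et_{\bC} = \xi\com \et_{\GA} = \id$ uses the unit-preservation of $\al$, and the associativity axiom reduces, by naturality of $\al$ and the product-compatibility identity $\al\com\mu_{\bC} = \mu_{\GA}\com\al\com\bC\al$, to the associativity of $\xi$. Applied to the canonical $\GA$-algebra structure $\OM\epz\colon \GA\OM Y \rtarr \OM Y$ on $\OM Y$, this gives exactly the pullback action $\vartheta = \OM\epz\com\al$, natural in $Y$.

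For part (ii), I would chase the composite directly:
\[
\vartheta\com \bC\et \;=\; \OM\epz\com \al\com \bC\et.
\]
Naturality of $\al$ applied to $\et\colon \Id \rtarr \GA$ gives $\al\com \bC\et = \GA\et \com \al = \OM\SI\et\com \al$, so
\[
\vartheta\com \bC\et \;=\; \OM\epz\com \OM\SI\et\com \al \;=\; \OM(\epz\com \SI\et)\com \al \;=\; \al,
\]
where the last equality is the triangle identity $\epz\SI\com \SI\et = \id_{\SI}$.

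The main obstacle, if one can call it that, is only bookkeeping: one must keep straight that the hypothesis gives $\al$ abstractly as a map of monads, while the conclusion reconstructs it via the action $\vartheta$ it induces on $\OM Y$. Since the whole argument rests on naturality plus one triangle identity, I do not expect any genuine difficulty — in fact, the statement is essentially the observation that the pullback-of-algebras construction and the formula displayed in \autoref{alphamon}(iii) are mutually inverse passages between maps of monads $\bC\rtarr \GA$ and natural actions $\vartheta\colon \bC\OM \rtarr \OM$ compatible with the $\GA$-action.
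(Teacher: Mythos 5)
Your proof is correct, but your argument for (ii) takes a different (and slightly leaner) route than the paper's. For (i) you spell out the standard fact that a map of monads pulls back algebras, applied to the canonical $\GA$-algebra $(\OM Y, \OM\epz)$; the paper simply declares (i) clear, so there is no real divergence there. For (ii), however, the paper's diagram chase stays entirely inside monad algebra: it inserts $\bC\et_{\bC}$, uses unit-compatibility of $\al$ to rewrite $\bC\et_{\GA}$ as $\bC\al\com\bC\et_{\bC}$, uses product-compatibility $\al\com\mu_{\bC} = \mu_{\GA}\com\al\com\bC\al$ (with $\mu_{\GA}=\OM\epz$) to slide past $\vartheta$, and finishes with the unit law $\mu_{\bC}\com\bC\et_{\bC}=\id$. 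You instead use only naturality of $\al$ against $\et\colon \Id\rtarr\GA$ together with the triangle identity $\epz\SI\com\SI\et = \id_{\SI}$. Your chase is shorter and in fact shows that (ii) holds for \emph{any} natural transformation $\al\colon\bC\rtarr\GA$ once $\vartheta$ is defined as the pullback $\OM\epz\com\al$ — the monad-map hypothesis is only needed for (i); the paper's version, by contrast, never invokes a triangle identity and is uniform in style with its proof of \autoref{alphamon}. Both arguments are complete; yours buys a marginally weaker hypothesis for (ii), the paper's buys consistency of bookkeeping with the surrounding monad-map computations.
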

\begin{proof}
Part (i) is clear.  Part (ii) holds by the following commutative diagram.
$$\xymatrix{
& \bC  \GA \ar[r]^-{\al}  \ar[ddr]^{\vartheta} & \GA\GA\ar[dd]^{\OM\epz}\\
\bC \ar[ur]^{\bC\et}  \ar[r]_{\bC\et} \ar[dr]_{=} & \bC\bC \ar[u]^-{\bC\al} \ar[d]^{\mu} & \\
& \bC \ar[r]_-{\al} &  \GA\\} $$
The upper left triangle and the rectangle commute since $\al$ is a map of monads; $\vartheta$ is defined by the upper right triangle and it follows that the lower right triangle commutes. The lower  left triangle commutes since $\bC$ is a monad. 
\end{proof}

\subsection{Monadic coequalizers and the adjunction $(\SI_{\bC},\OM_{\bC})$}\label{MON2}  We assume familiarity with coequalizers and split coequalizers (see e.g. \cite[Section 1]{KMZ0}). Let $(\bC,\mu,\et)$, abbreviated $\bC$, be our monad in $\sT$.  This means that $\bC\colon \sT\rtarr \sT$ is a monoid in the functor category of $\sT$ with product $\mu$ and unit $\et$. 

Recall that a $\bC$-algebra $(Y,\tha)$, abbreviated $Y$, is an object of $\sT$ with an action $\tha\colon \bC Y\rtarr Y$ such that $\tha\com \bC\tha = \tha\com \mu$  and $\tha\com \et = \id$.   The following diagram then exhibits $Y$ as a  split coequalizer.
\begin{equation}\label{split2}
\xymatrix@1{
\bC\bC Y  \ar@<2ex>[rr]^{\bC\tha}  \ar@<-1ex>[rr]^{\mu}  &&  \bC Y \ar[rr]^-{\tha} \ar@<+3ex>[ll]^{\et}& & Y \ar@<+2ex>[ll]^{\et}. \\}  
\end{equation}
Here $\bC\tha\com \et = \et\com  \tha$ by the naturality of $\et$ applied to the morphism $\tha$.\footnote{The first $\et$ is $\et$ on $\bC Y$; category theorists might usually write it as  $\et \com \bC$.}

Recall too that a $\bC$-functor  $(\SI,\be)$, abbreviated $\SI$, is a functor  $\SI\colon \sT \rtarr \sZ$ for some category $\sZ$ with an action  $\be\colon \SI\com \bC \rtarr \SI$ such that $\be\com\SI \mu= \be\com \be$ and $\be\com \SI \et = \id$ \cite[Definition 9.4]{MayGeo}.  We will be focusing on the case  $\sZ= \sS$.  In analogy with \autoref{split2}, the following diagram exhibits the functor $\SI$ as naturally given by a split coequalizer for each object $X\in \sT$.

\begin{equation}\label{split3}
\xymatrix@1{
\SI\bC\bC X  \ar@<2ex>[rr]^{\be}  \ar@<-1ex>[rr]^{\SI\mu}  && \SI \bC X \ar[rr]^-{\be} \ar@<+3ex>[ll]^{\SI \bC \et}& & \SI X \ar@<+2ex>[ll]^{\SI \et}. \\}  
\end{equation}
Here $\be\com \SI\bC \et = \SI\et\com \be$ by the naturality of $\be$ applied to the morphism $\et$. 

As seen in \autoref{MON}, the adjunction $(\SI,\OM)$ in \autoref{ass1} gives
an action $\be$ of $\bC$ on $\SI$. Our categorical starting point makes sense  for any $\bC$-functor $\SI$.

\begin{defn}\label{twist} Let  $\SI\colon \sT\rtarr \sS$ be a $\bC$-functor.  Define the $\bC$-coequalized functor 
$\SI_{\bC}\colon \bC[\sT] \rtarr \sS$ to be the tensor product $\SI\otimes_{\bC} (-)$.  Explicitly, on a $\bC$-algebra $Y$, it is given by  the coequalizer in $\sS$ displayed in the diagram
\begin{equation}\label{split4} 
 \xymatrix@1{
\SI {\bC} Y  \ar@<.7ex>[rr]^{\be}  \ar@<-.7ex>[rr]_{\SI\tha}  &&  \SI  Y \ar[rr]^-{q} & & \SI_{\bC}Y. \\}  
\end{equation}
\end{defn}

{
\begin{rem}\label{formalBPQ1} These definitions make a generalized version of \autoref{formalBPQ}  a tautology.  Specializing $Y$ in \autoref{split4} to $Y=\bC X$,  we see that the same pairs of arrows are being coequalized as in \autoref{split3}.     Therefore the two coequalizers can be identified, thus identifying $q\colon \SI \bC X \rtarr \SI_{\bC} \bC X$  with $\be$.  Observe that, since  $\be\com \SI_\bC \mu = \be\com \be$,  $\SI_\bC \mu$ agrees with $\be$ under the isomorphism $\SI_{\bC}(\bC \bC X)\iso \SI \bC X$. 
\end{rem}
}

Returning to \autoref{ass1}, we now specialize $\SI_{\bC}$ to that context.   

\begin{proof}[Proof of \autoref{keyadj}] Consider the following diagram, in which  $Y$ is in $\bC[\sT]$  and $Z$ is in $\sS$. 
\begin{equation}\label{adjs} \xymatrix{
\sS (\SI_{\bC}Y, Z) \ar[d]_{q^*} \ar@{-->}[r] &  \bC[\sT](Y, \OM_{\bC} Z) \ar[d]^{\subset}\\
\sS(\SI Y, Z)  \ar[r]_-{\iso} &  \sT(Y, \OM Z).\\}
\end{equation}
We claim that  a map  $f\colon Y \rtarr  \OM Z$  in $\sT$ is a map of $\bC$-algebras if and only if its adjoint $\tilde{f}\colon \SI Y \rtarr Z$ factors uniquely through $q\colon \SI Y \rtarr \SI_{\bC} Y$.  That will give the top isomorphism.  Thus consider the diagram
\begin{equation}\label{fact}
\xymatrix{
\bC Y \ar[r]^-{\bC f} \ar[d]_{\tha}& \bC\OM_{\bC} Z \ar[d]^{\vartheta} \\
Y \ar[r]_-{f} & \OM_{\bC} Z.\\}
\end{equation}
It commutes, so that $f$ is a map of $\bC$-algebras, if and only if its adjoint
commutes. The top row of the following diagram in $\sT$ is the adjoint 
$\widetilde{\vartheta\com \bC f}$ of $\vartheta\com \bC f$; its left column is $\be$, the adjoint of $\al$.
\begin{equation}\label{biggy}
\xymatrix{
\SI\bC Y \ar[r]^-{\SI\bC f} \ar[d]_{\SI\bC\et} & \SI\bC\OM_{\bC} Z \ar[r]^-{\SI\vartheta} \ar[d]_{\SI \bC \et}&  \SI\OM_{\bC}Z  \ar[dd]_{\SI\et}\ar[r]^-{\epz}  &  Z\\
\SI\bC\OM_{\bC} \SI Y \ar[r]^-{\SI\bC\OM_{\bC}\SI f}  \ar[d]_{\SI\vartheta} & \SI\bC\OM_{\bC} \SI \OM_{\bC} Z \ar[dr]_{\SI \vartheta} &  &  \\
\SI\OM_{\bC} \SI Y \ar[rr]_-{\SI\OM_{\bC}\SI f}  \ar[d]_{\epz} & & \SI\OM_{\bC} \SI \OM Z \ar[dr]_{\epz} & \\
\SI Y \ar[rrr]_{\SI f} & & & \SI\OM Z \ar[uuul]_{\id} \ar[uuu]^{\epz}\\}
\end{equation}
The diagram commutes by naturality and a triangle identity.   This identifies $\widetilde{\vartheta\com \bC f}$ with the composite
\begin{equation}\label{un} \xymatrix@1{ \SI\bC Y   \ar[r]^-{\be}  & \SI Y \ar[r]^{\SI f} & \SI\OM Z  \ar[r]^{\epz}  &  Z.\\} 
\end{equation}
The adjoint $\widetilde{f\com \tha}$ of $f\com \tha$ is the composite
\begin{equation}\label{deux}
\xymatrix@1{ \SI\bC Y \ar[r]^-{\SI\tha}    & \SI Y \ar[r]^{\SI f} & \SI\OM Z  \ar[r]^{\epz}  &  Z.\\}
\end{equation}
These two composites are equal if and only if the adjoint of \autoref{fact} commutes, which holds if and only if \autoref{fact} itself commutes.  By the universal property of the coequalizer $\SI_{\bC} Y$, this holds if and only if the composite $\epz\com \SI f$ factors uniquely through $\SI_{\bC} Y$.   This proves our claim.
\end{proof}

\begin{cor}\label{eye}  The unit $\et_{\bC}$ and counit  $\epz_{\bC}$  of the adjunction are described by the following diagrams.  Here a little chase shows that  $\epz\com  \be = \epz\com \SI\vartheta$. 
$$ 
\xymatrix{  
Y \ar[r]^-{\et}  \ar[dr]_{\et_{\bC}} & \OM\SI Y \ar[d]^{\OM q}   &   \text{and}  &      \SI {\bC} \OM Z  \ar@<.7ex>[rr]^{\be}  \ar@<-.7ex>[rr]_-{\SI \vartheta}  &&  \SI  \OM Z \ar[rr]^-{q}  \ar[d]^{\epz} & & \SI_{\bC} \OM_{\bC} Z  \ar[dll]^{\epz_{\bC}} \\
& \OM_{\bC}\SI_{\bC} Y                                                               &         &             & &   Z   & & \\}
$$
\end{cor}

{
For  $\bC = \GA$ and $\al = \id$, the following  consequence is \cite[Corollary 1.8]{KMZ0}.

\begin{cor}\label{eye3}  Let $\GA_{\bC} = \OM_{\bC}\SI_{\bC}$ be the adjunction monad in $\bC[\sT]$. The following is a composite of maps of monads: 
$$\xymatrix{\bC \ar[r]^-{\al} & \OM\SI \ar[r]^-{\OM q} & \OM_{\bC}\SI_{\bC}= \GA_{\bC}\\} $$
For $\OM q$, this means that the unit and product diagrams commute after application of the forgetful functor $\bU_{\bC}\colon \bC[\sT]\rtarr \sT$.
\end{cor}

Since $\OM$ is $\bU_{\bC} \OM_{\bC}$ and $\bF_\bC$ is the left adjoint to $\bU_{\bC}$, for $X\in \sT$ and $Z\in \sS$ we have
$$\sS(\SI X,Z)  \iso \sT(X, \bU_\bC \OM_{\bC}Z) \iso \bC[\sT](\bC X,\OM_{\bC}Z) 
\iso \sS(\SI_{\bC} \bC X,Z).$$
Using that $q = \be$ as  in \autoref{formalBPQ1}, inspection and a triangle identity give a more explicit version of \autoref{formalBPQ}.

\begin{prop}\label{formalBPQ2}  The natural isomorphism $\SI_{\bC}\bC \rtarr \SI$ is given by the following composite, which we name $\be_{\bC}$:
$$\xymatrix{\SI_{\bC}\bC \ar[r]^-{\SI_{\bC}\al} & \SI_{\bC} \OM_{\bC} \SI \ar[r]^-{\epz_{\bC}} & \SI \\} $$
\end{prop}

%inverse $\xymatrix{\SI X \ar[r]^{\SI \et} & \SI \bC \ar[r]^{q} & \SI_{\bC}}$. 
%\begin{rem} For $Z\in \sS$, a triangle identity gives that the composite 
%$$\xymatrix{\OM_{\bC}Z \ar[r]^-{\et_{\bC}} & \OM_{\bC}\SI_{\bC}\OM_{\bC} Z \ar[r]^-{\epz_{\bC}} & \OM_{\bC} Z\\}$$ 
%is the identity. We conclude from \autoref{formalBPQ} that both maps are isomorphisms when $Z = \SI_{\bC} Y$ for $Y\in \bC[\sT]$. 
%\end{rem} 

A closely related result plays a central role in our theory.  Intuitively, it shows that the map $\al$ can be interpreted as a special case of the unit $\et_{\bC}$.

\begin{lem}\label{eye2}  The  following commutative diagram identifies $\al$ as the composite 
of the isomorphism $\OM_{\bC}\be_{\bC}$ with $\et_{\bC}$:
$$\xymatrix{  
& \OM\SI X \ar[dr]_{\et_{\bC}}  \ar[drr]^-{=} & & \\
\bC X \ar[ur]^{\al} \ar[dr]_{\et_{\bC}}  & & \OM_{\bC}\SI_{\bC} \OM\SI X \ar[r]^{\OM_{\bC}\epz_{\bC}} & \OM_{\bC}\SI X \\
& \OM_{\bC} \SI_{\bC} \bC X \ar[ur]^-{\OM_{\bC}\SI_{\bC}\al}  \ar[urr]_-{\OM_{\bC} \be_{\bC}} & & \\}
$$
\end{lem}
}

%We shall later use the following immediate consequence. 
%\begin{lem}\label{eye5}   \pmar{Look again.  Want something like this  deleted false claim:}
%For all $X\in \sT$, 
%$$ \OM_{\bC}\epz_{\bC}\colon \OM_{\bC}\SI_{\bC}\OM_{\bC}X\rtarr \OM_{\bC} X$$
% is a natural isomorphism  (sic; weak equivalence when $\al$ is so?)\end{lem}}
 
We think of $\et_{\bC}$ as formal and $\al$ as relatively concrete.  It is $\al$ rather than $\et_{\bC}$ that leads to computational applications.

\begin{warn}  The functor $(\OM\com\SI)\otimes_{\bC}(-)$ is also defined, but it appears to be of little or no interest.  It must not be confused with $\OM_{\bC}\com \SI_{\bC}$.
\end{warn}

\section{The homotopical context}\label{CONTEXT2}

\subsection{Assumption \ref{ass2}: basic homotopical assumptions}
\autoref{ass1} specifies a minimal formal framework, and we flesh it out with minimal homotopical assumptions.  We separate out those basic assumptions that assume nothing substantial about the monad $\bC$ in this section.  These are the same as the assumptions used to prove the homotopical monadicity theorem for $\GA$ in \cite{KMZ0}.  

As there, our proofs of homotopical monadicity focus on derived homotopy theory, defined in terms of a bar resolution  of algebras over monads.  That is a special case of the two-sided monadic bar construction, as is the homotopically well-behaved variant $\bE$ of the left adjoint $\SI_{\bC}$ that will be used in the proofs.  These are constructed simplicially, and we write $s\sV$ for the category of simplicial objects in a category $\sV$. For a functor $F$, we write $F_*$ for the functor on simplicial objects given by applying $F$ objectwise on $q$-simplices, and similarly for natural transformations.  %We include our basic  assumptions on simplicial objects in $\sT$ and $\sS$.

\begin{ass}\label{ass2}  We assume that $\sT$ and $\sS$, in addition to being cocomplete, have standard notions of homotopy and have classes of weak equivalences satisfying the two out of three property.  We say that a map $f$  in $\bC[\sT]$ for any $\bC$ is a weak equivalence if it is a weak equivalence when considered as a map in $\sT$.   We assume that the functors $\OM$, $\SI$, and 
$\bF_{\bC}$, hence also $\OM_{\bC}$ and $\bC$, preserve weak equivalences, at least under restriction to good objects (as in \autoref{bspt1} for example).  We assume the following statements about simplicial objects in $\sT$ and $\sS$.  
\begin{enumerate}[(i)]
\item  There are realization functors $\bT \colon s\sT\rtarr \sT$ and $\bT \colon s\sS\rtarr \sS$ and they are left adjoints.
\item  The functors $\bT $  on  $s\sT$ and $s\sS$ preserve homotopies.
\item  The functors $\bT $  on  $s\sT$ and $s\sS$ preserve weak equivalences between Reedy cofibrant objects.\footnote{The name comes from Reedy's thesis \cite{Reedy}. The  word ``proper'' was sometimes used before Reedy cofibrancy was introduced \cite{MayGeo, MayPerm}.}
\item  Realization commutes with the (left adjoint) functor $\SI$. That is,  for $X_*$ in $s\sT$, there is a natural isomorphism
$\xymatrix@1{  \SI \bT X_* \iso \bT \SI_* X_*.}$
\item   For $X_*\in s\sS$,  the natural map $\ga\colon \bT \OM_* X_*\rtarr \OM \bT X_*$, which is the adjoint of 
 $$ \xymatrix@1{\SI \bT \OM_*X_* \iso \bT \SI_* \OM_* X_* \ar[r]^-{\bT\epz_*} & \bT X_*,\\}$$
 is a weak equivalence if $X_{*}$ is  levelwise $\OM$-connective.
\end{enumerate}
\end{ass}

The appropriate general definition of an $\OM$-connective object of $\sS$ was given in \cite{KMZ0} and will be recalled in \autoref{pfadjequiv}.  It coincides with more familiar notions in our examples.   The simplicial assumptions are discussed in \cite{KMZ0}.  All except (v) are essentially formal, with no real work required in their verification.

\begin{rem}\label{bspt2} As discussed in \autoref{MODEL} below, $\sT$ and $\sS$  are generally model categories such that $(\SI,\OM)$ and $(\bF_{\bC},\bU_{\bC})$ are Quillen adjunctions.  The reader may prefer to assume that holds and restrict the preservation properties accordingly.  This can depend on the choice of relevant model structures.  Our applications will focus mainly on topological model categories in which all objects are fibrant, hence the right adjoints preserve all weak equivalences, as we assume. The left adjoints do not, but rectifying this generally requires much less than cofibrant approximation.  Restricting attention to good objects as in \autoref{bspt1} is sufficient and much less of a nuisance in practice.   As said before,  we tacitly make this restriction throughout.  We prefer our objects as they come in nature, with their natural algebraic structure.
\end{rem}

\begin{rem}\label{ouch}  The assumption that $\SI$ preserves weak equivalences should be regarded as negotiable.  It fails in a very interesting way in \autoref{orbpre}, but, as there, it can be made true by cofibrant approximation.  Even when it holds, the functor $\SI_{\bC}$ is problematic.  We do not believe that $\SI_{\bC}$ preserves weak equivalences, even when restricted to good objects.  That will account for some subtleties later.  However, as we shall see in \autoref{Eweak}, the assumption on $\SI$ ensures that the composite $\bE = \SI_{\bC}\com \mathrm{Bar}$ defined in \autoref{Edefn} does preserve weak equivalences (implicitly, between good objects) regardless of whether or not  $\SI_{\bC}$ does so.
\end{rem}

As in \cite[2.7 and 2.8]{KMZ0}, we write $c_* J$ for the constant simplicial object at an object $J$ of any category.  Its $q$-simplices are $J$ for any $q$, with face and degeneracy maps the identity.  We assume, as may be formal and always holds in practice, that its realization $\bT c_*J$ can be identified with $J$.

\begin{lem}\label{triv}
For a simplicial object $K_*$ in a category $\sT$ and an object $J$ of $\sT$, a map $f\colon K_0 \rtarr J$ such that $f\circ d_0 = f\circ d_1$ induces a map $K_* \rtarr c_* J$  and thus a map $\xi\colon \bT K_* \rtarr J$ in $\sT$.
\end{lem}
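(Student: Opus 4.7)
The plan is to first construct the claimed simplicial map $f_*\colon K_* \rtarr J_*$ to the constant simplicial object at $J$ and then invoke functoriality of realization.

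First, I define $f_n\colon K_n \rtarr J$ for all $n \geq 0$ by setting $f_0 = f$ and, for $n \geq 1$, $f_n = f \com d_0 \com d_0 \com \cdots \com d_0$ (with $n$ iterated ``initial vertex'' face maps), and I claim this equals $f \com \alpha^*$ for \emph{any} morphism $\alpha\colon [0]\rtarr [n]$ in $\Delta$. The category $\Delta$ being generated by cofaces and codegeneracies, any two vertex inclusions $[0]\rightrightarrows[n]$ are connected via a common factorization through some $[1]\rtarr [n]$: explicitly, if $\alpha$ and $\beta$ pick out $i < j$, let $\gamma\colon [1]\rtarr[n]$ send $0\mapsto i$, $1\mapsto j$, so that $\alpha = \gamma\com d^1$ and $\beta = \gamma\com d^0$; then $f\com \alpha^* = f\com d_1 \com \gamma^* = f \com d_0 \com \gamma^* = f\com \beta^*$ by the standing hypothesis $f\com d_0 = f\com d_1$. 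Transitivity handles arbitrary pairs.

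Second, I verify that $\{f_n\}$ is a simplicial map into the constant simplicial object $J_*$ (all of whose face and degeneracy maps are $\id_J$). The required identity is that for every simplicial operator $\psi\colon [m]\rtarr[n]$, the square
\[
\xymatrix{
K_n \ar[r]^{f_n} \ar[d]_{\psi^*} & J \ar[d]^{=} \\
K_m \ar[r]_{f_m} & J
}
\]
commutes, i.e.\ $f_m \com \psi^* = f_n$. Picking any $\alpha\colon [0]\rtarr [n]$, the composite $\psi\com\cdots$ — rather, for any $\beta\colon [0]\rtarr [m]$, the composite $\alpha := \psi\com \beta\colon [0]\rtarr[n]$ satisfies $\alpha^* = \beta^* \com \psi^*$, so $f_m \com \psi^* = f \com \beta^* \com \psi^* = f \com \alpha^* = f_n$ by the well-definedness established in the first step. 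Both face and degeneracy compatibilities are instances of this.

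Third, I apply the realization functor $|-|\colon s\sT\rtarr \sT$ of \autoref{ass6}(i). The constant simplicial object $J_*$ satisfies $|J_*|\iso J$ naturally: in the tensor presentation of \autoref{Kancan}, $|J_*| = J_*\otimes_\Delta \Delta^v_*$ becomes $J$ tensored with the colimit of $\Delta^v_*$, which is the monoidal unit, so $|J_*|\iso J$. Hence $\xi = |f_*|\colon |K_*|\rtarr |J_*|\iso J$ is the desired map. No step presents a real obstacle; the only subtlety is the well-definedness of $f_n$, which is precisely what the hypothesis $f\com d_0 = f\com d_1$ is designed to ensure.
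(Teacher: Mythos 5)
Your argument is correct and is exactly the intended one: the paper states \autoref{triv} without proof as an elementary observation, and your construction — defining $f_n = f\com \al^*$ for any vertex operator $\al\colon [0]\rtarr [n]$, checking well-definedness from $f\com d_0 = f\com d_1$ via a factorization through $[1]$, deducing compatibility with all simplicial operators to get a map to the constant simplicial object, and then applying realization with $|J_*|\iso J$ (valid in the paper's settings per \autoref{Kancan}) — simply spells out that standard argument. Nothing to correct.
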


\subsection{\autoref{ass3}: assumptions about the monad $\bC$}\label{Csec}

We need two basic simplicial properties of the monad $\bC$, both conceptual rather than homotopical.

\begin{ass}\label{ass3} 
\begin{enumerate}[(i)]
\item Realization commutes with the functor $\bC$. That is,  for $X_*$ in $s\sT$, there is a natural isomorphism
$\nu\colon  \bT \bC_* X_* \rtarr \bC \bT X_*$ such that the following diagrams commute.
$$ \xymatrix{
& \bT X_* \ar[dl]_-{|\et_*|}  \ar[dr]^{\et} &        & & 
\bT \bC_*\bC_* X_* \ar[d]_{\bT \mu_*}   \ar[r]^-{\nu} &\bC \bT \bC_* X_*  \ar[r]^-{\bC\nu} & \bC\bC \bT X_* \ar[d]^{\mu}\\
\bT\bC_*X_* \ar[rr]_-{\nu} & & \bC \bT X_* & & \bT \bC_* X_* \ar[rr]_-{\nu}  &  & \bC \bT X_* \\}
$$
Therefore $\bT \bC_*X_*$ is a $\bC$-algebra.  
\item For $X_*\in s\sS$, the natural map $\ga\colon \bT \OM X_*  \rtarr \OM \bT X_*$  is a map of $\bC$-algebras.  That is, the following diagrams commute.
$$ \xymatrix{
\bT \OM_*X_* \ar[r]^-{\et} \ar[d]_{\ga} & \bC \bT \OM_* X_* \ar[d]^{\bC\ga} & & 
\bC \bT \OM_* X_* \ar[d]_{\bC \ga} & \bT\bC_*\OM_* X_*  \ar[l]_-{\nu} \ar[r]^-{\bT \vartheta_*} &  \bT \OM_* X_* \ar[d]^{\ga}\\
\OM \bT X_* \ar[r]_-{\et} & \bC\OM\bT X_* & & \bC\OM \bT X_*  \ar[rr]_-{\vartheta}  &  & \OM \bT X_* \\}
$$
\end{enumerate}
\end{ass}

\begin{rem}\label{twist2}  By \autoref{twist},  Assumptions \ref{ass3}(i) and \ref{ass2}(iv) imply that, for simplicial objects $X_*$  in $\bC[\sT]$, 
$\bT X_*$ is in  $\bC[\sT]$ and
$$\SI_{\bC}\bT X_* \iso \bT \SI_{\bC}X_*.$$
\end{rem}

\begin{rem}\label{critical} These assumptions clearly differentiate $\bC$ from the adjunction monad $\GA$. For the latter, \autoref{ass3}(i) is false since $\SI$ commutes with $\bT $ and $\OM$ generally does not.  Due to this falsity, we do not have a proof that $\bT \OM K_*$ is a $\GA$-algebra, so that (ii) does not make sense.  We showed in \cite{KMZ0} how to accommodate this failing.   This difference between contexts is crucial. 
\end{rem}

The monads associated to operads are left adjoints constructed using colimits and products, so that (i) follows from the categorical Fubini theorem and the commutation of $\bT $ with products.   In that case, an explicit detailed proof of (i) is given in \cite[Theorem 12.2]{MayGeo}.
In the special case of a monad associated to a particular operad, (ii) is proven in \cite[Theorem 12.4]{MayGeo}, but the argument can almost be formalized as follows.   
$$ \xymatrix{
\bT \OM_*X_* \ar[r]^-{\et} \ar[d]_{\ga} & \bC \bT \OM_* X_* \ar[d]^{\bC\ga} & & 
\bC \bT \OM_* X_* \ar[d]_{\bC \ga} &  \bT \bC_*\OM_* X_* \ar[l]_{\nu} \ar[r]^-{\bT \vartheta_*} &  \bT \OM_* X_* \ar[d]^{\ga}\\
\OM \bT X_* \ar[r]_-{\et} & \bC\OM\bT X_* & & \bC\OM \bT X_*  \ar[rr]_-{\vartheta}  &  & \OM \bT X_* \\}
$$
The first diagram commutes by the naturality of $\eta$. 
Passing to adjoints and writing $\tilde{\ga}$ for the adjoint of $\ga$, the second diagram becomes
\[  \xymatrix{
\SI \bC \bT \OM_*X_* \ar[d]_{\SI\bC \ga}&  \ar[l]_-{\SI\nu}  \SI \bT \bC_*\OM_* X_*  \ar[r]^-{\SI \bT \vartheta} & \SI \bT \OM_*X_*\bT \ar[d]^{\SI\ga} \ar[r]^-{\iso} \ar[dr]^{\tilde{\ga}} & 
\bT \SI_*\OM_* X_*  \ar[d]^{\bT \epz}\\
\SI \bC\OM \bT X_*  \ar[rr]_-{\SI \vartheta}  &  & \SI \OM \bT X_* \ar[r]_-{\epz} & \bT X_*.\\}
\]
The rectangle is obtained by applying $\SI$ to what is in spirit a naturality diagram; it commutes by inspection in examples since $\vartheta$ is defined the same way before and after realization.  The lower triangle commutes by a chase of adjoints and the upper triangle commutes by definition.

\subsection{\autoref{ass4}: group completions}

The failings of the monad $\GA$ documented in \autoref{critical} are compensated for and related to the fact that no notion of group completion was relevant to the homotopical monadicity theorem of \cite{KMZ0}.  However, the notion of group completion used throughout iterated loop space theory requires clarification.   A priori, the following curious definition has nothing at all to do with groups or with \autoref{ass1}.    It formalizes the relevant properties of group completion, and groups do appear prominently in examples.\footnote{We thank Dustin Clausen for noticing serious sloppiness in our first version of the definition.}

\begin{defn}\label{screwy}  We abstract what we mean by notions of grouplike objects and group completions in a category $\sT$ with a subcategory of weak equivalences.  We mean that we have a category  $\sH$ with a forgetful functor $\bU\colon \sH \rtarr \sT$ together with a full subcategory $\sH_{gp}$ of $\sH$, whose objects are called grouplike.  We say that a map $f$ in $\sH$ is a weak equivalence if 
$\bU f$ is a weak equivalence in $\sT$ and we require that objects of $\sH$ weakly equivalent to grouplike objects be grouplike.

We mean further that we have a concomitant notion of a group completion $f\colon X\rtarr Y$ in $\sH$, where $Y$ is grouplike. The class of group completions must satisfy the  following properties.
\begin{enumerate}[(i)]
\item A map $f\colon X \rtarr Y$ in $\sH$, where both $X$ and $Y$ are grouplike, is a group
  completion if and only if it is a weak equivalence.
\item Consider a commutative diagram of maps in $\sH$
\begin{equation}\label{square}
 \xymatrix{
X \ar[r]^f   \ar[d]_{g} & Y \ar[d]^h\\
Z  \ar[r]_{j}  &  W.\\}
\end{equation}
\begin{enumerate}[(a)]
\item If $Z$ and $W$ are grouplike  and $f$ and $j$ are weak equivalences, so that the diagram is a weak equivalence $g\rtarr h$ in the arrow category of $\sH$, then $g$ is a group completion if and only if $h$ is a group completion.
\item If $Z$ and $W$ are grouplike, $g$ and $h$ are group completions, and $f$ is a weak equivalence, then $j$ is a weak equivalence.
\item If $Y$, $Z$, and $W$ are grouplike and $h$ and $j$ are weak equivalences, then $f$ is a group completion if and only if $g$ is a group completion.
\item If $Y$, $Z$, and $W$ are grouplike and  $f$ and $g$ are group completions, then $h$ is a weak equivalence if and only if $j$ is a weak equivalence.
\end{enumerate}
\end{enumerate}
We regard  (ii) as an analog of the two out of three property of weak equivalences. 
\end{defn}

The examples of $\sH$ given in Definitions \ref{Hopf} and \ref{Hopf2} when $\sT$ is the category of based spaces should help motivate this abstract  definition. Given $\sH$, we now bring a monad $\bC$ into the picture.  

\begin{defn}\label{screwy2}  Let $\bC$ be a monad whose forgetful functor $\bU_{\bC}\colon \bC[\sT] \rtarr \sT$ factors as $\bU \com \bV$, where $\bV$ is a forgetful functor $\bC[\sT]\rtarr \sH$. 

\begin{enumerate}[(a)] 
\item Say that a $\bC$-algebra $Y$ is grouplike if $\bV Y$ is grouplike and say that a map $f$ of $\bC$-algebras is a group completion if $\bV f$ is a group completion in $\sH$.
\item Define a group completion functor  $\bG$ to be a functor $\bG\colon \bC[\sT] \rtarr \sH_{gp}$ together with a natural group completion  $g\colon \bV Y \rtarr \bG Y$ in $\sH$ and thus $\bU_{\bC}Y \rtarr \bU \bG Y$ in $\bC[\sT]$ for $\bC$-algebras $Y$.  \end{enumerate}
\end{defn} 

We later drop the functor $\bU$ from the notation, writing this as $\bU_{\bC}Y \rtarr \bG Y$.  Note that $\bG$ depends only on $\bC$, independent of its relationship to $(\SI,\OM)$. 

\begin{rem} By (i) of \autoref{screwy}, $g$ in (b) is a weak equivalence if $Y$ is grouplike; by (i),  (ii)(c), and (ii)(d)  applied to a naturality diagram, a map $f\colon X\rtarr Y$ of $\bC$-algebras such that $Y$ is grouplike is a group completion if and only if $\bG f$ is a weak equivalence. 
\end{rem}

\begin{rem}  One might expect that group completions satisfy the universal property that if $g\colon X\rtarr Y$ is a group completion and $f\colon X\rtarr Z$ is a map to a grouplike object, then $f$ factors uniquely through $g$.  We do {\em not} assume that here.  A group completion with such a universal property will enter in \autoref{fully}.  
\end{rem}

With these definitions in hand, we return to the context of \autoref{ass1}.

\begin{ass}\label{ass4}  We assume the following properties of  $\sT$.
\begin{enumerate}[(a)]
\item $\sT$ comes with a category $\sH$ with the properties specified in \autoref{screwy}, and the forgetful functor $\bU_{\bC}\colon \bC[\sT] \rtarr \sT$ factors through a forgetful functor $\bC[\sT]\rtarr \sH$. 
\item $\OM_{\bC} Z$ is a grouplike object of $\bC[\sT]$  for every object $Z$ of $\sS$ and there is a group completion functor  $(\bG,g)$, as specified in \autoref{screwy2}(b).
\end{enumerate}
\end{ass}

\subsection{\autoref{ass5}: the approximation theorem}\label{APPROX}

We place ourselves in the context of Assumptions \ref{ass1} and \ref{ass2}, except that we ignore all references to simplicial objects in this section, and we also assume \autoref{ass4}.  The following assumption,\footnote{We call this assumption a theorem both because the ``approximation theorem'' has a long history and because, in contrast with the other assumptions, all of which make sense in greater generality, this assumption requires a theorem specific to the context at hand.}  is by far our most substantial one.    It says that the monad $\bC$ is a homotopical approximation of the monad $\OM\SI$, and we will comment on its proof when we specialize our framework to examples.

\begin{ass}\label{ass5} [Approximation theorem] For an object $X\in \sT$, the map  $$\al\colon  \bC X \rtarr \OM_{\bC}\SI X$$ defined in \autoref{alpha} is a group completion.  It is therefore a weak equivalence when $\bC X$ is grouplike.
\end{ass}

We will explain  how to go from this result to \autoref{recprin} in \autoref{pfrecprin}, and we will  explain how to go from \autoref{recprin} to \autoref{adjequiv} in \autoref{pfadjequiv}.

\begin{rem} We remark that $\bC X$  is very often not grouplike.  In our topological examples, it  is grouplike only when  $X$ is connected.  The only role of the group completion functor $\bG$ of \autoref{ass4} is to serve as a tool that  will enable us to deduce the recognition principle from the approximation theorem.
\end{rem}

\section{Proofs and variant versions of Theorems \ref{recprin} and \ref{adjequiv}}\label{BARCON}

\subsection{The original version of the recognition principle}\label{REC}

For a monad $(\bC,\mu,\et)$, a $\bC$-functor $(\SI,\be)$ and a $\bC$-algebra $(Y,\tha)$, we have a monadic bar construction  $B(\SI,\bC, Y)$, namely the realization of the evident simplicial object in $\sS$ with  $q$-simplices  $\SI \bC^q Y$; its faces and degeneracies are given by $\be$, $\mu$, 
$\tha$ and $\et$. The following definition has been implicit throughout.

\begin{defn}\label{Edefn}   For $Y\in \bC[\sT]$, define $\overline Y = B(\bC,\bC,Y)$ and $\bE Y = \SI_{\bC} \overline Y$.  
We give the functor $Y\mapsto \overline Y$ the name $\mathrm{Bar}$. 
\end{defn}

A standard extra degeneracy argument \cite[Proposition 9.8]{MayGeo}, using (ii) of  \autoref{ass2} and  \autoref{triv}, proves the following result. 

\begin{lem}\label{extra}   For $Y\in \bC[\sT]$,  the evident maps  $\bC^{q+1} Y \rtarr Y$ induce a natural map $\ze\colon \bB Y  \rtarr Y$ of $\bC$-algebras.   Viewed as a map in $\sT$,  it is a homotopy equivalence with homotopy inverse  $\nu\colon Y \rtarr \bB Y$ induced by the maps  $\et\colon Y \rtarr \bC^{q+1} Y$ .
\end{lem}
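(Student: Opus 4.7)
The plan is to build $\ze$ as the realization of the canonical augmentation of $B_*(\bC,\bC,Y)$ via \autoref{triv}, to build $\nu$ from iterated units $\et$, and then to prove they are homotopy inverses in $\sT$ by the classical extra degeneracy argument. Explicitly, $B_q(\bC,\bC,Y) = \bC^{q+1} Y$ has face maps $d_0,\dots,d_{q-1}$ given by $\mu$ at successive positions, $d_q$ given by $\tha$ on the last factor, and degeneracies inserting $\et$. Associativity $\tha \circ \mu = \tha \circ \bC\tha$ gives $\tha \circ d_0 = \tha \circ d_1 \colon B_1 \rtarr Y$, so \autoref{triv} produces $\ze \colon \bB Y \rtarr Y$ in $\sT$. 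That $\ze$ is a map of $\bC$-algebras follows from the observation that $B_*(\bC,\bC,Y)$ is levelwise free over $\bC$ with action $\mu$ on the outermost factor, so that by (iv) of \autoref{ass6} the realization $\bB Y$ inherits a $\bC$-algebra structure, and the simplicial augmentation is a $\bC$-algebra map at each level by associativity.

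Next, I construct $\nu$. The $(q+1)$-fold composite $\et^{q+1} \colon Y \rtarr \bC^{q+1} Y$ is well-defined by naturality of $\et$, and the unit axioms $\mu \circ \bC\et = \mu \circ \et\bC = \id$ together with $\tha \circ \et = \id$ imply that these composites assemble into a simplicial map from the constant simplicial object at $Y$ to $B_*(\bC,\bC,Y)$. Realizing gives $\nu \colon Y \rtarr \bB Y$, and $\ze \circ \nu = \id_Y$ holds since at simplicial level $0$ the composite is $\tha \circ \et = \id$.

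For the reverse homotopy $\nu \circ \ze \simeq \id_{\bB Y}$, the standard extra degeneracy argument applies. Define $s_{-1}^q \colon \bC^{q+1} Y \rtarr \bC^{q+2} Y$ to be $\et_{\bC^{q+1} Y}$, prepending $\et$ on the left. Then $d_0 \circ s_{-1} = \id$ by the unit axiom, while naturality of $\et$ gives the commutation relations $d_i \circ s_{-1} = s_{-1} \circ d_{i-1}$ and $s_j \circ s_{-1} = s_{-1} \circ s_{j-1}$ for $i, j \geq 1$. These identities assemble into a combinatorial simplicial homotopy (in the sense of \cite[Definition 5.1]{Mayss}) from the identity of $B_*(\bC,\bC,Y)$ to the composite of its augmentation with the simplicial lift of $\nu$. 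By (ii) of \autoref{ass6}, realization carries this to a homotopy $\nu \circ \ze \simeq \id_{\bB Y}$ in $\sT$.

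The argument is essentially identical to \cite[Proposition 9.8]{MayGeo}; there is no substantive obstacle, only the routine bookkeeping to match the face conventions $(d_0 = \mu, \, d_q = \bC^q \tha)$ against the convention $s_{-1} = \et_{\bC^{q+1} Y}$, and the observation that assumption (ii) of \autoref{ass6} is precisely what is needed to transfer the combinatorial simplicial homotopy to a topological homotopy in $\sT$ after realization.
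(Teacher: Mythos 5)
Your proof is correct and follows essentially the same route as the paper, which simply invokes the standard extra degeneracy argument of \cite[Proposition 9.8]{MayGeo} together with \autoref{triv} (for the augmentation $\ze$) and part (ii) of \autoref{ass6} (to pass the simplicial homotopy through realization). The details you spell out — the augmentation via $\tha\circ d_0=\tha\circ d_1$, the simplicial lift of $\nu$ from iterated units, and the extra degeneracy $s_{-1}=\et$ in the first variable (exactly as noted in the remark following \autoref{extra2}) — are precisely the intended argument.
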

\begin{rem} We emphasize that $\et$ and $\nu$ are only maps in $\sT$, not in $\bC[\sT]$.
\end{rem}

Specializing  to the $\SI$ of \autoref{ass1},  \autoref{formalBPQ} gives isomorphisms 
$$  \SI \bC^q Y \iso \SI_{\bC}\bC^{q+1} Y.$$
This gives an isomorphism of simplicial objects in $\sS$, \autoref{formalBPQ1} and \autoref{formalBPQ2} implying  the compatibility of zeroth faces. Passing to realizations, using the commutation of $\SI_{\bC}$ with realization, this gives the following result.\footnote{This is \cite[(8.6)]{Rant1}, but it was not given sufficient emphasis there.}

\begin{lem}\label{oldkey} For $Y\in \bC[\sT]$,  $\bE Y = \SI_{\bC}\overline{Y}$ is naturally isomorphic to  $B(\SI,\bC,Y)$.
\end{lem}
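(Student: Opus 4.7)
The plan is to build the claimed isomorphism in two stages: first a levelwise identification of the underlying simplicial objects, then passage to realization.

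First, I would unwind the two simplicial objects. The $q$-simplices of $B(\bC,\bC,Y)_\bullet$ are $\bC^{q+1}Y = \bF_{\bC}(\bC^q Y)$, viewed as a free $\bC$-algebra, while the $q$-simplices of $B(\SI,\bC,Y)_\bullet$ are $\SI \bC^q Y$ in $\sS$. By the untwisting isomorphism of \autoref{formalBPQ}, applying $\SI_{\bC}$ at level $q$ yields a natural isomorphism $\SI_{\bC}\bF_{\bC}(\bC^q Y) \iso \SI \bC^q Y$, giving a candidate levelwise component of the proposed simplicial isomorphism.

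Next, I would verify that these level isomorphisms commute with faces and degeneracies. The degeneracies insert $\et$ at various interior positions, and the faces $d_i$ for $1 \leq i \leq q-1$ apply $\mu$ to interior adjacent pairs of $\bC$'s, while $d_q$ applies the action $\tha$ on $Y$. None of these operations touches the outermost $\bC$, so each is $\bF_{\bC}$ applied to a morphism in $\sT$; naturality of the untwisting isomorphism then matches it with the corresponding operator in $B(\SI,\bC,Y)_\bullet$. The only subtle point is the zeroth face: in $B(\bC,\bC,Y)_\bullet$, $d_0 = \mu$ on the leftmost two copies of $\bC$, whereas in $B(\SI,\bC,Y)_\bullet$, $d_0 = \be$ is the $\bC$-functor action. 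That these agree is precisely the content of the second sentence of \autoref{compat}, which records that $\SI_{\bC}\mu$ equals $\be$ under the isomorphism $\SI_{\bC}(\bC\bC X) \iso \SI\bC X$. This is the one step where a substantive observation is required rather than formal naturality, and it is the main obstacle overcome before the lemma is stated.

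Finally, I would pass to realization. Since $B(\bC,\bC,Y)_\bullet$ is a simplicial $\bC$-algebra and realization commutes with $\SI_{\bC}$ by the last assertion of \autoref{ass6}(iv), the desired natural isomorphism is the composite
$$\SI_{\bC}\overline{Y} = \SI_{\bC}|B(\bC,\bC,Y)_\bullet| \iso |\SI_{\bC}B(\bC,\bC,Y)_\bullet| \iso |B(\SI,\bC,Y)_\bullet| = B(\SI,\bC,Y).$$
Naturality in $Y$ follows from naturality at each stage. Everything apart from the $d_0$ identification is routine bookkeeping with the bar construction; that key identification was arranged in advance by \autoref{compat}, so the proof is short.
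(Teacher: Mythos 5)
Your proof is correct and follows essentially the same route as the paper: the levelwise untwisting isomorphisms $\SI_{\bC}\bC^{q+1}Y \iso \SI\bC^q Y$ from \autoref{formalBPQ}, compatibility of the zeroth faces via \autoref{compat}, and commutation of $\SI_{\bC}$ with realization from \autoref{ass6}(iv). You have merely spelled out the face/degeneracy bookkeeping that the paper leaves implicit.
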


By Assumptions \ref{ass2}(iii), together with our implicit restriction to good objects, this implies the following result.

\begin{lem} \label{Eweak}The functor $\bE$ preserves weak equivalences.
\end{lem}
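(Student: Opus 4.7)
The plan is to reduce the claim to what \autoref{ass6} already gives us, using the identification from \autoref{oldkey}. By that lemma, $\bE Y$ is naturally isomorphic to $B(\SI,\bC,Y) = |\SI\bC^{\bullet+1}Y|$, so it suffices to show that a weak equivalence $f\colon Y\to Y'$ in $\bC[\sT]$ induces a weak equivalence on realizations of the simplicial objects $\SI\bC^{\bullet+1}Y$ and $\SI\bC^{\bullet+1}Y'$.

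First I would verify that $f$ induces a levelwise weak equivalence. By definition of weak equivalence in $\bC[\sT]$ (\autoref{ass2}), $\bU_{\bC} f$ is a weak equivalence in $\sT$. Applying the monad $\bC$ iteratively, and then $\SI$, and using that both $\bC$ and $\SI$ preserve weak equivalences (between good objects) by \autoref{ass2}, each map $\SI\bC^{q+1}f$ is a weak equivalence in $\sS$.

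Next I would invoke \autoref{ass6}(iii), which says that realization on $s\sS$ preserves weak equivalences between Reedy cofibrant simplicial objects. Here the implicit restriction to good objects (see \autoref{bspt1} and \autoref{bspt2}) is doing real work: for good $Y$, the simplicial object $\SI\bC^{\bullet+1}Y$ is Reedy cofibrant (equivalently, ``proper'' in the sense of \cite{MayGeo}), essentially because its degeneracies are induced by the unit $\et$ of $\bC$, which is a cofibration in the good-object setting. Granted Reedy cofibrancy on both sides, \autoref{ass6}(iii) then converts the levelwise weak equivalence into a weak equivalence
\[
|\SI\bC^{\bullet+1}f|\colon |\SI\bC^{\bullet+1}Y|\;\longrightarrow\;|\SI\bC^{\bullet+1}Y'|
\]
in $\sS$, and transporting along the natural isomorphism of \autoref{oldkey} gives that $\bE f$ is a weak equivalence.

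The main obstacle, and the only real content of the argument, is the Reedy cofibrancy (properness) of the bar simplicial object. This is precisely the reason we restrict throughout to good objects in the sense of \autoref{bspt1} and \autoref{bspt2}: the standing convention ensures that degeneracies $\et\colon \bC^q Y\to \bC^{q+1} Y$ are well-behaved cofibrations, so that \autoref{ass6}(iii) applies directly. Everything else is just functoriality, preservation properties from \autoref{ass2}, and the identification in \autoref{oldkey}.
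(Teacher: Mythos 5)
Your argument is correct and is exactly the paper's intended proof: the paper deduces \autoref{Eweak} from \autoref{oldkey} by citing \autoref{ass2} (so that $\SI$ and $\bC$ give levelwise weak equivalences on the bar construction) and \autoref{ass6}(iii) together with the implicit restriction to good objects, which is precisely where the Reedy cofibrancy you highlight enters. Your writeup just makes explicit the steps the paper leaves to the reader.
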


By taking $Y=\bC X$, we have a symmetric variant of \autoref{extra}.  

\begin{lem}\label{extra2}   For $X\in \sT$,  the evident maps  $\SI \bC^{q+1} X \rtarr \SI X$ induce a natural map $\ze\colon \bE \bC X  \rtarr \SI X$ in $\sS$, and it is a homotopy equivalence with homotopy inverse  $\nu\colon \SI X \rtarr \bE{\bC X}$ induced by the maps  $\SI \bC^q\et \colon \SI \bC^q X \rtarr \SI \bC^{q+1} X$.
\end{lem}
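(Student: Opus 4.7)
The plan is to use Lemma \ref{oldkey} to identify $\bE\bC X$ with $B(\SI,\bC,\bC X) = |K_\bullet|$, where $K_q = \SI\bC^{q+1}X$, and then to invoke the same extra degeneracy template as in Lemma \ref{extra}, now with $\SI$ replacing the outermost $\bC$ and with $Y = \bC X$ in the rightmost slot.

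I construct $\ze$ via Lemma \ref{triv} applied to $\be\colon \SI\bC X = K_0 \rtarr \SI X$. In the bar construction $B(\SI,\bC,\bC X)$ the face $d_0$ on $K_1 = \SI\bC^2 X$ is $\be$ and $d_1$ is $\SI\mu$ (the action of $\bC$ on the free algebra $\bC X$ being $\mu$), so the required equality $\be\circ d_0 = \be\circ d_1$ is exactly the associativity axiom $\be\circ\be = \be\circ\SI\mu$ for the $\bC$-functor $\SI$. The homotopy inverse $\nu$ is defined as the composite of $\SI\et\colon \SI X \rtarr K_0$ with the canonical inclusion into the realization, and then $\ze\circ\nu = \id_{\SI X}$ reduces immediately to the $\bC$-functor unit axiom $\be\circ \SI\et = \id$.

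For $\nu\circ\ze\simeq \id$, I introduce extra degeneracies $s_{-1}\colon K_{q-1}\rtarr K_q$ given by $\SI\bC^q\et$ (the unit inserted in the innermost slot, enlarging $\bC^q X$ to $\bC^q\bC X = \bC^{q+1}X$); these are precisely the maps listed in the statement. A routine verification, using only the unit and associativity axioms of $\bC$ together with the $\bC$-functor axioms for $\SI$, shows that $s_{-1}$ satisfies the extra degeneracy identities against the faces and degeneracies of $K_\bullet$ and against the augmentation $\be$. The same argument of \cite[Proposition 9.8]{MayGeo} that proves Lemma \ref{extra} now applies verbatim, producing a simplicial homotopy at the level of $K_\bullet$ and, via Assumption \ref{ass6}(ii), a topological homotopy $\nu\circ\ze\simeq \id_{\bE\bC X}$ on realizations. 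The only potential obstacle is bookkeeping: correctly matching $\et$, $\mu$, $\be$, and $\SI\mu$ to their roles in the simplicial identities, but no substantive new input is required beyond what already drives Lemma \ref{extra}.
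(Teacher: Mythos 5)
Your proof is correct and is essentially the paper's own argument: the paper disposes of this lemma by observing that it is the symmetric variant of \autoref{extra}, with the extra degeneracy now in the third variable of $B(\SI,\bC,\bC X)$ given by exactly the maps $\SI\bC^q\et$ you use, the augmentation coming from \autoref{triv} and the $\bC$-functor axioms, and the simplicial homotopy realized via \autoref{ass6}(ii). The only cosmetic slip is your label $s_{-1}$: since $\et$ is inserted in the innermost slot, this extra degeneracy pairs with the last face (via $\mu\circ\bC\et=\id$) rather than with $d_0$, so it is the extra degeneracy on the right; the contraction and the resulting homotopy equivalence are unaffected.
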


\begin{rem}\label{subtle} \autoref{extra2} also applies with $\bC$ replaced by $\GA$, but \autoref{oldkey} does not. We shall use this fact implicitly in \autoref{pfadjequiv}.
\end{rem}

Observe that the extra degeneracy is seen in the first variable of $B(-,-,-)$, using $\eta$, in \autoref{extra} but is seen in the third variable, using the $\bC^{q}\et$, in \autoref{extra2}.

\autoref{oldkey} recovers the description of $\bE$ first given topologically in \cite{MayGeo}.  The functor $\bE$ and others of the same general form specialize to give all known variants of the operadic recognition principle and several new ones.  Interpreted model categorically, as in \autoref{MODEL}, we can view  $\bE$ as the composite of the well-structured cofibrant approximation $\mathrm{Bar}$ and the monadically coequalized functor $\SI_{\bC}$.  The equivalent  original form $B(\SI,\bC,Y)$ implicitly views this as a homotopically well-behaved derived approximation of the monadic tensor product $\SI\otimes_{\bC} Y$. 

We shall explain three versions of the recognition principle. The first, proven in this subsection,  is a generalized form of the original version in \cite{MayGeo}.  We show in the next subsection how that version implies the more conceptual version stated in \autoref{recprin}.  Finally, we give a new fully conceptual version in subsection \ref{fully}.

With Assumptions \ref{ass1}, \ref{ass2}, and \ref{ass3},  the following diagram of maps of $\bC$-algebras compares the identity functor with $\OM_{\bC} \bE$. 
\begin{equation}\label{master}
\xymatrix@1{  Y & \bB Y \ar[l]_-{\ze} \ar[r]^-{B\al} &   B(\OM_{\bC}\SI,\bC, Y) \ar[r]^-{\ga} &  \OM_{\bC}B(\SI,\bC,Y).\\} 
\end{equation}
Here $\ze$ is a homotopy equivalence in $\sT$ and $\ga$ is a weak equivalence.   The map $B\al$ is 
$$B(\al,\id,\id)\colon B(\bC, \bC, Y) \rtarr B(\OM_{\bC}\SI,\bC, Y).$$  
It  is a map of $\bC$-algebras since $\al$ is a map of  $\bC$-algebras.  Generalizing \cite[Theorem 2.3(ii)]{MayPerm}, we explain how the approximation theorem, \autoref{ass5}, implies that $B\al$ is a group completion.  As said before, this is the only place where the group completion functor $\bG$  of \autoref{screwy} and \autoref{ass4} comes into play.

\begin{prop} If  our Assumptions hold, then $B\al$ is a group completion.
\end{prop}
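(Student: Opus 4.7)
The plan is to verify the property simplicially, using \autoref{ass5}, and then pass through realization. Observe that $B\al = B(\al,\id,\id)$ is the realization of a simplicial map in $\bC[\sT]$ whose $q$-simplices are $\al\colon \bC(\bC^q Y) \rtarr \OM_{\bC}\SI(\bC^q Y)$; by \autoref{ass5} each of these is a group completion, and by \autoref{ass4}(b) each target $\OM_{\bC}\SI \bC^q Y$ is grouplike.

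Apply the group completion functor $(\bG,g)$ of \autoref{ass4}(b) to obtain at each simplicial level $q$ the naturality square
$$
\xymatrix{
\bC^{q+1} Y \ar[r]^-{\al} \ar[d]_-{g} & \OM_{\bC}\SI \bC^q Y \ar[d]^-{g}\\
\bG \bC^{q+1} Y \ar[r]_-{\bG \al} & \bG \OM_{\bC}\SI \bC^q Y.
}
$$
The right-hand $g$ has both source and target grouplike, so by clause (i) of \autoref{screwy} it is a weak equivalence. Since $\al$ is a group completion by \autoref{ass5}, the left-hand $g$ is a group completion by construction, and all of $\OM_{\bC}\SI \bC^q Y$, $\bG \bC^{q+1}Y$, $\bG \OM_{\bC}\SI \bC^q Y$ are grouplike, clause (ii)(d) of \autoref{screwy} then forces $\bG \al$ to be a weak equivalence at every simplicial level. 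The monadic bar constructions here are Reedy cofibrant through the degeneracies induced by $\et$, so \autoref{ass6}(iii) promotes this levelwise weak equivalence to a weak equivalence after realization.

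To finish, I would identify the realized bottom row with $\bG(B\al)$ using that $\bG$ commutes, up to natural weak equivalence, with realization of the Reedy cofibrant levelwise grouplike simplicial $\bC$-algebras appearing here; another application of clause (i) of \autoref{screwy}, after noting that $B(\OM_{\bC}\SI,\bC,Y)$ is grouplike, then concludes that $B\al$ is a group completion. The main obstacle is this last identification: the axioms \autoref{screwy}--\autoref{ass6} do not by themselves prescribe how $\bG$ and $|-|$ interact. In every example treated in the paper $\bG$ is given by an explicit model (a plus-construction, a telescope, or an $\OM B$-type functor) for which the compatibility is transparent, so the step causes no trouble in practice, and the argument is the abstract analog of \cite[Theorem 2.3(ii)]{MayPerm}.
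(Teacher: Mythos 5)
Your levelwise analysis is exactly the paper's first step (each $\al\colon \bC^{q+1}Y\rtarr \OM_{\bC}\SI\bC^qY$ is a group completion by \autoref{ass5}, hence $\bG\al$ is a levelwise weak equivalence by (i), (ii)(d) of \autoref{screwy}, and realization preserves this). The genuine gap is the one you yourself flag at the end: to conclude you need to identify the realization of the levelwise-$\bG$ bar construction with $\bG$ applied to $B(\OM_{\bC}\SI,\bC,Y)$, i.e.\ you need $\bG$ to commute with $|-|$ up to weak equivalence compatibly with the maps $g$. Nothing in Assumptions \ref{ass4} and \ref{ass6} provides this — $\bG$ is only an abstract group completion functor — and appealing to the explicit models of $\bG$ in the examples defeats the purpose of the proposition, which is supposed to follow from the axioms alone. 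So as written the argument does not close.

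The paper's proof is engineered precisely to avoid that commutation: it never applies $\bG$ to a realized object. Instead it forms the bar constructions $B(\bG\bC,\bC,Y)$ and $B(\bG\OM_{\bC}\SI,\bC,Y)$ ($\bG$ inserted levelwise in the first variable only, which needs nothing beyond functoriality of $\bG$), and compares them to the abstract grouplike object $\bG Y$ via the extra-degeneracy equivalence $\ze\colon B(\bG\bC,\bC,Y)\rtarr \bG Y$, exactly as in \autoref{extra}. Then (ii)(a) of \autoref{screwy}, applied to the square with horizontal maps $\ze$, shows that $Bg\colon B(\bC,\bC,Y)\rtarr B(\bG\bC,\bC,Y)$ is a group completion; your levelwise argument shows $B(\bG\al)$ and the right-hand $Bg$ are weak equivalences after realization; and (ii)(c), applied to the realized square (whose relevant corners are grouplike because grouplike objects are closed under weak equivalence, using $\ga$ and \autoref{ass4}(b) for $B(\OM_{\bC}\SI,\bC,Y)$), yields that $B\al$ is a group completion. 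If you replace your final identification step by this comparison through $\ze$ and clauses (ii)(a), (ii)(c), your proof becomes the paper's.
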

\begin{proof}
Let $Y$ be a $\bC$-algebra and consider the following commutative diagram.
\[  \xymatrix{
Y\ar[d]_{g} & B(\bC,\bC,Y) \ar[r]^-{B\al} \ar[d]_{Bg} \ar[l]_-{\ze} & B(\OM_{\bC} \SI,\bC,Y) \ar[d]^{Bg} \\
\bG Y & B(\bG\bC,\bC,Y) \ar[r]^-{B(\bG\al)} \ar[l]_-{\ze} & B(\bG\OM_{\bC}\SI,\bC,Y)  \\}
\]
By (ii)(a) of \autoref{screwy}, since $g$ is a group completion and the $\ze$ are weak equivalences, the middle vertical arrow  $Bg = B(g,\id,\id)$ is a group completion.  By (ii)(d) of \autoref{screwy}, $\bG\al\colon  \bG\bC X \rtarr \bG\OM_{\bC}\SI X$ is a weak equivalence for any $X\in \sT$ since 
$\al\colon \bC X \rtarr \OM_{\bC}\SI X$ and $g\colon \bC X \rtarr \bG\bC X$ are group completions and 
$g\colon \OM_{\bC}\SI X \rtarr \bG \OM_{\bC}\SI X$ is a weak equivalence
by (i) of \autoref{screwy}.   Therefore $B(\bG \al)=B(\bG\al,\id,\id)$ is the realization of a levelwise weak equivalence and is thus a weak equivalence.  Similarly, the right vertical arrow $Bg$ is the realization of a levelwise weak equivalence and is thus a weak equivalence.  By  (ii)(c) of  \autoref{screwy}, it follows that $B\al$ is a group completion.
\end{proof}

Putting things together,  the following generalization of the original version of the recognition principle follows from our Assumptions.  In view of the long history of applications, we view it as the most calculationally accessible version, although, as we shall see, we now view it as somewhat lacking in conceptual clarity and elegance.

\begin{thm}[Recognition principle, original version]\label{recprin0} The following statements about the maps of $\bC$-spaces in \autoref{master} hold.
\begin{enumerate}[(i)]
\item The map $\ze$ is a homotopy equivalence with a natural homotopy inverse $\nu$.
\item The map ${B\al}$ is a group completion and is therefore a weak equivalence if the $\bC$-algebra $Y$ is grouplike.
\item The map $\ga$ is a weak equivalence.
\end{enumerate}
Therefore the composite 
\begin{equation}\label{comp}
\ga\com B\al \com \nu\colon Y \rtarr \OM_\bC \bE Y
\end{equation}
is a  group completion and is thus a weak equivalence if $Y$ is grouplike. 
\end{thm}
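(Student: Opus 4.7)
The proof is essentially an assembly of results already at hand, so my plan is to verify each of (i), (ii), (iii) by direct citation and then to concatenate them carefully, paying attention to the fact that $\nu$ is only a map in $\sT$ rather than in $\bC[\sT]$.

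First, statement (i) is just \autoref{extra}, which produces $\ze\colon \bB Y \to Y$ as a map of $\bC$-algebras that is a homotopy equivalence in $\sT$ with homotopy inverse $\nu$. Statement (ii) was established in the proposition immediately preceding the theorem, whose proof combined the approximation theorem (\autoref{ass5}) with the two-out-of-three-style properties of group completions in \autoref{screwy} applied to the bar construction $B\al = B(\al,\id,\id)$. For statement (iii), I would apply \autoref{ass6}(vii) to the simplicial object $K_\ast = \SI\bC^\bullet Y$ in $\sS$: by \autoref{ass3} the functor $\SI$ takes values in $\OM$-connective objects, so $K_\ast$ is levelwise $\OM$-connective; the natural map $\ga\colon|\OM K_\ast|\to \OM|K_\ast|$ is then a map of $\bC$-algebras (by \autoref{ass6}(vi)) and a weak equivalence.

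For the final claim about the composite $\ga\com B\al\com\nu$, the subtlety is that $\nu$ lives only in $\sT$, so the composite need not be a map of $\bC$-algebras; however, group completion is defined through the forgetful functor to $\sH$, so this is the right setting. The plan is to first observe that $B(\OM_{\bC}\SI,\bC,Y)$ is grouplike: the right-hand object $\OM_{\bC}B(\SI,\bC,Y)$ is of the form $\OM_{\bC}Z$ and hence grouplike by \autoref{ass4}(b), and since $\ga$ is a weak equivalence, the clause in \autoref{screwy} that grouplike status is weak-equivalence invariant promotes $B(\OM_{\bC}\SI,\bC,Y)$ to a grouplike object. Then I form the commutative square
\[
\xymatrix{
Y \ar[r]^-{\nu} \ar[d]_{B\al\com \nu} & \bB Y \ar[d]^{B\al}\\
B(\OM_{\bC}\SI,\bC,Y)\ar@{=}[r] & B(\OM_{\bC}\SI,\bC,Y)
}
\]
whose horizontal arrows are weak equivalences and whose targets are grouplike; by (ii)(a) of \autoref{screwy}, $B\al\com\nu$ is a group completion iff $B\al$ is, so part (ii) gives that $B\al\com\nu$ is a group completion.

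Finally, I would compose with $\ga$: since $\ga$ is a weak equivalence between grouplike objects, property (i) of \autoref{screwy} makes it a group completion, and one more application of (ii)(a) (to the square with top row $B\al\com\nu$, bottom row $\ga\com B\al\com\nu$, and right vertical map $\ga$) upgrades $\ga\com B\al\com \nu$ to a group completion. If $Y$ is grouplike then property (i) of \autoref{screwy} converts this group completion into a weak equivalence. The main obstacle—such as it is—lies in tracking precisely when an object is grouplike so that (ii)(a) can be invoked; the approximation theorem hidden inside (ii) is of course the genuinely substantive ingredient, but that work has already been done.
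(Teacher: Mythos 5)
Your proposal is correct and takes essentially the same route as the paper, which gives no separate proof but assembles the theorem exactly as you do: (i) is \autoref{extra}, (ii) is the proposition proved immediately before the theorem via the approximation theorem and the group completion functor, (iii) is \autoref{ass6}(vii) applied to the levelwise $\OM$-connective simplicial object $B_*(\SI,\bC,Y)$ (using \autoref{ass3}), and the concluding claim about $\ga\com B\al\com\nu$ is extracted from the axioms of \autoref{screwy} just as you indicate, with your first square correctly invoking (ii)(a). The one small slip is the final citation: for the square as you describe it (the two composites as the horizontal maps, $\ga$ as the right vertical), the hypotheses of (ii)(a) fail since the horizontal maps are not weak equivalences; you should either transpose the square so that $\id_Y$ and $\ga$ are the horizontal weak equivalences into the grouplike objects $B(\OM_{\bC}\SI,\bC,Y)$ and $\OM_{\bC}B(\SI,\bC,Y)$ and then apply (ii)(a), or keep your square and cite (ii)(c) instead.
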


\subsection{A more conceptual version of the recognition principle}\label{pfrecprin}
To go from the original version to the more conceptual version stated in  \autoref{recprin}, we use the following conceptual reinterpretation
of the composite $\ga\com B\al$.

\begin{prop}\label{Yeah}
The composite $\ga\com B\al$ in \autoref{master} can be identified with the unit 
$$\et_{\bC}\colon  \bB Y \rtarr \OM_{\bC}\SI_{\bC} \bB Y$$
of the adjunction $(\SI_{\bC},\OM_{\bC})$.
\end{prop}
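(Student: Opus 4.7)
The plan is to unfold $\et_{\bC}$ via \autoref{eye} and reduce to a levelwise computation on the simplicial $\bC$-algebra $\bC^{*+1}Y$ whose realization is $\bB Y$. First, I would match up the codomains: by \autoref{ass6}(iv), \autoref{formalBPQ}, and \autoref{compat}, $\SI_{\bC}\bB Y\cong |\SI_{\bC}\bC^{*+1}Y|\cong |\SI\bC^{*}Y|=B(\SI,\bC,Y)$, so $\OM_{\bC}\SI_{\bC}\bB Y$ is identified with $\OM B(\SI,\bC,Y)$, which is the codomain of $\ga\circ B\al$.

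Next I would verify a general compatibility of $\et_{\bC}$ with realization: for any simplicial object $L_*$ in $\bC[\sT]$, $\et_{\bC,|L_*|}=\ga\circ |\et_{\bC,L_*}|$ under the identification $\SI_{\bC}|L_*|\cong |\SI_{\bC}L_*|$. The analogous fact for the plain unit, $\et_{|L_*|}=\ga\circ |\et_{L_*}|$, is a direct adjunction check: by \autoref{ass6}(vi), $\ga$ is defined as the adjoint of $|\epz|\colon |\SI\OM K_*|\rtarr |K_*|$, and consequently the adjunct of each side under $(\SI,\OM)$ reduces to the identity on $\SI|L_*|$. Combining this with the description $\et_{\bC}=\OM q\circ\et$ from \autoref{eye} and invoking naturality of $\ga$ to commute $\OM q$ past $\ga$ (using that the coequalizer $q$ at $|L_*|$ is the realization of the levelwise coequalizers) promotes the statement to $\et_{\bC}$.

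Applied to $L_*=\bC^{*+1}Y$, the proof then reduces to a levelwise computation on each free $\bC$-algebra $\bC^{q+1}Y=\bC(\bC^q Y)$. But \autoref{eye2}, encoded in the commutative diagram \autoref{forcon}, says precisely that under the canonical isomorphism $\OM_{\bC}\SI_{\bC}\bC X\cong \OM\SI X$, the unit $\et_{\bC}$ on the free $\bC$-algebra $\bC X$ coincides with $\al\colon \bC X\rtarr \OM\SI X$. Taking $X=\bC^q Y$ identifies the levelwise map $\et_{\bC,\bC^{*+1}Y}$ with $\al_{\bC^* Y}$, whose realization is by construction $B\al=B(\al,\id,\id)$. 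Thus $\et_{\bC,\bB Y}=\ga\circ |\et_{\bC,\bC^{*+1}Y}|=\ga\circ B\al$, as required.

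The main obstacle will be organizing the several canonical identifications and naturality squares without confusion; the two substantive ingredients, namely \autoref{eye2} and the realization compatibility of $\et$, are already in hand, and what remains is routine diagram chasing.
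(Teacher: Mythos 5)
Your proposal is correct and is essentially the paper's own argument: both hinge on \autoref{eye2} (identifying $\al$ with $\et_{\bC}$ on free $\bC$-algebras, applied levelwise to $B_*(\bC,\bC,Y)$) together with an adjoint computation using the description of $\ga$ from \autoref{ass6}(vi) and the triangle identity $\epz_{\bC}\com\SI_{\bC}\et_{\bC}=\id$. The paper simply packages your two steps---the realization-compatibility of $\et_{\bC}$ and the levelwise identification of $\et_{\bC}$ with $\al$---into a single commutative diagram whose bottom row is the triangle-identity composite $B\epz_{\bC}\com B(\SI_{\bC}\et_{\bC})=\id$, so the difference is only one of organization, not of substance.
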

\begin{proof}  Recalling \autoref{ass3}(i), for $X_*\in s\sS$ we  let $\ga_{\bC}\colon \bT{\OM_{\bC}}_* X_* \rtarr \OM_{\bC} \bT X_*$ denote $\ga$ regarded as a map of  $\bC$-algebras and we let $\tilde{\ga}_{\bC}\colon \SI_{\bC} \bT {\OM_{\bC}}_*X_* \rtarr \bT X_*$ denote its adjoint.  A small  diagram chase from Assumptions \ref{ass2}(iv) and   \ref{ass3}(i) shows that $\tilde{\ga}_{\bC}$ is the composite displayed in the middle square of the following diagram, in which the vertical isomorphisms are given by a natural isomorphism displayed in \autoref{ass2}(iv) and by  \autoref{formalBPQ}.   The isomorphism at the top right is the isomorphism  $\bE Y \iso \SI_{\bC} \overline{Y}$ given by \autoref{oldkey}, and we use that to identify the target of $\tilde{\ga}_{\bC}$ with $\SI_{\bC} \overline{Y}$ and thus to  identify the target of $\ga_{\bC}$ with $\OM_{\bC} \SI_{\bC} \overline{Y}$.

\[ \xymatrix{
\SI_{\bC}B(\bC, \bC, Y)  \ar[r]^-{\SI_{\bC}B\al} \ar[d]_{\iso}  &  \SI_{\bC}B(\OM_{\bC}\SI, \bC, Y) \ar[r]^{\tilde{\ga}_{\bC}}    \ar[d]^{\iso} 
&  B(\SI, \bC, Y)  \ar[r]^-{\iso} &   \SI_{\bC} B(\bC,\bC, Y)  \\
B(\SI_{\bC}\bC, \bC, Y) \ar[r]_-{B(\SI_{\bC}\et_{\bC})}  & B(\SI_{\bC}\OM_{\bC}\SI_{\bC}\bC, \bC, Y) 
\ar[r]_-{B\epz_{\bC}} & B(\SI_{\bC}\bC, \bC, Y)  \ar[u]_{\iso}   \ar[r]_-{=}&  B(\SI_{\bC}\bC, \bC, Y)  \ar[u]_{\iso}   \\}
\]
The left square uses \autoref{eye2} to identify $\al$ as $\et_{\bC}$ and the right square commutes by inspection of definitions.
The bottom composite is the identity, by a triangle identity.  The rightmost vertical isomorphism is the inverse of the leftmost one, so that isomorphism conjugates the bottom composite to the top composite.  Therefore the top composite is also the identity.   Since $\et_{\bC}$ is the adjoint of the identity, this gives the conclusion.
\end{proof}
Diagrammatically, we can embed \autoref{master} in the following commutative diagram.
\begin{equation}\label{forcon2}
\xymatrix{
 X & \ar[l]_-{\ze}  \overline X  \ar[r]^-{B\al} \ar[d] _{\et_{\bC}} &   B(\OM_{\bC}\SI,\bC, X) \ar[d]^{\ga} \\
    &  \OM_{\bC} \SI_{\bC} \overline X  \ar[r]^-{\iso} &  \OM_{\bC} B(\SI,\bC, X) \\}
\end{equation}

It follows that the recognition principle in the form stated in \autoref{recprin} is just a restatement of \autoref{recprin0}.  We restate the main point to emphasize the analogy with the monadicity theorem.

\begin{thm}\label{recprin1}  For a $\bC$-algebra $Y$, $\et_{\bC}\colon \overline Y \rtarr \OM_{\bC}\SI_{\bC} \overline Y$ is a group completion.  If $Y$ is grouplike, then $\et_{\bC}$ is a weak equivalence. In particular,  $\et_{\bC}\colon \overline{\OM_{\bC} Z}  \rtarr  \OM_{\bC}\SI_{\bC} \overline{\OM_{\bC} Z}$ is a weak equivalence for any  $Z\in \sS$. 
\end{thm}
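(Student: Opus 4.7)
The plan is to deduce \autoref{recprin1} by combining \autoref{recprin0} with the identification of maps proven in \autoref{Yeah}. First I would invoke \autoref{Yeah} to replace $\et_{\bC}\colon \overline{Y}\rtarr \OM_{\bC}\SI_{\bC}\overline{Y}$ with the composite $\ga\com B\al$ appearing in the master diagram \autoref{master}, so that the assertion becomes the claim that this composite is a group completion of $\bC$-algebras.

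Next I would use parts (ii) and (iii) of \autoref{recprin0}: the map $B\al\colon \overline{Y}\rtarr B(\OM_{\bC}\SI,\bC,Y)$ is a group completion, and $\ga\colon B(\OM_{\bC}\SI,\bC,Y)\rtarr \OM_{\bC}B(\SI,\bC,Y)=\OM_{\bC}\bE Y$ is a weak equivalence. To combine these into ``$\ga\com B\al$ is a group completion'', I would apply axiom (ii)(a) of \autoref{screwy} to the commutative square with identity on the left column and $\ga$ on the right column; the hypotheses are met because $\OM_{\bC}\bE Y$ is grouplike by \autoref{ass4}(b), and hence $B(\OM_{\bC}\SI,\bC,Y)$ is grouplike by the closure of grouplike objects under weak equivalence built into \autoref{screwy}. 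This proves the first assertion.

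For the second assertion, I would invoke \autoref{extra}: the map $\ze\colon \overline{Y}\rtarr Y$ is a homotopy equivalence in $\sT$, and since weakly equivalent objects to grouplike objects are themselves grouplike, if $Y$ is grouplike then so is $\overline{Y}$. A group completion between grouplike objects is a weak equivalence by axiom (i) of \autoref{screwy}, so $\et_{\bC}$ is a weak equivalence. The ``in particular'' statement then follows by specializing $Y = \OM_{\bC}Z$, which is grouplike by \autoref{ass4}(b).

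The main ``obstacle'' is really only careful bookkeeping about which axiom of \autoref{screwy} licenses each inference and verifying that each relevant object is grouplike; there is no substantive new content beyond what was already established in \autoref{recprin0} and \autoref{Yeah}. Indeed, as the authors remark just before the statement, this theorem is essentially a reformulation of the original recognition principle phrased so as to highlight the analogy with the Beck monadicity theorem and the universal property of the unit $\et_{\GA}$ of \autoref{wowwow}.
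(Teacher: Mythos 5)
Your proposal is correct and follows essentially the same route as the paper, which deduces \autoref{recprin1} by combining \autoref{recprin0} with the identification of $\et_{\bC}$ with $\ga\com B\al$ in \autoref{Yeah}; the paper simply calls the theorem a restatement and leaves implicit the bookkeeping with \autoref{screwy} and \autoref{ass4} that you spell out. Your explicit use of (ii)(a) and (i) of \autoref{screwy}, together with closure of grouplike objects under weak equivalence, is exactly the intended justification.
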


Parenthetically, note that we can take $Y=\bC X$ for $X\in\sT$ in \autoref{master}.  In this case, we find that the approximation theorem can be seen as a special case of the recognition principle.  This can be viewed as  a generalized version of the Barratt--Priddy--Quillen theorem in topology that identifies the space  $QS^0 = \OM^{\infty}\SI^{\infty}S^0$ whose homotopy groups are the stable homotopy groups of spheres as a group completion of $\bC S^0$ for an $E_{\infty}$ operad $\bC$.  Up to homotopy, $\bC S^0$ is the disjoint union of a basepoint and the classifying spaces $B{\SI_n}$ of symmetric groups for $n\geq 1$.

\begin{thm}\label{BPQthm}[Barratt--Priddy--Quillen]  For $X\in \sT$, the following diagram of maps of $\bC$-algebras commutes.
\begin{equation}\label{master2}
\xymatrix@1{  \overline{\bC X} \ar[d]_{\ze} \ar[r]^-{B\al} &   B(\OM_{\bC}\SI,\bC, \bC X) \ar[r]^-{\ga_{\bC}} \ar[d]^{\ze} &  \OM_{\bC} \bE \bC  X  \ar[d]^-{\OM_{\bC}\ze}\\
\bC X \ar[r]_{\al} & \OM_{\bC} \SI X  \ar[r]_{=}   &  \OM_{\bC} \SI X \\}
\end{equation}
The vertical arrows are given by Lemmas \ref{extra} and \ref{extra2} and are weak equivalences.
\end{thm}

\subsection{$\OM$-connectivity and the homotopical monadicity theorem}\label{HMC}\label{pfadjequiv}
As discussed in \cite{KMZ0}, we regard \autoref{adjequiv} as a homotopical monadicity theorem.  We first recall the definition and some results about $\OM$-connectivity from Section 2.3 of \cite{KMZ0}.   These have nothing to do with the monad $\bC$.

\begin{defn} \label{LAcon} For $Z\in \GA[\sS]$,  \autoref{triv} gives a natural map 
$$  \xi\colon  B(\SI, \GA, \OM Z) \rtarr Z $$ 
in $\sS$. We say that $Z$ is $\OM$-connective if $\xi$ is a weak equivalence.  We let $\sS_{c}$ denote the full subcategory of $\OM$-connective objects in $\sS$.
\end{defn}

\begin{lem}\label{LAcon2} For any object $X$ of $\sT$, $\SI X$ is an $\OM$-connective object of $\sS$.  
\end{lem}

\begin{lem}\label{LAcon3}  Realization on $\sS$ takes levelwise $\OM$-connective objects to $\OM$-connective objects.  Thus, by \autoref{LAcon2},  $B(\SI, \GA, \OM Z)$ is itself $\OM$-connective for all $Z\in \sS$. 
\end{lem}

\begin{prop}\label{LAcon4} If $Z$ and $Z'$ are $\OM$-connective, then  a map $f\colon Z \rtarr Z'$ in $\sS$ is a weak equivalence if and only if 
$\OM f\colon \OM Z \rtarr \OM Z'$ is a weak equivalence in $\sT$. 
\end{prop}

\begin{lem}\label{LAcon5}   $Z$ is $\OM$-connective if and only if
 $$\ga\colon B(\GA,\GA,\OM Z)  \rtarr   \OM B(\SI,\GA,\OM Z)$$
 is a weak equivalence.  
 \end{lem}
 This shows the necessity of an assumption concerning $\OM$-connectivity in \autoref{ass2}(v) when $\bC = \GA$.  Now return to a $\bC$ that satisfies \autoref{ass3}. Lemmas  \ref{LAcon2} and \ref{LAcon3} and application of $B(\id, \al, \id)$ imply the following result.

\begin{lem}\label{Econ} If $Y$ is grouplike, then $\bE Y = B(\SI,\bC,Y)$ is $\OM$-connective.
\end{lem}

With these preliminaries on the target category of \autoref{adjequiv}, we adopt the following definition of its source category.

\begin{defn} Let $\bC[\sT]_{gr}\subset \bC[\sT]$  be the full subcategory of grouplike $\bC$-algebras.  \end{defn}

We need the following $\bC$-variant of the map $\xi$ in \autoref{LAcon}.  We view it as a derived variant of $\epz_{\bC}$ and use it in a kind of triangle identity to go from the recognition principle to the homotopical monadicity theorem.

\begin{prop}\label{NewAssE}   For  $Z\in \sS$, there is a natural map 
$$\xi\colon\bE \OM_{\bC} Z  = B(\SI,\bC, \OM_{\bC} Z)  \iso   \SI_{\bC} \overline{\OM_{\bC}Z}  \rtarr Z$$ 
such that the weak equivalence $\ze\colon \overline{\OM_{\bC}Z} \rtarr \OM_{\bC}Z$  factors as the composite
\begin{equation}\label{factor}
\xymatrix@1{
\overline{\OM_{\bC}Z} \ar[rr]^-{\et_{\bC}}  & & \OM_{\bC} \SI_{\bC} \overline{\OM_{\bC}Z} 
\ar[rr]^-{\OM_{\bC} \xi} & & \OM_{\bC}Z. \\}
\end{equation}
\end{prop}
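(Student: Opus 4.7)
\medskip\noindent\textbf{Proof proposal.}
The plan is to define $\xi$ via the $(\SI_{\bC},\OM_{\bC})$ adjunction and then read off the factorization from a triangle identity. First, by \autoref{extra}, the natural map $\ze\colon \overline{\OM_{\bC}Z}\rtarr \OM_{\bC}Z$ is actually a map of $\bC$-algebras (it is its underlying map in $\sT$ that is a homotopy equivalence, but the construction is already a $\bC$-algebra map). Since $(\SI_{\bC},\OM_{\bC})$ is an adjunction by \autoref{keyadj}, I take $\xi\colon \SI_{\bC}\overline{\OM_{\bC}Z} \rtarr Z$ to be the map in $\sS$ adjoint to $\ze$ under this adjunction. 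Explicitly, using the formula \autoref{adj3} applied to $(\SI_{\bC},\OM_{\bC})$,
$$\xi \; := \; \epz_{\bC} \circ \SI_{\bC}\ze \colon \SI_{\bC}\overline{\OM_{\bC}Z} \rtarr \SI_{\bC}\OM_{\bC}Z \rtarr Z.$$
Naturality in $Z$ is automatic, since $\ze$, $\SI_{\bC}$, and $\epz_{\bC}$ are all natural.

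The factorization $\ze = \OM_{\bC}\xi \circ \et_{\bC}$ is then immediate. Indeed, by \autoref{adj2} applied to $(\SI_{\bC},\OM_{\bC})$, the map adjoint to $\xi$ is $\et_{\bC}^{*}\circ \OM_{\bC}(\xi) = \OM_{\bC}\xi \circ \et_{\bC}$, and this is $\ze$ by construction. Unpacking this directly using naturality of $\et_{\bC}$ and the triangle identity $\OM_{\bC}\epz_{\bC}\circ \et_{\bC} = \id$ for $(\SI_{\bC},\OM_{\bC})$ gives
$$\OM_{\bC}\xi \circ \et_{\bC} \;=\; \OM_{\bC}\epz_{\bC}\circ \OM_{\bC}\SI_{\bC}\ze \circ \et_{\bC} \;=\; \OM_{\bC}\epz_{\bC}\circ \et_{\bC}\circ \ze \;=\; \ze,$$
as required.

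There is no real obstacle; the content is entirely adjunction-formal. The $\OM$-connectivity hypothesis on $Z$ plays no role in the construction or in establishing the factorization. It is kept as a placeholder for the subsequent use: combined with the $\OM$-connectivity of the source from \autoref{Econn}, the recognition principle \autoref{recprin1} (giving that $\et_{\bC}$ on $\overline{\OM_{\bC}Z}$ is a weak equivalence, since $\OM_{\bC}Z$ is grouplike by \autoref{ass4}), and \autoref{ass3}, the factorization $\OM_{\bC}\xi \circ \et_{\bC} = \ze$ will force $\xi$ to be a weak equivalence, thereby supplying the counit data for the adjoint equivalence claimed in \autoref{adjequiv}. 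As a sanity check one may alternatively describe $\xi$ simplicially: \autoref{oldkey} identifies $\bE\OM_{\bC}Z$ with $B(\SI,\bC,\OM Z)$, and \autoref{alphamon}(iii)--(iv) together with naturality of $\epz$ show that $\epz\colon \SI\OM Z \rtarr Z$ coequalizes the two face maps $\be$ and $\SI\vartheta$ out of $\SI\bC\OM Z$; \autoref{triv} then yields $\xi$ on realization, and a routine diagram chase matches it with the adjunction-theoretic definition above.
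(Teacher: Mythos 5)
Your proof is correct, and it takes a genuinely different (and more economical) route than the paper. The paper constructs $\xi$ simplicially: it applies \autoref{triv} to $B_*(\SI,\bC,\OM_{\bC}Z)$ and the counit $\epz\colon \SI\OM Z\rtarr Z$, using that $\epz$ coequalizes $\be$ and $\SI\vartheta$ (exactly your closing "sanity check"), identifies $\bE\OM_{\bC}Z$ with $B(\SI,\bC,\OM_{\bC}Z)$ via \autoref{oldkey}, and then deduces the factorization from the diagram \autoref{master} together with \autoref{Yeah}, which identifies $\ga\com B\al$ with $\et_{\bC}$. You instead observe that $\ze$ is a map of $\bC$-algebras (\autoref{extra}), define $\xi = \epz_{\bC}\com\SI_{\bC}\ze$ as its adjoint under $(\SI_{\bC},\OM_{\bC})$ (\autoref{keyadj}), and then the factorization $\ze = \OM_{\bC}\xi\com\et_{\bC}$ is literally the statement that the adjunction bijections \autoref{adj2} and \autoref{adj3} are inverse to one another; your direct verification via naturality of $\et_{\bC}$ and the triangle identity is also correct. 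This buys a purely formal proof that bypasses \autoref{Yeah} and the realization machinery, and it makes transparent your correct observation that $\OM$-connectivity of $Z$ plays no role here (it is only used later, in \autoref{equiv}, to upgrade $\OM_{\bC}\xi$ being a weak equivalence to $\xi$ being one). What the paper's construction buys is the explicit bar-level description of $\xi$, in analogy with $\epz_{\GA}$ in the proof of the Beck monadicity theorem. Note, moreover, that the two constructions necessarily agree: the factorization says precisely that the adjoint of $\xi$ is $\ze$, and since \autoref{adj2} is a bijection, any map satisfying \autoref{factor} equals $\epz_{\bC}\com\SI_{\bC}\ze$; so your $\xi$ is the paper's $\xi$, and either description suffices for the application in \autoref{equiv}.
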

\begin{proof}  We apply \autoref{triv} to  the simplicial object  $B_*(\SI,\bC,\OM_{\bC} Z)$ and  the counit $\epz\colon \SI \OM Z \rtarr Z$ of  the adjunction $(\SI,\OM)$ to obtain the promised map 
$$ \xi\colon \bE \OM_{\bC} Z = B(\SI,\bC,\OM_{\bC} Z)\rtarr Z.$$
\autoref{triv} applies since  an easy formal argument as in  \cite[Examples 9.5]{MayGeo} shows that $\epz$ coequalizes the pair
\[  \xymatrix@1{
\SI {\bC}  \OM Z  \ar@<.7ex>[rr]^-{\be}  \ar@<-.7ex>[rr]_-{\SI\vartheta}  &&  \SI   \OM Z. \\}  
\]  
\autoref{triv} also gives the diagonal arrow in the following commutative diagram of maps of $\bC$-algebras.
\[ \xymatrix{
B(\bC,\bC, \OM_{\bC} Z) \ar[d]_{\ze}   \ar[rr]^-{B\al} & & B(\OM_{\bC}\SI,\bC, \OM_{\bC} Z) \ar[d]^{\ga}  \ar[dll] \\
 \OM_{\bC} Z  & & \OM_{\bC}  B(\SI, \bC, \OM_{\bC} Z) \ar[ll]^-{\OM_{\bC} \xi}  \\}
 \]
By \autoref{Yeah}, this gives the stated factorization of $\ze$.  
\end{proof}

The following slightly more precise version of \autoref{adjequiv} follows directly.  

\begin{thm}\label{equiv}   The functors $\bE \colon \bC[\sT]_{gr} \rtarr \sS_c$ and $\OM_{\bC}\colon \sS_c \rtarr \bC[\sT]_{gr}$ induce inverse equivalences on passage to homotopy categories.  Thus every $\OM$-connective object of $\sS$ is weakly equivalent to $\SI_{\bC}\overline{Y}$ for some 
grouplike $\bC$-algebra $Y$.
\end{thm}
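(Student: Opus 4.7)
The plan is to verify directly that the two functors, restricted as stated, pass to mutually inverse equivalences of homotopy categories. First, the functors are well-defined between the stated subcategories: $\bE$ sends $\bC[\sT]_{gr}$ into $\sS_c$ by \autoref{Econn}, while $\OM_{\bC}$ sends $\sS_c$ into $\bC[\sT]_{gr}$ by \autoref{ass4}(b). They preserve weak equivalences by \autoref{Eweak} and \autoref{ass2} respectively, so they descend to functors between the homotopy categories.

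For $Y \in \bC[\sT]_{gr}$, I would exhibit the natural isomorphism $Y \iso \OM_{\bC}\bE Y$ in the homotopy category via the zigzag
\[
Y \xleftarrow{\ze} \overline{Y} \xrightarrow{\et_{\bC}} \OM_{\bC}\SI_{\bC}\overline{Y} = \OM_{\bC}\bE Y.
\]
The map $\ze$ is a weak equivalence by \autoref{extra}, and $\et_{\bC}$ is a weak equivalence because $Y$ is grouplike, by \autoref{recprin1}. Thus both arrows invert in the homotopy category, giving the required natural isomorphism.

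For $Z \in \sS_c$, I would use the map $\xi\colon \bE\OM_{\bC}Z \rtarr Z$ constructed in \autoref{NewAssE}. Since $\OM_{\bC}Z$ is grouplike by \autoref{ass4}(b), \autoref{recprin1} gives that $\et_{\bC}\colon \overline{\OM_{\bC}Z} \rtarr \OM_{\bC}\bE\OM_{\bC}Z$ is a weak equivalence. Combined with the factorization $\ze = \OM_{\bC}\xi \com \et_{\bC}$ from \autoref{NewAssE} and the fact that $\ze$ is itself a weak equivalence, two-out-of-three forces $\OM_{\bC}\xi$, and hence $\OM\xi$, to be a weak equivalence in $\sT$.

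The main obstacle is promoting this to $\xi$ itself being a weak equivalence in $\sS$. Both $\bE\OM_{\bC}Z$ and $Z$ lie in $\sS_c$ (the former by \autoref{Econn}), so this is precisely what \autoref{ass3} delivers: on $\OM$-connective objects, $\OM$ reflects weak equivalences. This is the one step where the $\OM$-connectivity restriction on $\sS$ is indispensable; without it, $\xi$ need not be a weak equivalence even after $\OM$ is applied. The final claim that every $\OM$-connective $Z$ is weakly equivalent to $\SI_{\bC}\overline{Y}$ for some grouplike $Y$ then follows by taking $Y = \OM_{\bC}Z$, and naturality of both transformations is automatic from the naturality of $\ze$, $\et_{\bC}$, and $\xi$.
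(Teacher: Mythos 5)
Your argument is correct and follows essentially the same route as the paper: the recognition principle (via the zigzag $\ze$, $\et_{\bC}$) handles the composite $\OM_{\bC}\bE$ on grouplike algebras, and the factorization $\ze = \OM_{\bC}\xi\com\et_{\bC}$ of \autoref{NewAssE} together with the two-out-of-three property and \autoref{ass3} on $\OM$-connective objects shows $\xi$ is a weak equivalence. Your added remarks on well-definedness and weak-equivalence preservation are just a more explicit spelling-out of what the paper leaves implicit.
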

\begin{proof}
For $Y \in  \bC[\sT]_{gr}$,  the recognition principle gives a natural weak equivalence between $Y$ and $\OM_{\bC}\bE Y$.  For $Z\in \sS_c$, consider \autoref{factor}.  The map $\et_{\bC}$  is a weak equivalence since $\overline{\OM_{\bC} Z}$ is grouplike. Since the composite $\ze$  is also a weak equivalence,  so is $\OM_{\bC} \xi$. Since $Z$ and $B(\SI, \bC, \OM_{\bC} Z)$ are $\OM$-connective and  $\OM \xi$ is a weak equivalence, $\xi$ is a weak equivalence.
\end{proof}

\subsection{A fully conceptual reinterpretation of Theorems \ref{recprin} and \ref{adjequiv}}\label{fully}

Here we reformulate our main theorems in terms of a new conceptual notion of grouplike objects and group completions.   We would not know how to prove these results starting just from the new definitions.   We can view the notions given in Definitions \ref{screwy} and \ref{screwy2} and built into our work via \autoref{ass4} as scaffolding leading up to the entirely formal context that is given in this subsection. Here we give restatements of the main results that make no reference  to \autoref{ass4}, although that is used in their proofs.  One can imagine alternative proofs.

We first define $\bC$-grouplike objects and $\bC$-group completions.  The following definition would appear more intuitive if we replaced 
$\overline Y$ with $Y$, but we use $\overline{Y}$ to get around the fact that we do not know that the functor  $\SI_{\bC}$ preserves weak equivalences, as pointed out in \autoref{ouch}.

\begin{notn} We abbreviate notation by writing $\GA_{\bC} = \OM_{\bC}\SI_{\bC}$ in this section.  Recall that, by  \autoref{ass4},  
$\GA_{\bC}$ takes values in grouplike $\bC$-algebras.
\end{notn}

\begin{defn}\label{fgplike}  A $\bC$-algebra $Y$ is {\em $\bC$-grouplike} if  
$\et_{\bC}\colon   \overline {Y} \rtarr \GA_{\bC} \overline{Y}$ is  a weak equivalence.  A {\em $\bC$-group completion}  of a $\bC$-algebra $X$ is a map  $f\colon  X \rtarr Y$ of $\bC$-algebras such that $Y$ is $\bC$-grouplike and $\GA_{\bC} \overline{f}$ is a weak equivalence, where 
$$\overline{f} = B(\id,\id,f) \colon  B(\bC,\bC,X) \rtarr B(\bC,\bC, Y).$$
\end{defn}

The idea is illuminated by the following naturality diagram.

\begin{equation}\label{univ}
\xymatrix{
X\ar[d]_-f  & \overline{X} \ar[l]_{\ze}\ar[d]^{\overline{f}} \ar[r]^-{\et_{\bC}} & \GA_{\bC} \overline{X}  \ar[d]^{\GA_{\bC}\overline{f}}\\
Y &  \overline{Y} \ar[l]^{\ze} \ar[r]_-{\et_{\bC}} &  \GA_{\bC} \overline{Y}.\\} 
\end{equation}
Since the maps $\ze$ are equivalences, $f$ is a $\bC$-group completion if and only if $\overline{f}$ is, and we can interpret the diagram as showing that a $\bC$-group completion $f$ is weakly equivalent to $\et_{\bC}\colon \overline{X} \rtarr \GA_{\bC}\overline{X}$.

Retaining our general Assumptions, including \autoref{ass4}, we have the following comparison theorem.

\begin{thm}\label{new1}  A $\bC$-algebra $Y$ is grouplike if and only if it is $\bC$-grouplike.   A map $f\colon  X\rtarr Y$ of $\bC$-algebras is a group completion if and only if it is a $\bC$-group completion.
\end{thm}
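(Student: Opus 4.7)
The plan rests on three universal facts valid for every $\bC$-algebra $Y$: (a) $\ze\colon \overline{Y} \rtarr Y$ is a weak equivalence of $\bC$-algebras by \autoref{extra}; (b) $\et_{\bC}\colon \overline{Y} \rtarr \GA_{\bC}\overline{Y}$ is always a group completion by \autoref{recprin1}; and (c) $\GA_{\bC}\overline{Y}$ is always grouplike by \autoref{ass4}(b). Granted these three, the rest is a manipulation of the axioms \autoref{screwy}(i) and (ii)(a)--(d), together with the closure of grouplike objects under weak equivalences.

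I would prove the first equivalence first, since the second uses it. For the forward direction, if $Y$ is grouplike then (a) and closure of grouplikeness under weak equivalences make $\overline{Y}$ grouplike, and then (b) combined with \autoref{screwy}(i) forces the group completion $\et_{\bC}$ to be a weak equivalence, so $Y$ is $\bC$-grouplike. Conversely, if $\et_{\bC}$ is a weak equivalence, then (c) and closure make $\overline{Y}$ grouplike, and (a) propagates this to $Y$.

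For the second equivalence, fix $f\colon X \rtarr Y$ and analyze the two squares of the naturality diagram \autoref{univ}. By the first equivalence I may assume $Y$, and hence $\overline{Y}$, is grouplike (this is part of both hypotheses). The left square of \autoref{univ} has horizontal weak equivalences $\ze$ and grouplike targets $Y$ and $\overline{Y}$, so \autoref{screwy}(ii)(a) gives that $f$ is a group completion iff $\overline{f}$ is. The right square has three grouplike corners $\GA_{\bC}\overline{X}, \overline{Y}, \GA_{\bC}\overline{Y}$, both horizontal arrows are $\et_{\bC}$'s and hence group completions, and the lower $\et_{\bC}$ is in addition a weak equivalence (since $\overline{Y}$ is grouplike). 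Axiom \autoref{screwy}(ii)(c) then says $\overline{f}$ is a group completion iff the upper $\et_{\bC}$ is a weak equivalence, and axiom \autoref{screwy}(ii)(d) says the upper $\et_{\bC}$ is a weak equivalence iff $\GA_{\bC}\overline{f}$ is. Chaining these equivalences: $f$ is a group completion iff $\GA_{\bC}\overline{f}$ is a weak equivalence, which combined with the already-established $Y$ grouplike iff $Y$ $\bC$-grouplike is exactly the definition of a $\bC$-group completion.

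I do not expect any deep obstacle; the argument is pure axiomatic unfolding. The one genuine care point is bookkeeping: both directions of the second equivalence require $\overline{Y}$ to be grouplike in order to apply (ii)(c) and (ii)(d), so one must handle the first equivalence before the second and invoke it symmetrically. A secondary point to verify in passing is that the weak equivalence $\ze$ in $\sT$ descends to a weak equivalence in $\sH$ via the factorization $\bU_{\bC}=\bU\com\bV$ of \autoref{screwy2}; this is immediate from the definition of weak equivalences in $\sH$.
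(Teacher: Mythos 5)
Your proof of the first equivalence is correct and is the paper's own argument. For the second equivalence you also follow the paper's route (diagram \autoref{univ}, axiom (ii)(a) on the left square, (ii)(c) and (ii)(d) on the right square), but the two pivotal invocations are misstated, and as stated they are false. Label the right square as in \autoref{square}: $f=$ upper $\et_{\bC}$, $g=\overline{f}$, $h=\GA_{\bC}\overline{f}$, $j=$ lower $\et_{\bC}$. Axiom (ii)(c) does \emph{not} say ``$\overline{f}$ is a group completion iff the upper $\et_{\bC}$ is a weak equivalence''; it says that \emph{if} $h=\GA_{\bC}\overline{f}$ and $j$ are weak equivalences, then the upper $\et_{\bC}$ is a group completion iff $\overline{f}$ is. Note that since the upper $\et_{\bC}\colon \overline{X}\rtarr \GA_{\bC}\overline{X}$ is always a group completion with grouplike target, ``upper $\et_{\bC}$ is a weak equivalence'' is equivalent to ``$X$ is grouplike''; so your first link literally asserts that $f$ is a group completion iff $X$ is grouplike, which is false (take $f$ to be $\et_{\bC}$ itself, or any group completion of a non-grouplike $X$). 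Similarly (ii)(d) does not compare the upper $\et_{\bC}$ with $\GA_{\bC}\overline{f}$: its hypotheses are that $f$ and $g$ (here the upper $\et_{\bC}$ and $\overline{f}$) are group completions, and its conclusion compares $h=\GA_{\bC}\overline{f}$ with the \emph{lower} $\et_{\bC}$. Your two false intermediate biconditionals happen to compose to the true endpoint statement, but the chain as written is not a proof.

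The repair is to argue the two implications separately, each with its own hypothesis, which is exactly what the paper does. If $f$ is a $\bC$-group completion, then $\GA_{\bC}\overline{f}$ is a weak equivalence; since the lower $\et_{\bC}$ is also a weak equivalence ($\overline{Y}$ being grouplike) and the corners $\GA_{\bC}\overline{X}$, $\overline{Y}$, $\GA_{\bC}\overline{Y}$ are grouplike, (ii)(c) gives that $\overline{f}$ is a group completion because the upper $\et_{\bC}$ is one (\autoref{recprin1}); then (ii)(a) on the left square transfers this to $f$. Conversely, if $f$, hence $\overline{f}$, is a group completion, then the upper $\et_{\bC}$ and $\overline{f}$ are both group completions, so (ii)(d) gives that $\GA_{\bC}\overline{f}$ is a weak equivalence if and only if the lower $\et_{\bC}$ is, and the latter holds; thus $f$ is a $\bC$-group completion. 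With the right-square step reorganized this way, your write-up coincides with the paper's proof.
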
  
\begin{proof}  If $Y$ is $\bC$-grouplike, then $\et_{\bC}\colon  \overline{Y} \rtarr \GA_{\bC} \overline{Y}$  is a weak equivalence to a grouplike object, hence $\overline{Y}$ and therefore $Y$ are grouplike.  The second statement of \autoref{recprin1} says conversely that a grouplike $\bC$-algebra is $\bC$-grouplike.  

For the second statement of the theorem, consider a map $f\colon  X\rtarr Y$ of $\bC$-algebras, where $Y$ and therefore $\overline{Y}$ is grouplike.  By the first statement, 
$\overline{Y}$ is also $\bC$-grouplike.  Now consider the naturality diagram  \autoref{univ}.
The maps $\et_{\bC}$ are group completions by  \autoref{recprin1}, hence the bottom map $\et_{\bC}$ is a weak equivalence. 
If $f$ is a $\bC$-group completion, then  $\GA_{\bC}\overline{f}$ is a weak equivalence and $\overline{f}$ is a group completion by
(ii)(c) of \autoref{screwy}.   Conversely, if $f$ is a group completion, then  $\GA_{\bC}\overline{f}$ is a weak equivalence by (ii)(d) of \autoref{screwy}, and that means that $f$ is a $\bC$-group completion.
\end{proof}

\begin{cor}\label{Cgpgp} Taking $\sH$ and $\sT$ in \autoref{screwy} to both be $\bC[\sT]$, $\bC$-grouplike objects and $\bC$-group completions satisfy the conditions required of a category of grouplike objects and group completions in the category $\bC[\sT]$.
\end{cor}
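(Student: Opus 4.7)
The plan is to deduce the corollary directly from \autoref{new1} together with the observation that the ``pullback'' notion of grouplike objects and group completions on $\bC[\sT]$ supplied by \autoref{ass4} and \autoref{screwy2}(a) already satisfies the axioms of \autoref{screwy}. Taking $\sH$ and $\sT$ in \autoref{screwy} both to be $\bC[\sT]$, the functor $\bU$ there is simply the identity, and a weak equivalence in $\bC[\sT]$ is defined by $\bU_{\bC}$ as in \autoref{ass2}.

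First I would verify that the notion of grouplike objects and group completions coming from \autoref{screwy2}(a), via the forgetful functor $\bV\colon \bC[\sT]\rtarr \sH$, already satisfies each axiom of \autoref{screwy}. Since weak equivalences in $\bC[\sT]$ and in $\sH$ are both created by their underlying maps in the original $\sT$, the functor $\bV$ both preserves and reflects weak equivalences; and by definition $\bV$ preserves and reflects the predicates ``grouplike'' and ``group completion''. Every clause of \autoref{screwy}---the closure of grouplikes under weak equivalence, axiom (i), and the four sub-axioms (a)--(d) of (ii)---is phrased entirely in terms of these three decorated notions. Applying $\bV$ to the relevant data (objects, maps, commutative squares) in $\bC[\sT]$, each clause reduces to the corresponding clause in $\sH$, which holds by \autoref{ass4}(a).

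Next I would invoke \autoref{new1}: a $\bC$-algebra is $\bC$-grouplike if and only if it is grouplike, and a map of $\bC$-algebras is a $\bC$-group completion if and only if it is a group completion. Substituting these identifications into the axiom verification above transports every clause from the grouplike/group-completion notions to the $\bC$-grouplike/$\bC$-group-completion notions. This gives the conclusion: the $\bC$-grouplike objects form a full subcategory of $\bC[\sT]$ meeting the requirements for $\sH_{gp}$, and the $\bC$-group completions form a class of maps satisfying axioms (i) and (ii) of \autoref{screwy}, with $\sH=\sT=\bC[\sT]$.

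There is no real obstacle here; the corollary is a formal consequence of \autoref{new1}. The only point requiring mild care is to confirm that each of the four sub-clauses (a)--(d) of axiom (ii) translates cleanly. But since each sub-clause is phrased only in terms of weak equivalences, grouplike objects, and group completions, and since \autoref{new1} identifies the latter two notions with their $\bC$-decorated versions, the translation is immediate in each case.
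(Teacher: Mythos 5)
Your proposal is correct and follows essentially the same route as the paper: the paper's one-line proof likewise inherits the axioms of the abstract definition from $\sH$ via the factorization of $\bU_{\bC}$ through $\sH$, with the identification of $\bC$-grouplike objects and $\bC$-group completions with the pullback notions supplied by the comparison theorem immediately preceding the corollary. Your version merely spells out the transfer clause by clause, which matches the paper's intent.
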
  
\begin{proof}  Restricting attention to $\bC[\sT]$-algebras and  their maps, the required properties of $\bC[\sT]$-grouplike objects 
and $\bC[\sT]$-group completions are inherited from those of grouplike objects and group completions in $\sH$.
\end{proof}
%\begin{proof}  Since our new definitions are entirely given in terms of weak equivalences in $\bC[\sT]$, this is easily checked from the %definitions, using the natural weak equivalence $\ze$,  the naturality of $\et_{\bC}$, and the $2$ out of $3$ property of weak %equivalences.  \end{proof}

\autoref{Cgpgp} is conceptually satisfactory, especially if one thinks of the functor $\mathrm{Bar}$ as giving a highly structured cofibrant approximation, as in \autoref{MODEL} below,   but a direct proof without use of \autoref{new1}  seems problematic.   
In our new language, Theorems \ref{recprin} and \ref{adjequiv} can be restated as follows.

\begin{thm}\label{recprinnew}  There is a functor $\mathrm{Bar} \colon \bC[\sT] \rtarr \bC[\sT]$, written $Y\mapsto \overline{Y}$, and a natural equivalence  $\ze\colon \overline{Y} \rtarr Y$ such that the unit $\et_{\bC}\colon \overline{Y} \rtarr \OM_{\bC} \SI_{\bC} \overline{Y}$  is a $\bC$-group completion and is therefore an equivalence if $Y$ is $\bC$-grouplike.
\end{thm}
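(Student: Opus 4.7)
The functor $\mathrm{Bar}$ and the natural equivalence $\ze\colon \overline{Y}\rtarr Y$ are already in hand: $\mathrm{Bar}$ was defined by $Y\mapsto B(\bC,\bC,Y)$ in \autoref{Edefn}, and $\ze$ is the natural homotopy equivalence provided by \autoref{extra}. Hence the entire content still to be verified is the assertion that the unit $\et_{\bC}\colon \overline{Y}\rtarr \OM_{\bC}\SI_{\bC}\overline{Y}$ is a $\bC$-group completion in the sense of \autoref{fgplike}, together with the claim that it is a weak equivalence whenever $Y$ is $\bC$-grouplike.

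The plan is to reduce to the previously established \autoref{recprin1} by way of the identification of the classical and $\bC$-theoretic notions of group completion given by \autoref{new1}. Unpacking \autoref{fgplike}, two things must be checked about $\et_{\bC}\colon \overline{Y}\rtarr \GA_{\bC}\overline{Y}$: first, that the target $\GA_{\bC}\overline{Y}$ is a $\bC$-grouplike $\bC$-algebra; and second, that $\GA_{\bC}\overline{\et_{\bC}}$ is a weak equivalence, where $\overline{\et_{\bC}} = B(\id,\id,\et_{\bC})$.

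For the first point, I would observe that $\GA_{\bC}\overline{Y} = \OM_{\bC}Z$ with $Z = \SI_{\bC}\overline{Y}\in \sS$, so \autoref{ass4}(b) ensures that it is grouplike, and then \autoref{new1} promotes this to $\bC$-grouplike. (Equivalently, one can invoke directly the last sentence of \autoref{recprin1}, which says that $\et_{\bC}\colon \overline{\OM_{\bC}Z}\rtarr \GA_{\bC}\overline{\OM_{\bC}Z}$ is a weak equivalence.) For the second, \autoref{recprin1} establishes that $\et_{\bC}$ is a group completion in the classical sense, and \autoref{new1} then upgrades this to the statement that $\et_{\bC}$ is a $\bC$-group completion, which in particular packages the desired weak equivalence $\GA_{\bC}\overline{\et_{\bC}}$.

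The final assertion is immediate from the very definition: to say that $Y$ is $\bC$-grouplike is, by \autoref{fgplike}, to say precisely that $\et_{\bC}\colon \overline{Y}\rtarr \GA_{\bC}\overline{Y}$ is a weak equivalence, so nothing further is required. There is no genuine obstacle in this argument; the essential difficulty has been absorbed into the proof of \autoref{new1}, and the present theorem is obtained by translating \autoref{recprin1} into the new conceptual vocabulary introduced in \autoref{fully}.
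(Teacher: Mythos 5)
Your proposal is correct and follows essentially the same route as the paper: the paper treats \autoref{recprinnew} as a direct restatement of \autoref{recprin1} in the new vocabulary, with all the work absorbed into the comparison theorem \autoref{new1} (plus \autoref{ass4}(b) for grouplikeness of $\OM_{\bC}\SI_{\bC}\overline{Y}$), exactly as you argue. Your explicit unpacking of \autoref{fgplike} and the observation that the final assertion is the definition of $\bC$-grouplike are consistent with the paper's intended (unwritten) proof.
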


\begin{thm}\label{adjequivnew}  $(\SI_{\bC},\OM_{\bC})$ induces an adjoint equivalence from the homotopy category of $\bC$-grouplike $ \bC$-algebras in $\sT$ to the homotopy category of $\OM$-connective objects of $\sS$.
\end{thm}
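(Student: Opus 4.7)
The plan is to reduce \autoref{adjequivnew} to \autoref{equiv}, which has already been established. The first step is to invoke \autoref{new1}: this identifies the full subcategory of $\bC$-grouplike $\bC$-algebras (in the sense of \autoref{fgplike}) with $\bC[\sT]_{gr}$, the full subcategory of grouplike $\bC$-algebras (in the sense of \autoref{screwy2}). Thus the source category of the desired adjoint equivalence agrees on the nose with the source category of the equivalence produced by \autoref{equiv}.

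Next, I would feed this identification into \autoref{equiv}, which tells us that $\bE = \SI_{\bC}\com\mathrm{Bar}$ and $\OM_{\bC}$ induce inverse equivalences between $\mathrm{Ho}(\bC[\sT]_{gr})$ and $\mathrm{Ho}(\sS_c)$. What still has to be checked is that on these homotopy categories $\bE$ may be replaced by $\SI_{\bC}$, so that the adjunction $(\SI_{\bC},\OM_{\bC})$ of \autoref{keyadj} is the one witnessing the equivalence. The natural weak equivalence $\ze\colon \overline{Y}\rtarr Y$ of \autoref{extra} shows that $Y$ and $\overline{Y}$ are canonically isomorphic in $\mathrm{Ho}(\bC[\sT])$; combined with the isomorphism $\bE Y \iso \SI_{\bC}\overline{Y}$ of \autoref{oldkey}, this lets us identify the derived functor of $\SI_{\bC}$ on $\mathrm{Ho}(\bC[\sT]_{gr})$ with $\bE$.

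I expect the subtlety, and the main obstacle, to lie precisely in this last step, because by \autoref{ouch} the functor $\SI_{\bC}$ is not known to preserve weak equivalences, so it does not a priori descend to $\mathrm{Ho}(\bC[\sT]_{gr})$ at all. The remedy is to define the derived functor as $Y\longmapsto \SI_{\bC}\overline{Y}$, which is legitimate because $\mathrm{Bar}$ together with $\SI_{\bC}$ produces $\bE$, and $\bE$ \emph{does} preserve weak equivalences by \autoref{Eweak}. Under this derived identification, the unit $\et_{\bC}\colon \overline Y \rtarr \OM_{\bC}\SI_{\bC}\overline Y$ of $(\SI_{\bC},\OM_{\bC})$ is a weak equivalence on $\bC$-grouplike $Y$ by \autoref{recprinnew}, and \autoref{NewAssE} together with the argument at the end of the proof of \autoref{equiv} shows that $\xi\colon \bE\OM_{\bC}Z \rtarr Z$ is a weak equivalence for $Z\in\sS_c$. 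These provide the unit and counit of the adjoint equivalence on homotopy categories, completing the argument.
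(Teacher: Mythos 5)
Your proposal is correct and follows essentially the same route as the paper: \autoref{adjequivnew} is obtained by using the comparison \autoref{new1} to identify $\bC$-grouplike $\bC$-algebras with grouplike ones and then restating \autoref{equiv} (the precise form of \autoref{adjequiv}), with the left adjoint understood in the derived sense $\bE = \SI_{\bC}\com\mathrm{Bar}$. Your explicit handling of the fact that $\SI_{\bC}$ itself need not preserve weak equivalences (\autoref{ouch}, \autoref{Eweak}) matches the paper's intended reading, so there is nothing to add.
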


While these theorems are {\em proven} using any notions of grouplike objects and group completions in $\sT$ and $\bC[\sT]$, as in Definitions \ref{screwy} and \ref{screwy2} and \autoref{ass4}, they are now {\em stated} entirely in terms of conceptual definitions within the category $\bC[\sT]$.  Moreover, our new definition of a $\bC$-group completion is characterized by a universal property in the relevant homotopy categories, as we now show.

\begin{defn}  Define $\bC[\sT]_{\bC\text{-}gp}$ to be the full subcategory of $\bC$-grouplike objects in $\bC[\sT]$.
\end{defn}

\begin{thm}\label{Univ}  Let $X$ be a $\bC$-algebra and $Y$ be a $\bC$-grouplike $\bC$-algebra.  The map
$$  \et_{\bC}^* \colon  \bC[\sT]_{\bC\text{-}gp}(\GA_{\bC} \overline{X}, Y) \rtarr \bC[\sT](\overline{X},Y)$$
induces a natural isomorphism  on passage to homotopy categories.
\end{thm}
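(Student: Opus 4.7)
The plan is to reduce both Hom-sets in the statement to $[\bE X, \bE Y]_{\mathrm{Ho}\,\sS_c}$ and to check that $\et_{\bC}^*$ corresponds to the identity of that set under these identifications, leveraging the adjoint equivalence of \autoref{equiv}.

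First I would replace $Y$ by $\GA_{\bC}\overline{Y}$ in the homotopy category. Since $Y$ is $\bC$-grouplike, the recognition principle \autoref{recprin1} gives that $\et_{\bC}\colon \overline{Y} \rtarr \GA_{\bC}\overline{Y}$ is a weak equivalence, and combining it with the natural weak equivalence $\ze\colon \overline{Y} \rtarr Y$ from \autoref{extra} produces a zigzag identifying $Y$ with $\GA_{\bC}\overline{Y} = \OM_{\bC}\bE Y$ in $\mathrm{Ho}\,\bC[\sT]$. The object $\GA_{\bC}\overline{X} = \OM_{\bC}\bE X$ is automatically grouplike by \autoref{ass4}, hence $\bC$-grouplike by \autoref{new1}. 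After this reduction, it suffices to show that
\[
\et_{\bC}^*\colon [\GA_{\bC}\overline{X}, \OM_{\bC}\bE Y] \rtarr [\overline{X}, \OM_{\bC}\bE Y]
\]
is a bijection.

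Next I would transport both sides to $\mathrm{Ho}\,\sS_c$ via the adjunction $(\SI_{\bC}, \OM_{\bC})$ of \autoref{keyadj}, with the bar construction serving as a cofibrant replacement on which $\SI_{\bC}$ computes as $\bE$, which preserves weak equivalences and lands in $\sS_c$ by \autoref{Eweak} and \autoref{Econn}. The set-theoretic adjunction $\bC[\sT](\overline{X}, \OM_{\bC}\bE Y) \iso \sS(\bE X, \bE Y)$ descends to a natural isomorphism $[\overline{X}, \OM_{\bC}\bE Y] \iso [\bE X, \bE Y]$. For the source, the same procedure applied through the weak equivalence $\ze\colon \overline{\GA_{\bC}\overline{X}} \rtarr \GA_{\bC}\overline{X}$ yields $[\GA_{\bC}\overline{X}, \OM_{\bC}\bE Y] \iso [\bE(\OM_{\bC}\bE X), \bE Y]$, and \autoref{NewAssE} contributes the weak equivalence $\xi\colon \bE(\OM_{\bC}\bE X) \rtarr \bE X$, available because $\bE X \in \sS_c$, to identify this further with $[\bE X, \bE Y]$.

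The last step is a diagram chase: under the two chains of identifications, $\et_{\bC}^*$ must correspond to the identity on $[\bE X, \bE Y]$. The heart of this chase is the triangle identity $\epz_{\bC}\com \SI_{\bC}\et_{\bC} = \id_{\bE X}$, which realizes $\SI_{\bC}\et_{\bC}$ as a homotopy section of $\epz_{\bC}$ within $\sS_c$; combined with the identification $\al = \et_{\bC}$ from \autoref{forcon} and the naturality of $\ze$ and $\xi$, this forces the two adjoint bijections to agree on the image of $\et_{\bC}^*$. I expect the main obstacle to be the justification that the adjunction $(\SI_{\bC}, \OM_{\bC})$ descends cleanly to the homotopy category in the intermediate step. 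Because $\SI_{\bC}$ does not in general preserve weak equivalences (\autoref{ouch}), one must consistently insert bar constructions before applying it, and use \autoref{ass6} to verify that the homotopy relation transports correctly across the adjunction. Once this scaffolding is in place, the remainder is a formal chase of adjoint bijections enhanced by the approximation theorem.
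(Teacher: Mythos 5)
Your overall strategy---reduce both hom-sets to $[\bE X,\bE Y]$ and identify $\et_{\bC}^*$ with the identity---is genuinely different from the paper's argument, but it rests on a step that the paper's axiomatic framework does not supply. The pivotal claim is that the point-set adjunction $\sS(\bE X, Z)\iso \bC[\sT](\overline{X},\OM_{\bC}Z)$ ``descends to a natural isomorphism $[\overline{X},\OM_{\bC}\bE Y]\iso[\bE X,\bE Y]$'' because the bar construction is a cofibrant replacement. That is a model-categorical assertion: it needs a model (or at least a homotopical-adjunction) structure in which $(\SI_{\bC},\OM_{\bC})$ is a Quillen adjunction, $\overline{X}$ is cofibrant, and targets are fibrant, so that localized hom-sets are computed compatibly on both sides. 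The general framework in which \autoref{Univ} is stated assumes none of this: there is no model structure, $\SI_{\bC}$ is not known to preserve weak equivalences (\autoref{ouch}), and \autoref{MODEL} is explicitly an aside about the examples, not an input to the proofs. The assumption you invoke to repair this, \autoref{ass6}, concerns geometric realization of simplicial objects and says nothing about transporting homotopy relations across the $(\SI_{\bC},\OM_{\bC})$ adjunction. Nor can you extract the needed hom-set isomorphism from \autoref{equiv}: that theorem gives fully faithfulness of $\bE$ only on the homotopy category of \emph{grouplike} $\bC$-algebras, whereas your $X$ (hence $\overline{X}$) is arbitrary. So as written there is a genuine gap, and it sits exactly where you predicted the ``main obstacle'' would be; inserting bar constructions does not by itself close it.

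One could try to rescue your plan by replacing the descended adjunction with explicit homotopy-category constructions: define $[\overline{X},\OM_{\bC}\bE Y]\rtarr[\bE X,\bE Y]$ by applying $\bE$ (which does descend, by \autoref{Eweak}) and composing with $\xi$ and $\bE\ze$, and a map back by applying $\OM_{\bC}$ and composing with $\et_{\bC}$; but verifying that these are mutually inverse requires unit--counit identities up to homotopy for non-grouplike sources, which is essentially the content one is trying to prove. The paper avoids all of this by working entirely inside $\bC[\sT]$: after replacing $Y$ by $\overline{Y}$, it writes down an explicit inverse $[f]\mapsto[\et_{\bC}]^{-1}\circ[\GA_{\bC}f]$, checks one composite by naturality of $\et_{\bC}$, and checks the other using the monad multiplication $\mu_{\bC}$ of $\GA_{\bC}$ together with the key claim that $\et_{\bC}\colon\GA_{\bC}\overline{X}\rtarr\GA_{\bC}\GA_{\bC}\overline{X}$ is a weak equivalence (proved from \autoref{ass4}, \autoref{recprin1}, and parts (iii) and (v) of \autoref{ass6}). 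If you want to keep your route, you must either add the hypotheses that make the derived adjunction available (as in \autoref{MODEL}) and say so, or prove a statement of that strength by hand---at which point you have reproduced the paper's computation in different notation.
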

Intuitively, the theorem says that any map of $\bC$-algebras $\overline X\rtarr Y$ factors uniquely up to homotopy through the $\bC$-group completion
$\et_{\bC}\colon \overline X \rtarr \GA_{\bC}\overline{X}$. 
%This should be compared with \autoref{wowwow} about the categorical monadicity theorem.
\begin{proof}  In view of the equivalence $\ze\colon \overline Y \rtarr Y$, we may replace $Y$ by $\overline Y$ in the statement, and we carry out the proof with this replacement.   This allows us to use that $\et_{\bC}\colon \overline Y \rtarr \GA_{\bC}\overline{Y}$  is a weak equivalence and is therefore invertible in the homotopy category. Let us  write  $[X,Y]$ for the set of maps $X\rtarr Y$ in any given homotopy category.   

For $X\in \bC[\sT]$ and $Y\in \bC[\sT]_{\bC gp}$, we have the induced map
$$\et_{\bC}^*\colon [\GA_{\bC} \overline{X}, \overline{Y}]\rtarr [\overline{X}, \overline{Y}]$$
given by $\et_{\bC}^*([g]) = [g\com \et_{\bC}]$.   We define 
$$(\et_{\bC}^*)^{-1}\colon  [\overline{X}, \overline{Y}]\rtarr  [\GA_{\bC} \overline{X}, \overline{Y}]$$ 
%For a map $g\colon \GA_{\bC}\overline X \rtarr \overline Y$,  $\rh{[g]}$ is given by the composite
%$$  \xymatrix@1{ \overline{X} \ar[r]^-{\et_{\bC}} & \GA_{\bC} \overline X  \ar[r]^-{[g]}  &  \overline Y. \\} $$
by $(\et_{\bC}^*)^{-1}([f]) = [\et_{\bC}]^{-1}\com [\GA_{\bC}f]$. 
%For a map $f\colon \overline X \rtarr \overline Y$, let  $\la[f]$ be the composite 
%$$  \xymatrix@1{ \GA_{\bC} \overline X  \ar[r]^-{[\GA_\bC f]} & \GA_{\bC} \overline Y \ar[r]^-{\et_{\bC}^{-1}} &  \overline Y.\\} $$
By the naturality diagram  \autoref{univ} (with $\overline f$ there replaced by $f$ here), $\GA_{\bC}f \com \et_{\bC} = \et_{\bC}\com f$.  Passing to homotopy and post-composing with $[\et_{\bC}]^{-1}$, this implies that  
$$(\et_{\bC}^*\com (\et_{\bC}^*)^{-1})([f]) = [f].$$  
Now consider the following diagram, whose top row represents $((\et_{\bC}^*)^{-1}\com\et_{\bC}^*)([g])$.
\begin{equation}\label{cute}  
\xymatrix{
\GA_{\bC} \overline X  \ar[r]^-{\GA_{\bC}\et_{\bC}}  \ar@{=}[dr]  &  \GA_{\bC} \GA_{\bC} \overline X \ar[r]^-{\GA_{\bC}g}  \ar@<-1ex>[d]_{\mu_{\bC}} 
&  \GA_{\bC} \overline Y  \ar[r]^-{[\et_{\bC}]^{-1}} & \overline Y\\
&  \GA_{\bC} \overline X \ar[r]_-{g} \ar@<-1ex>[u]_{\et_{\bC}} &  \overline Y \ar[u]_{\et_{\bC}}  \ar@{=}[ur] & \\}
\end{equation}
By the unit conditions of the monad $\GA_{\bC}$,  $\mu_{\bC}\com \GA\et_{\bC} = \id = \mu_{\bC}\com \et_{\bC}$; the middle square commutes by the naturality of $\et_{\bC}$.  Thus
$$\GA_{\bC}g\com \et_{\bC} = \et_{\bC} \com g = \et_{\bC} \com g \com \mu_{\bC}\com \et_{\bC}.$$
We claim that  $\et_{\bC}\colon \GA_{\bC}\overline{X}\rtarr\GA_{\bC}\GA_{\bC}\overline{X}$ is a weak equivalence.  Granting the claim, passing to homotopy, and precomposing with the resulting $[\et_{\bC}]^{-1}$,  it follows that 
$$[\GA_{\bC}g] =  [\et_{\bC} \com g \com \mu_{\bC}].$$
Precomposing with $[\GA_{\bC}\et_{\bC}]$ on $\GA_{\bC}\overline{X}$, using the left triangle, and post-composing with  $[\et_{\bC}]^{-1}$ on  
$\GA_{\bC}\overline{Y}$, this implies that
 $$((\et_{\bC}^*)^{-1}\com\et_{\bC}^*)([g]) = [g].$$
%on $\GA_{\bC}\overline X$ this implies that 
%$$[\GA_{\bC}g \com \GA_{\bC}\et_{\bC}]= [\GA_{\bC}g \com \et_{\bC}] = [\et_{\bC} \com g].$$
%Postcomposing with $\et_{\bC}^{-1}$ for $\overline{Y}$, this  gives that $((\et_{\bC}^*)^{-1}\com\et_{\bC}^*)([g]) = [g]$. 
To prove the claim, consider the naturality square
$$  \xymatrix{  \overline{\GA_{\bC}\overline X}  \ar[r]^-{\et_{\bC}}  \ar[d]_{\ze} &  \GA_{\bC} \overline{\GA_{\bC}\overline X}  	\ar[d]^{\GA_{\bC}\ze} \\
\GA_{\bC}\overline X \ar[r]_-{\et_{\bC}}  &   \GA_{\bC} \GA_{\bC}\overline{X} \\}
$$
Since $\GA_{\bC}\overline{X}$ is grouplike, the top arrow $\et_{\bC}$ is a weak equivalence.  Since $\ze$ is a weak equivalence, one can check that  
$\GA_{\bC}\ze$ is a weak equivalence by inspection and \autoref{ass2}(iii).  Therefore the bottom arrow  $\et_{\bC}$ is a weak equivalence. 
\end{proof} 

%We do not have an obvious analogue of part (iii) of \autoref{wowwow} due to the contrasting roles of $\bC$ and $\GA_{\bC}$ here, 
 
 \begin{rem}  Note that the diagram \autoref{cute}  implies that $\mu_{\bC}$ is the homotopical inverse of 
 $\et_{\bC}\colon \GA_{\bC} \overline X\rtarr \GA_{\bC}\GA_{\bC}\overline{X}$.  
 \end{rem}
 
 \subsection{When is $\epz_{\bC}\colon \SI_{\bC}\OM_{\bC}Z \rtarr Z$ a weak equivalence?}\label{When} The question seeks a conceptual analog of \autoref{fgplike}, but our answer is less satisfactory.  As we explain in the next section, we believe the answer is intimately related to the question of passing from our equivalences of homotopy categories to equivalences of their associated $\infty$-categories.
 
\begin{defn}\label{fcon} An $\OM$-connective object $Z\in \sS$ is  $\bC$-{\em{connective}} if $\SI_{\bC}\OM_{\bC}Z$ is also  $\OM$-connective.
\end{defn}

Under our Assumptions, we have the following formal analog of \autoref{new1}. 

\begin{prop}\label{fcon2} Assume that $Z\in \sS$ is $\OM$-connective.  Then $Z$ is $\bC$-connective 
 if and only if  $\epz_{\bC} \colon \SI_{\bC}\OM_{\bC} Z \rtarr Z$ is a weak equivalence.
\end{prop}
\begin{proof}  The triangle commutes by the definition of 
$\be_{\bC}$ and the square commutes by naturality in the following diagram. We abbreviate 
$\OM_{\bC}Z$ to $\OM Z$:
\begin{equation}\label{Ohyeah}
\xymatrix{
B(\OM_{\bC}\SI_{\bC}\bC,\bC,\OM Z) \ar[d]_{B(\OM_{\bC}\SI_{\bC}\al, \id,\id) } 
\ar[drr]^{B(\OM_{\bC}\be_{\bC},\id,\id)} \\
B(\OM_{\bC}\SI_{\bC}\OM_{\bC}\SI,\bC,\OM Z) \ar[d]_{\xi} \ar[rr]^-{B(\OM_{\bC} \epz_{\bC}, \id, \id)}
& &  B(\OM_{\bC}\SI,\bC,\OM Z) \ar[d]^{\xi}\\
\OM_{\bC}\SI_{\bC} \OM Z \ar[rr]_{\OM_{\bC}\epz_{\bC}} & & \OM Z\\}
\end{equation}
Here $B(\OM_{\bC}\be_{\bC},\id,\id)$ is an isomorphism and the  right arrow $\xi$ is a weak equivalence.  We claim that the two left vertical arrows are both weak equivalences, hence the horizontal arrows in the square are weak equivalences.  Granting the claim, consider the commutative diagram
\begin{equation}\label{Ohyeah2}
\xymatrix{
B(\SI, \bC, \OM_{\bC}\SI_{\bC}\OM Z) \ar[rr]^-{B(\id,\id,\OM_{\bC}\epz_{\bC})} \ar[d]_{\xi}
& & B(\SI, \bC, \OM Z) \ar[d]^{\xi} \\
\SI_{\bC}\OM Z \ar[rr]_-{\epz_{\bC}}& & Z.\\}
\end{equation}
Its top arrow is a weak equivalence since $\OM_{\bC}\epz_{\bC}$ is a weak equivalence and realization preserves weak equivalences.  Since its right arrow $\xi$ is a weak equivalence, its left arrow is a weak equivalence if and only if its bottom arrow is a weak equivalence, which is the desired result.

Turning to the proof of the claim, note first that since $\OM Z$ is grouplike and $\al$ is a weak equivalence on grouplike objects, both 
$$ B(\al,\id,\id) \colon \overline{\OM Z}= B(\bC,\bC,\OM Z) \rtarr B(\OM\SI,\bC,\OM Z)$$
and the top map in the evident commutative diagram 
$$\xymatrix{   
B(\SI,\bC, \OM Z) \ar[rr]^-{B(\id,\al,\id)} \ar[dr]_{\xi} & &  B(\SI, \GA, \OM Z) \ar[dl]^{\xi}\\
&  Z &\\}
$$
are weak equivalences because they are realizations of levelwise weak equivalences.  In the diagram, $Z$ is 
$\OM$-connective if and only if the left arrow $\xi$ is a weak equivalence.  As in \cite[Remark 2.21]{KMZ0}, this holds if and only if
$\GA \colon \overline{Z} \rtarr   \OM B(\SI,\bC,\OM Z) $ is a weak equivalence.  

The top left vertical arrow in \autoref{Ohyeah} is the left vertical arrow in the following commutative diagram
{\small{$$\xymatrix{
B(\OM_{\bC}\SI_{\bC}\bC,\bC,\OM Z) 
\ar[d]_{B(\OM_{\bC}\SI_{\bC}\al, \id,\id)} 
\ar[rr]^-{\ga} & &  \OM_{\bC}\SI_{\bC}B(\bC,\bC,\OM Z) \ar[d]^{\OM_{\bC}\SI_{\bC}B(\al,\id,\id)}
& & \ar[ll]_-{\eta_{\bC}}  B(\bC,\bC,\OM Z) \ar[d]^{B(\al,\id,\id)}\\
B(\OM_{\bC}\SI_{\bC}\OM_{\bC}\SI,\bC, \OM Z) 
\ar[rr]^-{\ga} & &  \OM_{\bC}\SI_{\bC}B(\OM_{\bC}\SI, \bC,\OM Z)
& & \ar[ll]_-{\eta_{\bC}}  B(\OM_{\bC}\SI,\bC,\OM  Z)\\}$$
}}
We have written $\ga$ for the composite of  $\ga\colon |\OM_*-| \rtarr \OM|-|$ and the isomorphism 
$|{\SI_{\bC}}_{*}-|\iso \SI_{\bC}|-|$.  Since $\SI_{\bC}\bC\iso \SI$ and $Z$ is assumed to be connective, both maps $\ga$ are weak equivalences. The right square is included only to circumvent the fact that we do not know that the functor $\SI_{\BC}$ preserves weak equivalences. We have proven that its top map $\et_{\bC}$ is a weak equivalence.  The lower map $\et_{\bC}$ is a weak equivalence since, up to isomorphism, it is just
$\ga\colon B(\OM\SI,\bC,\OM) \rtarr \OM B(\SI,\bC,\OM)$.   Using two out of three, we see that the top left vertical arrow in \autoref{Ohyeah}  is a weak equivalence.  

Therefore, by the naturality of $\xi$ in  \autoref{triv},  the bottom 
vertical arrow of \autoref{Ohyeah} is a weak equivalence if 
$$ \OM_{\bC}\SI_{\bC} \ze\colon  \OM_{\bC}\SI_{\bC}  B(\bC,\bC, S) \rtarr \OM_{\bC}\SI_{\bC} \OM Z $$
is a weak equivalence.  That should  be true since $\ze$ is a homotopy equivalence.  However, since 
the homotopy is the realization of a simplicial homotopy in $\sT$ and $\SI_{\bC}$ is only defined on 
$\bC[\sT]$, the conclusion is not obvious.   To get around this, we use the map 
$\bC\rtarr \OM_{\bC}\SI_{\bC} = \GA_{\bC}$ of monads given in \autoref{eye3}., which we denote by $\tilde{\al}$.   The following diagram commutes by naturality.
$$\xymatrix{
& B(\bC, \bC, \OM Z)  \ar[dl]_{\ze}  
\ar[dd]^{B(\tilde{\al}, \tilde{\al}, \id)}  \ar[rr]^-{\et_{\bC}} 
& & \GA_{\bC}B(\bC,\bC,\OM Z) \ar[dr]^{\GA_{\bC}\ze} \ar[dd]_{\GA_{\bC}B(\tilde{\al}, \tilde{\al},\id)} &   \\
\OM Z & & & & \GA_{\bC} \OM Z \\
& B(\GA_{\bC}, \GA_{\bC}, \OM Z)  \ar[ul]^{\ze} \ar[rr]_{\et_{\GA_{\bC}}} & & \GA_{\bC}B(\GA_{\bC},\GA_{\bC}, \OM Z)
\ar[ur]_{{\GA_{\bC}} \ze} \\}
$$

The arrows $\ze$ are homotopy equivalences, the top one in $\sT$ and the bottom one in $\bC[\sT]$.  The point is that all of the simplicial degeneracies, not just the faces, used in the construction of the homotopy are maps in 
$\bC[\sT]$.  Since any functor preserves homotopy equivalences, it follows that the lower map 
$\GA_{\bC}\ze$ is a homotopy equivalence.  The argument above for $\et_{\bC}$ also applies to 
$\et_{\GA_{\bC}}$, so that both the top and bottom map in the square are weak equivalences.  Therefore, by three applications of two out of three, the lower map $\GA\ze$ is a weak equivalence.  Using the weak eqivalence $\ga$ in a triangle, it follows that the lower left vertical arrow $\xi$  in \autoref{Ohyeah} is a weak equivalence, completing the proof of our claim and thus of the proposition.
 \end{proof}

\section{Model categorical and $\infty$-categorical embellishments}\label{CONTEXT3}
\subsection{Model category contexts}\label{MODEL}

In our applications, we generally have model structures on $\sT$ and $\sS$ such that $(\SI,\OM)$ is a Quillen adjunction.  Moreover, a standard model categorical result (e.g. \cite[Theorem 16.2.5]{morecon}) generally applies to show that $\bC[\sT]$ is then a model category with fibrations and weak equivalences created by the forgetful functor  $\bU_{\bC}\colon \bC[\sT]\rtarr \sT$.  Then  $(\bF_{\bC}, \bU_{\bC})$ is a Quillen adjunction. Since $\OM$ preserves fibrations and weak equivalences, it is clear that $\OM_{\bC}$ preserves fibrations and weak equivalences.  That is, the following result holds.
\begin{prop}  If $(\SI,\OM)$ is a Quillen adjunction, then $(\SI_{\bC}, \OM_{\bC})$ is a Quillen adjunction.
\end{prop}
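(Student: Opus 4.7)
The plan is to verify that $\OM_{\bC}$ is a right Quillen functor. By the standard characterization (e.g.\ \cite[Proposition 8.5.3]{morecon}), it suffices to show that $\OM_{\bC}$ preserves fibrations and trivial fibrations; alternatively one can show it preserves fibrations and weak equivalences between fibrant objects, but the stronger statement will drop out for free in our setting.

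The key input is the transferred model structure on $\bC[\sT]$, already invoked in the passage. By construction, a map $f$ in $\bC[\sT]$ is a fibration (resp.\ weak equivalence) if and only if $\bU_{\bC}f$ is a fibration (resp.\ weak equivalence) in $\sT$. Since $\OM = \bU_{\bC}\com \OM_{\bC}$ by \autoref{alphaC}, for any map $g$ in $\sS$ we have $\bU_{\bC}(\OM_{\bC} g) = \OM g$. Now if $g$ is a fibration in $\sS$, then $\OM g$ is a fibration in $\sT$ because $\OM$ is a right Quillen functor in the adjunction $(\SI,\OM)$; hence $\OM_{\bC}g$ is a fibration in $\bC[\sT]$. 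The argument for trivial fibrations is identical, using that $\OM$ also preserves trivial fibrations (indeed, it preserves all weak equivalences under the running \autoref{ass2}, and the transferred structure lets us transport this through $\bU_{\bC}$).

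Having established that $\OM_{\bC}$ preserves fibrations and trivial fibrations, we conclude by the Quillen adjunction criterion that $(\SI_{\bC},\OM_{\bC})$ is a Quillen adjunction, where the adjunction itself is supplied by \autoref{keyadj}. There is essentially no obstacle here: the content is entirely formal once one notes the compositional identity $\OM = \bU_{\bC}\com \OM_{\bC}$ together with the creation of fibrations and weak equivalences by $\bU_{\bC}$. The only point requiring any care is verifying the hypotheses of the standard transfer theorem that endows $\bC[\sT]$ with its model structure; this is an assumption we are inheriting from the applications and is independent of the statement at hand.
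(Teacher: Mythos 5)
Your proposal is correct and follows essentially the same route as the paper: both use the transferred model structure on $\bC[\sT]$ in which $\bU_{\bC}$ creates fibrations and weak equivalences, together with the identity $\OM = \bU_{\bC}\com \OM_{\bC}$ and the fact that $\OM$ is right Quillen, to conclude that $\OM_{\bC}$ preserves fibrations and acyclic fibrations (the paper phrases this as preserving fibrations and weak equivalences, which is equivalent for this purpose). The only cosmetic difference is your use of the fibrations/trivial-fibrations criterion rather than the paper's direct statement, and both proofs defer to the standard transfer theorem for the existence of the model structure on $\bC[\sT]$.
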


While thinking model theoretically did not lead us to our present perspective, it does conceptualize the results.  When working with spaces or $G$-spaces and using the standard Quillen model structures, the bar construction  $\overline{Y}$ has the homotopy type of a cofibrant $\bC$-algebra under minimal hypotheses (e.g. \cite[Lemma 5.50]{ABGHR1}).  It is therefore cofibrant and thus  a cofibrant approximation of $Y$ in $\bC[\sT]$ if we instead use the mixed model structures of \cite[Section 17.3 ]{morecon}.   We can then think of the functor $\bE$ of \autoref{Edefn} as the composite of the cofibrant approximation $\mathrm{Bar}$ and the left adjoint $\SI_{\bC}$ of a Quillen adjunction.  In the language of \cite{MMSS}, the homotopical monadicity theorem is then given by something close to a connective Quillen equivalence.  That is, if we restrict the target to connective objects and the source to grouplike objects, the Quillen adjunction is restricting to an equivalence of homotopy categories.   

\begin{rem}  In the example of orbital presheaves in Part 2,  we are led to an illuminating alternative point of view.
There the interest is entirely focused on objects in a model category that are definitely  {\em not}  cofibrant and we use classical cellular cofibrant approximation as a tool to convert such examples to well-behaved cofibrant objects that feed naturally into our machine.   Nevertheless, it is convenient and perhaps necessary there  to continue using the bar construction, even though it would be pointless to consider it as a cofibrant approximation.
\end{rem}

\subsection{From relative categories to $\infty$-categories}

The passage to $\infty$-categories fits more generally and perhaps more naturally into the language of equivalences of relative categories.  We recall here  how a 1980 result of Dwyer and Kan \cite{DK2} implies that many equivalences of homotopy categories directly imply equivalences of associated $\infty$-categories.  The original equivalences then amount to concrete classical category level sharpenings of equivalences of $\infty$-categories.  

Aside from modern contextualization, due to Joyal, Lurie, Rezk, Bergner and others \cite{Joyal, LurieHA, Rezk, Bergner}, the ideas and work to show this are due to Dwyer and Kan \cite{DK1, DK2}. We are just advertising and trying to apply their results, in part following a modern exposition of Bayliss \cite{Bay}.  As so often in the literature, we ignore size issues, relying on passage to larger universes to rectify the neglect. Some treatment is given in \cite{DK3}.

A relative category $(\sC,\sW)$ is a category $\sC$ together with a subcategory $\sW$ with the same objects and their identity maps.  A relative category is called a category with weak equivalences if $\sW$ contains all isomorphisms and satisfies the two-out-of-three property.  We are only interested here in those 
relative categories which are categories with weak equivalences.  Thus from here on relative categories mean categories with weak equivalences.  A morphism $\bF\colon (\sC,\sW)\rtarr (\sD,\sV)$ of relative categories is a functor  $\bF\colon \sC \rtarr \sD$ that restricts to a functor  $\sW$ to $\sV$.  
 
 \begin{defn} An adjoint  weak equivalence between relative categories $(\sC,\sW)$ and $(\sD,\sV)$ is an adjoint pair $\bF\dashv \bG$, such that the unit $\et\colon X \rtarr \bG\bF X$ is in $\sW$ and the counit $\epz\colon \bF\bG Y \rtarr Y$ is in $\sV$ for all $X\in \sC$ and $Y\in \sD$. 
 \end{defn}
 
 The following result is \cite[Corollary 3.6]{DK2}. 
 
 \begin{thm}\label{DKThm}
 An adjoint weak equivalence between relative categories $(\sC,\sW)$ and $(\sD,\sV)$ induces a weak equivalence between their hammock localizations $L^H(\sC,\sW)$ and $L^H(\sD,\sV)$.
 \end{thm}
 
The hammock localization is defined in \cite[2.1]{DK2}.   By a weak equivalence of hammock localizations we mean exactly what Dwyer and Kan prove: the induced functors
 $$ \pi_0 L^H(\sC,\sW) \rtarr \pi_0L^H(\sD,\sV) \ \ \text{and} \ \  \pi_0 L^H(\sD,\sV) \rtarr \pi_0L^H(\sC,\sW) $$
 give an equivalence of categories and the induced maps of simplicial sets
$$ L^H(\sC,\sW) \rtarr L^H(\sD,\sV) \ \ \text{and} \ \  L^H(\sD,\sV) \rtarr L^H(\sC,\sW) $$
are weak homotopy equivalences.

 The modern reinterpretation is as follows.
 
 \begin{thm}\label{infversion}  The hammock localization of $(\sC,\sW)$ is a model for the $\infty$-category associated to $(\sC,\sW)$, and a weak equivalence between hammock localizations can be interpreted as an equivalence between such $\infty$-categories.
 \end{thm}
The equivalence of $\pi_0$ categories is an equivalence of (ordinary) homotopy categories, and the weak equivalence of hammock localizations is a lifting to an equivalence of $\infty$-categories. 
  
A brief leisurely exposition of the definitions and results that make sense of \autoref{infversion} is given by Bayliss \cite{Bay}, an expository paper written for the 2025 Chicago REU. 
We are using the model of $\infty$-categories given by (enriched) simplicial categories.  It is equivalent to the quasicategory model by the coherent nerve functor, with adjoint the path functor. The hammock construction promotes a relative category to a simplicial category, and every simplicial category is weakly equivalent to the simplicial localization of a relative category \cite[2.5]{DK4}. The homotopy category $\pi_0L^H(C,W)$ is equivalent to the classical homotopy category, namely the localization $\sC[\sW^{-1}]$ \cite[3.1]{DK4}.   The weak equivalences of simplicial hom sets lift the equivalence of homotopy categories to a Dwyer Kan equivalence of simplicial categories, which is a model for an equivalence of $\infty$-categories.

If $\SI\dashv \OM$ is a Quillen equivalence of model categories, Theorems \ref{DKThm} and \ref{infversion} imply that $\SI\dashv \OM$ induces an equivalence of $\infty$-categories.    The model structures are not needed for the conclusion,  as the cited theorems show.  

\subsection{Homotopical adjunctions}
What we are actually seeing in \autoref{adjequivnew} is close to this, but a little different.  Some elementary categorical observations contextualize the framework.
Briefly, for any adjoint pair of functors $(\XI, \PS)$ between categories $\sC$ and $\sD$ and any functor  $\bF \colon \sC \rtarr \sC$, we define an induced ``twisted adjunction"
$(\XI \bF,\PS)$.  The example to keep in mind is $(\XI, \PS)=(\SI_{\bC},\OM_{\bC})$ and $\bF=\mathrm{Bar}$.
Passage to opposite categories gives an analogous twisted adjunction  $(\XI,\PS \bG)$ for any functor $\bG\colon \sD \rtarr \sD$, but that will not concern us.   

We describe  $(\XI\bF, \PS)$  before returning to homotopy and the monadicity theorems.  It is just the natural isomorphism
\begin{equation}\label{twist3}
\sD(\XI  \bF  Y, Z) \iso \sC(\bF Y,\Psi Z)
\end{equation} 
given by $(\XI,\PS)$.   Taking the identity map of  $Z = \XI \bF Y$, we obtain the unit map $\et\colon \bF Y \rtarr \Psi\XI \bF Y$.  Using that and the counit map
$\epz\colon \XI\PS Z \rtarr Z$,  we obtain the induced inverse isomorphisms
$$\xymatrix{
\al_{\bF } \colon  \sD(\XI \bF Y, Z) \ar[r]^-{\Psi} & \sC(\Psi \XI\bF Y, \Psi Z) \ar[r]^-{\et^*} & \sC(\bF Y, \Psi Z)\\}$$
 and
$$ \xymatrix{
    \be_{\bF  } \colon \sC(\bF Y, \Psi Z) \ar[r]^-{\XI} & \sD(\XI \bF Y, \XI\Psi Z) \ar[r]^-{\epz_*} & \sD(\XI \bF  Y, Z).\\}$$
Writing $\al = \al_{\id}$ and $\be = \be_{\id}$,  the composites $\al\com\be$ and $\be\com \al$ are also the identity.  Now suppose further that we have a natural transformation $\ze\colon \bF  \rtarr \id$.  Then we have natural transformations
$$ (\XI\ze)^* \colon \sD(\XI Y, Z) \rtarr \sD(\XI \bF Y, Z)  \ \ \text{and}  \ \ \ze^* \colon \sC(Y, \Psi Z) \rtarr \sC(\bF Y, \Psi Z). $$
Naturality shows that the four trapezoids in the following two diagrams commute.   
The composites in their inner and outer squares, starting at the top left, are both identity maps since they are $\be\al$ and $\be_{\bF}\al_{\bF}$ in \autoref{NatOne}  and
$\al\be$ and $\al_{\bF}\be_{\bF}$ in \autoref{NatTwo}. 

\begin{equation}\label{NatOne}
\xymatrix{
\sD(\XI \bF Y, Z) \ar[rrr]^-{\Psi} & & & \sC(\Psi \XI\bF Y, \Psi Z) \ar[ddd]^{\et^*}  \\
&  \sD(\XI Y, Z) \ar[r]^-{\Psi} \ar[ul]_-{(\XI \ze)^*} & \sC(\Psi \XI Y, \Psi Z) \ar[ur]^-{(\PS \XI \ze)^*} \ar[d]^{\et^*}  & \\
&\sD(\XI  Y, \XI\Psi Z)  \ar[u]^{\epz_*} \ar[dl]_{(\XI \ze)^*}& \sC(Y, \Psi Z)\ar[dr]^{\ze^*} \ar[l]_{\XI}  & \\
\sD(\XI \bF Y, \XI \Psi Z) \ar[uuu]^-{\epz_*} & & &  \sC(\bF Y, \Psi Z) \ar[lll]^{\XI} \\}
\end{equation}

\begin{equation}\label{NatTwo}
\xymatrix{
\sC(\bF Y, \PS Z) \ar[rrr]^-{\XI} & & & \sD( \XI\bF Y, \XI\PS Z) \ar[ddd]^{\epz_*}  \\
&  \sC(Y, \PS Z) \ar[r]^-{\XI} \ar[ul]_-{ \ze^*} & \sD(\XI Y, \XI\Psi Z) \ar[ur]^-{(\XI \ze)^*} \ar[d]^{\epz_*}  & \\
&\sC(\PS \XI Y, \Psi Z)  \ar[u]_{\et^*} \ar[dl]_{(\PS\XI\ze)^*}& \sD( \XI Y,  Z)\ar[dr]^{(\XI\ze)^*} \ar[l]_{\PS}  & \\
\sC(\PS \XI \bF Y, \Psi Z) \ar[uuu]^-{\et^*} & & &  \sD(\XI \bF Y, Z) \ar[lll]^{\PS} \\}
\end{equation}  

For homotopical contexts, we adopt the following definition.

\begin{defn} A homotopical adjunction is an adjunction $\XI \dashv \PS$ twisted by a functor $\bF$, as above, in which $\ze\colon \bF  Y \rtarr Y$ is a weak equivalence for all $Y\in \sC$ and the ``twisted units" $\et: \bF Y \to \PS\XI \bF Y$ and the counits $\epz: \XI \PS Z \to Z$ are weak equivalences upon restriction to full subcategories $\sC_g$ (perhaps grouplike objects) and $\sD_c$ (perhaps  connective objects) of $\sC$ and $\sD$. 
\end{defn} 

For example, $\XI\dashv \Psi$ might be a Quillen equivalence of model categories and $\bF$ might be a functorial cofibrant approximation.  Ignoring possible model structures, the $\ze$ in our monadical context will always be a natural weak equivalence.  We claim that the proof of Theorem \ref{DKThm}  adapts to this context of homotopical adjunctions to prove the following conjecture.

 \begin{conj}\label{DKThm?}
 A homotopical adjunction between relative categories $(\sC,\sW)$ and $(\sD,\sV)$ induces a weak equivalence between the hammock localizations \linebreak
 $L^H(\sC_g,\sW\cap \sC_g)$ and $L^H(\sD_c,\sV\cap \sD_c)$. 
 \end{conj}

A proof requires careful inspection, but we believe that this is true.  The intuition should be clear from the construction of  hammock localizations: we expect  that one can use $\ze$ to enlarge hammocks to equivalent ones.  By \autoref{infversion}, the conjecture implies that we have induced equivalences of $\infty$-categories between the $\infty$-categories associated to  $L^H(\sC_g,\sW\cap \sC_g)$ and $L^H(\sD_c,\sV\cap \sD_c)$.  

\subsection{The monadic homotopical adjunction}

The following result is a direct consequence of \autoref{fcon2}.

\begin{thm}\label{NotYet}   The adjunction $\SI_{\bC}\dashv\OM_{\bC}$ from $\bC[\sT]$ to $\sS$ and the functor $\mathrm{Bar}\colon \bC[\sT] \rtarr \bC[\sT]$ together with the weak equivalence 
$\ze\colon \mathrm{Bar} \rtarr \id$ specify a homotopical adjunction
from the relative subcategory $\bC[\sT]_{gp}$ of grouplike objects of $\bC[\sT]$ to the relative subcategory $\sS_{\bC c}$ of $\bC$-connective objects of $\sS$
\end{thm} 
\begin{proof}
We have the adjunction and the weak equivalences $\ze$.  The equivalence $\et_{\bC}\colon \overline{Y} \rtarr \OM_{\bC}\SI_{\bC}\overline{Y}$ is given  by \autoref{recprinnew}. \autoref{fcon2} shows that 
$\bC$-connectivity is a necessary and sufficient  condition for $\epz_{\bC}\colon  \SI_{\bC}\OM_{\bC}Z \rtarr Z$ to be a weak equivalence when $Z$ is $\OM$-connective.  
\end{proof}

Assuming \autoref{DKThm?} and applying \autoref{infversion}, this has the following consequence.  

\begin{cor}\label{YES?} The $\infty$-categories associated to the relative categories $\bC[\sT]_{gp}$ and $\sS_{\bC c}$ and their respective weak equivalences are equivalent.
\end{cor}

Of course, this is unsatisfactory since we do not yet have any results about the following question.

\begin{quest}\label{quest} When is it true that an $\OM$-connective object of $\sS$ is $\bC$-connective?
\end{quest}

\section{The classical theory of spaces and $G$--spaces}\label{SPACEG}
Adding some results not in the earlier literature, we record how classical and equivariant iterated and infinite loop space theory appear in our context.

\subsection{Delooping based spaces}\label{SPACES}  This is the original example and all proofs are in place, as we have sketched.  We focus on what is new in the interpretation of our Assumptions.

Here $\sT$ and $\sS$ are both taken to be the category of based spaces and $(\SI,\OM)$ in the general theory is taken to be $(\SI^n,\OM^n)$, the $n$-fold suspension and $n$-fold loop space functors.  We take  $\bC=\bC_n$  to be the monad on based spaces associated to an $E_n$-operad $\sC = \sC_n$ that acts naturally  on $\OM^n X$ for spaces $X$, such as the little $n$-cubes operad, the little $n$-discs operad, or the Steiner operad for $\bR^n$ or the product of one of these with any $E_{\infty}$-operad.  As in Remarks \ref{bspt1} and \ref{bspt2}, to ensure that these monads behave well homotopically, we implicitly restrict attention to nondegenerately based spaces when applying $\SI$ or $\bC$.  As said before, basepoint issues are handled carefully in \cite{MMO} and we shall say no more about them here.  In particular, we have Reedy cofibrancy wherever needed.

The following observation relates the conceptual notion of an $\OM^n$-connective space from \autoref{LAcon} to the classical notion of an $n$-connective, alias $(n-1)$-connected, space.

\begin{prop}\label{n=n} A space $X$ is $\OM^n$-connective if and only if it is $n$-connective.
\end{prop}
\begin{proof}
If $X$ is $\OM^n$-connective, then it is weakly equivalent to $B(\SI^n,\OM^n\SI^n, \OM^n X)$.  The homotopy groups of the latter are zero in dimensions less than $n$, so $X$ must be $n$-connective.  Conversely, if $X$ is $n$-connective, then inductive comparison of path space fibrations before and after realization, as displayed in \cite[(2.4)]{KMZ0} for $n=1$, shows that 
$$\ga^n\colon B(\OM^n\SI^n, \OM^n\SI^n, \OM^n X) \rtarr \OM^nB(\SI^n,\OM^n\SI^n,\OM^n X) $$
is a weak equivalence.  By \autoref{LAcon5}, that implies that $X$ is $\OM^n$-connective. 
\end{proof}

We clearly have a context as specified in \autoref{ass1}. \autoref{ass2} holds under the basepoint restrictions mentioned above, as is discussed in \cite{KMZ0}. \autoref{ass3} holds as discussed in \autoref{Csec}.  With any of the standard model structures on $\sT$ (e.g. \cite[Chapter 17]{morecon}), $(\SI^n,\OM^n)$ is a Quillen adjunction since $\OM^n$ clearly preserves fibrations and acyclic fibrations.   It remains to discuss  Assumptions \ref{ass4} and \ref{ass5}.

We start with the following classical definitions, modified very slightly to fit \autoref{screwy}.  This gives the context in which the approximation theorem is proven for $n\geq 2$.  Let $\mathrm{Gr}$ denote the Grothendieck group functor (alias group completion) from abelian monoids to abelian groups and let $i\colon \id \rtarr \mathrm{Gr}$ denote the natural map.   If $f\colon M \rtarr A$ is a homomorphism from an abelian monoid to an abelian group, we write $\tilde{f}$ for the map of groups $\mathrm{Gr} M \rtarr A$ such that $\tilde{f}\com i = f$.  We think of the following definition as giving a homologically defined category $\sH_{com}$ that satisfies \autoref{screwy}.  It is an elaboration of \cite[Definition 1.6]{GM3}.

\begin{defn}\label{Hopf}  Let $\sH$ be the category of homotopy associative Hopf spaces and Hopf maps.  It is understood that Hopf spaces are (nondegenerately) based at their unit objects and that Hopf maps are based.
%based spaces that are weakly equivalent to homotopy associative Hopf spaces and maps that are weakly equivalent in the arrow %category of $\sT$ to Hopf maps.  
A space in $\sH$ is said to be grouplike if  $\pi_0(X)$ is a group.    Let $\sH_{com}$ be the full subcategory of homotopy commutative Hopf spaces in $\sH$.  Define a group completion $f\colon Y\rtarr Z$ to be a map in $\sH_{com}$ such that $Z$ is grouplike, $f$ induces an isomorphism $\tilde{f}\colon \mathrm{Gr}\pi_0(Y) \rtarr \pi_0(Z)$, and, for any field $k$ of coefficients, $f_*$ localizes the homology of $Y$  by inverting $\pi_0(Y)\subset H_*(Y)$.  That is, $f_*$ induces an isomorphism
\[  \xymatrix@1{  \tilde{f}\colon H_*(Y)[\pi_0(Y)^{-1}] \ar[r]^-{\iso} & H_*(Z).\\}  \]  
It suffices to take $k$ to be $\bK$ and $\bF_p$ for any prime $p$. 
\end{defn}

The definition just given is designed for use when $n\geq 2$, so that the relevant Hopf spaces are homotopy commutative.  We focus on the case $n=1$ in \autoref{Moore}.

\begin{lem}\label{square2}  Using the notion of group completion given in \autoref{Hopf},  the category $\sH_{com}$ satisfies the conditions specified in \autoref{screwy}.
\end{lem}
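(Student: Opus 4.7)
The plan is to reduce each clause of \autoref{screwy} to a statement about the functors $\mathrm{Gr}\,\pi_0(-)$ and $H_*(-;k)[\pi_0(-)^{-1}]$ (with $k$ ranging over fields), and then invoke three basic facts. First, both invariants are functorial on Hopf maps, so they preserve isomorphisms. Second, when $Y$ is grouplike, the unit $\pi_0(Y)\rtarr \mathrm{Gr}\,\pi_0(Y)$ is an isomorphism (since $\pi_0(Y)$ is already a group) and the localization $H_*(Y;k)\rtarr H_*(Y;k)[\pi_0(Y)^{-1}]$ is an isomorphism (since Hopf multiplication by $[y]\in\pi_0(Y)$ and by its group-theoretic inverse are mutually inverse self-maps of $H_*(Y;k)$). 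Third, and more substantively, a Hopf map $f\colon X\rtarr Y$ between grouplike homotopy commutative Hopf spaces is a weak equivalence if and only if $f_*$ is an isomorphism on $\pi_0$ and on $H_*(-;k)$ for every field $k$.

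With these in hand, the verification is a uniform diagram chase. The ambient condition that weak equivalence preserves being grouplike is immediate since $\pi_0$ is preserved by weak equivalence. For (i), when $X$ and $Y$ are both grouplike, the second fact identifies the pair of isomorphisms defining ``$f$ is a group completion'' with the pair ``$f_*$ is an isomorphism on $\pi_0$ and on every $H_*(-;k)$,'' whose equivalence with ``$f$ is a weak equivalence'' is precisely the third fact. For each subcase of (ii), the hypotheses translate by functoriality and by the second fact into isomorphisms of the two invariants on certain arrows of the square; commutativity then forces the corresponding isomorphism on the remaining arrow, which by the third fact or by the definition of group completion yields the desired conclusion. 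For instance, in (ii)(b) one applies both invariants to the square: the weak equivalence $f$ supplies isomorphisms along the top, the group completion data for $g$ and $h$ identify the left and right columns with $\mathrm{Gr}\,\pi_0(X) \iso \pi_0(Z)$ and $\mathrm{Gr}\,\pi_0(Y) \iso \pi_0(W)$ (and similarly for homology), and one reads off that $j_*$ is an isomorphism on $\pi_0$ and on every $H_*(-;k)$, so the third fact gives that $j$ is a weak equivalence. The other subcases (a), (c), (d) are analogous.

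The only step with genuine topological content is the third fact, and this is the main obstacle. To prove it, one first reduces to a map of identity components: since both source and target are grouplike, left Hopf multiplication by a representative of an element of $\pi_0$ together with its group-theoretic inverse gives a homotopy equivalence translating any component onto the identity component, so by the $\pi_0$-isomorphism hypothesis it suffices to treat the map of identity components. The identity component of a homotopy associative Hopf space is simple, in fact nilpotent (the $\pi_1$-action on higher homotopy is trivial), and for nilpotent spaces an isomorphism on $H_*(-;\bQ)$ and on every $H_*(-;\bF_p)$ upgrades via universal coefficients to an isomorphism on $H_*(-;\bZ)$, which by the nilpotent Whitehead theorem is a weak equivalence. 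Everything else in the verification is formal bookkeeping around the data already built into \autoref{Hopf}.
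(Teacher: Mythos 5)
Your proposal is correct and takes essentially the same route as the paper: both rest on the criterion that a Hopf map between grouplike objects of $\sH_{com}$ is a weak equivalence if and only if it induces isomorphisms on $\pi_0$ and on $H_*(-;k)$ for all fields $k$ (the paper gets this from the splitting $X\simeq X_0\times\pi_0(X)$, simplicity, and Whitehead's theorem, while you reprove it by translating to identity components and invoking the nilpotent Whitehead theorem with a universal-coefficients upgrade to integral homology), and both then verify (i) and (ii)(a)--(d) by the same formal diagram chases factoring the vertical maps through $\mathrm{Gr}\,\pi_0$ and the localized homology. No gaps; this matches the paper's proof in substance.
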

\begin{proof}  This is easy, but we give some details in the hopes of making  \autoref{screwy} more intuitively sensible than it might have seemed on a first reading.  It is obvious that an object of $\sH$ that is weakly equivalent to a grouplike object is grouplike.
%It is immediate from \autoref{Hopf} that $\sH$, $\sH_{com}$, and their full subcategories $\sH_{gp}$  and $\sH_{com,gp}$ of % %grouplike objects are closed under weak equivalence.   
It is classical that an object  $X\in \sH_{com,gp}$ is weakly equivalent in $\sH$ to $X_0\times \pi_0(X)$, where $X_0$ is the component of the identity element $0$ of $\pi_0(X)$.  Recalling that Hopf spaces are simple, it follows from a theorem of Whitehead that a map between objects of $\sH_{com,gp}$ is a weak equivalence if and only if it induces isomorphisms on  $\pi_0$ and on $H_*$ for all fields of coefficients or, equivalently, for integer coefficients.   Now (i) of \autoref{screwy} is clear.  For (ii), looking back at \autoref{square}, we first assume that $Z$ and $W$ are grouplike.  On passage to $\pi_0$ or to $H_*$, the vertical composites in the following diagrams are either $g_*$ or $h_*$.
\[  \xymatrix{
\pi_0(X) \ar[r]^-{f_*} \ar[d]_{i} & \pi_0(Y) \ar[d]^{i}\\
\mathrm{Gr}\pi_0(X) \ar[r]^-{\tilde{f}_*} \ar[d]_{\tilde{g}_*}& \mathrm{Gr}\pi_0(Y)\ar[d]^{\tilde{h}_*}\\
\pi_0(Z) \ar[r]_-{j_*}  & \pi_0(W) \\}
\ \ \ 
\xymatrix{
H_*(X) \ar[r]^-{f_*} \ar[d]_{i} & H_*(Y) \ar[d]^{i}\\
H_*(X)[\pi_0(X)^{-1}] \ar[r]^-{\tilde{f}_*} \ar[d]_{\tilde{g}_*} & H_*(Y)[\pi_0(Y)^{-1}] \ar[d]^{\tilde{h}_*}\\
H_*(Z) \ar[r]_-{j_*}  & H_*(W) \\}
\]
In (ii)(a), $f$ and $j$ are weak equivalences, so $f_*$, the induced map $\tilde{f}_*$, and $j_*$ are isomorphisms.  It is immediate from $2$ out of $3$ in the bottom squares that $\tilde{g}_*$ is an isomorphism if and only if $\tilde{h}_*$ is an isomorphism, which verifies that (ii)(a) is satisfied.  In (ii)(b), $g$ and $h$ are group completions and $f$ is a weak equivalence, so $2$ out of $3$ in the bottom square implies that $j_*$ is an isomorphism, so that $j$ is a weak equivalence.  Finally, we assume that $Y$, $Z$, and $W$ are grouplike in \autoref{square}.  Then the bottom squares of the diagrams above reduce to
\[  \xymatrix{
\mathrm{Gr}\pi_0(X) \ar[r]^-{\tilde{f}_*} \ar[d]_{\tilde{g}_*}& \pi_0(Y)\ar[d]^{h_*}\\
\pi_0(Z) \ar[r]_-{j_*}  & \pi_0(W) \\}
\ \ \ 
\xymatrix{
H_*(X)[(\pi_0(X)^{-1}] \ar[r]^-{\tilde{f}_*} \ar[d]_{\tilde{g}_*} & H_*(Y) \ar[d]^{h_*}\\
H_*(Z) \ar[r]_-{j_*}  & H_*(W) \\}
\]
In (ii)(c), $h_*$ and $j_*$ are isomorphisms, hence $\tilde{f}_*$ is an isomorphism if and only if $\tilde{g}_*$ is an isomorphism.   In (ii)(d),  $\tilde{f}_*$ and 
$\tilde{g}_*$ are isomorphisms, hence $h_*$ is an isomorphism if and only if $j_*$ is an isomorphism.
\end{proof}

When $n\geq 2$, it is clear that a $\bC$-algebra is a homotopy associative and commutative Hopf space, so that $\bU_{\bC}$ factors through $\sH_{com}$. To verify \autoref{ass4}, it remains to construct a group completion functor $(\bG,g)$ as in \autoref{screwy2}.  Here we follow \cite[Theorem 13.5]{MayGeo} and
\cite[Theorem 1.14(ii)]{GM3}.  Let $\sM$ be the operad\footnote{alias $\mathrm{Asso}$, the associativity monad} (with permutations, as usual) whose algebras are topological monoids
\cite[Definition 3.1]{MayGeo}. Whichever sequence of compatible operads we choose, the operad $\sC = \sC_n$ comes with an inclusion of the operad $\sC_1$. The operad $\sC_1$ is an $A_{\infty}$ operad as defined in  \cite[Definition 3.5]{MayGeo}.  Moreover, there is a map of operads  $\de\colon \sC_1 \rtarr \sM$ such that the $\SI_j$-map  
$\de(j)\colon \sC_1(j)\rtarr \sM(j)$ is a $\SI_j$-equivalence for each $j$, so that the induced map of monads $\de\colon \bC_1\rtarr \bM$ is given by weak equivalences.  The following result is implied by \autoref{extra} and \autoref{ass2}.
 
\begin{lem}\label{Cone}  Let $Y$ be a $\bC_1$-algebra.  Then
\begin{enumerate}[(i)] 
\item  $\ze\colon B(\bC_1,\bC_1,Y) \rtarr Y$ is a map of $\sC_1$-algebras and a homotopy equivalence with inverse $\nu$.
\item  $B\de\colon  B(\bC_1, \bC_1,Y) \rtarr B(\bM,\bC_1,Y)$  is a weak equivalence.
\end{enumerate}
\end{lem}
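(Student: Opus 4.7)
The plan is to treat the two parts separately, each as a direct specialization of tools already in place.

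For part (i), I would simply invoke \autoref{extra} with the monad $\bC$ there taken to be $\bC_1$ and the $\bC_1$-algebra taken to be $Y$. That lemma was proved via a standard extra degeneracy argument based on \autoref{triv} and part (ii) of \autoref{ass6} (preservation of homotopies under realization), and it already delivers exactly what is claimed: $\ze\colon B(\bC_1,\bC_1,Y)\rtarr Y$ is a map of $\bC_1$-algebras (equivalently, of $\sC_1$-algebras), and it is a homotopy equivalence in $\sT$ with the map $\nu$ as inverse, the homotopy being induced by the simplicial extra degeneracy. No new work is needed here.

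For part (ii), the strategy is Reedy-cofibrant levelwise comparison. I would first observe that $B\de = B(\de,\id,\id)$ arises by applying the realization functor to the map of simplicial objects in $\sT$ whose $q$-simplices are
\[
\de\colon \bC_1 \bC_1^{q} Y \rtarr \bM\, \bC_1^{q} Y.
\]
By hypothesis each operad map $\de(j)\colon \sC_1(j)\rtarr \sM(j)$ is a $\SI_j$-equivariant weak equivalence, so on any good (nondegenerately based) input $Z$ the induced map $\de\colon \bC_1 Z\rtarr \bM Z$ is a weak equivalence in $\sT$, by the standard wedge/orbit-space comparison (the monads are built from free $\SI_j$-orbits smashed with $Z^{\wedge j}$). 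Taking $Z=\bC_1^q Y$ and using our implicit restriction to good objects, this shows the simplicial map $B_*\de$ is a levelwise weak equivalence.

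It remains to pass from levelwise equivalence to an equivalence on realization; this is where \autoref{ass6}(iii) enters, and the one genuine technical point in the argument. The plan is to check that both $B_*(\bC_1,\bC_1,Y)$ and $B_*(\bM,\bC_1,Y)$ are Reedy cofibrant simplicial based spaces; this is the familiar fact that the monadic bar construction on a good input is Reedy cofibrant, the degeneracies being inclusions of wedge summands at the operadic level. Granting this, \autoref{ass6}(iii) implies that the realization $B\de$ is a weak equivalence, completing part (ii). The main obstacle is purely bookkeeping on basepoints and Reedy cofibrancy, which, as flagged in Remarks \ref{bspt1} and \ref{bspt2}, we handle by restricting throughout to nondegenerately based spaces as in \cite{MMO}.
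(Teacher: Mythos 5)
Your proposal is correct and follows essentially the same route as the paper, which simply notes that the lemma is implied by \autoref{extra} (for part (i)) and \autoref{ass6} (for part (ii)), using the fact stated just before the lemma that $\de\colon\bC_1\rtarr\bM$ is given by weak equivalences so that $B_*\de$ is a levelwise equivalence whose realization is an equivalence. Your added detail on Reedy cofibrancy and nondegenerate basepoints is exactly the bookkeeping the paper delegates to Remarks \ref{bspt1}--\ref{bspt2} and the cited sources.
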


\begin{notn}\label{class}
Recall that the classifying space $BM$ of a topological monoid $M$ is the geometric realization of the nerve of $M$, where $M$ is viewed as the morphism space of a topological category with a single object.  The inclusion of $1$-simplices gives a natural map $\io\colon \SI M \rtarr BM$. Its adjoint is a natural map ${\chi}\colon M \rtarr \OM BM$.
\end{notn}  

\begin{defn}\label{HopfbG}  Let $Y$ be a $\sC= \sC_n$-algebra, and regard $Y$ as a $\sC_1$-algebra by pullback along  $\sC_1\rtarr \sC_n$.  Define $\bF Y$ to be the topological monoid  
$B(\bM,\bC_1,Y)$ and define $\bG Y = \OM B \bF Y$.  Note that $\nu$ in \autoref{extra} is a Hopf map, and define $g\colon Y \rtarr \bG Y$ to be the natural composite of Hopf maps 
$$  \xymatrix@1{g\colon Y \ar[r]^-{\nu}  & B(\bC_1,\bC_1,Y) \ar[r]^-{B\de} &  B(\bM,\bC_1,Y) = \bF Y \ar[r]^-{\ze}  &\bG Y\\}$$
\end{defn}

We recall a version of the Barratt--Quillen group completion theorem.
\begin{thm}\label{gpcompthm}  If $M$ is a homotopy commutative topological monoid, then the map $\chi\colon M \rtarr \OM BM$  is a group completion.  If, further, $M$ is grouplike, then $\chi$ is a weak equivalence. Therefore, for $Y\in \bC[\sT]$, $g\colon Y \rtarr \bG Y$ is a group completion and is a weak equivalence if $Y$ is grouplike.
\end{thm}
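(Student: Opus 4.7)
The statement has three parts, and the plan is to address them in the order stated, leaning on the classical literature for the hardest input and on the axiomatic machinery of Section 2 to bootstrap from it.

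For the first assertion, that $\chi\colon M\rtarr \OM BM$ is a group completion when $M$ is a homotopy commutative topological monoid, I would simply invoke the classical Barratt--Quillen--McDuff--Segal group completion theorem as it appears, e.g., in McDuff--Segal or in Segal's paper on categories and cohomology theories, and verify that their conclusion matches the conditions of \autoref{Hopf}: namely, $\chi$ induces an isomorphism $\mathrm{Gr}\,\pi_0(M) \rtarr \pi_0(\OM BM)$ and, after inverting $\pi_0(M)$, a homology isomorphism with any field coefficients. This is not something one reproves here; it is the one genuinely hard external input, and it is the step I expect would occupy an entire separate exposition if we were to prove it from scratch (the homological analysis of the bar spectral sequence for $BM$). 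The second assertion, that $\chi$ is a weak equivalence when $M$ is grouplike, is then immediate from the first together with axiom (i) of \autoref{screwy}: a group completion between grouplike objects of $\sH_{com}$ is a weak equivalence.

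For the third assertion, the plan is to trace the three maps in the definition of $g$ in \autoref{HopfbG} and invoke the composition properties of group completions from \autoref{screwy}. The first map $\nu\colon Y \rtarr B(\bC_1,\bC_1,Y)$ is a homotopy equivalence in $\sT$ by \autoref{extra}, hence a weak equivalence between objects of $\sH_{com}$ (homotopy commutativity is inherited from $Y$, which is a $\bC_n$-algebra with $n\geq 2$, by a naturality argument levelwise on the bar simplicial object). The second map $B\de$ is the realization of a levelwise weak equivalence (since each $\de(j)$ is a $\SI_j$-equivalence, so $\de\colon \bC_1 \bC_1^q Y \rtarr \bM \bC_1^q Y$ is a weak equivalence at each simplicial level), hence a weak equivalence by \autoref{ass6}(iii); the target $\bF Y = B(\bM,\bC_1,Y)$ is a topological monoid that is homotopy commutative because $Y$ is. The third map is $\chi\colon \bF Y \rtarr \OM B\bF Y = \bG Y$, which is a group completion by the first part applied to the homotopy commutative monoid $\bF Y$.

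With these three assembled, the composite $g$ is a weak equivalence followed by a weak equivalence followed by a group completion. The target $\bG Y$ is grouplike (it is a loop space), so applying (ii)(a) of \autoref{screwy} twice—first to the naturality square pairing $\chi$ with $\chi\com B\de$, then to the square pairing $\chi\com B\de$ with $\chi\com B\de\com \nu = g$—shows that $g$ is a group completion. Finally, if $Y$ itself is grouplike, then $B(\bC_1,\bC_1,Y)$ and $\bF Y$ are grouplike (their $\pi_0$'s agree with $\pi_0(Y)$ up to the isomorphism induced by $\nu$ and $B\de$), so by the second assertion $\chi$ on $\bF Y$ is a weak equivalence, and therefore $g$ is a composite of three weak equivalences. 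The main potential pitfall, aside from citing Barratt--Quillen correctly, is the verification that $\bF Y$ is homotopy commutative; this is where the restriction $n\geq 2$ is genuinely used, and the cleanest argument is to exhibit the homotopy commutativity of $B(\bM,\bC_1,Y)$ as induced simplicially from the $\sC_2$-action on $Y$, a routine but careful check.
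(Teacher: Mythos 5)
Your proposal is correct, and its overall shape---take the classical group completion theorem as the one external input, then unwind the composite defining $g$ in \autoref{HopfbG} using \autoref{extra}, \autoref{Cone}, and the closure properties of \autoref{screwy}---matches the paper's intent; indeed the paper's printed proof covers only the first two assertions and leaves the ``Therefore'' clause implicit, exactly as you fill it in. The differences are in the supporting details. For the first assertion the paper does not cite McDuff--Segal but rather \cite[Theorem 15.1]{MayClass}, and the real content of its proof is the verification of that theorem's hypotheses: left and right translation must agree up to homotopy in both $M$ and $\OM BM$; for $M$ this is homotopy commutativity, and for $\OM BM$ the hypothesis is needed only to make $\pi_0(\OM BM)=\pi_1(BM)$ abelian, which is supplied by Ramras's identification of $\pi_0(\OM BM)$ with the Grothendieck group of $\pi_0(M)$ \cite[Proposition B.1]{Ramras}. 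Your appeal to the McDuff--Segal form (centrality of $\pi_0(M)$ in $H_*(M)$) is an acceptable substitute, but you should then record the $\pi_0$ statement explicitly, since \autoref{Hopf} requires the isomorphism $\mathrm{Gr}\,\pi_0(M)\rtarr \pi_0(\OM BM)$ and not only the localized homology isomorphism. For the second assertion the paper argues directly by comparing the path-space fibration of $BM$ with the quasifibration $EM\rtarr BM$ (\cite[Theorem 7.6]{MayClass}), which gives the weak equivalence without routing through the group completion property at all; your deduction from part (i) of \autoref{screwy}, as verified in \autoref{square2}, is also valid once the first assertion is granted in the precise sense of \autoref{Hopf}. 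Your treatment of the third assertion, including flagging the homotopy commutativity of $\bF Y$ for $n\geq 2$ as the point needing a routine but genuine check, is exactly the argument the paper leaves to the reader, so there is no gap---only a different choice of classical reference and a different derivation of the grouplike case.
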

\begin{proof}  The version in \cite[Theorem 15.1]{MayClass} assumes that left translation by an element $x$ is homotopic to right translation by $x$  in both $M$ and $\OM BM$.  Clearly that holds for $M$ when $M$ is homotopy commutative.  For $\OM BM$, that hypothesis is only used to ensure that $\pi_0(\OM BM) = \pi_1(BM)$ is commutative (see \cite[Lemma 15.2]{MayClass}).  This holds when $M$ is homotopy commutative since $\pi_0(\OM BM)$ is then the Grothendieck group of $\pi_0(M)$, by \cite[Proposition B.1]{Ramras}.\footnote{We thank Dan  Ramras for this reference.} The weak equivalence consequence is immediate from a comparison of fiber sequences that is given in \cite[Theorem 7.6]{MayClass}.
\end{proof}

This applies when $n\geq 2$ to complete the verification of \autoref{ass4}.  To deal with the case $n=1$, 
we adopt the following variant of \autoref{Hopf}, which is a variant of \cite[Definition 1.7]{GM3}. 

\begin{defn}\label{Hopf2}  
Let $\sH_m$ be the subcategory of $\sH$ whose objects are weakly equivalent to topological monoids and whose maps are weakly equivalent in the arrow category of $\sH$ to maps of monoids.  A space in $\sH_m$ is said to be grouplike if $\pi_0(X)$ is a group.    A map $f$ in 
$\sH_m$ is an  $m$-group completion if it is weakly equivalent in the arrow category of $\sH_m$ to a map $\chi\colon M\rtarr \OM B M$ for a topological monoid $M$.  The definition makes sense since, for a topological monoid $M$, $\OM B M$ is homotopy equivalent to the Moore loop space $\LA BM$, which is a topological monoid.
\end{defn}

We leave it as an exercise to prove the following analog of \autoref{square2}.

\begin{lem}\label{square3}   Using the notion of $m$-group completion given in \autoref{Hopf2},  the category $\sH_m$ satisfies the conditions specified in \autoref{screwy} when $n \geq 1$.
\end{lem}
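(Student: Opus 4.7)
The plan is to reduce every statement to a statement about topological monoids using the rectification built into the definition of $\sH_m$, and then to exploit the classical fact that $\chi\colon M \rtarr \Omega BM$ is a weak equivalence whenever $M$ is grouplike (this weak-equivalence consequence does not require commutativity, as indicated at the end of the proof of \autoref{gpcompthm}), together with naturality of $\chi$ in maps of topological monoids. The closure of the grouplike class of objects under weak equivalence is automatic because being grouplike is a condition purely on $\pi_0$.

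The key intermediate lemma I would establish first is: for a map of topological monoids $\phi\colon M \rtarr N$, $\phi$ is an $m$-group completion if and only if $\Omega B\phi\colon \Omega BM \rtarr \Omega BN$ is a weak equivalence (and in this case $N$ is automatically grouplike, since the target of any $\chi_{M'}$ is grouplike). The forward direction applies $\Omega B$ to the zigzag $\phi \simeq \chi_{M'}$ and uses that $\chi_{\Omega BM'}$ is a weak equivalence because $\Omega BM'$ is always grouplike. The reverse direction builds an explicit zigzag $\phi \simeq \chi_N\circ \phi = \Omega B\phi \circ \chi_M \simeq \chi_M$ in the arrow category, using the naturality square of $\chi$ together with the weak equivalences $\chi_N$ and $\Omega B\phi$ as the connecting vertical arrows. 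When in addition $M$ is grouplike, the naturality square of $\chi$ has both horizontal maps weak equivalences, so 2-out-of-3 shows that $\phi$ is an $m$-group completion iff $\phi$ is itself a weak equivalence. Transferring back through the rectification yields axiom (i).

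For axiom (ii), I would rectify the square \autoref{square} to a square of monoid maps $\phi_f, \phi_g, \phi_h, \phi_j$, apply $\Omega B$, and use the lemma above to translate each ``group completion'' hypothesis or conclusion into ``the corresponding $\Omega B$-image is a weak equivalence.'' Each of (a)--(d) then reduces to 2-out-of-3 in the commutative identity $\Omega B\phi_h \circ \Omega B\phi_f = \Omega B\phi_j \circ \Omega B\phi_g$, combined with the naturality square of $\chi$ at grouplike vertices to convert back from $\Omega B$-statements to statements about the original maps. For instance, in (b), the hypotheses force $\Omega B\phi_f, \Omega B\phi_g, \Omega B\phi_h$ all to be weak equivalences; the commutativity identity then forces $\Omega B\phi_j$ to be one; and grouplike-ness of $Z$ and $W$ promotes this via the $\chi$-naturality square to $\phi_j$ itself being a weak equivalence, i.e., $j$ being one. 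Cases (a), (c), (d) follow the same template.

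The main obstacle is the coherent rectification step: replacing a commutative square in $\sH_m$ by a strictly commutative square of topological monoid maps requires fitting individual rectifications of vertices and edges into one diagram. The cleanest approach is functorial rectification via the two-sided bar construction $B(\bM, \bC_1, -)$ of \autoref{Cone}, which sends a $\bC_1$-algebra to a weakly equivalent topological monoid functorially and hence converts any commutative square of $\bC_1$-algebra maps to a strictly commutative square of monoid maps weakly equivalent (vertex by vertex and edge by edge) to the original. Once this bookkeeping is in place, the 2-out-of-3 manipulations in the $\Omega B$-image square are routine.
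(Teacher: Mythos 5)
The paper offers no proof to compare against --- it explicitly leaves this as an exercise --- and your skeleton (translate to topological monoids, use that $\chi\colon M\rtarr \OM BM$ is a weak equivalence for grouplike $M$ with no commutativity needed, exploit naturality of $\chi$, finish with two-out-of-three) is surely the intended one. Your zigzag $\phi \rtarr \chi_N\com\phi = \OM B\phi\com\chi_M$ back to $\chi_M$ is exactly the right trick for showing that a weak equivalence between grouplike objects is an $m$-group completion, and your promotion of ``$\OM B\phi_j$ is a weak equivalence'' to ``$\phi_j$ is a weak equivalence'' via the $\chi$-naturality square at grouplike vertices is correct. There is one small error and one real gap. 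The small one: the ``if'' direction of your key lemma is false as stated, because $\OM B\phi$ can be a weak equivalence with $N$ not grouplike (take $\phi$ the identity of a free monoid), and such a $\phi$ is not an $m$-group completion since the target of any $m$-group completion has grouplike $\pi_0$; your own construction uses that $\chi_N$ is a weak equivalence, so the lemma must carry the hypothesis that $N$ is grouplike. This is harmless, since all relevant targets in \autoref{screwy} are assumed grouplike.

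The real gap is the rectification on which both the forward direction of your key lemma and your treatment of axiom (ii) depend. The zigzag exhibiting $\phi$ as an $m$-group completion, and the square \autoref{square}, consist of maps in $\sH_m$, i.e.\ mere Hopf maps between Hopf spaces; $B$, hence $\OM B$, is not defined on such maps, and ``$B\phi$ is a weak equivalence'' is a property of the monoid ($A_{\infty}$) structure rather than of the underlying $H$-map, so ``apply $\OM B$ to the zigzag $\phi\simeq\chi_{M'}$'' is not a legitimate step, and it is not even clear a priori that the forward direction holds without first rectifying that zigzag to monoid maps. Your proposed fix, functorial rectification by $B(\bM,\bC_1,-)$ as in \autoref{Cone}, requires the objects to be $\bC_1$-algebras and the maps to be $\bC_1$-maps, which \autoref{Hopf2} does not supply: it only guarantees that each object and each map of $\sH_m$ is \emph{individually} weakly equivalent to a monoid, resp.\ a monoid map, with no coherence across a square or across the defining zigzag. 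So the conversion of these diagrams into strictly commuting diagrams of monoid maps is precisely the missing content. A remark that localizes the difficulty: (ii)(a) is immediate because the square there is itself a weak equivalence in the arrow category, and (ii)(c) follows since $m$-group completions are invariant under pre- and post-composition with weak equivalences (use squares with identity legs); then (ii)(b) and (ii)(d) reduce to the single cancellation statement that if $g\colon X\rtarr Z$ and $j\com g\colon X\rtarr W$ are $m$-group completions with $Z$ and $W$ grouplike, then $j$ is a weak equivalence. Thus the only places your monoid-level delooping argument is genuinely needed are condition (i) and that cancellation statement, but there the rectification problem remains, and your proposal does not yet resolve it.
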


\begin{rem} We could alternatively use the more rigid conceptual notions of  $\bC_1$-grouplike $\bC_1$-algebras and $\bC_1$-group completions that are given in \autoref{fgplike}, together with \autoref{Cgpgp}.
\end{rem}

Using $m$-group completions and \autoref{HopfbG}, \autoref{ass4} is now immediate for $n\geq 1$.  In view of the following result, the $m$-notion is mainly of interest when $n=1$ since it can be used interchangeably with the original notion of \autoref{Hopf} when $n\geq 2$. Let  $\sH_{m,com}$ be the full subcategory of homotopy commutative Hopf spaces in $\sH_m$.

\begin{lem}\label{weakstrong}  Let $f\colon Y\rtarr Z$ be a map in $\sH_{m,com}$. If $f$ is an $m$-group completion, then $f$ is a  group completion. Conversely, if $f$ is also a map of $\bC$-algebras (in particular, if $n\geq 2$) and is a group completion, then $f$ is an $m$-group completion.
\end{lem}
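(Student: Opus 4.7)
The plan is to handle the two directions separately, using the group completion functor $\bG$ of \autoref{HopfbG} and the naturality of its universal map $g$ to bridge between the two notions.

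For the forward direction, suppose $f\colon Y\rtarr Z$ in $\sH_{m,com}$ is an $m$-group completion, so that $f$ is connected by a zigzag of arrow-category weak equivalences in $\sH_m$ to some $\chi\colon M \rtarr \OM BM$ for a topological monoid $M$. Since $Y\in \sH_{m,com}$ is homotopy commutative and $M$ is weakly equivalent to $Y$ in $\sH$ along the zigzag, $M$ inherits homotopy commutativity. The Barratt--Quillen theorem \autoref{gpcompthm} then exhibits $\chi$ as a group completion in the sense of \autoref{Hopf}, and the invariance property \autoref{screwy}(ii)(a), applied repeatedly along the zigzag, transfers this property from $\chi$ back to $f$.

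For the converse, assume that $f$ is both a map of $\bC$-algebras and a group completion. The first step is to show that $\bG f\colon \bG Y\rtarr \bG Z$ is a weak equivalence. Consider the naturality square
\[
\xymatrix{
Y \ar[r]^-f \ar[d]_-{g_Y} & Z \ar[d]^-{g_Z} \\
\bG Y \ar[r]^-{\bG f} & \bG Z.
}
\]
Here $Z$, $\bG Y$, $\bG Z$ are all grouplike; $f$ and $g_Y$ are group completions (the latter by \autoref{gpcompthm}, using $Y\in \sH_{m,com}$); and $g_Z$ is a weak equivalence, again by \autoref{gpcompthm}, since $Z$ is grouplike and homotopy commutative. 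Hence \autoref{screwy}(ii)(d) forces $\bG f$ to be a weak equivalence.

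The second step identifies $f$ with $\chi_{\bF Y}\colon \bF Y \rtarr \bG Y$ in the arrow category of $\sH_m$, taking $M=\bF Y$. From the definition in \autoref{HopfbG}, we have the factorization $g_Y = \chi_{\bF Y}\com \phi$, where $\phi = B\de\com\nu\colon Y \rtarr \bF Y$ is a weak equivalence. I would then exhibit the explicit zigzag
\[
f \ \longrightarrow\ (g_Z\com f) \ \longleftarrow\ g_Y \ \longrightarrow\ \chi_{\bF Y}
\]
of three commuting squares in $\sH_m$: the first with verticals $\id$ and $g_Z$; the second using the naturality identity $\bG f\com g_Y = g_Z\com f$, with verticals $\id$ and $\bG f$; the third using $g_Y = \chi_{\bF Y}\com\phi$, with verticals $\phi$ and $\id$. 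Every vertical is a weak equivalence in $\sH_m$, so this is a zigzag of arrow-category weak equivalences, and hence $f$ is an $m$-group completion.

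The main obstacle is that the naturality square itself is not an arrow-category weak equivalence, since the left vertical $g_Y$ need not be a weak equivalence when $Y$ is not grouplike. The detour through $g_Z\com f$ is precisely what lets us substitute the weak equivalence $\bG f$ for the group completion $g_Y$, which in turn is only available once the first step has upgraded $\bG f$ to a weak equivalence. Everything else reduces to bookkeeping with \autoref{screwy} and \autoref{gpcompthm}.
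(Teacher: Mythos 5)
Your proof is correct and takes essentially the same approach as the paper: the forward direction is the paper's appeal to \autoref{gpcompthm} together with the invariance properties of \autoref{screwy}, and your converse---upgrading $\bG f$ to a weak equivalence via the naturality square and then zigzagging $f$ to $\chi\colon \bF Y\rtarr \bG Y$---is precisely the content of the paper's one-line claim that $f$ is equivalent in the arrow category to $\bF Y \rtarr \bG Y = \OM B\bF Y$. You have simply written out the details the paper leaves implicit.
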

\begin{proof}  If $f$ is an $m$-group completion, then it is a group completion by \autoref{gpcompthm}.  For the converse, $f$ is equivalent in $\sH_{com}$ to $\bF Y \rtarr \bG Y = \OM B\bF Y$.
\end{proof}

It remains to consider the approximation theorem, \autoref{ass5}.   When $X$ is connected, it states that $\al =\al_n \colon \bC_n X\rtarr \OM^n\SI^n X$ is a weak equivalence and is proven in \cite[Theorem 6.1]{MayGeo} for all $n\geq 1$.  The proof goes  by induction on $n$, using a comparison of fiber sequences
\[ \xymatrix{ 
\bC_n X \ar[r] \ar[d]_{\al_n} & \bE_n X \ar[r]  \ar[d] &  \bC_{n-1} \SI X \ar[d]^{\al_{n-1}}\\
 \OM^n\SI^n X \ar[r] & P\OM^{n-1} \SI^n X \ar[r] &  \OM^{n-1} \SI^n X,\\} \]
taking $\al_0$ to be the identity functor on $\SI X$ to start the induction.   The work is in the construction of the contractible space $\bE_n X$ giving the quasi-fibration displayed in the top row.  When $n=1$, this weak equivalence is also implied by James' 1954 paper \cite{James} that introduced the James construction, which, by historical misuse, we have denoted $\bM$.   

For general spaces $X$, there is a proof for $n\geq 2$ by direct computation of mod $p$ homology for all primes $p$, with their Bockstein spectral sequences, by Fred Cohen \cite[Theorems 3.1, 3.2, and 3.12 and Corollary 3.3]{CLM}\footnote{Some small corrections are given by Wellington 
\cite[p. 10]{Well}}; the corresponding calculation with rational coefficients is easy and is essentially implied by the Bockstein spectral sequence calculations.  There is also a geometric proof by Graeme Segal \cite{Seg0}.   It proceeds by an induction analogous to that for the connected case.  For the case $n=1$, we shall explain a proof of the following version in \autoref{Moore}.  

\begin{thm}\label{approxn=1} For a $\sC_1$-space $Y$, $\al_1\colon \bC_1 Y \rtarr \OM\SI Y$ is an $m$-group completion. 
\end{thm}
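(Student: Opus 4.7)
The plan is to reduce \autoref{approxn=1} to the Barratt--Quillen group completion theorem (\autoref{gpcompthm}) by identifying $\al_1$, up to weak equivalence in the arrow category of $\sH$, with the canonical map $\chi\colon M\rtarr \LA BM$ for the topological monoid $M = \bM Y$.

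First I would use \autoref{Cone} to replace $\bC_1 Y$ by the strict topological monoid $\bF Y = B(\bM,\bC_1,Y)$, obtaining a diagram in $\sH$
\begin{equation*}
\xymatrix{
\bC_1 Y \ar[d]_{\al_1} & B(\bC_1,\bC_1,Y) \ar[l]_-{\ze} \ar[r]^-{B\de} \ar[d] & \bF Y \ar[d]^{\bar{\al}_1} \\
\OM\SI Y \ar@{=}[r] & \OM\SI Y \ar@{=}[r] & \OM\SI Y
}
\end{equation*}
whose horizontal arrows in the top row are weak equivalences. This reduces the statement to showing that $\bar{\al}_1\colon \bF Y \rtarr \OM\SI Y$ is weakly equivalent to $\chi$ in the arrow category of $\sH_m$.

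The heart of the argument is to carry the whole picture over to the category $\sV$ of length-indexed (Moore) spaces, where loop composition is strictly associative. In $\sV$ one has a free-monoid monad (also denoted $\bM$), and the Moore loop functor $\LA\colon \sS\rtarr \sV$ provides a composite adjunction analogue of \autoref{ass1}: the adjunction $(\SI,\LA)$ between $\sT$ and $\sV$ will be the prototypical composite adjunction \autoref{adj0}, and the $\bM$-action on $\LA\SI$ gives an analogue $\al\colon \bM Y \rtarr \LA\SI Y$ of $\al_1$, which is by construction a map of topological monoids. Since $\LA\SI Y$ is homotopy equivalent to $\OM\SI Y$ in $\sH$, verifying \autoref{approxn=1} is equivalent to identifying this Moore-loop refinement with $\chi\colon \bM Y \rtarr \LA B\bM Y$ under a natural equivalence $B\bM Y\simeq \SI Y$.

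The key technical input, and the main obstacle, is the Moore-loop enhancement of James's theorem: for well-based $Y$, the natural map $B\bM Y \rtarr \SI Y$ is a weak equivalence, without any connectivity hypothesis on $Y$. For connected $Y$ this is essentially James's original theorem, but the disconnected case is the core content of Fiedorowicz's work \cite{Fied} and requires a careful length/filtration analysis in $\sV$; this is precisely where recording Moore length-data pays off, since it converts the group completion on $\pi_0$ into a strict monoid statement that can be checked filtration by filtration. This identification will be extracted from what becomes \autoref{keyFied}.

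Once $B\bM Y\simeq \SI Y$ is established compatibly with the unit $\chi$, the formal properties of the composite adjunction of \autoref{alphamon} identify $\bar{\al}_1$ with $\chi\colon \bM Y\rtarr \LA B\bM Y$ up to weak equivalence in the arrow category of $\sH_m$. Since $\chi$ is by definition an $m$-group completion (\autoref{Hopf2}), \autoref{approxn=1} follows. The remaining steps, namely verifying that Moore loops are in the appropriate category $\sH_m$, that the homotopy equivalence $\LA\simeq\OM$ is compatible with all the monadic structure, and that the zigzag above lives in $\sH_m$, are routine once the composite adjunction formalism of Part 2 is in place.
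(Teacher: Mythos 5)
Your plan is essentially the paper's own proof: replace the $E_1$ monad $\bC_1$ by the free monoid (James) monad $\bM$ via the equivalence of operads $\de$, pass to Moore loops so that the approximation map becomes a map of strict topological monoids $\al_{\bM}\colon \bM Y\rtarr \LA\XI Y$, and rest everything on Fiedorowicz's theorem that $\io\com\SI\et_{\bM}\colon \SI Y\rtarr B\bM Y$ is a weak equivalence with no connectivity hypothesis, which is exactly \autoref{keyFied}. The paper packages your final identification with $\chi$ through the auxiliary notion of a mon-group completion (\autoref{mongp}, \autoref{mongpcom}) and the diagram \autoref{FiedDiag}, but that is the same content as your last step, so the two arguments coincide in substance and in their one nonformal input.

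Two repairs are needed. First, your opening square is not correct as written: \autoref{Cone} gives $\ze\colon B(\bC_1,\bC_1,Y)\rtarr Y$, not a map into $\bC_1 Y$, and neither vertical arrow from the bar constructions to $\OM\SI Y$ exists as you use it; in particular there is no monoid map $\bF Y\rtarr \OM\SI Y$, precisely because $\OM\SI Y$ is not strictly associative, which is why the Moore category enters at all. The reduction is cleaner the way the paper does it: transport $\al_1$ through $\bR$ and use the monad equivalence $\de\colon \bD\rtarr \bJ$ in the diagram \autoref{RealCute}; the bar-construction detour through $\bF Y$ is unnecessary. Second, the compatibilities you defer to ``the composite adjunction formalism of Part 2'' cannot be delegated to that formalism: the paper notes in \autoref{precursor} that this example does not fit the composite adjunction context, since $\sM$ does not act on $\OM X$ (and $\epz\colon \bL\bR\rtarr \Id$ is only an equivalence here, not an isomorphism). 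The identifications you need are instead verified directly, namely the commutativity of \autoref{RealCute} and \autoref{FiedDiag}, the relation $\xi\com\io=\epz$ cited from Fiedorowicz, and the fact that $\xi\colon B\LA\XI Y\rtarr \XI Y$ is a weak equivalence because the suspension $\XI Y\iso\SI Y$ is connected. With those repairs your argument is the paper's.
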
 

The following theorem is now an immediate specialization of our general results.  It is essentially a restatement of results first proven in \cite{MayGeo, MayPerm} and generalized equivariantly in \cite{GM3}.

\begin{thm}\label{space}  Take $\sC = \sC_n$.  For $\sC$-spaces $Y$, the unit $\et_{\bC}$ induces a group completion if $n\geq 2$ (an $m$-group completion if $n\geq1$) and therefore a $\bC$-group completion
$$Y \rtarr \OM^n_{\bC}\bE Y\iso \OM^n_{\bC}  \SI^n_{\bC} \bB Y.$$
Moreover, $n$-connective based spaces $Z$ are naturally equivalent to coequalized $n$-fold suspensions  $\SI^n_{\bC}  \overline{\OM^n_{\bC} Z}$.
\end{thm}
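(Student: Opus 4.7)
The plan is to deduce this theorem as a direct specialization of the general homotopical monadicity results (Theorems \ref{recprin1} and \ref{equiv}, equivalently Theorems \ref{recprinnew} and \ref{adjequivnew}) to the case $\sT=\sS=\sU_*$, $(\SI,\OM)=(\SI^n,\OM^n)$, and $\bC=\bC_n$. The entire task thus reduces to verifying our six standing Assumptions in this context, with the understanding that the first statement follows from \autoref{recprin1} and \autoref{new1}, while the second follows from \autoref{equiv}.

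Most of the bookkeeping has already been carried out in the discussion preceding the theorem, and I would simply collect it. \autoref{ass1} holds because $\bC_n$ acts on $\OM^n X$ via the $E_n$-operad $\sC_n$ and based spaces are cocomplete; \autoref{ass2} holds under the implicit restriction to nondegenerately based spaces, as $\OM^n$ and $\SI^n$ are Quillen adjoint and the standard whiskering techniques take care of the left adjoints; \autoref{ass3} holds with the convention that ``$\OM^n$-connective'' means $(n-1)$-connected, since $\SI^n$ lands in such spaces and a map between $(n-1)$-connected spaces is a weak equivalence iff its $n$-fold loops are. \autoref{ass6} is the realization package summarized in \autoref{assrems}, all of whose items are known for simplicial based spaces and for the monads $\bC_n$.

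For \autoref{ass4}, I would invoke Lemmas \ref{square2} and \ref{square3}, which confirm that $\sH_{com}$ (for $n\ge 2$) and $\sH_m$ (for $n\ge 1$) satisfy the axiomatization of \autoref{screwy}, and that $\bU_{\bC_n}$ factors through the appropriate one. The group completion functor $(\bG,g)$ is then provided by \autoref{HopfbG}, with the key fact that $g\colon Y\rtarr \bG Y$ is a group completion (and an $m$-group completion, by construction) following from the Barratt--Quillen theorem \autoref{gpcompthm} applied to the homotopy commutative monoid $\bF Y = B(\bM,\bC_1,Y)$; the homotopy commutativity for $n\ge 2$ comes from the $E_n$-structure, while \autoref{weakstrong} reconciles the two notions of group completion where both apply.

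The main obstacle, as usual, is \autoref{ass5}: the approximation theorem $\al\colon \bC_n X \rtarr \OM^n\SI^n X$ must be a group completion for every based space $X$. For $n\ge 2$ I would cite two independent arguments already in the literature---F.~Cohen's mod $p$ and rational homology computations in \cite{CLM} combined with the homological criterion of \autoref{Hopf}, and Segal's geometric inductive proof \cite{Seg0}---either of which settles the statement for general $X$ (the connected case being the original result of \cite[Theorem 6.1]{MayGeo}). For $n=1$, where the target is not homotopy commutative, the appropriate statement is $m$-group completion, and this is exactly \autoref{approxn=1}, whose proof via Moore loop spaces is deferred to \autoref{Moore}. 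With all Assumptions in hand, \autoref{recprin1} gives that $\et_{\bC}\colon \overline{Y}\rtarr \OM^n_{\bC}\SI^n_{\bC}\overline{Y}$ is a group completion (or $m$-group completion when $n=1$), and via the equivalence $\ze$ and \autoref{oldkey} this is the asserted map $Y\rtarr \OM^n_{\bC}\bE Y \iso \OM^n_{\bC}\SI^n_{\bC}\overline{Y}$; the final clause about $n$-connective spaces being equivalent to $\SI^n_{\bC}\overline{\OM^n_{\bC} Z}$ is then immediate from \autoref{equiv}.
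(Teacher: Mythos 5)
Your proposal is correct and follows essentially the same route as the paper: Theorem \ref{space} is obtained there exactly as an immediate specialization of the general machinery (Theorems \ref{recprin1}, \ref{new1}, and \ref{equiv}) once Assumptions A--F are checked for $(\SI^n,\OM^n)$ and $\bC_n$, with Assumption D supplied by Lemmas \ref{square2}/\ref{square3}, \ref{HopfbG}, and \ref{gpcompthm}, and Assumption E by \cite{MayGeo} in the connected case, Cohen \cite{CLM} or Segal \cite{Seg0} for general $X$ when $n\ge 2$, and \autoref{approxn=1} (proved in \autoref{Moore}) when $n=1$. The citations and the division of labor you give match the paper's own discussion preceding the theorem.
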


\subsection{Moore loop spaces and $1$-fold loop spaces}\label{Moore}
It is classical that the $1$-fold loop space $\OM X$ is equivalent to the Moore loop space $\LA X$, which is a topological monoid, and that, for connected $X$, $\OM\SI X$ is equivalent to the James construction on $X$, which we have denoted $\bM X$. Largely following Thomason \cite{Thom} and, more closely, Fiedorowicz \cite{Fied}, we put this into a conceptual framework that elucidates the case $n=1$ of \autoref{SPACES}.  We should say that there is a wealth of further information in \cite{Thom, Fied} that we shall not describe. At the end, we very briefly indicate how the ideas here are related to McDuff's proof \cite{McDuff} that all connected spaces are weakly equivalent to classifying spaces of discrete monoids, which is itself a variant of the proof by Kan and Thurston \cite{KT} that all connected spaces are equivalent to plus constructions of classifying spaces of discrete groups. 
 
In this subsection, $\sT = \sS$ is the category of based spaces.

\begin{defn}\label{Vee} Define $\sV$ to be the category of based spaces $\pi \colon X\rtarr [0,\infty)$ over $[0,\infty)$ such that $\pi^{-1}(0) = \ast$.  We take $0$ to be the basepoint of $[0,\infty)$, and we use the notation $(X,\pi)$ for $\pi \colon X\rtarr [0,\infty)$.
\end{defn}

We first recall the Moore loop space functor $\LA^v\colon \sT \rtarr \sV$ and its left adjoint $\SI_v\colon \sV\rtarr \sT$ and then use an adjunction $(\bL,\bR)$ between $\sV$ and $\sT$ to relate $(\SI_v,\LA^v)$ to $(\SI,\OM)$ via a composite adjunction.  The Moore loop space functor as usually understood is $\LA = \bL \LA^v\colon \sT \rtarr \sT$, and we shall
write $\XI = \SI_v\bR\colon \sT \rtarr \sT$ to match. Up to isomorphism, these are $\OM$ and $\SI$ and are therefore also adjoint. 
We then relate these adjunctions to two monads on $\sT$ and to two related monads on $\sV$ and finally show how these all fit together to give a conceptual understanding of \autoref{approxn=1}.

\begin{defn}\label{Mooreadj}  Define $\LA^v\colon \sT \rtarr \sV$ by letting $\LA^v X$ be the set of pairs $(f,s)$, where $s\in [0,\infty)$ and $f\colon [0,\infty) \rtarr X$ is a continuous map such that $f(0) = \ast$ and $f(t) = \ast$ for all $t\geq s$.  Topologize $\LA^v X$ as a subspace of 
$\mathrm{Map}([0,\infty),X)\times [0,\infty)$ and define $\pi(f,s) = s$.  Define $\SI_v\colon \sV\rtarr \sT$ by 
$$\SI_v(X,\pi)   = X\times [0,{\infty})/\{(x,t)| t = 0 \ \ \text{or} \ \  t\geq \pi(x) \}.$$
Both $\LA^v$ and $\SI_v$ extend to morphisms in obvious ways.  
\end{defn}

The lemmas from here to the diagram \autoref{adj0} are all easily verified from the definitions. Almost all are parts of \cite[Lemmas 6.2, 6.6, and 6.7]{Fied}.

\begin{lem}\label{Fied1} $(\SI_v, \LA^v)$ is an adjoint pair of functors between $\sV$ and $\sT$.
\end{lem}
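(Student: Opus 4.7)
The plan is to exhibit the adjunction bijection directly from the definitions and verify it using the universal property of a quotient together with the standard exponential law. Concretely, I want to prove that for $(X,\pi)\in\sV$ and $Y\in\sT$ there is a natural bijection
\[
\sT\bigl(\SI_v(X,\pi),\,Y\bigr)\;\longleftrightarrow\;\sV\bigl((X,\pi),\,\LA^v Y\bigr),
\]
sending a based map $\phi\colon\SI_v(X,\pi)\to Y$ to $\tilde\phi\colon x\mapsto(\phi(x,-),\,\pi(x))$.

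First, I would unpack what a based map $\phi\colon\SI_v(X,\pi)\to Y$ is. By the universal property of the quotient defining $\SI_v(X,\pi)$, it is the same thing as a continuous map $\phi\colon X\times[0,\infty)\to Y$ satisfying $\phi(x,0)=\ast$ and $\phi(x,t)=\ast$ for all $t\ge\pi(x)$. Since $[0,\infty)$ is locally compact Hausdorff, the exponential law identifies such maps with continuous maps $\hat\phi\colon X\to\mathrm{Map}([0,\infty),Y)$. The two boundary conditions translate to $\hat\phi(x)(0)=\ast$ and $\hat\phi(x)(t)=\ast$ for $t\ge\pi(x)$, which are exactly the conditions needed for the pair $(\hat\phi(x),\pi(x))$ to lie in $\LA^v Y$.

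Second, I would package this as the map $\tilde\phi\colon X\to\LA^v Y$, $\tilde\phi(x)=(\hat\phi(x),\pi(x))$, and check that it is a morphism in $\sV$. Continuity into the subspace $\LA^v Y\subset\mathrm{Map}([0,\infty),Y)\times[0,\infty)$ follows from continuity of $\hat\phi$ and of $\pi$. Compatibility with the projection to $[0,\infty)$ holds by construction, since the second coordinate is literally $\pi(x)$. The fiber over $0$ is sent to the basepoint, since $\pi^{-1}(0)=\ast$ in $\sV$ and $\hat\phi$ is based.

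Third, I would construct the inverse. Given $\psi\colon(X,\pi)\to\LA^v Y$ in $\sV$, write $\psi(x)=(g_x,s_x)$; compatibility with projections forces $s_x=\pi(x)$. Define $\phi_\psi\colon X\times[0,\infty)\to Y$ by $\phi_\psi(x,t)=g_x(t)$. Continuity follows from the exponential law, and $\phi_\psi$ respects the quotient relations since $g_x(0)=\ast$ and $g_x(t)=\ast$ for $t\ge\pi(x)$ by the definition of $\LA^v Y$. This descends to a based map $\SI_v(X,\pi)\to Y$. The assignments $\phi\mapsto\tilde\phi$ and $\psi\mapsto\phi_\psi$ are mutually inverse and natural in $X$ and $Y$ by inspection.

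I do not anticipate a genuine obstacle: the only point requiring care is the interplay between the quotient topology on $\SI_v(X,\pi)$ and the subspace/compact-open topology on $\LA^v Y$, and local compactness of $[0,\infty)$ makes the exponential law apply cleanly. This is the classical Moore loop/suspension adjunction repackaged over the parameter space $[0,\infty)$, as noted in \cite{Fied}.
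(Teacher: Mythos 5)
Your proof is correct, and it is the same direct verification-from-definitions that the paper has in mind: the text simply asserts that this lemma (with the others in that block) is ``easily verified from the definitions,'' citing Fiedorowicz, and your explicit unit/counit-free construction of the bijection via the quotient's universal property and the exponential law is exactly that verification spelled out. No gap: local compactness of $[0,\infty)$ handles the only delicate point, as you note.
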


\begin{defn}\label{TVadj} Define $\bL\colon \sV\rtarr \sT$ by $\bL(X,\pi) = X$.  Define $\bR\colon \sT\rtarr \sV$ by letting $\bR(X)$ be the subspace
$$\bR(X) =(\ast,0) \cup \{(x,s)| x\in X \ \ \text{and} \ \ s>0\}$$
of $X\times [0,\infty)$ and letting $\pi\colon \bR(X)\rtarr [0,\infty)$ be the projection, $\pi(x,s) = s$.
\end{defn}

\begin{lem}\label{Fied2}  $(\bL,\bR)$ is an adjoint pair of functors.  For $(X,\pi) \in \sV$, the unit $\et\colon (X,\pi) \rtarr \bR\bL (X,\pi)$ sends $x\in X$ to $(x,\pi(x))$. For $X\in \sT$, the counit $\epz\colon \bL\bR X \rtarr X$ sends $(x,s)$ to $x$. Both $\et$ and $\epz$ are homotopy equivalences.
\end{lem}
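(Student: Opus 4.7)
The plan is to prove, in order, (a) the adjunction, (b) the unit/counit formulas, and (c) the homotopy equivalence claim, paralleling the proof of \autoref{Fied1}.

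For (a), I would construct the natural bijection
\[
\sV\bigl((X,\pi),\bR Y\bigr) \;\iso\; \sT(X,Y) \;=\; \sT\bigl(\bL(X,\pi),Y\bigr)
\]
directly. A morphism $f\colon (X,\pi)\rtarr \bR Y$ in $\sV$ is a continuous based map whose composite with the projection $\bR Y\rtarr [0,\infty)$ equals $\pi$; since that projection is the second-coordinate map, $f$ must have the form $f(x) = (f_1(x),\pi(x))$ for a continuous $f_1\colon X\rtarr Y$. This $f_1$ is automatically based, and the condition $\pi^{-1}(0)=\ast$ forces the image of $f$ to lie in $\bR Y$ (when $\pi(x)=0$, then $x=\ast$ and $f_1(x)=\ast$, so $f(x)=(\ast,0)$). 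Conversely, any based $g\colon X\rtarr Y$ produces $\tilde g(x)=(g(x),\pi(x))$, lying in $\bR Y$ for the same reason; naturality in both variables is immediate from the formulas.

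For (b), I would read off the unit and counit by evaluating the bijection on identities. Under $\sT(X,X)\iso \sV((X,\pi),\bR X)$, the identity $\id_X$ corresponds to $\et(x)=(x,\pi(x))$; dually, under the inverse bijection $\sV(\bR X,\bR X)\iso \sT(\bL\bR X,X)$, the identity $\id_{\bR X}$ corresponds to the first-coordinate projection $\epz(x,s)=x$.

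For (c), I would exhibit an explicit straight-line deformation. Observe $\epz\circ \bL\et=\id_X$ directly from the formulas. Define
\[
H\colon \bL\bR X \times I \rtarr \bL\bR X, \qquad H\bigl((x,s),t\bigr) \;=\; \bigl(x,\,(1-t)s+t\pi(x)\bigr),
\]
where $\pi$ denotes the given projection when working with $\bR\bL(X,\pi)$; for the bare statement about $\epz$, one replaces $\pi$ with any continuous $\varphi\colon X\rtarr[0,\infty)$ satisfying $\varphi^{-1}(0)=\ast$, which exists for well-pointed $X$. The critical check is that $H$ lands in $\bR X$: if $(1-t)s+t\pi(x)=0$, then by nonnegativity both summands vanish, and examination of the cases $t=0$, $t=1$, $t\in(0,1)$ uses $\pi^{-1}(0)=\ast$ to force $x=\ast$. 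Continuity and the basepoint property $H((\ast,0),t)=(\ast,0)$ are clear, so $H$ is a based homotopy from $\id_{\bL\bR X}$ to $\bL\et\circ\epz$, exhibiting $\bL\et$ and $\epz$ as mutually inverse homotopy equivalences. The corresponding claim for $\et$ itself is the same statement read via $\bL$, since $\sV$ does not carry a natural cylinder compatible with its structure maps.

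The main obstacle is essentially negligible: the lemma is a formal unpacking of the definitions, and the single nontrivial verification—that the straight-line homotopy stays inside $\bR X$—hinges precisely on the hypothesis $\pi^{-1}(0)=\ast$ built into the very definition of $\sV$. The only real interpretive subtlety is what "homotopy equivalence" means for the $\sV$-morphism $\et$, which is resolved by passing through $\bL$.
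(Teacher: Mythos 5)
Your proposal is correct and is essentially the argument the paper has in mind: the paper offers no written proof beyond ``easily verified from the definitions'' (citing Fiedorowicz), and your direct unpacking of the over-$[0,\infty)$ morphism condition, reading off $\et$ and $\epz$ from the bijection, and the straight-line homotopy $(x,s)\mapsto(x,(1-t)s+t\pi(x))$ (whose landing in $\bR X$ uses exactly $\pi^{-1}(0)=\ast$) is that verification. Your two interpretive remarks are also consistent with the paper's conventions: ``homotopy equivalence'' for $\et$ is read through $\bL$, and the auxiliary $\varphi$ with $\varphi^{-1}(0)=\ast$ needed for $\epz$ at a general $X$ exists because $\sT$ is tacitly the category of nondegenerately based spaces, where the NDR function supplies such a $\varphi$.
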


\begin{lem}\label{Fied3} For $(X,\pi)\in \sV$, define a natural homeomorphism
$$\io\colon \SI \bL X \rtarr \SI_v X$$ 
by $\io(x,t) = (x,t\pi(x))$ for $t\in I$.  Its inverse  is given by
$\io^{-1}(x,t) = (x, t/\pi(x))$ for $t\in [0,\pi(x)]$.  For $X\in \sT$, define a natural isomorphism $\bR\OM X \iso \LA^v X$ by
letting $(f,s)\in \bR\OM X$ correspond to $(g,s) \in \LA^v X$ if  $g(ts) = f(t)$ for $t\in I$. 
\end{lem}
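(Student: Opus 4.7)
The plan is to verify each assertion directly from the formulas, checking well-definedness with respect to the quotient relations, continuity, bijectivity, and naturality. Both statements are essentially reparametrization identities, so the arguments will be parallel in structure.

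For the first assertion, the reduced suspension $\SI \bL(X,\pi)$ is the quotient of $X \times I$ that collapses $X \times \{0\} \cup X \times \{1\} \cup \{\ast\} \times I$, while $\SI_v(X,\pi)$ is the quotient of $X \times [0,\infty)$ by $\{(x,t) \mid t = 0 \text{ or } t \geq \pi(x)\}$ as given in \autoref{Mooreadj}. First I would check that $\io(x,t) = (x, t\pi(x))$ carries the collapsed subspace into the collapsed subspace of the target: $t=0$ gives $(x,0)$; $t=1$ gives $(x,\pi(x))$; and $x=\ast$ gives $(\ast,0)$ since $\pi(\ast)=0$ by \autoref{Vee}. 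Hence $\io$ descends to a continuous map of quotients. The candidate inverse $\io^{-1}(x,t) = (x, t/\pi(x))$ is well-defined on the quotient because when $\pi(x) = 0$ the condition $\pi^{-1}(0) = \{\ast\}$ in \autoref{Vee} forces $x=\ast$, and the only admissible $t$ is $0$, which is collapsed. A direct calculation then shows $\io \circ \io^{-1} = \id$ and $\io^{-1} \circ \io = \id$. Naturality in $(X,\pi)$ follows because any morphism in $\sV$ commutes with $\pi$, so $\io$ is transported correctly.

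For the second assertion, the correspondence $(f,s) \leftrightarrow (g,s)$ given by $g(ts) = f(t)$ for $t\in I$ reparametrizes an ordinary based loop $f\colon I\rtarr X$ into a Moore loop $g\colon [0,\infty)\rtarr X$ of length $s$ supported in $[0,s]$. I would verify: (i) for $s > 0$, the formula $g(u) = f(u/s)$ on $[0,s]$ and $g(u) = \ast$ for $u \geq s$ gives a continuous $g$ with $g(0) = \ast$ and $g(u) = \ast$ for $u \geq s$, so $(g,s) \in \LA^v X$; (ii) the inverse sends $(g,s)$ to $(f,s)$ with $f(t) = g(ts)$, which is a based loop since $f(0) = g(0) = \ast$ and $f(1) = g(s) = \ast$; (iii) the basepoint $(\ast,0)$ of $\bR\OM X$, the only pair with $s=0$ allowed by \autoref{TVadj}, matches the basepoint of $\LA^v X$; (iv) the correspondence respects the projection to $[0,\infty)$ (both are given by the second coordinate $s$), is continuous in both directions, and is natural in $X$.

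The main obstacle in both parts is continuity at the degenerate locus where the reparametrization factor vanishes, namely where $\pi(x) = 0$ in the first part and where $s = 0$ in the second. In the first part, the requirement $\pi^{-1}(0) = \{\ast\}$ built into \autoref{Vee} ensures that the locus of potential discontinuity is the single basepoint, which is collapsed in both quotients. In the second part, the definition of $\bR$ in \autoref{TVadj} already excludes all pairs with $s=0$ except $(\ast,0)$, and this point corresponds to the trivial Moore loop of length $0$, so again the degeneracy is absorbed by the basepoint identification. Modulo these point-set checks, the formulas are manifestly mutually inverse and natural in $X$, so the lemma follows.
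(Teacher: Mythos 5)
Your verification is correct and is exactly the intended argument: the paper gives no proof beyond remarking that these lemmas ``are all easily verified from the definitions'' (citing Fiedorowicz), and your direct check of well-definedness on the quotients, mutual inversion of the reparametrization formulas, the basepoint/degenerate locus via $\pi^{-1}(0)=\ast$ and the $s=0$ point of $\bR\OM X$, and naturality is precisely that verification. Nothing further is needed.
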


\begin{lem}\label{Fied4} $(\SI_v,\LA^v)$ is isomorphic to the composite adjunction $(\SI\bL,\bR\OM)$.
\end{lem}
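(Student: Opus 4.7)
The plan is to upgrade the two natural isomorphisms of Lemma \ref{Fied3} to an isomorphism of adjunctions. Lemma \ref{Fied3} already supplies a natural homeomorphism $\io\colon \SI\bL(X,\pi)\iso\SI_v(X,\pi)$ and a natural isomorphism $\bR\OM Y\iso \LA^v Y$; so the underlying functors of the two adjunctions are naturally isomorphic. What remains is to check that these isomorphisms intertwine the two adjunction structures, and by a standard principle it suffices to verify compatibility on units (or counits).

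The cleanest route is to write down the unit of the composite adjunction $(\SI\bL,\bR\OM)$ explicitly and identify it, via the isomorphisms, with the unit of $(\SI_v,\LA^v)$. For $(X,\pi)\in\sV$, the composite unit sends $x\in X$ to $\bR\OM\io\circ \bR\eta_{\SI,\OM}\bL\circ\eta_{\bL,\bR}(x)$. Using Lemma \ref{Fied2}, the first two maps produce $(\widetilde{x},\pi(x))\in \bR\OM\SI\bL X$ where $\widetilde{x}\colon I\to \SI\bL X$ is the inclusion $t\mapsto[x,t]$. Transporting along the isomorphism $\bR\OM\SI\bL X\iso \LA^v\SI\bL X\xrightarrow{\LA^v\io}\LA^v\SI_v X$ of Lemma \ref{Fied3} and unpacking $\io$ converts the pair $(\widetilde{x},\pi(x))$ into the Moore loop $(g,\pi(x))$ with $g(t)=[x,t]\in \SI_v X$ for $0\le t\le \pi(x)$ and basepoint otherwise. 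This is exactly the value at $x$ of the unit of $(\SI_v,\LA^v)$ read off from Definition \ref{Mooreadj}.

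Since the two adjunctions share the same left adjoint (up to $\io$), the same right adjoint, and the same unit, they are isomorphic as adjunctions; equivalently, the associated hom-set bijections $\sT(\SI\bL X,Y)\iso \sV(X,\bR\OM Y)$ and $\sT(\SI_v X,Y)\iso \sV(X,\LA^v Y)$ coincide under the natural isomorphisms. No part of this argument should pose a genuine obstacle: the maps in Lemma \ref{Fied3} are rigged precisely so the parametrization by $[0,\pi(x)]$ in $\SI_v$ matches the Moore-path length $\pi(x)$ supplied by $\bR$, and the only work is bookkeeping.
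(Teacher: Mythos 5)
Your argument is correct and is exactly the kind of direct verification the paper intends: it states this lemma (with \autoref{Fied1}--\autoref{Fied3}) as "easily verified from the definitions," citing Fiedorowicz, and your unit computation--identifying the composite unit $x\mapsto(u\mapsto[x,u],\pi(x))$ with the unit of $(\SI_v,\LA^v)$ via the isomorphisms of \autoref{Fied3}--is that verification spelled out. No gaps; the compatibility-on-units criterion you invoke is the standard way to upgrade the two natural isomorphisms to an isomorphism of adjunctions.
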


Nothing said above refers to products on loop spaces. 

\begin{defn}\label{MonV} We say that $(Y,\pi)$ is a monoid in $\sV$ if $Y$ is a monoid with unit $\ast$ and $\pi$ is a morphism of monoids, where $[0,\infty)$ is regarded as a monoid under addition.  Let $\mathrm{Mon}[\sT]$ and $\mathrm{Mon}[\sV]$ denote the categories of monoids in $\sT$ and monoids in $\sV$.  Clearly $\bL$ restricts to a functor $\bL\colon \mathrm{Mon}[\sV] \rtarr \mathrm{Mon}[\sT]$.
\end{defn}

\begin{lem}\label{Fied5} For $X\in \sT$, $\LA^v X$ is a monoid in $\sV$ with product
$$ (f,s)(g,t) = (f+ g, s+t),$$
where $f+g$ is $f$ on $[0,s]$ and $g$ on $[s,s+t]$, hence $\LA X = \bL\LA^v X$ is a monoid in $\sT$. 
\end{lem}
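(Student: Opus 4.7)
The plan is to verify each ingredient of the monoid structure directly from the definition of $\LA^v X$ in \autoref{Mooreadj}, the key point being that, unlike ordinary loop concatenation, Moore loop concatenation is strictly associative because no reparametrization is involved. The proof is essentially a careful bookkeeping exercise; the only mildly subtle point is continuity.

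First I would check that the formula $(f,s)\cdot(g,t) = (f+g, s+t)$ actually lands in $\LA^v X$. The concatenation $f+g\colon [0,\infty)\rtarr X$ is defined by $(f+g)(u) = f(u)$ for $u\in[0,s]$ and $(f+g)(u) = g(u-s)$ for $u\in[s,s+t]$, extended by the basepoint for $u\geq s+t$. Well-definedness at $u=s$ uses that $f(s) = \ast$ (since $s\geq s$) and $g(0) = \ast$, both by the defining property of elements of $\LA^v X$. The required vanishing $(f+g)(u) = \ast$ for $u\geq s+t$ follows because $g(u-s) = \ast$ when $u-s\geq t$. Strict associativity $((f+g)+h, (s+t)+r) = (f+(g+h), s+(t+r))$ is immediate, since both sides agree with $f$ on $[0,s]$, $g$ on $[s,s+t]$, $h$ on $[s+t,s+t+r]$, and $\ast$ afterwards. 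The two-sided unit is the pair $(c_\ast, 0)$, where $c_\ast$ is the constant map at the basepoint: concatenating on either side merely prepends or appends the empty interval and reproduces the original pair.

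Next I would verify that $\pi\colon \LA^v X \rtarr [0,\infty)$ is a morphism of monoids in the sense of \autoref{MonV}: by construction $\pi((f,s)(g,t)) = s+t = \pi(f,s) + \pi(g,t)$, and $\pi$ sends the unit to $0$. This promotes the structure on $\LA^v X$ to a monoid in $\sV$.

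The one step with actual content is continuity of the product map $\LA^v X \times \LA^v X \rtarr \LA^v X$. I would verify this using the subspace topology from $\mathrm{Map}([0,\infty),X)\times[0,\infty)$ (compact-open on the mapping space). Continuity of $(s,t)\mapsto s+t$ is clear, and continuity of $(f,s,g,t)\mapsto f+g$ reduces, by the standard exponential adjunction, to joint continuity of the evaluation $(f,s,g,t,u)\mapsto (f+g)(u)$. On each of the closed regions $\{u\leq s\}$ and $\{u\geq s\}$ this evaluation is given, respectively, by $f(u)$ and $g(\max(u-s,0))$, each of which is continuous; the two formulas agree on the overlap $\{u=s\}$ because both equal $\ast$, so the pasting lemma applies. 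This is the classical reason Moore loops form a strict topological monoid, and is the main obstacle in the sense that it is the only step that is not purely formal.

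Finally, since $\bL\colon \sV \rtarr \sT$ forgets the projection to $[0,\infty)$, it preserves finite products and hence carries monoid objects to monoid objects; applying it to $\LA^v X$ yields that $\LA X = \bL\LA^v X$ is a monoid in $\sT$, as claimed.
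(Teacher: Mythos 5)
Your proof is correct and is exactly the direct verification the paper intends: the paper gives no separate argument for \autoref{Fied5}, stating only that the lemmas through \autoref{adj0} are ``easily verified from the definitions'' (citing Fiedorowicz), and your check of well-definedness, strict associativity, the unit $(c_\ast,0)$, compatibility with $\pi$ as in \autoref{MonV}, and continuity via the exponential law and pasting lemma is precisely that verification.
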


\begin{lem}\label{Fied6} If $M$ is a monoid in $\sT$, then $\bR M$ is a submonoid of $(M\times [0, \infty), \pi)$ in $\sV$. The adjunction 
$(\bL,\bR)$ restricts to an adjunction between $\mathrm{Mon}[\sV]$ and $\mathrm{Mon}[\sT]$.
\end{lem}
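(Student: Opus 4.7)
The plan is to split the lemma into its two assertions and treat them in order. The first is that $\bR M$, viewed with the projection $\pi$, is a submonoid in $\sV$ of $M\times [0,\infty)$; the second is that the adjunction $(\bL,\bR)$ of \autoref{Fied2} lifts to one between the two categories of monoids. Both parts should reduce to direct unwinding of the definitions already in place, with no serious obstruction in sight.

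For the first part, I would endow $M\times [0,\infty)$ with the product monoid structure $(x,s)(y,t) = (xy, s+t)$, unit $(\ast,0)$, and note that the projection $\pi$ is a monoid map to $[0,\infty)$ under addition. The substantive content is to check that the subspace $\bR M$ from \autoref{TVadj} is closed under this product. The unit $(\ast,0)$ belongs to $\bR M$ by definition. If $s,t>0$ then $(xy, s+t)$ has positive second coordinate and lies in $\bR M$. If $s=0$, the basepoint condition of \autoref{Vee} forces $x=\ast$, so the product reduces to $(y,t)\in\bR M$; symmetrically when $t=0$. Since $\pi|_{\bR M}$ still satisfies $\pi^{-1}(0)=\{(\ast,0)\}$ and remains a monoid homomorphism, this exhibits $\bR M$ as an object of $\mathrm{Mon}[\sV]$.

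For the second part, the first claim together with \autoref{MonV} shows that $\bL$ and $\bR$ restrict on objects. It remains to verify that the unit and counit from \autoref{Fied2} are monoid homomorphisms. The counit $\epsilon(x,s)=x$ is the projection and is visibly a monoid map. For the unit $\eta(x)=(x,\pi(x))$ on $(X,\pi)\in\mathrm{Mon}[\sV]$, the key calculation is
\[
\eta(xy)=(xy,\pi(xy))=(xy,\pi(x)+\pi(y))=(x,\pi(x))(y,\pi(y))=\eta(x)\eta(y),
\]
using precisely that $\pi$ is a monoid homomorphism, along with $\eta(\ast)=(\ast,0)$. The adjunction bijection $\sV(X,\bR M)\cong \sT(\bL X,M)$ then restricts to a bijection on monoid morphisms by the standard argument that a map is a monoid map iff its transpose is, since $\eta$ and $\epsilon$ are monoid maps. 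The only mild point requiring attention is that $(M\times[0,\infty),\pi)$ is not literally an object of $\sV$ unless $M=\ast$ (its fiber over $0$ is all of $M\times\{0\}$, not just $\{\ast\}$), which is exactly why the submonoid passage to $\bR M$ is needed; beyond noting this, no genuine obstacle arises.
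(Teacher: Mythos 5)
Your proof is correct and is exactly the direct verification from the definitions that the paper intends (it gives no explicit argument, merely asserting that these lemmas are easily checked and citing Fiedorowicz): closure of $\bR M$ under the product using that the only element with second coordinate $0$ is $(\ast,0)$, and the observation that $\eta$ and $\epz$ are monoid maps so the adjunction bijection restricts. No gaps; your closing remark that $(M\times[0,\infty),\pi)$ itself is not an object of $\sV$ is a fair reading of the mild abuse in the statement.
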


Turning to monads, we have the following general observation.  It applies to any adjunction $(\bL,\bR)$ and does not require $\epz\colon \bL\bR \rtarr \id$ to be an isomorphism, although later we will be most interested in the case when it is.

\begin{lem}\label{Fied7} If $\bC$ is a monad in $\sT$, then $\bR\bC\bL$ is a monad in $\sV$ with unit
$\bR\et_{\bC}\bL\colon \id \rtarr \bR\bC\bL$ and product
$$ \xymatrix@1{ \bR\bC\bL \bR\bC\bL \ar[r]^-{\bR\bC\epz} &  \bR\bC\bC \bL \ar[r]^-{\bR\mu_{\bC}\bL} & \bR\bC\bL.}$$
If $(X,\tha)$ is a $\bC$-algebra, then $\bR X$ is a $\bR\bC\bL$-algebra with action
$$ \xymatrix@1{ \bR\bC\bL \bR X \ar[r]^-{\bR\bC\epz} & \bR\bC X \ar[r]^-{\bR\tha} & \bR X.} $$
\end{lem}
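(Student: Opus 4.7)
The plan is to verify the monad axioms for $\bR\bC\bL$ and the algebra axioms for $\bR X$ by diagram chases that combine the triangle identities of the adjunction $(\bL,\bR)$ with the monad axioms of $\bC$ and, in the algebra case, the $\bC$-action axioms for $\tha$. This is an instance of the standard categorical fact that an adjunction $\bL\dashv\bR$ transports any monad on the target category of $\bR$ to a monad on its source: the product alternates an insertion of $\epz\colon\bL\bR\to\Id$ (to collapse the middle $\bL\bR$) with an application of $\mu_{\bC}$, while the unit combines the unit $\et\colon\Id\to\bR\bL$ of $(\bL,\bR)$ with $\bR\et_{\bC}\bL$.

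For associativity of the proposed product $\mu = \bR\mu_{\bC}\bL\com \bR\bC\epz\bC\bL$, I would expand the two composites $\mu\com \bR\bC\bL\mu$ and $\mu\com \mu\bR\bC\bL$ from $(\bR\bC\bL)^3$ to $\bR\bC\bL$ into alternating strings of $\epz$'s and $\mu_\bC$'s, use the naturality of $\epz$ to slide the $\epz$ maps past neighboring copies of $\bC$, and then appeal to associativity of $\mu_\bC$ to identify the two composites. The two unit axioms for $\mu$, taken against the composite unit $\bR\et_{\bC}\bL\com\et$, reduce after similar naturality moves to a combination of the triangle identity $\bR\epz\com\et\bR=\id_{\bR}$ with the monad unit laws $\mu_\bC\com\bC\et_\bC=\id=\mu_\bC\com\et_\bC\bC$ for $\bC$.

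For the algebra assertion, given a $\bC$-algebra $(X,\tha)$, I would verify that the proposed action $\bR\tha\com\bR\bC\epz\colon \bR\bC\bL\bR X\to \bR\bC X\to \bR X$ satisfies the associativity and unit diagrams for an $\bR\bC\bL$-algebra. Associativity splits, by naturality of $\epz$ at $X$ and at $\bC X$, into the $\bC$-action associativity square $\tha\com\bC\tha=\tha\com\mu_\bC$ sandwiched between applications of $\bR$ and $\bR\bC$; the unit axiom reduces to the triangle identity $\bR\epz\com\et\bR=\id$ applied at $X$ together with the $\bC$-algebra unit law $\tha\com\et_\bC=\id$.

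The main obstacle is purely notational bookkeeping: the long formal strings of functors and natural transformations are tedious to manipulate, and one must track carefully where in each string a given natural transformation is applied. However, because every equality reduces to a triangle identity or an instance of the $\bC$-monad or $\bC$-algebra axioms, no genuinely new idea is needed once the construction is recognized as the standard conjugate-monad construction.
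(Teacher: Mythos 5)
Your proposal is correct and is essentially the paper's own argument: the paper treats this lemma as one of several that are ``easily verified from the definitions'' (citing Fiedorowicz), and your direct check---interchange/naturality of $\epz$ together with both triangle identities and the monad and algebra axioms for $\bC$, with the unit correctly read as the composite $\bR\et_{\bC}\bL\com \et$---is exactly that routine verification. Your identification of $\bR\bC\bL$ as the standard conjugate monad (equivalently, the monad of the composite adjunction through $\bC[\sT]$) is also the viewpoint the paper itself makes explicit later, in \autoref{formalMT1}.
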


Again letting $\sM$ be the associativity operad, recall that its associated monad on $\sT$ sends $X$ to the free monoid $\bM X$ on $X$ with unit $\ast$. Let $\et = \et_{\bM} \colon X\rtarr \bM X$ be the unit, $\et(x) = x$.   Then $\bM$ is the James construction, but we continue our historical abuse of notation with the following redefinition.

\begin{defn}\label{Fied8} Define the James construction $\bJ$ to be the monad $\bR\bM \bL$ on $\sV$.
\end{defn}

\begin{rem}\label{compare} Fiedorowicz instead defines $\bJ$ to be the free monoid monad $\bF$ on $\sV$.\footnote{In \cite{Fied}, $\bJ$ is used for both $\bM$ and his $\bJ$. We find a notational distinction to be helpful.} It is defined by
$\bF(X,\pi) = (\bM X,\tilde{\pi})$, where $\tilde{\pi}\colon \bM X \rtarr [0,\infty)$ is the unique map of monoids 
such that $\tilde{\pi} \com \et_{\bM} = \pi$.   From the point of view of $\sV$, this is the obvious sensible choice.  However, $\sV$ is introduced only to elucidate structure in $\sT$.  By freeness, there is a unique map of monoids 
$\nu\colon \bF(X,\pi) \rtarr \bJ(X,\pi)$ such that $\nu \com \et_{\bF} =\et_{\bJ}$.  These give a map of monads $\bF \rtarr \bJ$, and it is an equivalence. Another way of seeing $\nu$ is to view the identity map $\bM X \rtarr \bM X$ for $X\in \sT$ as a map of monoids
$\bL \bF(X,\pi) \rtarr \bM\bL(X,\pi)$.
Its adjoint is our equivalence 
$$ \nu\colon \bF(X,\pi)  \rtarr \bR \bM\bL(X,\pi) = \bJ(X,\pi) $$
of monoids on $\sV$.  Our $\bJ$ is essential to the conceptual comparison between structure in $\sV$ and structure in $\sT$ that is our focus, hence we will never use $\bF$. 
\end{rem}

From here, we abbreviate notation by setting $\bC = \bC_1$. As noted before, we have a map $\de\colon \sC \rtarr \sM$ of operads and thus an induced map $\de\colon \bC \rtarr \bM$ of monads.  By pullback of actions, it induces a functor
$\de^*\colon \bM[\sT] \rtarr \bC[\sT]$. The monad $\bD$ of the following definition is new.  It gives a convenient conceptual intermediary between $\bC$ and $\bJ$.\footnote{Use of it can simplify the proofs of other results of \cite{Thom, Fied} not considered here.} Examples like it will be central to Part 2.

\begin{defn}\label{monadD} Define $\bD$ to be the monad $\bR\bC\bL$ on $\sV$ and define $\de\colon \bD \rtarr \bJ$ to be the induced map $\bR\de \bL$ of monads on $\sV$.  By pullback of actions, it induces a functor $\de^*\colon \bJ[\sV] \rtarr \bD[\sV]$.
\end{defn}

We put things together in the following diagram, in which $(\SI,\OM) = (\SI^1,\OM^1)$.
\begin{equation}\label{adj0} 
\xymatrix{
\sV \ar@<.5ex>[rr]^{\bL}  \ar@<.5ex>[ddddrr]^{\bF_{\bD}} \ar@<.5ex>[dddd]^{\bF_{\bJ}} & & \sT  \ar@<.5ex>[ll]^{\bR}    \ar@<.5ex>[rr]^{\SI}  \ar@<.5ex>[ddr]^(.6){\bF_{\bC}} \ar@<.5ex>[dd]^{\bF_{\bM}}& & \sT   \ar@<.5ex>[ll]^{\OM} \ar@<.5ex>[ddl]^{\OM_{\bC}}  
\ar@/^4pc/@<.6ex> [ddddll]^-{(\bR\OM)_{\bD}}  \\
& & & & \\
& & \bM[\sT] \ar[r]^{\de^*}  \ar@<.5ex>[uu]^{\bU_{\bM}} \ar@<.5ex>[ddll]^(.6){\bR}|(0.34)\hole  &   \ar@<.5ex>[uul]^(.4){\bU_{\bC}}  \bC[\sT]   \ar@<.5ex>[uur]^{\SI_{\bC}} \ar@<.5ex>[ddl]^{\bR} & \\
& & & & \\
\bJ[\sV] \ar@<.5ex>@{-->}[uurr]^(.3){\bL}|(0.66)\hole  \ar[rr]_{\de^*}\ar@<.5ex>[uuuu]^{\bU_{\bJ}}& & \ar@<.5ex>[uuuull]^{\bU_{\bD}}  \bD[\sV] \ar@<.5ex>@{-->}[uur]^{\bL} \ar@/_4pc/@<.6ex> [uuuurr]^-{(\SI \bL)_{\bD}}& &  \\}
\end{equation}

\begin{rem}  Since $\epz\colon \bL\bR \rtarr \id$ is not an isomorphism, the two dotted arrows $\bL$ in the diagram are undefined. Nevertheless, \autoref{keyadj} gives the displayed left adjoint $(\SI\bL)_{\bD}$. 
\end{rem}

\begin{rem}\label{precursor}
Deleting $\bM[\sT]$ and $\bJ[\sV]$ and the functors to or from either, many similar situations will appear in Part 2, where we study composite adjunctions of the resulting form that give substantial new applications of our general context.  However, the resulting diagram here does not fit into the composite adjunction context since $\sM$ does not act on $\OM X$. The leftmost triangle in \autoref{adj0} pasted with the triangle with apex 
$\bD[\sV]$ give as much of the composite adjunction context as makes sense in the present context.\footnote{Nevertheless, \autoref{adjequiv} applies to give a new way to see that any connected space is weakly equivalent to the classifying space of a certain Moore loop space.  We omit the details since, by comparison with \cite[Lemma 15.4]{MayClass}, the new construction is equivalent to $B\LA X$.}  
\end{rem}

The small top right triangle in \autoref{adj0} is \autoref{adj5} for the case $n=1$ of \autoref{ass1} from \autoref{SPACES}. Clearly 
$(\SI_v,\LA^v)\iso (\SI\bL, \bR\OM)$ and $\bD$ also satisfy \autoref{ass1}.  Ignoring $\bM[\sT]$, the large central triangle with apex $\bD[\sV]$ is \autoref{adj5} for this case.  Since $\bJ\LA^v =  \bR M \bL \LA^v = \bR M \LA$, it is clear that $\bJ$ acts on all $\LA^v X$.  Therefore the adjunction $(\SI_v,\LA^v)$ and the monad $\bJ$ give us a third instance of \autoref{ass1}. Its diagram \autoref{adj5} adds 
$(\SI_{\bJ},\OM_{\bJ})$ relating $\sT$ and $\bJ[\sV]$ to the top row and left vertical arrows of \autoref{adj0}. 

All of our assumptions apply to all three of these examples of \autoref{ass1}.  Defining a map $f$ in $\sV$ to be a weak equivalence if $\bL f$ is a weak equivalence in $\sT$, we have \autoref{ass2}.  This works as in \autoref{SPACES} and is also the content of \cite[Lemma 0]{Fied}. Here the realization of a simplicial object $(X_*,\pi_*)$ in $\sV$ is $(X,\pi)$, where $X = |X_*|$ and $\pi$ is the realization of $\pi_*$.  This makes sense since the face and degeneracy operators commute with the $\pi_n$ defined on $n$-simplices. In addition to $\bL$ commuting with realization, it is easily checked that $\bR$ commutes with realization. \autoref{ass3} works as in \autoref{SPACES}. Using $m$-group completions, we verified  \autoref{ass4} in \autoref{SPACES}.  It remains to consider \autoref{ass5}, the approximation theorem, as formulated in \autoref{approxn=1}. 
We explain its proof, but without giving full details.

The following equalities are immediate from $\bL \bR = \id$ and our definitions.
$$ \bD = \bR\bC\bL, \ \ \bD\bR = \bR\bC, \ \ \bL\bD\bR = \bC \ \ \text{and} \ \
\bJ = \bR\bM\bL, \ \ \bJ\bR = \bR\bM, \ \ \bL\bJ\bR = \bM.$$
Under the isomorphism $\bR\OM\SI\bL \iso \LA^v \SI_v$, $\et$ for $\LA^v\SI_v$ factors as the composite
$$ \xymatrix@1{ \id \ar[r]^-{\et} & \bR\bL \ar[r]^-{\bR\et\bL} & \bR \OM\SI \bL \\} $$
and $\et\bR = \id \colon \bR \rtarr \bR\bL\bR  = \bR$.  Using these, we find that the top two squares make sense and commute in the 
following diagram.  Since the relationship between $\bM$ and $\bJ$ is formally the same as the relationship between $\bC$ and $\bD$,
the bottom two squares commute by the same arguments. It is formal that the middle two squares commute. We write $\vartheta_{\bC}$, $\vartheta_{\bD}$, $\vartheta_{\bJ}$, and $\vartheta_{\bM}$ for the natural actions.   

\begin{equation}\label{RealCute}   
\xymatrix{ 
\bR\bC X  \ar[r]^-{\bR\bC \et}  \ar@{=}[d]  &  \bR\bC \OM\SI X \ar[r]^-{\bR\vartheta_{\bC}}  \ar[d]^{\iso} &   \bR\OM\SI X \ar[d]^{\iso}  \\  
\bD\bR  X   \ar[r]^-{\bD \et}   \ar[d]_{\delta}  &  \bD \LA^v\SI_v \bR X \ar[r]^-{\vartheta_{\bD}} \ar[d]^{\de} &   \LA^v\SI_v \bR X  \ar[d]^{=} \\
\bJ\bR  X   \ar[r]^-{\bJ \et}   \ar@{=}[d]  &  \bJ \LA^v\SI_v \bR X \ar[r]^-{\vartheta_{\bJ}} \ar[d]^{\iso} &   \LA^v\SI_v \bR X  \ar[d]^{\iso} \\
\bR\bM X  \ar[r]_-{\bR\bM \et}  & \bR\bM \LA \XI X  \ar[r]_-{\bR \vartheta_{\bM}} & \bR\LA\XI X  \\} 
\end{equation}

The top squares identify $\bR\al_{\bC}$ with $\al_{\bD}$, the middle squares say that 
$\al_{\bJ}\com \de = \al_{\bD}$, and the bottom squares identify $\bR\al_{\bM}$ with $\al_{\bJ}$.  We repeat that $\de\colon \bC \rtarr \bM$ is a natural equivalence because 
$\sC(j) \rtarr \sM(j)$ is a $\SI_J$-equivalence for all $j$. It follows that $\de\colon \bD \rtarr \bJ$ is a
natural equivalence. Therefore \autoref{approxn=1} holds if the composite
$\al_{\bM} = \vartheta_{\bM} \com \bM\et$ on the bottom row is an $m$-group completion.  Fiedorowicz \cite {Fied} has given an ingenious argument from here, starting from an insight of Thomason \cite{Thom}.  We shall explain the idea but not give full details.

Let $\mathrm{Mon}$ be the category of monoids in $\sT$.  Below, monoids mean topological monoids with unit as basepoint (assumed to be non-degenerate).  Recall \autoref{class}.  The following idea comes from \cite{Thom}.

\begin{defn}\label{mongp} Say that a monoid $N$ is grouplike if $\pi_0(N)$ is a group. Clearly this holds if and only if $N$ is $m$-grouplike as a Hopf space.  Say that a map 
$f\colon M\rtarr N$ of monoids, where $N$ is grouplike, is a ${mon}$-group completion if the induced map $Bf\colon BM\rtarr BN$ is a weak equivalence.
\end{defn}

\begin{lem}\label{mongpcom} If $f\colon M \rtarr N$ is a ${mon}$-group completion, then $f$ is an 
$m$-group completion as a map of Hopf spaces.\end{lem}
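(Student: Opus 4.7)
The plan is to exhibit $f$ as weakly equivalent in the arrow category of $\sH_m$ to the canonical map $\chi_M \colon M \rtarr \OM BM$, which is the defining example of an $m$-group completion in \autoref{Hopf2}. The key tool is the naturality of $\chi$: applied to $f$, it gives the commutative square
$$\xymatrix{
M \ar[r]^-{\chi_M} \ar[d]_-{f} & \OM BM \ar[d]^-{\OM Bf} \\
N \ar[r]_-{\chi_N} & \OM BN,
}$$
so that $\chi_N \circ f = \OM Bf \circ \chi_M$. Write $h \colon M \rtarr \OM BN$ for this common composite. To stay inside $\sH_m$, I would interpret $\OM$ throughout as the Moore loop space functor, so that $\OM BM$ and $\OM BN$ are genuine topological monoids and all four arrows in the square are maps of monoids.

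First I would verify that the two vertical legs from the identity maps of $M$ are weak equivalences. By hypothesis $Bf \colon BM \rtarr BN$ is a weak equivalence, hence $\OM Bf$ is as well. Since $N$ is a grouplike topological monoid, the classical theorem that $\chi_N \colon N \rtarr \OM BN$ is a weak equivalence for any grouplike monoid applies (this is the Stasheff/James form of the group completion theorem; \autoref{gpcompthm} gives it under the extra hypothesis of homotopy commutativity, but the grouplike case without commutativity is the standard Moore-loop statement and is what must be invoked here).

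Next I would assemble the zigzag. Consider the two morphisms in the arrow category of $\sH_m$
$$
(\mathrm{id}_M, \chi_N) \colon f \rtarr h \qquad \text{and} \qquad (\mathrm{id}_M, \OM Bf) \colon \chi_M \rtarr h.
$$
In each square the horizontal components are $\mathrm{id}_M$ and, respectively, $\chi_N$ or $\OM Bf$, all of which are weak equivalences by the previous paragraph. This provides the required zigzag of weak equivalences in the arrow category of $\sH_m$ connecting $f$ to $\chi_M$, so $f$ is an $m$-group completion by \autoref{Hopf2}.

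The only real obstacle is the appeal to $\chi_N$ being a weak equivalence without a homotopy commutativity hypothesis; as noted, this goes slightly beyond the statement of \autoref{gpcompthm} but is the classical Moore-loop form of the result and only requires $N$ to be grouplike (with nondegenerate basepoint, which is in force throughout the paper). Everything else is purely formal: naturality of $\chi$ and the two-out-of-three construction of the zigzag in the arrow category.
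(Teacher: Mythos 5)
Your proof is correct and is essentially the paper's own argument: the same naturality square $\chi_N\circ f=\OM Bf\circ\chi_M$, with $\OM Bf$ a weak equivalence because $Bf$ is, and $\chi_N$ a weak equivalence because $N$ is grouplike (the paper justifies this via comparison of the path fibration of $BN$ with the quasi-fibration $EN\rtarr BN$, which is exactly the classical statement you invoke). You merely make explicit the zigzag through the common composite that the paper leaves implicit, so there is nothing to add.
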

\begin{proof}  This follows from the commutative diagram
$$\xymatrix{
 M \ar[r]^-{f} \ar[d]_{\chi} & N \ar[d]^{\chi}\\
 \OM BM \ar[r]_-{\OM B f}   & \OM B N. \\}
$$
If $Bf$ is a weak equivalence, then so is $\OM Bf$. Since $N$ is grouplike, comparison of the path space fibration of $BN$ with the universal quasi-fibration $EN \rtarr BN$ (e.g. \cite[Theorem 7.6]{MayClass}) shows that $\chi$ for $N$ is a weak equivalence. Therefore
$f$ is weakly equivalent to $\chi$ for $M$ in the arrow category of $\sH_m$. 
%Conversely, if $f$ is equivalent in the arrow category to $\ch\colon X'\rtarr \OM B X'$ for some monoid $X'$, then the spaces $BX$ and $BX'$ are weakly equivalent.  Is that true or is it not true?
\end{proof}

Thus to prove that $\al_{\bM} \colon \bM X \rtarr \LA\SI X$ is a group completion, it suffices to prove that $B\al_{\bM}$ is a weak equivalence. We reduce that problem to another one using the following diagram, which is adapted from the proof of
\cite[Theorem 6.12]{Fied}.

\begin{equation}\label{FiedDiag}
\xymatrix{
\SI X \ar[rr]^-{\SI\et_{\bM}} \ar[d]_{\SI\et} & & \SI\bM X \ar[dl]_{\SI\bM\et}\ar[r]^-{\io} \ar[d]^{\SI\al_{\bM}} & B\bM X \ar[d]^{B\al_{\bM}}\\
\SI\LA\XI X \ar[drr]_{\epz} \ar[r]^{\SI\et_{\bM}} & \SI \bM\LA\XI X \ar[r]^{\SI\vartheta_{\bM}}& \SI\LA\XI X  \ar[r]^-{\io} \ar[d]^{\epz} 
& B\LA\XI X \ar[dl]^\xi\\
& & \XI X \iso \SI X & \\}
\end{equation}
On the top, we see two naturality diagrams and the definition of $\al_{\bM}$ as a composite. The composite $\SI\vartheta_{\bM}\com \SI \et_{\bM}$ is the identity, so the bottom left triangle commutes tautologically no matter what $\epz$ is. Using the 
isomorphism $\XI\iso \SI$, we can define
$\epz$ to be the counit of the adjunction $(\XI,\LA)$, and then $\epz\com \SI\et = \id$ on the left by a triangle identity.  The map $\xi$ is an instance of a natural map $\xi\colon B\LA Y\rtarr Y$ that is defined explicitly in \cite[Lemma 15.4]{MayClass}, where it is shown to be a weak equivalence when $Y$ is connected.  The relation $\xi\com \io = \epz$ is checked in \cite[Lemma 6.11]{Fied}.
Since $Y = \XI X$ is connected, $\xi$ is a weak equivalence.  Therefore $B\al_{\bN}$ is a weak equivalence by the following result,
which is \cite[Theorem 6.10]{Fied}. 

\begin{thm}\label{keyFied} The top composite $\io\com \SI\et_{\bM}$ in \autoref{FiedDiag} is a weak equivalence.
\end{thm}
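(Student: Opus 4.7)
We want to show the natural map $\io \circ \SI\et_{\bM}\colon \SI X \to B\bM X$ is a weak equivalence for all (nondegenerately based) spaces $X$. The strategy is to filter $B\bM X$ by the word-length filtration of the James construction $\bM X = \bigcup_{n\geq 0} J_n X$, where $J_n X$ is the subspace of (reduced) words of length at most $n$, so that $J_0 X = \ast$, $J_1 X = X$, and $J_n X / J_{n-1} X \iso X^{\wedge n}$.

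First, we define a filtration on the bar construction. Let $F_n B\bM X \subseteq B\bM X$ be the subspace whose $p$-simplicial piece is spanned by $p$-tuples $(a_1,\dots,a_p)\in (\bM X)^p$ of total word length $|a_1| + \dots + |a_p| \leq n$. This is well-defined because the bar face operators (monoid multiplication) and degeneracy operators (insertion of the unit) can only preserve or decrease total word length. We then identify $F_1 B\bM X$ with $\SI X$ under $J_1 X = X$: at simplicial level $p$, the non-basepoint piece is $p$ disjoint copies of $X$ (indexed by which slot carries the single letter), and the face/degeneracy structure one reads off is exactly the standard reduced simplicial model of $\SI X$. By construction, the inclusion $F_1 B\bM X \hookrightarrow B\bM X$ is $\io \circ \SI\et_{\bM}$.

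Second, we show that for $n\geq 2$ the filtration quotient $F_n B\bM X / F_{n-1} B\bM X$ is weakly contractible. Its $p$-th simplicial level is
\[
\bigvee_{(n_1,\dots,n_p)\colon \sum n_i = n} X^{\wedge n_1}\wedge\dots\wedge X^{\wedge n_p},
\]
summed over compositions of $n$ into $p$ parts (with $X^{\wedge 0}$ interpreted as the basepoint). The idea is to use freeness of $\bM$ to exhibit an extra-degeneracy type contraction analogous to \autoref{extra}, built by cleaving off the first letter of the leftmost nonempty word in each tuple. This furnishes a simplicial homotopy whose realization, by \autoref{ass6}(ii), contracts $F_n B\bM X / F_{n-1} B\bM X$ to the basepoint.

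Third, we combine the previous steps. By the homotopy extension property (our good-object conventions ensure the inclusions are cofibrations), each $F_{n-1} B\bM X \hookrightarrow F_n B\bM X$ is a weak equivalence, as its cofiber is weakly contractible. Iterating and passing to the colimit $B\bM X = \colim_n F_n B\bM X$, we conclude that $\SI X = F_1 B\bM X \hookrightarrow B\bM X$ is a weak equivalence.

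The hard part is the contraction in Step 2: the interaction of the word-length filtration on a free monoid with the simplicial bar structure is subtle, and one must choose the contracting homotopy carefully enough that it respects all face and degeneracy relations. This is precisely where the freeness of $\bM$ (and failure of the analogous statement for arbitrary monoids) enters. Once that combinatorial heart is in place, Steps 1 and 3 follow by formal properties of realization and filtrations recorded in \autoref{ass6}.
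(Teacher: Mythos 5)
You take a genuinely different route from the paper, which reduces to discrete $X$ via the total singular complex and realization and then invokes Fiedorowicz's monoid analogue of Whitehead's theorem on classifying spaces of pushouts, inducting up skeleta. The filtration idea is reasonable, but your proposal has a gap at exactly its load-bearing step, Step 2. In the quotient $F_nB\bM X/F_{n-1}B\bM X$ the outer faces $d_0$ and $d_p$ send every tuple whose first (resp.\ last) entry is a nonempty word to the basepoint, since deleting that entry drops the total length below $n$. Hence the map you describe (cleave off the first letter of the leftmost nonempty word) cannot be an extra degeneracy: it produces tuples with nonempty first entry, so $d_0s_{-1}=\ast\neq\id$. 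In fact, already for $n=2$ and $X=S^0$, where a $p$-simplex of the quotient is a weak composition of $2$ into $p$ parts, the only nondegenerate simplices are the $1$-simplex $(2)$ and the $2$-simplex $(1,1)$, and one checks that no extra degeneracy in either the front or back convention satisfies the simplicial identities: $d_0s_{-1}(2)=(2)$ forces $s_{-1}(2)=(0,2)$, and then $d_1s_{-1}(2)=(2)\neq\ast=s_{-1}d_0(2)$. The quotients \emph{are} weakly contractible for $n\ge 2$; for instance one can write $F_n/F_{n-1}\iso K_n\wedge X^{\wedge n}$ with $K_n$ the based simplicial set of weak compositions of $n$, note that the normalized reduced chains of $K_n$ form the augmented chain complex of the simplex on the set $\{1,\dots,n-1\}$ (hence are acyclic), and kill $\pi_1$ using the $2$-cell $(1,n-1)$. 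But that is a different argument from the one you gesture at, and you acknowledge you have not constructed the contraction; as proposed, it does not exist.

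Step 3 also rests on a false principle: a cofibration with weakly contractible cofiber need not be a weak equivalence. The plus construction $A\rtarr A^{+}$ is a counterexample, since its cofiber is acyclic and simply connected while the map changes $\pi_1$. This matters here because the use of \autoref{keyFied} in \autoref{FiedDiag} requires general, in particular non-connected, $X$, so you cannot restrict to a simply connected situation and quote relative Hurewicz. To repair this you would need to show in addition that $\SI X\rtarr B\bM X$ is an isomorphism on $\pi_1$ (both are free groups on the non-basepoint components of $X$, using van Kampen and the identification of $\pi_1 BM$ with the group completion of $\pi_0 M$), and then upgrade the homology comparison to local coefficients or pass to universal covers. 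Managing precisely this $\pi_1$/local-coefficient interaction is where the substance of the theorem lies, and it is what the Whitehead-type theorem for monoids cited in the paper's proof is doing.
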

\begin{proof}[Brief Sketch] The total singular complex $\mathbf{Sing}M$ of a monoid $M$ is a simplicial discrete monoid.  Standard properties of geometric realization as in \autoref{ass2} reduce the proof to the case when $X$ is discrete, which is proven in \cite[Lemma 6.9]{Fied} by use of the analog for monoids (\cite [Theorem 4.1]{Fied}) of a result of J.H.C. Whitehead \cite{White} (restated as \cite[Theorem 4.0]{Fied}) for groups. The result for monoids describes when $B$ takes pushouts of discrete monoids to pushouts (up to homotopy equivalence) of spaces, and that allows an induction up skeleta.
\end{proof}

The cited result of Whitehead and its analog for monoids underlie the results of Kan and Thurston and of McDuff \cite{KT, McDuff}, respectively, and the latter is given a simplified proof \cite[Theorem 3.5]{Fied} using \cite[Theorem 4.1]{Fied}. Fiedorowicz 
\cite[Theorem 6.15]{Fied} also gives a curious homological group completion result for the map $\chi\colon \bM X \rtarr \OM B MX \htp \OM\SI X$.  That result is a partial non-commutative generalization of \autoref{gpcompthm}.

\subsection{Delooping based G-spaces for finite groups $G$}\label{GSPACES}

Here $\sT$ and $\sS$ are both taken to be the category $G\sT$ of based $G$-spaces, tacitly nondegenerately based where necessary, and based $G$-maps.  Again, basepoint issues are handled carefully in \cite{MMO}.  We take $(\SI,\OM)$ in the general theory to be $(\SI^V,\OM^V)$ for some representation $V$ of $G$.   We always assume that $V$ contains the trivial representation $\bR$ to ensure that its fixed point spaces are in $\sH$; we assume that it contains $\bR^2$ when we wish to ensure that homology algebras are commutative.  Some relevant facts about passage to $H$-fixed points for $H\subset G$ are collected in \autoref{SoEasy} below. 

We take  $\bC = \bC_V$  to be the monad on based $G$-spaces associated to an $E_V$-operad $\sC_V$ that acts naturally on $\OM^V X$ for $G$-spaces $X$, such as the little $V$-disks operad or the Steiner operad for $V$ or the product of either with an $E_{\infty}$ $G$-operad.  Details about these operads are in \cite[Section 1.1]{GM3}.  This places us in a context as specified in \autoref{ass1}.  

A map $f$ of based $G$-spaces is a weak equivalence if each of its fixed point maps $f^H$ is a nonequivariant weak equivalence. We comment on the realization part of \autoref{ass2}.  Geometric realization commutes with finite products and pullbacks and therefore commutes with finite limits.  Passage to fixed points is a finite limit, so realization on $G$-spaces commutes with passage to fixed points.  From here, the verification that realization satisfies parts (i) through (iv) of \autoref{ass2} presents no difficulty.    Part (v) was proven by Hauschild \cite{Haus} by reduction to the nonequivariant case, and the proof also appears in \cite[pp. 495-496]{CW} and in \cite[Section 4.6]{ZouFactorization}. We have an analog of \autoref{n=n}.

\begin{prop}\label{Gconn}  A $G$-space $X$ is  $\OM^V$-connective in the sense of \autoref{LAcon} if and only if it is  $V$-connective, meaning that  $X^H$ is $|V^H|$-connective for each subgroup $H\subset G$.
\end{prop}
\begin{proof}  The proof is like that of \autoref{n=n}, although full details involve explaining how the map 
$\ga^V\colon \bT \OM^V_* X_* \rtarr \OM^V \bT X_*$ is proven to be a weak equivalence by passage to fixed points and use of the nonequivariant version there.
\end{proof}

\autoref{ass3} works as in \autoref{SPACES}.
With any of the standard model structures on $G\sT$ (e.g. \cite[section VI.5]{EHCT} and \cite[Chapter 17]{morecon} made equivariant), $(\SI^V,\OM^V)$ is a Quillen adjunction since $\OM^V$ preserves fibrations and acyclic fibrations. The following equivariant versions of Definitions \ref{Hopf} and \ref{Hopf2} are variants of \cite[Definitions 1.9 and 1.10]{GM3}.   

\begin{defn} A Hopf $G$-space is a based $G$-space $X$ with a product $G$-map $X\times X \rtarr X$ such that left and right multiplication by $\ast$ are $G$-homotopic to the identity.   A $G$-monoid  $M$ is a based $G$-space $M$ with an associative product $G$-map such that left and right multiplication by $\ast$ are the identity.
\end{defn}

\begin{defn}\label{HopfG}  Let $\sH_G$ be the category of $G$-homotopy associative Hopf $G$-spaces.  A space in $\sH_G$ is {\em grouplike} if  each $\pi_0(X^H)$ is a group.    Let $\sH_{G,com}$ be the full subcategory of $G$-homotopy commutative Hopf $G$-spaces in $\sH_G$. Define a group completion $f\colon Y\rtarr Z$ to be a map in $\sH_{G,com}$ such that $Z$ is grouplike and  each $f^H$ is a group completion in the sense of  \autoref{Hopf}.
\end{defn}

The definition just given is designed for use when $\bR^2\subset V$, so that the relevant Hopf $G$-spaces are $G$-homotopy commutative.

\begin{defn}\label{HopfG2}  
Let $\sH_{G,m}$ be the full subcategory of $\sH_G$ whose objects are weakly equivalent to topological $G$-monoids. A space in $\sH_{G,m}$ is {\em grouplike} if each $\pi_0(X^H)$ is a group.    When $\bR\subset V$, a map $f$ in $\sH_{G,m}$ is an  $m$-group completion if it is weakly equivalent in the arrow category of $\sH_{G,m}$ to a $G$-map $\et\colon M\rtarr \OM B M$ for some topological $G$-monoid $M$.  The definition makes sense since, for a topological $G$-monoid $M$,  $\OM B M$ is weakly equivalent to the Moore loop $G$-space $\LA BM$, which is a topological $G$-monoid.  Using \autoref{SoEasy}, it follows by passage to $H$-fixed points that $f^H$ is an $m$-group completion for all $H\subset G$.
\end{defn}

The following lemmas are now immediate from their nonequivariant versions in Lemmas \ref{square2}, \ref{square3}, and \ref{weakstrong}.

\begin{lem}\label{squareG2}  Using the notion of group completion given in \autoref{HopfG},  the category $\sH_{G,com}$ satisfies the conditions specified in \autoref{screwy} when $\bR^2\subset V$.
\end{lem}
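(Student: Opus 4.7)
The plan is to reduce every condition in Definition~\ref{screwy} to the corresponding nonequivariant condition already verified in Lemma~\ref{square2}, by evaluating on $H$-fixed points for each subgroup $H\subset G$. The key observation is that the fixed-point functor $(-)^H\colon G\sT\rtarr\sT$ preserves basepoints, products, $G$-homotopies between $G$-maps, and weak equivalences. Consequently it sends objects of $\sH_{G,com}$ to objects of $\sH_{com}$. Unwinding Definition~\ref{HopfG}, an object $X\in\sH_{G,com}$ is grouplike precisely when each $X^H$ is grouplike in $\sH_{com}$; a map $f$ is a weak equivalence precisely when each $f^H$ is a weak equivalence in $\sH_{com}$; and a map $f\colon Y\rtarr Z$ with $Z$ grouplike is a group completion in $\sH_{G,com}$ precisely when each $f^H$ is a group completion in $\sH_{com}$. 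In particular, since grouplikeness depends only on $\pi_0$ of fixed points, it is immediate that any object weakly equivalent to a grouplike object is again grouplike.

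With these dictionary entries in hand, each clause of Definition~\ref{screwy} becomes a statement universally quantified over subgroups $H\subset G$. First I would verify clause (i): for a map $f\colon X\rtarr Y$ between grouplike objects of $\sH_{G,com}$, Lemma~\ref{square2}(i) asserts that each fixed-point map $f^H$ is a group completion in $\sH_{com}$ if and only if it is a weak equivalence there; taking the conjunction over all $H\subset G$ yields the desired $G$-level equivalence. Then I would handle the four clauses (ii)(a)--(d) for a commutative square as in \eqref{square} in $\sH_{G,com}$: applying $(-)^H$ yields a commutative square in $\sH_{com}$ whose objects and maps inherit exactly the grouplikeness, weak equivalence, and group completion hypotheses of the original square. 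Lemma~\ref{square2}(ii) then gives the conclusion at each level $H$, and conjoining over $H\subset G$ delivers the desired statement in $\sH_{G,com}$.

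I do not anticipate any substantial obstacle, as the argument is a purely formal fixed-point reduction enabled by the definitional choice in Definition~\ref{HopfG} to characterize grouplikeness and group completion fixed-pointwise. The mildest point worth noting is that the notion of group completion in $\sH_{com}$ is itself defined in terms of $\pi_0$ and field-coefficient homology, each of which commutes with the fixed-point functor trivially in the sense that $\pi_0(X^H)$ and $H_*(X^H;k)$ are intrinsic invariants of $X^H$; hence the transport of hypotheses and conclusions across $(-)^H$, one subgroup at a time, is unambiguous. The hypothesis $\bR^2\subset V$ plays no explicit role in the argument, since homotopy commutativity is already built into the ambient category $\sH_{G,com}$.
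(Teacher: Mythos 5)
Your proof is correct and is exactly the argument the paper intends: it declares \autoref{squareG2} ``immediate'' from its nonequivariant version, \autoref{square2}, precisely because \autoref{HopfG} defines grouplikeness, weak equivalences, and group completions fixed-point-wise, so each clause of \autoref{screwy} reduces to the clause already verified in $\sH_{com}$ for every $H\subset G$. Your added remarks (fixed points preserve the Hopf structure and homotopies, and $\bR^2\subset V$ only matters for the intended applications, not for the lemma itself) are consistent with the paper's discussion surrounding \autoref{HopfG}.
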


\begin{lem}\label{squareG3} Using the notion of $m$-group completion given in \autoref{HopfG2}, the
category $\sH_{G,m}$ satisfies the conditions specified in Definition 1.13 when $\bR\subset V$.
\end{lem}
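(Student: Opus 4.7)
The plan is to reduce the verification entirely to the nonequivariant case established in \autoref{square3} by testing everything on $H$-fixed points for subgroups $H\subseteq G$. First I would record the basic compatibilities: for a topological $G$-monoid $M$, passage to $H$-fixed points commutes with the classifying space construction and with the loop functor (since $\OM = \mathrm{Map}_*(S^1,-)$ with trivial $G$-action on $S^1$), giving a natural isomorphism $(\OM BM)^H \iso \OM B(M^H)$ under which the structure map $\chi\colon M\rtarr \OM BM$ restricts on $H$-fixed points to the nonequivariant map $\chi\colon M^H \rtarr \OM B(M^H)$. From \autoref{HopfG2} one then reads off: an $m$-group completion $f$ in $\sH_{G,m}$ has each $f^H$ an $m$-group completion in $\sH_m$ in the sense of \autoref{Hopf2}; an object $X$ is grouplike in $\sH_{G,m}$ precisely when each $X^H$ is grouplike in $\sH_m$; and weak equivalences in $\sH_{G,m}$ are detected on fixed points. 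The property that an object weakly equivalent to a grouplike object is grouplike then reduces immediately to the corresponding fact in $\sH_m$.

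For conditions (i) and (ii) of \autoref{screwy}, I would apply the $H$-fixed point functor to each diagram in $\sH_{G,m}$ to obtain a diagram in $\sH_m$ satisfying the corresponding hypothesis. For (i), a map $f\colon X\rtarr Y$ between grouplike objects in $\sH_{G,m}$ has each $f^H$ a map between grouplike objects in $\sH_m$, so by \autoref{square3} $f^H$ is an $m$-group completion if and only if it is a weak equivalence; conjoining over all $H\subseteq G$ yields the equivariant statement. For (ii)(a)-(d), the hypothesis on the square \autoref{square} restricts to the analogous hypothesis on each $H$-fixed point square, the nonequivariant conclusion from \autoref{square3} applies at each $H$, and the equivariant conclusion follows by quantifying over subgroups.

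The one point requiring a little care is the stability of the class of $m$-group completions under fixed points: an $m$-group completion in $\sH_{G,m}$ is defined via a weak equivalence in the arrow category of $\sH_{G,m}$ to some $\chi\colon M\rtarr \OM BM$, and one must check that this presentation survives $H$-fixed points. This is automatic since weak equivalences in $\sH_{G,m}$ are by definition detected on fixed points, and the identification $(\OM BM)^H \iso \OM B(M^H)$ converts the given presentation into exactly the data describing an $m$-group completion of $M^H$ in $\sH_m$. With this bookkeeping in hand no new computation is needed beyond invoking \autoref{square3} fixed-point-by-fixed-point; the entire verification is a routine equivariant translation of the nonequivariant argument.
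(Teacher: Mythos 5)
Your overall strategy---reduce to \autoref{square3} by passing to $H$-fixed points, using $(\OM BM)^H\iso \OM B(M^H)$ from \autoref{SoEasy} and the fact that weak equivalences and grouplikeness in $\sH_{G,m}$ are detected on fixed points---is the reduction the paper itself gestures at (it offers no proof beyond ``immediate from the nonequivariant versions''). But there is a genuine gap in the step ``the equivariant conclusion follows by quantifying over subgroups,'' and it occurs precisely at the clauses of \autoref{screwy} whose conclusion is that a map \emph{is} an $m$-group completion: the ``if'' direction of (i), and (ii)(a) and (ii)(c). Unlike the homological notion of \autoref{HopfG}, which is defined fixed-point-wise (so that \autoref{squareG2} really is a conjunction over subgroups), the equivariant $m$-notion of \autoref{HopfG2} is not: $f$ must be weakly equivalent in the arrow category of $\sH_{G,m}$ to a single $G$-map $\chi\colon M\rtarr \OM BM$ for one topological $G$-monoid $M$. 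Your fixed-point argument yields only that each $f^H$ admits such a presentation nonequivariantly, with unrelated monoids chosen separately for each $H$; that is a priori weaker, and you give no argument assembling these into an equivariant presentation. You do flag the ``stability under fixed points'' issue, but you verify only the easy forward direction (an equivariant $m$-group completion restricts to nonequivariant ones), which is not the direction the conjunction argument needs.

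The repair is to argue those clauses directly at the equivariant level, exactly as one proves the nonequivariant exercise \autoref{square3}: a commutative square \autoref{square} in which the relevant pair of maps are weak equivalences is a weak equivalence in the arrow category of $\sH_{G,m}$, and the property of being weakly equivalent in the arrow category to some $\chi\colon M\rtarr\OM BM$ is invariant under zigzags of such; this disposes of (ii)(a) formally, and of (ii)(c) by comparing $f$ and $g$ with their common composite $h\com f=j\com g$ via the arrow-category weak equivalences $(\id,h)$ and $(\id,j)$. For the ``if'' direction of (i) one also needs that $\chi\colon M\rtarr \OM BM$ is a weak $G$-equivalence when the $G$-monoid $M$ is grouplike; this is where fixed points genuinely enter, via \autoref{SoEasy} and the equivariant validity of \autoref{gpcompthm} noted in the paper. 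The remaining requirements---the ``only if'' direction of (i), clauses (ii)(b) and (ii)(d), whose conclusions are weak equivalences, and closure of grouplike objects under weak equivalence---do reduce by your fixed-point conjunction, since $G$-weak equivalences and grouplikeness are detected on fixed points and you only need the forward restriction of $m$-group completions there.
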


Let  $\sH_{G, m,com}$ denote the full subcategory of homotopy commutative Hopf $G$-spaces in $\sH_{G,m}$.

\begin{lem}\label{weakstrongG}  If a map $f\colon Y\rtarr Z$ in $\sH_{G,m,com}$ is an $m$-group completion, then $f$ is a  group completion. Conversely, if a map $f\colon Y\rtarr Z$ in $\bC_1[G\sT]$ is a group completion, then it is an $m$-group completion when regarded as a map in $\sH_{G,m,com}$. 
\end{lem}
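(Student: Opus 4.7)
The plan is to mirror the proof of the nonequivariant \autoref{weakstrong}, reducing everything to its nonequivariant analog via the fixed-point functors $(-)^H$ for $H\subset G$; since $G$ is finite, $\OM^V$, geometric realization, and hence the bar and classifying-space constructions all commute with fixed points.

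For the forward implication, I would start from an $m$-group completion $f$ and invoke \autoref{HopfG2} to get a zigzag of weak equivalences in the arrow category of $\sH_{G,m}$ from $f$ to some canonical $G$-map $\chi\colon M\rtarr \OM BM$ for a topological $G$-monoid $M$. Passing to $H$-fixed points (using the standard identifications $(\OM X)^H \iso \OM(X^H)$ and $(BM)^H \iso B(M^H)$ collected in \autoref{SoEasy}), $\chi^H$ is identified with the nonequivariant canonical map $M^H\rtarr \OM B(M^H)$, which is a group completion by the nonequivariant \autoref{weakstrong}. Property (ii)(a) of \autoref{screwy} applied in $\sH_{m,com}$ then transfers this to $f^H$ for every $H$, which is exactly what it means for $f$ to be a group completion in $\sH_{G,com}$.

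For the converse, I would apply the evident equivariant analog of \autoref{HopfbG}: for a $\bC_1$-algebra $Y$, build the topological $G$-monoid $\bF Y = B(\bM,\bC_1,Y)$ and set $\bG Y = \OM B\bF Y$, with natural $G$-map $g\colon Y\rtarr \bG Y$ built from $\ze$, $B\de$, and the canonical $\et\colon \bF Y\rtarr \OM B\bF Y$. The equivariant group completion theorem from \cite[Theorem 1.14]{GM3} (or the fixed-point reduction of \autoref{gpcompthm}) shows that $g$ is a group completion; since $Z$ is grouplike, $g\colon Z\rtarr \bG Z$ is thus a weak equivalence. The naturality square
\[
\xymatrix{ Y \ar[r]^f \ar[d]_g & Z \ar[d]^g \\ \bG Y \ar[r]_{\bG f} & \bG Z }
\]
together with property (ii)(d) of \autoref{screwy} forces $\bG f$ to be a weak equivalence. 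The resulting zigzag of weak equivalences in the arrow category of $\sH_{G,m,com}$ exhibits $f$ as equivalent to $\et\colon \bF Y\rtarr \OM B\bF Y$, which is an $m$-group completion by \autoref{HopfG2}.

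The main technical point, rather than a genuine obstacle, is verifying that $(\bF Y)^H$ and $(\bG Y)^H$ really do compute the corresponding nonequivariant constructions applied to $Y^H$; this is standard because geometric realization and $\OM$ commute with fixed points for finite $G$, so the fixed-point reduction runs smoothly and both implications follow from their classical counterparts.
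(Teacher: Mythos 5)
Your proposal is correct and is essentially the paper's own argument: the paper declares \autoref{weakstrongG} immediate from the nonequivariant \autoref{weakstrong} by passage to $H$-fixed points, using \autoref{SoEasy} together with the observations that \autoref{gpcompthm} and the construction $(\bG,g)$ of \autoref{HopfbG} carry over equivariantly because all terms other than $Y$ carry trivial $G$-action, so realization, $B$, and $\OM$ commute with fixed points. Your spelled-out fixed-point reduction, the use of (ii)(a) for the forward direction, and the naturality square with (ii)(d) for the converse are exactly the intended details.
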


By passage to fixed points, using \autoref{SoEasy}, we see that \autoref{gpcompthm}  remains true for $G$-monoids $M$.  Just as in the paragraph above \autoref{Cone}, the operad  $\sC_1 = \sC_{\bR}$ is contained in $\sC_V$, and $G$ acts trivially on it; $G$ also acts trivially on $\sM$.  With all group actions induced by the group action on $Y$, \autoref{HopfbG}  applies directly to $\sC_V$-$G$-spaces $Y$ to define $(\bG,g)$.  Since the terms other than $Y$ in the construction have trivial action by $G$,  the  construction commutes with passage to $H$-fixed points.   From here, the verification of \autoref{ass4} is implied by application of the nonequivariant version to fixed points. 

The approximation theorem, \autoref{ass5}, is more difficult.   The following analog of \autoref{approxn=1} will be proven in \autoref{MooreG}.\footnote{We find the proof by reference in \cite{GM3} to be unconvincing.}
 
\begin{thm}\label{approxGn=1} For a $\sC_1$-$G$-space $Y$, $\al_1\colon \bC_1 Y \rtarr \OM\SI Y$ is an $m$-group completion. 
\end{thm}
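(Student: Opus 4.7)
My plan is to mirror the proof of \autoref{approxn=1} in the $G$-equivariant category and then reduce the essential step to the nonequivariant case by passage to $H$-fixed points for every closed subgroup $H \subset G$.

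First, I would check that the constructions of \autoref{Moore} all lift verbatim to $G$-spaces. Redefine $\sV$ to consist of based $G$-spaces over $[0,\infty)$ (with trivial $G$-action on the base), and reinterpret $(\SI_v, \LA^v)$, $(\bL, \bR)$, and the monads $\bC_1$, $\bM$, $\bD$, $\bJ$ as $G$-equivariant functors. The crucial point is that the operads $\sC_1$ and $\sM$ carry trivial $G$-action, so the induced monads commute with fixed points: $(\bC_1 Y)^H = \bC_1(Y^H)$ and $(\bM Y)^H = \bM(Y^H)$. The map $\de\colon \sC_1 \rtarr \sM$ is tautologically a $G$-equivalence, so the induced $\de\colon \bD \rtarr \bJ$ is a $G$-equivalence, and the $G$-equivariant version of diagram \autoref{RealCute} reduces the theorem to proving that $\al_{\bM}\colon \bM Y \rtarr \LA \SI Y$ is an $m$-group completion of $G$-spaces.

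Next, I would run Fiedorowicz's diagram \autoref{FiedDiag} equivariantly. The $G$-analog of \autoref{mongpcom} (whose proof carries over verbatim, using the $G$-equivariant Barratt--Quillen theorem obtained by passage to fixed points for $G$-monoids, as in the discussion preceding \autoref{squareG2}) reduces the claim to showing that $B\al_{\bM}\colon B\bM Y \rtarr B\LA \SI Y$ is a $G$-weak equivalence. The comparison map $\xi\colon B\LA \SI Y \rtarr \SI Y$ is a $G$-weak equivalence because each $(\SI Y)^H = \SI(Y^H)$ is connected (here $V = \bR$), so the nonequivariant result applies levelwise. The $G$-equivariant form of \autoref{FiedDiag} thus reduces the entire problem to the equivariant key lemma \autoref{keyGFied}: the composite $\io \com \SI \et_{\bM}\colon \SI Y \rtarr B\bM Y$ is a $G$-weak equivalence.

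The main obstacle — and really the only genuine work — is this equivariant key lemma, but the fixed-point reduction resolves it cleanly. Since $\sM$ has trivial $G$-action, $\bM$ commutes with $(-)^H$; the classifying space functor $B$ commutes with fixed points applied to $G$-monoids by a nerve-of-category argument; and $\SI$, $\et_{\bM}$, and $\io$ manifestly commute with $(-)^H$. Hence the $H$-fixed-point map $(\io \com \SI \et_{\bM})^H$ is canonically identified with the nonequivariant map $\io \com \SI \et_{\bM}\colon \SI(Y^H) \rtarr B\bM(Y^H)$, which is a weak equivalence by \autoref{keyFied}. Therefore $\io \com \SI \et_{\bM}$ is a $G$-weak equivalence, and the proof is complete. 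The only thing requiring careful attention is bookkeeping: checking the compatibility of each structural map with $(-)^H$, which is straightforward from the definitions given that all operadic data carries trivial $G$-action.
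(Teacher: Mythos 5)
Your proposal follows essentially the same route as the paper: adapt the Moore/Fiedorowicz machinery of \autoref{Moore} equivariantly, use the equivariant versions of \autoref{RealCute}, \autoref{mongpcom}, and \autoref{FiedDiag} to reduce to the key lemma \autoref{keyGFied}, and prove that lemma by passage to $H$-fixed points using the commutation of $\SI$, $\OM$, $\bM$, and $B$ with fixed points (the paper's \autoref{SoEasy}). The only caveat is that the ``straightforward bookkeeping'' for $\bM$ and $B$ commuting with fixed points does require the nondegenerate-basepoint/cofibration hypotheses, since it rests on fixed points preserving pushouts one leg of which is a $G$-cofibration, as the paper's proof of \autoref{SoEasy} makes explicit.
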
 

However, we are interested in the case of general $V$, and we have nothing to add to the discussion in \cite[Section 1.2]{GM3}.
We restate \cite[Theorem 1.11]{GM3}.  It restricted to the Steiner operad $\sK_V$, but it applies to our $\sC_V$.
 
\begin{thm}[The approximation theorem] If $\bR\subset V$, $\al_V\colon \bC_V X \rtarr \OM^V\SI^V X$ is an $m$-group completion. 
Therefore, if $\bR^2\subset V$, $\al_V\colon \bC_V X \rtarr \OM^V\SI^V X$ is a group completion.
\end{thm}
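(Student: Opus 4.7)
The plan is to follow the classical fixed-point reduction strategy, bootstrapping the equivariant approximation theorem from the known nonequivariant approximation theorems applied to each fixed subspace $V^H$, together with the equivariant $V^H=\bR$ case already handled in \autoref{approxGn=1}.

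First I would use the freedom to choose the $E_V$-operad. Replacing $\sC_V$ by $\sC_V\times\sK_V$ for the Steiner operad $\sK_V$, one obtains compatible equivalences of the associated monads and of the $\al$-maps, so it suffices to treat the case $\sC_V=\sK_V$. The Steiner operad has the technical advantage that each of its elements carries a path of linear embeddings, which makes the geometry of fixed points tractable in a way that, say, the little $V$-disks operad does not.

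Next, for each subgroup $H\subset G$, I would analyze $\al_V^H\colon (\bC_V X)^H \rtarr (\OM^V\SI^V X)^H$. Since $(S^V)^H=S^{V^H}$ and $H$-fixed points commute with smash products and mapping spaces (for $H$-fixed basepoints), the target identifies canonically with $\OM^{V^H}\SI^{V^H}X^H$. The crucial step is to identify $(\bC_V X)^H$ up to weak equivalence with $\bC_{V^H}X^H$ (the analogous nonequivariant Steiner construction on $V^H$ applied to $X^H$), via an identification carrying $\al_V^H$ to the nonequivariant approximation map $\al_{V^H}$. This proceeds by stratifying $\bC_V X$ by orbit type and deformation-retracting the strata with nontrivial $H$-action onto the stratum of $V^H$-configurations with $H$-fixed labels, using the paths of Steiner embeddings to push configurations into $V^H$. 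With this identification in place, the conclusion on each $H$-fixed point follows from the nonequivariant results: \autoref{approxn=1} when $\dim V^H=1$ (yielding an $m$-group completion) and the Cohen--Segal nonequivariant approximation theorem when $\dim V^H\geq 2$ (yielding a weak equivalence, which is a fortiori an $m$-group completion between grouplike homotopy commutative Hopf spaces). Since $\bR\subset V$ forces $\dim V^H\geq 1$ for every $H\subset G$, assembling these across all $H$ and invoking \autoref{HopfG2} yields that $\al_V$ itself is an $m$-group completion.

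The second assertion follows from the first: when $\bR^2\subset V$, we have $\dim V^H\geq 2$ for all $H$, so $\bC_V X$ and $\OM^V\SI^V X$ lie in $\sH_{G,m,com}$, and \autoref{weakstrongG} upgrades the $m$-group completion to a group completion in the sense of \autoref{HopfG}. The main obstacle is precisely the fixed-point identification $(\bC_V X)^H \simeq \bC_{V^H}X^H$: the naive equality fails because $(\bC_V X)^H$ contains configurations whose $H$-orbits have points outside $V^H$, labeled by $H$-invariant but not pointwise $H$-fixed elements of $X$, and one must argue that these extra contributions deformation-retract onto the $V^H$-stratum. This is exactly where the rigidity of the Steiner operad is essential, and where the delicate geometric arguments of Hauschild and Costenoble--Waner cited in the excerpt enter.
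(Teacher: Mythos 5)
There is a genuine gap, and it is the step you yourself flag as "the main obstacle": the fixed-point identifications you need are false, not merely delicate. On the target side, $H$-fixed points do not commute with mapping spaces: $(\OM^V\SI^V X)^H$ is the space of \emph{equivariant} maps $S^V\rtarr \SI^V X$, not $\OM^{V^H}\SI^{V^H}X^H$. On the source side, $(\bC_V X)^H$ contains configurations of free and other $H$-orbits in $V$ labeled by points of various fixed-point spaces $X^K$, and these strata carry essential homotopy; they cannot be deformation-retracted onto the $V^H$-stratum. A concrete obstruction: take $X=S^0$ and $V$ containing a nontrivial representation. After group completion, $\pi_0$ of $(\OM^V\SI^V S^0)^G$ is the Burnside ring $A(G)$ (equivariant Barratt--Priddy--Quillen, reflecting the tom Dieck splitting), and correspondingly $\pi_0\big((\bC_V S^0)^G\big)$ is the free abelian monoid on the orbit types embeddable in $V$; both are strictly larger than what $\bC_{V^G}S^0\rtarr \OM^{V^G}\SI^{V^G}S^0$ sees, namely $\bN\rtarr\bZ$. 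So the retraction you propose would destroy exactly the homotopy that makes the equivariant theorem true. Fixed-point reduction works only when $G$ acts trivially on $V$, which is precisely why the paper proves the $V=\bR$ case that way (Theorems \ref{approxGn=1} and \ref{keyGFied}, via \ref{SoEasy}) and likewise the classical/naive case of \ref{classical}, but not the general case.

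For the general $V$ the paper does not give a proof at all: it restates \cite[Theorem 1.11]{GM3}, whose proof is due to Rourke and Sanderson via their compression theorem, elaborating Segal's geometric (scanning-type) nonequivariant argument, and it explicitly remarks that no computational proof along Cohen's lines is known and that ``there is further work to be done here.'' Also note that the citations of Hauschild and Costenoble--Waner in this part of the paper concern part (vii) of \autoref{ass6} (realization commuting with $\OM$ equivariantly), not the approximation theorem for general $V$. Your final paragraph deducing the $\bR^2\subset V$ statement from the $m$-group completion statement via \autoref{weakstrongG} is fine, but it rests on the first assertion, which your argument does not establish.
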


We are very far from a computational proof along the lines of Cohen's nonequivariant proof.  The proof discussed in \cite{GM3} is due to Rourke and Sanderson \cite{RS} and starts from Segal's nonequivariant proof and their ``compression theorem''.  There is further work to be done here.

Thus all assumptions are satisfied,  with more detailed verifications available in \cite{GM3} and the other cited sources.   We remark that  
$$\bC S^0 \htp \amalg_{j\geq 0} F(V,j)/\SI_j,$$
where $F(V,j)$ is the configuration $G$-space of (ordered) $j$-tuples of distinct points of $V$.
It is surprising how little these naturally occurring equivariant configuration spaces have been studied.
We have the following analog of \autoref{space}.

\begin{thm}\label{Gspace} Take $\sC= \sC_V$.  For $\sC$-spaces $Y$, the unit $\et_{\bC}$ induces a group completion if $\bR^2\subset V$ (an  $m$-group completion if 
$\bR\subset V$)  and therefore a $\bC$-group completion
$$Y \rtarr \OM^V_{\bC}\bE Y\iso \OM^V_{\bC}  \SI^V_{\bC} \bB Y.$$
Moreover, $V$-connective based $G$-spaces $Z$ are naturally equivalent to  coequalized $V$-fold suspensions $\SI^V_{\bC}  \overline{\OM^V_{\bC} X}$.
\end{thm}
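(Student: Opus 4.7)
The plan is to deduce \autoref{Gspace} as a direct specialization of the general machinery from \autoref{BARCON}, in particular Theorems \ref{recprin1} and \ref{equiv}. Accordingly, I would not prove anything new about delooping from scratch: I would simply verify that Assumptions \ref{ass1}--\ref{ass6} hold in the present equivariant setting (where $\sT=\sS=G\sT$, the adjoint pair is $(\SI^V,\OM^V)$, and the monad is $\bC=\bC_V$), then cite those theorems. The bulk of this verification is already set up in the discussion preceding the theorem; the proof proposal therefore amounts to organizing those verifications and pointing to the correct consequence in each case.

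First I would record that \autoref{ass1} is immediate since $\bC_V$ is by construction a monad on $G\sT$ acting on $\OM^V Y$, and $G\sT$ is cocomplete. \autoref{ass2} holds since both $\SI^V$ and $\OM^V$ preserve $G$-weak equivalences (the latter because all objects are fibrant, the former upon restriction to nondegenerately based $G$-spaces), and $\bC_V$ preserves weak equivalences between well-based $G$-spaces. \autoref{ass3} is the definition of $V$-connectivity (\autoref{Gconn}) together with the fact that $\SI^V X$ is automatically $V$-connective. For \autoref{ass4}, the relevant categories $\sH_{G,com}$ and $\sH_{G,m}$ and their notions of group completion are given in Definitions \ref{HopfG} and \ref{HopfG2}, verified to satisfy \autoref{screwy} in Lemmas \ref{squareG2} and \ref{squareG3}; the group completion functor $(\bG,g)$ is constructed by applying \autoref{HopfbG} equivariantly, using that the operads $\sC_1$ and $\sM$ carry trivial $G$-action so that the construction commutes with passage to $H$-fixed points, and then invoking the equivariant form of \autoref{gpcompthm} (which reduces to the nonequivariant statement fixed-point-wise).

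\autoref{ass6} is almost entirely formal in the equivariant setting: realization commutes with finite limits and hence with passage to $H$-fixed points, so (i)--(vi) reduce to their nonequivariant counterparts as explained in \autoref{assrems}. Only (vii), the preservation of weak equivalences between Reedy cofibrant objects by realization combined with the commutation of $\OM^V$ with realization on levelwise $V$-connective objects, requires the equivariant version of \cite[Theorems 12.3 and 12.7]{MayGeo}, which is proven by reduction to the nonequivariant case by Hauschild \cite{Haus} and is also recorded in \cite{CW}.

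The main obstacle, and the only truly substantive assumption, is \autoref{ass5}: the statement that $\al_V\colon \bC_V X\rtarr \OM^V_{\bC}\SI^V X$ is a group completion (resp.\ an $m$-group completion) under the hypothesis $\bR^2\subset V$ (resp.\ $\bR\subset V$). For $V=\bR$ this is \autoref{approxGn=1}, to be proved in \autoref{MooreG}. For general $V$ containing $\bR$ this is \cite[Theorem 1.11]{GM3} (the Rourke--Sanderson argument via the compression theorem, building on Segal's geometric proof), and when $\bR^2\subset V$ the conclusion upgrades from an $m$-group completion to a group completion by (the equivariant version of) \autoref{weakstrong}, since then $\bC_V X$ is $G$-homotopy commutative.

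With all Assumptions verified, \autoref{recprin1} applied to $\bC=\bC_V$ yields that $\et_{\bC}\colon \overline Y\rtarr \OM^V_{\bC}\SI^V_{\bC}\overline Y$ is a ($\bC$-)group completion for every $\bC_V$-$G$-space $Y$, and is a weak equivalence when $Y$ is grouplike; combined with the identification $\bE Y\iso \SI^V_{\bC}\overline Y$ from \autoref{oldkey}, this gives the first half of \autoref{Gspace}. For the second half, \autoref{equiv} applied in the present context gives an adjoint equivalence between the homotopy category of grouplike $\bC_V$-$G$-spaces and the homotopy category of $V$-connective based $G$-spaces, so every $V$-connective $G$-space $Z$ is naturally equivalent to $\bE(\OM^V_{\bC}Z)=\SI^V_{\bC}\overline{\OM^V_{\bC}Z}$, as claimed.
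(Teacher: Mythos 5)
Your proposal is correct and follows essentially the same route as the paper: Section \ref{GSPACES} is itself just the verification of Assumptions \ref{ass1}--\ref{ass6} for $(\SI^V,\OM^V)$ and $\bC_V$ (fixed-point reduction for the group-completion notions and for realization, Hauschild for \autoref{ass6}(vii), \autoref{approxGn=1} and the Rourke--Sanderson theorem from \cite{GM3} for the approximation theorem, with the $\bR^2\subset V$ upgrade via \autoref{weakstrongG}), after which \autoref{Gspace} is read off from the general Theorems \ref{recprin1} and \ref{equiv} exactly as you do.
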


\begin{rem}\label{comLie}  It is proven in \cite{MMO} that the results of this subsection apply equally well to compact Lie groups $G$, provided that we restrict attention to just those representations $V$ all of whose isotropy groups have finite index in $G$.
\end{rem}

\begin{rem}   For a given $V$, define $\sF_V$ to be the family of subgroups $H$ of $G$ such that $G/H$ embeds in $V$.    If we restrict  to those $H\in \sF_V$ when defining the group completion property and $V$-connectivity, then, using the Steiner operad for $V$, the proof goes through to give a family version of  \autoref{Gspace}.
\end{rem}

\subsection{Moore loop $G$-spaces and $1$-fold loop $G$-spaces}\label{MooreG}

In this subsection, we adapt \autoref{Moore} to prove \autoref{approxGn=1}. We stay in the context of \autoref{GSPACES} with $n=1$, taking $\sC$ and $\bC$ to be $\sC_{\bR^1}$ and $\bC_{\bR^1}$, except that we no longer require $G$ to be finite; it can be any topological group here.  We add in $G\sV = G\sT/[0,\infty)$, where $G$ acts trivially on $[0,\infty)$.  As far as we know, nothing at all of the equivariant version of \autoref{Moore} appears in the literature.  However we claim that nearly all of it adapts without change equivariantly and, where change is needed, passage to fixed point spaces does the trick.

We use the same notations equivariantly for the equivariant versions of all of the functors and transformations in \autoref{Moore}. The introductory paragraphs apply verbatim.  We adapt \autoref{Mooreadj} by letting $G$ act on Moore loops 
just as it acts on ordinary loops: $(gf)(t) = gf(t)$.  We define a $G$-monoid $M$ to be a $G$-space which is a monoid such that $g(mn) = gm\cdot gn$.  Then all of the definitions and results recorded in \ref{Mooreadj} through 
\ref{RealCute} apply exactly as written but with $\sT$ and $\sV$ replaced by $G\sT$ and $G\sV$ and with monoids understood to be $G$-monoids.

We then have the category $G\mathrm{Mon}$ of $G$-monoids and $G$-monoid maps.  For a $G$-monoid $M$, we define the classifying $G$-space $BM$ and the natural $G$-maps $\io\colon \SI M \rtarr BM$ and $\chi\colon M\rtarr \OM BM$ exactly as in \autoref{class}.  That is, $G$-acts through its action of $G$ on $M$ and nothing else changes.\footnote{We ignore consideration of what $BM$ classifies since that is irrelevant here.}  We write out the changes to \autoref{mongp} needed from here.

\begin{defn}\label{Gmongp} Say that a $G$-monoid $N$ is grouplike if $N$ is $m$-grouplike as a Hopf $G$-space.  Say that a map 
$f\colon M\rtarr N$ of $G$-monoids, where $N$ is grouplike, is a ${G{mon}}$-group completion if the induced $G$-map $Bf\colon BM\rtarr BN$ is a weak equivalence of $G$-spaces.
\end{defn}

The proof of \autoref{mongpcom} applies to show that if $f$ is a $G$-group completion, then $f$ is an $m$-group completion as a map of Hopf $G$-spaces.  Then the diagram \autoref{FiedDiag} applies to reduce the proof of \autoref{approxGn=1} to the following statement.

\begin{thm}\label{keyGFied} For a $G$-space $X$, the composite $G$-map
\begin{equation}\label{topcomp}
\xymatrix{
\SI X \ar[r]^-{\SI\et_{\bM}}  & \SI\bM X \ar[r]^-{\io} & B\bM X \\}
\end{equation}
is a weak equivalence.
\end{thm}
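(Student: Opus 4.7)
The plan is to reduce \autoref{keyGFied} to its nonequivariant counterpart \autoref{keyFied} by passage to $H$-fixed points, as the introduction promises.   Since a map of $G$-spaces is a weak equivalence if and only if each of its $H$-fixed point maps is a nonequivariant weak equivalence, it suffices to show that for every closed subgroup $H\subset G$, the $H$-fixed points of the composite \autoref{topcomp} are naturally identified with the analogous nonequivariant composite for $X^H$.

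First I would identify how $(-)^H$ interacts with each functor in sight.  Suspension is unproblematic: $(\SI X)^H = \SI(X^H)$ because $G$ acts trivially on the $S^1$ coordinate and smash product commutes with fixed points in the first variable.  For the James construction, the $G$-action on $\bM X$ is the diagonal action on words, and a word $x_1\cdots x_n$ (with no $x_i = \ast$) is $H$-fixed if and only if each $x_i$ lies in $X^H$, so $(\bM X)^H = \bM(X^H)$.  The unit $\et_{\bM}$ and the natural map $\io\colon \SI\bM X \rtarr B\bM X$ of \autoref{class} are defined levelwise on the underlying sets, so they obviously commute with $(-)^H$ whenever $(-)^H$ commutes with the constructions involved.

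The one substantive step is the commutation of $(-)^H$ with the realization defining $B\bM X$.  This is the fixed-point/realization interchange underlying the equivariant case of \autoref{ass6}(vii) discussed in \autoref{assrems}, proven nonequivariantly in \cite{MayGeo} and equivariantly in \cite{CW, Haus}.  Concretely, since $X$ is nondegenerately based, the nerve $N_\bullet \bM X$ is a Reedy cofibrant simplicial $G$-space (the degeneracies being insertions of the identity element $\ast$), and under these cofibrancy hypotheses one has a natural isomorphism
$$(B\bM X)^H = |N_\bullet \bM X|^H \iso |(N_\bullet \bM X)^H| = |N_\bullet \bM(X^H)| = B\bM(X^H).$$
This is the key compatibility, and it is the same kind of argument used by Hauschild to prove the equivariant approximation theorem. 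It works for any topological group $G$ because the interchange requires only cofibrancy, not finiteness of $G$.

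Combining these identifications, the $H$-fixed points of \autoref{topcomp} become the nonequivariant composite
$$\SI(X^H) \xrightarrow{\SI\et_{\bM}} \SI\bM(X^H) \xrightarrow{\io} B\bM(X^H),$$
which is a weak equivalence by \autoref{keyFied}.  Since this holds for every closed $H\subset G$, the composite \autoref{topcomp} is a weak equivalence of $G$-spaces, completing the proof.  The main obstacle is precisely the fixed-point/realization interchange; once that is in hand, the remaining steps are formal consequences of the fact that all of $\SI$, $\bM$, $\et_{\bM}$, and $\io$ are defined by constructions which are compatible with $(-)^H$ at each simplicial level.
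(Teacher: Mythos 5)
Your proof is correct and takes essentially the same route as the paper: the paper deduces \autoref{keyGFied} from the nonequivariant \autoref{keyFied} by passage to $H$-fixed points, proving in \autoref{SoEasy} that fixed points commute with $\SI$, $\bM$, $B$, $\et_{\bM}$, and $\io$. The only cosmetic difference is that the paper justifies the interchange for $\bM X$ and $B\bM X$ via the word-length and skeletal filtrations, whose attaching pushouts have one leg a $G$-cofibration (nondegenerate basepoint), which is the same cofibrancy input you phrase as Reedy cofibrancy of the nerve.
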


This holds nonequivariantly by \autoref{keyFied}.  Thus the following result implies that it also holds equivariantly, as required. 

\begin{thm}\label{SoEasy}
\enumerate[(i)] Let $X$ be a based $G$-space and $M$ be a $G$-monoid.
\item $(\OM X)^H$ is naturally isomorphic to $\OM(X^H)$.
\item $(\SI X)^H$ is naturally isomorphic to $\SI(X^H)$.
\item $(\bM X)^H$ is naturally isomorphic to $\bM (X^H)$.
\item $(BM)^H$ is naturally isomorphic to $B(M^H).$
\item After passage to $H$-fixed points, the composite \autoref{topcomp} is isomorphic to 
$$
\xymatrix{
\SI X^H \ar[r]^-{\SI\et_{\bM}}  & \SI\bM X^H \ar[r]^-{\io} & B\bM X^H. \\}
$$
\end{thm}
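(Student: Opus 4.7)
The plan is to verify each of parts (i)-(iv) by direct inspection of the defining constructions, observing that the ``model'' spaces involved ($S^1$, the simplices $\Delta^n$, and the indexing sets in the James construction) all carry trivial $G$-action, so that passage to $H$-fixed points commutes with the relevant constructions. Part (v) will then follow from naturality.

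For (i), since $G$ acts trivially on $S^1$, a based map $f\colon S^1\rtarr X$ is $H$-fixed iff $hf(t)=f(t)$ for all $h\in H$ and $t\in S^1$, iff $f$ factors through $X^H\subset X$; this gives a natural homeomorphism $(\OM X)^H \iso \OM(X^H)$. For (ii), writing $\SI X = S^1\wedge X$ with trivial $G$-action on $S^1$, the result follows from the fact that $H$-fixed points commute with finite products and preserve the inclusion $S^1\vee X\hookrightarrow S^1\times X$, yielding $(S^1\wedge X)^H \iso S^1\wedge X^H$. For (iii), recall that $\bM X = \coprod_{n\geq 0} X^n/{\sim}$, where $\sim$ deletes or inserts the basepoint but never permutes letters. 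A reduced word $[x_1,\dots,x_n]$ is $H$-fixed iff each $x_i$ is, so $(\bM X)^H \iso \bM(X^H)$.

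For (iv), write $BM = |NM|$ where $NM$ is the nerve, with $(NM)_n = M^n$ carrying the diagonal $G$-action. Since fixed points commute with finite products, $(NM)_n^H = (M^H)^n = N(M^H)_n$ levelwise. The conclusion $(BM)^H \iso B(M^H)$ then reduces to the statement that geometric realization commutes with $H$-fixed points, which holds because $\Delta^n$ has trivial $G$-action, and the standard coend defining $|-|$ (see \autoref{Kancan}) commutes with the finite limit defining fixed points; this is precisely the input used in \cite{Haus, CW} and reflected in (vii) of \autoref{ass6} for $G$-spaces.

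Finally, (v) is immediate from naturality: applying the isomorphisms in (ii), (iii) and (iv) to the composite \autoref{topcomp} identifies $(\SI\et_{\bM})^H$ with $\SI\et_{\bM}$ on $X^H$ (using that $\et_{\bM}$ is natural in the based space) and $\io^H$ with $\io$ for the $G$-monoid $\bM X^H$. The main potential obstacle is (iv): one must confirm that the fixed-point functor $(-)^H$, which is not a left adjoint, nonetheless commutes with the realization of this particular simplicial $G$-space. This is not a formal statement about arbitrary simplicial $G$-spaces, but it holds here because the $G$-action in each simplicial degree is diagonal on a finite product with trivial action on the simplex coordinate, and the skeletal filtration of $BM$ is built by pushouts along cofibrations of trivial $G$-CW type in the simplex direction, so the fixed-point functor preserves both the pushouts and the colimit along the filtration.
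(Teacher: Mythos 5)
Your proposal is correct in substance and, at bottom, runs on the same engine as the paper's proof: everything reduces to the fact that passage to $H$-fixed points commutes with the colimits used to build $\bM X$ and $BM$, because the attaching maps are along $G$-cofibrations with trivial action on the extra ($S^1$ or simplex) coordinates. The routing differs, though. For (iii) the paper does not argue with reduced words: it filters $\bM X$ by word length and observes that $F_{n+1}\bM X$ is a pushout of $F_n\bM X$ and $X^{n+1}$ along $\si_n\colon sX^n\subset X^{n+1}$, a $G$-cofibration because basepoints are nondegenerate; since fixed points preserve such pushouts, one gets $(\bM X)^H\iso \bM(X^H)$ as spaces and not merely as sets. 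Your reduced-word argument by itself only yields a continuous bijection, and the identification of topologies needs exactly the filtration/pushout input you invoke for (iv); you should say so explicitly, and the same remark applies to the quotient defining the smash product in (ii). For (iv) the paper argues in the same filtered way, via pushouts along $\si_n\times\id$ of $\DE_{n+1}$, rather than through a blanket commutation of realization with fixed points; your phrase that the coend defining $|-|$ ``commutes with the finite limit defining fixed points'' is not a formal fact (colimits do not commute with limits in general), although the correct justification is available and is the one the paper states in \autoref{GSPACES}: realization of simplicial spaces commutes with pullbacks and finite products, hence with finite limits, and $(-)^H$ is such a limit. Finally, for (v) the paper simply notes that $\et_{\bM}$ and $\io$ are the inclusions of the first filtrations, so the identifications in (ii)--(iv) are tautologically compatible with them; your naturality argument accomplishes the same thing at the cost of a couple of easy compatibility checks, which you assert and which do hold. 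In short: the paper's uniform filtration argument settles the point-set issues once and for all via the nondegenerate-basepoint hypothesis, while your route is conceptually lighter for (iv) and (v) but needs the topological patch in (ii) and (iii) and the correct citation of the finite-limits property of realization.
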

\begin{proof}
Remember that basepoints are assumed to be non-degenerate when that is needed. It is needed here since we use the standard fact that passage to $H$-fixed points preserves pushouts, one leg of which is a $G$-cofibration.  Clearly (i) holds and it is now also clear that (ii) holds.  Let $\si_n\colon sX^n\subset X^{n+1}$ denote the space of points one coordinate of which is the basepoint, and similarly for $M$. Then $\si_n$ is a $G$-cofibration. 
For (iii), $\bM X$ is filtered by word length, and $F_{n+1}\bM X$ is obtained from $F_n X$ and $X^{n+1}$ by a unit identification given by a pushout diagram, one leg of which is $\si_n$.  For (iv), $BM$ is the geometric realization of a simplicial $G$-space with $n$-simplices $M^n$.  As such, it is filtered and $F_{n+1}BM$ is obtained from $F_nBM$ and $X^{n+1}$ by a pushout diagram, one leg of which is the product of $\si_n$ and the identity map of $\DE_{n+1}$.  The maps $\et_M$ and $\io$ are each the inclusion of the first filtration, hence (v) follows. 
\end{proof}

Parenthetically, while we have not pursued the details, in view of the result of McDuff \cite{McDuff} and its reproof by Fiedorowicz \cite{Fied}, it seems plausible that the following equivariant generalization holds.\footnote{Sunny Zhang has since proven this in a paper not yet completed.}

\begin{conj}\label{McConj} Let $G$ be a discrete group.  Then any $G$-connected based $G$-space $X$ is weakly equivalent to $B M(X)$ for some discrete $G$-monoid $M(X)$.
\end{conj}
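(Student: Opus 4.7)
The plan is to adapt the non-equivariant proof of McDuff's theorem (in the streamlined form due to Fiedorowicz, \cite{Fied}) fiberwise across the subgroup lattice of $G$, using in an essential way the identity $(BM)^H \iso B(M^H)$ proved above in \autoref{SoEasy}(iv). Since $G$ is discrete, the categories of discrete $G$-sets, discrete $G$-monoids, and simplicial objects therein behave in many ways like their non-equivariant counterparts: free constructions are the usual free constructions on the underlying $G$-set, and pushouts along $G$-cofibrations of discrete $G$-sets are preserved by passage to $H$-fixed points for every subgroup $H\leq G$.

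First, replace $X$ by a $G$-equivalent based $G$-CW complex with a single $0$-cell; the $G$-connectedness hypothesis ensures this can be done with cells only in positive dimensions. Filter $X$ by its skeleta $X_0 = \ast \subset X_1 \subset X_2 \subset \cdots$ with $X = \colim_n X_n$. The plan is to produce, by induction on $n$, a sequence of discrete $G$-monoids $M_0 \to M_1 \to M_2 \to \cdots$ together with $G$-equivalences $BM_n \htp X_n$ compatible (up to homotopy) with the inclusions; then $M(X)=\colim_n M_n$ and $BM(X)\htp X$ since geometric realization of a classifying space commutes with sequential $G$-cofibrant colimits.

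For the induction step, attaching a $G$-cell $G/H \times D^{n+1}$ along a $G$-map $\phi\colon G/H\times S^n \rtarr X_n$ is, by adjunction, equivalent to specifying a class $[\phi] \in \pi_n(X_n^H)=\pi_n(B(M_n^H))$. Fiedorowicz's monoid analogue of the Whitehead construction \cite[Theorem 4.1]{Fied} provides, non-equivariantly, a way to ``kill'' such a class by forming a pushout
\[
\begin{tikzcd}
\bM S^n \ar[r]\ar[d] & M_n^H \ar[d]\\
\bM D^{n+1} \ar[r] & N
\end{tikzcd}
\]
in the category of discrete monoids, such that $B$ takes this pushout to a homotopy pushout. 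To carry this out $G$-equivariantly, form the analogous pushout of discrete $G$-monoids in which the relevant free generators and relations are indexed by $G/H$, so that on $K$-fixed points the pushout restricts (up to natural identification of $(G/H)^K$ as a $W_G(H)\!$-set) to a disjoint collection of the non-equivariant pushouts at each component of $X_n^K$. Define $M_{n+1}$ as this pushout. That $BM_{n+1} \htp X_{n+1}$ is then checked on every $K$-fixed point using \autoref{SoEasy}(iv): we must verify that $(M_{n+1})^K = \colim(M_n^K \leftarrow \bM S^n^{(G/H)^K} \rightarrow \bM D^{n+1}^{(G/H)^K})$, which reduces the statement to repeated application of the non-equivariant Fiedorowicz pushout theorem.

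The hard part will be step three, namely verifying that the $G$-equivariant monoid pushout used to attach the $G/H$-cell is compatible with passage to $K$-fixed points for every $K\leq G$ \emph{simultaneously}. Free $G$-monoid constructions behave well under $(-)^K$ only up to careful indexing, and, more importantly, quotienting by a $G$-invariant congruence does not in general commute with fixed points. One must therefore choose the generators-and-relations presentation so that, for each $K$, the $K$-fixed submonoid is precisely the non-equivariant Fiedorowicz pushout that kills the images in $\pi_n(X_n^K)$ of the attaching class $[\phi]$ along the various components of $(G/H)^K$. Arranging this compatibility is the real technical content; once it is done, the equivariant Fiedorowicz--McDuff theorem follows by the same induction-and-colimit argument as in the non-equivariant case.
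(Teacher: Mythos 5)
You should first be aware that the paper does not prove this statement at all: \autoref{McConj} is stated as a conjecture, with the remark that, as in the nonequivariant proof of \cite[Theorem 3.5]{Fied}, it would follow once one knows that the classifying space of any topological $G$-monoid is weakly equivalent to the classifying space of some discrete $G$-monoid (an equivariant analogue of \cite[Theorem 3.4]{Fied}); the other ingredient, $X \htp B\LA X$ for $G$-connected $X$, is already in place via \autoref{keyGFied} and \autoref{SoEasy}. A footnote records that the conjecture has since been proven by Sunny Zhang \cite{Sunny}. So there is no proof in the paper to compare against; the only question is whether your sketch actually closes the gap, and it does not.

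Two concrete problems. First, you yourself flag the central step, compatibility of the equivariant monoid pushout with passage to $K$-fixed points for all $K\leq G$ simultaneously, as unresolved; since quotients by $G$-invariant congruences do not commute with fixed points, this is exactly where the content of the theorem lies, and asserting that a presentation ``can be chosen'' to make it work is a restatement of the problem, not an argument. Second, and more fundamentally, the induction scheme is already broken nonequivariantly as stated: $M_n$ is a \emph{discrete} monoid, so for $n\geq 2$ a class $[\phi]\in \pi_n(B(M_n^H))$ cannot be represented by a map of monoids out of a free monoid on a sphere into $M_n^H$ (there is nothing in a discrete monoid to map $S^n$ to), and the object $\bM S^n$ appearing in your pushout is a topological, not discrete, monoid; moreover $B\bM S^n \htp \SI S^n$, so your indexing of which cell is being attached is off by one. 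Fiedorowicz's pushout theorem \cite[Theorem 4.1]{Fied} concerns discrete monoids with freeness hypotheses and is deployed in \cite{Fied} in an induction over the skeleta of a simplicial discrete monoid arising from the total singular complex of a topological monoid, not by attaching cells of $X$ directly; McDuff's higher homotopy is produced by relations, not by topologized generators. Thus your route diverges both from the paper's intended reduction (Moore loop $G$-monoid plus discretization of a topological $G$-monoid) and from the nonequivariant argument you are trying to imitate, precisely at the points where a proof would have to do its work.
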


As in the nonequivariant proof of \cite[Theorem 3.5]{Fied}, this holds if the classifying space of any topological $G$-monoid is equivalent to the classifying space of some discrete $G$-monoid, as is proven nonequivariantly in \cite[Theorem 3.4]{Fied}.

\subsection{Spectra and the adjunction $(\SI^{\infty},\OM^{\infty})$}\label{SPECTRA}

To pass from iterated loop space theory to infinite loop space theory, it is necessary to choose a good target category of spectra.  This is not the place for a full treatment, but we must explain our choice.  If one is primarily interested in formal properties of the stable homotopy category, one can modernize Boardman's original construction\footnote{M. J. Boardman.  Unpublished 1964 thesis.} of that category, for example by using Lurie's $\infty$-categories.  That in effect ignores the point-set level precision that is our focus and eliminates all hope of using classical adjunctions as in our \autoref{ass1}.   One might next try to use one's favorite category of spectra that is symmetric monoidal under the smash product.  There are several good choices, such as symmetric spectra, orthogonal spectra, and EKMM $S$-modules \cite{MMSS, EKMM}, and there are comparisons among them showing that each gives rise to a category equivalent to Boardman's original stable homotopy category.  But there is a conundrum.  By a result of Lewis \cite{Lewis} (or see \cite[Theorem 11.1]{Rant1}), any such choice is {\em incompatible} with an adjunction $(\SI^{\infty},\OM^{\infty})$ such that the unit for the smash product is the sphere spectrum  $S=\SI^{\infty}S^0$.   

The precursor \cite{LMS} of \cite{EKMM} gives the category $\sS$ of spectra that we shall use.  As said before, it is the only choice we know of in which the achingly elementary \autoref{ass1} makes sense. It is not symmetric monoidal in the usual sense, but it is the term $\sS_1$ of a symmetric monoidal  {\em graded} category of spectra.\footnote{An exposition giving a full treatment of this old idea is needed.}
The smash product in {\em every} known symmetric monoidal category of spectra is obtained by suitably internalizing an ``external smash product''   $\sS_1 \times \sS_1 \rtarr \sS_2$ of a symmetric monoidal graded category of spectra.  A first formalization of this point of view is in \cite{LMS}.  We shall find in Part 3 that the operadic internalization developed in \cite[Chapter VIII]{LMS} is well suited for the multiplicative elaboration of the present theory. 

We here recall the categories  of (LMS) prespectra and spectra from \cite{LMS} and exhibit the adjunction $(\SI^{\infty},\OM^{\infty})$. We let $U = \bR^{\infty}$ with its standard inner product. Define an indexing space to be a finite dimensional subspace of $U$ with the induced inner product. A (coordinate free) prespectrum $T$ consists of based spaces $TV$ and based maps 
$$\si\colon \SI^{W-V}TV\rtarr TW$$
with adjoints
$$\tilde{\si}\colon TV\rtarr \OM^{W-V}TW$$ 
for $V\subset W$.  Here $W-V$ is the orthogonal complement of $V$ in $W$ and $S^{W-V}$ is its one point compactification; 
$\tilde{\si}$  must be the identity when $V=W$, and the obvious transitivity condition must hold when $V\subset W\subset Z$.  A prespectrum $T$ is an {\em inclusion prespectrum} if each map $\tilde{\si}$ is an inclusion.  It is a  {\em spectrum} if each map $\tilde{\si}$ is a homeomorphism. We then usually write $E$ rather than $T$.    

A map $f\colon T\rtarr T'$ of prespectra consists of based maps   $f_V\colon  TV\rtarr T'V$ such that the diagram
$$\xymatrix{  
TV  \ar[r]^-{f_V}  \ar[d]_{\tilde{\si}}  &  T'V \ar[d]^{\tilde{\si}}   \\
\OM^{W-V} TW \ar[r]_-{\OM^{W-V}f_W}  & \OM^{W-V} T'W\\}
$$
commutes when $V\subset W$.  We let $\sP$ and $\sS$ denote the category of prespectra and its full subcategory of spectra.   The inclusion $\ell \colon \sS\rtarr \sP$ has a left adjoint spectrification functor $L\colon \sP\rtarr \sS$ \cite[Theorem I.2.2]{LMS}.   When $T$ is an inclusion prespectrum, 
$$(LT)(V) = \colim_{V\subset W} \OM^{W-V} TW,$$
where the colimit is taken over the maps
$$  \OM^{W-V}\tilde{\si} \colon \OM^{W-V} TW \rtarr \OM^{W-V}\OM^{Z-W}TZ \iso  \OM^{Z-V} TZ $$
This makes sense since, for $V\subset W$, 
$$ LT(V)  \iso \colim_{W\subset Z}\OM^{W-V}\OM^{Z-W} TZ \iso  \OM^{W-V}\colim_{W\subset Z} \OM^{Z-W} TZ  = \OM^{W-V} TW.$$

We may restrict attention to any cofinal set of indexing spaces $\sV$ in $U$;
we require $0$ to be in $\sV$ and we require the union of the $V$ in $\sV$ 
to be all of $U$. Up to isomorphism, the category $\sS$ is independent of the
choice of $\sV$.  The default is $\sV = \sA\ell\ell$.  We can define 
prespectra and spectra in the same way in any countably infinite dimensional real inner product space $U$.
The default is $U=\bR^{\infty}$.

For a based space $X$, we have an  obvious suspension prespectrum $\{\SI^VX \}$ with $V^{th}$ space  $\SI^VX$. The maps $\si$ are the evident identifications  $\SI^{W-V} \SI^V X\iso \SI^W X$, and their adjoints $\tilde{\si}$ are inclusions.   We define 
$$\SI^{\infty} X = L\{\SI^V X\}.$$
More explicitly, define
$$QX = \colim \OM^V\SI^V X,$$
where the colimit runs over the maps
$$ \OM^V\tilde{\si}\colon  \OM^V\SI^V X \rtarr  \OM^V \OM^{W-V}\SI^{W-V}\SI^V X \iso \OM^W\SI^W X.$$
Then the $V$th space of $\SI^{\infty} X$ is $Q\SI^V X$.  Let $\et\colon X\rtarr QX$ be the natural inclusion.  

For a spectrum $E$, we define $\OM^{\infty}E = E(0)$; we usually write it as $E_0$.  The functors $\SI^{\infty}$ 
and $\OM^{\infty}$ are adjoint; $QX$ is $\OM^{\infty}\SI^{\infty}X$, and $\et$
is the unit of the adjunction.  The counit $\epz\colon \SI^{\infty}\OM^{\infty}E  \rtarr E$ is adjoint to the map of prespectra
$\{\SI^VE_0\}\rtarr \ell E$ which at level $V$ is $\si\colon  \SI^V E_0\rtarr EV$.    We sometimes write  
$\GA^{\infty} = \OM^{\infty}\SI^{\infty}$ for the associated monad. 

\begin{defn}  Spectra have homotopy groups, and a map of spectra is a weak equivalence if it induces an isomorphism on homotopy  groups.
\end{defn}

For \autoref{ass1}, we recall the following definition.

\begin{defn}\label{oper1}   An operad $\sC$ of spaces is an $E_{\infty}$ operad if each $\sC(n)$ is $\SI_n$-free and contractible.  A space with an action of some $E_{\infty}$-operad is an $E_{\infty}$ space.  
\end{defn}

We take $\bC$ to be the monad associated to the infinite Steiner operad  $\sK_{\infty}$, as in \cite[Section 3]{Rant1} or \cite{GM3}.  It is an $E_{\infty}$ operad.   By passage to colimits from the actions of the Steiner operads  $\sK_V$ on $V$-fold loop spaces,  $\sK_{\infty}$ acts naturally on the infinite loop spaces  $E_0=\OM^{\infty}E$.  We could equally well take  $\bC$ to be the monad associated to any $E_{\infty}$ operad that acts naturally on infinite loop spaces, such as the product of any $E_{\infty}$ operad with $\sK_{\infty}$.   This puts us in the context of \autoref{ass1}.

It is very easy to generalize the context just established to $G$-spectra for a finite group $G$.  Everything said so far applies almost verbatim.   We replace $\sT$ by $G\sT$, so replace spaces and maps by $G$-spaces and $G$-maps, and we replace $U$ by a $G$-universe $U_G$.  This means that $U_G$ is the sum of countably many copies of each of a chosen set of irreducible representations of $G$; we insist that the trivial representation be in our set.  We then take our indexing spaces $V$ to be finite dimensional sub  $G$-inner product spaces of $U_G$.
A universe $U_G$ is {\em complete} if it contains all irreducible representations of $G$.  The $G$-spectra are then said to be {\em genuine}.  A universe $U_G = U$ is trivial if $G$ acts trivially, in which case we just see  spectra with $G$-actions. Such $G$-spectra are said to be  {\em classical}, or {\em naive}.   We write $G\sS$ for the category of genuine $G$-spectra indexed on a complete universe $U_G$ and the $G$-maps of $G$-spectra.  

The trivial universe $U$ is a sub-universe of any other universe $U_G$.  We define the $H$-fixed point spectrum of a genuine $G$-spectrum  $E$ by first restricting $E$ to the subuniverse $U$ and then taking $H$-fixed points levelwise.  That is,
$$  E^H(V) = E(V)^H $$ where $G$ acts trivially on $V$.   Homotopical properties of $G\sS$ are inherited from those of $\sS$ by passage to fixed points. 

\begin{defn}  A map $f\colon  D \rtarr E$ of $G$-spectra is a weak equivalence if each $f^H\colon D^H\rtarr E^H$ is a weak equivalence of spectra.
\end{defn}

We use the following definition to define $E_{\infty}$ operads of $G$-spaces.

\begin{defn}\label{famFn}   Let $\bF_n$ denote the family of all subgroups $\GA$  of $G\times \SI_n$ such that 
$\GA\cap \SI_n =\{e\}$.  Each such $\GA$ is the graph  $\GA_{\al} = \{(h,\al(h))\,|\, h\in H\}\subset G\times \SI_n$
of some homomorphism $\al\colon H\rtarr \SI_n$, where $H\subset G$. Taking $\al$ to be trivial, we see 
that $H\in\bF_n$ for all $n$ and all $H\subset G$.
\end{defn}

\begin{defn}\label{oper2}   An operad $\sC$ of $G$-spaces is an $E_{\infty}$ operad if each $\sC(n)$ is $\SI_n$-free and if $\sC(n)^{\GA}$ is contractible for all $\GA\in \bF_n$.  A $G$-space with an action of some $E_{\infty}$-operad is an $E_{\infty}$ $G$-space.  
\end{defn}

With these definitions in place, everything said above works in the same way to place us in the context of \autoref{ass1}.

\subsection{From $G$-spaces to genuine $G$-spectra for finite groups $G$}\label{SpaceSpectra}

We prove analogs of Theorems \ref{space} and \ref{Gspace} for spectra and $G$-spectra here.  We view the nonequivariant case as the case when $G$ is the trivial group. We follow the modernized sketch of the recognition principle that is given non-equivariantly in \cite[Section 9]{Rant1}.  That is made  equivariant and compared with the orthogonal $G$-spectrum machine in \cite[Sections 2.3 and 2.4]{ GM3}.  We restrict to a complete universe and delete it from the notation.   We take $\sT$ in the general theory to be the category $G\sT$ of based $G$-spaces.\footnote{The caveat  in \autoref{bspt1} concerning non-degenerate basepoints applies.}   We take $\sS$ to be the category $G\sS$ of (genuine) $G$-spectra and take  $(\SI,\OM)$ to be $(\SI^{\infty}, \OM^{\infty})$.  

We have already placed ourselves in the context of \autoref{ass1}, and we have defined the weak equivalences needed for the first portion of \autoref{ass2}.  In fact, by \cite[Theorem VII.4.4]{EKMM},  $G\sS$ is a model category, and it is again clear that $(\SI^{\infty},\OM^{\infty})$ is a Quillen adjunction since $\OM^{\infty}$ preserves fibrations and acyclic fibrations.   Just as for spaces and $G$-spaces, all $G$-spectra are fibrant objects.   Model categories with this property are often especially convenient since there is no need to keep track of how fibrant replacement behaves with respect to constructions of interest.  This feature holds for our $G$-spectra and can be viewed as the reason that $\OM^{\infty}$ behaves well both formally and homotopically. This feature does not hold for symmetric or orthogonal $G$-spectra.

The portion of \autoref{ass2} that deals with realization essentially follows by passage to colimits from \autoref{GSPACES}. Nonequivariantly, we define a spectrum to be connective if it is connective in the usual sense that its homotopy groups in negative degrees are zero.  We define a $G$-spectrum to be connective if all of it fixed point spectra are connective. We have another analog of \autoref{n=n},  proven similarly to \autoref{Gconn}.

\begin{prop}\label{Gconn2}  A $G$-spectrum is  $\OM^{\infty}$-connective in the sense of \autoref{LAcon} if and only if it is connective.
\end{prop}

\autoref{ass3} works as for $G$-spaces.  We also define group completion as for $G$-spaces.  The definition now is for $E_{\infty}$ $G$-spaces and is the same as for $E_V$-$G$-spaces: it is given by homological group completion on $H$-fixed point spaces for $H\subset G$.
From here it is implicit or explicit in \cite{GM3} that  Assumptions \ref{ass4} and \ref{ass5} hold.

\begin{thm}\label{spectra}  For $E_{\infty}$ $G$-spaces $Y$, the unit $\et_{\bC}$ induces a group completion and therefore a $\bC$-group completion  
$$Y \rtarr \OM^{\infty}_{\bC}\bE Y\iso \OM^{\infty}_{\bC}  \SI^{\infty}_{\bC} \bB Y,$$
and any connective $G$-spectrum $E$ is equivalent to  $\SI^{\infty}_{\bC}  \OM^{\infty}_{\bC} E$.
\end{thm}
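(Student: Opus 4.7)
The plan is to apply the general recognition principle \autoref{recprin1} and homotopical monadicity theorem \autoref{equiv} by verifying Assumptions \ref{ass1}--\ref{ass6} in this spectral context. The adjunction $(\SI^{\infty},\OM^{\infty})$, together with the natural action of the infinite Steiner operad $\sK_{\infty}$ on $\OM^{\infty}$, places us in the framework of \autoref{ass1} as already set up in \autoref{SPECTRA}. \autoref{ass2} is immediate: weak equivalences of $G$-spectra are defined via homotopy groups of fixed-point spectra; every object of $G\sS$ is fibrant, so $\OM^{\infty}$ preserves all weak equivalences; and $\SI^{\infty}$ and $\bC = \bC_{\infty}$ preserve weak equivalences on well-based $G$-spaces (to which we tacitly restrict as in \autoref{bspt1}). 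For \autoref{ass3}, define a $G$-spectrum $E$ to be $\OM^{\infty}$-connective if each $E^H$ is connective in the classical sense; $\SI^{\infty} X$ is then connective, and $\OM^{\infty}$ detects weak equivalences between connective $G$-spectra.

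For \autoref{ass4}, I would inherit the group-completion apparatus directly from \autoref{GSPACES}. Pulling back along the inclusion $\sC_1 \hookrightarrow \sK_{\infty}$ makes every $\bC_{\infty}$-$G$-space into a $\sC_1$-$G$-space, to which the functor $\bG$ of \autoref{HopfbG} applies. Since $\OM^{\infty}E$ is an infinite loop $G$-space, its fixed-point spaces admit homotopy inverses, so $\OM^{\infty}E$ is grouplike in the sense of \autoref{HopfG}. The $G$-equivariant version of \autoref{gpcompthm} then ensures that $g\colon Y \rtarr \bG Y$ is a group completion, completing the verification.

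The substantial step is \autoref{ass5}, the approximation theorem $\al\colon \bC X \rtarr \OM^{\infty}\SI^{\infty} X$. I would obtain this by passing to the colimit over finite-dimensional sub-$G$-inner-product spaces $V$ of the complete universe $U_G$. By construction $\bC_{\infty} = \colim_V \bC_V$ and $\OM^{\infty}\SI^{\infty}X \htp \colim_V \OM^V\SI^V X$, and the colimit of the $V$-fold maps $\al_V\colon \bC_V X \rtarr \OM^V\SI^V X$ is $\al$. Each $\al_V$ with $\bR^2 \subset V$ is a group completion by the equivariant $V$-fold approximation theorem of \autoref{GSPACES}. Since homology commutes with sequential colimits along $G$-cofibrations and localization at $\pi_0$ commutes with such colimits, the homological group-completion property survives, giving that $\al$ is itself a group completion. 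The hard part is precisely this passage to the limit: one must arrange the Steiner comparison maps into a cofibrant filtered diagram and ensure compatibility with the Hopf-space structure, essentially as carried out in \cite[Section 9]{Rant1} and its equivariant extension in \cite[Section 2.3]{GM3}.

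Finally, \autoref{ass6} is handled levelwise, using the realization of simplicial $G$-spectra described in \autoref{Kancan}. Parts (i)--(vi) follow by reduction to the $G$-space case through the spectral tensoring, and part (vii) — that $|\OM^{\infty} K_*| \rtarr \OM^{\infty}|K_*|$ is a weak equivalence for levelwise connective $K_*$ — follows by fixed-point reduction from the $G$-space statement of \autoref{MooreG} combined with the spectrum-level argument of \cite[Section 11]{Rant1}. With Assumptions \ref{ass1}--\ref{ass6} in hand, \autoref{recprin1} provides the group completion $Y \rtarr \OM^{\infty}_{\bC} \bE Y$, \autoref{Yeah} identifies this composite with $\et_{\bC}$, and \autoref{equiv} yields the equivalence $\SI^{\infty}_{\bC} \overline{\OM^{\infty}_{\bC} E} \rtarr E$ for any connective $G$-spectrum $E$.
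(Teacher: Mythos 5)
Your proposal follows the same route as the paper: it verifies Assumptions A--F for $(\SI^{\infty},\OM^{\infty})$ and the Steiner monad $\bC$ (with group completion defined fixed-pointwise as in the $G$-space case and the approximation theorem obtained by passage to colimits over $V$, exactly as in the cited \cite[Section 9]{Rant1} and \cite[Sections 2.3--2.4]{GM3}) and then quotes the general recognition and homotopical monadicity theorems, which is all the paper itself does. The only slip is cosmetic: for part (vii) of \autoref{ass6} the relevant $G$-space input is the fixed-point reduction of Hauschild \cite{Haus} (also \cite{CW}) recalled in \autoref{GSPACES}, not \autoref{MooreG}, but the mechanism you describe is the correct one.
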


\begin{rem}\label{alluniverses}  The theory here adapts without change for a general $G$-universe $U_G$, giving  an analog of \autoref{spectra}, provided that we take $\bC$ to be the monad associated to the Steiner operad $\sC$ for the universe  $U_G$. 
\end{rem}

\begin{rem}\label{finiteindex}  The discussion of $G$-spectra in \autoref{SPECTRA}  summarizes the restriction to finite groups $G$ of the theory of $G$-spectra developed for general compact Lie groups in \cite{LMS}.   As in \autoref{comLie},  \autoref{spectra} generalizes to compact Lie groups $G$ provided that we restrict attention to representations $V$ of $G$ whose isotropy groups have finite index in $G$.  Here we use a universe defined using only such representations and a  Steiner operad $\sC$ for a compact Lie group $G$ that is the colimit of Steiner operads $\sC_V$ for such $V$.  Details are in  \cite[Section 9.3]{MMO}.
\end{rem}
 
\begin{rem}\label{classical} For any topological group  $G$,\footnote{Again, we require $e\in G$ to be a non-degenerate basepoint to avoid pathology.}  we can use the trivial $G$-universe and a non-equivariant $E_{\infty}$ operad, viewed as $G$-trivial, to obtain the analog of \autoref{spectra} for classical (alias naive) $G$-spectra.   These are just spectra, as defined nonequivariantly,  with actions of $G$ on component spaces and equivariant structure maps such that $E_n\rtarr \OM E_{n+1}$ is a $G$-homeomorphism.  All of our assumptions hold.  The approximation theorem is obtained from its nonequivariant version by passage to fixed points.
\end{rem}
\part{Composite adjunctions}
 
\section{The general theory of composite adjunctions}\label{ABCDEF}   

\subsection{The categorical context and Assumption A}\label{A}
We start with a given  instance of the context described in \autoref{ass1}.   Thus we have an adjunction   
$(\SI,\OM)$ between categories  $\sT$ and $\sS$ and a monad  $\bC$ on $\sT$.  We have the formal consequences described in 
\autoref{CONTEXT1}. 

\begin{asscom}\label{ass6} We assume given a second adjunction
$(\bL,\bR)$ from a third cocomplete category $\sV$ to $\sT$ and we assume that $\epz\colon \bL\bR \rtarr \id$ is an isomorphism.  We also assume given a monad $\bD$ on 
$\sV$ together with a natural isomorphism $\om\colon \bD \bR \rtarr  \bR \bC$ of functors $\sT \rtarr \sV$ that is compatible with the units and products, $\et$ and $\mu$, of $\bC$ and $\bD$ in the sense that the following diagrams commute.\footnote{In the language of \cite[Definition 14.1]{Rant1}, these diagrams say that $(\bR,\om)$ is a lax map of monads $\bC\rtarr \bD$.}
\[ \xymatrix{
& \bR  \ar[dl]_{\et \bR}  \ar[dr]^{\bR \et} & \\
\bD \bR \ar[rr]_-{\om} &  & \bR\bC \\}
\ \ \ \ \ \ \
\xymatrix{
\bD\bD\bR \ar[r]^-{\bD\om} \ar[d]_{\mu\bR} & \bD\bR\bC \ar[r]^{\om \bC} & \bR\bC\bC \ar[d]^{\bR\mu}\\
\bD\bR \ar[rr]_-{\om} & & \bR\bC\\}
\]
\end{asscom}

\begin{rem}\label{weakcom} The condition that $\epz\colon \bL\bR \rtarr \id$ is an isomorphism plays an important role in our main examples.  We say we have a weak composite adjunction context when $\epz$ is only a weak equivalence.  The isomorphism condition is familiar categorically.  It means that $\sT$ is a reflective subcategory of $\sV$.  In particular, $\bR$ is full and faithful, the categorical monad $\bR\bL$ is idempotent (its $\mu$ is an isomorphism), and the adjunction is monadic.  Intuitively, $\bD$-algebras in $\sV$ are more general than $\bC$-algebras in $\sT$, so that a recognition principle for the new context is more general than the one for the original context.   Nevertheless, in our examples, we will show how to transform the more general context into the original one.  See \autoref{reduce}.
\end{rem}

The following diagram gives the picture.  It is a composite analog of \autoref{adj5} .  
\begin{equation}\label{adj6} 
\xymatrix{
\sV \ar@<.5ex>[rr]^{\bL}  \ar@<.5ex>[ddrr]^{\bF_{\bD}} & & \sT  \ar@<.5ex>[ll]^{\bR}    \ar@<.5ex>[rr]^{\SI}  \ar@<.5ex>[dr]^{\bF_{\bC}}& & \sS   \ar@<.5ex>[ll]^{\OM}   \ar@<.5ex>[dl]^{\OM_{\bC}}   
\ar@/^4pc/@<.6ex> [ddll]^-{(\bR\OM)_{\bD}}        \\
& & &   \ar@<.5ex>[ul]^{\bU_{\bC}}  \bC[\sT]   \ar@<.5ex>[dl]^{\bR}  \ar@<.5ex>[ur]^{\SI_{\bC}}& \\
& & \ar@<.5ex>[uull]^{\bU_{\bD}}  \bD[\sV] \ar@{-->}@<.5ex>[ur]^{\bL} \ar@/_4pc/@<.6ex> [uurr]^-{(\SI \bL)_{\bD}}& &  \\}
\end{equation}

We emphasize that the dotted arrow $\bL$ usually does not exist, but our context makes sense nevertheless.  Its companion arrow $\bR$ does always exist since we shall see in (i) of \autoref{formalMT2} that $\bR$  takes $\bC$-algebras in $\sT$ to $\bD$-algebras in $\sV$.  Therefore the arrow $(\bR\OM)_{\bD}=\bR\com \OM_{\bC}$ exists. This proves the following result.

\begin{lem}\label{com1} \autoref{ass1} holds with $(\SI, \OM)$ and $\bC$ replaced by $(\SI\bL, \bR\OM)$ and $\bD$.  
\end{lem}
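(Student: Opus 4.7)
My plan is to verify each clause of \autoref{ass1} for the new data $(\SI\bL,\bR\OM)$ and $\bD$ in turn, with almost everything following formally from the given data; the only real content is the construction and verification of a $\bD$-action on $\bR\OM Y$.

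First I would note that the adjunction condition is immediate: adjunctions compose, so the pair $(\SI\bL,\bR\OM)$ with unit and counit built from those of $(\bL,\bR)$ and $(\SI,\OM)$ is an adjoint pair from $\sV$ to $\sS$. The cocompleteness of $\sV$ is part of \autoref{ass7}, and cocompleteness of $\sS$ is part of the original \autoref{ass1}, so those hypotheses transfer. The monad $\bD$ on $\sV$ is given by \autoref{ass7}.

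The substantive step is to define the natural $\bD$-action on $\bR\OM Y$ for $Y\in\sS$ and check that it really is an action. I would take $\vartheta_{\bD}$ to be the composite
\[
\xymatrix@1{\bD\bR\OM Y \ar[r]^-{\om\OM} & \bR\bC\OM Y \ar[r]^-{\bR\vartheta} & \bR\OM Y,\\}
\]
where $\vartheta\colon\bC\OM Y\rtarr\OM Y$ is the $\bC$-action supplied by \autoref{ass1}. Naturality in $Y$ is inherited from naturality of $\om$ and $\vartheta$. The unit axiom $\vartheta_{\bD}\com \et_{\bD}\bR\OM = \id$ follows from the left triangular identity of \autoref{ass7} (which gives $\om\com\et_{\bD}\bR = \bR\et$) together with the unit axiom for the $\bC$-action. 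The associativity axiom $\vartheta_{\bD}\com\bD\vartheta_{\bD} = \vartheta_{\bD}\com\mu_{\bD}\bR\OM$ unfolds into a diagram whose outer rectangle commutes by pasting together three pieces: the right compatibility square of \autoref{ass7} applied to $\OM Y$ (which moves $\mu_{\bD}$ past $\om$ to $\bR\mu$), a naturality square of $\om$ applied to the morphism $\vartheta\colon\bC\OM Y\rtarr\OM Y$, and the image under $\bR$ of the associativity square for the $\bC$-action on $\OM Y$. Each piece is a commutative diagram whose commutativity is given by hypothesis.

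The step I expect to require the most care is the associativity verification above, simply because it requires correctly aligning the two compatibility diagrams of \autoref{ass7} with the action axiom for $\bC$; once drawn explicitly, however, each face is one of the three commutative diagrams just listed, so no further input is needed. This completes the verification that $(\SI\bL,\bR\OM)$ and $\bD$ satisfy \autoref{ass1}, justifying the diagram \autoref{adj6} and, as observed just before the lemma, allowing us to form $(\bR\OM)_{\bD} = \bR\com\OM_{\bC}$ as the promised lift of $\bR\OM$ to $\bD[\sV]$.
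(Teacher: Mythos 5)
Your proposal is correct and follows the paper's route: the paper deduces the lemma by citing (i) of \autoref{formalMT2}, which equips $\bR X$ for a $\bC$-algebra $(X,\tha)$ with exactly the $\bD$-action $\bR\tha\com\om$ that you construct on $\bR\OM Y$, the rest (composability of adjunctions, cocompleteness, the monad $\bD$) being immediate from \autoref{ass1} and \autoref{ass7}. Your diagram chase for the unit and associativity axioms simply supplies the "formal inspection" that the paper leaves implicit in its forward reference to \autoref{formalMT2}.
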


We call this the composite adjunction context.
 We have maps $\al$ and $\be$ as in \autoref{MON} for this context, hence \autoref{twist} and \autoref{keyadj} construct the left adjoint, denoted $(\SI\bL)_{\bD}$, of $(\bR\OM)_{\bD}$.  

\begin{rem}\label{sillybit}   When the dotted arrow left adjoint $\bL$ to $\bR$ exists, the curved arrows might as well be erased since we then have both
$$(\bR\OM)_{\bD} = \bR \com \OM_{\bC} \ \ \text{and} \ \  (\SI\bL)_{\bD} = \SI_{\bC}\com \bL.$$
\end{rem}

{ We assume  that  Assumptions B through E of  \autoref{CONTEXT2} hold for $(\SI,\OM)$ and $\bC$, so that we have the conclusions they imply. We discuss these assumptions in the present context  in Sections  \ref{BC} and \ref{DE}, answering the following natural question.

\begin{quest}\label{Quest} Assuming that $(\SI,\OM)$ and $\bC$ satisfy Assumptions B through E, when do $(\SI \bL, \bR\OM)$ and $\bD$ also satisfy them?
\end{quest}

We relate the monads $\bC$ and $\bD$ and their algebras formally in \autoref{MONCOM}, and we discuss the subcategories $\sV_s$ and $\sV_{ss}$ of special and strictly special  objects of $\sV$ in \autoref{SecSpec},   We summarize our conclusions in \autoref{compmach}.  Finally, we show in \autoref{silly} how an example of our (weak) composite adjunction context applies to transform topological examples of \autoref{ass1} to simplicial ones.

\subsection{The homotopical context and Assumptions B and C}\label{BC}

We require the following analog of \autoref{ass2}. In practice, most parts follow directly from their analogs for our original adjunction $(\SI,\OM)$.

\begin{asscom}\label{ass7}  We assume that $\sV$, like $\sS$ and $\sT$, is cocomplete, has a standard notion of homotopy, and has a class of weak equivalences satisfying the two out of three property. We say that a map $f$  in $\bD[\sV]$ is a weak equivalence if $\bU_{\bD}f$ is a weak equivalence in $\sV$.   We assume that a map $f$ in $\sT$ is a weak equivalence if and only if $\bR f$ is a weak equivalence in $\sV$ and that the functor $\bF_{\bD}$ (hence also $\bD$) preserves weak equivalences, at least under restriction to good objects as in \autoref{bspt1}. 
We assume the following statements about simplicial objects in $\sV$.  
\begin{enumerate}[(i)]
\item  There is a (levelwise) realization functor $\bT \colon s\sV\rtarr \sV$ and it is a left adjoint.
\item  The functor $\bT $  on  $s\sV$ preserves homotopies.
\item  The functor $\bT $  on  $s\sV$ preserves weak equivalences between Reedy cofibrant objects.
\item  Realization commutes with the (left adjoint) functor $\bL$. That is, for $X_*$ in $s\sV$, there is a natural isomorphism
$\xymatrix@1{  \bL \bT X_* \iso \bT \bL_* X_*.}$
\end{enumerate}
\end{asscom}

\begin{rem}\label{Lstinks} In contrast to \autoref{ass2}, where $\SI$ was assumed to preserve weak equivalences, we do not assume that the functor $\bL$ preserves weak equivalences.  It will do so in the categories of operators context.  It will not do so in the orbital presheaf context, but it will do so there on restriction to cofibrant objects in view of \autoref{Lss} and \autoref{Gcellwonder}.
\end{rem}

Similarly, we require the following analog of \autoref{ass3}.

\begin{asscom}\label{ass8} 
\begin{enumerate}[(i)]
\item Realization commutes with the functor $\bD$. That is,  for $X_*$ in $s\sV$, there is a natural isomorphism
$\nu\colon  \bT \bD_* X_* \rtarr \bD \bT X_*$ such that the following diagrams commute.
$$ \xymatrix{
& \bT X_* \ar[dl]_-{\bT \et_*}  \ar[dr]^{\et} &        & & 
\bT \bD_*\bD_* X_* \ar[d]_{\bT \mu_*}   \ar[r]^-{\nu} &\bD \bT \bD_* X_*  \ar[r]^-{\bD\nu} & \bD\bD \bT X_* \ar[d]^{\mu}\\
\bT \bD_*X_* \ar[rr]_-{\nu} & & \bD \bT X_* & & \bT \bD_* X_*  \ar[rr]_-{\nu}  &  & \bD \bT X_* \\}
$$
Therefore $\bT \bD_*X_*$ is a $\bD$-algebra.  
\item For $X_*\in s\sT$,  the natural map $\ga_{\bR}\colon \bT \bR_* X_* \rtarr \bR \bT X_*$, which is the adjoint of the isomorphism
 $$ \xymatrix@1{\bL \bT \bR_*X_* \iso \bT \bL_*\bR_* K_* \ar[r]^-{\bT \epz_*} & \bT X_*,\\}$$
 is also an isomorphism.
 \end{enumerate}
\end{asscom}

\begin{rem}\label{different}  In our space level examples in Sections  \ref{catop} and \ref{orbpre}, $\ga_{\bR}$ is an isomorphism for entirely different non-formal reasons.  In  \autoref{catop}, $\bR$ is given by cartesian powers, and realization commutes with products.  In  \autoref{orbpre}, $\bR$ is given by passage to fixed points, and realization commutes with passage to fixed points for Reedy cofibrant simplicial $G$-spaces.  
\end{rem}

Composite versions of  Assumptions \ref{ass2}(v) and \ref{ass3}(ii) will be given in \autoref{tortuous}.  Therefore the following result holds in the contexts we consider in this paper.

\begin{lem}\label{ass2yes}   By composition, \autoref{ass7} and \autoref{tortuous} together with Assumptions \ref{ass2} and \ref{ass3} for the adjunction $(\SI,\OM)$ and the monad $\bC$ ensure that Assumptions \ref{ass2} and \ref{ass3} hold for the composite adjunction $(\SI\bL, \bR\OM)$ and the monad $\bD$, except that $\SI\bL$ need not preserve weak equivalences in general.
\end{lem}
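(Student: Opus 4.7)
The plan is to verify each clause of \autoref{ass2} in the composite setting by a direct composition argument, reading off each piece from either the original \autoref{ass2} applied to $(\SI,\OM)$ and $\bC$, or from the newly imposed \autoref{ass8}.

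First I would address the formal data: the classes of weak equivalences on $\sV$ and $\sS$ satisfy two out of three (the one on $\sS$ from the original \autoref{ass2}, the one on $\sV$ from \autoref{ass8}), and the definition of a weak equivalence in $\bD[\sV]$ via the forgetful functor $\bU_{\bD}$ is literally part of \autoref{ass8}.

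Next I would check preservation of weak equivalences functor by functor. For $\bR\OM\colon \sS\rtarr \sV$: the functor $\OM$ preserves weak equivalences by the original \autoref{ass2}, and $\bR$ preserves weak equivalences because \autoref{ass8} says a map $f$ in $\sT$ is a weak equivalence if and only if $\bR f$ is one in $\sV$; the ``only if'' direction gives what is needed. For $\bF_{\bD}$: this is assumed directly in \autoref{ass8}. Hence the monad $\bD = \bU_{\bD}\com\bF_{\bD}$ also preserves weak equivalences, since $\bU_{\bD}$ trivially does by the definition of weak equivalences in $\bD[\sV]$. Finally, for $(\bR\OM)_{\bD}$: by \autoref{sillybit} and the discussion preceding \autoref{com1}, this functor equals the composite $\bR\com \OM_{\bC}$, where $\OM_{\bC}$ preserves weak equivalences by the original \autoref{ass2} and $\bR$ preserves them as above.

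The only clause of \autoref{ass2} that cannot be verified in general is preservation of weak equivalences by the left adjoint $\SI\bL$. Here $\SI$ does preserve weak equivalences by the original \autoref{ass2}, but $\bL$ is not assumed to; indeed \autoref{Lstinks} explicitly flags this failure, citing the orbital presheaf example where $\bL$ only preserves weak equivalences after restriction to cofibrant objects. This accounts for the exception noted in the statement, and completes the verification. No nontrivial obstacle arises; the whole argument is a bookkeeping exercise in composing preservation statements, with the sole substantive point being the explicit acknowledgment that $\bL$, unlike $\bR$, is not a homotopical functor without further restriction.
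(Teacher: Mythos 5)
Your verification is correct and follows exactly the route the paper intends: the lemma is stated without proof precisely because it is the bookkeeping composition argument you spell out (two-out-of-three and weak equivalences in $\bD[\sV]$ from \autoref{ass8}, $\bR$ and $\bF_{\bD}$ preserving weak equivalences from \autoref{ass8}, $\OM$ and $\OM_{\bC}$ from \autoref{ass2}, with $\SI\bL$ excepted because $\bL$ is not assumed homotopical, as \autoref{Lstinks} notes). Nothing to add.
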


We will come back to Assumptions \ref{ass4} and \ref{ass5} in \autoref{DE}.
}
   
\subsection{Relations between monads $\bC$ in $\sT$ and $\bD$ in $\sV$}\label{MONCOM}

We ignore $(\SI,\OM)$ in this subsection but assume given the adjunction $(\bL,\bR)$ such that $\epz\colon \bL\bR \rtarr \id$ is an isomorphism.  We relate monadic data in $\sT$ and $\sV$, assuming throughout that we are given monads $\bC$ on $\sT$ and $\bD$ on $\sV$ together with an isomorphism $\om\colon \bD\bR\rtarr \bR\bC$ that satisfies the compatibility diagrams of  \autoref{ass7}.  

In the special case of categories of operators, results like these were first observed in \cite[\S6]{MT} and were later elaborated in \cite[Appendix A]{Rant2}, but our statements here are adapted to our present general context.  The proofs are formal inspections of definitions and straightforward diagram chases.  These results are purely categorical and have nothing to do with group completion or the approximation theorem.  Note that part (ii) is in part a specialization of \autoref{Fied7}, but now using that $\epz$ is an isomorphism.

\begin{prop}\label{formalMT1}  The following conclusions relate the monads $\bC$ and $\bD$. 
\begin{enumerate}[(i)]  
\item The functor $\bL \bD \bR$ on $\sT$ is naturally isomorphic to $\bC$ via the composite
\[ \xymatrix@1{\bL \bD \bR \ar[r]^-{\bL\om} & \bL\bR \bC \ar[r]^-{\epz\bC} & \bC \\}  \]
and therefore  inherits a monad structure from that of $\bC$.  Moreover, under this isomorphism, the following compatibility diagrams commute.
\[  \xymatrix{
\bL\bR \ar[d]_{\epz} \ar[r]^-{\bL\et\bR} & \bL\bD\bR \ar[d]^{\iso}\\
\id \ar[r]_-{\et}  & \bC\\}
\ \ \ \ \ \ \ 
\xymatrix{
\bL\bD\bD\bR \ar[d]_{\bL\mu \bR} \ar[r]^-{\bL\bD\et\bD\bR} & \bL\bD\bR\bL\bD\bR \ar[r]^-{\iso} & \bC\bC \ar[d]^{\mu}\\
\bL\bD\bR \ar[rr]_-{\iso} & & \bC\\}  \]
\item The functor $\bR \bC \bL $ on $\sV$ is a monad with product and unit induced from those
of $\bC$ via the composites
\[ \xymatrix@1{ \bR \bC \bL \bR \bC \bL \ar[r]^-{\bR\bC \epz} & \bR \bC \bC \bL  \ar[r]^-{\bR \mu \bL } & \bR \bC \bL \\}  
\ \ \text{and} \ \ \xymatrix@1{ \id \ar[r]^{\et} & \bR \bL  \ar[r]^-{\bR \et \bL } & \bR \bC \bL.  \\} \]
Moreover, the isomorphism  $\bR\bC\epz\colon \bR\bC\bL\bR \rtarr \bR\bC$ is compatible with the units and products of 
$\bR\bC\bL$ and $\bC$ in the sense specified in \autoref{ass7}. 
\item   The composite
$$\xymatrix@1{\io: \bD  \ar[r]^-{\bD\et} & \bD  \bR \bL  \ar[r]^{\om \bL}_{\iso} &  \bR \bC \bL\\} $$
is a morphism of monads in $\sV$. 
\end{enumerate}
\end{prop}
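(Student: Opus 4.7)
The plan is to treat each part by transporting structure through the isomorphisms $\omega$ and $\epsilon$ and then chasing compatibility diagrams built from the axioms of \autoref{ass7} and the triangle identities for $(\bL,\bR)$.

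For part (i), I would first observe that both $\bL\omega$ and $\epz\bC$ are isomorphisms, hence so is their composite $\bL\bD\bR \xrightarrow{\iso} \bC$, and this automatically transports the monad structure of $\bC$ to $\bL\bD\bR$. For the first compatibility square I would expand $\epz_{\bC}\circ\bL\omega\circ\bL\et\bR$ using the first diagram of \autoref{ass7} (which says $\omega\circ\et\bR = \bR\et$) and then use naturality of $\epz$ to collapse the result to $\et\circ\epz$. For the product square I would use the second diagram of \autoref{ass7}, together with naturality of $\epz$ and a triangle identity to insert/remove a $\bL\bR$ pair between the two copies of $\bD$. These are entirely mechanical diagram chases.

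For part (ii), the claim that $\bR\bC\bL$ is a monad is a special case of \autoref{Fied7} (applied to $(\bL,\bR)$), so nothing needs to be re-proved except perhaps to recall how the unit and product are defined. The substantive statement is that $\bR\bC\epz\colon\bR\bC\bL\bR\rtarr\bR\bC$ plays the same role between $\bR\bC\bL$ and $\bC$ that $\omega$ played between $\bD$ and $\bC$, namely that it satisfies the two diagrams of \autoref{ass7} with $\bD$ replaced by $\bR\bC\bL$. Here I would simply paste the defining diagrams for the unit and product of $\bR\bC\bL$ (from the statement of part (ii)) against the target compatibility diagrams; everything reduces to naturality of $\epz$ and the fact that $\epz$ is an isomorphism (so $\bR\bC\epz$ is too), together with the monad axioms for $\bC$.

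For part (iii), I need to check that $\iota = (\omega\bL)\circ(\bD\et)$ preserves units and products. Unit preservation is immediate: composing $\iota$ with $\et_{\bD}\colon\Id\rtarr\bD$ and using naturality of $\et_{\bD}$ reduces to $\omega\bL\circ\et_{\bD}\bR\bL$, which the first diagram of \autoref{ass7} (applied at $\bL$) identifies with $\bR\et\bL = \et_{\bR\bC\bL}$. For product preservation, I would form the large diagram built from two copies of the defining composite of $\iota$ side by side and then insert the second diagram of \autoref{ass7} (applied at $\bL$) to move $\omega$ past the $\mu$ of $\bC$; the remaining cells commute by naturality of $\et$ and $\omega$ and by a triangle identity for $(\bL,\bR)$, together with the definition of the product of $\bR\bC\bL$ recalled in part (ii).

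The main obstacle I expect is bookkeeping in the product-compatibility square of part (iii): the product on $\bR\bC\bL$ is itself a two-step composite involving $\bR\bC\epz$, so after expanding $\iota\iota$ on the one side and $\iota\circ\mu_{\bD}$ on the other, the resulting diagram has several cells and one must align the insertion of an $\epz^{-1}\bL\bR$ pair correctly so that the hexagon of \autoref{ass7} for $\omega$ can be applied. Everything else is naturality and triangle identities.
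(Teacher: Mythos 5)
Your proposal is correct and matches the paper's approach: the paper itself proves \autoref{formalMT1} only by noting that the claims follow from formal inspection of definitions and straightforward diagram chases with the compatibility diagrams of \autoref{ass7}, naturality, and the triangle identities, and it likewise points to \autoref{Fied7} for part (ii), exactly as you do. Your sketched chases (unit diagram plus naturality of $\epz$ for the unit squares, the product diagram of \autoref{ass7} with a triangle identity to absorb the inserted $\bR\bL$ for the product squares and for $\io$) are the intended arguments.
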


\begin{rem}\label{puzzle}  This result  says in particular that the ``conjugate monad'' $\bR\bC \bL$ is the terminal monad in $\sV$ such that  $\bD\bR$ is compatibly isomorphic to  $\bR\bC$.  There usually are other such monads that are more relevant to the applications.   In the category of operators  context, others have long been known and are standard.  In the orbital presheaf context, others are difficult to construct.   In the sequel \cite{KMZ2}, we will rework a construction of Costenoble and Waner \cite{CW} to obtain such $\bD$ in that case.  Those $\bD$ are the examples that lead to applications.   
\end{rem}

\begin{prop}\label{formalMT2}  The following conclusions relate $\bC$-algebras to $\bD$-algebras.  Let $X$ be in $\sT$ and $Y$ be in  $\sV$.  
\begin{enumerate}[(i)]  
\item If $X$ is a $\bC$-algebra with action $\tha\colon \bC X\rtarr X$, then $\bR X $ is a $\bD$-algebra with action 
the composite 
$$\xymatrix@1{\bD \bR X\ar[r]^-{\om} & \bR\bC X\ar[r]^-{\bR  \tha} & \bR X\\}$$
and  $\epz\colon \bL\bR X \rtarr  X$ is an isomorphism of $\bC$-algebras.
\item  If $\bR X$ is a $\bD$-algebra with action  $\ps\colon \bD\bR X \rtarr \bR X$, then $X$ is a $\bC$-algebra with action  
$$\bL\ps \colon \bC X \iso \bL \bD \bR X \rtarr \bL  \bR X \iso X$$
and the action $\ps$ factors as the composite
$$ \xymatrix{
\bD \bR X  \ar[r]^-{\et} &  \bR\bL \bD \bR X  \iso \bR\bC X  \ar[r]^-{\bR\bL \ps} & \bR X.\\} $$
\item If  $\bD \iso \bR\bC\bL$ and $Y$ is a $\bD$-algebra with action  $\tha\colon \bD Y \rtarr Y$, then $\bL Y$ is a $\bC$-algebra with action  
$$ \bL \tha\colon \bC \bL Y \iso \bL\bR \bC \bL Y \rtarr \bL Y.$$
Therefore the dotted arrow $\bL$ in \autoref{adj6} exists when $\bD = \bR\bC\bL$.
\end{enumerate}
\end{prop}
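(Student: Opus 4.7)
The approach is direct: each of the three parts reduces to a formal diagram chase driven by the two compatibility axioms for $\om$ in \autoref{ass7} and the triangle identities for $(\bL,\bR)$, combined with the standing hypothesis that $\epz\colon \bL\bR\rtarr \Id$ is an isomorphism. No approximation-theoretic or group-completion input is needed; this is the purely categorical counterpart to \autoref{formalMT1}.

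For part (i), I would first verify that $\bR\tha\com\om\colon \bD\bR X\rtarr \bR X$ is a $\bD$-action. The unit axiom is obtained by precomposing with $\et\bR$: the first compatibility triangle rewrites $\om\com\et\bR$ as $\bR\et$, and then $\bR\tha\com\bR\et = \bR(\tha\com\et) = \id$ by the $\bC$-algebra unit axiom for $X$. The associativity axiom comes from pasting the second compatibility square with the naturality square of $\om$ applied to $\tha$ and the associativity of $\tha$. Once $\bL\bR X$ is given the $\bC$-action transported from $X$ along the isomorphism $\epz$, the fact that $\epz$ is an isomorphism of $\bC$-algebras is then a tautology.

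For part (ii), the candidate $\bC$-action on $X$ is defined to be the composite
\[
\bC X \xrightarrow{\iso} \bL\bD\bR X \xrightarrow{\bL\ps} \bL\bR X \xrightarrow{\epz}_{\iso} X,
\]
using the isomorphism $\bL\bD\bR\iso \bC$ from \autoref{formalMT1}(i). The unit and associativity axioms are obtained by applying $\bL$ to the compatibility diagrams of \autoref{ass7} (read in the opposite direction, since $\om$ is a natural isomorphism) and combining with the $\bD$-algebra axioms for $\ps$. For the stated factorization of $\ps$, observe that by a triangle identity combined with $\epz$ being an isomorphism, $\et\colon \bR X\rtarr \bR\bL\bR X$ is inverted by $\bR\epz$; a straightforward naturality chase applied to $\ps$ then produces the asserted identity $\ps = \bR\bL\ps\com \et$ with the middle term identified as $\bR\bC X$.

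For part (iii), assuming $\bD\iso \bR\bC\bL$, the candidate action on $\bL Y$ is
\[
\bL\tha\colon\ \bC\bL Y\ \iso\ \bL\bR\bC\bL Y\ \iso\ \bL\bD Y\ \xrightarrow{\bL\tha}\ \bL Y,
\]
and its $\bC$-algebra axioms follow by applying $\bL$ to those of $\tha$, using the induced isomorphism $\bL\bD\iso \bC\bL$. This produces the dotted arrow $\bL$ in \autoref{adj6}. There is no conceptual obstacle here; the only care required is bookkeeping, namely keeping track of the direction of $\om$ and invoking the triangle identities in the correct order. The substantive takeaway is that when $\bD\iso \bR\bC\bL$, the adjunction $(\bL,\bR)$ lifts to an equivalence between $\bC[\sT]$ and $\bD[\sV]$, so the composite adjunction context collapses to the original context of \autoref{ass1}; this is the principle behind \autoref{weakcom}.
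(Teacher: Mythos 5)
Your proof is correct and is essentially the argument the paper has in mind: the paper offers no written-out proof for this proposition, asserting only that it follows by ``formal inspections of definitions and straightforward diagram chases,'' and your chases (the unit triangle and product square of \autoref{ass7} plus naturality of $\om$ for (i), the isomorphism $\bL\bD\bR\iso\bC$ of \autoref{formalMT1}(i) for (ii), and naturality of $\et$ with the triangle identity for the factorization of $\ps$) are exactly those checks. Two small points. First, in (i) the clause about $\epz$ is better read as saying that the $\bC$-structure on $\bL\bR X$ \emph{induced from the $\bD$-structure you just put on $\bR X$} agrees under $\epz$ with the original $\tha$; declaring the structure ``transported along $\epz$'' makes the clause a tautology and dodges this (one-line) check, which is just naturality of $\epz$ applied to $\tha$ together with cancelling $\bL\om$ against its inverse. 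Second, your closing takeaway overstates the conclusion: when $\bD\iso\bR\bC\bL$ the lifted $\bL$ of part (iii) gives an adjunction, but \emph{not} an equivalence between $\bC[\sT]$ and all of $\bD[\sV]$; the unit $\et\colon Y\rtarr\bR\bL Y$ is not an isomorphism (or even an equivalence) for general $Y$, and the paper's \autoref{weakcom} stresses that $\bD$-algebras are strictly more general, with the equivalence of \autoref{formalMT3} holding only after restricting to the strictly special objects $\sV_{ss}$.
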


\begin{prop}\label{formalMT22}   If $(F,\la)$ is a $\bC$-functor in some category $\sZ$, then $F\bL \colon \sV \rtarr \sZ$ is an
$\bR \bC \bL $-functor in $\sZ$ with action the composite
$$\xymatrix@1{F\bL \bR \bC \bL  \ar[r]^{F\epz\bC\bL} & F\bC \bL  \ar[r]^{\la \bL} & F\bL.\\} $$
Therefore, by pullback, $F\bL$ is a $\bD$-functor in $\sZ$ with action the composite
\[ \xymatrix@1{ F\bL \bD\ar[r]^-{F\bL \bD\et} 
&  F\bL \bD  \bR \bL  \ar[r]^-{F\bL \om\bL} &  F\bL \bR  \bC \bL  \ar[r]^{F\epz\bC\bL} & F\bC \bL  \ar[r]^-{\la \bL } & F\bL .\\} \]
\end{prop}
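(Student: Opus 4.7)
The plan is to prove the two assertions in order. First I would verify that $(F\bL, \la')$ with $\la' = \la\bL \circ F\epz\bC\bL$ defines an $\bR\bC\bL$-functor in $\sZ$, where $\bR\bC\bL$ carries the monad structure constructed in \autoref{formalMT1}(ii), with unit $\bR\et\bL \circ \et$ and product $\bR\mu\bL \circ \bR\bC\epz$. The second assertion then follows formally from \autoref{formalMT1}(iii), which shows that $\io = \om\bL \circ \bD\et$ is a morphism of monads $\bD \rtarr \bR\bC\bL$; pulling back $\la'$ along $\io$ converts any $\bR\bC\bL$-functor into a $\bD$-functor, and unpacking the composite gives
$$\la\bL \circ F\epz\bC\bL \circ F\bL\om\bL \circ F\bL\bD\et,$$
which is exactly the displayed action.

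For the $\bR\bC\bL$-functor unit axiom, naturality of $\epz$ applied to $\et\bL \colon \bL \rtarr \bC\bL$ gives the identity $\epz\bC\bL \circ \bL\bR\et\bL = \et\bL \circ \epz\bL$. Combined with the triangle identity $\epz\bL \circ \bL\et = \id_{\bL}$ and the unit axiom $\la \circ F\et = \id_F$ of the original $\bC$-functor, this collapses $\la' \circ F\bL(\bR\et\bL \circ \et)$ to $\id_{F\bL}$.

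For associativity, I would chase the diagram comparing
$$\la' \circ F\bL(\bR\mu\bL \circ \bR\bC\epz) \quad \text{and} \quad \la' \circ \la'(\bR\bC\bL)$$
as maps $F\bL\bR\bC\bL\bR\bC\bL \rtarr F\bL$. The key ingredients are naturality of $\epz$ (used repeatedly to commute the two copies of $\epz\bC\bL$ past intervening $\bL\bR$-sandwiches), a triangle identity or two, and the associativity axiom $\la \circ F\mu = \la \circ \la\bC$ for $(F,\la)$. I expect the main obstacle to be purely notational: with several $\bL\bR$-factors stacked and two $\epz\bC\bL$'s to propagate, the bookkeeping is delicate, but each individual step is standard and no new structure beyond what is already in Assumption~\thecountcom\ (for \autoref{ass7}) and the $\bC$-functor axioms is needed.
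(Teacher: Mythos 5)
Your proposal is correct and is essentially the paper's (implicit) argument: Proposition~\ref{formalMT22} is stated without proof precisely because it is the formal diagram chase you describe, with the second assertion obtained by pullback along the monad map $\io\colon \bD\rtarr \bR\bC\bL$ of \autoref{formalMT1}(iii). One tiny correction to your ingredient list: the associativity chase closes using naturality of $\epz$ (twice), naturality of $\la$, and the associativity axiom $\la\com F\mu = \la\com\la\bC$ — no triangle identity is needed there (it is needed only for the unit axiom, exactly as you use it).
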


\subsection{Special and strictly special objects of $\sV$}\label{SecSpec}

We will see in \autoref{Segal}  that the first part of the following definition is standard in the context of categories of operators.   The second part will play a central role in both that context and the context of orbital presheaves.

\begin{defn}\label{special} We say that an object  $Y$ of $\sV$ is {\em special} if the unit 
$\et\colon Y\rtarr \bR\bL Y$ is a weak equivalence, and we let $\sV_s$ be the full subcategory of special objects of $\sV$. 
We say that $Y$ is {\em strictly special} if $\et$ is an isomorphism, and we let $\sV_{ss}$ be the full subcategory of strictly special objects of $\sV$.  \end{defn}

 \autoref{ass7} and the naturality of $\et$ imply the following conclusion. 

\begin{lem}\label{Lss}  The functor $\bL$ preserves weak equivalences when restricted to special objects of $\sV$.
\end{lem}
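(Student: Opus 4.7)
The plan is to use the naturality of the unit $\et$ together with the half of \autoref{ass8} saying that a map $f$ in $\sT$ is a weak equivalence if and only if $\bR f$ is a weak equivalence in $\sV$. So let $f\colon Y \rtarr Y'$ be a weak equivalence in $\sV$ between strictly special objects. We want to show that $\bL f$ is a weak equivalence in $\sT$, and by \autoref{ass8} it suffices to show that $\bR\bL f$ is a weak equivalence in $\sV$.

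The naturality square for $\et$ reads
\[
\xymatrix{
Y \ar[r]^-{\et} \ar[d]_{f} & \bR\bL Y \ar[d]^{\bR\bL f}\\
Y' \ar[r]_-{\et} & \bR\bL Y'.\\}
\]
By the assumption that $Y$ and $Y'$ are strictly special, both horizontal maps are isomorphisms, hence in particular weak equivalences. Since $f$ is a weak equivalence by hypothesis, the two out of three property of weak equivalences in $\sV$ from \autoref{ass8} forces $\bR\bL f$ to be a weak equivalence. Applying the ``only if'' direction of \autoref{ass8} then gives that $\bL f$ is a weak equivalence in $\sT$, completing the proof.

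The argument is entirely formal, so there is no real obstacle; the only thing to note is that we used both directions of the equivalence in \autoref{ass8} (namely that $f$ is a weak equivalence in $\sT$ iff $\bR f$ is a weak equivalence in $\sV$), together with the strict form of being special. The result would fail, or at least require additional hypotheses, if we weakened ``strictly special'' to ``special'' without some further control on how $\bR\bL$ interacts with weak equivalences.
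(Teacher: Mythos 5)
Your argument is correct and is exactly the paper's intended proof: the paper simply notes that \autoref{ass8} together with the naturality of $\et$ implies the lemma, which is precisely your naturality-square argument with the strictness of $\et$ supplying the two isomorphisms. (One tiny quibble: the implication you need at the end — $\bR\bL f$ a weak equivalence forces $\bL f$ to be one — is the ``if'' rather than the ``only if'' direction of the biconditional in \autoref{ass8}, but you clearly invoke the correct implication.)
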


The condition that  $\epz\colon \bL\bR \rtarr \id$ is an isomorphism has the following implication.  

\begin{lem}\label{Rss}  The functor $\bR$ takes objects of $\sT$ to strictly special objects of $\sV$.   Therefore 
$(\bL,\bR)$ restricts to an adjoint equivalence relating $\sV_{ss}$ to $\sT$. \end{lem}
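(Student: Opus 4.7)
The plan is to use the two triangle identities for the adjunction $(\bL,\bR)$ together with the hypothesis that $\epz\colon \bL\bR \rtarr \Id$ is an isomorphism.

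First I would prove the first sentence. Given $X\in\sT$, I must show that the unit $\et_{\bR X}\colon \bR X \rtarr \bR\bL\bR X$ is an isomorphism. The triangle identity asserts that the composite
\[
\xymatrix@1{
\bR X \ar[r]^-{\et_{\bR X}} & \bR\bL\bR X \ar[r]^-{\bR\epz_X} & \bR X
}
\]
is the identity. Since $\epz_X$ is an isomorphism by \autoref{ass7}, so is $\bR\epz_X$, and hence $\et_{\bR X}$ is its two-sided inverse (the other triangle identity is unnecessary here, since right inverses of isomorphisms in any category are automatically two-sided inverses). Thus $\bR X$ is strictly special, giving a restricted functor $\bR\colon \sT \rtarr \sV_{ss}$.

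For the second sentence, I would note that $\bL$ restricts to a functor $\sV_{ss}\rtarr \sT$ tautologically, and that the restricted pair $(\bL,\bR)$ inherits its unit and counit from the original adjunction. On $\sV_{ss}$, the unit $\et_Y\colon Y\rtarr \bR\bL Y$ is an isomorphism by the very definition of strictly special, and on $\sT$ the counit $\epz_X\colon \bL\bR X\rtarr X$ is an isomorphism by \autoref{ass7}. Hence both unit and counit of the restricted adjunction are natural isomorphisms, which is precisely what it means for $(\bL,\bR)$ to be an adjoint equivalence between $\sV_{ss}$ and $\sT$.

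There is no substantive obstacle; the entire argument is a one-line consequence of the triangle identities plus the standing hypothesis. The only point worth flagging is that one should check that the restricted adjunction is indeed a well-defined adjoint pair between the two subcategories, which amounts to verifying (as above) that $\bR$ lands in $\sV_{ss}$ and that $\bL$ sends $\sV_{ss}$ into $\sT$; the latter is automatic since $\bL$ is defined on all of $\sV$.
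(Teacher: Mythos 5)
Your proof is correct and is essentially the paper's own argument: the triangle identity $\bR\epz\com\et = \id$ together with the hypothesis that $\epz$ is an isomorphism forces $\et\colon \bR X\rtarr \bR\bL\bR X$ to be an isomorphism, so $\bR$ lands in $\sV_{ss}$. The paper leaves the second sentence implicit, whereas you spell out that both unit and counit of the restricted adjunction are isomorphisms; that added detail is accurate and harmless.
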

\begin{proof}
By the triangle identity, the following composite is the identity for  $X\in \sT$.
$$\xymatrix@1{ 
\bR X \ar[r]^-{\et}  &  \bR\bL\bR X \ar[r]^-{\bR\epz}_-{\iso} & \bR X\\}
$$
Therefore $\et\colon \bR X \rtarr \bR\bL \bR X$ is an isomorphism. 
\end{proof}

\begin{rem}\label{puzzle2}
In our examples, we will show, non-trivially, that it suffices to focus on strictly special objects of $\sV$.  Then \autoref{formalMT3} below, which is implied by the following lemma and Propositions \ref{formalMT1} and 
\ref{formalMT2}, always justifies taking $\bD$ to be $\bR \bC\bL$ when applying the machine. This holds even though the examples of interest come from actions of quite different monads $\bD$, as pointed out in
\autoref{puzzle}.
\end{rem}

\begin{lem}\label{YDY}   If $Y\in \sV$ is strictly special, then $\bD Y$ is strictly special.
\end{lem}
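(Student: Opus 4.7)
The plan is to exhibit $\bD Y$ as isomorphic (naturally) to an object in the image of $\bR$, and then appeal to \autoref{Rss}, which says every object of the form $\bR X$ is strictly special. Concretely, starting from the hypothesis that $\eta_Y\colon Y\to \bR\bL Y$ is an isomorphism, I would apply $\bD$ to get that $\bD\eta_Y\colon \bD Y\to \bD\bR\bL Y$ is an isomorphism, and then post-compose with the isomorphism $\om_{\bL Y}\colon \bD\bR\bL Y\to \bR\bC\bL Y$ that is part of the data of \autoref{ass7}. The composite
$$\ph\colon \bD Y\xrightarrow{\bD\eta_Y}\bD\bR\bL Y\xrightarrow{\om_{\bL Y}}\bR\bC\bL Y=\bR X,\qquad X:=\bC\bL Y,$$
is then an isomorphism exhibiting $\bD Y$ as (canonically isomorphic to) an object in the image of $\bR$.

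The second step is to observe that the property ``strictly special'' is invariant under isomorphism: if $\ph\colon Z\to Z'$ is an isomorphism in $\sV$ and $Z'$ is strictly special, then the naturality square
$$\xymatrix{
Z \ar[r]^-{\eta_Z} \ar[d]_{\ph}^{\iso} & \bR\bL Z \ar[d]^{\bR\bL\ph}_{\iso} \\
Z' \ar[r]_-{\eta_{Z'}}^{\iso} & \bR\bL Z'
}$$
has three of its four arrows isomorphisms, forcing $\eta_Z$ to be an isomorphism as well. Applying this with $Z=\bD Y$, $Z'=\bR X$, and $\ph$ as above, and using \autoref{Rss} to guarantee that $Z'=\bR X$ is strictly special, yields the claim.

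There is essentially no obstacle here: the argument is a two-arrow diagram chase that uses only the isomorphism $\om$ of \autoref{ass7} and the formal observation \autoref{Rss} about the reflective subcategory $\sV_{ss}\subset \sV$. The only mild point worth flagging in the write-up is the invariance of ``strictly special'' under isomorphism, which is what lets us pass from the \emph{existence} of an isomorphism $\bD Y\cong \bR X$ to the statement that the \emph{specific} map $\eta_{\bD Y}$ is itself an isomorphism.
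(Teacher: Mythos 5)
Your proposal is correct and follows essentially the same route as the paper: the paper's proof chases exactly the naturality diagram you describe, using that $\bD\et_Y$ and $\om$ are isomorphisms and that $\et\colon \bR\bC\bL Y\rtarr \bR\bL\bR\bC\bL Y$ is an isomorphism by the triangle identity (the content of \autoref{Rss}), to conclude that $\et_{\bD Y}$ is an isomorphism. The only cosmetic difference is that you first package the top row into a single isomorphism $\bD Y\iso \bR\bC\bL Y$ and invoke isomorphism-invariance of strict specialness, whereas the paper writes the whole $2\times 3$ diagram at once.
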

\begin{proof}
Consider the following commutative naturality diagram.
$$\xymatrix{
\bD Y \ar[r]^-{\bD \et}_-{\iso}  \ar[d]_{\et} & \bD\bR\bL Y \ar[r]^-{\om}_{\iso} \ar[d]^{\et}& \bR\bC \bL Y \ar[d]^{\et} \\
\bR\bL\bD Y \ar[r]_-{\bR\bL\bD \et}^-{\iso} & \bR\bL\bD \bR\bL Y  \ar[r]_-{\om}^-{\iso} & \bR\bL\bR\bC\bL Y \\} 
$$
By the triangle identity, the rightmost vertical arrow $\et$ is an  isomorphism with inverse $\bR\epz$.  Therefore the other  vertical arrows $\et$ are also isomorphisms. 
\end{proof}

Restricting to strictly special objects,  we can fill in the dotted arrow $\bL$ of \autoref{adj6}, and it then preserves weak equivalences by \autoref{Lss}.

\begin{cor}\label{formalMT3}    Restricted to strictly special objects, the  monads $\bD$ and $\bR\bC\bL$ are isomorphic and take values in  strictly special objects.  The  adjoint equivalence $(\bL, \bR)$ between 
$\sV_{ss}$ and  $\sT$ restricts to an adjoint equivalence between  $\bD[\sV_{ss}]$ and $\bC[\sT]$. 
\end{cor}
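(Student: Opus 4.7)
The plan is to assemble the corollary from the three preceding results; nothing new is needed, only a careful bookkeeping.

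First, I would produce, for every strictly special $Y$, a natural isomorphism $\bD Y \iso \bR\bC\bL Y$. Take the composite
\[
\bD Y \xrightarrow{\bD\et} \bD\bR\bL Y \xrightarrow{\om_{\bL Y}} \bR\bC\bL Y.
\]
The first map is an isomorphism because $\et\colon Y\rtarr \bR\bL Y$ is an isomorphism on $\sV_{ss}$ by \autoref{special} and the functor $\bD$ preserves isomorphisms; the second is an isomorphism by \autoref{ass7}. Compatibility of this composite with units and products follows routinely from the two compatibility squares for $\om$ in \autoref{ass7} together with naturality of $\et$. The analogous chase giving a map of monads in the opposite direction is supplied by \autoref{formalMT1}(ii), so if one prefers to transport structure the other way, the same isomorphism is obtained.

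Next, that both monads land in $\sV_{ss}$ when restricted to $\sV_{ss}$ is already in hand: \autoref{YDY} handles $\bD$, and for $\bR\bC\bL$ it is immediate from \autoref{Rss}, since any object in the essential image of $\bR$ is strictly special. By the isomorphism just constructed, checking either one suffices.

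For the adjoint equivalence on algebras, I would proceed as follows. By \autoref{Rss} the restricted pair $(\bL,\bR)\colon \sV_{ss}\leftrightarrows\sT$ is already an adjoint equivalence, the counit $\epz$ being an isomorphism by hypothesis in \autoref{ass7} and the unit $\et$ being an isomorphism on $\sV_{ss}$ by definition. To promote this to algebra categories, \autoref{formalMT2}(i) shows that $\bR$ lifts to a functor $\bC[\sT] \rtarr \bD[\sV]$, and by \autoref{Rss} its image lies in $\bD[\sV_{ss}]$. Conversely, because on $\sV_{ss}$ we have $\bD \iso \bR\bC\bL$ by the first step, every $\bD$-algebra structure on a strictly special $Y$ transports to an $\bR\bC\bL$-algebra structure, to which \autoref{formalMT2}(iii) applies to lift $\bL$ to a functor $\bD[\sV_{ss}] \rtarr \bC[\sT]$.

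The one remaining point, and the step I expect to need the most care, is verifying that the unit $\et$ and counit $\epz$ are themselves maps of $\bD$-algebras and $\bC$-algebras respectively when the underlying objects carry compatible structure. This is a straightforward diagram chase from the compatibility squares for $\om$ in \autoref{ass7} together with the explicit description of the transported actions in parts (i) and (ii) of \autoref{formalMT2}: both diagrams reduce, after unfolding, to naturality of $\et$ and $\epz$ combined with the $\om$-compatibility square for $\mu$. Once this is done, $\et$ and $\epz$ are isomorphisms at the algebra level because they are isomorphisms at the underlying level and isomorphisms in $\bD[\sV]$ and $\bC[\sT]$ are created by the forgetful functors.
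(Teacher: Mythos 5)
Your proposal is correct and follows essentially the same route the paper intends: the isomorphism you build is exactly the monad map $\io$ of \autoref{formalMT1}(iii) (not part (ii)), which becomes an isomorphism on $\sV_{ss}$ since $\et$ is, and the remaining ingredients are \autoref{YDY}, \autoref{Rss}, and \autoref{formalMT2}(i) and (iii), precisely as indicated in \autoref{puzzle2}.
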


Using $\sV_{ss}$, we can complete our discussion of the composite versions of  Assumptions \ref{ass2}(v) and \ref{ass3}(ii).

\begin{lem}\label{DgammaR}  If $\ga_{\bR}$ is an isomorphism and $X_*$ is a simplicial $\bC$-algebra in $\sT$, then $\bT \bR X_*$ is a 
$\bD$-algebra in $\sV_{ss}$ such that  $\ga_{\bR}$ is an isomorphism of $\bD$-algebras.
\end{lem}  
\begin{proof}
By \autoref{Rss}, $\bR$ takes values in $\sV_{ss}$.  Visibly $\et\colon \bT \bR X_* \rtarr \bR\bL \bT \bR X_*$ is the composite
$$ \xymatrix@1{\bT \bR X_*  \ar[r]^-{\bT \et}  & \bT \bR\bL \bR X_* \ar[r]^-{\ga_{\bR}} & \bR \bT \bL \bR X_* \iso \bR\bL \bT \bR X_*\\}$$
and is therefore an isomorphism.  Thus $\bT \bR X_*$ is in $\sV_{ss}$.  It is a $\bD$-algebra such that $\ga_{\bR}$ is a map of 
$\bD$-algebras by use of the isomorphism $\nu\colon \bT \bD \bR X_* \rtarr  \bD \bT \bR X_*$ of \autoref{ass8}(i).
\end{proof}

\begin{lem}\label{tortuous}  Let $X_* \in s\sS$ and consider the natural map  $\bT \bR\OM X_* \rtarr \bR\OM \bT X_*$, namely the composite
\begin{equation}\label{gamma2} \xymatrix@1{ \bT \bR\OM X_* \ar[r]^-{\ga_{\bR}} &  \bR \bT \OM X_*  \ar[r]^{\bR\ga} & \bR\OM \bT X_*. \\} 
\end{equation}
If $\ga_{\bR}$ is an isomorphism, then it is an isomorphism of $\bD$-algebras in $\sV_{ss}$.  The map $\bR\ga$ is a map of $\bD$-algebras in $\sV$, and it is a weak equivalence if $X_*$ is levelwise $\OM$-connective.
\end{lem}
\begin{proof}  The first statement is immediate from \autoref{DgammaR}. For the second statement, $\ga$ is a map of $\bC$-algebras
by \autoref{ass3}(ii),  hence $\bR \ga$ is a map of $\bD$-algebras by (i) of \autoref{formalMT2}(i); $\bR\ga$ is a weak equivalence since $\ga$ is a weak equivalence by \autoref{ass2}(v) and $\bR$ preserves weak equivalences. 
\end{proof}

We conclude this section by describing a natural equivalence between the homotopy categories of $\sV_s$ and  $\sV_{ss}$.   This result is discussed in more detail for categories of operators, which is where we will use it, in \cite{Rant2}, so we will be brief.   Observe that, by \autoref{formalMT22}, $\bC \bL$ is a $\bD$-functor with values in $\bC[\sT]$.  

\begin{prop}\label{ssequiv}  For a $\bD$-algebra $Y$, define $\overline{Y}_{\bC}$ to be the strictly special $\bD$-algebra 
$$ \overline{Y}_{\bC} = \bR B(\bC \bL, \bD, Y).$$
If  $Y$ is special, then $Y$ is naturally weakly equivalent  as a $\bD$-algebra to $\overline{Y}_{\bC}$.  For a $\bC$-algebra $X$, $\bR X$ is weakly equivalent as a $\bD$-algebra to $\overline{\bR X}_{\bC}$.
\end{prop}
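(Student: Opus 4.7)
The plan is to treat the three claims in order. First, $\overline{Y}_{\bC}$ is strictly special by \autoref{Rss} because it lies in the image of $\bR$, and it is a $\bD$-algebra via \autoref{formalMT2}(i): $\bC\bL$ is a $\bC$-functor with action induced by $\mu$, so each simplicial level $\bC\bL\bD^q Y$ is a $\bC$-algebra compatibly with faces and degeneracies, and the realization is a $\bC$-algebra by \autoref{ass6}(iv).

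For the second statement, about $\bR X$, I would compute directly: iterating $\om\colon \bD\bR\rtarr \bR\bC$ and using $\bL\bR=\Id$ yields natural isomorphisms $\bC\bL\bD^q\bR X \iso \bC\bL\bR\bC^q X \iso \bC^{q+1}X$ at each simplicial level $q$. The compatibility of $\om$ with units and products from \autoref{ass7} makes these commute with all face and degeneracy maps, producing an isomorphism $B_*(\bC\bL,\bD,\bR X)\iso B_*(\bC,\bC,X)$ of simplicial $\bC$-algebras. Realizing, applying $\bR$, and composing with the map $\bR\ze$ induced by the homotopy equivalence $\ze\colon \overline X\rtarr X$ of \autoref{extra} yields the desired natural weak equivalence $\overline{\bR X}_{\bC}\iso \bR\overline X \rtarr \bR X$ in $\bD[\sV]$.

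For the remaining statement, about general special $Y$, the plan is to construct a natural zigzag of $\bD$-algebra maps
\[\xymatrix@1{Y & B(\bD,\bD,Y) \ar[l]_-{\ze} \ar[r]^-{\io_*} & \bR B(\bC\bL,\bD,Y) = \overline{Y}_{\bC}.}\]
The left arrow is the augmentation of \autoref{extra} applied to $\bD$-algebras in $\sV$, which is always a homotopy equivalence. The right arrow is induced by the monad morphism $\io\colon \bD\rtarr \bR\bC\bL$ of \autoref{formalMT1}(iii) applied in the first (functor) slot of the bar construction, using that $\bR$ commutes with realization; it is a simplicial $\bD$-algebra map because $\io$ is a map of monads intertwining the relevant $\bD$-actions. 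At level $q$, $\io_{\bD^q Y}$ factors as $\om_{\bL\bD^q Y}\com \bD\et_{\bD^q Y}$ with $\om$ an isomorphism, so it is a weak equivalence if and only if $\bD^q Y$ is special.

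The main obstacle I anticipate is the key lemma needed at this final step: if $Y$ is special, then $\bD^q Y$ is special for all $q\geq 0$. This does not follow formally from the standing Assumptions and depends on structural features of the particular composite adjunction at hand. I would verify it separately in the two main specializations of \autoref{ABCDEF}: for categories of operators (\autoref{catop}) it follows from the Segal-type nature of specialness being preserved by the operadic multiplication underlying $\bD$, and for orbital presheaves (\autoref{orbpre}) it follows after cofibrant approximation via \autoref{Lss}. Granting this lemma, $\io_*$ is a levelwise weak equivalence between Reedy cofibrant simplicial objects, so \autoref{ass6}(iii) gives that its realization is a weak equivalence, completing the zigzag and the proof.
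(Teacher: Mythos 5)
Your proposal is correct and takes essentially the same route as the paper: the same zigzag $Y \leftarrow B(\bD,\bD,Y) \rightarrow \bR B(\bC\bL,\bD,Y)$ induced by the monad map $\io = \overline{\et}\colon \bD \rtarr \bR\bC\bL$ in the functor slot together with the comparison $\ga_{\bR}$ of $\bR$ with realization, and, for the last statement, the same identification $B(\bC\bL,\bD,\bR X)\iso B(\bC,\bC,X)$ (which the paper obtains via $B(\id,\overline{\et},\id)$ and \autoref{barcom1} rather than your direct levelwise computation with $\om$) composed with $\ze$. The ``obstacle'' you flag, that each $\bD^q Y$ remains special, is precisely the point the paper handles by the diagrams of \autoref{YDY} together with \autoref{ass8} and the same context-specific inputs already needed for $\ga_{\bR}$ (notably in the categories-of-operators case, where the result is actually used), so your plan to check it in the specializations matches the paper's reasoning.
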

\begin{proof} Define $\overline{\et}$ to be either of the equal composites in the following diagram.
$$ \xymatrix{
\bD \ar[r]^-{\bD\et}  \ar[dr]_{\et} & \bD\bR\bL \ar[r]^-{\et \bD\bR\bL} & \bR\bL\bD\bR\bL \ar[r]^{\iso} 
&\bR\bC\bL\\
& \bR\bL \bD \ar[ur]_{\bR\bL\bD\et} & & \\}
$$
We then have the following natural maps.
$$\xymatrix{
Y & B(\bD,\bD, Y) \ar[l]_-{\ze} \ar[r]^-{B(\overline{\et},\id, \id)} & B(\bR\bC\bL, \bD, Y) \ar[r]^-{\ga_{\bR}} & 
\bR B(\bC\bL, \bD, Y) = \overline{Y}_{\bC} \\}
$$
Here $\ga_{\bR}$ is defined in \autoref{ass8}(ii) and discussed in \autoref{different}.  In the context of categories of operators, $\ga_{\bR}$ is a natural isomorphism because realization commutes with products.  As  usual, $\ze$ is an equivalence.  When $Y$ is special, we can deduce that $\overline{\et}$ is a weak equivalence; the diagrams in the proof of \autoref{YDY} are relevant.  Then $B(\overline{\et},\id, \id)$ is a weak equivalence by \autoref{ass7}(iii).  The weak equivalence of the last statement is the composite
$$\xymatrix{
 \overline{\bR X}_{\bC} = \bR B(\bC \bL, \bD, \bR X)\ar[r]^-{\bR B(\id,\overline{\et},\id)} & 
 \bR B(\bC\bL, \bR\bC\bL, \bR X) \iso \bR B(\bC, \bC,X) \ar[r]^-{\ze} & \bR X. \\}
 $$
The isomorphism is a specialization of the easy \autoref{barcom1} below. 
\end{proof}

\begin{rem}\label{reduce}
We can apply the equivalences of \autoref{ssequiv} to transform \autoref{adj6} to a diagram in which the category $\sV$ is  replaced by $\sV_{ss}$.  In the transformed diagram $\bL$ exists and preserves weak equivalences.  Then the units of both pairs $(\bL,\bR)$ are  given by weak  equivalences.    In effect, up to natural weak equivalence, the transformed diagram entirely reduces the composite adjunction context to the original context of the small right triangle.  However, we do not know how to effect the corresponding reduction in the richer multiplicative context of \autoref{Mult2}. 
\end{rem} 

\subsection{Assumptions D and E in the composite adjunction context}\label{DE}
We shall be a little informal here since the verifications differ in ways that we shall indicate briefly but that will become clearer when we explain the relevant concrete specializations of our composite adjunction context.  The framework is the same in all contexts, but the specifics of group completion and realization vary.  The approximation theorem, \autoref{ass5}, is the main point, and that works the same way in all contexts.

Our discussion of Assumptions  \ref{ass4} and \ref{ass5} focuses on the following diagram.  Write $\al_{com}$ and 
$\vartheta_{com}$  for the maps $\al$ and $\vartheta$ in the composite adjunction context.  The following diagram describes them in terms of $\al$ and $\vartheta$ in our original context of \autoref{ass1}.  We write $\et$ generically for the units of both adjunctions and monads.

\begin{equation}\label{compal}
\xymatrix{
 \bD \ar[r]^-{\bD \et}  \ar[d]_{\bD\et}  \ar@/^2pc/[rr]^-{\al_{com}}& \bD\bR\OM\SI\bL \ar[r]^-{\vartheta_{com}} \ar@{=}[d] 
 &  \bR\OM\SI \bL \ar@{=}[dd]\\
 \bD\bR\bL \ar[r]^-{\bD\bR\et} \ar[d]^{\iso}_{\om} & \bD\bR\OM\SI\bL \ar[d]_{\iso}^{\om} & \\
 \bR\bC \bL \ar[r]^-{\bR\bC\et}  \ar@/_2pc/[rr]_-{\bR\al}& \bR\bC\OM\SI \bL \ar[r]^-{\bR\vartheta} & \bR\OM\SI\bL \\}
 \end{equation}
The upper left square applies $\bD$ to the unit of a composite adjunction, the lower left square is a naturality diagram, and the right rectangle exhibits the definition of $\vartheta_{com}$ in terms of $\vartheta$.   

When applied to special objects of $\sV$, the top left vertical arrow $\bD\et$ is a weak equivalence, and it is an isomorphism when applied to strictly special objects.  Due to the appearance of $\bD\et$ in the diagram, we can only expect our recognition principle and characterization theorems to hold when we restrict attention to special objects.  This restriction also effects our discussion of group completions.

\begin{rem}\label{ass4hazy} In our examples, the objects of $\sV$ are functors  $\sJ\rtarr \sW$ for some small domain category $\sJ$ and some homotopical target category $\sW$.  In \autoref{catop},  $\sW$ is the category $\sT$ of our original adjunction.   However, in \autoref{orbpre}, our original $\sT$ is based $G$-spaces for a finite group $G$, whereas $\sW$ is based spaces. Therefore, even though both examples fit perfectly into the general framework here, they are very different in practice.  In the former, interest focuses on special objects.  In the latter, interest focuses on non-special objects, but in \autoref{Gcellwonder} we shall show model theoretically that a cellular approximation functor $\GA$ can be used to approximate all objects by strictly special objects.  Nothing like that holds in the categories of operators context; see \autoref{light}.
\end{rem}

\begin{defn}\label{ass4hazy2} Suppose that the objects of $\sV$ are functors $\sJ\rtarr \sW$  and  that grouplike objects of $\sW$ and group completions in $\sW$ have been defined using subcategories $\sH$ and $\sH_{gp}$ of $\sW$, as in \autoref{screwy}.  We transfer these notions to $\sV$ levelwise.  We define $\sH$ and $\sH_{gp}$  in $\sV$ to be the subcategories of functors $Y$ such that each $Y(j)$ is in $\sH$ or is in
$\sH_{gp}$.  We say that a functor $Z$ in $\sV$ is grouplike if each $Z(j)$ is grouplike and we say that a natural transformation  $f\colon Y \rtarr Z$ is a group completion if each $f\colon Y(j) \rtarr Z(j)$ is a group completion.  With these definitions, it is immediate in our examples that $\bR\colon \sT\rtarr \sV_s$ preserves grouplike objects and group completions.

It is also immediate in our examples that the forgetful functor $\bD[\sV]\rtarr \sV$ takes (levelwise) values in $\sH$.  Assume that $\sW$ has a group completion functor $(\bG,g)$ as in \autoref{screwy2}.   In the categories of operators context of \autoref{catop}, where $\sW = \sT$ is the target of $\bL$, we restrict to special objects and define $(\bG_{\sV_s},g_{\sV_{s}})$ by setting
$$ \bG_{\sV_s}=\bR\bG\bL  \colon \bD[\sV_s] \rtarr \sH_{gp} $$
and letting $g_{\sV_{s}}\colon Y\rtarr \bG_{\sV_s}Y$ be the composite 
$$  \xymatrix@1{Y \ar[r]^-{\et}  & \bR\bL Y \ar[r]^-{\bR g\bL} & \bR\bG\bL Y.\\}$$
In the orbital presheaves context of \autoref{orbpre}, where $\sW$ is not the target $\sT$ of $\bL$, we restrict to strictly special objects and define $(\bG_{\sV_{ss}},g_{\sV_{ss}})$  by setting
  $$(\bG_{\sV_{ss}}Y)(j) = \bG (Y(j))$$
and letting $g_{\sV_{ss}}\colon Y \rtarr \bG_{\sV_{ss}}Y$ be given levelwise by $g\colon Y(j) \rtarr \bG Y(j)$ for $j\in \sJ$.
\end{defn} 

\begin{lem}\label{ass4com}  With the definitions just given, \autoref{ass4} for  our composite adjunction $(\bR\OM,\SI\bL)$ follows from \autoref{ass4} for our original adjunction $(\SI,\OM)$ in both the categories of operators and orbital presheaves contexts, provided we restrict to the category $\sV_s$ in the former case and to the category $\sV_{ss}$ in the latter.
\end{lem}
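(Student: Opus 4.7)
The plan is to verify each clause of \autoref{ass4} in the composite adjunction context by transferring it from the hypothesis for $(\SI,\OM)$, one piece at a time. Since $\sH$ in $\sV$ is defined levelwise from $\sH$ in $\sW$ and weak equivalences in $\sV$ are created levelwise by \autoref{ass8}, every axiom of \autoref{screwy}---closure of grouplike objects under weak equivalence, property (i), and the group-completion two-out-of-three properties (ii)(a)--(d)---follows by applying the corresponding axiom in $\sW$ at each $j\in\sJ$. The factorization of $\bU_{\bD}$ through the levelwise $\sH$ is, as noted in \autoref{ass4hazy2}, immediate in both examples. I would then check that $(\bR\OM)_{\bD}Z = \bR\OM_{\bC}Z$ is grouplike: $\OM_{\bC}Z$ is grouplike by \autoref{ass4} for the original adjunction, and $\bR$ preserves grouplike objects by the definition recorded in \autoref{ass4hazy2}, while landing in $\sV_{ss}\subset\sV_s$ by \autoref{Rss}.

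It remains to construct the group-completion functor on $\bD$-algebras in each restricted category. In the orbital presheaves case the definition $(\bG_{\sV_{ss}}Y)(j) = \bG(Y(j))$ together with the levelwise $g_{\sV_{ss}}$ reduce this step completely to \autoref{ass4} in $\sW$, since both grouplike objects and group completions in $\sV_{ss}$ are defined componentwise. The categories of operators case, where $g_{\sV_s}$ is defined as $(\bR g\bL)\circ \et$, is the one non-formal step. Since $Y\in\sV_s$ is special, $\et\colon Y\rtarr \bR\bL Y$ is a weak equivalence, and $\bR g\bL\colon \bR\bL Y \rtarr \bR\bG\bL Y$ is a group completion with grouplike target because $\bR$ preserves both grouplike objects and group completions. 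I would then apply \autoref{screwy}(ii)(a) to the naturality square
\begin{equation*}
\xymatrix{
Y \ar[r]^-{\et} \ar[d]_{g_{\sV_s}} & \bR\bL Y \ar[d]^{\bR g\bL} \\
\bR\bG\bL Y \ar@{=}[r] & \bR\bG\bL Y
}
\end{equation*}
in which both horizontal arrows are weak equivalences (the identity below, $\et$ above) and the common target $\bR\bG\bL Y$ is grouplike, to conclude that $g_{\sV_s}$ is a group completion because $\bR g\bL$ is.

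The hard part is precisely this last application of \autoref{screwy}(ii)(a): it is the only step where specialness of $Y$ enters decisively, and it is what forces the restriction to $\sV_s$ (and, analogously, the restriction to $\sV_{ss}$ in the orbital presheaves case, which additionally guarantees that $\bL$ preserves weak equivalences by \autoref{Lss}). Everything else is either a formal consequence of the levelwise definitions or a citation of a previously recorded preservation property of $\bR$.
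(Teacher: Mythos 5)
Your proof is correct and follows the same route as the paper's (very brief) sketch: the paper's proof only records that $(\bR\OM)_{\bD}$ lands in strictly special objects by \autoref{Rss} and takes grouplike values because $\OM_{\bC}$ does and $\bR$ preserves grouplike objects, leaving all remaining clauses to the levelwise definitions and preservation claims recorded in \autoref{ass4hazy2}. Your extra verification that $g_{\sV_s}$ is a group completion in the categories of operators case---via \autoref{screwy}(ii)(a) applied to the commutative square comparing $g_{\sV_s}$ with $\bR g\bL$, which is precisely where specialness of $Y$ enters---is the one non-formal step the paper leaves implicit, and you handle it correctly.
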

\begin{proof}[Sketch]  We see that $(\bR\OM)_{\sD}$ takes values in $\sV_{ss}$ by \autoref{Rss}.  It takes grouplike values since  $\OM_{\bC}$ takes grouplike values in $\sT$ and $\bR$ takes grouplike objects in $\sT$ to grouplike objects in $\sV$.
\end{proof}

\begin{lem}\label{ass5com}
On special objects, \autoref{ass5} (the approximation theorem) holds for  the composite adjunction $(\bD\bL, \bR \OM)$ and the monad $\bD$. 
\end{lem}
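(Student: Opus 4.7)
The plan is to exploit the factorization of $\al_{com}$ provided by diagram \autoref{compal}, which exhibits it as the composite
\[ \bD Y \xrightarrow{\bD\et} \bD\bR\bL Y \xrightarrow{\om} \bR\bC\bL Y \xrightarrow{\bR\al} \bR\OM\SI\bL Y. \]
I would analyze each factor in turn. Since $Y$ is special, the unit $\et\colon Y\rtarr \bR\bL Y$ is a weak equivalence by \autoref{special}, so $\bD\et$ is a weak equivalence by \autoref{ass8}. The middle arrow $\om$ is an isomorphism by \autoref{ass7}. The final map $\bR\al$ is the image under $\bR$ of the map $\al\colon \bC(\bL Y) \rtarr \OM_{\bC}\SI(\bL Y)$ associated to the object $\bL Y\in\sT$; by \autoref{ass5} for the original adjunction $(\SI,\OM)$ this $\al$ is a group completion in $\sT$, and by the levelwise definition of group completions in \autoref{ass4hazy2} the functor $\bR$ preserves group completions, so $\bR\al$ is a group completion in $\sV$.

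To combine these observations, I would apply (ii)(a) of \autoref{screwy} to the commutative square
\[ \begin{tikzcd}
\bD Y \ar[r, "\al_{com}"] \ar[d, "\bD\et"'] & \bR\OM\SI\bL Y \ar[d, equal] \\
\bD\bR\bL Y \ar[r, "\bR\al\,\circ\,\om"'] & \bR\OM\SI\bL Y,
\end{tikzcd} \]
whose commutativity is exactly the content of \autoref{compal}. The two vertical maps are weak equivalences by the previous paragraph, and the common target $\bR\OM\SI\bL Y$ is grouplike because $\OM_{\bC}\SI\bL Y$ is grouplike by \autoref{ass4} for the original adjunction and $\bR$ preserves grouplike objects (again \autoref{ass4hazy2}). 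The bottom row is a group completion, being the composite of the isomorphism $\om$ with the group completion $\bR\al$. By (ii)(a) of \autoref{screwy}, the top row $\al_{com}$ is therefore also a group completion.

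The argument above is essentially formal, and the only real point to check is the compatibility between the levelwise notion of group completion in $\sV$ from \autoref{ass4hazy2} and the ambient notion in $\sT$ used in the original \autoref{ass5}. In the two concrete specializations of the composite adjunction context, namely categories of operators (\autoref{catop}) and orbital presheaves (\autoref{orbpre}), this compatibility holds by construction of the subcategory $\sH$ of $\sV$; granting this, the proof of \autoref{ass5com} reduces to invoking the original approximation theorem along the factorization \autoref{compal}.
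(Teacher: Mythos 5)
Your argument is correct and follows essentially the same route as the paper's (brief) proof: factor $\al_{com}$ via the diagram \autoref{compal}, use specialness plus \autoref{ass8} to make $\bD\et$ a weak equivalence, and use that $\bR$ preserves grouplike objects and group completions in the intended specializations. You merely spell out the bookkeeping with \autoref{screwy}(ii)(a) that the paper leaves implicit, which is a fine elaboration rather than a different approach.
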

\begin{proof}
This follows directly from \autoref{ass5} for our original adjunction $(\SI,\OM)$ and the monad $\bC$ via the diagram \autoref{compal}.   The implication uses that $\bR$ preserves grouplike objects and group completions, as follows directly from the definitions in the contexts to which we shall specialize. 
\end{proof}

\subsection{The composite adjunction machine}\label{compmach}

Replacing the adjunction $(\SI,\OM)$ with the composite adjunction $(\SI \bL, \bR \OM)$, Theorems \ref{recprin} and \ref{adjequiv} specialize schematically as follows.   As explained above, we must restrict attention to special objects of $\sV$ in the context of categories of operators.    We can and will restrict to strictly special objects in the context of orbital presheaves.  
 
\begin{thm}\label{recprincom}  There is a functor $\mathrm{Bar} \colon \bD[\sV] \rtarr \bD[\sV]$, written $Y\mapsto \overline{Y}$, and a natural equivalence  $\ze\colon \overline{Y} \rtarr Y$.  If $Y$ is special, the unit $\et_{\bD}\colon \overline{Y} \rtarr (\bR\OM)_{\bD} (\SI\bL)_{\bD} \overline{Y}$  is a group completion and is therefore an equivalence if $Y$ is grouplike.\end{thm}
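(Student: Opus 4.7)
The plan is to deduce this directly from the general recognition principle of Part 1 (specifically \autoref{recprin1}) applied to the composite adjunction $(\SI\bL,\bR\OM)$ together with the monad $\bD$. By \autoref{com1} this data satisfies \autoref{ass1}, so the formal consequences of \autoref{CONTEXT} apply; in particular we have the left adjoint $(\SI\bL)_{\bD}$ of $(\bR\OM)_{\bD}$, and the bar construction $\overline{Y} = B(\bD,\bD,Y)$ together with the natural equivalence $\ze\colon\overline{Y}\rtarr Y$ is produced exactly as in \autoref{Edefn} and \autoref{extra}, requiring only parts of \autoref{ass6} that are established by \autoref{ass12}.

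The remaining assumptions B through F for the composite adjunction have been assembled in \autoref{DEF}: \autoref{ass2yes} gives \autoref{ass2}, \autoref{ass3yes} gives \autoref{ass3}, \autoref{ass4com} gives \autoref{ass4} on the subcategory $\sV_s$ of special objects, \autoref{ass5com} gives the approximation theorem (\autoref{ass5}) on $\sV_s$, and \autoref{ass12} together with \autoref{tortuous} gives what is needed of \autoref{ass6}. It is precisely the restriction in Assumptions D and E to special objects that forces the specialness hypothesis on $Y$ in the theorem. With all these verifications in place, the proof of \autoref{recprin1} transcribes to yield the composite diagram
\[
\xymatrix@1{ Y & \overline{Y} \ar[l]_-{\ze} \ar[r]^-{B\al_{com}} & B((\bR\OM)_{\bD}(\SI\bL),\bD,Y) \ar[r]^-{\ga_{com}} & (\bR\OM)_{\bD} B(\SI\bL,\bD,Y),\\}
\]
whose composite factors through $\et_{\bD}$ by (the composite-adjunction analog of) \autoref{Yeah}, and the task reduces to showing that $B\al_{com}$ is a group completion levelwise when $Y$ is special.

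The key input for this last step is the commutativity of diagram \autoref{compal}, which factors $\al_{com}\colon \bD Y \rtarr (\bR\OM)_{\bD}(\SI\bL)Y$ on a special $Y$ as the composite of the weak equivalence $\bD\et\colon\bD Y\rtarr \bD\bR\bL Y$ (using specialness of $Y$ and that $\bD$ preserves weak equivalences on good objects), the isomorphism $\om\colon\bD\bR\bL Y\iso \bR\bC\bL Y$, and the map $\bR\al$. Since $\al\colon\bC\bL Y\rtarr \OM\SI\bL Y$ is a group completion by \autoref{ass5} for the original adjunction, and since $\bR$ preserves group completions under the levelwise convention of \autoref{ass4hazy2}, the composite $\al_{com}$ is a group completion on each special $Y$. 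The main obstacle is that this argument must be applied to each simplicial level $\bD^{q+1}Y$ appearing in $\overline{Y}$, so one needs $\bD$ to preserve specialness; this is supplied by \autoref{YDY} in the strictly special setting (which covers the orbital presheaf context of \autoref{orbpre}), and by the analogous preservation statement for special objects in the categories of operators context of \autoref{catop}. Given this preservation, $B\al_{com}$ is the realization of a levelwise group completion between Reedy cofibrant simplicial objects, hence itself a group completion, and the theorem follows.
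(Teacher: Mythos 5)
Your overall route is exactly the paper's: Theorem \ref{recprincom} is not given a separate proof there, but is obtained by specializing the general recognition principle (\autoref{recprin}/\autoref{recprin1}) to the composite adjunction $(\SI\bL,\bR\OM)$ and the monad $\bD$, the real content being the verification of Assumptions A--F in \autoref{ABC} and \autoref{DEF} (\autoref{com1}, \autoref{ass2yes}, \autoref{ass3yes}, \autoref{ass4com}, \autoref{ass5com}, \autoref{ass12}, \autoref{tortuous}), with the specialness hypothesis forced by the diagram \autoref{compal}. Your added observation that the levelwise use of the approximation theorem requires $\bD$ to preserve specialness of the simplicial levels $\bD^qY$ is correct and genuinely needed; in the paper this is supplied by \autoref{YDY} (strictly special objects, as used in \autoref{orbpre}) and by \autoref{homotopMT} in the categories of operators context, so that part of your write-up is a welcome explicit articulation of something the paper leaves to the context-specific sections.

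The one step that does not stand as written is your final inference that ``$B\al_{com}$ is the realization of a levelwise group completion between Reedy cofibrant simplicial objects, hence itself a group completion.'' In the abstract framework of \autoref{screwy}, realization is only assumed to preserve weak equivalences (between Reedy cofibrant objects); there is no axiom saying it preserves group completions, and circumventing exactly this point is the purpose of the group completion functor $(\bG,g)$ of \autoref{ass4} together with properties (ii)(a), (ii)(c), (ii)(d) of \autoref{screwy}, as in the proof of \autoref{recprin0}: one applies $\bG$ levelwise, converts the question into one about levelwise \emph{weak equivalences}, realizes those, and then descends. Since you have already verified \autoref{ass4} for the composite context via \autoref{ass4com}, the repair is immediate: delete the levelwise-realization claim and instead cite the general proposition that $B\al$ is a group completion (whose proof transcribes verbatim, using the levelwise approximation theorem on the special objects $\bD^qY$). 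With that substitution your argument coincides with the paper's.
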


\begin{thm}\label{adjequivcom}  $\big((\SI\bL)_{\bD},(\bR\OM)_{\bD}\big)$ induces an adjoint equivalence from the homotopy category of special grouplike $ \bD$-algebras in $\sV$ to the homotopy category of $\bR\OM$-connective objects of $\sS$.
\end{thm}

{In our examples, the $\bR \OM$-connective objects in $\sS$, as defined by specialization of \autoref{LAcon}, turn out to be the same as the 
$\OM$-connective objects.}

The idea is illuminated by the following diagram, which is the composite adjunction special case of the diagram
\autoref{forcon2}.  Here $Y$ is in $\bD[\sV]$ and must be special for the maps in the square to have the properties listed in our Assumptions.

\begin{equation}\label{adj}
\xymatrix{
 Y & \ar[l]_-{\ze}  \overline Y  \ar[r]^-{B\al_{com}} \ar[d] _{\et_{\bD}} &   B((\bR\OM)_{\bD}\SI\bL,\bD, Y) \ar[d]^{\ga} \\
    &  (\bR\OM)_{\bD}(\SI\bL)_{\bD} \overline Y  \ar[r]^-{\iso} &  (\bR\OM)_{\bD} B(\SI\bL,\bD, Y) \\}
\end{equation}

The following result, which starts from the map of monads $\io$ in (iii) of \autoref{formalMT1}, shows that, up to homotopy, when considering special objects of $\sV$ it is in principle possible to restrict attention to the monad $\bR\bC\bL$, rather than using a general monad $\bD$ satisfying \autoref{ass7}, in the results above.

\begin{prop}  Define $\io^*\colon  \bR\bC\bL[\sV_s] \rtarr \bD[\sV_s]$ by pullback of actions along $\io\colon \bD\rtarr \bR\bC\bL$.  Then $\io^*$ induces a equivalence on passage to homotopy categories.
\end{prop}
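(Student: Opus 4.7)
The plan is to factor the comparison through the full subcategories of strictly special objects, where by \autoref{formalMT3} the monads $\bD$ and $\bR\bC\bL$ literally coincide via $\io$. Explicitly, $\io^*$ restricts to an isomorphism of categories $\io^*\colon \bR\bC\bL[\sV_{ss}] \iso \bD[\sV_{ss}]$, and naturality of $\io^*$ with respect to the subcategory inclusions yields a commutative square
\[
\xymatrix{
\bR\bC\bL[\sV_{ss}] \ar[r]^-{\io^*}_-{\iso} \ar@{^{(}->}[d] & \bD[\sV_{ss}] \ar@{^{(}->}[d] \\
\bR\bC\bL[\sV_s] \ar[r]_-{\io^*} & \bD[\sV_s].
}
\]
It therefore suffices to show that each of the two vertical inclusions induces an equivalence of homotopy categories, since the equivalence on the bottom row then follows.

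For the $\bD$-algebra inclusion the required strictification is provided directly by \autoref{ssequiv}: the functor $\Phi_{\bD}\colon Y \mapsto \overline{Y}_{\bC} = \bR B(\bC\bL, \bD, Y)$ lands in $\bD[\sV_{ss}]$ by \autoref{Rss}, and comes with a natural zigzag of $\bD$-algebra weak equivalences $Y \simeq \Phi_{\bD}(Y)$ for every special $Y$. For the $\bR\bC\bL$-algebra inclusion I will use the simpler strictification $\Phi_{\bR\bC\bL}\colon Z \mapsto \bR\bL Z$, where $\bL Z$ carries the natural $\bC$-algebra structure
\[
\bC\bL Z \iso \bL\bR\bC\bL Z \xrightarrow{\bL\tha_Z} \bL Z
\]
(the tautological instance of \autoref{formalMT2}(iii) for the monad $\bR\bC\bL$), and $\bR\bL Z$ inherits the associated $\bR\bC\bL$-action as in \autoref{formalMT2}(i). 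Then $\bR\bL Z$ lies in $\sV_{ss}$ by \autoref{Rss}, the unit $\et\colon Z \rtarr \bR\bL Z$ is a weak equivalence precisely when $Z$ is special (by the definition of $\sV_s$), and a short diagram chase combining naturality of $\et$ with the triangle identity $\bR\epz_{\bC\bL Z} \circ \et_{\bR\bC\bL Z} = \id$ shows that $\et$ is a map of $\bR\bC\bL$-algebras.

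With these two strictifications in hand, both vertical inclusions in the square above become equivalences of homotopy categories: in each case the strictification is a functorial quasi-inverse to the inclusion, the natural zigzag of weak equivalences produces the equivalence $\mathrm{incl} \circ \Phi \simeq \id$ on $\sV_s$, and the opposite composite $\Phi \circ \mathrm{incl}$ is similarly equivalent to the identity on $\sV_{ss}$ via the restriction of the same zigzag. The bottom $\io^*$ is therefore an equivalence of homotopy categories, as claimed. The main obstacle I anticipate is the compatibility verification that $\et\colon Z \rtarr \bR\bL Z$ preserves the $\bR\bC\bL$-algebra structure; although entirely formal, it requires unwinding the composite defining the $\bR\bC\bL$-action on $\bR\bL Z$ and repeated use of the triangle identities for $(\bL,\bR)$, in the same spirit as the diagram chases in the proofs of Propositions \ref{formalMT1} and \ref{formalMT2}.
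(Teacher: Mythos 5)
Your route is genuinely different from the paper's: the paper proves the statement directly on special objects by constructing a derived inverse $\io_*\,=\,B(\bR\bC\bL,\bD,-)$ (with $\bD$ acting on $\bR\bC\bL$ through $\io$) and exhibiting the zigzags $X \leftarrow B(\bD,\bD,X)\rightarrow \io^*B(\bR\bC\bL,\bD,X)$ and $B(\bR\bC\bL,\bD,\io^*Y)\rightarrow B(\bR\bC\bL,\bR\bC\bL,Y)\rightarrow Y$, using only that $\io$ is a weak equivalence on special objects; no passage through $\sV_{ss}$ is needed. Your reduction to strictly special objects is a reasonable alternative, and your $\bR\bC\bL$-side strictification $Z\mapsto \bR\bL Z$ is correct as described: the check that $\et$ is a map of $\bR\bC\bL$-algebras really is just a triangle identity plus naturality, and $\bR\bL$ preserves weak equivalences between special objects by two-out-of-three against $\et$, so that side is fine.

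There is, however, a genuine gap on the $\bD$-side. You assert that $\Phi_{\bD}\circ\mathrm{incl}\simeq \mathrm{id}$ on $\bD[\sV_{ss}]$ ``via the restriction of the same zigzag,'' but the zigzag of \autoref{ssequiv} passes through $B(\bD,\bD,Y)$ and $B(\bR\bC\bL,\bD,Y)$, which are not obviously strictly special, so it does not restrict to $\bD[\sV_{ss}]$. What you then actually have is only the one-sided comparison $\mathrm{incl}\circ\Phi_{\bD}\simeq\mathrm{id}$ in $\mathrm{Ho}(\bD[\sV_s])$, which gives essential surjectivity of $\mathrm{Ho}(\bD[\sV_{ss}])\rightarrow\mathrm{Ho}(\bD[\sV_s])$ but not full faithfulness; and full faithfulness is exactly what your factorization through the commuting square requires, since the localization of a full subcategory need not embed in the localization of the ambient category. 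The gap is patchable with the paper's own tools: for strictly special $Y$, \autoref{barcom2} (or \autoref{Iforgot}) identifies $\overline{Y}_{\bC}=\bR B(\bC\bL,\bD,Y)$ with $\bR B(\bC,\bC,\bL Y)$, and then $\bR\ze$ followed by $\et^{-1}\colon \bR\bL Y\iso Y$ is a natural weak equivalence $\overline{Y}_{\bC}\rightarrow Y$ lying entirely in $\bD[\sV_{ss}]$; alternatively, when $\ga_{\bR}$ is an isomorphism (as in the paper's main examples, cf.\ \autoref{DgammaR}) the intermediate bar constructions on strictly special input are themselves strictly special and your zigzag does restrict. Note that the point you flag as the main obstacle, the $\bR\bC\bL$-equivariance of $\et$, is the easy part; the two-sided invertibility of the inclusion is where the real work lies, and it is precisely what the paper's direct bar-construction inverse sidesteps.
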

\begin{proof} We define a derived inverse $\io_*\colon \bD[\sV_s] \rtarr  \bR\bC\bL[\sV_s]$  by 
$$\io_* X =  B(\bR\bC\bL, \bD, X) $$
for $\bD$-algebras $X$.  Here the right action of $\bD$ on $\bR\bC\bL$ is the composite
$$ \xymatrix@1{\bR\bC\bL\bD \ar[r]^-{\bR\bC\bL\io} & \bR\bC\bL\bR\bC\bL \ar[r]^-{\mu} & \bR\bC\bL\\}$$
On special objects,  $\io \colon \bD\rtarr \bR\bC\bL$ is a weak equivalence.  The following diagrams display 
weak equivalences $\io^*\io_* \htp \id$ and $\io_*\io^* \htp \id$.
$$ \xymatrix@1{ X  & B(\bD, \bD, X) \ar[l]_-{\ze}  \ar[rr]^-{B(\io,\id,\id)} & & \io^*B(\bR\bC\bL,\bD, X)\\}$$
$$ \xymatrix@1{ B(\bR\bC\bL,\bD, \io^* Y) \ar[rr]^-{B(\id, \io, \id)} &  & B(\bR\bC\bL,\bR\bC\bL, Y)  \ar[r]^-{\ze} &  Y\\}$$
Here $X$ is a special $\bD$-algebra and $Y$ is a special $\bR\bC\bL$-algebra. 
\end{proof} 

Part (iii) of \autoref{formalMT1}, together with Propositions \ref{formalMT2} and \ref{formalMT22}, also leads to the following comparison of bar constructions.   It shows that our bar constructions constructed from $\bC$ are special cases of those constructed from  $\bR\bC\bL$.

\begin{cor}\label{barcom1}  For $\bC$-functors $F$ and $\bC$-algebras $X$, 
$$  B(F,\bC, X) \iso B(F\bL, \bR\bC\bL, \bR X) $$
\end{cor}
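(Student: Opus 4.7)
The plan is to construct a levelwise isomorphism of simplicial objects in $\sZ$
\[
B_*(F\bL, \bR\bC\bL, \bR X) \iso B_*(F, \bC, X),
\]
and then take realizations. The $q$-simplices on the left are $F\bL (\bR\bC\bL)^q \bR X$. Writing this out, one has $q$ copies of $\bL\bR$ sandwiched between successive occurrences of $\bC$; applying the isomorphism $\epz\colon \bL\bR \iso \Id$ at each such pair yields a canonical isomorphism
\[
\varphi_q\colon F\bL (\bR\bC\bL)^q \bR X \xrightarrow{\iso} F\bC^q X,
\]
built entirely from (iterated) $F\epz\bC^i\bL \cdots \bC^j \bL$ and the identity $\epz\bL \com \bL\et = \id$. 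So the levels match after applying $\varphi_q$.

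Next, one verifies that the face and degeneracy maps match. The key observation is that all the structure morphisms on the left-hand simplicial object are, by construction, built from the structure morphisms on the right via the prescriptions of Propositions \ref{formalMT1}, \ref{formalMT2}, and \ref{formalMT22}. Explicitly: the action of $\bR\bC\bL$ on the $\bR\bC\bL$-functor $F\bL$ is, by \autoref{formalMT22}, the composite $\la\bL \com F\epz\bC\bL$; the product of the monad $\bR\bC\bL$ is, by \autoref{formalMT1}(ii), $\bR\mu\bL \com \bR\bC\epz$; its unit is $\bR\et\bL \com \et$; and the $\bR\bC\bL$-algebra action on $\bR X$ is, by \autoref{formalMT2}(i) applied with $\bD = \bR\bC\bL$ and $\om = \bR\bC\epz$, the composite $\bR\tha \com \bR\bC\epz$. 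In every case an ``extra'' $\epz$ appears precisely at the slot where a cancelling $\bL\bR$ pair lives; so under $\varphi_q$ these $\epz$'s are absorbed into the canonical identifications used to define $\varphi_q$, and what remains is exactly $\la$ (as the zeroth face on the left), $\mu$ (as the inner faces), $\tha$ (as the last face), or $\et$ (as the degeneracies) of $B_*(F,\bC,X)$. A diagram chase using a triangle identity $\bR\epz \com \et\bR = \id$ shows that the units match without further trouble.

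Since $\varphi_*$ intertwines all simplicial operators, it is an isomorphism of simplicial objects, and passing to realization in $\sZ$ gives the asserted isomorphism
\[
B(F\bL, \bR\bC\bL, \bR X) \iso B(F, \bC, X).
\]

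The only real obstacle is clerical: keeping track of exactly which $\epz$ in which slot is being used at each face to ensure cancellation against which factor of $\varphi_q$. Once this bookkeeping is organized (for instance by reading the $q$-simplex left-to-right and numbering the $\bL\bR$-pairs), each simplicial identity reduces to one of the compatibility diagrams already recorded in Propositions \ref{formalMT1}, \ref{formalMT2}, and \ref{formalMT22}, together with the triangle identities for $(\bL,\bR)$.
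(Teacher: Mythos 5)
Your proposal is correct and is essentially the paper's own argument: the paper proves \autoref{barcom1} by iteratively using the isomorphism $\epz\colon \bL\bR \rtarr \Id$ to identify the $q$-simplices $F\bL(\bR\bC\bL)^q\bR X$ with $F\bC^q X$ and noting that these identifications commute with faces and degeneracies. You simply spell out the bookkeeping with Propositions \ref{formalMT1}, \ref{formalMT2}, and \ref{formalMT22}, which the paper leaves implicit.
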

\begin{proof} Used iteratively, the isomorphism $\epz\colon \bL\bR \rtarr \id$ induces isomorphisms
$$F\bL(\bR\bC\bL)^q\bR X \rtarr  F \bC^q X$$
on $q$-simplices, and these isomorphisms commute with faces and degeneracies.
\end{proof}

Restricting to strictly special objects of $\sV$,   we have $\bD\iso \bR\bC\bL$ by \autoref{formalMT3}.  Here Propositions \ref{formalMT2} and \ref{formalMT22} lead to commutation results relating bar constructions constructed from $\bC$ to bar constructions constructed from $\bD$.

\begin{cor}\label{barcom2} For $\bC$-functors $F$ and strictly special $\bD$-algebras $Y$, 
$$  B(F,\bC, \bL Y) \iso B(F\bL, \bD, Y) $$
\end{cor}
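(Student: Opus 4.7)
The plan is to reduce \autoref{barcom2} to \autoref{barcom1} by using the strict specialness of $Y$ together with the identification of $\bD$ with $\bR\bC\bL$ on strictly special objects supplied by \autoref{formalMT3}.

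First I would invoke \autoref{formalMT2}(iii), which endows $\bL Y$ with a natural $\bC$-algebra structure since $Y$ is a $\bD$-algebra. Applying \autoref{barcom1} with $X = \bL Y$ then yields a natural isomorphism
\[
B(F,\bC,\bL Y) \iso B(F\bL,\bR\bC\bL,\bR\bL Y).
\]
Next, strict specialness of $Y$ gives $\et\colon Y\iso \bR\bL Y$, and by \autoref{formalMT3} the monads $\bD$ and $\bR\bC\bL$ agree, via the natural isomorphism $\om\com\bD\et$, on the full subcategory $\sV_{ss}$, which contains $Y$ and all iterates $\bD^q Y$ by iterating \autoref{YDY}. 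Combining these two data produces a levelwise isomorphism of simplicial objects
\[
B_*(F\bL,\bD,Y)\iso B_*(F\bL,\bR\bC\bL,\bR\bL Y),
\]
which becomes an isomorphism on realization; composing with the previous display gives the asserted identification.

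The only nontrivial work is the verification that the pointwise isomorphisms $\bD^q Y\iso (\bR\bC\bL)^q \bR\bL Y$ intertwine the simplicial face and degeneracy operators of the two bar constructions. This is essentially formal: the outer face $d_q$ uses the $\bD$-action on $Y$, which by \autoref{formalMT2}(ii) corresponds to the $\bR\bC\bL$-action on $\bR\bL Y$ under $\om\com\bD\et$; the outer face $d_0$ uses the $\bD$-functor structure on $F\bL$ constructed in \autoref{formalMT22}, which by construction is the pullback along $\io = \om\bL\com\bD\et$ of the natural $\bR\bC\bL$-functor structure on $F\bL$; inner faces involve the products $\mu_{\bD}$ and $\mu_{\bR\bC\bL}$, which correspond under the monad isomorphism of \autoref{formalMT3}; and the degeneracies involve the units, which match by the first compatibility diagram in \autoref{ass7}. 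I expect this bookkeeping to be the only place that requires any care, but it is diagrammatic and follows directly from the compatibility axioms of \autoref{ass7} together with the formal properties collected in Propositions \ref{formalMT1}, \ref{formalMT2}, and \ref{formalMT22}.
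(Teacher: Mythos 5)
Your argument is correct, and at bottom it performs the same manipulation as the paper's proof, just packaged differently. The paper identifies the $q$-simplices directly: iterating $\et\colon \bD^iY \iso \bR\bL\bD^iY$ (strict specialness of the $\bD^iY$, via \autoref{YDY}) and regrouping with $\bL\bD\bR \iso \bC$ gives $F\bL\bD^qY \iso F\bC^q\bL Y$ in one step, after which one checks faces and degeneracies. You instead factor the same chain of isomorphisms through the intermediate simplicial object $B_*(F\bL,\bR\bC\bL,\bR\bL Y)$, quoting \autoref{barcom1} for one half and the monad identification of \autoref{formalMT3} for the other. What this buys is a cleaner separation of the two inputs (the $\epz$-isomorphism versus strict specialness), at the cost of having to verify the compatibility of the $\bD$- and $\bR\bC\bL$-structures on both the algebra and functor variables -- bookkeeping the paper's direct route largely avoids, though your sketch of it (using \autoref{formalMT22} for $d_0$, \autoref{formalMT2}(ii) for $d_q$, and the \autoref{ass7} diagrams for units and products) is sound. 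One small citation point: \autoref{formalMT2}(iii) carries the global hypothesis $\bD\iso\bR\bC\bL$, which need not hold for a general $\bD$ satisfying \autoref{ass7}; the $\bC$-algebra structure on $\bL Y$ for strictly special $Y$ should instead be attributed to the restricted adjoint equivalence of \autoref{formalMT3}, which you in any case invoke immediately afterwards, so this is a matter of attribution rather than a gap.
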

\begin{proof} Used iteratively, the isomorphism $\et\colon \bD^iY \rtarr \bR\bL \bD^i Y$ given by \autoref{formalMT3}
induces isomorphisms
$$ F\bL \bD^q Y \rtarr F\bL (\bD\bR\bL)^q Y \iso F(\bL\bD\bR)^q \bL Y \iso F \bC^q \bL Y $$
on $q$-simplices, and these isomorphisms commute with faces and degeneracies.
\end{proof}

\begin{cor}\label{Iforgot}  If $Y$ is a strictly special $\bD$-algebra, then 
$$B(\bC,\bC,\bL Y)\iso \bL B(\bD,\bD,Y)$$
In particular, for a $\bC$-algebra $X$, 
$$B(\bC,\bC,X) \iso B(\bC,\bC,\bL \bR X)\iso \bL B(\bD,\bD,\bR X)$$
and therefore, applying $\bR$,
$$ \bR \overline{X} = \bR B(\bC,\bC,X) \iso   B(\bD,\bD,\bR X) =\overline{\bR X}$$
\end{cor}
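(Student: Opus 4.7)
The plan is to combine \autoref{barcom2} with a levelwise identification $\bC\bL \iso \bL\bD$ valid on strictly special objects, and then pass through realization via \autoref{ass12}. Applying \autoref{barcom2} with $F = \bC$ already gives a natural isomorphism $B(\bC,\bC,\bL Y) \iso B(\bC\bL, \bD, Y)$ of simplicial objects, whose $q$-simplices are $\bC\bL\bD^q Y$. What remains is to identify this with $\bL B(\bD,\bD,Y)$, whose $q$-simplices are $\bL\bD^{q+1}Y$.

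To build the required levelwise isomorphisms, first observe that repeated application of \autoref{YDY} shows each $\bD^q Y$ is strictly special whenever $Y$ is. Combined with the natural isomorphism $\bC \iso \bL\bD\bR$ of \autoref{formalMT1}(i), this yields for any strictly special $Z$
\[
\bC\bL Z \;\iso\; \bL\bD\bR\bL Z \;\iso\; \bL\bD Z,
\]
where the second arrow uses $\bR\bL Z \iso Z$. Applied to $Z = \bD^q Y$, this produces the desired levelwise isomorphism $\bC\bL\bD^q Y \iso \bL\bD^{q+1}Y$.

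The substantive step is to verify that these levelwise isomorphisms assemble into an isomorphism of simplicial objects. The inner faces (given by $\mu$) and degeneracies (given by $\et$) correspond under the identification because the compatibility diagrams for $\om$ in \autoref{ass7} assert exactly that $\om$ respects units and products. The leftmost face uses the $\bC\bL$-functor structure of \autoref{formalMT22}, and the rightmost face uses the $\bC$-action on $\bL Y$ inherited via the isomorphism $\bD \iso \bR\bC\bL$ of \autoref{formalMT3}, as in \autoref{formalMT2}(iii). These checks are formally parallel to those of \autoref{barcom2}. Passing to realization and invoking \autoref{ass12} then yields $B(\bC,\bC,\bL Y)\iso \bL B(\bD,\bD,Y)$.

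The remaining claims follow formally. For a $\bC$-algebra $X$, \autoref{Rss} ensures $\bR X$ is strictly special, so specializing to $Y = \bR X$ and using $\bL\bR \iso \mathrm{Id}$ gives $B(\bC,\bC,X) \iso B(\bC,\bC,\bL\bR X) \iso \bL B(\bD,\bD,\bR X)$. For the final assertion, applying $\bR$ and observing that each $\bD^q\bR X$ is strictly special (so $\et\colon \bD^q\bR X \rtarr \bR\bL\bD^q\bR X$ is a levelwise isomorphism, hence an isomorphism on the realization by \autoref{ass12}) yields $\bR\bL B(\bD,\bD,\bR X)\iso B(\bD,\bD,\bR X)$. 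The main obstacle I anticipate is the bookkeeping for simplicial compatibility of the levelwise isomorphism, especially matching the extreme faces, where the leftmost face arises from the $\bC\bL$-functor structure built in \autoref{formalMT22} and the rightmost from a $\bC$-action that only exists because strict speciality forces $\bD \iso \bR\bC\bL$; this verification is mechanical but is the content-bearing part of the argument.
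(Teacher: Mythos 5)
Your argument follows the paper's own proof essentially step for step: \autoref{barcom2} with $F=\bC$, the isomorphism $\bC\bL\iso\bL\bD$ on strictly special objects obtained from $\bC\iso\bL\bD\bR$ together with $\et\colon \Id\iso\bR\bL$ there (using \autoref{YDY} levelwise), commutation of $\bL$ with realization from \autoref{ass12}, and \autoref{Rss} for the specialization $Y=\bR X$; the simplicial bookkeeping you spell out is exactly what the paper leaves implicit. One small correction: in your final step the identification $\bR\bL B(\bD,\bD,\bR X)\iso B(\bD,\bD,\bR X)$ requires $\bR$, not just $\bL$, to commute with realization, i.e.\ that $\ga_{\bR}$ of \autoref{gammaR} is an isomorphism as in \autoref{different} (equivalently, that $B(\bD,\bD,\bR X)$ is strictly special, cf.\ \autoref{DgammaR}), which is not part of \autoref{ass12} --- though the paper's own ``applying $\bR$'' is equally terse on this point.
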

\begin{proof}
Taking $F = \bC$ and using the isomorphism
$$\xymatrix{\bL \bC \ar[r]^-{\bL\bC \et} & \bL\bC\bR\bL \ar[r]^{\iso} & \bD \bL\\} $$
of functors on $\sV_{ss}$, we see the first statement as the specialization
$$B(\bC,\bC,\bL Y) \iso B(\bC\bL,\bD, Y)\iso B(\bL\bD,\bD,Y)  \iso \bL B(\bD,\bD,Y) $$
of \autoref{barcom2}.   Since $\bR X$ is strictly special by \autoref{Rss} and $\bL$ commutes with realization,
the second statement follows.
\end{proof}

Using this, we show  in the rest of this section that the machine starting from $\bC$-algebras $X$ is equivalent to the machine starting from $\bD$-algebras $\bR X$.  The starting point is the following commutative diagram, which follows from \autoref{compal} by an adjoint diagram chase.

\begin{equation}\label{compbe}
\xymatrix{
\SI \bL \bD \bR \ar[r]^-{\be_{com} \bR} \ar[d]^{\iso}_{\SI\epz\bR} & \SI \bL \bR \ar[d]_{\iso}^{\SI\epz}\\
\SI \bC \ar[r]_{\be}  & \SI \\}
\end{equation}
 
\begin{lem}\label{compbe2}  The functors $(\SI\bL)_{\bD}\bR$ and $\SI_{\bC}$  on $\bC$-algebras in $\sT$ are isomorphic.
 \end{lem}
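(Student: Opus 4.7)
The plan is to unwind both functors as explicit coequalizers and then show that they are presentations of the same coequalizer via the isomorphisms $\epz\colon \bL\bR \rtarr \Id$ and $\bL\om\colon \bL\bD\bR \rtarr \bC$ (the latter being the isomorphism of monads from (i) of \autoref{formalMT1}).

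First, for a $\bC$-algebra $(X,\tha)$, \autoref{twist} presents $\SI_{\bC} X$ as the coequalizer in $\sS$ of the pair
\[
\xymatrix@1{ \SI\bC X \ar@<.5ex>[r]^-{\be} \ar@<-.5ex>[r]_-{\SI\tha} & \SI X.}
\]
By (i) of \autoref{formalMT2}, $\bR X$ is a $\bD$-algebra with action $\bR\tha\com \om$, so that the composite adjunction analog of \autoref{twist} presents $(\SI\bL)_{\bD}(\bR X)$ as the coequalizer of the pair
\[
\xymatrix@1{ \SI\bL\bD\bR X \ar@<.5ex>[r]^-{\be_{com}} \ar@<-.5ex>[r]_-{\SI\bL(\bR\tha\com \om)} & \SI\bL\bR X.}
\]

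Second, I would apply $\SI\epz\colon \SI\bL\bR \iso \SI$ and the composite isomorphism
\[
\xymatrix@1{\SI\bL\bD\bR \ar[r]^-{\SI\bL\om}_-{\iso} & \SI\bL\bR\bC \ar[r]^-{\SI\epz\bC}_-{\iso} & \SI\bC}
\]
to the left-hand object. Under these isomorphisms, the top arrow $\be_{com}$ is identified with $\be$: this is precisely the content of the commutative square \autoref{compbe}. For the bottom arrow, naturality of $\epz$ gives the commutative diagram
\[
\xymatrix{
\SI\bL\bD\bR X \ar[r]^-{\SI\bL\om} \ar[d]_{\SI\bL(\bR\tha\com \om)} & \SI\bL\bR\bC X \ar[r]^-{\SI\epz\bC}_-{\iso} \ar[d]^{\SI\bL\bR\tha} & \SI\bC X \ar[d]^{\SI\tha} \\
\SI\bL\bR X \ar@{=}[r] & \SI\bL\bR X \ar[r]_-{\SI\epz}^-{\iso} & \SI X,}
\]
which identifies $\SI\bL(\bR\tha\com\om)$ with $\SI\tha$.

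Thus the two parallel pairs are isomorphic pairs of arrows in $\sS$, so their coequalizers agree, giving a natural isomorphism $(\SI\bL)_{\bD}(\bR X)\iso \SI_{\bC}X$. Naturality in $X$ is immediate from the naturality of all the ingredients. I expect no real obstacle here: \autoref{compbe} already handles the only non-formal identification (that of $\be_{com}$ with $\be$), and the remaining identifications are driven entirely by the unit–counit triangles of $(\bL,\bR)$ together with the definition of the pullback $\bD$-action on $\bR X$.
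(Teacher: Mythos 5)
Your proof is correct and is essentially the paper's own argument: the paper's proof of this lemma simply cites \autoref{formalMT2} (for the pullback $\bD$-action $\bR\tha\com\om$ on $\bR X$), the square \autoref{compbe} (identifying $\be_{com}$ at $\bR X$ with $\be$), and a comparison of coequalizer diagrams, which is exactly what you spell out. The only cosmetic difference is that you make explicit that the left vertical isomorphism in \autoref{compbe} is $\SI\epz\bC\com\SI\bL\om$ and that the second parallel arrow is matched by naturality of $\epz$, details the paper leaves to the reader.
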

 \begin{proof}
This follows from \autoref{formalMT2}, the diagram \autoref{compbe}, and a comparison of coequalizer diagrams.
\end{proof}

\begin{prop}\label{bCbDcomp2}   For $\bC$-algebras  $X$ in $\sT$, the unit $\et\colon \id \rtarr \bR\bL$ induces a natural isomorphism
$$ \tilde{\et} \colon \bE_{\bD} \bR X  \rtarr \bE_{\bC} X.$$
\end{prop}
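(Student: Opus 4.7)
The plan is to assemble the isomorphism directly from the two pieces built in the preceding corollary and lemma, which both already encode the unit $\eta$ as the fundamental comparison map.

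First I would unpack the two sides according to \autoref{Edefn}:
\[
\bE_{\bD} \bR X \;=\; (\SI\bL)_{\bD}\,\overline{\bR X}\;=\;(\SI\bL)_{\bD}\,B(\bD,\bD,\bR X),
\qquad
\bE_{\bC} X \;=\; \SI_{\bC}\,\overline{X} \;=\; \SI_{\bC}\,B(\bC,\bC,X).
\]
By \autoref{Rss}, $\bR X$ is strictly special, so \autoref{Iforgot} applies and supplies a natural isomorphism
\[
\overline{\bR X} \;=\; B(\bD,\bD,\bR X) \;\stackrel{\cong}{\longrightarrow}\; \bR\,B(\bC,\bC,X) \;=\; \bR\,\overline{X}.
\]
Inspection of the derivation of \autoref{Iforgot} (and \autoref{barcom2} which feeds it) shows that this isomorphism is assembled levelwise from iterated applications of the unit $\eta\colon \bD^{i}(\bR X)\to\bR\bL\bD^{i}(\bR X)$, which is an isomorphism on strictly special objects by \autoref{formalMT3}; this is precisely the promised dependence on $\eta$.

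Applying the functor $(\SI\bL)_{\bD}$ to the previous isomorphism and then invoking \autoref{compbe2}, which gives the natural isomorphism $(\SI\bL)_{\bD}\com\bR \cong \SI_{\bC}$ on $\bC[\sT]$, I obtain
\[
\bE_{\bD}\,\bR X \;\cong\; (\SI\bL)_{\bD}\,\bR\,\overline{X} \;\cong\; \SI_{\bC}\,\overline{X} \;=\; \bE_{\bC}\,X.
\]
I would then define $\tilde\eta$ to be this composite. Naturality in $X$ is automatic from the naturality of the two building blocks (\autoref{Iforgot} and \autoref{compbe2}).

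The only step that requires any care is the bookkeeping verification that the two chains of isomorphisms at the simplicial level --- the one in the proof of \autoref{Iforgot} (iterated $\eta$ inserted into the $\bD$-layers of $B(\bD,\bD,\bR X)$) and the one implicit in \autoref{compbe2} (the coequalizer comparison arising from \autoref{compbe}) --- are compatible with the face and degeneracy operators, so that they descend to a well-defined map of realizations rather than just a levelwise isomorphism of simplicial objects. Since both constructions are built entirely from the structural transformations $\eta$, $\epsilon$, $\mu$, and $\omega$, for which the compatibilities in \autoref{ass7} and \autoref{formalMT1} have already been recorded, this is a diagram chase rather than a genuine obstacle; no further hypotheses are needed.
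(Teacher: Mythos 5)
Your composite is correct, and it proves the statement; but it is assembled differently from the paper's own argument. The paper constructs the isomorphism directly at the simplicial level: iterated insertions of $\et$ together with the isomorphism $\bC\iso \bL\bD\bR$ give isomorphisms $\SI\bL\bD^q\bR X\rtarr \SI\bC^q X$ on $q$-simplices, one checks faces and degeneracies, realizes to get $B(\SI\bL,\bD,\bR X)\iso B(\SI,\bC,X)$, and then invokes \autoref{Iforgot} to square this with the definition of $\bE_{\bD}$ via $\overline{\bR X}$. You instead avoid any new simplicial bookkeeping by composing two results the paper has already established: \autoref{Iforgot} (via \autoref{Rss} and \autoref{barcom2}, where the unit $\et$ enters) to identify $\overline{\bR X}$ with $\bR\overline{X}$, and \autoref{compbe2} applied to the single realized object $\overline{X}$ to identify $(\SI\bL)_{\bD}\bR\overline{X}$ with $\SI_{\bC}\overline{X}$. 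What your route buys is modularity: the paper states \autoref{compbe2} but does not actually use it here, whereas you make it do the work that the paper's bespoke simplicial comparison does, and the resulting isomorphism agrees with the paper's up to the canonical identifications, which is all that the subsequent corollary needs.

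Two small adjustments to your "step that requires care." First, the worry you flag (compatibility of the two chains with faces and degeneracies so that they descend to realizations) is not actually a new obligation in your argument: the only simplicial-level content is already packaged inside \autoref{Iforgot}/\autoref{barcom2}, and \autoref{compbe2} is applied after realization, to the object $\overline{X}$, so there is nothing further to descend. Second, the point that does deserve a sentence is that in order to apply the functor $(\SI\bL)_{\bD}$ to the isomorphism $\overline{\bR X}\iso\bR\overline{X}$, you need it to be an isomorphism of $\bD$-algebras: $\overline{\bR X}$ carries its bar-construction $\bD$-action (using \autoref{ass12}), $\bR\overline{X}$ carries the action of \autoref{formalMT2}(i), and the comparison map of \autoref{barcom2}/\autoref{Iforgot}, being built from $\et$, $\epz$, and $\om$, respects these. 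This is routine, but it is the analogue of the compatibility check the paper performs and should be said explicitly.
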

\begin{proof}  By (i) of \autoref{formalMT1}, the isomorphisms $\epz\colon \bL\bR\rtarr \id$ and $\om\colon \bD\bR \rtarr \bR\bC$ induce an isomorphism  $\bC\iso \bL\bD\bR$.  Using iterated insertions of $\et$ and \autoref{Rss}, 
we obtain natural isomorphisms
$$   \bL \bD^q\bR \rtarr (\bL\bD\bR)^q   $$
and therefore 
$$\SI \bL \bD^q\bR X \rtarr \SI (\bL\bD\bR)^q X \iso \SI\bC^q X. $$
A check of faces and degeneracies shows that these are the maps on $q$-simplices of a simplicial map. On realization, this map gives the isomorphism
$$  \tilde{\et} \colon  B(\SI\bL, \bD, \bR X) \rtarr B(\SI,\bC, X) $$
We must apply this with $X$ replaced by $\overline{X}$, and \autoref{Iforgot}  gives that $\overline{\bR X}$ is isomorphic to $\bR \overline{X}$.
\end{proof}

The following corollary relates to the missing dotted arrow $\bL$ in \autoref{adj6}.

\begin{cor}  When restricted to grouplike objects, $\bR\colon \bC[\sT]\rtarr \bD[\sV]$ induces an equivalence 
of homotopy categories.
\end{cor}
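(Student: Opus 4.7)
The plan is to sandwich $\bR$ between the two infinite loop space machines and invoke two-out-of-three for equivalences of categories. First I would check that $\bR$ restricts appropriately: by \autoref{Rss} the functor $\bR$ lands in strictly special (hence special) $\bD$-algebras, by the levelwise definition in \autoref{ass4hazy2} it preserves grouplike objects, and by \autoref{ass8} it preserves weak equivalences. Consequently $\bR$ descends to a functor
\[
\bR\colon \mathrm{Ho}\,\bC[\sT]_{gr}\longrightarrow \mathrm{Ho}\,\bD[\sV]_{s,gr}.
\]

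The key input is \autoref{bCbDcomp2}: the natural isomorphism $\tilde{\et}\colon \bE_{\bD}\bR X\iso \bE_{\bC}X$ makes the triangle
\[
\xymatrix@C=3pc{
\mathrm{Ho}\,\bC[\sT]_{gr} \ar[rr]^{\bR} \ar[dr]_{\bE_{\bC}} && \mathrm{Ho}\,\bD[\sV]_{s,gr} \ar[dl]^{\bE_{\bD}} \\
& \mathrm{Ho}\,\sS_c &
}
\]
commute up to natural isomorphism. By \autoref{equiv} the diagonal $\bE_{\bC}$ is an equivalence of categories, and by the composite-adjunction analogue encoded in \autoref{adjequivcom} so is $\bE_{\bD}$. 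Two-out-of-three for equivalences of categories then forces $\bR$ to be an equivalence as well.

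No serious obstacle arises, since the heavy lifting was already done in Theorems \ref{equiv} and \ref{adjequivcom} and in the commutation \autoref{bCbDcomp2}. The one point to watch is the slight clash between the target of $\bR$ (strictly special grouplike) and the codomain of the equivalence \autoref{adjequivcom} (merely special grouplike), but \autoref{ssequiv} shows that the inclusion of strictly special into special grouplike $\bD$-algebras is itself an equivalence on homotopy categories, so the identification is unambiguous. Unpacking what two-out-of-three yields concretely: essential surjectivity says that given a special grouplike $\bD$-algebra $Y$, any grouplike $\bC$-algebra $X$ with $\bE_{\bC}X\simeq \bE_{\bD}Y$ satisfies $\bR X\simeq Y$ in the homotopy category, and full faithfulness is the analogous statement on hom-sets, both of which are immediate transcriptions via the two known equivalences.
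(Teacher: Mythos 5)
Your argument is essentially the paper's own: both use the natural isomorphism $\tilde{\et}\colon \bE_{\bD}\bR X \iso \bE_{\bC}X$ of \autoref{bCbDcomp2} together with the equivalences of Theorems \ref{adjequiv} (\ref{equiv}) and \ref{adjequivcom} onto the homotopy category of connective objects, concluding that on homotopy categories $\bR \simeq \bE_{\bD}^{-1}\com\bE_{\bC}$ is an equivalence. Your extra care about strictly special versus special grouplike $\bD$-algebras (via \autoref{Rss} and \autoref{ssequiv}) is a sound refinement of a point the paper leaves implicit, but the proof route is the same.
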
 
\begin{proof}
By Theorems \ref{adjequiv} and \ref{adjequivcom}, $\bE_{\bC}$ and  $\bE_{\bD}$ induce equivalences from the homotopy categories of grouplike $\bC$-algebras and grouplike $\bD$-algebras to the homotopy category of connective objects of 
$\sS$, hence the conclusion is immediate from the previous proposition.  In effect, $\bR = \bE_{\bD}^{-1}\com \bE_{\bC}$ on our homotopy categories.
\end{proof}

\subsection{From a topological machine to a simplicial machine}\label{silly}

In this brief section, we illustrate the general theory of composite adjunctions by showing how it works to translate any topological context into a simplicial one.   Of course, our general theory could well start in a simplicial context.  However, in Part 1, our examples all started with $\sT$ being the category of based spaces or based $G$-spaces.  We focus on based spaces $\sT$, but with obvious modifications the following discussion also applies with $\sT$ replaced  by $G\sT$.  

We consider the following diagram.  The top right triangle is any given instance of \autoref{ass1} in which $\sT$ figures, and $(\bT,\bS)$ is the standard geometric realization and total singular complex adjunction between based simplicial sets and based spaces. We take $\bD = \bT \bC \bS$ as our monad on based simplicial sets. We do not have the dotted arrow $\bT$, but by \autoref{keyadj} we do have the left adjoint  $(\SI \bT)_{\bD}$.
\begin{equation}\label{adj6Too} 
\xymatrix{
s\mathrm{Set} \ar@<.5ex>[rr]^{\bT}  \ar@<.5ex>[ddrr]^{\bF_{\bD}} & & \sT  \ar@<.5ex>[ll]^{\bS}    \ar@<.5ex>[rr]^{\SI}  \ar@<.5ex>[dr]^{\bF_{\bC}}& & \sS   \ar@<.5ex>[ll]^{\OM}   \ar@<.5ex>[dl]^{\OM_{\bC}}   
\ar@/^4pc/@<.6ex> [ddll]^-{(\bS\OM)_{\bD}}        \\
& & &   \ar@<.5ex>[ul]^{\bU_{\bC}}  \bC[\sT]   \ar@<.5ex>[dl]^{\bS}  \ar@<.5ex>[ur]^{\SI_{\bC}}& \\
& & \ar@<.5ex>[uull]^{\bU_{\bD}}  \bD[s\mathrm{Set}] \ar@<.5ex>@{-->}[ur]^{\bT} \ar@/_4pc/@<.6ex> [uurr]^-{(\SI \bT)_{\bD}}& &  \\}
\end{equation}

This is a weak example of \autoref{adj6} since the counit $\epz\colon \bT\bS\rtarr \id$ is a weak equivalence but not an isomorphism. The unit $\et\colon \id \rtarr \bS\bT$ is also a weak equivalence (by the standard definition of weak equivalences of simplicial sets), so all simplicial sets are special. Here all homotopical assumptions for the composite adjunction follow directly from their counterparts for the given top right triangle.  We find that Theorems \ref{recprincom} and \ref{adjequivcom} specialize to give the following result.

\begin{thm}\label{simpset} The recognition principle and homotopical monadicity theorems for the adjunction $(\SI \bT,\bS\OM)$ and the monad $\bD$ follow directly from the recognition principle and homotopical monadicity theorem for the adjunction $(\SI,\OM)$ and the monad $\bC$.
\end{thm}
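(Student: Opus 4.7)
The plan is to invoke \autoref{recprincom} and \autoref{adjequivcom} with $\sV = s\mathrm{Set}$, $\bL = T$, and $\bR = S$, by verifying \autoref{ass7} (in the weak form permitted by \autoref{weakcom}), \autoref{ass8}, and the residual homotopical assumptions in this composite adjunction context. Since $\et\colon K \rtarr STK$ is a weak equivalence for every simplicial set $K$, every object of $s\mathrm{Set}$ is automatically special, so no restriction on inputs is required in the resulting recognition principle.

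First, I would verify \autoref{ass7}: the natural transformation $\om\colon \bD S \rtarr S\bC$ is a weak equivalence induced by $SC$ applied to $\epz\colon TS \rtarr \Id$, and the unit/product compatibility diagrams commute by inspection from the adjunction triangle identities and the monad structure on $\bC$. \autoref{ass8} and \autoref{ass3} follow directly, since both $T$ and $S$ preserve weak equivalences and we declare the $S\OM$-connective objects of $\sS$ to be the $\OM$-connective ones; \autoref{ass4} follows by transporting the given group completion functor on $\sT$ along $S$ to simplicial sets levelwise, as in \autoref{ass4hazy2}. The simplicial preliminaries \autoref{ass12} are immediate: $T$ commutes with realization as a left adjoint, $\bD$ inherits a commutation with realization from $\bC$ via the interaction of $S$ and $T$ with bisimplicial objects, and the conditions of \autoref{ass6} (i)--(vii) transfer from $\sT$ to $s\mathrm{Set}$ through $S$ and $T$.

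The principal content is the approximation theorem, \autoref{ass5}. Applying the diagram \autoref{compal}, the map $\al_{com}\colon \bD K \rtarr S\OM\SI TK$ is identified, up to the weak equivalence $\om$, with $S$ applied to $\al\colon \bC(TK) \rtarr \OM_{\bC}\SI(TK)$. Since $\al$ is a group completion by the given hypothesis, and since $S$ preserves weak equivalences as well as the homological or Hopf-space data defining group completions levelwise, $S\al$ and hence $\al_{com}$ is a group completion. The main obstacle is a book-keeping one: because $\om$ is only a weak equivalence rather than an isomorphism, $\bD$ and $\bR\bC\bL$ agree only up to natural weak equivalence, so each manipulation in \autoref{BARCON} --- in particular the bar-construction comparisons and the factorization $\et_{\bD} = \ga \circ B\al_{com} \circ \nu$ --- must be checked to involve only levelwise weak equivalences between Reedy cofibrant simplicial objects, so that (iii) of \autoref{ass6} applies after realization. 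Once this verification is carried out, Theorems \ref{recprincom} and \ref{adjequivcom} apply without further modification to yield the desired simplicial recognition principle and homotopical monadicity theorem.
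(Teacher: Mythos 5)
Your proposal follows the same route as the paper: recognize \autoref{adj6Too} as a weak composite adjunction context in which every simplicial set is special (since $\et\colon \Id \rtarr ST$ is a weak equivalence), transfer the homotopical Assumptions from the topological triangle, and specialize Theorems \ref{recprincom} and \ref{adjequivcom}. The paper's own argument is exactly this, stated tersely, and your added verifications (of $\om = S\bC\epz$, of \autoref{ass5} via \autoref{compal}, etc.) fill in the same details; the only slight mislabeling is that $\bD$ here is literally the conjugate monad $S\bC T = \bR\bC\bL$, so the weakness lies in $\epz$ and $\om$ failing to be isomorphisms, not in any discrepancy between $\bD$ and $\bR\bC\bL$.
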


\section{The specialization to categories of operators}\label{catop}

\subsection{The cast of characters}\label{cast1}

Categories of operators were introduced in \cite{MT}  in order to define a common generalization of operadic and Segal style infinite loop space theory.\footnote{The notion of an $\infty$-operad \cite[2.1]{LurieHA} is a modern reinterpretation and generalization.}
Nonequivariantly, \cite{MT} already used that categories of operators have associated monads.  The equivariant generalization was worked out in detail in \cite{MMO}, which uses the Steiner operad to give an explicit concrete  proof that the equivariant versions of the operadic and Segalic infinite loop space machines are equivalent.   We shall concentrate on what is new and on how previous work fits into our composite adjunction context.  What is new is equivariant, so we write things that way, but our framework is already interesting nonequivariantly.  Largely following \cite{MMO}, we begin by introducing the following categorical cast of characters.

{\large
\begin{equation}\label{cast}
\begin{tabular}{||c|c|c|c||c|c|c|c||}\hline 
$\SI$ & $\LA$ & $\PI$ & $\sF$& $\sC$ & $\bC$ & $\sD$ & $\bD$ \\ \hline \hline
$\SI_G$ & $\LA_G$ & $\PI_G$ & $\sF_G$ & $\sC_G^{fin}$ & $\bC_G^{fin}$ & $\sD_G$ & $\bD_G$ \\
  \hline 
\end{tabular}
\end{equation}}

The characters $\sC_G^{fin}$ and $\bC_G^{fin}$ are new.  They fill a conceptual gap that has long puzzled the senior author.  

\begin{defn}\label{finitecats}  We define categories and inclusions  $\SI \subset \LA \subset \PI \subset \sF$.   All have objects the set of based finite sets  $\bn =\{0,1, \cdots, n\}$ for $n\geq 0$, where $\bn$ has basepoint $0$.  All have morphisms given by based functions.   These functions are restricted as follows.
\begin{enumerate}[(i)]
\item  $\SI$: bijections
\item  $\LA$: injections
\item  $\PI$:  functions $\ph\colon \bm\rtarr\bn$ such that $|\ph^{-1}\{j\}|$ is $0$ or $1$ for $1\leq j\leq n$
\item  $\sF$:  all based functions $\bm\rtarr \bn$
\end{enumerate}
Note that $\PI$ adds projections to $\LA$, since $|\ph^{-1}\{0\}|$ is not limited in size in (iii).
\end{defn}

{
We adopt the following from \cite[Convention 1.4]{MMO}. 

\begin{conv}\label{finiteGset}
For a group $G$ and a homomorphism $\al\colon G\rtarr \SI_n$,  define $\bn^{\al}$ to be the based $G$-set specified by letting $G$ act on $\mathbf{n}$ by $g\cdot i = \al(g)(i)$  for $1\leq i\leq n$.   Every based  finite $G$-set with $n$ non-basepoint elements is  isomorphic to some  $\bn^{\al}$, and we understand based finite $G$-sets to be of this form throughout.
\end{conv}
}

By a $G$-category we understand a category enriched in $G$-sets.  

\begin{defn}\label{finiteGcats}
We define $G$-categories and inclusions  $\SI_G \subset \LA_G \subset \PI_G \subset \sF_G$.   All have objects the set of based finite $G$-sets  $\bn^{\al}$ for $n\geq 0$ and all homomorphisms $\al\colon G\rtarr \SI_n$.  All have morphisms given by based functions, with $G$ acting by conjugation.  That is, for a based function $f\colon  \bm^{\al} \rtarr \bn^{\be}$,
$$  (gf)(i) = \be(g) f \al(g^{-1})(i) \ \ \text{for} \ \ 1\leq i \leq m. $$
These functions are restricted as follows.
\begin{enumerate}[(i)]
\item  $\SI_G$: bijections
\item  $\LA_G$: injections
\item  $\PI_G$:  functions $\ph\colon \bm^{\al}\rtarr\bn^{\be}$ such that
  $|\ph^{-1}\{j\}|$ is $0$ or $1$ for $1\leq j\leq n$
\item  $\sF_G$:  all based functions $\bm^{\al} \rtarr \bn^{\be}$
\end{enumerate}
\end{defn}

{
\begin{conv}\label{finiteGset2}
Define  $\SI_{\bn^{\al}}$ to be the group $\SI(\bn^{\al}, \bn^{\al})$ of automorphisms of $\bn^{\al}$ in $\SI_G$.   It is the group $\SI_n$ with the action of $G$ given by conjugation by $\al$.  
\end{conv}
}

{
\begin{defn}\label{semi}
Define a homomorphism $\al_c\colon G\rtarr  Aut(\SI_n)$ by conjugation by $\al$, $ \al_c(g)(\si) = \al(g)\com \si \com \al(g)^{-1}$.  Define an extension of  the action of $G$ on $\bn^{\al}$ to an action of the semi-direct product  
$\SI_n \rtimes_{\al_c} G$  by $(\si,g)(i) = \si\big(\al(g)(i)\big).$
\end{defn}
}
%\begin{proof} For the first statement, it is easily checked that
%$$\al_c(g)(\si \ta) = \al_c(g)(\si)\al_c(g)(\ta)\ \ \text{and} \ \ \al_c(gh) = \al_c(g)\big(\al_c(h)\big).$$
%Thus each $\al_c(g)$ is an automorphism of $\SI_n$  and $\al_c$ is a group homomorphism.  For the second statement, recall that
%$$  (\si,g)(\ta,h) = (\si\al_c(g)(\ta), gh).$$
%It is easily checked that 
%$$\big((\si,g)(\ta,h))(i)  = (\si,g)\big((\ta,h)(i)\big).  \qedhere $$\end{proof}  

\begin{lem} The categories of \autoref{finitecats} embed in the respective categories of \autoref{finiteGcats} by identifying 
$\bn$ with $\bn^{\epz_n}$, where $\epz_n\colon G\rtarr \SI_n$ is the trivial homomorphism.
\end{lem}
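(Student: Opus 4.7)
The plan is to check that the assignment $\bn \mapsto \bn^{\epz_n}$ extends to a fully faithful functor in each of the four cases, and that the four restrictions on morphisms (bijections, injections, projection-like, all based) are preserved and reflected. Because the four pairs $(\SI,\SI_G)$, $(\LA,\LA_G)$, $(\PI,\PI_G)$, $(\sF,\sF_G)$ are defined by the same classes of morphisms, it suffices to handle $\sF \subset \sF_G$ and then observe that each subclass is closed in the obvious way.

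First I would fix the object assignment: by \autoref{finiteGset} the based $G$-set $\bn^{\epz_n}$ has $G$ acting by $g\cdot i = \epz_n(g)(i) = \mathrm{id}(i) = i$, so $\bn^{\epz_n}$ is just $\bn$ with the trivial $G$-action. Thus the assignment is a well-defined injection on objects of $\sF$ into objects of $\sF_G$. On morphisms, a based function $f\colon \bm \rtarr \bn$ in $\sF$ yields a based function $\bm^{\epz_m}\rtarr\bn^{\epz_n}$, which is a morphism of $\sF_G$ (no equivariance condition is imposed on morphisms in $\sF_G$ by \autoref{finiteGcats}).

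The main thing to verify is that this is fully faithful, i.e.\ that
\[
\sF_G(\bm^{\epz_m}, \bn^{\epz_n}) \;=\; \sF(\bm,\bn)
\]
as sets with their $G$-actions (where the left side carries the conjugation action from \autoref{finiteGcats} and the right side carries the trivial action). For $f\in \sF_G(\bm^{\epz_m}, \bn^{\epz_n})$ and $g\in G$, the conjugation formula gives $(gf)(i) = \epz_n(g)\,f\,\epz_m(g^{-1})(i) = f(i)$ for $1\le i\le m$, so the $G$-action on the hom-set is trivial and the underlying sets agree. Compatibility with composition and identities is immediate since composition in $\sF_G$ is defined the same way as in $\sF$.

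Finally, I would note that the embedding restricts to embeddings of the three subcategories: a based function $\bm^{\epz_m}\rtarr \bn^{\epz_n}$ is a bijection (resp.\ injection, resp.\ satisfies the projection-like condition of \autoref{finiteGcats}(iii)) if and only if its underlying based function $\bm\rtarr \bn$ has the same property, and these conditions cut out $\SI_G$, $\LA_G$, $\PI_G$ out of $\sF_G$ and $\SI$, $\LA$, $\PI$ out of $\sF$ in exactly parallel ways. No step is a real obstacle; the only point to watch is the triviality of the conjugation action when both $\al$ and $\be$ are trivial, which is what makes the embedding full.
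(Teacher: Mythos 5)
Your proof is correct, and it fleshes out what the paper treats as immediate from the definitions: the paper states this lemma without proof, and the only content is exactly what you check, namely that $\bn^{\epz_n}$ is $\bn$ with trivial action, that the conjugation action on $\sF_G(\bm^{\epz_m},\bn^{\epz_n})$ is trivial when both homomorphisms are trivial (so the hom-$G$-sets literally coincide with $\sF(\bm,\bn)$ with trivial action, compatibly with composition and identities), and that the four morphism classes of \autoref{finitecats} and \autoref{finiteGcats} match under this identification. So your argument is the same (and essentially the only) approach; no gaps.
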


\begin{rem}\label{symmonF} It is clear that the category $\sF$ and its subcategories $\SI$, $\LA$, and $\PI$ are symmetric monoidal under the wedge operation  $\bm\wed \bn \iso \mathbf{m+n}$ that identifies $\bm$ with the first $m$ and $\bn$ with the last $n$ positive elements of $\mathbf{m+n}$, in order.   Similarly, $\sF_G$ and its subcategories   $\SI_G$, $\LA_G$, and $\PI_G$ are symmetric monoidal.  In fact, they are permutative, meaning that they are symmetric strict monoidal.  In the multiplicative theory, we will implicitly use that they are bipermutative  \cite[Section 5.3]{MQR}\footnote{The corrections to \cite{MQR} in \cite{MayMult} do not affect that section.} with multiplicative product given by the smash product of finite based sets, where the positive degree elements of $\mathbf{mn}$ are identified with pairs $(i,j)$ with $1\leq i\leq m$ and $1\leq j\leq n$, ordered lexicographically.  
\end{rem}

\subsection{Categories of operators  $\sD$ and $\sD_G$}\label{cast2}  

In our cast of characters \autoref{cast1}, $\sC$ is an operad in $G\sU$ for some bicomplete cartesian monoidal ground category $\sU$, $\sD$ is its associated category of operators over $\sF$ and $\sD_G$ is its associated category of operators over $\sF_G$.  We recall these notions and their relationships and then introduce the new $G$-operad $\sC_G^{fin}$.\footnote{Here we change notation from \cite{MMO}, which used $\sC_G$ to denote an operad of $G$-spaces.  We are using $\sC$ for that.}   A bit paradoxically, to understand both the conceptual role of $\sC_G^{fin}$ and the details of its definition, it seems best to start with categories of operators.  However, $\sC_G^{fin}$ will play a much larger role in the sequel \cite{KMZ2}, where more details will be given.

We must use a bit of enriched category theory in this section, and we use the following notations.

\begin{notn}\label{enriched} We say that a category $\sM$ enriched in $\sV$ is a $\sV$-category, where $\sV$ is a given symmetric monoidal category with product $\otimes$ and unit object $\mathcal{I}$.  We have morphism objects, categorically denoted $\ul{\sM}(X,Y)$,  in $\sV$ with composition maps  
$\ul{\sM}(Y,Z) \otimes \ul{\sM}(X,Y)\rtarr \ul{\sM}(X,Z)$ in $\sV$.   The underlying category $\sM$ has morphism sets $\sM(X,Y) = \sV(\mathcal{I}, \ul{\sM}(X,Y))$.  We focus on the cartesian monoidal category  $\sV = G\sT$, and then $\sM(X,Y)$ generally takes values in $\sT$ rather than just in sets. Such ``double enrichment'' is discussed  in \cite{GMR}.
\end{notn}

\begin{defn}\label{GCO/F} A category of operators\footnote{Here and in
    \autoref{GCO/FG} we again change notation from \cite{MMO}.}  $\sD$ over
  $\sF$ in $G\sT$, abbreviated $CO$ over $\sF$, is a $G\sT$-category with objects the based sets $\mathbf{n}$ for $n\geq 0$, together with $G\sU$-functors
\[\xymatrix@1{ \Pi \ar[r]^-{\io} & \sD \ar[r]^-{\xi} & \sF\\} \] 
such that $\io$ and $\xi$ are the identity on objects and $\xi\com \io$ is the inclusion. Here $G$ acts trivially on $\PI$ and $\sF$.  We say that $\sD$ is reduced if $\sD(\mathbf{m},\mathbf{n}) = \ast$  if either $m=0$ or $n=0$, and we restrict attention to  reduced $G$-$COs$ over $\sF$ henceforward.   A morphism $\nu\colon \sD\rtarr \sE$ of $G$-$CO$s over $\sF$ is a $G\sT$-functor over $\sF$ and under $\PI$. In particular, $\xi\colon \sD\rtarr \sF$ is a map of $G$-$COs$ over $\sF$ for any  $G$-$CO$ $\sD$ over $\sF$.
\end{defn}

\begin{defn}\label{GCO/FG} A $G$-category of operators $\sD_G$ over $\sF_G$ in $G\sT$, abbreviated $G$-$CO$ over $\sF_G$,  is a $G\sT$-category, with objects the based $G$-sets $\bn^{\al}$ for $n\geq 0$ and 
$\al\colon G\rtarr \SI_n$, together with $G\sT$-functors
\[\xymatrix@1{ \Pi_G \ar[r]^-{\io_G} & \sD_G \ar[r]^-{\xi_G} & \sF_G\\} \] 
such that $\io_G$ and $\xi_G$ are the identity on objects and $\xi_G\com \io_G$ is the inclusion.  
We say that $\sD_G$ is reduced if $\sD_G(\bm^{\al} , \bn^{\be}) = \ast$ if either 
$m=0$ or $n=0$, and we restrict attention to reduced $G$-$COs$ over $\sF_G$ henceforward.
A morphism $\nu_G\colon \sD_G\rtarr \sE_G$ of $G$-$CO$s 
over $\sF_G$ is a $G\sT$-functor over $\sF_G$ and under $\PI_G$.  In particular, $\xi_G\colon \sD_G\rtarr \sF_G$ is a map of $G$-$COs$ over $\sF_G$ for any  $G$-$CO$ $\sD_G$ over $\sF_G$.
\end{defn}

\begin{rem}\label{Dpoint} Since $\sD_G$ is reduced, each $\sD_G(\bm^{\al},\bn^{\be})$ is based with basepoint the component of the unique map 
$\bm^{\al} \rtarr \mathbf{0} \rtarr \bn^{\be}$.  In line with \autoref{bspt1}, when $G\sT$ is based $G$-spaces \cite[Addendum 4.6]{MMO} adds in cofibration 
conditions that are automatically satisfied by categories of operators constructed from operads.
\end{rem}

The full subcategory $\sD\subset \sD_G$ whose objects are the trivial $G$-sets $\bn$ is a category of operators over $\sF$, also denoted $\bU \sD_G$.  Conversely, we can prolong a $CO$ $\sD$ over $\sF$ to a  $G$-$CO$  $\sD_G$ over $\sF_G$, also denoted $\bP\sD$.   Moreover, as noted in \cite[Proposition 6.9]{GMMO3}, up to isomorphism all $\sD_G$ can be constructed in this fashion.

\begin{KMZ0}\label{DtoDG}  Let $\sD$ be a category of operators over $\sF$.  We define a prolonged $G$-category of operators
$\sD_G = \bP \sD$ over $\sF_G$ whose full subcategory of objects $\mathbf{n}$ is $\sD$.  The morphism object
$\sD_G(\bm^{\al},\bn^{\be})$ in $G\sT$ is the underlying object $\sD(\mathbf{m},\mathbf{n})$ of $\sU$  with $G$-action induced by conjugation and the original $G$-action on $\sD(\mathbf{m}, \mathbf{n})$.  Explicitly, for $f\in \sD_G(\bm^{\al}, \bn^{\be})$, 
\[ g\cdot f= \beta(g) \circ (gf) \circ \al(g^{-1});\]
We check that $g\cdot (h\cdot f) = (gh)\cdot f$ using that $G$ acts trivially on permutations since they 
are in the image of $\PI$.  Composition and identity maps are inherited from $\sD$ and are appropriately equivariant.
\end{KMZ0}

\begin{cor}\label{FtoFG}  Applied to $\sF$, the construction reconstructs $\sF_G$,  and it restricts to reconstruct 
$\SI_G$, $\LA_G$, and $\PI_G$ from $\SI$, $\LA$, and $\PI$, respectively. 
\end{cor}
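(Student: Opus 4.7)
The plan is to verify this claim by matching the two structures term by term from their definitions, since both $\bP\sF$ and $\sF_G$ are specified by explicit formulas. There is no real obstacle here; the content is unraveling definitions and recording that the $G$-action prescribed in \autoref{DtoDG} reduces to conjugation on $\sF$.

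First, I would compare objects. By \autoref{DtoDG}, the objects of $\bP\sF$ are the based $G$-sets $\bn^{\al}$ for $n \geq 0$ and homomorphisms $\al\colon G \rtarr \SI_n$, and these are exactly the objects of $\sF_G$ by \autoref{finiteGcats}. Next, I would compare morphism objects. By construction, $(\bP\sF)(\bm^{\al},\bn^{\be})$ has underlying set $\sF(\bm,\bn)$, the set of all based functions $\bm \rtarr \bn$, which coincides setwise with $\sF_G(\bm^{\al},\bn^{\be})$.

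The key step is to check that the two $G$-actions agree. In \autoref{GCO/F}, $G$ acts trivially on $\sF(\bm,\bn)$, so the formula in \autoref{DtoDG} simplifies: for $f \in \sF(\bm,\bn)$,
\[
g \cdot f \;=\; \be(g) \circ (gf) \circ \al(g^{-1}) \;=\; \be(g) \circ f \circ \al(g^{-1}),
\]
which is precisely the conjugation action defining the $G$-action on $\sF_G(\bm^{\al},\bn^{\be})$ in \autoref{finiteGcats}. Composition and identities in $\bP\sF$ are inherited from $\sF$, so they agree on the nose with composition and identities in $\sF_G$. The enrichment data of the two $G\sT$-categories therefore matches, giving an isomorphism $\bP\sF \iso \sF_G$ of $G$-categories of operators over $\sF_G$.

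Finally, I would observe that the construction $\bP$ is functorial in subcategories defined by pointwise conditions on the underlying function. The three conditions in \autoref{finitecats}(i)--(iii)---being a bijection, an injection, or having $|\ph^{-1}\{j\}| \in \{0,1\}$ for $j \geq 1$---are each preserved under pre- and post-composition with permutations $\al(g^{-1})$ and $\be(g)$, so the conjugation action restricts to each subcategory. Hence $\bP$ applied to the inclusion $\SI \subset \LA \subset \PI \subset \sF$ yields the inclusion $\SI_G \subset \LA_G \subset \PI_G \subset \sF_G$, reconstructing the three $G$-subcategories of \autoref{finiteGcats} from their nonequivariant counterparts.
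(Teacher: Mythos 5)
Your proof is correct and is exactly the argument the paper intends: the corollary is stated as an immediate consequence of \autoref{DtoDG}, and your verification — objects and morphism sets coincide, the twisted action $g\cdot f=\be(g)\circ(gf)\circ\al(g^{-1})$ collapses to conjugation because $G$ acts trivially on $\sF$, and the bijection/injection/preimage-size conditions are stable under pre- and post-composition with permutations — is just the definitional unwinding the paper leaves to the reader.
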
  

\subsection{Operads $\sC$ and $G$-operads $\sC_G^{fin}$}\label{cast3}

The $\sD$ and hence $\sD_G$ of interest are constructed from operads.  We will not repeat the complete definition of an operad from \cite{MayGeo} or \cite{MayOp1}.  The following recollection focuses on operads in $G\sU$, the category of unbased $G$-spaces, although we could work with $G$-objects in other categories.

\begin{defn}\label{operad}  Recall that an operad $\sC$ in $G\sU$ consists of objects $\sC(n)\in G\sU$, where $\sC(n)$ has a left action by $G$ and a right action by $\SI_n$ that commute with each other, together with a unit $G$-map $\id\colon  \ast \rtarr \sC(1)$ and structure 
$G$-maps  
$$\ga\colon \sC(k) \times \sC(j_1) \times \cdots \times \sC(j_i) \rtarr \sC(j),$$
where $j = j_1 + \cdots + j_k$, which are associative, unital, and equivariant as specified in \cite{MayGeo, MayOp1}.   We say that $\sC$ is reduced if $\sC(0) = \ast$, and we restrict attention to reduced operads henceforward. 
\end{defn}  

\begin{notn} Let $\ph_n\colon \bn \rtarr \mathbf{1}$ be the based function that sends all $i\geq 1$ to $1$.  Observe that $\sF$ is generated by $\PI$ and the $\ph_n$ and that
\begin{equation}\label{obvious}
  \ph_k\com (\ph_{j_1} \vee \cdots \vee \ph_{j_k}) = \ph_{j},
\end{equation}
where $j = j_1 + \cdots + j_k$. Similarly, let $\ph_n^{\al}\colon \bn^{\al} \rtarr \mathbf{1}$ be the based function that sends all $i\geq 1$ to $1$ and observe that $\sF_G$ is generated by $\PI_G$ and the $\ph_n^{\al}$.
\end{notn}  

\begin{warn}\label{Uhoh1}  There is no general analogue to \autoref{obvious} of the form
$$   \ph_{\bk^{\be}}\com (\ph_{\bj_1^{\al_1}} \vee \cdots \vee \ph_{\bj_k^{\al_k}}) = \ph_{\bj^{\ga(\be;\wed \al_r)}} $$
since there is no way to define a homomorphism $\ga(\be;\wed \al_r)\colon G\rtarr \SI_{j} $ that gives the target as a sensible finite $G$-set without a relationship between $\be$ and the  $\al_r$.  
\end{warn}

Thinking  of the based  finite $G$-set  $\bj_r^{\al_r}$ as the $r$th ordered block in the wedge sum 
$$\bj^{\wed\al_r} = \bj_1^{\al_1}\wed \cdots \wed \bj_k^{\al_k},$$
we see what that relationship must be.

\begin{defn}\label{composable} Say that $(\be, \{\al_r\})$ is \em{composable} if $j_r = j_s$ and  $\al_r = \al_{s}$ whenever 
$\be(g)(r)= s$ for some $g\in G$, where $r,s\in \{1, \cdots, k\}$.  Then define 
$$\ga(\be;\wed \al_r)\colon G\rtarr \SI_{j}$$ 
by letting
$$\ga(\be;\wed \al_{r})(g)(i) = \al_s(g)(i) $$ 
when $\be(g)(r) = s$; on the left, $i$ is in the $r$th ordered block of $j_r$ letters; on the right, $i$ is in the $s$th ordered block of 
$j_r =j_s$ letters.
\end{defn}

\begin{defn}\label{defGcat} Let $\sC$ be an operad in $G\sU$.  We construct a $CO$ over $\sF$,
which we denote by $\sD(\sC)$, abbreviated $\sD$ when there is no risk of confusion.   Similarly, we write
$\sD_G = \sD_G(\sC)$ for the associated prolonged $G$-CO over $\sF_G$.  The morphism objects in $G\sT$ of $\sD$ are 
\[ {\sD}(\mathbf{m}, \mathbf{n}) = \coprod_{\ph\in\sF(\mathbf{m},\mathbf{n})} \prod_{1\leq j\leq n} \sC(\ph_j)  \]
with $G$-action induced by the $G$-actions on the $\sC(n)$.   Here $\ph_j = |\ph^{-1}(j)|$.
Write elements in the form $(\ph,c)$, where $c = (c_1,\dots,c_n)$.  
For $(\ph,c)\colon \mathbf{m}\rtarr \mathbf{n}$ and $(\ps,d)\colon \mathbf{k}\rtarr \mathbf{m}$, define
\[ (\ph,c)\com (\ps,d) = \big(\ph\com \ps, 
\prod_{1\leq j\leq n}\ga(c_j;\prod_{\ph(i) = j} d_i)\si_j\big). \]
Here $\ga$ denotes the structural maps of the operad.  The $d_i$ with $\ph(i) =j$ 
are ordered by the natural order on their indices $i$
and $\si_j$ is that permutation of $(\phi\com\ps)_j$ letters which converts 
the natural ordering of $(\phi\com\ps)^{-1}(j)$ as a subset of $\{1,\dots,k\}$ to
its ordering obtained by regarding it as $\coprod_{\ph(i)=j}\ps^{-1}(i)$, so ordered
that elements of $\ps^{-1}(i)$ precede elements of $\ps^{-1}(i')$ if $i< i'$ and 
each $\ps^{-1}(i)$ has its natural ordering as a subset of $\{1,\dots,k\}$. 
The identity element in $\sD(\mathbf{n},\mathbf{n})$ is $(id, \id^n)$, where $\id$ on
the right is the unit element in $\sC(1)$. The map $\xi\colon \sD \rtarr \sF$
sends $(\ph,c)$ to $\ph$.  The inclusion $\io\colon \PI\rtarr \sD$ sends 
$\ph\colon \mathbf{m}\rtarr \mathbf{n}$ to $(\ph, c)$, where $c_i = \id \in \sC(1)$ if $\ph(i) = 1$ and
$c_i = \ast\in \sC(0)$ if $\ph(i) = 0$. 
\end{defn}

\begin{exmp} The commutativity operad $\sN$ has $n$th space a point for all $n$.
We think of it as a $G$-trivial $G$-operad. Then $\sF = \sD(\sN)$, again regarded 
as $G$-trivial. 
\end{exmp}

\begin{defn}\label{defGop} Conversely, suppose given a CO $\sD$ over $\sF$ such that the iterated wedge sum in $\sF$ of \autoref{symmonF} is covered by an analogous associative, unital, and equivariant wedge sum in $\sD$. {
Here, since $\sD$ is reduced, each $\sD(\bm,\bn)$ has basepoint the unique map that factors through $0$.
}
\begin{equation}\label{Dplus}
\xymatrix{
\sD(\bj_1,\mathbf{1}) \times \cdots \times \sD(\bj_k,\mathbf{1}) \ar[r]^-{\wed} \ar[d]_{\xi^k} & \sD(\bj, \bk)\ar[d]^{\xi} \\
\sF(\bj_1,\mathbf{1}) \times \cdots \times \sF(\bj_k,\mathbf{1}) \ar[r]_-{\wed} & \sF(\bj, \bk)\\}
\end{equation}
This condition holds in the examples we know.   We then construct an operad $\sC(\sD) = \sC$, called the operad associated to $\sD$.
We define $\sC(n)$ to be the component of $\ph_n$ in $\sD(\bn,\mathbf{1})$ with its given left  action by $G$ and with right action by $\SI_n$ induced by composition with $\SI_n \subset \PI(\bn,\bn)$.  We define $\id\in \sC(1)$ via the identity morphism of $\sD(\mathbf{1}, \mathbf{1})$,  and we define the structural maps $\ga$ to be the restrictions to the components of $\ph_k$ and the $\ph_{j_r}$ of the composites
\begin{equation}\label{canonical}
\xymatrix@1{
\sD(\bk,\mathbf{1}) \times \sD(\bj_1,\mathbf{1})  \times \cdots \times \sD(\bj_k,\mathbf{1})  \ar[r]^-{\id\times \wed}  &
\sD(\bk,\mathbf{1})  \times \sD(\bj,\bk) \ar[r]^-{\com} & \sD(\bj,\mathbf{1}). \\}
\end{equation}
Since $\sC(n)$ lies over  $\ph_n$, it is unbased, so in $G\sU$.
\end{defn}

 \begin{rem}\label{CDDC} This gives a functor $\sC(-)$ from the full subcategory of those categories of operators over $\sF$ in $G\sT$ which  satisfy the assumption of \autoref{Dplus}  to the category of operads in $G\sU$.  This functor is left adjoint to the functor $\sD(-)$ of \autoref{defGcat}.  The unit of the adjunction is given by the evident identifications  $\sC = \sC(\sD(\sC))$.  The counit  $\epz\colon \sD\sC(\sD) \rtarr \sD$ is given by use of $\wed$.   It is an isomorphism in the examples we care about, but presumably not in general.
 \end{rem}

\begin{warn}\label{Uhoh2} There is no general analogue to \autoref{canonical}  for $G$-categories of operators over 
$\sF_G$.  When we start with general finite $G$-sets, the target and source of the middle term are different. 
\end{warn}

To get around this problem, we define the following twisted product.
  \begin{defn}\label{Gcomposable}
  For composable $(\beta,\{\alpha_{r}\})$, let $\prod_{r \in \bk^{\beta}}
  \sD_G(\bj_r^{\alpha_r}, \mathbf{1})$ be the space $\prod_{1 \leq r \leq k}
  \sD_G(\bj_r^{\alpha_r}, \mathbf{1})$ with twisted $G$-action given in precise analogy with 
  the definition of $\ga(\be;\wed \al_{r})$ in \autoref{composable} so as to ensure that $\vee$ on
  $\prod_{r \in \bk^{\beta}} \sF_G(\bj_r^{\alpha_r}, \mathbf{1})$ lands in
$\sF_G(\bj^{\ga(\be;\wed \al_{r})}, \bk^{\beta})$.
In analogy with \autoref{defGop}, we require that the iterated wedge sum in $\sF_G$ be covered by an
associative, unital, and equivariant wedge sum in $\sD_G$, as in \autoref{Dplus}:
\begin{equation}
\xymatrix{
\prod_{r \in \bk^{\beta}} \sD_G(\bj_r^{\alpha_r}, \mathbf{1})  \ar[r]^-{\wed} \ar[d]_{\xi^k} & \sD_G(\bj^{\ga(\be;\wed \al_{r})}, \bk^{\beta})\ar[d]^{\xi} \\
\prod_{r \in \bk^{\beta}} \sF_G(\bj_r^{\alpha_r}, \mathbf{1})  \ar[r]_-{\wed} & \sF_G(\bj^{\ga(\be;\wed \al_{r})}, \bk^{\beta})\\}
\end{equation}
We then define $\ga_G$ to be the composite
\begin{equation}\label{canonical3}
  \xymatrix@1{
 \sD_G(\bk^{\be},\mathbf{1})\times \prod_{r\in
  \bk^{\be}}\sD_G(\bj_r^{\al_r},\mathbf{1}) \ar[d]^-{\mathrm{id} \times \vee} \\
\sD_G(\bk^{\be},\mathbf{1})\times \sD_G(\bj^{\ga(\be;\wed \al_{r})}, \bk^{\be})
\ar[d]^-{\circ}\\
 \sD_G(\bj^{\ga(\be;\wed \al_{r})},\mathbf{1}), \\}
\end{equation}
With $G$ acting through $\ga(\be;\wed \al_{r})$,  $\ga_G$ is then a $G$-map.
\end{defn}

We can now give a definition of $G$-operads $\sC_G^{fin}$ that allows a comparison analogous to that of \autoref{CDDC}. The problems explained in Warnings \ref{Uhoh1} and \ref{Uhoh2} and the answers to those problems given in Definitions \ref{composable} and \ref{Gcomposable} pave the way.

\begin{defn}\label{crude}  A $G$-operad $\sC_G^{fin}$ in $G\sU$  consists of $G$-objects  $\sC_G^{fin}(\bn^{\al})$  in $G\sU$ for all $\bn^{\al}$, where 
$\sC_G^{fin}(\bn^{\al})$ has a left action by $G$ and a right action by $\SI_n$ that together give a right action of the semi-direct product   $\SI_n \rtimes_{\al_c} G$,
%\footnote{$\al_c\colon G\rtarr {\bf{Aut}}(\SI_n)$ is defined by $\al_c(g)(\si) = \al(g)\si\al(g)^{-1}$.}  
together with a unit $G$-map $\id \colon \ast \rtarr \sC_G^{fin}(\mathbf{1})$ and structure 
$G$-maps  
$$\ga_G\colon \sC_G^{fin}(\bk^{\be}) \times \sC_G^{fin}(\bj_1^{\al_1} ) \times \cdots \times \sC_G^{fin}(\bj_k^{\al_k}) \rtarr 
\sC_G^{fin}(\bj^{\ga(\be; \wed \al_r)})$$
for composable $(\be,\{\al_r\}_{1\leq r\leq k})$.  We require the $\ga_G$ to be both
$$\SI_{k} \rtimes_{\be_c}  G \ \ \text{and} \ \  (\SI_{j_1} \times \cdots \times
  \SI_{j_k})\rtimes_{(\al_1,...,\al_k)_c} G$$ equivariant, where these groups act on the source of $\ga$ via their actions on its coordinates and where they act on the target via embeddings as subgroups of  $\SI_{j}$ via block permutations and permutations within blocks.
The $\ga_G$ must be associative and unital for composable data, as specified in \cite[Definition 1]{MayOp1}. We say that $\sC_G^{fin}$ is reduced if $\sC_G^{fin}(\mathbf{0}) = \ast$, and we restrict attention to reduced $G$-operads henceforward. 
\end{defn} 

\begin{prop}\label{CtoCG}  A $G$-operad $\sC_G^{fin}$ in $G\sU$ restricts on $G$-trivial based finite $G$-sets $\bn$ to an
operad  $\sC = \bU\sC_G^{fin}$ in $G\sU$.  Conversely, an operad $\sC$ in $G\sU$ prolongs to a $G$-operad $\sC_G^{fin} = \bP \sC$ in $G\sU$.  
\end{prop}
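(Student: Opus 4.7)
\textbf{Restriction.} First I would define $\sC = \bU\sC_G$ by $\sC(n) = \sC_G(\bn^{\epsilon_n})$, where $\epsilon_n\colon G\rtarr\SI_n$ is the trivial homomorphism. Since $\epsilon_n$ is trivial, the conjugation action $(\epsilon_n)_c$ is trivial, so the semidirect product action $\SI_n\rtimes_{(\epsilon_n)_c}G = \SI_n\times G$; this splits into a commuting left $G$-action and right $\SI_n$-action, exactly the data required of an operad component. For the structure maps, observe that any tuple of trivial homomorphisms $(\epsilon_k,\{\epsilon_{j_r}\})$ is vacuously composable in the sense of \autoref{composable}, and $\ga(\epsilon_k;\wed\epsilon_{j_r}) = \epsilon_j$. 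Restricting $\ga_G$ to these tuples therefore yields maps $\ga\colon\sC(k)\times\sC(j_1)\times\cdots\times\sC(j_k)\rtarr\sC(j)$, and the unit $\id\colon\ast\rtarr\sC_G(\mathbf{1}) = \sC(1)$ restricts tautologically. Associativity, unitality and the two equivariance conditions for $\ga$ are immediate specializations of the axioms in \autoref{crude} to the trivial-homomorphism case.

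\textbf{Prolongation.} For $\bP\sC$, I would set $\sC_G(\bn^{\al}) := \sC(n)$ as an object of $\sU$, retain the original right $\SI_n$-action, and twist the left $G$-action by $\al$ via
\[ g\cdot_{\al} c := (g\cdot c)\cdot\al(g)^{-1}. \]
A direct computation (using that the original left $G$-action and right $\SI_n$-action commute) shows that the formula $c\cdot(\si,g) := (g^{-1}\cdot c)\cdot(\si\al(g))$ defines a right action of $\SI_n\rtimes_{\al_c}G$; the key identity is the calculation
\[ c\cdot((\si,g)(\ta,h)) = c\cdot(\si\al(g)\ta\al(g)^{-1},gh) = ((gh)^{-1}\cdot c)\cdot(\si\al(g)\ta\al(h)), \]
which matches $(c\cdot(\si,g))\cdot(\ta,h)$ by commutativity of the two original actions. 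For the structure maps, I would define $\ga_G$ on underlying spaces to be the original $\ga$. Unitality and associativity are then inherited.

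\textbf{The main check.} The hard part is verifying that $\ga_G$ is equivariant with respect to the twisted left $G$-actions when $(\be,\{\al_r\})$ is composable. Unpacking the definition,
\[ \ga_G(g\cdot_{\be}c;g\cdot_{\al_1}d_1,\ldots,g\cdot_{\al_k}d_k) = \ga((gc)\cdot\be(g)^{-1};(gd_1)\cdot\al_1(g)^{-1},\ldots,(gd_k)\cdot\al_k(g)^{-1}). \]
Applying the $\SI_k$-equivariance of the classical $\ga$ permutes the inputs $d_r$ by $\be(g)$; the composability condition $\al_r = \al_{\be(g)(r)}$ and $j_r = j_{\be(g)(r)}$ is precisely what ensures that this permutation of inputs has matching types, so the resulting block permutation lies in $\SI_j$. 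Applying the $(\SI_{j_1}\times\cdots\times\SI_{j_k})$-equivariance then pulls out the product of $\al_r(g)^{-1}$'s as a block element. Finally, the classical $G$-equivariance of $\ga$ pulls out the outer $g$. Collecting these factors and using the definition of $\ga(\be;\wed\al_r)$ in \autoref{composable}, one obtains $g\cdot_{\ga(\be;\wed\al_r)}\ga(c;d_1,\ldots,d_k)$, which is the required equivariance. The two required equivariance axioms in \autoref{crude} (under $\SI_k\rtimes_{\be_c}G$ and under $(\SI_{j_1}\times\cdots\times\SI_{j_k})\rtimes_{(\al_1,\ldots,\al_k)_c}G$) follow by combining this $G$-calculation with the underlying $\SI$-equivariance of the classical operad. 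The compatibility $\bU\bP\sC = \sC$ is immediate, since for $\al = \epsilon_n$ the twist factor $\al(g)^{-1}$ is trivial.
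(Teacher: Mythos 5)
Your construction coincides with the paper's: its proof simply sets $\sC_G(\bn^{\al}) = \sC(n)^{\al}$ in the sense of \autoref{Xal}, which is exactly your twisted action $g\cdot_{\al}c = (g\cdot c)\cdot\al(g)^{-1}$, and declares the verification analogous to \autoref{DtoDG}. Your restriction half is fine, as is your check that the twisted left $G$-action and the untouched right $\SI_n$-action assemble into a right action of $\SI_n\rtimes_{\al_c}G$.

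The gap is in your ``main check''. The identity you display,
\[ \ga\big(g\cdot_{\be}c;\,g\cdot_{\al_1}d_1,\ldots,g\cdot_{\al_k}d_k\big) \;=\; g\cdot_{\ga(\be;\wed\al_r)}\ga(c;d_1,\ldots,d_k), \]
with each $d_r$ left in its own slot, is false in general, and it is not what \autoref{crude} asks for. When you apply the classical $\SI_k$-equivariance to move $\be(g)^{-1}$ off the first variable, the inputs really are permuted: what comes out is a block-permuted, block-twisted translate of $\ga(c;d_{\be(g)(1)},\ldots,d_{\be(g)(k)})$, not of $\ga(c;d_1,\ldots,d_k)$. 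Composability guarantees only that the permuted tuple lies in the same product of spaces ($j_r=j_{\be(g)(r)}$, $\al_r=\al_{\be(g)(r)}$); it does not identify the element $d_r$ with $d_{\be(g)(r)}$. Concretely, take $G$ of order two acting trivially on the associativity operad $\sM$, $k=2$, $\be(g)=(1\,2)$, $j_1=j_2=2$, $\al_1=\al_2$ trivial: your identity would force $\ga(c;d_2,d_1)$ to differ from $\ga(c;d_1,d_2)$ by right multiplication by a fixed permutation independent of $d_1,d_2$, which already fails in $\sM(4)=\SI_4$ (take $c=e$, $d_1=e$, $d_2=(1\,2)$). The equivariance actually required is with respect to the twisted product of \autoref{Gcomposable}: $g$ sends $(d_1,\ldots,d_k)$ to the tuple having $g\cdot_{\al_r}d_r$ in slot $\be(g)(r)$, and composability is precisely what makes that action well defined. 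With that source action your three-step manipulation (the $\SI_k$-equivariance, the $\SI_{j_1}\times\cdots\times\SI_{j_k}$-equivariance, and the $G$-equivariance of the classical $\ga$) does go through and yields the required statement; this is exactly where the multi-ary operadic case differs from the binary composition of \autoref{DtoDG}, to which the paper's proof appeals.
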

\begin{proof}   The prolongation is analogous to the prolongation from $\sD$ to $\sD_G$ spelled out in
\autoref{DtoDG}, starting from the definition $\sC_G^{fin}(\bn^{\al}) = \sC(n)^{{\al}}$, with notation as in \autoref{Xal}. 
\end{proof}

The evident analogue of \autoref{CDDC} holds and we have the following diagram of categories and adjunctions. 
\begin{equation}\label{OpsCats}
\xymatrix{
\text{Operads in}  \ G\sU  \ar@<.5ex>[r]^-{\bP} \ar@<.5ex>[d]^{\sD(-)} &  
G\text{-Operads in} \ G\sU \ar@<.5ex>[d]^{ \sD_G(-)}  \ar@<.5ex>[l]^-{\bU} \\
CO's  \ \text{over}\  \sF   \ar@<.5ex>[u]^{\sC(-)} \ar@<.5ex>[r]^-{\bP}  & 
G\text{-}CO's\ \text{over} \ \sF_G  \ar@<.5ex>[l]^-{\bU} \ar@<.5ex>[u]^{\sC_G(-)} \\}
\end{equation}

We summarize our conclusions.

\begin{prop}  In the adjunctions in \autoref{OpsCats}, the composites 
$$  \bU\bP,\ \  \sC(-)\sD(-) \ \ \text{and}\ \   \sC_G(-)\sD_G(-)  $$
are the identity, and the composite $\bP\bU$ is naturally isomorphic
to the identity. Moreover, the square of left adjoints and the square of right adjoints commute.
\end{prop}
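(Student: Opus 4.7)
The plan is to check each claim by unwinding the explicit constructions in \autoref{DtoDG}, \autoref{defGcat}, \autoref{defGop}, \autoref{crude}, and \autoref{CtoCG}.

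First I would verify $\bU\bP = \id$ on operads by observing that the prolongation $\bP\sC$ at $\bn^{\al}$ is built from the space $\sC(n)$ with $G$-action twisted by $\al$; restriction along $\al = \epz_n$ (the trivial homomorphism) recovers $\sC(n)$ with its original $G$- and $\SI_n$-actions and structure maps, because twisting by the trivial homomorphism is the identity. For $\mathfrak C(-)\mathfrak D(-) = \id$ and $\mathfrak C_G(-)\mathfrak D_G(-) = \id$, the identifications are the units of the respective adjunctions as noted in \autoref{CDDC}: concretely, the formula $\mathfrak D(\sC)(\bn,\mathbf{1}) = \coprod_{\ph} \sC(|\ph^{-1}(1)|)$ from \autoref{defGcat} displays $\sC(n)$ as the component over $\ph_n$, and the composition law restricted to these components reproduces the operadic structure map $\ga$ via \autoref{canonical}. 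The equivariant version proceeds identically, using \autoref{canonical3} in place of \autoref{canonical}.

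For $\bP\bU \cong \id$ on $G$-operads $\sC_G$, the proposed comparison at $\bn^{\al}$ is the identity on the underlying space $\sC_G(\bn)$; the content is that the twisted $G$-action on $\sC_G(\bn)$ appearing in $\bP\bU\sC_G(\bn^{\al})$ agrees with the intrinsic $G$-action on $\sC_G(\bn^{\al})$. This equality is built into the semidirect product equivariance of \autoref{semi} combined with the $(\SI_n \rtimes_{\al_c} G)$-equivariance constraints on $\sC_G$ recorded in \autoref{crude}: both formulas give $g \cdot x = \al(g) \cdot (g \cdot x)$. One then checks naturality in morphisms $\bm^{\be} \rtarr \bn^{\al}$ of $\sF_G$ and compatibility with $\ga_G$ using the same equivariance.

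For the commutativity of the two squares, both assertions reduce to inspection of the formulas. The square of right adjoints $\mathfrak D_G \com \bP \iso \bP \com \mathfrak D$ holds because both sides attach to $(\bm^{\al},\bn^{\be})$ the coproduct over $\sF_G(\bm^{\al},\bn^{\be})$ of products of operadic spaces with twisted actions, and prolongation and the category-of-operators construction adjust only the $G$-actions, so they commute by bookkeeping. The square of left adjoints $\bU \com \mathfrak C_G \iso \mathfrak C \com \bU$ holds because restricting the component of $\ph_n^{\al}$ in $\sD_G(\bn^{\al},\mathbf{1})$ to trivial $G$-sets yields the component of $\ph_n$ in $\sD_G(\bn,\mathbf{1}) = (\bU\sD_G)(\bn,\mathbf{1})$, which is by definition $\mathfrak C(\bU\sD_G)(n)$. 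The main obstacle will be the $\bP\bU \cong \id$ step, since it is the only one not witnessed by an on-the-nose identity of structural data; the precise interplay between the $\SI_n$- and $G$-actions required by \autoref{crude} is where the care lies, but once that is settled the rest is direct unpacking.
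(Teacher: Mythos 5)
Your proposal is correct and follows essentially the same route as the paper, which states this proposition without a separate argument as a summary of the preceding constructions (\autoref{DtoDG}, \autoref{defGcat}, \autoref{defGop}, \autoref{CDDC}, \autoref{CtoCG}, \autoref{crude}), so unwinding those definitions as you do is exactly the intended proof. Your identification of $\bP\bU \cong \id$ (via the compatibility of the twisted $G$-action with the $\SI_n\rtimes_{\al_c}G$-equivariance, with details deferred to \cite{KMZ2}) as the only step not given by an on-the-nose identity matches where the paper itself places the weight.
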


\subsection{The associated monads $\bD$ and $\bD_G$}\label{cast4}

The ground category of the monads $\bC$ and $\bC_G^{fin}$ is $G\sT$.  The following notations and definition specify the ground categories of the monads $\bD$ and $\bD_G$.  Recall from \autoref{enriched} that $G\sT(X,Y)$ denotes the based space of $G$-morphisms $X\rtarr Y$
and that  $\ul{G\sT}(X,Y)$ denotes the based $G$-space of morphisms $X\rtarr Y$ in the enriched category $\ul{G\sT}$.  
Note that
$$G\sT(X,Y) = \ul{G\sT}(X,Y)^G.$$

\begin{rem}\label{bspttoo} In line with \autoref{bspt1},  \cite{MMO} used $G\sU_*$ when working formally with general based $G$-spaces and used $G\sT$ when thinking homotopically and restricting to nondegenerately based $G$-spaces.  Again, the distinction is carefully handled in \cite{MMO}, and we find the use of two notations distracting in the conceptual framework on which we are focusing.
\end{rem}

\begin{defn}  Let $\PI[G\sT] =\PI[\ul{G\sT}]$ be the category of functors $\PI \rtarr G\sT$ or, equivalently, the category of $G$-functors $\PI\rtarr \ul{G\sT}$.  The equality holds since $G$ acts trivially on $\PI$ so that a $G$-functor $X$
from $\PI$ to $\ul{G\sT}$ must take values fixed under $G$, that is, equivariant maps.  We call $X$ a $\PI$-$G$-object in $\sT$. 
Analogously, let $\PI_G[\ul{G\sT}]$ be the category of  $G$-functors $\PI_G\rtarr \ul{G\sT}$.  We call $X$ a $\PI_G$-$G$-object in $\sT$.   Restriction to the trivial $\bn$ gives a forgetful functor  $\bU\colon \PI_G[\ul{G\sT}] \rtarr \PI[\ul{G\sT}]$.
\end{defn}

\begin{rem}  For reasons as in \autoref{bspt1}, \cite[Definitions 2.3 and 2.33]{MMO} adds in a cofibration condition.
\end{rem}

\begin{prop}\label{PUone}  The forgetful functor  $\bU\colon \PI_G[\ul{G\sT}] \rtarr \PI[\ul{G\sT}]$ has a left adjoint prolongation functor $\bP$, and $(\bP,\bU)$ is an adjoint equivalence. It restricts to an adjoint equivalence between
$\LA[\ul{G\sT}]$ and $\LA_G[\ul{G\sT}]$. 
\end{prop}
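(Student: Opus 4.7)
The plan is to construct the prolongation $\bP$ explicitly using a twisted-action formula, then verify the two adjunction/equivalence conditions by direct diagram chases. Specifically, for $X\colon \PI \rtarr \ul{G\sT}$, I would define $(\bP X)(\bn^{\al})$ to have underlying space $X(\bn)$ but with the twisted $G$-action
\[ g \cdot_{\al} x = X(\al(g))(g\cdot x), \]
where on the right $g$ acts via the original $G$-action on the $G$-space $X(\bn)$ and $X(\al(g))$ denotes the image under $X$ of the permutation $\al(g) \in \SI_n \subset \PI(\bn,\bn)$. On morphisms, since every morphism $f\colon \bm^{\al}\rtarr \bn^{\be}$ in $\PI_G$ is, by definition, simply a based function of the appropriate sort (identical as a morphism to one in $\PI$), set $(\bP X)(f) = X(f)$ as a map of the underlying spaces.

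The first check is that $\bP X$ genuinely defines a $G\sT$-functor $\PI_G \rtarr \ul{G\sT}$. One uses the identity $gf = \be(g)\circ f\circ \al(g^{-1})$ in $\PI_G(\bm^{\al},\bn^{\be})$ together with the $G$-equivariance of the maps $X(f)$ (as $G$ acts trivially on morphisms of $\PI$) and the commuting of the $G$- and $\SI_n$-actions on $X(\bn)$: a one-line calculation then gives $(\bP X)(gf)(g\cdot_{\al} x) = g\cdot_{\be}(\bP X)(f)(x)$. Functoriality and preservation of identities are then immediate because $\bP X(f) = X(f)$ as underlying maps. The equality $\bU \bP = \mathrm{Id}$ is trivial: taking $\al = \epz_n$ reduces the twisted action formula to the original action, and morphisms are unchanged.

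For the natural isomorphism $\bP\bU \iso \mathrm{Id}$, the key observation is that in $\PI_G$, the based identity function $\id_{\bn}$ is an isomorphism $\bn^{\epz_n} \rtarr \bn^{\al}$ (though of course it is not $G$-fixed when $\al$ is non-trivial, since $g\cdot \id = \al(g)$). For $Y\colon \PI_G \rtarr \ul{G\sT}$, I would define $\tau_{\bn^{\al}}\colon (\bP\bU Y)(\bn^{\al}) \rtarr Y(\bn^{\al})$ as $Y(\id_{\bn})$ applied to $Y(\bn^{\epz_n})$. This is an isomorphism of underlying $G$-spaces, and $G$-equivariance reduces to the enriched functoriality identity $Y(g\cdot \id)(g\cdot x) = g\cdot Y(\id)(x)$, i.e., $Y(\al(g))(g\cdot x) = g\cdot Y(\id)(x)$, which is exactly the equation needed. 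Naturality in morphisms of $\PI_G$ follows from the trivial equation $\id\circ f = f\circ \id$ in $\PI_G$.

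Finally, the restriction to $\LA$ and $\LA_G$ is automatic: the formula for $\bP$ is purely in terms of composition of based functions and the $G$-action through $\al$ and $\be$, and these preserve injectivity, so $\bP$ and $\bU$ both restrict to the full subcategories determined by the inclusion $\LA_G \subset \PI_G$, and all verifications go through verbatim. The only mildly subtle point, which I would emphasize, is that the equivalence $\bP\bU \iso \mathrm{Id}$ is not realized by a $G$-fixed isomorphism of objects of $\PI_G$; rather, it is mediated by the non-$G$-fixed isomorphisms $\id\colon \bn^{\epz_n}\rtarr \bn^{\al}$, so the enriched functoriality of $Y$ is essential to extract equivariance of $\tau$. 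This is the single place where one really must use that $Y$ is a $G$-functor on an enriched category rather than merely an ordinary functor.
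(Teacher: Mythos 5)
Your proposal is correct and is essentially the paper's proof made explicit: the twisted-action formula you give for $(\bP X)(\bn^{\al})$ is exactly what the paper's categorical tensor product $(\bP X)(\bn^{\al})=\PI_G(-,\bn^{\al})\otimes_{\PI}X$ evaluates to. Your counit $Y(\id_{\bn})$, with equivariance extracted from the enriched functoriality precisely because $\id_{\bn}$ is not a $G$-fixed element of $\PI_G(\bn,\bn^{\al})$, is the paper's ``Yoneda type verification using the identity function on $\bn$ regarded as an element of $\PI_G(\bn,\bn^{\al})$.''
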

\begin{proof}  This is the formal part of \cite[Theorem  2.38]{MMO}.   For 
$X\in \PI[\ul{G\sT}]$,  $\bP X$ is given by the categorical tensor product
$$(\bP X)(\bn^{\al})  = \PI_G(-, \bn^{\al})\otimes_{\PI} X.$$
Since $\bU$ is full and faithful, the unit $\et\colon X \rtarr \bU\bP X$ is the evident identification. 
The counit $\epz\colon \bP \bU Y \rtarr Y$ is an isomorphism by a Yoneda type verification using the identity function on $\bn$ regarded as an element of $\PI_G(\bn,\bn^{\al})$.  The statement with $\PI$ and $\PI_G$ replaced by $\LA$ and 
$\LA_G$ is proven in the same way.
\end{proof}

{
The following definition, which combines \cite[Definition 1.3 and Example 1.5]{MMO},  leads to more combinatorially explicit descriptions of the functor $\bP$ and natural transformation  $\epz^{-1}\colon \Id \rtarr \bP\bU$.

\begin{defn}\label{Xal} Let $Y$ be a $(G\times \Sigma_n)$-space and $\al\colon G\rtarr \Sigma_n$ be a homomorphism. We define $Y^\al$ to be the $G$-space with underlying space $Y$ and with a new $G$-action $\cdot_\al$ given by
$g\cdot_\al y= (g,\al(g))\cdot y$.  Thus $Y^{\al}$ is $Y$ with $G$-action twisted by $\al$.  Said another way $Y^{\al}$ is just notation for $Y$ with its action by the graph subgroup $\GA_{\al} = \{(g,\al(g)\}$ of $G\times \SI_n$, pulled back along the isomorphism  $\pi^{-1}\colon G\rtarr \GA_{\al}$, where $\pi\colon \GA_{\al} \rtarr G$ is the projection.   On passage to fixed point spaces, $ (Y^{\al})^G \iso Y^{\GA_{\al}}$.
The example of interest is obtained by taking $Y = X^n$ for a based $G$-space X. Then we have the identification 
$(X^n)^\al=X^{\mathbf{n}^\al}$, where  $X^{\mathbf{n}^\al}$ denotes the set of based maps $\bn^{\al}\rtarr X$ with $G$ acting by conjugation.   Explicitly, this is  the cartesian power $X^n$ with twisted action by $G$ given by
$g(x_1,\dots,x_n) = (gx_{\alpha(g^{-1})(1)},\dots, gx_{\alpha(g^{-1})(n)})$.
\end{defn} 
}
With this notation in hand, the abstract categorical tensor product defining the prolongation functor $\bP$ in \autoref{PUone} collapses to an explicit formula: for a $\Pi$-$G$-space $X$, we have a canonical $G$-homeomorphism$$ (\bP X)(\bn^\alpha) \cong X(\bn)^\alpha. $$Consequently, for a $\Pi_G$-$G$-space $Y$, the inverse of the counit $\epsilon^{-1}: Y \to \bP\bU Y$ evaluated at $\bn^\alpha$ is explicitly given by the canonical identification $Y(\bn^\alpha) \xrightarrow{\cong} (\bU Y(\bn^\alpha))^\alpha$.

\begin{defn}  The monads $\bD$ in $\PI[\ul{G\sT}]$ and $\bD_G$ in $\PI_G[\ul{G\sT}]$ are given by the categorical tensor products
\begin{equation}\label{DX}
 (\bD X)(\bn) =  \sD(-,\bn) \otimes_{\PI} X  \ \ \text{and} \ \ (\bD_G Y)(\bn^{\al}) =  \sD_G(-,\bn^{\al}) \otimes_{\PI_G} Y
 \end{equation}
for a $\PI$-$G$-functor  $X$ and a $\PI_G$-$G$-functor $Y$.  Here $\sD(-,\mathbf{n})$ and $\sD_G(-,\bn^{\al})$ are the functors represented by $\bn$ and $\bn^{\al}$.  Explicitly, these tensor products  are the coequalizers  
\begin{equation}
 \xymatrix@1{
\coprod_{\bk,\bm} \sD(\bm,\bn)\times \PI(\bk,\bm) \times X(\bk) \ar@<.7ex>[r]^{}  \ar@<-.7ex>[r]  &  
\coprod_{k}\sD(\bk,\bn) \times X(\bk) \ar[r]  &  (\bD X)(\bn) \\}  
\end{equation}

\begin{equation}
\xymatrix@1{\underset{\bk^{\ga},\bm^{\be}}{\coprod} \sD_G(\bm^{\be},\bn^{\al})\times \PI_G(\bk^{\ga},\bm^{\be}) \times Y(\bk^{\ga}) \ar@<.7ex>[r]
 \ar@<-.7ex>[r]  &
\underset{\bk^{\ga}}{\coprod}\sD_G(\bk^{\ga},\bn^{\al}) \times Y(\bk^{\ga}) \ar[r]  &  (\bD_G Y)(\bn^{\al}). \\}  
\end{equation}
The parallel arrows are given by composition in $\sD$ or in $\sD_G$ and the action of $\PI$ on $X$ or of $\PI_G$ on $Y$.
The products  $\mu\colon \bD\bD\rtarr \bD$ and $\mu\colon \bD_G \bD_G \rtarr \bD_G$ 
are derived from the composition in $\sD$ or $\sD_G$ and thus from the structure maps $\ga$ 
of $\sC$ when $\sD = \mathfrak{D}(\sC)$. The unit maps $\et$ are derived from the identity morphisms in these categories and thus from the unit  $\id\in \sC(1)$.   We define a $\sD$-$G$-algebra to be a 
$\bD$-algebra in $\Pi[\ul{G\sT}]$ and we define a
$\sD_G$-$G$-algebra to be a $\bD_G$-algebra in $\Pi_G[\ul{G\sT}]$.
We let $\bD\big[\PI[\ul{G\sT}]\big]$ and $\bD_G\big[\PI_G[\ul{G\sT}]\big]$ denote the respective categories of algebras.  Restriction to trivial $\bn$ gives a forgetful functor  
$\bU\colon \bD_G\big[\PI_G[\ul{G\sT}]\big] \rtarr \bD\big[\PI[\ul{G\sT}]\big]$.
\end{defn}

Ignoring cofibration conditions, the following definition is the formal part of  \cite[Definitions 4.7 and 4.8]{MMO}.

\begin{defn}  Define a $\sD$-$G$-space to be a $G\sT$-functor  $\sD \rtarr \ul{G\sT}$.  Define
a $\sD_G$-$G$-space to be a $G\sT$-functor  $\sD_G \rtarr \ul{G\sT}$.  In both cases, maps are $G\sT$-natural transformations.
\end{defn}

Again ignoring cofibration conditions, the following result is \cite[Proposition 5.17]{MMO}.  Its proof is an immediate comparison of definitions.

\begin{prop}\label{DAlgIso}  The category of $\sD$-$G$-spaces is isomorphic to the category of $\bD$-algebras in $\Pi[\ul{G\sT}]$.   The category of $\sD_G$-$G$-spaces is isomorphic to the category of $\bD_G$-algebras in $\Pi_{G}[\ul{G\sT}]$.  
\end{prop}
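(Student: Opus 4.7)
The plan is to exhibit mutually inverse constructions by a direct inspection of definitions, leveraging the universal property of the coequalizer that defines $\bD Y$ (and $\bD_G Y$). Since the nonequivariant over-$\sF$ case and the $G$-equivariant over-$\sF_G$ case are parallel, I will carry out the argument for $\sD$ and then indicate the trivial modifications for $\sD_G$ (where the objects are $\bn^{\al}$ and all enrichments are in $\ul{G\sT}$, and the twisted $G$-actions must be respected).

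First, given a $G\sT$-functor $Y\colon \sD \rtarr \ul{G\sT}$, I restrict along $\io\colon \PI \rtarr \sD$ to obtain a $\PI$-$G$-object, also called $Y$; this produces the underlying $\PI$-indexed object. The enriched functoriality provides $G$-maps $\sD(\bm,\bn) \times Y(\bm) \rtarr Y(\bn)$ which, by the coequalizer description in \autoref{DX}, factor uniquely through a natural $G$-map $\tha\colon \bD Y \rtarr Y$ in $\PI[\ul{G\sT}]$. The unit law $\tha \com \et = \id$ follows because $\et$ is induced by the unit morphisms $\id \in \sD(\bn,\bn)$ and $Y$ preserves identities; the associativity law $\tha \com \mu = \tha \com \bD\tha$ follows because both sides, after rewriting, are the map induced by the two ways of composing three layers of morphisms in $\sD$ and applying $Y$, and these agree by associativity of composition in $\sD$.

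Conversely, given a $\bD$-algebra $(X,\tha)$, the underlying $\PI$-$G$-object already gives the assignment $\bn \mapsto X(\bn)$. The coproduct summand inclusion
\[
\sD(\bm,\bn) \times X(\bm) \hookrightarrow \coprod_{\bk}\sD(\bk,\bn)\times X(\bk)
\]
composed with the quotient to $(\bD X)(\bn)$ and then with $\tha$ yields a $G$-map $\sD(\bm,\bn) \times X(\bm) \rtarr X(\bn)$. These maps are $\PI$-bilinear by construction of the coequalizer and are associative and unital by the $\bD$-algebra axioms; by the universal property of enrichment, they assemble into a $G\sT$-functor $\sD \rtarr \ul{G\sT}$ extending the restriction along $\PI$. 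The two constructions are manifestly inverse on underlying data, and they are inverse on structure because each is determined by the same bilinear family of maps $\sD(\bm,\bn) \times X(\bm) \rtarr X(\bn)$.

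For the $G$-equivariant assertion, the only substantive change is that the indexing category is $\PI_G$ (respectively $\sD_G$) and enriched functors must intertwine the $G$-actions on morphism $G$-spaces with the $G$-actions on values. The coequalizer presentation of $\bD_G Y$ indexes over pairs $\bk^{\ga}, \bm^{\be}$ with all arrows $G$-equivariant, so the above passage between bilinear action maps and coequalizer-induced maps goes through verbatim once one observes that $G$-equivariance of the action is built into the $G\sT$-functoriality on one side and into the definition of $\PI_G[\ul{G\sT}]$ on the other. The main bookkeeping obstacle is verifying compatibility with the twisted semidirect-product actions of \autoref{Xal} and \autoref{Gcomposable}, but this is automatic from the $G$-equivariance of composition in $\sD_G$ and thus does not require separate verification beyond unwinding definitions.
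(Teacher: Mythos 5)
Your proposal is correct and is essentially the paper's own argument: the paper cites \cite[Proposition 5.17]{MMO} and notes that the proof is an immediate comparison of definitions, which is exactly the unwinding you carry out — passing between enriched action maps $\sD(\bm,\bn)\times X(\bm)\rtarr X(\bn)$ and algebra structure maps $\tha\colon \bD X\rtarr X$ via the universal property of the coequalizer in \autoref{DX}, identically in the $\sD_G$ case.
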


The following result is \cite[Theorem 4.11]{MMO}.  The proof there is a detailed combinatorial inspection of definitions starting from  \autoref{PUone},  but we give a quick conceptual argument.

\begin{prop}\label{PUtwo}  The prolongation functor $\PI[G\sT] \rtarr \PI_G[G\sT]$ extends to a prolongation functor 
$\bP\colon \Dalg \rtarr \DGalg $  left adjoint to the  forgetful functor  $\bU\colon \DGalg  \rtarr \Dalg$, and $(\bP,\bU)$ is an adjoint equivalence.
\end{prop}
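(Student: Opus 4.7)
The plan is to exploit \autoref{PUone}, which already provides an adjoint equivalence $(\bP,\bU)$ between the underlying categories $\PI[\ul{G\sT}]$ and $\PI_G[\ul{G\sT}]$, and then to lift this equivalence to categories of algebras by showing that the monads $\bD$ and $\bD_G$ correspond under it. Concretely, the first step is to establish a natural isomorphism
\[
\omega\colon \bU\bD_G \xrightarrow{\iso} \bD\bU
\]
of functors $\PI_G[\ul{G\sT}] \rtarr \PI[\ul{G\sT}]$, which is compatible with the units and products of the two monads. Since $\sD_G = \bP\sD$ has morphism $G$-spaces $\sD_G(\bm^\alpha, \bn^\beta)$ whose underlying spaces are $\sD(\bm,\bn)$ (with only the $G$-action twisted by conjugation), and since $\bU$ simply restricts to trivial $G$-sets where the twisting disappears, the defining coequalizers for $\bD$ and $\bU\bD_G$ coincide on objects, and $\omega$ is essentially the identity. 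Compatibility with the units and products is then immediate from the fact that both are induced by the identity morphisms and composition in $\sD$, which agree with those in $\sD_G$ on the trivial $G$-set objects.

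Next, using $\omega$ together with the counit $\epz\colon \bP\bU \rtarr \Id$ (an isomorphism by \autoref{PUone}), I would transport this into a natural isomorphism $\bD_G\bP \iso \bP\bD$ of monads on $\PI_G[\ul{G\sT}]$. This then permits the definition of $\bP$ on algebras: for a $\bD$-algebra $(X,\vartheta)$, the prolongation $\bP X$ becomes a $\bD_G$-algebra with action the composite
\[
\xymatrix@1{ \bD_G\bP X \ar[r]^-{\iso} & \bP\bD X \ar[r]^-{\bP\vartheta} & \bP X. }
\]
Symmetrically, $\bU$ of a $\bD_G$-algebra inherits a $\bD$-action via $\omega$, and $\bU$ on algebras is the usual forgetful functor in view of \autoref{formalMT2}.

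The final step is to verify that $(\bP,\bU)$ on algebras is an adjoint equivalence. The adjunction property is formal from the adjunction on underlying categories plus the fact that the units $\eta\colon \Id\rtarr \bU\bP$ and counits $\epz\colon \bP\bU \rtarr \Id$ of \autoref{PUone} lift to maps of algebras (one checks this is compatible with the algebra structures using naturality of $\omega$). Since both $\eta$ and $\epz$ are isomorphisms in the base categories and weak equivalences/isomorphisms are detected there, $(\bP,\bU)$ remains an adjoint equivalence on algebras.

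The main obstacle is the careful bookkeeping in the first step: verifying that $\omega\colon \bU\bD_G \iso \bD\bU$ is a well-defined natural isomorphism of monads despite the twisted $G$-actions appearing in the definition of $\bD_G$. The point is that $\bU$ restricts to $\bn = \bn^{\epz_n}$ where $\epz_n$ is the trivial homomorphism, and for trivial homomorphisms the twisted actions reduce to the ordinary ones, so the combinatorial data indexing the two coequalizers match. Once this reduction is made, everything else is formal transport of structure along an adjoint equivalence.
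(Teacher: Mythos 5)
Your overall strategy---lifting the adjoint equivalence $(\bP,\bU)$ of \autoref{PUone} to the algebra categories by exhibiting an interchange isomorphism between the monads---is viable, and it pivots on the same identification that the paper's proof uses, namely $\bD_G \bP X \iso \sD_G\otimes_{\PI} X \iso \bP\bD X$ for $\PI$-$G$-spaces $X$. The paper packages this differently: it defines $\bP$ on $\bD$-algebras directly by the categorical tensor product $(\bP Y)(\bn^{\al}) = \sD_G(-,\bn^{\al})\otimes_{\PI} Y(-)$, observes via \autoref{DAlgIso} that this is a $\bD_G$-algebra so that the adjunction is immediate, and then proves the counit is an isomorphism by reducing along the split coequalizer \autoref{split2} to free algebras on prolonged $\PI$-$G$-spaces, where the identification above is a formal cancellation.

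There is, however, a genuine gap in your first step as written. You justify $\om\colon \bU\bD_G \iso \bD\bU$ by asserting that the defining coequalizers ``coincide'' because restriction to trivial $G$-sets removes the twisting. That is not true as stated: the coequalizer computing $(\bD_G Y)(\bn)$ is indexed over \emph{all} objects $\bk^{\ga}$, $\bm^{\be}$ of $\PI_G$, including the nontrivially twisted ones, whereas the coequalizer computing $(\bD\,\bU Y)(\bn)$ is indexed only over the trivial objects, so the two diagrams do not match and the isomorphism of their colimits is exactly the nontrivial content of the proposition. The isomorphism is true, but proving it requires the Yoneda-type reduction underlying \autoref{PUone}: write $Y \iso \bP\bU Y = \PI_G\otimes_{\PI}\bU Y$ using the counit isomorphism, and then cancel, $\sD_G(-,\bn)\otimes_{\PI_G}\big(\PI_G\otimes_{\PI}\bU Y\big) \iso \sD_G(-,\bn)\otimes_{\PI}\bU Y \iso (\bD\,\bU Y)(\bn)$, using that $\sD_G(\bm,\bn) = \sD(\bm,\bn)$ with its original $G$-action when $\bm$ and $\bn$ are trivial. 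This is precisely the paper's ``without loss of generality $Y = \bP X$'' step. You should also verify compatibility with the products $\mu$, which comes down to the fact that composition in $\sD_G = \bP\sD$ restricts to composition in $\sD$ on trivial objects (\autoref{DtoDG}). Once the interchange isomorphism is established correctly, your formal transport of the adjunction and of the unit and counit to the algebra level does go through.
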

\begin{proof}   The functor $\bP$ is defined on $\bD$-algebras $Y$ by the categorical tensor product
$$(\bP Y)(\bn^{\al}) = \sD_G(-,\bn^{\al})\otimes_{\PI} Y(-),$$
where the variable $(-)$ runs over the objects $\bm$ of $\sD$.
It is clear from the definition that $\bP Y$ is naturally a  $\sD_G$-$G$-space.  By \autoref{DAlgIso}, that  means that $\bP Y$ is a $\bD_G$-algebra.   
The adjunction is immediate.  

Since 
$\bU$ is full and faithful, for a $\bD$-algebra $X$  the unit $\et \colon X\rtarr \bU\bP X$ is the evident identification.  Consider the counit $\epz\colon \bP\bU Z \rtarr Z$ for a $\bD_G$-algebra $Z$.   Here  $Z$ is given by a split coequalizer as in \autoref{split2}.  Since $\bP\bU$ preserves split coequalizers, it is immediate by comparison of coequalizers that $\epz$ is an isomorphism if it is an isomorphism when $Z = \bD_G Y$ for a $\PI_G$-$G$-space $Y$.  Moreover, by \autoref{PUone}, we may assume without loss of generality that $Y = \bP X$, where $X$ is a $\PI$-$G$-space  and $\bP$ here is the prolongation functor of that result.  Not inserting notation for variables, we have the following formal identification.
$$  \bD_G \bP X = \sD_G\otimes_{\PI_G} (\PI_G \otimes_{\PI} X) \iso \sD_G\otimes_{\PI} X \iso \sD_G\otimes_{\sD}(\sD\otimes_{\PI} X) = \bP\bD X.$$
Therefore $\epz\colon \bP\bU \bD_G \bP X \rtarr \bP X$ 
can be identified with $\epz\colon \bP\bU \bP \bD X \rtarr \bP \bD X, $ 
which is an isomorphism with inverse $\bP \et$ by a triangle identity.
\end{proof}

\subsection{The associated monads $\bC$ and $\bC_G^{fin}$}\label{cast5}

We turn to the monads associated to operads $\sC$ and $G$-operads $\sC_G^{fin}$.  In contrast to the case of categories of operators, these monads are special cases rather than  general cases of certain categorical tensor products.  We start with the general definition.    Recall  $\LA$ and $\LA_G$ from Definitions \ref{finitecats} and \ref{finiteGcats}.  Fix an operad $\sC$ of $G$-spaces and a $G$-operad $\sC_G^{fin}$, which we can take to be $\bP(\sC)$. Let $\sD = \sD(\sC)$ and 
$\sD_G = \sD_G(\sC_G^{fin}) \iso \bP\sD(\sC)$.  

\begin{lem}  The operad $\sC$ restricts to a contravariant functor $\sC\colon \LA \rtarr G\sU$. The $G$-operad $\sC_G^{fin}$ restricts to a contravariant functor $\sC_G^{fin}\colon \LA_G \rtarr \ul{G\sU}$.
\end{lem}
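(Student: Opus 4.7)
The plan is to exploit the presentation of $\sC(n)$ (resp.\ $\sC_G(\bn^{\al})$) as the component of $\ph_n$ (resp.\ $\ph_n^{\al}$) in $\sD(\bn,\mathbf{1})$ (resp.\ $\sD_G(\bn^{\al},\mathbf{1})$) that is given in \autoref{defGop}, together with the inclusions $\io\colon \PI\rtarr \sD$ and $\io_G\colon \PI_G\rtarr \sD_G$ from \autoref{defGcat}. The contravariant functor structure will then be obtained by pulling back composition in $\sD$ (resp.\ $\sD_G$) along these inclusions, with $\sC(n)$ (resp.\ $\sC_G(\bn^{\al})$) viewed as a subspace of the appropriate hom object.

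On objects, put $\sC(\bn) = \sC(n)$. For a morphism $f\colon \bm\rtarr \bn$ in $\LA\subset \PI$, define
\[
f^{*}\colon \sC(n)\rtarr \sC(m),\qquad f^{*}(c) = c\com \io(f),
\]
using composition in $\sD$. To verify that this is well-defined, observe that $\xi\com \io$ is the inclusion $\PI\rtarr \sF$ and that $\xi(c) = \ph_n$ for $c\in \sC(n)$, so $\xi(c\com \io(f)) = \ph_n\com f$; because $f$ is a based injection, $\ph_n\com f = \ph_m$, so the composite lies in the component $\sC(m)$ of $\ph_m$. Contravariance $(g\com f)^{*} = f^{*}\com g^{*}$ and $\id^{*} = \id$ are then immediate from functoriality of $\io$ combined with associativity and unitality of composition in $\sD$; continuity is inherited from continuity of composition; and $G$-equivariance of $f^{*}$ follows from $G$-equivariance of composition in $\sD$ together with the fact that $\io(f)$ is $G$-fixed, since $G$ acts trivially on $\PI$.

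The $G$-operad statement proceeds by the same recipe with $\sD$ replaced by $\sD_G$ and $\io$ by $\io_G$, but now $\LA_G$ is a genuine $G\sT$-enriched category, so the required output is an enriched contravariant functor, i.e., $G$-equivariant morphism maps
\[
\LA_G(\bm^{\al},\bn^{\be})\rtarr \ul{G\sU}\bigl(\sC_G(\bn^{\be}),\sC_G(\bm^{\al})\bigr).
\]
These are the restrictions of the $G$-equivariant composition maps in $\sD_G$, and the component-preservation check is identical once one observes that $\ph_n^{\be}\com f = \ph_m^{\al}$ for $f\in \LA_G(\bm^{\al},\bn^{\be})$. The only point demanding a modicum of care, and hence the nearest thing to an obstacle here, is verifying that the component of $\ph_n^{\al}$ in $\sD_G(\bn^{\al},\mathbf{1})$ is a $G$-subspace; this holds because $\ph_n^{\al}$ is $G$-fixed, the conjugation action on $\sF_G(\bn^{\al},\mathbf{1})$ fixing it since $G$ acts trivially on $\mathbf{1}$. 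With that in hand the same associativity/unitality argument as before produces the desired contravariant enriched functor.
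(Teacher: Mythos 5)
Your proposal is correct and follows essentially the same route as the paper: both define the action by restricting composition in $\sD$ (resp.\ $\sD_G$) along the inclusion $\io$ (resp.\ $\io_G$) of $\LA\subset\PI$ (resp.\ $\LA_G\subset\PI_G$), the key point being that the composite lies over $\ph_n\com f=\ph_m$ and hence in the component $\sC(m)$ (resp.\ $\sC_G(\bm^{\al})$). You simply spell out the component, equivariance, and functoriality checks that the paper leaves implicit in its two commutative diagrams.
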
  
\begin{proof} The first statement goes back to \cite[Construction 2.4]{MayGeo}; details are recalled in \autoref{OtoJ} below.  An analogous explicit equivariant description is given in \cite{KMZ2}.  The action can be described implicitly by the commutative diagram
$$ \xymatrix{\sC(n) \times \LA(\bm,\bn)  \ar[d]_{\subset}    \ar[r] &  \sC(m)\ar[d]^{\subset} \\
     \sD(\bn,\mathbf{1}) \times \sD(\bm,\bn) \ar[r]_-{\com}  & \sD(\bm,\mathbf{1}). \\}$$
That is, the restriction to $\sC(n)\times \LA(\bm,\bn)$ of the displayed composition in $\sD$ lands in $\sC(m)$ and gives the action. A  proof of the second statement works the same way, using the analogous diagram
 $$ \xymatrix{\sC_G^{fin}(\bn^{\be}) \times \LA_G(\bm^{\al},\bn^{\be})  \ar[d]_{\subset}    \ar[r] &  \sC_G^{fin}(\bm^{\al})\ar[d]^{\subset} \\
     \sD_G(\bn^{\be},\mathbf{1}) \times \sD_G(\bm^{\al},\bn^{\be}) \ar[r]_-{\com} &  \sD_G(\bm^{\al},\mathbf{1}).  \qedhere  \\}  
     $$
\end{proof}

\begin{defn}  Let $\LA[G\sT]= \LA[\ul{G\sT}]$ be the category of functors $\LA\rtarr G\sT$ and  let $\LA_G[\ul{G\sT}]$ be the category $G$-functors $\LA_G\rtarr \ul{G\sT}$.  Define functors
$$\bD\colon \LA[\ul{G\sT}]\rtarr G\sT \ \ \text{and} \ \  \bD_G \colon \LA_G[\ul{G\sT}]\rtarr G\sT $$
by the categorical tensor products defined on $X\in \LA[\ul{G\sT}]$ and $Y\in \LA_G[\ul{G\sT}]$ by
$$\bD X = \sC\otimes_{\LA} X \ \ \text{and} \ \ \bD_G Y = \sC_G^{fin}\otimes_{\LA_G} Y. $$
\end{defn}

The following surprising identification is proven by a comparison of coequalizer  diagrams that is given in \cite{KMZ2}.

\begin{prop}\label{cuter}  The functor $\bD$ can be identified with the composite $\bD_G\com \bP$. 
\end{prop}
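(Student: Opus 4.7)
The plan is to unwind both sides as iterated categorical tensor products (coends) and match them via Fubini plus co-Yoneda. From the analog of \autoref{PUone} for $\LA$ and $\LA_G$, prolongation is given by $(\bP X)(\bn^{\al}) = \LA_G(-,\bn^{\al})\otimes_\LA X$. Substituting into the definition of $\bD_G$ yields $\bD_G\bP X = \sC_G\otimes_{\LA_G}\bigl(\LA_G(-,-)\otimes_\LA X\bigr)$, and Fubini for coends commutes the two tensor products to give a natural isomorphism with $\bigl(\sC_G\otimes_{\LA_G}\LA_G(-,-)\bigr)\otimes_\LA X$.

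The first simplification is the co-Yoneda identification $\sC_G\otimes_{\LA_G}\LA_G(\bm,-)\cong \sC_G(\bm)$ for each $\bm\in \LA\subset \LA_G$, which holds because $\sC_G$ is a contravariant $G\sT$-functor on $\LA_G$. This reduces the expression to $\sC_G|_\LA\otimes_\LA X$. The second simplification is that $\sC_G|_\LA = \bU\sC_G = \sC$ by construction of $\bU$ in \autoref{CtoCG}: restricting a $G$-operad to trivial homomorphisms $\epz_n\colon G\rtarr \SI_n$ recovers its underlying operad, i.e., $\sC_G(\bn) = \sC(n)$. Combining these identifications gives $\bD_G\bP X\cong \sC\otimes_\LA X = \bD X$, naturally in $X$.

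The main obstacle will be the equivariance bookkeeping. The morphism $G$-spaces $\LA_G(\bm,\bn^{\al})$ carry the conjugation $G$-action, and $\sC_G(\bn^{\al})\iso\sC(n)^{\al}$ carries the twisted action of \autoref{Xal}. One must verify that these twists cancel inside the coend so that the resulting $G$-action on $\bD_G\bP X$ agrees with the given $G$-action on $\bD X = \sC\otimes_\LA X$. The cleanest route, announced for \cite{KMZ2}, is to write down explicit coequalizer presentations of both tensor products, produce mutually inverse $G$-maps at the level of the defining coproducts, and then check compatibility with the coequalizer relations. Beyond the formal coend manipulations, the essential inputs are only the identity $\bU\sC_G=\sC$ and the description of prolongation given in \autoref{DtoDG}.
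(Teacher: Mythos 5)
Your argument is correct and is essentially the proof the paper has in mind: the text only cites \cite{KMZ2} for ``a comparison of coequalizer diagrams,'' and your Fubini-plus-co-Yoneda computation, once unwound into the defining coequalizer presentations of the two tensor products, is exactly that comparison, with the two needed inputs correctly isolated ($\bP$ is the enriched left Kan extension along $\LA\subset\LA_G$ as in \autoref{PUone}, and $\bU\sC_G=\sC$ as contravariant $\LA$-functors because composition in $\mathfrak D_G(\sC)$ restricts to composition in $\mathfrak D(\sC)$ on the trivial objects). The equivariance bookkeeping you flag is not a real obstacle: if you carry out the Fubini and co-Yoneda steps for coends enriched in $G$-sets/$G$-spaces (diagonal $G$-actions throughout the coequalizer diagrams), the resulting isomorphisms are automatically $G$-maps, so the conjugation twist on $\LA_G(\bm,\bn^{\al})$ and the twisted action on $\sC_G(\bn^{\al})\iso \sC(n)^{\al}$ cancel with no separate verification.
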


The general functor $\bD$ was used in \cite{CMT, MT}, but it restricts to a monad on $G\sT$.  We use the following definition to show that $\bD_G$ also restricts to a monad on $G\sT$.  For immediate purposes,  it suffices to consider $\LA$ and $\LA_G$, but we will shortly also need $\PI$ and $\PI_G$.  We use different notations because the conceptual roles of $\LA$ and $\LA_G$ turn out to be quite different from the roles of $\PI$ and $\PI_G$.

\begin{defn}   We define $G\sT$-functors
$$ \bR\colon G\sT \rtarr \PI[\ul{G\sT}] \ \ \text{and} \ \  \bR_G\colon G\sT\rtarr  \PI_G[\ul{G\sT}].$$
For $X\in G\sT$, we let  $(\bR X)(\bn) = X^n$.  For a based function $f\colon \bm \rtarr \bn$ such that $|f^{-1}(j)| = 0$ or $1$ for $j>0$, we let 
$(\bR X)(f)\colon X^m\rtarr X^n$ be the map that sends $(x_1,\dots,x_m)$ to $(y_1,\cdots y_j)$, where $y_j = x_{f^{-1}(j)}$ if $f^{-1}(j)>0$ and $y_j = \ast$ if $j\notin \im(f)$.  We define $\bR_G = \bP \bR$, where $\bP\colon \PI[\ul{G\sT}] \rtarr  \PI_G[\ul{G\sT}]$ is defined in  \autoref{PUone}.  We define 
$$ \bK\colon G\sT \rtarr \LA[\ul{G\sT}] \ \ \text{and} \ \  \bK_G\colon G\sT\rtarr  \LA_G[\ul{G\sT}]$$
by restricting from $\PI$ to $\LA$ and from $\PI_G$ to $\LA_G$ in the targets of $\bR$ and $\bR_G$. 
\end{defn}

{
Use of \autoref{Xal} makes the definition of $\bR_G$ more combinatorially explicit.
}
\begin{notn}  Restricting to $\LA$ and $\LA_G$, we  write
$$  \bC X =\bD \bK X  \  \ \text{and} \ \ \bC_G^{fin} X = \bD_G\bK_G X.$$
for based $G$-spaces $X$.
\end{notn}  

\begin{prop}   The functor $\bC\colon G\sT \rtarr G\sT$ is a monad with product $\mu$ induced by the  structure maps $\ga$ of $\sC$ and unit $\et$ induced by the unit $\id:\ast\rtarr \sC(1)$.   The functor $\bC_G^{fin}\colon G\sT \rtarr G\sT$ is a monad with product $\mu$ induced by the  structure maps $\ga$ of $\sC_G^{fin}$ and unit $\et$ induced by the unit $\id:\ast\rtarr \sC_G^{fin}(1)$.  
\end{prop}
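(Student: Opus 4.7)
The plan is to give explicit descriptions of the monad structure maps and verify the axioms by reducing them to the axioms of the (G-)operad. I begin with $\bC = \bD\bQ$. Unpacking the coequalizer defining $\bD\bQ X = \sC \otimes_{\LA} \bQ X$, and using that $\LA$ is generated by bijections together with the non-surjective injections $\si_i\colon \bn-\mathbf{1}\rtarr \bn$ (which $\bQ X$ carries to the basepoint-insertion maps $X^{n-1}\rtarr X^n$), one identifies $\bC X$ with the familiar May construction
\[ \bC X = \coprod_{n \geq 0} \sC(n) \times_{\SI_n} X^n \big/ \sim, \]
where the relation $\sim$ is the basepoint identification $[c\si_i; x_1,\dots,x_n] \sim [c; x_1,\dots,\widehat{x_i},\dots,x_n]$ when $x_i = \ast$. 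The $G$-action is diagonal. The $\SI_n$-coinvariants come from the bijections in $\LA$ while the basepoint identifications come from the non-surjective injections; no further relations arise because $\LA$ contains no projections.

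With this description in hand, I would define $\et_X(x) = [\id; x]$ with $\id\in \sC(1)$, and the product by the classical formula
\[ \mu_X\bigl[c;\, [d_1;\mathbf{x}_1],\dots,[d_k;\mathbf{x}_k]\bigr] \;=\; \bigl[\ga(c;d_1,\dots,d_k);\, \mathbf{x}_1,\dots,\mathbf{x}_k\bigr], \]
where $c\in \sC(k)$, $d_r\in \sC(j_r)$, and $\mathbf{x}_r\in X^{j_r}$. Both maps are $G$-equivariant since $\id$ and $\ga$ are. Well-definedness of $\mu_X$ on the coequalizer reduces to two operad axioms: the $\SI$-equivariance of $\ga$ handles the $\SI_n$-quotient, and the unit axiom $\ga(c;\id,\dots,\id)=c$ (combined with equivariance) handles the basepoint identifications, since plugging $\id\in \sC(1)$ into the $i$th slot of $\ga$ effectively deletes that coordinate. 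The monad axioms $\mu\com \bC\mu = \mu\com \mu_{\bC}$ and $\mu\com \bC\et = \id = \mu\com \et_{\bC}$ then follow directly from the associativity and unit axioms of the operad $\sC$.

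For $\bC_G$, I would proceed by two complementary routes. First, \autoref{cuter} combined with the identity $\bP\bQ = \bQ_G$ — which holds since $\bR_G = \bP\bR$ by construction and $\bQ$, $\bQ_G$ are the restrictions to $\LA$, $\LA_G$ of $\bR$, $\bR_G$ — gives a natural isomorphism $\bC_G X \iso \bC X$ of functors $G\sT\rtarr G\sT$, along which the monad structure on $\bC$ transports. Second, to confirm that the transported structure agrees with the one built directly from $\ga_G$ and $\id\in \sC_G(\mathbf{1})$, I would give the explicit description of $\bC_G X$ analogous to the one above, with $\SI_n$-coinvariants replaced by coinvariants under the twisted action of $\SI_{\bn^{\al}} = \SI_n\rtimes_{\al_c} G$, and repeat the previous argument. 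Agreement of the two structures amounts to the fact that $\ga_G$ restricted to trivial $G$-sets equals $\ga$ and that prolongation $\bP$ on operads is compatible with operadic composition, which is essentially the content of the $(\bP,\bU)$ adjunction in \autoref{OpsCats}.

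The main obstacle is verifying well-definedness of $\mu$ for $\bC_G$ directly from $\ga_G$: one must show that whenever $[c; [d_1;\mathbf{x}_1],\dots,[d_k;\mathbf{x}_k]]$ represents an element of $\bC_G\bC_G X$ with $c\in \sC_G(\bk^{\be})$ and $d_r\in \sC_G(\bj_r^{\al_r})$, the datum $(\be;\{\al_r\})$ is automatically composable in the sense of \autoref{composable}, so that $\ga_G(c; d_1,\dots,d_k)$ is even defined. This composability is precisely what the $G$-equivariance built into $\LA_G$-morphisms (and hence into the coequalizer defining $\bC_G X$) enforces on the tuple $(d_1,\dots,d_k)$ — exactly the obstruction flagged in Warnings \ref{Uhoh1} and \ref{Uhoh2} and resolved by the design of $\ga_G$ in \autoref{crude}. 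Spelling this out rigorously — tracking how a $\LA_G$-morphism relating representatives of the same class forces $\al_r = \al_s$ whenever $\be(g)(r) = s$ — is the technical heart of the argument, and is where the definition of a $G$-operad pays off.
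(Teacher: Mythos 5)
Your argument for $\bC$ is the classical one the paper simply cites, and your first route for $\bC_G$ --- transporting the monad structure along the isomorphism $\bC_G \iso \bC$ obtained from \autoref{cuter} together with the compatibility $\bP\bQ \iso \bQ_G$ --- is exactly the paper's argument (``\autoref{cuter} saves the day''), with the remaining details deferred to the sequel. One small slip in the $\bC$ part: the basepoint identifications in $\bC X$ are governed by the $\sC(0)$-degeneracies $c\si_i = \ga(c;\id^{i-1},0,\id^{n-i})$, not by inserting $\id\in\sC(1)$ into a slot, which deletes nothing.

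The genuine gap is in your second route, where you assert that for any representative $[c;[d_1;\mathbf{x}_1],\dots,[d_k;\mathbf{x}_k]]$ of a point of $\bC_G\bC_G X$ the datum $(\be;\{\al_r\})$ is automatically composable because the $G$-equivariance built into the $\LA_G$-coequalizer enforces it. It does not: $\bQ_G(\bC_G X)(\bk^{\be})$ is just the product $(\bC_G X)^k$ with a twisted $G$-action, so nothing constrains the blocks of an individual point. For instance, take $G$ of order two, $\be\colon G\rtarr \SI_2$ nontrivial, $d_1\in\sC_G(\mathbf{1})$, $d_2\in\sC_G(\mathbf{3}^{\al})$, with no basepoint coordinates among the $\mathbf{x}_r$: then $j_1\neq j_2$ while $\be(g)(1)=2$, the tuple is not composable, and $\ga_G(c;d_1,d_2)$ is literally undefined --- this is precisely the ``only partially defined'' difficulty the paper flags before invoking \autoref{cuter}, and no chase of $\LA_G$-morphisms relating representatives will force $j_r=j_s$ or $\al_r=\al_s$ on a given representative. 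What is true is weaker and is what you actually need: every class admits a composable representative, because the identity bijections, regarded as (non-equivariant) points of the morphism $G$-spaces $\LA_G(\bk,\bk^{\be})$ and $\LA_G(\bj_r,\bj_r^{\al_r})$, let you rewrite any representative so that $\be$ and the $\al_r$ become trivial; this untwisting, together with the check that the result is independent of choices, is exactly what the identification $\bD\iso\bD_G\com\bP$ of \autoref{cuter} packages. So the correct completion of your proof is route 1 plus the easy observation that the transported product is computed by $\ga_G$ on composable representatives (where the prolonged structure maps restrict to $\ga$); the ``automatic composability'' step of route 2, as written, would fail.
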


The first statement goes back to \cite[Construction 2.4]{MayGeo}, with later generalizations of context.  The analog for $\bC_G^{fin}$ is not as obvious since the restriction of the structure map $\ga_G$ to composable tuples makes it appear that the product $\mu$ on $\bC_G^{fin}$ is only partially defined.  However, \autoref{cuter} saves the day. More details
are given in \cite{KMZ2}. 

We complete the formal picture with the following comparisons of ground categories and of monads.  

\begin{defn}  Define $G\sT$-functors
$$ \bL\colon \PI[\ul{G\sT}] \rtarr G\sT\ \ \text{and} \ \  \bL_G\colon  \PI_G[\ul{G\sT}] \rtarr G\sT$$
by  evaluating functors on $\mathbf{1}$.
\end{defn}  

\begin{lem}\label{mopup1}   The pairs $(\bL,\bR)$ and $(\bL_G, \bR_G)$ are adjunctions such that $\bL\bR = \id$ and $\bL_G\bR_G = \id$.   Moreover,
just as $\bP \bR = \bR_G$, so also  $\bU\bR_G = \bR$, where $(\bP,\bU)$ is as in \autoref{PUone}. 
\end{lem}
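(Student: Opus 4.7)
The plan is to break the lemma into three logical pieces: establishing the adjunction $(\bL,\bR)$ and the identity $\bL\bR = \id$ by direct construction, then deducing the identity $\bU\bR_G = \bR$ from \autoref{PUone}, and finally obtaining $(\bL_G,\bR_G)$ with $\bL_G\bR_G = \id$ as a formal composite of the two adjunctions.

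First I would establish $\bL\bR = \id$ by direct computation: $\bL(\bR X) = (\bR X)(\mathbf{1}) = X^1 = X$, naturally in $X$. I would then construct the adjunction $(\bL,\bR)$ explicitly by giving its unit and counit. The counit $\epz\colon \bL\bR \rtarr \id$ is the identity just noted. For the unit $\et\colon \id\rtarr \bR\bL$ on $Y\in \PI[\ul{G\sT}]$, I define $\et_Y(\bn)\colon Y(\bn) \rtarr Y(\mathbf{1})^n$ coordinatewise: the $i$-th coordinate is $Y(\pi_i)$, where $\pi_i\colon \bn\rtarr \mathbf{1}$ is the projection sending $i$ to $1$ and every other element to $0$. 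Naturality in $\PI$ is immediate because any morphism in $\PI$ composed with a projection is again either a projection or the zero map, and the action of $\bR X$ on such morphisms is exactly by coordinate projections. The triangle identities are routine: $\bL\et_Y$ at level $\mathbf{1}$ is $Y(\pi_1) = Y(\id) = \id$, while $\et_{\bR X}$ at level $\bn$ sends $(x_1,\dots,x_n)$ to $((\bR X)(\pi_1)(x_1,\dots,x_n),\dots,(\bR X)(\pi_n)(x_1,\dots,x_n)) = (x_1,\dots,x_n)$. Since $G$ acts trivially on $\PI$, everything here is automatically equivariant, so the $G\sT$-enriched adjunction is obtained with no extra work.

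Next, I would verify the two identities involving $\bR_G$ and $\bL_G$. By definition $\bR_G = \bP\bR$, and \autoref{PUone} tells us that the unit $\et\colon X\rtarr \bU\bP X$ is the evident identification, so $\bU\bP = \id$ and therefore $\bU\bR_G = \bU\bP\bR = \bR$. For the parallel statement about $\bL_G$, I would observe that $\bL_G$ evaluates a $\PI_G$-$G$-object on the trivially-acted-on $\mathbf{1}$, and this factors through $\bU$: namely, $\bL_G Z = Z(\mathbf{1}) = (\bU Z)(\mathbf{1}) = \bL(\bU Z)$, i.e. $\bL_G = \bL\bU$. Combining, $\bL_G\bR_G = \bL\bU\bP\bR = \bL\bR = \id$.

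Finally, since $(\bP,\bU)$ is an adjunction by \autoref{PUone} and $(\bL,\bR)$ is an adjunction by the construction above, the composite $(\bL\bU,\bP\bR) = (\bL_G,\bR_G)$ is automatically an adjunction, as composites of adjunctions are adjunctions. This completes all claims of the lemma. The main obstacle will be verifying that the unit $\et$ in the $(\bL,\bR)$ adjunction is well-defined as a morphism in $\PI[\ul{G\sT}]$, i.e. checking naturality against an arbitrary morphism of $\PI$ rather than only against the generating projections; but this reduces to observing that every morphism $\ph\colon \bm\rtarr \bn$ in $\PI$ satisfies $\pi_j\com\ph\in\{\pi_{\ph^{-1}(j)}, 0\}$ for each $j$, so the required compatibility follows coordinate by coordinate. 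Everything else is formal bookkeeping, and no input beyond \autoref{PUone} is required.
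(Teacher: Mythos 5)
Your overall strategy is sound, and it is essentially the argument underlying the sources to which the paper defers for this lemma (the paper gives no in-text proof, citing \cite{MT} and \cite{MMO}): the counit of $(\bL,\bR)$ is the identity $\bL\bR=\id$, the unit is the Segal map whose coordinates are induced by the delta functions $\pi_i\colon\bn\rtarr\mathbf{1}$, the triangle identities are the short computations you give, and the $G$-level statements follow formally from $\bR_G=\bP\bR$, $\bL_G=\bL\bU$ and \autoref{PUone}. One small caveat on the last step: \autoref{PUone} exhibits $\bU$ as the \emph{right} adjoint of $\bP$, so ``composites of adjunctions are adjunctions'' does not literally apply to the pair $(\bL\bU,\bP\bR)$; you need the stronger statement, also in \autoref{PUone}, that $(\bP,\bU)$ is an adjoint \emph{equivalence}, so that $\bU$ is left adjoint to $\bP$ as well, whence $\bL\bU\dashv\bP\bR$.

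The genuine gap is at the step you yourself flag as the main obstacle and then dismiss: naturality of the unit (equivalently, surjectivity of restriction to level $\mathbf{1}$) in the coordinates $j\notin\im(\ph)$. There $\pi_j\com\ph$ is the zero morphism $\bm\rtarr\mathbf{0}\rtarr\mathbf{1}$, while the $j$-th coordinate of $(\bR\bL Y)(\ph)$ is the constant map at the basepoint of $Y(\mathbf{1})$; so you need $Y$ to send the zero morphism to the constant basepoint map. This is not automatic from functoriality into based spaces: it holds because $\PI$-$G$-spaces are \emph{reduced}, $Y(\mathbf{0})=\ast$ (the standing convention in \cite{MMO}, to whose definitions the paper defers), since the zero morphism factors through $\mathbf{0}$ and based maps preserve basepoints. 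Without that hypothesis the adjunction claim is actually false: take $Y$ to be the constant functor at a based $G$-space $A$ with more than one point; naturality against the zero endomorphism of $\mathbf{1}$ forces every map $Y\rtarr\bR X$ to be constant at level $\mathbf{1}$, so restriction $\PI[\ul{G\sT}](Y,\bR X)\rtarr G\sT(\bL Y,X)$ is not surjective. Your proof is complete once you invoke reducedness explicitly at this point; as written, the assertion that the compatibility ``follows coordinate by coordinate'' skips exactly the one case that needs it.
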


\begin{rem}\label{Segal}  The units of these adjunctions are denoted
$$ \de\colon \id \rtarr \bR\bL \ \ \text{and} \ \ \de_G\colon \id \rtarr \bR_G\bL_G$$
since, for $X\in \PI[\sT]$, the coordinates of the maps $\de\colon X(n) \rtarr X(1)^n$ are induced by delta functions $\bn \rtarr \mathbf{1}$.  These maps are called Segal maps since their importance was first noticed by Segal \cite{Seg}, who coined the word ``special'' to indicate when they are equivalences.
\end{rem}

\begin{lem}\label{mopup2} We have an isomorphism $\om\colon \bD\bR \rtarr \bR\bC$ of functors $G\sT \rtarr \PI[\ul{G\sT}]$ and an isomorphism
$\om_G \colon \bD_G\bR_G \rtarr \bR_G \bC_G^{fin}$ of functors $G\sT \rtarr \PI_G[\ul{G\sT}]$.  
\end{lem}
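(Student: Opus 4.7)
The plan is to construct $\om$ explicitly by a fiberwise decomposition, exploiting the formula $\sD(\bk,\bn) = \coprod_{\ph\in\sF(\bk,\bn)}\prod_{1\le j\le n}\sC(\ph_j)$ from \autoref{defGcat}. A typical element of $(\bD\bR X)(\bn) = \sD(-,\bn)\otimes_{\PI} \bR X$ is represented by a triple $(\ph, (c_j)_{j=1}^n, (x_i)_{i=1}^k)$ with $\ph\colon\bk\rtarr\bn$, $c_j\in \sC(\ph_j)$, and $x_i\in X$. For each $j\in\{1,\dots,n\}$, restricting the tuple $(x_i)$ to coordinates $i\in\ph^{-1}(j)$ (ordered by their index) gives an element of $X^{\ph_j}$, and pairing with $c_j$ yields a class in $\bC X=\coprod_m \sC(m)\times_{\Sigma_m} X^m/\!\sim$. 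I would define $\om$ to send the class of the triple to the $n$-tuple whose $j$th entry is this class in $\bC X$; the resulting map lands in $(\bC X)^n=(\bR\bC X)(\bn)$.

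Next I would verify that $\om$ descends to the coequalizer by $\PI$ and gives an isomorphism of $\PI$-$G$-objects. The coequalizer relation is generated by $(\ph\circ\io(\psi),c,x)\sim(\ph,c,(\bR X)(\psi)(x))$ for $\psi\in\PI(\bk',\bk)$. Because morphisms in $\PI$ are precisely those functions which are partial bijections on positive elements, an equivalence via $\psi$ either reorders the $x_i$ within fibers (matching a $\Sigma_{\ph_j}$-identification inside $\bC X$) or inserts/removes a basepoint coordinate (matching the degeneracy identifications that collapse $\ast\in X$ in $\bC X$). This is exactly the standard presentation of $\bC X$, so the map is well-defined and injective, and an inverse is given by choosing for each tuple $([c_j,(y_{i}^{(j)})_{i}])_j\in(\bC X)^n$ the concatenation $(x_i)$ and the obvious $\ph$ sending the block indexing $y^{(j)}$ into $j$. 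Naturality in $\PI$ is immediate: a morphism $\bn\rtarr\bn'$ in $\PI$ acts by projecting/reordering the $n$ factors on both sides in the same way. Compatibility with the units and products of $\bC$ and $\bD$ required in \autoref{ass7} reduces to inspection of the structural maps $\ga$ of $\sC$, since both sides are built from the same $\ga$.

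For $\om_G$, the cleanest route is to combine the nonequivariant $\om$ with \autoref{cuter}, $\bD \iso \bD_G\com\bP$, and with $\bR_G = \bP\bR$ and the naturality of $\bP$: then $\bD_G\bR_G = \bD_G\bP\bR \iso \bD\bR \iso \bR\bC$ on the trivial objects, and applying $\bP$ prolongs this to the isomorphism $\bD_G\bR_G \iso \bR_G\bC_G$ via \autoref{CtoCG} and \autoref{cuter} applied once more. Alternatively, one can mimic the above construction directly using $\sD_G(\bk^\ga,\bn^\al)=\coprod_{\ph\in\sF_G}\prod_j \sC_G(\ph^{-1}(j))$ and the twisted $G$-action on $\bR_G X$ described in \autoref{prodal}; the block decomposition still works, but now the $G$-action permutes the $j$-blocks according to $\al$ and twists each $\sC_G(\ph_j^{\al_j})$ appropriately.

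The main obstacle will be the bookkeeping of basepoints together with the twisted equivariance in the $G$-version. Nonequivariantly, the subtlety is to see that the $\PI$-coequalizer in the definition of $\bD$ produces precisely the $\Sigma_j$-orbit and basepoint quotient defining $\bC X$, decomposed block-by-block; this is essentially the same combinatorial identity that makes the composition formula in \autoref{defGcat} work. In the equivariant case the added subtlety is that the $G$-action on $\sD_G(\bk^\ga,\bn^\al)$ mixes the factors via $\be$ in the way codified by \autoref{composable}, and one must check that this mixing agrees with the $G$-action on $\bR_G\bC_G X = (\bC_G X)^{\bn^\al}$ prescribed by conjugation in \autoref{prodal}. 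Granting \autoref{cuter}, this reduces to the nonequivariant case and becomes formal.
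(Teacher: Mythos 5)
Your construction of $\om$ is essentially the argument the paper is relying on: the paper gives no details itself, deferring to \cite{MT} (Section 6) and \cite{MMO} (Propositions 5.21--5.22), and the proof there is exactly your block decomposition of $\sD(\bk,\bn)\times X^k$ over $\ph\in\sF(\bk,\bn)$ together with the observation that the $\PI$-coequalizer reproduces, block by block, the $\SI_{\ph_j}$- and basepoint-identifications defining $\bC X$. One omission in your taxonomy of the generating relations: a morphism $\ps$ in $\PI$ may also send positive elements of its source to $0$, and on the $\bR X$ side such a relation simply deletes the corresponding coordinates, basepoint or not. These are precisely the identifications that let your map discard the coordinates indexed by $\ph^{-1}(0)$ and that make it injective, so they need to be listed alongside the permutation and basepoint-insertion relations; the verification is routine once stated.

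For $\om_G$, your preferred route contains a slip. \autoref{cuter} identifies the $\LA$-level functors $\sC\otimes_{\LA}(-)$ and $\sC_G\otimes_{\LA_G}\bP(-)$, both valued in $G\sT$; it does not assert $\bD\iso\bD_G\com\bP$ for the monads $\bD$ on $\PI[\ul{G\sT}]$ and $\bD_G$ on $\PI_G[\ul{G\sT}]$ that appear in the lemma, and your chain ``$\bD_G\bR_G=\bD_G\bP\bR\iso\bD\bR\iso\bR\bC$'' does not typecheck, since $\bD_G\bP\bR$ takes values in $\PI_G[\ul{G\sT}]$ while $\bD\bR$ takes values in $\PI[\ul{G\sT}]$. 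The formal reduction can be repaired: using the interchange isomorphism $\bD_G\bP\iso\bP\bD$ that appears in the proof of \autoref{PUtwo}, together with the isomorphism of monads $\bC\iso\bC_G$ on $G\sT$ noted in \autoref{newcatop}, one gets
\begin{equation*}
\bD_G\bR_G \;=\; \bD_G\bP\bR \;\iso\; \bP\bD\bR \;\iso\; \bP\bR\bC \;=\; \bR_G\bC \;\iso\; \bR_G\bC_G,
\end{equation*}
the middle isomorphism being $\bP\om$. Your alternative—mimicking the explicit construction with the twisted actions of \autoref{prodal} and \autoref{composable}—is what the paper intends when it says the proof of the second isomorphism is ``entirely similar'' to the first, and it avoids this bookkeeping entirely. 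With either fix the statement is established; only the appeal to \autoref{cuter} as written needs correction.
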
 

Lemmas \ref{mopup1} and \ref{mopup2} were proven nonequivariantly in \cite[Lemma 5.7 and Section 6]{MT}. Equivariantly, ignoring $\bC_G^{fin}$, they were proven in \cite[Lemma 2.8 and Propositions 5.21 and 5.22]{MMO}\footnote{\cite[Proposition 5.20]{MMO} states that $\bL$ takes $\bD$-algebras to $\bC$-algebras, which is false, but Propositions 5.21 and 5.22 do not use that and are correct as stated; remember that $\bC_G^{fin}$ in \cite{MMO} is  what we are denoting by $\bC$ here.}.   The proof of the second part of \autoref{mopup2} is entirely similar to that of the first.   In both parts, the diagrams of \autoref{ass7} are easily seen to commute.

\subsection{Categories of operators in the composite adjunction context}\label{newcatop}
With this background, we have the following two special cases of \autoref{adj6}.   We specialize to $G$-spaces and $G$-spectra in this subsection, but there are analogous specializations in other contexts.  We let $\sC$ be an $E_{\infty}$ operad of $G$-spaces, as defined in \autoref{oper2}, let $\sC_G^{fin}$ be the associated $G$-operad, and let $\sD$ and $\sD_G$ be the categories of operators over  $\sF$ and $\sF_G$ constructed from $\sC$.  We have the associated monads and the following diagram \autoref{adj7}.  We note right away that \autoref{reduce} can be applied to reduce both parts of \autoref{adj7} to the corresponding diagrams with $\PI[\ul{G\sT}]$ and  $\PI_G[\ul{G\sT}]$ restricted to $\PI[\ul{G\sT}]_{ss}$ and 
 $\PI_G[\ul{G\sT}]_{ss}$.  In effect that reduces the  present context given by the big triangles  to the
 original context given by the small triangles at the right.
 
\begin{equation}\label{adj7} 
\xymatrix{
\PI[\ul{G\sT}] \ar@<.5ex>[rr]^{\bL}  \ar@<.5ex>[ddrr]^{\bF_{\bD}} & & G\sT  \ar@<.5ex>[ll]^{\bR}    \ar@<.5ex>[rr]^{\SI^{\infty}}  \ar@<.5ex>[dr]^{\bF_{\bC}}& & G\sS   \ar@<.5ex>[ll]^{\OM^{\infty}}   \ar@<.5ex>[dl]^{\OM^{\infty}_{\bC}}   
\ar@/^4pc/@<.6ex> [ddll]^-{(\bR\OM^{\infty})_{\bD}}        \\
& & &   \ar@<.5ex>[ul]^{\bU_{\bC}}  \bC[G\sT]   \ar@<.5ex>[dl]^{\bR}  \ar@<.5ex>[ur]^{\SI^{\infty}_{\bC}}& \\
& & \ar@<.5ex>[uull]^{\bU_{\bD}}  \Dalg\ar@{-->}@<.5ex>[ur]^{\bL} \ar@/_4pc/@<.6ex> [uurr]^-{(\SI^{\infty} \bL)_{\bD}}& &  \\
&  & \ar@<.5ex> [d]^{\bP}  && \\
& & \ar@<.5ex>[u]^{\bU}   && \\
\PI_G[\ul{G\sT}] \ar@<.5ex>[rr]^{\bL_G}  \ar@<.5ex>[ddrr]^{\bF_{\bD_G}} & & G\sT  \ar@<.5ex>[ll]^{\bR_G}    \ar@<.5ex>[rr]^{\SI^{\infty}}  \ar@<.5ex>[dr]^{\bF_{\bC_G^{fin}}}& & G\sS   \ar@<.5ex>[ll]^{\OM^{\infty}}   \ar@<.5ex>[dl]^{\OM^{\infty}_{\bC_G^{fin}}}   
\ar@/^4pc/@<.6ex> [ddll]^-{(\bR\OM^{\infty})_{\bD_G}}        \\
& & &   \ar@<.5ex>[ul]^{\bU_{\bC_G^{fin}}}  \bC_G^{fin}[G\sT]   \ar@<.5ex>[dl]^{\bR_G}  \ar@<.5ex>[ur]^{\SI^{\infty}_{\bC_G^{fin}}}& \\
& & \ar@<.5ex>[uull]^{\bU_{\bD_G}}  \DGalg\ar@{-->}@<.5ex>[ur]^{\bL_G} \ar@/_4pc/@<.6ex> [uurr]^-{(\SI^{\infty} \bL_G)_{\bD_G}}& &  \\}
\end{equation}

\vspace{1mm}

The small right-hand triangle of the top diagram is the $G$-spectrum level special
case of \autoref{adj5} that is discussed in \autoref{SpaceSpectra}.   Via Propositions \ref{PUone} and \ref{PUtwo}, we think
of the prolongation and forgetful functor adjunction $(\bP,\bU)$ as mapping the
entire top diagram equivalently to the entire bottom diagram.  Since the monads  $\bC$ and $\bC_G^{fin}$ on $G\sT$ are isomorphic, we think of 
$(\bP, \bU)$ as identifying the small right-hand triangles of the two diagrams.   \autoref{SpaceSpectra} applies equally well to both, compatibly. 

 The monads $\bD$ and $\bD_G$ in the left trapezoids have different but equivalent ground categories  $\PI[G\sT]=\PI[\ul{G\sT}]$ and  
 $\PI_G[\ul{G\sT}]$, and they have equivalent categories of algebras.  
The adjunction $(\bP,\bU)$ therefore gives an equivalence between the top diagram and the bottom diagram.  

However, remember that we must start with chosen subcategories of weak equivalences in  $\PI[G\sT]$ and in 
$\PI_G[\ul{G\sT}]$, as in \autoref{ass8}.  We recall from \cite{MMO} how these can and must be defined in order for $(\bP,\bU)$ to induce an equivalence between these subcategories from a homotopical point of view. In fact we have two choices, one leading to genuine $G$-spectra for finite $G$ and the other leading to classical $G$-spectra for general topological groups $G$, as in \autoref{classical}. 

Since $\SI\subset \PI$,  $X(\bn)$ and $X(\mathbf{1})^n$ are $(G\times \SI_n)$-spaces and $\de\colon X(\bn)\rtarr X(\mathbf{1})^n$ is a map  of $(G\times \SI_n)$-spaces for any $\PI$-$G$-space $X$. The following definitions are part of \cite[Definitions 2.14 and 2.35]{MMO}. Recall \autoref{famFn}.

\begin{defn}\label{weakFn} 
\begin{enumerate}[(i)]
\item A map $f\colon X\rtarr Y$ of $(G\times \SI_n)$-spaces is an  $\bF_n$-equivalence if  $f^{\LA}\colon X^{\LA} \rtarr Y^{\LA}$ is a weak equivalence for each $\LA\in \bF_n$.   
\item A map $f\colon X\rtarr Y$ of $\PI$-$G$-spaces is an  \gen-level equivalence if $f\colon X(\bn) \rtarr Y(\bn)$ is an $\bF_n$-equivalence for all $n \geq 0$. 
\item  A map $f$ of $\sD$-$G$-spaces is an \gen-level equivalence if $\bU f$ is a \gen-level equivalence of  $\PI$-$G$-spaces.
\item A map $f\colon X\rtarr Y$ of $\PI_G$-$G$-spaces is a level $G$-equivalence if each map $f\colon X(\bn^{\al})\rtarr Y(\bn^{\al})$ 
is a weak $G$-equivalence. 
\item   A map $f$ of $\sD_G$-$G$-spaces is a level $G$-equivalence if $\bU f$ is a level $G$-equivalence of  $\PI_G$-$G$-spaces.
\end{enumerate}
\end{defn}

Using these notions of weak equivalence, we rename things in the relevant special cases of \autoref{special}.

\begin{defn}\label{PIspecial}
  \begin{enumerate}[(i)]
\item   A $\PI$-$G$-space $X$ is \gen-special if 
$\de\colon X(\bn)\rtarr X(\mathbf{1})^n$ is an $\bF_n$-equivalence for all $n\geq 0$.
\item  A $\PI_G$-$G$-space $X$ is special if 
$\de\colon X(\bn^{\al}) \rtarr X(\mathbf{1})^{\al}$ is a $G$-equivalence for all
$\al\colon G\rtarr \SI_n$.
\end{enumerate}
\end{defn}

The following results are \cite[Lemmas 2.16 and 2.17]{MMO}. They follow from an explicit description of the $\LA_{\al}$-fixed points of $X^n$  for a $G$-space $X$  that is given in  \cite[Lemma 2.15]{MMO}. 

\begin{lem}\label{Rgen} If $f\colon X\rtarr Y$ is a weak equivalence of based $G$-spaces, then the induced map
$\bR f\colon \bR X\rtarr \bR Y$ is an \gen-level equivalence of $\PI$-$G$-spaces.
\end{lem}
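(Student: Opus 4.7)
The plan is to reduce the claim to the familiar fact that a weak equivalence of $G$-spaces restricts to a weak equivalence on $H$-fixed points for every subgroup $H\subset G$, and then to observe that passage to $\Lambda$-fixed points on $X^n$ decomposes as a finite product of such fixed-point functors. First I would unwind the definitions: an $\bF_\bullet$-level equivalence requires that, for every $n$ and every $\Lambda\in \bF_n$, the map $(f^n)^{\Lambda}\colon (X^n)^{\Lambda} \rtarr (Y^n)^{\Lambda}$ be a weak equivalence. By the discussion following \autoref{famFn}, every such $\Lambda$ is the graph $\Lambda_{\alpha}$ of a homomorphism $\alpha\colon H\rtarr \SI_n$ for some $H\subset G$, and the $(G\times \SI_n)$-space $X^n$, restricted along $\pi^{-1}\colon H\iso \Lambda_{\alpha}$, is exactly $X^{\bn^{\alpha}}$ in the notation of \autoref{Xal} and \autoref{prodal}.

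The key step would be to invoke the explicit description of $(X^n)^{\Lambda_\alpha}$ cited from \cite[Lemma 2.15]{MMO}. Concretely, using \autoref{prodal}, a tuple $(x_1,\dots,x_n)$ is fixed by $\Lambda_{\alpha}$ iff $h\cdot x_{\alpha(h^{-1})(i)} = x_i$ for every $h\in H$ and every $i\in\{1,\dots,n\}$; choosing a set $S$ of representatives for the orbits of the $H$-action on $\{1,\dots,n\}$ through $\alpha$, and letting $H_i\subset H$ denote the stabilizer of $i\in S$, the coordinates indexed outside $S$ are forced by those indexed in $S$, and the latter must satisfy $x_i\in X^{H_i}$. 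This gives a natural homeomorphism
\[
(X^n)^{\Lambda_{\alpha}} \;\iso\; \prod_{i\in S} X^{H_i},
\]
and likewise for $Y$, compatibly with the map induced by $f$.

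Given this identification, I would conclude as follows. Since $f\colon X\rtarr Y$ is a weak equivalence of based $G$-spaces, $f^{H_i}\colon X^{H_i}\rtarr Y^{H_i}$ is a weak equivalence of based spaces for each $i\in S$, because $H_i\subset H\subset G$. A finite product of weak equivalences of based spaces is a weak equivalence, so $(f^n)^{\Lambda_{\alpha}}\iso \prod_{i\in S} f^{H_i}$ is a weak equivalence. As this holds for every $n\geq 0$ and every $\Lambda_{\alpha}\in \bF_n$, the map $\bR f\colon \bR X\rtarr \bR Y$ is an $\bF_\bullet$-level equivalence of $\PI$-$G$-spaces.

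The only real obstacle is the fixed-point decomposition in the middle paragraph; everything else is bookkeeping. Since this decomposition is precisely the content of the cited \cite[Lemma 2.15]{MMO}, the proof can be written as a short paragraph invoking that lemma and observing that the finite product of the relevant fixed-point maps inherits the weak-equivalence property.
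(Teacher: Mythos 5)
Your proof is correct and follows the same route as the paper: the paper deduces this lemma (quoted as \cite[Lemma 2.16]{MMO}) from the explicit description of the $\LA_{\al}$-fixed points of $X^n$ in \cite[Lemma 2.15]{MMO}, which is exactly the decomposition $(X^n)^{\LA_{\al}}\iso \prod_{i\in S}X^{H_i}$ you write out, after which the conclusion follows since $f^{H_i}$ is a weak equivalence for each $H_i\subset G$ and finite products of weak equivalences are weak equivalences. The only difference is that you spell out the fixed-point identification rather than citing it as a black box, which is harmless.
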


\begin{lem}\label{iff}  Let $f\colon X\rtarr Y$ be an \gen-level equivalence of $\PI$-$G$-spaces.  Then $X$ is \gen-special if and only if $Y$ is \gen-special.  
\end{lem}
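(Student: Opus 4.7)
The plan is to deduce this directly from naturality of the Segal map $\de$ together with the two-out-of-three property of $\bF_n$-equivalences and an application of \autoref{Rgen}. The naturality square
\begin{equation*}
\xymatrix{
X(\bn) \ar[r]^-{\de} \ar[d]_{f} & X(\mathbf{1})^n \ar[d]^{f^n} \\
Y(\bn) \ar[r]_-{\de} & Y(\mathbf{1})^n
}
\end{equation*}
is a commutative diagram of $(G\times\SI_n)$-spaces. The conclusion is that the top horizontal map is an $\bF_n$-equivalence if and only if the bottom is, so it suffices to show that both vertical maps are $\bF_n$-equivalences; the equivalence will then follow by taking $\LA$-fixed points for each $\LA\in\bF_n$ and invoking the two-out-of-three property of ordinary weak equivalences.

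The left vertical map is an $\bF_n$-equivalence directly from the hypothesis that $f$ is an \gen-level equivalence, by part (ii) of \autoref{weakFn}. For the right vertical map, first note that $\bF_1$ consists of all subgroups of $G\times\SI_1 = G\times\{e\}\cong G$ (the intersection condition is vacuous), so $f\colon X(\mathbf{1})\rtarr Y(\mathbf{1})$ being an $\bF_1$-equivalence means exactly that it is a weak $G$-equivalence of based $G$-spaces. First I would apply \autoref{Rgen} to this map: since $\bR X(\mathbf{1})$ at level $\bn$ is $X(\mathbf{1})^n$ (with its twisted $(G\times\SI_n)$-action as in \autoref{prodal}), the lemma immediately gives that $f^n\colon X(\mathbf{1})^n\rtarr Y(\mathbf{1})^n$ is an $\bF_n$-equivalence for every $n$.

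With both vertical maps in the square known to be $\bF_n$-equivalences, the equivalence of the two horizontal maps as $\bF_n$-equivalences is a levelwise application of the two-out-of-three property for the classes of weak equivalences in $\sT$, performed on $\LA$-fixed points for each $\LA\in\bF_n$. Since this holds for every $n$, the condition that $\de$ is an $\bF_n$-equivalence for all $n$ transfers between $X$ and $Y$, which is precisely the assertion that $X$ is \gen-special if and only if $Y$ is \gen-special.

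There is really no serious obstacle here; the only subtlety worth flagging is the identification that $\bF_1$-equivalence on based $G$-spaces coincides with weak $G$-equivalence, which is needed in order to feed $f$ at level $\mathbf{1}$ into \autoref{Rgen}. Everything else is bookkeeping with the definitions in \autoref{weakFn} and \autoref{PIspecial}.
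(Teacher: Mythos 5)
Your proof is correct, and it is essentially the argument the paper intends: the paper disposes of this lemma by citation to [MMO, Lemma 2.17], noting that it follows from the fixed-point analysis of $X^n$ that is packaged here into \autoref{Rgen}, and your route --- naturality of $\de$, the observation that $\bF_1$-equivalence at level $\mathbf{1}$ is weak $G$-equivalence so \autoref{Rgen} makes $f^n$ an $\bF_n$-equivalence, then two-out-of-three on $\LA$-fixed points for $\LA\in\bF_n$ --- is exactly that argument. The only quibble is cosmetic: the relevant structure on $X(\mathbf{1})^n$ is the evident (untwisted) $(G\times\SI_n)$-action, the twisting of \autoref{prodal} arising only upon restriction to graph subgroups, which is where it is used.
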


We defined \gen-level equivalences and \gen-special $\PI$-$G$-spaces  in terms of the families $\bF_n$ and thus in terms of homomorphisms $\al\colon H\rtarr \SI_n$ for all subgroups $H$ of $G$.  However, the following result, which is \cite[Lemma 2.18]{MMO},  says that we may restrict to $H=G$ and thus to finite $G$-sets.  

\begin{lem}\label{speciallevel} A map $f\colon X\rtarr Y$ of $\PI$-$G$-spaces is an \gen-level equivalence if the $f_n$ for  $n\geq 0$  are weak $\LA_{\al}$-equivalences for all subgroups $\LA_{\al}$ of $G\times \SI_n$ defined by homomorphisms $\al\colon G\rtarr \SI_n$.
A $\PI$-$G$-space $X$ is \gen-special if the maps $\de\colon X_n\rtarr X_1^n$ for $n\geq 0$ are weak $\LA_{\al}$-equivalences for all such $\LA_{\al}$. \end{lem}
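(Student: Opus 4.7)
The plan is to reduce a general graph subgroup $\LA_\al$ for $\al\colon H\rtarr \SI_n$ (with $H\subset G$) to graphs of homomorphisms whose domain is $G$.  Given such $\al$, I would decompose the based $H$-set $\bn^\al$ into $H$-orbits, $\bn^\al \iso \bigvee_j H/K_j$, and form the analogous based $G$-set $\bm^{\tilde\al} = \bigvee_j G/K_j$, corresponding to a homomorphism $\tilde\al\colon G\rtarr \SI_m$ with $m = \sum_j [G\colon K_j]$.  The basic computational input is the explicit identification of fixed points of cartesian powers (essentially \autoref{prodal}, i.e.\ \cite[Lemma 2.15]{MMO}): for any based $G$-space $Z$ there is a natural isomorphism
\[  (Z^n)^{\LA_\al} \;\iso\; \mathrm{Map}^{H}_{\ast}(\bn^\al, Z) \;\iso\; \prod_j Z^{K_j} \;\iso\; \mathrm{Map}^{G}_{\ast}(\bm^{\tilde\al}, Z) \;\iso\; (Z^m)^{\LA_{\tilde\al}}. \]

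For the second statement (characterizing \gen-specialness), I would apply this with $Z = X(\mathbf{1})$ to identify the target of the Segal map $\de_n\colon X(\bn) \rtarr X(\mathbf{1})^n$ after taking $\LA_\al$-fixed points with the target of $\de_m$ after taking $\LA_{\tilde\al}$-fixed points.  Using projections $\pi_i\colon \bm\rtarr \mathbf{1}$ in $\PI$, one obtains a commutative diagram connecting $\de_n^{\LA_\al}$ with $\de_m^{\LA_{\tilde\al}}$, after which the hypothesis applied to $\tilde\al\colon G\rtarr \SI_m$ delivers the conclusion.

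For the first statement, since no product decomposition is available for general $X(\bn)$, the argument instead exploits the equivalence $\bP\colon \PI[\ul{G\sT}]\rtarr \PI_G[\ul{G\sT}]$ of \autoref{PUone}.  Under this equivalence the value $(\bP X)(\bn^\be)$ for $\be\colon G\rtarr \SI_n$ has underlying space $X(\bn)$ with $G$-action twisted by $\be$, so that $((\bP X)(\bn^\be))^G \iso X(\bn)^{\LA_\be}$.  The hypothesis of the lemma therefore translates to the assertion that $(\bP f)(\bn^\be)^G$ is a weak equivalence for every based $G$-set $\bn^\be$.  One then identifies $X(\bn)^{\LA_\al}$ with the $G$-fixed points of $\bP X$ evaluated on a based $G$-set built from $\bn^\al$ by an induction-type construction (the analog for $\bP X$ of the orbit-decomposition computation above), and similarly for $Y$, to obtain the desired weak equivalence on $\LA_\al$-fixed points.

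The main obstacle is the last identification in the first statement: establishing, for a general (non-special) $\PI$-$G$-space $X$, a natural isomorphism $X(\bn)^{\LA_\al} \iso ((\bP X)(\bq))^G$ for a suitable based $G$-set $\bq$ depending on $\al$.  This requires unwinding the coend defining $(\bP X)(\bq) = \PI_G(-,\bq)\otimes_{\PI} X$ and tracking how the $G$-action interacts with the induced-orbit structure of $\bq = G_+\wedge_H \bn^\al$.  Once in hand, the remainder of the argument is a direct transcription of the hypothesis.
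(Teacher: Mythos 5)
The paper gives no proof of this lemma---it simply cites \cite[Lemma 2.18]{MMO}---so the question is whether your argument would prove it, and as it stands it would not. The step you defer as the ``main obstacle'' is not a technicality to be unwound from the coend: it is false. For a general (non-special) $\PI$-$G$-space $X$ there is no natural isomorphism $X(\bn)^{\LA_{\al}}\iso \big((\bP X)(\bq)\big)^G$ with $\bq= G_+\wedge_H \bn^{\al}\iso \bm^{\tilde\al}$; equivalently, $X(\bn)^{\LA_{\al}}$ is not naturally isomorphic to $X(\bm)^{\LA_{\tilde\al}}$. The identification you import from \autoref{prodal} is special to the representable objects $\bR Z$, and its failure for general $X$ is precisely the difference between arbitrary and special $\PI$-$G$-spaces. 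Concretely, let $G=C_2$, $B=C_{2+}$, and let $X(\bn)=\bigvee_{i=1}^{n}B$ with $\SI_n$ permuting wedge summands, $G$ acting only on $B$, and $\ph\in\PI(\bm,\bn)$ sending the $i$th summand identically to the $\ph(i)$th (to the basepoint if $\ph(i)=0$). For $H=e$, $n=1$, the left side is $B$ (two non-basepoint points) while the right side, computed at the regular $G$-set $\mathbf{2}^{\tilde\al}$, is a single point. The same example shows your reading of the hypothesis is too weak: the map induced by $B=C_{2+}\rtarr \ast$ induces equivalences on $\LA_{\al}$-fixed points for every $\al\colon G\rtarr \SI_n$ and every $n$, yet it is not an $\bF_\bullet$-level equivalence (it already fails on underlying spaces at level $1$). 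So ``weak $\LA_{\al}$-equivalence'' must mean a genuine equivalence of $\LA_{\al}$-spaces, i.e.\ an equivalence on $K$-fixed points for every $K\leq \LA_{\al}$, equivalently a weak $G$-equivalence of twisted $G$-spaces $X(\bn)^{\al}\rtarr Y(\bn)^{\al}$; in your translation via \autoref{PUone} the hypothesis says that $\bP f$ is a level $G$-equivalence, not merely an equivalence on $G$-fixed points. Note also that no levelwise group-theoretic reduction can work: for $G=C_4$, $H=C_2$, $n=2$, the isomorphism $\be\colon H\rtarr\SI_2$ extends to no homomorphism $G\rtarr\SI_2$, so $\LA_{\be}$ is not subconjugate to any full graph at level $2$; the $\PI$-structure linking different levels must be used.

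With the hypothesis read correctly, your induced $G$-set is the right device, but the claimed isomorphism should be replaced by a natural retraction. The based maps $\io\colon \bn^{\al}\rtarr \bm^{\tilde\al}$, $x\mapsto [e,x]$, and $\pi\colon \bm^{\tilde\al}\rtarr \bn^{\al}$, $[g,x]\mapsto gx$ for $g\in H$ and to the basepoint otherwise, are morphisms of $\PI$ with $\pi\com\io=\id$, and they satisfy $\io\com\al(h)=\tilde\al(h)\com\io$ and $\pi\com\tilde\al(h)=\al(h)\com\pi$ for $h\in H$. Hence $X(\io)$ and $X(\pi)$ are $H$-equivariant for the twisted actions and exhibit $X(\bn)^{\LA_{\al}}$ as a natural retract of $X(\bm)^{\LA_{\tilde\al|_H}}$, so $f_n^{\LA_{\al}}$ is a retract of $f_m^{\LA_{\tilde\al|_H}}$. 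The hypothesis (correctly read, with $K=H\leq G$) makes the latter a weak equivalence, and retracts of weak equivalences are weak equivalences, which proves the first statement. The second statement should not be a separate diagram chase: identifying the targets via \autoref{prodal} and knowing $\de_m^{\LA_{\tilde\al}}$ is an equivalence says nothing about $\de_n^{\LA_{\al}}$ unless the sources are also compared, which is exactly the content of the first statement. Instead, observe that $\bF_\bullet$-specialness says precisely that $\de\colon X\rtarr \bR X(\mathbf{1})$ is an $\bF_\bullet$-level equivalence, and apply the first statement to $f=\de$.
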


This leads to the following  comparison, which is \cite[Theorem 2.38]{MMO}.

\begin{thm}\label{compFFG}  The following statements hold.
\vspace{1mm}
\begin{enumerate}[(i)]
\item A map $f\colon Y\rtarr Z$ of $\PI_G$-$G$-spaces is a level $G$-equivalence if and only if the map 
$\bU f\colon \bU Y\rtarr \bU Z$ of $\PI$-$G$-spaces is an \gen-level equivalence.  
\vspace{1mm}
\item A $\PI_G$-$G$-space $Y$ is special if and only if the $\PI$-$G$-space $\bU Y$ is \gen-special.
\vspace{1mm}
\item  A map $f$ of $\PI$-$G$-spaces is an \gen-level equivalence if and only if the map $\bP f$ of 
$\PI_G$-$G$-spaces is a level $G$-equivalence.
\item  A $\PI$-$G$-space $X$ is \gen-special if and only if the $\PI_G$-$G$-space $\bP X$ is special.
\vspace{1mm}
\end{enumerate}
These statements remain true with $\PI$ and $\PI_G$ replaced by $\sD$ and $\sD_G$.
\end{thm}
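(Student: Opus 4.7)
The plan is to reduce everything to the ground adjoint equivalence $(\bP,\bU)$ of \autoref{PUone}, plus the identification from \autoref{Xal} and \autoref{prodal}, plus the reduction-to-graph-subgroups in \autoref{speciallevel}. The key intermediate observation is that for any $\PI_G$-$G$-space $Y$, the counit isomorphism $\epz\colon \bP\bU Y \iso Y$, combined with the explicit description of the prolongation (which on an object $\bn^{\al}$ twists the $G$-action on $Y(\bn)$ by $\al$ exactly as in \autoref{Xal}), yields a natural $G$-equivariant identification $Y(\bn^{\al}) \iso (\bU Y)(\bn)^{\al}$. Passing to $H$-fixed points for any subgroup $H \subset G$ and invoking \autoref{Xal} then gives $Y(\bn^{\al})^H \iso (\bU Y)(\bn)^{\GA_{\al|_H}}$, and analogously $f^H\colon Y(\bn^{\al})^H \rtarr Z(\bn^{\al})^H$ is identified with the restriction of $\bU f\colon (\bU Y)(\bn)\rtarr (\bU Z)(\bn)$ to $\GA_{\al|_H}$-fixed points.

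For part (i), a level $G$-equivalence of $\PI_G$-$G$-spaces is precisely the requirement that $f^H$ be a weak equivalence at every $\bn^{\al}$ and every $H\subset G$; by the identification just established this is the requirement that $\bU f$ be a weak $\LA_{\al|_H}$-equivalence on $(\bU Y)(\bn)$ as $H$ and $\al\colon G\rtarr \SI_n$ vary. Taking $H=G$ produces weak $\LA_\be$-equivalences for every $\be\colon G\rtarr \SI_n$, and \autoref{speciallevel} then upgrades these to an \gen-level equivalence; conversely any \gen-level equivalence is a fortiori a weak $\LA_{\al|_H}$-equivalence for every such pair $(H,\al)$, so level $G$-equivalence follows. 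Part (ii) is proved by exactly the same bookkeeping applied to the Segal maps $\de\colon Y(\bn^{\al})\rtarr Y(\mathbf{1})^{\al}$, since both source and target of $\de$ respect the identification $(-)^{\al}$, and the special-object analogue of \autoref{speciallevel} is already stated there.

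Parts (iii) and (iv) follow formally from (i) and (ii) by applying them to $\bP f$ and $\bP X$ respectively and invoking the identification $\bU\bP \iso \mathrm{Id}$ from \autoref{PUone}. Finally, the $(\sD,\sD_G)$-algebra versions of all four statements are immediate: by \autoref{PUtwo} the adjoint equivalence $(\bP,\bU)$ lifts to an adjoint equivalence $\DGalg \rightleftarrows \Dalg$, and both \gen-level equivalences of $\sD$-$G$-spaces (resp.\ level $G$-equivalences of $\sD_G$-$G$-spaces) are defined by forgetting to the underlying $\PI$-$G$-space (resp.\ $\PI_G$-$G$-space), so the algebra statements reduce on the nose to the underlying statements.

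The one place that requires care is checking that the identification $Y(\bn^{\al})\iso (\bU Y)(\bn)^{\al}$ produced by $\epz^{-1}$ is actually the one described combinatorially in \autoref{prodal}, so that taking $H$-fixed points really does land us in the graph subgroup $\GA_{\al|_H}$ as claimed; this is a Yoneda-style verification exactly like the one outlined in the proof of \autoref{PUone}, tracing the identity element of $\PI_G(\bn,\bn^{\al})$ through the coend formula. I expect this bookkeeping to be the only nontrivial step; everything else is then a direct translation between the two indexing schemes via \autoref{speciallevel}.
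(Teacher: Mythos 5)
Your proof is correct and is essentially the argument the paper intends: the paper disposes of this theorem by citation to \cite[Theorem 2.38]{MMO}, and the ingredients it assembles around that citation — the twisted-action description of prolongation and of $\epz^{-1}$ (\autoref{Xal}, \autoref{prodal}, \autoref{PUone}) and the reduction to graph subgroups with $H=G$ in \autoref{speciallevel} — are exactly the ones your argument uses, with the identification $Y(\bn^{\al})^H \iso (\bU Y)(\bn)^{\GA_{\al|_H}}$ doing the real work and \autoref{PUtwo} handling the $\sD$, $\sD_G$ case. The bookkeeping step you flag (that the counit identification is the combinatorial twisting of \autoref{prodal}) is precisely the explicit description of $\epz^{-1}$ that the paper attributes to MMO, so there is no gap.
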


In turn, this leads to the analogues of Lemmas  \ref{Rgen} and \ref{iff}, which are \cite[Lemmas 2.41 and 2.42]{MMO}.

\begin{lem}\label{Rgen2} If $f\colon X\rtarr Y$ is a weak equivalence of based $G$-spaces, then the induced map
$\bR_Gf\colon  \bR_G X\rtarr \bR_GY$ is a level $G$-equivalence of $\PI_G$-$G$-spaces.
\end{lem}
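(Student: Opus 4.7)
My plan is to deduce this result directly from the two inputs already assembled in the text: the nonequivariant (more precisely, $\PI$-level) version \autoref{Rgen} and the comparison between $\PI[\ul{G\sT}]$ and $\PI_G[\ul{G\sT}]$ given in \autoref{compFFG}. The point is that $\bR_G$ has been defined as the composite $\bP \com \bR$, so there is nothing new to compute once one knows how $\bP$ interacts with level $G$-equivalences versus $\bF_n$-level equivalences.

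First, I would note that for a weak equivalence $f\colon X\rtarr Y$ of based $G$-spaces, \autoref{Rgen} immediately gives that $\bR f\colon \bR X \rtarr \bR Y$ is an \gen-level equivalence of $\PI$-$G$-spaces; that is, at each level $\bn$, the map $f^n\colon X^n \rtarr Y^n$ is an $\bF_n$-equivalence, meaning $(f^n)^{\GA_\al}$ is a weak equivalence for every homomorphism $\al\colon H \rtarr \SI_n$ with $H\subset G$. Then, since $\bR_G = \bP\com \bR$, the map $\bR_G f$ is by definition $\bP(\bR f)$. Applying part (iii) of \autoref{compFFG}, which asserts that $\bP$ converts \gen-level equivalences of $\PI$-$G$-spaces precisely into level $G$-equivalences of $\PI_G$-$G$-spaces, we conclude that $\bR_G f$ is a level $G$-equivalence, as required.

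There is essentially no obstacle: the entire content is absorbed into \autoref{Rgen} (which isolates exactly the right $\bF_n$-equivariant strength needed to survive prolongation) and into \autoref{compFFG}(iii) (which packages the identifications $(X^n)^\al = X^{\bn^\al}$ and the relation between fixed points under $\GA_\al$ and $G$-equivariance for the twisted action). If one wanted a self-contained check, the only thing to verify by hand is that at level $\bn^\al$ the map $(\bR_G f)(\bn^\al)$ is the map $f^{\bn^\al}\colon X^{\bn^\al}\rtarr Y^{\bn^\al}$ induced on twisted cartesian powers, and that passing to $H$-fixed points for $H\subset G$ decomposes this into a product of maps of the form $f^K$ for various subgroups $K\subset G$ determined by orbits of $H$ acting on $\bn^\al$; since $f^K$ is a weak equivalence by hypothesis, so is each such product. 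This is precisely the content that \autoref{speciallevel} and \autoref{compFFG} have already extracted, so invoking those results is the cleanest route.
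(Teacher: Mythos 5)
Your proposal is correct and follows essentially the same route the paper takes: since $\bR_G=\bP\com\bR$, the statement is exactly what one gets by combining \autoref{Rgen} with the comparison \autoref{compFFG}(iii), which is how the text derives it (``in turn, this leads to\dots''), with the underlying fixed-point computation being the cited description of $\LA_{\al}$-fixed points of $X^n$. Your optional hands-on check via orbit decompositions of $(X^{\bn^{\al}})^H$ is precisely that cited ingredient, so nothing is missing.
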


\begin{lem}\label{iff2}  If $f\colon X\rtarr Y$ is a level $G$-equivalence of $\PI_G$-$G$-spaces, then $X$ is special if and only if $Y$ is special.
\end{lem}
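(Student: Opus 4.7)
The plan is to reduce the statement to its $\PI$-$G$-analog Lemma \ref{iff} by means of the comparison Theorem \ref{compFFG}, which is the natural bridge between level $G$-equivalences of $\PI_G$-$G$-spaces and \gen-level equivalences of $\PI$-$G$-spaces. The whole point of having first set up the equivalence of categories $(\bP,\bU)$ between $\PI[\ul{G\sT}]$ and $\PI_G[\ul{G\sT}]$, and having shown in Theorem \ref{compFFG} that it respects both the notions of weak equivalence and the notions of specialness, is precisely to make arguments like this completely formal.

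First I would apply Theorem \ref{compFFG}(i) to the map $f\colon X\rtarr Y$: since $f$ is assumed to be a level $G$-equivalence of $\PI_G$-$G$-spaces, the induced map $\bU f\colon \bU X\rtarr \bU Y$ is an \gen-level equivalence of $\PI$-$G$-spaces. Next I would apply Lemma \ref{iff}, which is exactly the $\PI$-level version of the statement to be proved, to conclude that $\bU X$ is \gen-special if and only if $\bU Y$ is \gen-special. Finally I would apply Theorem \ref{compFFG}(ii) twice, once to $X$ and once to $Y$, to translate \gen-specialness of $\bU X$ and $\bU Y$ back into specialness of $X$ and $Y$. Stringing these biconditionals together gives the conclusion.

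There is no real obstacle; the only thing to verify is that the hypotheses of each cited result are met, and in each case they are verbatim what we have. A purely direct proof bypassing \autoref{compFFG} would have to compare the naturality square
\[
\xymatrix{
X(\bn^{\al}) \ar[r]^{f} \ar[d]_{\de} & Y(\bn^{\al}) \ar[d]^{\de} \\
X(\mathbf{1})^{\al} \ar[r]_{f^n} & Y(\mathbf{1})^{\al}
}
\]
and argue that the lower horizontal map is a weak $G$-equivalence whenever $f\colon X(\mathbf{1})\rtarr Y(\mathbf{1})$ is, by identifying fixed points of the twisted action via \autoref{prodal} and the family $\bF_n$; the present route through \autoref{compFFG} packages that combinatorial fixed-point calculation once and for all, and so is both quicker and more in keeping with the conceptual framework of the paper.
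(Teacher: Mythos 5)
Your proof is correct and follows essentially the same route as the paper, which introduces \autoref{compFFG} precisely so that Lemmas \ref{Rgen2} and \ref{iff2} follow from their $\PI$-level counterparts (the paper says ``in turn, this leads to the analogues of Lemmas \ref{Rgen} and \ref{iff}'' and otherwise cites \cite[Lemma 2.42]{MMO}). Chaining \autoref{compFFG}(i), \autoref{iff}, and \autoref{compFFG}(ii) exactly as you do is the intended argument.
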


The following specialization of \cite[Theorem 5.23]{MMO} tells us that restrictions of our basic constructions preserve special objects.

 \begin{thm}\label{homotopMT} 
\begin{enumerate}[(i)]
\item If $f\colon X\rtarr Y$ is an \gen-level equivalence of $\PI$-$G$-spaces,  then 
$\bD  f\colon \bD  X \rtarr \bD Y$ is an \gen-level equivalence, hence 
$\bD_G  \bP f\colon \bD_G  \bP X \rtarr \bD_G \bP Y$, is a level $G$-equivalence.
\item If $X$ is an \gen-special $\PI$-$G$-space, then $\bD X$ is \gen-special, hence
$\bD_G \bP X$ is a special $\PI_G$-$G$-space.
\end{enumerate}
\end{thm}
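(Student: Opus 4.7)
The plan is to prove both statements for the monad $\bD$ on $\PI[\ul{G\sT}]$ and deduce the second clauses for $\bD_G \bP$ from them. For those reductions, the proof of \autoref{PUtwo} supplied a natural isomorphism $\bD_G \bP \iso \bP \bD$, and \autoref{compFFG}(iii) and (iv) convert \gen-level equivalences and \gen-special objects of $\PI[\ul{G\sT}]$ into level $G$-equivalences and special objects of $\PI_G[\ul{G\sT}]$, respectively. Thus both ``hence'' clauses are formal consequences of the statements about $\bD$.

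For part (i), I would reduce the question to a fixed-point calculation via \autoref{speciallevel}: it suffices to show, for every $\al\colon G\rtarr \SI_n$, that $(\bD f)(\bn)^{\LA_\al}$ is a weak equivalence. Unwinding the coequalizer in the definition $(\bD X)(\bn) = \sD(-,\bn)\otimes_\PI X$ and using the explicit form $\sD(\bm,\bn) = \coprod_\phi \prod_j \sC(\phi_j)$, one obtains a natural decomposition of $(\bD X)(\bn)$ as a colimit of summands of the form (factors of $\sC$) smashed, modulo the $\PI$-action, with smash products of values $X(\bk_i)$. Because $\sC$ is an $E_\infty$ operad, the stabilizer of each summand lies in the appropriate family $\bF_j$, and the $\LA_\al$-fixed points of a smash power of $X(\bk_i)$ are identified by \autoref{prodal} with fixed points of the form $X(\bk_i)^{\LA_\be}$ for graph subgroups $\LA_\be$ determined by $\al$. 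The hypothesis that $f$ is an \gen-level equivalence then supplies a weak equivalence on each summand, and the usual cofibration considerations recalled in \autoref{bspt1} propagate this to the colimit.

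For part (ii), the Segal map $\de\colon (\bD X)(\bn) \rtarr (\bD X)(\mathbf{1})^n$ must be shown to be an $\bF_n$-equivalence when $X$ is \gen-special. Observe first that $(\bD X)(\mathbf{1})$ is canonically identified with $\bC X(\mathbf{1})$, so the codomain is $(\bC X(\mathbf{1}))^n$. Using the indexing decomposition of $(\bD X)(\bn)$ from part (i), the map $\de$ splits along summands: the summand indexed by $\phi\colon \bm \rtarr \bn$ maps naturally into the product over $j$ of the summands of $\bC X(\mathbf{1})$ indexed by the fibers $\phi^{-1}(j)$, and the induced map on that summand is, after appropriate rearrangement, a smash product of Segal maps of $X$ restricted to the various input slots. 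Each such Segal map is an $\bF_{m_i}$-equivalence by hypothesis, and the desired $\bF_n$-equivalence follows by a fixed-point analysis parallel to that of part (i). The main obstacle, and the real combinatorial work, is tracking the interplay between the colimits defining the tensor product, the operadic structure on $\sC$, and iterated passage to $\LA_\al$-fixed points; the realization and cofibration machinery of \autoref{ass6} then assembles the piecewise equivalences into the single global equivalence that the theorem demands.
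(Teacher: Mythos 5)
Your reduction of the two ``hence'' clauses is fine: the identification $\bD_G\bP \iso \bP\bD$ from the proof of \autoref{PUtwo} together with \autoref{compFFG}(iii),(iv) does convert the statements about $\bD$ into the statements about $\bD_G\bP$, and that is exactly how the paper intends them to be read. Note, though, that the paper gives no proof of the statements about $\bD$ itself; it cites \cite[Theorem 5.23]{MMO}, so what you are really reconstructing is that proof.

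For part (i) your sketch has the right general shape (fixed-point analysis over graph subgroups, homotopical control of the quotients), but it defers the actual content. The coequalizer $(\bD X)(\bn)=\sD(-,\bn)\otimes_{\PI}X$ is \emph{not} a coproduct of summands of the advertised form: the projections and degeneracies in $\PI$ glue the pieces $\sC(\ph_j)$-factors-times-$X(\bk)$ together, which is precisely why the argument in \cite{MMO} runs through a filtration of $\bD X$ with the cofibration conditions of their Definitions 2.3/2.33 and Addendum 4.6, and why the relevant hypothesis is $\SI$-freeness of the $\sC(j)$ (so that $\LA_{\al}$-fixed points of the orbit constructions decompose correctly), not the $E_\infty$ condition; also \autoref{prodal} only describes twisted powers of a single $G$-space, i.e.\ values of $\bR$, not fixed points of products of values of a general $\PI$-$G$-space. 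More seriously, part (ii) starts from a false identification: $(\bD X)(\mathbf{1})\iso \bC X(\mathbf{1})$, i.e.\ $\bL\bD\iso\bC\bL$, fails for general $\PI$-$G$-spaces --- $(\bD X)(\mathbf{1})$ is built from the values $X(\bm)$ at \emph{all} levels, and only $\bD\bR\iso\bR\bC$ (equivalently $\bL\bD\bR\iso\bC$) holds. This is exactly the slip the paper flags in the footnote to \autoref{mopup2}, where \cite[Proposition 5.20]{MMO} is called out as false. The correct and much shorter route to (ii) is to deduce it from (i): if $X$ is \gen-special, the Segal map $\de\colon X\rtarr\bR\bL X$ is an \gen-level equivalence, so by (i) $\bD\de\colon\bD X\rtarr\bD\bR\bL X\iso\bR\bC\bL X$ is an \gen-level equivalence; the target is (strictly) special because every object in the image of $\bR$ is, and \autoref{iff} then gives that $\bD X$ is \gen-special. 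As written, your direct splitting of $\de$ into smash products of Segal maps rests on the false identification and does not go through.
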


From here, the discussion in  Sections \ref{A}, \ref{BC}  and \ref{DE} makes clear that our Assumptions all hold.   We specialize Theorems \ref{recprincom} and \ref{adjequivcom} to the second diagram of \autoref{adj7}.  The analogous theorems hold for the first diagram, compatibly with respect to $(\bP,\bU)$. 

\begin{thm}\label{recprincom1}  There is a functor $\mathrm{Bar} \colon \DGalg\rtarr \DGalg$, written $Y\mapsto \overline{Y}$, and a natural equivalence  $\ze\colon \overline{Y} \rtarr Y$.  If $Y$ is special, the unit 
$$\et_{\bD_G}\colon \overline{Y} \rtarr (\bR_G\OM^{\infty})_{\bD_G} (\SI^{\infty}\bL_G)_{\bD_G} \overline{Y}$$  
is a group completion and is therefore an equivalence if $Y$ is grouplike.\end{thm}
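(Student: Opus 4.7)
The plan is to derive this theorem as a direct specialization of the abstract \autoref{recprincom} to the composite adjunction context furnished by $(\bL_G,\bR_G)$ together with the monads $\bC_G$ on $G\sT$ and $\bD_G$ on $\PI_G[\ul{G\sT}]$. The target adjunction $(\SI^{\infty},\OM^{\infty})$ and monad $\bC_G$ on $G\sT$ are the very setup of \autoref{SpaceSpectra}, where all of Assumptions \ref{ass1} through \ref{ass6} were established, so what remains is to verify the composite adjunction assumptions on the left trapezoid of the bottom diagram of \autoref{adj7}.

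First I would verify \autoref{ass7}: \autoref{mopup1} gives the adjunction $(\bL_G,\bR_G)$ with $\bL_G\bR_G = \id$, and \autoref{mopup2} supplies the natural isomorphism $\om_G\colon \bD_G\bR_G \rtarr \bR_G\bC_G$ compatible with the units and products of $\bC_G$ and $\bD_G$, as noted there. Next I would verify \autoref{ass8}: take weak equivalences in $\PI_G[\ul{G\sT}]$ to be the level $G$-equivalences of \autoref{weakFn}(iv), and weak equivalences in $\bD_G[\PI_G[\ul{G\sT}]]$ to be those created by $\bU_{\bD_G}$, as in \autoref{weakFn}(v). Then \autoref{Rgen2} ensures that $\bR_G$ preserves weak equivalences, and part (i) of \autoref{homotopMT} ensures that $\bD_G$ (hence $\bF_{\bD_G}$) does as well on \gen-special objects; the ``if and only if'' clause concerning weak equivalences on $\sT$ is transparent since $\bL_G\bR_G = \id$ forces $\bL_G\bR_G f = f$.

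For the remaining assumptions I would appeal to \autoref{DEF}. \autoref{ass3yes} gives \autoref{ass3} by defining $\bR_G\OM^{\infty}$-connective to mean $\OM^{\infty}$-connective. For \autoref{ass4}, I would use the levelwise framework of \autoref{ass4hazy2}, taking the group completion functor on special $\PI_G$-$G$-spaces to be $\bR_G\bG\bL_G$ composed with the unit $\et\colon Y\rtarr \bR_G\bL_G Y$; \autoref{ass4com} then converts the space level group completion of \autoref{SpaceSpectra} into the required composite version, with the restriction to special objects absorbing the fact that $\et$ is only a weak equivalence there. \autoref{ass5com} delivers the approximation theorem on special objects from its $G$-spectrum level counterpart via the diagram \autoref{compal}, where the top left map $\bD_G\et$ is a weak equivalence precisely because $Y$ is special. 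Finally, \autoref{ass12} holds because the right adjoint $\bR_G$ is given by (twisted) cartesian powers in the sense of \autoref{prodal}, so $\ga_{\bR_G}$ is an isomorphism by \autoref{different}, making \autoref{tortuous} applicable.

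The main obstacle is the role of ``special'' throughout: the unit of the composite adjunction is homotopically well behaved only when evaluated on special $\PI_G$-$G$-spaces, and the approximation map $\al_{com}$ of \autoref{compal} factors through $\bD_G\et$, which is a weak equivalence exactly on special objects. Once this restriction is installed, the conclusion that $\et_{\bD_G}\colon \overline Y \rtarr (\bR_G\OM^{\infty})_{\bD_G}(\SI^{\infty}\bL_G)_{\bD_G}\overline Y$ is a group completion, and an equivalence when $Y$ is grouplike, follows verbatim from the proof of \autoref{recprincom} (equivalently, \autoref{recprin1} applied to the composite context), with the final statement via \autoref{homotopMT}(ii) ensuring $\overline Y = B(\bD_G,\bD_G,Y)$ remains special whenever $Y$ is, so that the bar construction lives in the range where the machine operates correctly.
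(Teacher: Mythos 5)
Your proposal is correct and follows essentially the same route as the paper: the paper's own proof of \autoref{recprincom1} is exactly to verify Assumptions A$_{com}$ through F in the categories-of-operators context via \autoref{mopup1}, \autoref{mopup2}, Definitions \ref{weakFn} and \ref{PIspecial}, Lemmas \ref{Rgen2} and \ref{iff2}, \autoref{homotopMT}, and the general discussion of Sections \ref{ABC} and \ref{DEF} (including \autoref{ass4hazy2}, \autoref{ass4com}, \autoref{ass5com}, \autoref{ass12}, and \autoref{different}), and then specialize \autoref{recprincom} to the second diagram of \autoref{adj7}, with \autoref{homotopMT}(ii) supplying the preservation of specialness needed for the bar construction. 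Your only slip is cosmetic: \autoref{homotopMT}(i) gives that $\bD_G$ preserves level $G$-equivalences without any restriction to \gen-special objects (modulo good objects), so your added restriction there is unnecessary but harmless.
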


\begin{thm}\label{adjequivcom2}  $\big((\SI^{\infty}\bL)_{\bD_G},(\bR\OM^{\infty})_{\bD_G}\big)$ induces an adjoint equivalence from the homotopy category of special grouplike $\bD_G$-algebras in $\PI_G[\ul{G\sT}]$ to the homotopy category of connective $G$-spectra.
\end{thm}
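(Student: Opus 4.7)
The plan is to derive Theorem \ref{adjequivcom2} as a direct specialization of the general composite adjunction result \autoref{adjequivcom}, so the bulk of the work is verifying that the specific data $(\SI^{\infty},\OM^{\infty})$, $\bC_G$, $(\bL_G,\bR_G)$, and $\bD_G$ satisfy Assumptions A through F of \autoref{CONTEXT} together with the composite-adjunction additions $\text{A}_{com}$ through $\text{F}_{com}$ of \autoref{ABCDEF}.

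First I would check the base context: the small right triangle of the bottom diagram in \autoref{adj7} is an instance of \autoref{ass1} with $\sT = G\sT$, $\sS = G\sS$, and monad $\bC_G$, and all of Assumptions A--F for this triangle were verified in \autoref{SpaceSpectra} (the key input being the operadic recognition/approximation results for connective genuine $G$-spectra via the Steiner $E_\infty$ operad). Then I would verify \autoref{ass7}: the adjunction $(\bL_G,\bR_G)$ between $\PI_G[\ul{G\sT}]$ and $G\sT$ has $\epz$ an isomorphism by \autoref{mopup1}, and the natural isomorphism $\om_G\colon \bD_G\bR_G \iso \bR_G\bC_G$ compatible with units and products is \autoref{mopup2}. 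This gives the composite adjunction context.

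Next I would verify the homotopical composite assumptions. For \autoref{ass8}, take weak equivalences in $\PI_G[\ul{G\sT}]$ to be the level $G$-equivalences of \autoref{weakFn}(iv); \autoref{Rgen2} shows that $\bR_G$ reflects and preserves weak equivalences on objects of $G\sT$, and \autoref{homotopMT}(i) gives that $\bD_G$ (hence $\bF_{\bD_G}$) preserves level $G$-equivalences. Assumption C (connectivity) transfers via \autoref{ass3yes}, declaring $\bR_G\OM^\infty$-connective to mean connective in $G\sS$. For Assumption D (group completions) in the composite context, I invoke \autoref{ass4com} together with the levelwise definition of grouplike objects and group completions in $\PI_G[\ul{G\sT}]_{ss}$ from \autoref{ass4hazy2}; since $\bR_G$ is given by taking cartesian powers (with twisted action, \autoref{prodal}), it visibly preserves grouplike objects and group completions from $G\sT$. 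For Assumption E (the approximation theorem) I use \autoref{ass5com}: on special $\bD_G$-algebras, the diagram \autoref{compal} identifies $\al_{com}$ with $\bR_G \al$ up to the Segal equivalence, and the space-level approximation theorem for $\sC_G$-$G$-spaces from \autoref{SpaceSpectra} combined with the fact that $\bR_G$ preserves group completions gives the result. For Assumption F (simplicial preliminaries) and its composite elaboration \autoref{ass12}, realization is defined levelwise; parts (i)--(iv) are formal, (v) and (vi) follow from \autoref{tortuous} once we observe $\ga_{\bR_G}$ is an isomorphism because $\bR_G$ is built from cartesian powers and realization commutes with finite products (see \autoref{different}), and (vii) reduces to the space-level statement after applying $\bR_G$.

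With all Assumptions in hand, \autoref{adjequivcom} applies verbatim, yielding the asserted adjoint equivalence on homotopy categories; the recognition principle \autoref{recprincom1} is then the companion statement. The main obstacle I expect is the approximation theorem step, because although it formally reduces to its space/spectrum-level counterpart, one must be careful that the reduction in \autoref{compal} uses the \emph{special} condition essentially (the top-left vertical arrow $\bD_G\et$ is a weak equivalence only on special objects, via \autoref{compFFG}(ii) and \autoref{homotopMT}), and that the levelwise notion of group completion in $\PI_G[\ul{G\sT}]$ cooperates with the twisted-power structure of $\bR_G$ described in \autoref{prodal}. Once these specialness considerations are handled, the equivalence $\bR X\rtarr \overline{\bR X}_{\bC}$ from \autoref{ssequiv} together with \autoref{reduce} confirms that the homotopical content is faithfully captured by the corresponding equivalence in the small right triangle, which is precisely the case $G$-space/$G$-spectrum of \autoref{spectra}.
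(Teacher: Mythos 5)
Your proposal is correct and follows essentially the same route as the paper: the paper likewise verifies the Assumptions and their composite analogues for the bottom diagram of \autoref{adj7} using \autoref{mopup1}, \autoref{mopup2}, Definitions \ref{weakFn} and \ref{PIspecial}, Lemmas \ref{Rgen2} and \ref{iff2}, Theorems \ref{compFFG} and \ref{homotopMT}, together with the general discussion in Sections \ref{ABC} and \ref{DEF}, and then obtains the theorem as a direct specialization of \autoref{adjequivcom}. One cosmetic correction: in the categories-of-operators case, \autoref{ass4com} and \autoref{ass4hazy2} restrict to special objects $\sV_s$ rather than strictly special objects $\sV_{ss}$ as you wrote, which is exactly the hypothesis appearing in the theorem statement.
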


\begin{rem}  As in \autoref{finiteindex} and \cite[Section 9.3]{MMO}, everything above applies equally well to compact Lie groups $G$, provided that we restrict attention to representations of $G$ all of whose isotropy groups have finite index in $G$.  
\end{rem}

Following up \autoref{classical},  there is a second specialization of \autoref{adj7}, where we now start with a nonequivariant $E_{\infty}$ operad $\sC$ as defined in \autoref{oper1}, regarded as an operad of $G$-trivial $G$-spaces.  While the notion of an \gen-special $\PI$-$G$-space is only of real interest when  $G$ is finite, the following definitions apply to any topological group  $G$.\footnote{Remember that we require $e\in G$ to be a non-degenerate basepoint to avoid pathology.}  They should be compared with Definitions \ref{weakFn} and \ref{PIspecial}. 

\begin{defn} A map $f\colon X\rtarr Y$ of $\PI$-spaces is a {\em level $G$-equivalence} if each $f_n\colon X(\bn)\rtarr Y(\bn)$ is a weak 
$G$-equivalence.   A map of $\sD$-$G$-spaces is a level $G$-equivalence if its underlying map of $\PI$-$G$-spaces is a level $G$-equivalence.
\end{defn}

\begin{defn} A $\PI$-space $X$ is classically (or naively) special if  $\de\colon  X \rtarr \bR\bL X$ is a level $G$-equivalence.
A $\sD$-$G$-space $X$ is  classically (or naively) special if its underlying $\PI$-$G$-space is classically special.
\end{defn} 

The $\PI_G$ and $\sD_G$-$G$-space analogues of the previous two definitions are of no great interest.  They are obtained just by forgetting down to $\PI$-$G$-spaces and applying the definitions just given.  With these definitions, the two diagrams of \autoref{adj7} are tautologically equivalent homotopically, as well as equivalent formally.  Focusing
on the first diagram, we have classical (or naive) analogs of Theorems \ref{recprincom1} and \ref{adjequivcom2}. These apply to arbitrary (non-degenerately based) groups $G$ and refer to classical (or naive) $G$-spectra as defined in 
\autoref{classical}. 

\begin{thm}\label{recprincom3}  There is a functor $\mathrm{Bar} \colon \Dalg\rtarr \Dalg$, written $Y\mapsto \overline{Y}$, and a natural equivalence  $\ze\colon \overline{Y} \rtarr Y$.  If $Y$ is classically special, the unit 
$$\et_{\bD}\colon \overline{Y} \rtarr (\bR\OM^{\infty})_{\bD} (\SI^{\infty}\bL)_{\bD} \overline{Y}$$  
is a group completion and is therefore an equivalence if $Y$ is grouplike.\end{thm}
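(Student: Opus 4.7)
The plan is to obtain \autoref{recprincom3} as a direct application of the general composite adjunction recognition principle \autoref{recprincom}. First I would fix the data: take $\sT = G\sT$ and $\sS$ the category of classical (naive) $G$-spectra, with the adjunction $(\SI^\infty,\OM^\infty)$ recalled in \autoref{classical}; take $\bC$ to be the monad associated to a $G$-trivial nonequivariant $E_\infty$ operad $\sC$, let $\sD = \mathfrak D(\sC)$, let $\bD$ be the corresponding monad on $\sV = \PI[\ul{G\sT}]$, and let $(\bL,\bR)$ be the adjunction of \autoref{mopup1}. The small right triangle of the first diagram in \autoref{adj7}, read in the classical setting, satisfies the base \autoref{ass1}--\autoref{ass6} by \autoref{classical}, and this is the starting point.

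Next I would verify the composite adjunction context. The equality $\bL\bR = \mathrm{Id}$ is part of \autoref{mopup1}, and \autoref{mopup2} supplies the natural isomorphism $\om\colon \bD\bR \rtarr \bR\bC$ together with its compatibility with units and products, so \autoref{ass7} holds. For \autoref{ass8}, declare weak equivalences in $\sV$ to be level $G$-equivalences; since $(\bR X)(\bn) = X^n$ with the conjugation $G$-action, cartesian powers preserve and reflect $G$-equivalences, and therefore $f$ in $G\sT$ is a weak equivalence if and only if $\bR f$ is. The functor $\bF_{\bD} = \bD$ preserves level $G$-equivalences by the argument of \autoref{homotopMT}(i), which goes through verbatim in the classical case. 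Assumption F for the composite is subsumed by \autoref{ass12}: $\bL$, being evaluation at $\mathbf{1}$, trivially commutes with realization; $\bD$ is a coend built from coproducts and products with the spaces $\sC(k)$, hence commutes with realization; finally $\ga_\bR$ is an isomorphism because $\bR$ is given by cartesian powers and realization commutes with finite products (\autoref{different}).

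The role played by ``special'' in \autoref{recprincom} is now played by ``classically special.'' \autoref{ass4} for the composite follows from \autoref{ass4com}: define grouplike objects and group completions in $\sV$ levelwise from $\sH_{G,m,com}$ in $G\sT$ (\autoref{ass4hazy2}), and use that $\bR$ sends grouplike $\bC$-algebras to grouplike $\bD$-algebras and group completions to group completions levelwise. The main substantive point is \autoref{ass5}, the composite approximation theorem. By \autoref{ass5com} and the diagram \autoref{compal}, on a classically special $\PI$-$G$-space $X$ the map $\al_{com}$ is identified up to the level equivalence $\bD \et \colon \bD X \rtarr \bD \bR\bL X$ with $\bR$ applied to the base $\al\colon \bC(\bL X) \rtarr \OM^\infty\SI^\infty(\bL X)$; since that base map is a group completion by \autoref{classical} and group completion is tested levelwise in $\sV$, the composite $\al_{com}$ is a group completion on classically special objects.

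With all assumptions now in place, \autoref{recprincom} directly supplies a functor $\mathrm{Bar}\colon \Dalg \rtarr \Dalg$, the natural weak equivalence $\ze \colon \overline Y \rtarr Y$, and the conclusion that for classically special $Y$ the unit $\et_{\bD}\colon \overline Y \rtarr (\bR\OM^\infty)_{\bD}(\SI^\infty\bL)_{\bD}\overline Y$ is a group completion, which is an equivalence when $Y$ is grouplike. The only place where real (as opposed to purely formal) work is hidden is the appeal to the classical approximation theorem in \autoref{classical}, which in turn reduces by taking $G$-fixed points to the nonequivariant Barratt--Quillen-type statement for $E_\infty$ operads acting on infinite loop spaces.
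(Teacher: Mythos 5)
Your proposal is correct and follows essentially the same route as the paper: Theorem \ref{recprincom3} is obtained by specializing the general composite adjunction recognition principle \autoref{recprincom} to the first diagram of \autoref{adj7}, with the base adjunction and approximation theorem supplied by \autoref{classical} (fixed-point reduction to the nonequivariant case), level $G$-equivalences as the weak equivalences in $\PI[\ul{G\sT}]$, and ``classically special'' playing the role of special. Your verifications of Assumptions A$_{com}$--F via \autoref{mopup1}, \autoref{mopup2}, \autoref{ass4com}, \autoref{ass5com}, and \autoref{ass12} are exactly the checks the paper leaves implicit when it asserts the classical analogs of Theorems \ref{recprincom1} and \ref{adjequivcom2}.
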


\begin{thm}\label{adjequivcom4}  $\big((\SI^{\infty}\bL)_{\bD},(\bR\OM^{\infty})_{\bD}\big)$ induces an adjoint equivalence from the homotopy category of special grouplike $\bD$-algebras in $\PI[\ul{G\sT}]$ to the homotopy category of connective classical $G$-spectra.
\end{thm}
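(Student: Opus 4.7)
The strategy is to realize \autoref{adjequivcom4} as a direct specialization of the general \autoref{adjequivcom} to the upper diagram of \autoref{adj7}, but equipped with the classical/naive notion of weak equivalence on $\PI[\ul{G\sT}]$ and the classical $G$-spectra of \autoref{classical}. The base adjunction is $(\SI^{\infty}, \OM^{\infty})$ between $G\sT$ and classical $G$-spectra, with $\bC$ the monad associated to the $G$-trivial $E_\infty$ operad $\sC$. \autoref{classical} already gives that all of Assumptions A–F hold in this base context, with group completion defined levelwise on fixed-point spaces and with $\OM^{\infty}$-connectivity meaning connective in the classical sense. The proof then amounts to installing the composite adjunction and verifying Assumptions A(com), B(com) and the Assumptions relevant in \autoref{DEF}.

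First I would record the composite datum: $\sV = \PI[\ul{G\sT}]$, the adjunction $(\bL,\bR)$ of \autoref{mopup1} (which satisfies $\bL\bR = \Id$), the monad $\bD$ associated to the categories of operators $\sD = \mathfrak D(\sC)$, and the isomorphism $\om\colon \bD\bR \iso \bR\bC$ of \autoref{mopup2}. The compatibility of $\om$ with units and products, required in \autoref{ass7}, is a direct inspection that appears in the proof of \autoref{mopup2}. This puts us in the composite adjunction context, and by \autoref{com1} gives the left adjoint $(\SI^\infty\bL)_{\bD}$ to $(\bR\OM^\infty)_{\bD}$.

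Next I would verify \autoref{ass8} by declaring weak equivalences in $\PI[\ul{G\sT}]$ to be the level $G$-equivalences and transporting them to $\bD[\PI[\ul{G\sT}]]$ via $\bU_{\bD}$. Since $\bR$ is given levelwise by cartesian powers with $G$ acting diagonally and with permutations from $\PI$, it preserves and reflects $G$-equivalences on every level (compare \autoref{Rgen} applied to the family of $G$-trivial subgroups of $G\times\SI_n$), and $\bD$ preserves level $G$-equivalences on good objects by the filtration argument behind the first part of \autoref{homotopMT}. \autoref{ass3yes} then identifies the $\bR\OM^\infty$-connective objects with the connective classical $G$-spectra. The levelwise group completion functor of \autoref{ass4hazy2}, combined with \autoref{ass4com}, supplies \autoref{ass4} for $(\bR\OM^\infty, \SI^\infty\bL)$ and $\bD$ after restriction to $\sV_s$. \autoref{ass12} reduces to checking that $\bL$ and $\bD$ commute with geometric realization, which holds because $\bL$ is evaluation at $\mathbf{1}$ and $\bD$ is assembled from products, wedges, and coends that all commute with realization. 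Finally, $\ga_{\bR}$ of \autoref{gammaR} is an isomorphism since $\bR$ is a levelwise cartesian power and realization commutes with finite products; Lemmas \ref{DgammaR} and \ref{tortuous} then finish the remaining parts of \autoref{ass6} in the composite context.

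The main obstacle, and the only nonformal step, is the approximation theorem in this composite setting, namely \autoref{ass5com}. I would establish it by chasing the diagram \autoref{compal}: its bottom composite $\bR\al\colon \bR\bC\bL Y \to \bR\OM^\infty\SI^\infty\bL Y$ is a levelwise group completion because the base approximation theorem from \autoref{classical} holds nonequivariantly on each level and $\bR$ (levelwise cartesian powers with diagonal $G$-action) preserves both grouplike objects and group completions; the left vertical arrow $\bD\et$ is a level $G$-equivalence exactly when $Y$ is classically special, by the level $G$-equivalence analog of \autoref{YDY} (which goes through because $\bD$ preserves level $G$-equivalences). Hence $\al_{com}$ is a level $G$-equivalence group completion on classically special $Y$. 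With all Assumptions now in force, \autoref{adjequivcom} applied to this composite context yields precisely \autoref{adjequivcom4}; \autoref{recprincom3} is the corresponding specialization of \autoref{recprincom}.
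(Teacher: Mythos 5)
Your proposal is correct and follows essentially the same route as the paper: the paper obtains \autoref{adjequivcom4} as a direct specialization of \autoref{adjequivcom} to the first diagram of \autoref{adj7}, with the classical level $G$-equivalences, classically special objects, and the classical $G$-spectra of \autoref{classical}, relying on the assumption verifications of Sections \ref{DEF} and \ref{newcatop}. The only difference is that the paper states this specialization tersely (noting the two diagrams are "tautologically equivalent homotopically"), whereas you spell out the individual assumption checks, including the approximation step via \autoref{compal} and \autoref{ass5com}, exactly as the general machinery intends.
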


\begin{rem}\label{SegalDis}
Using orthogonal rather than Lewis-May $G$-spectra, the Segal and operadic machines are proven to be equivalent in \cite{MMO}.  This means that when we regard $\sF_G$-$G$-spaces as examples of  $\sD_G$-$G$-spaces for a category of operators associated to an $E_{\infty}$ $G$-operad over ${\sK_G}$, application of the Segal and operadic machines gives equivalent orthogonal $G$-spectra.  One can compare the operadic machine with orthogonal  $G$-spectra as output with the operadic machine here by using the comparison of orthogonal  $G$-spectra to Lewis-May $G$-spectra in \cite[Chapter IV]{MM}.  That equivalence is multiplicative \cite[Theorem IV.1.1]{MM}.   

This gives a multiplicative Segal machine: apply the operadic machine  given by 
$B(\SI^{\infty}\bL_G, \bD_G, -)$  to $\sF_G$-$G$-spaces regarded as $\bD_G$-algebras by pullback along the projection  $\sD_G\rtarr \sF_G$.  From this standpoint, there is no need for the Segal machine as developed in \cite{MMO}.   The group completion property is already there from  our operadic point of view.   Retrospectively, in situations where our axioms are satisfied, there seems to be little advantage to the Segal machine.  It is symmetric monoidal, directly, as proven in \cite{GMMO}, but that does not accept the weakly structured input that is needed for the applications.  
\end{rem}

\section{The specialization to categories of orbital presheaves}\label{orbpre}

\subsection{The orbital presheaf context and a comparison of contexts}\label{GOTCOMP}

Let $G\sO$ denote the orbit category of a finite group $G$.  Its objects are the orbits $G/H$ and its morphisms are the $G$-maps.   Let $\OGop$ denote the category of contravariant functors, alias orbital presheaves, from $G\sO$ to the category $\sT$ of based spaces.   It is a long established fact \cite{EHCT, MM} that, with appropriate model structures,  the category $G\sT$ of based $G$-spaces is Quillen equivalent to $\OGop$.  The right adjoint $\bR\colon G\sT \rtarr \OGop$ sends a $G$-space $X$ to its fixed point functor, so that $(\bR X)(G/H) = X^H$.  The left adjoint $\bL$ sends an orbital presheaf  $\sX$  to the $G$-space $\sX(G/e)$.  Clearly $\bL\bR = \id$. 

In contrast to earlier parts of this paper, model structures will play a vital role in this section.   The weak equivalences in $\OGop$ are the levelwise weak equivalences, namely the maps $\sX\rtarr \sY$ such that each  $\sX(G/H)\rtarr \sY(G/H)$ is a weak equivalence of based spaces.   Thus, by definition, a map $f$ in $G\sT$ is a weak equivalence if and only if 
$\bR f$ is a weak equivalence in $\OGop$.  In contrast, as said in \autoref{Lstinks}, $\bL$ does {\em{not}} preserve weak equivalences: it takes a weak equivalence of orbital presheaves to a map of $G$-spaces that is a weak equivalence of underlying spaces.   Such weak equivalences are sometimes called Borel equivalences since only Borel cohomology, not Bredon cohomology, is invariant with respect to them.   

As we shall recall in the next subsection, it is a startling feature of the present context that $\bL$ and $\bR$ restrict to inverse isomorphisms between the respective categories of cofibrant objects.   We shall heavily exploit that fact.  It implies that cofibrant orbital presheaves $\sX$ are strictly special: the  unit $\et\colon \sX \rtarr \bR\bL \sX$ is an isomorphism.  In the previous section, the interesting $\PI$-$G$-spaces were those that are special, so that $\et$ is a weak equivalence, and strictly special objects gave no new information beyond that of the original adjunction of \autoref{ass1}.  The situation here is entirely different, and the following remark may help illuminate why.   Here we return for the moment to \autoref{ass7}.

\begin{rem}\label{light} For comparison of contexts, consider the diagram
$$\xymatrix{
\GA Y \ar[d]_{\ze} \ar[r]^{\et_{\GA Y}} & \bR\bL\GA Y \ar[d]^{\bR\bL \ze}  \\
Y \ar[r]_{\et_Y} &  \bR\bL Y\\}
$$
where $Y\in \sV$ and $\GA$ denotes a cellular cofibrant approximation functor, so that $\ze$ is a weak equivalence. The diagram makes sense in both the categories of operators and the orbital presheaves contexts and the functor $\bR$ preserves weak equivalences in both contexts.  In the category of operators context, $\bL$ also preserves weak equivalences.  Thus the diagram just tells us that $Y$ is special if and only if $\GA Y$ is special, which is the interesting case, and here there is no reason to make use of $\GA$.

In the orbital presheaves context,  the interesting case is when the presheaf $Y$ is {\em{not}} special, so generally {\em{not}}  cofibrant, and then 
$\GA$ turns any $Y$ into the weakly equivalent and strictly special $\GA Y$.  There is no contradiction since $\bR\bL \ze$ is generally not a weak equivalence.  We regard $\bR\bL Y$ as wrong and $\bR\bL\GA Y$ as right.  

Here classical cofibrant approximation plays a new and surprising role.  The composite $\bL\GA$ is a substitute for the Elmendorf construction of a $G$-space from an orbital presheaf.  That construction and its monadic elaboration were central features of the paper of Costenoble and Waner \cite{CW}, which was the first to study the orbital presheaf context.  While the Elmendorf construction is not needed with our treatment here, the monads of \cite{CW} are fundamental to the applications.   Because $\bL$ does not preserve weak equivalences, the monad $\bR\bC\bL$ is of no direct use beyond illumination of the theory; it only becomes usable after application of either cofibrant approximation or the Elmendorf construction to transform interesting orbital presheaves to equivalent orbital presheaves of the form $\bR X$.
\end{rem}

We have a new version of the Costenoble--Waner monads, and new applications.  We will give a new operadic characterization of Mackey functors  and we will construct  $G$-spectra of units, of Picard groups, and of Brauer groups of $E_{\infty}$ ring $G$-spectra.  Although that theory feeds into the present conceptual theory, it is of sufficient independent interest that we  present it in the companion paper \cite{KMZ2}.  In this section, we focus on the novelty of using cofibrant approximation to convert the orbital presheaves of interest,  which are  not strictly special, to equivalent orbital presheaves that are strictly special and feed into our machine.

\subsection{Model categories and cellular orbital presheaves}\label{Orbcells}

We use the standard model structures.   The fibrations and weak equivalences in $\OGop$ are the levelwise Serre fibrations and weak equivalences, and a map $f$ in $G\sT$ is a fibration or weak equivalence if and only if $\bR f$ is a fibration or weak equivalence.   These model structures are cofibrantly generated, in fact compactly generated in the sense of \cite[15.2.1]{morecon}.   The generating cofibrations $\sI$ and generating acyclic cofibrations $\sJ$  in  $G\sT$ are obtained by adjoining the identity map of disjoint $G$-fixed basepoints to the standard unbased generating cofibrations and acyclic cofibrations
$$  \xymatrix@1{G/H \times S^n \ar[r]^-{\subset}  & G/H\times D^n \ \ \text{and}  \ \ G/H\times D^n \ar[r]^-{i_0} & G/H\times D^n\times I.\\}$$

By definition, the generating cofibrations and generating acyclic cofibrations of $\OGop$ are obtained by applying $\bR$ to those of $G\sT$.  Therefore they are in bijective correspondence via $\bR$, with inverse $\bL$.  Cell $G$-spaces are constructed using disjoint unions, pushouts, and sequential unions.  Since passage to fixed points commutes with pushouts one leg of which is a $G$-cofibration (in the classical sense) and clearly commutes with unions, $\bR$ takes cellular objects to cellular objects, as does $\bL$.  Of course, as functors, $\bL$ and $\bR$ also  preserve retractions.   Since the cofibrant objects in either of our categories are the retracts of the cellular objects, this and \autoref{Rss}  are  all there is to the proof of the following result.

\begin{prop}\label{Gcellwonder}  The adjunction $(\bL,\bR)$ between $G\sT$ and $\OGop$ is a Quillen equivalence that restricts to an isomorphism of categories between the full subcategories of cofibrant  $G$-spaces and cofibrant orbital presheaves.  In particular, every cofibrant orbital presheaf is strictly special.
\end{prop}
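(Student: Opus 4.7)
The plan is to prove the statements in reverse order: first establish the core combinatorial fact that $\bL$ and $\bR$ restrict to mutually inverse isomorphisms between the full subcategories of cellular $G$-spaces and cellular orbital presheaves; then pass to cofibrant objects via retracts; then deduce the Quillen equivalence; and finally observe that strict specialness of every cofibrant orbital presheaf is an immediate byproduct.

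For the isomorphism on cellular objects, the starting point is the definition: the generating cofibrations and generating acyclic cofibrations in $\OGop$ are, by construction, $\bR$ applied to those of $G\sT$. Since $\bL\bR = \mathrm{id}$, the two generating sets correspond bijectively under $\bL$ and $\bR$. I would then proceed by transfinite induction on cellular structure. Both functors preserve coproducts (so they match disjoint unions of generating cells). The functor $\bL$, being a left adjoint, preserves all colimits, so it takes cellular orbital presheaves to cellular $G$-spaces. For $\bR$, the only nontrivial point is preservation of pushouts along attaching maps and of sequential transfinite compositions; since in each cellular attachment one leg is a $G$-cofibration (in fact a relative $G$-cell complex), and since passage to $H$-fixed points commutes with such pushouts and with sequential colimits of closed inclusions, the induction runs. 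At each stage the unit $\et\colon \sX \rtarr \bR\bL\sX$ is exhibited as the identity on cells (because $\bL\bR = \mathrm{id}$ on generators), so $\et$ is an isomorphism on cellular $\sX$; symmetrically $\epz$ is an isomorphism on cellular $X$, as already known from \autoref{Rss}.

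Next, cofibrant objects in either category are, by standard cofibrantly generated model category theory, the retracts of cellular objects. Both $\bL$ and $\bR$ preserve retracts, so they restrict to inverse isomorphisms between the full subcategories of cofibrant $G$-spaces and cofibrant orbital presheaves; in particular $\et\colon \sX \rtarr \bR\bL\sX$ is an isomorphism whenever $\sX$ is cofibrant, which is exactly the assertion that every cofibrant orbital presheaf is strictly special.

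Finally, the Quillen equivalence: by definition of the model structure on $\OGop$, the functor $\bR$ creates fibrations and weak equivalences, so $(\bL,\bR)$ is automatically a Quillen adjunction. To upgrade this to a Quillen equivalence, it suffices to check that for cofibrant $\sX$ and fibrant $X$, a map $\bL\sX \rtarr X$ is a weak equivalence in $G\sT$ if and only if its adjoint $\sX \rtarr \bR X$ is a weak equivalence in $\OGop$. Every object in these model categories is fibrant, and the previous step shows that the unit map $\et_{\sX}$ is an isomorphism for cofibrant $\sX$, so this adjunction correspondence is tautological once one passes through the isomorphism $\bR X \iso \bR\bL\bR X$. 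The only obstacle worth flagging is the careful verification that passage to $H$-fixed points really does commute with the relevant cellular pushouts and sequential colimits, but this is precisely the standard fact invoked in the paragraph immediately preceding the statement of the proposition in the main text.
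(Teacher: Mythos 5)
Your proposal is correct and follows essentially the same route as the paper: matching the generating (acyclic) cofibrations under $\bL$ and $\bR$, using that passage to fixed points commutes with cellular pushouts and sequential colimits to identify cellular objects, passing to cofibrant objects via retracts, and invoking \autoref{Rss} for strict specialness; the Quillen equivalence itself the paper simply cites as the long-established fact from \cite{EHCT, MM}, whereas you verify it directly (correctly, since weak equivalences in $G\sT$ are created by $\bR$ and the unit is an isomorphism on cofibrant presheaves). One tiny slip: the isomorphism of $\epz\colon \bL\bR \rtarr \Id$ is part of the standing setup (indeed $\bL\bR = \id$ here), not a consequence of \autoref{Rss}, which concerns the unit on objects of the form $\bR X$.
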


Now return to the context of \autoref{SpaceSpectra}.  We choose to focus on spectra for definiteness.  Everything also works in the context of \autoref{GSPACES}.  Thus we have the adjunction $(\SI^{\infty},\OM^{\infty})$ and the monad $\bC$ associated to an $E_{\infty}$ operad that acts on $\OM^{\infty} E$ for all $G$-spectra $E$.    We have the adjunction $(\bL,\bR)$, and we assume that we also have a monad $\bD$ in $\OGop$ that satisfies \autoref{ass7}.  The obvious choice is $\bD = \bR \bC\bL$, in which case $\bC = \bL\bD\bR$ and $\bD\bR =\bR\bC$. 

\begin{rem}\label{orbitapps} Other choices will be given in the sequel \cite{KMZ2}.   For any operad $\sC$ in $G\sT$, we will there use the operad $\sC_G^{fin}$ to construct a monad $\Cp$ in $\OGop$ such that $\Cp\bR = \bR \bC$ and $\Cp$ satisfies the compatibility conditions of \autoref{ass7}.  Assuming that $\sC$ acts on $\OM^{\infty} E$ for $G$-spectra $E$, $\Cp$ acts on $\bR\OM^{\infty} E$.  As said before, it is the monads $\bD = \bC^{pre}$ that lead to applications. 
\end{rem}

Up to isomorphism, when we restrict to the category  $\OGop_{ss}$ of strictly special orbital presheaves, any choice of $\bD$ restricts to the canonical choice $\bR \bC\bL$.  With that restriction, \autoref{formalMT3} gives that we have the dotted  arrow functor $\bL$ in \autoref{adj6} and that the following result holds.

\begin{prop}\label{adjequiv1}  The adjoint equivalence $(\bL,\bR)$ between  $G\sT$ and $\OGop_{ss}$ restricts to an adjoint equivalence between  $\bC[G\sT]$ and $\bD\big[\OGop_{ss}\big]$.
\end{prop}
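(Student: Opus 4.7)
The plan is to deduce this proposition directly from \autoref{formalMT3}, since that general categorical result already establishes precisely the statement we want in the abstract composite adjunction setting. Our task therefore reduces to verifying that the orbital presheaf context fits the hypotheses of \autoref{ass7}, and then specializing.

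First I would note that the adjunction $(\bL,\bR)$ between $\OGop$ and $G\sT$ satisfies $\bL\bR = \mathrm{Id}$, which is stronger than the required condition that $\epz\colon \bL\bR\rtarr \mathrm{Id}$ be an isomorphism. Second, the standing assumption of the subsection is that $\bD$ is a monad on $\OGop$ satisfying \autoref{ass7} relative to $\bC$ on $G\sT$; that is, we have a compatible isomorphism $\om\colon \bD\bR\rtarr \bR\bC$ making the unit and product diagrams of \autoref{ass7} commute. With these hypotheses in place, \autoref{formalMT3} immediately yields both assertions: on $\OGop_{ss}$, the monad $\bD$ is isomorphic to $\bR\bC\bL$ and preserves $\OGop_{ss}$, and the adjoint equivalence $(\bL,\bR)$ between $\OGop_{ss}$ and $G\sT$ lifts to an adjoint equivalence between $\bD[\OGop_{ss}]$ and $\bC[G\sT]$.

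To spell out what the lifted functors are: the functor $\bR$ sends a $\bC$-algebra $X$ to the $\bD$-algebra $\bR X$ via part (i) of \autoref{formalMT2}, and by \autoref{Rss} the target is automatically strictly special, so $\bR$ restricts to $\bR\colon \bC[G\sT]\rtarr \bD[\OGop_{ss}]$. In the other direction, for a strictly special $\bD$-algebra $Y$, part (iii) of \autoref{formalMT2} equips $\bL Y$ with a $\bC$-algebra structure through the isomorphism $\bC\iso \bL\bD\bR$ of part (i) of \autoref{formalMT1}, which is available on strictly special objects by \autoref{formalMT3}. The unit $\et\colon Y\rtarr \bR\bL Y$ is an isomorphism of $\bD$-algebras because $Y$ is strictly special, and the counit $\epz\colon \bL\bR X\rtarr X$ is an isomorphism of $\bC$-algebras because $\bL\bR = \mathrm{Id}$.

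Since all of the real work has been done in \autoref{formalMT1}, \autoref{formalMT2}, and \autoref{formalMT3}, there is essentially no obstacle here. The only mild subtlety, which is already handled in those earlier results, is to check that the unit and counit of the adjunction are in fact \emph{algebra} maps and not merely maps in the underlying categories; this is a diagram chase using the compatibility in \autoref{ass7}.
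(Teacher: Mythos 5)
Your proposal is correct and matches the paper's own argument: the paper proves \autoref{adjequiv1} exactly by noting that $\epz\colon \bL\bR\rtarr \Id$ is an isomorphism (indeed $\bL\bR=\Id$), that $\bD$ is assumed to satisfy \autoref{ass7}, and that on strictly special presheaves $\bD$ restricts to $\bR\bC\bL$, so that \autoref{formalMT3} (via \autoref{formalMT1}, \autoref{formalMT2}, and \autoref{Rss}) yields the restricted adjoint equivalence. Your extra spelling out of the lifted functors and of the unit and counit being algebra isomorphisms is just the content already packaged in those cited results.
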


Note that an equivalence of a model category $\sM$ with another category $\sN$ induces a model structure on $\sN$ such that the equivalence is also a Quillen equivalence.   Using that, we can upgrade \autoref{adjequiv1} to Quillen equivalences that are given by actual equivalences of categories.  

Indeed,  the categories  $\bC[G\sT]$ and $\bD\big[\OGop\big]$  have standard structures of compactly generated model categories.  The fibrations and weak equivalences are created by forgetting to the ground categories $[G\sT]$ and $\OGop$, and the generating cofibrations and acyclic cofibrations are obtained by application of the left adjoints $\bF_{\bC}$ and $\bF_{\bD}$ to the generating cofibrations and acyclic cofibrations in the ground categories.  This is an easy application of the standard criterion (e.g. \cite[16.2.5]{morecon}) for a right adjoint to create a compactly generated model structure on its source.   Incorporating \autoref{adjequiv1}, the proof of the following result is precisely analogous to the proof of \autoref{Gcellwonder}.

\begin{prop}\label{Gcellwonder2}  The Quillen equivalence $(\bL,\bR)$ between  $G\sT$ and $\OGop$ restricts to a Quillen equivalence between $\bC[G\sT]$ and  $\bD\big[\OGop_{ss}\big]$, and that restricts to an isomorphism of categories between the full subcategories of cofibrant  $\bC$-algebras and cofibrant $\bD$-algebras.  Therefore $(\bL,\bR)$  restricts to equivalences between $G\sT$ and $\OGop_{ss}$  and between  $\bC[G\sT]$ and $\bD\big[\OGop_{ss}\big]$.
\end{prop}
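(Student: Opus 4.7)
The plan is to mimic the proof of \autoref{Gcellwonder} at the algebra level, using \autoref{adjequiv1} (i.e., \autoref{formalMT3}) to transport cellular data between the two sides. First I would install compactly generated model structures on $\bC[G\sT]$ and $\bD[\OGop]$ via the standard transfer criterion \cite[16.2.5]{morecon}: the forgetful functors $\bU_{\bC}$ and $\bU_{\bD}$ create weak equivalences and fibrations, and the generating cofibrations and acyclic cofibrations are obtained by applying the respective free functors to $\sI,\sJ$ in $G\sT$ and to $\bR\sI,\bR\sJ$ in $\OGop$. Since $\bR$ preserves weak equivalences and fibrations in the ground categories, the induced functor $\bR\colon \bC[G\sT]\rtarr \bD[\OGop]$ preserves weak equivalences and fibrations, so $(\bL,\bR)$ is a Quillen adjunction on the algebra categories.

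Next I would exploit the compatibility isomorphism $\om\colon \bD\bR\iso \bR\bC$ of \autoref{ass7} to identify the generating (acyclic) cofibrations of $\bD[\OGop]$ as the image under $\bR$ of those of $\bC[G\sT]$: for each $i\in \sI\cup\sJ$, $\bF_{\bD}\bR i\iso \bR\bF_{\bC}i$, which lies in $\OGop_{ss}$ by \autoref{Rss}. By \autoref{YDY} and \autoref{formalMT3}, the monad $\bD$ preserves $\OGop_{ss}$, and all the colimits used to build cell objects in $\bD[\OGop]$ (pushouts along generating cofibrations and transfinite composites) are created by the equivalence of \autoref{adjequiv1} from the corresponding colimits in $\bC[G\sT]$. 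Therefore every cellular $\bD$-algebra in fact lives in $\bD[\OGop_{ss}]$, and $(\bL,\bR)$ restricts to a Quillen equivalence between $\bC[G\sT]$ and $\bD\big[\OGop_{ss}\big]$ (the equivalence on homotopy categories is immediate from \autoref{adjequiv1}).

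For the isomorphism of cofibrant subcategories, I would argue as in the proof of \autoref{Gcellwonder}: under the equivalence of categories of \autoref{adjequiv1}, the bijection $\bR\colon \sI\cup\sJ \leftrightarrow \bR\sI\cup \bR\sJ$ lifts to a bijection $\bR_{\mathrm{alg}}\colon \bF_{\bC}(\sI\cup\sJ)\leftrightarrow \bF_{\bD}(\bR\sI\cup \bR\sJ)$ of generating (acyclic) cofibrations. Because $(\bL,\bR)$ on strictly special objects preserves all colimits and limits, pushouts along coproducts of generating cofibrations and transfinite compositions correspond bijectively on both sides. This gives a bijection between cell $\bC$-algebras and cell $\bD$-algebras in $\OGop_{ss}$, and since retractions are also preserved by any functor, passing to retracts yields the asserted isomorphism between the full subcategories of cofibrant objects. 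The last sentence of the proposition follows by restricting the equivalences to these subcategories.

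The main obstacle is the verification that every cofibrant $\bD$-algebra in $\bD[\OGop]$ is already strictly special, so that the transferred model structure on $\bD[\OGop]$ restricts cleanly to $\bD[\OGop_{ss}]$ and the Quillen equivalence sits inside the ambient structure. This hinges on two ingredients used together: the identification $\bF_{\bD}\bR i = \bR\bF_{\bC}i$ (so the generating cofibrations are automatically strictly special by \autoref{Rss}), and the fact that the colimits building cell objects commute with the inclusion $\OGop_{ss}\subset \OGop$ because, via $\bL\bR=\Id$ and \autoref{formalMT3}, these colimits are computed inside the equivalent category $\bC[G\sT]$. Everything else is a bookkeeping verification patterned on \autoref{Gcellwonder}.
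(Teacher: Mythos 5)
Your proposal is correct and follows essentially the same route as the paper: transfer the compactly generated model structures to $\bC[G\sT]$ and $\bD[\OGop]$ by the standard criterion, match generating (acyclic) cofibrations via $\bF_{\bD}\bR\iso\bR\bF_{\bC}$ together with \autoref{Rss}, and then repeat the cellular-plus-retracts argument of \autoref{Gcellwonder}, incorporating \autoref{adjequiv1}, to see that cofibrant $\bD$-algebras are strictly special and correspond isomorphically to cofibrant $\bC$-algebras. The only point where you are slightly more hand-wavy than the underlying argument requires is the preservation of the cell-building colimits, which rests on the same fact used in \autoref{Gcellwonder} (fixed points commute with pushouts along cofibrations and with sequential unions), and which you implicitly invoke by patterning the bookkeeping on that proof.
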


\subsection{Orbital presheaves in the composite adjunction context}\label{Orbcomp}

Here our philosophy needs psychological but not mathematical change.   Consider the following diagrams.  

\begin{equation}\label{adj8} 
\xymatrix{
\OGop \ar@<.5ex>[rr]^{\bL}  \ar@<.5ex>[ddrr]^{\bF_{\bD}} & & G\sT  \ar@<.5ex>[ll]^{\bR}    \ar@<.5ex>[rr]^{\SI^{\infty}}  \ar@<.5ex>[dr]^{\bF_{\bC}}& & G\sS   \ar@<.5ex>[ll]^{\OM^{\infty}}   \ar@<.5ex>[dl]^{\OM^{\infty}_{\bC}}   
\ar@/^4pc/@<.6ex> [ddll]^-{(\bR\OM^{\infty})_{\bD}}        \\
& & &   \ar@<.5ex>[ul]^{\bU_{\bC}}  \bC[G\sT]   \ar@<.5ex>[dl]^{\bR}  \ar@<.5ex>[ur]^{\SI^{\infty}_{\bC}}& \\
& & \ar@<.5ex>[uull]^{\bU_{\bD}}  \bD[\OGop] \ar@{-->}@<.5ex>[ur]^{\bL} \ar@/_4pc/@<.6ex> [uurr]^-{(\SI^{\infty} \bL)_{\bD}}& &  \\
&  & \ar[d]^{\GA}  && \\
& & & & \\
\OGop_{ss} \ar@<.5ex>[rr]^{\bL}  \ar@<.5ex>[ddrr]^{\bF_{\bD}} & & G\sT  \ar@<.5ex>[ll]^{\bR}    \ar@<.5ex>[rr]^{\SI^{\infty}}  \ar@<.5ex>[dr]^{\bF_{\bC}}& & G\sS   \ar@<.5ex>[ll]^{\OM^{\infty}}   \ar@<.5ex>[dl]^{\OM^{\infty}_{\bC}}   
\ar@/^4pc/@<.6ex> [ddll]^-{(\bR\OM^{\infty})_{\bD}}        \\
& & &   \ar@<.5ex>[ul]^{\bU_{\bC}}  \bC[G\sT]   \ar@<.5ex>[dl]^{\bR}  \ar@<.5ex>[ur]^{\SI^{\infty}_{\bC}}& \\
& & \ar@<.5ex>[uull]^{\bU_{\bD}}  \bD[\OGop_{ss}] \ar@<.5ex>[ur]^{\bL} \ar@/_4pc/@<.6ex> [uurr]^-{(\SI^{\infty} \bL)_{\bD}}& &  \\}
\end{equation}

We think of the top diagram as the input diagram that leads to applications.  We apply cellular cofibrant approximation $\GA$ to feed the top diagram into the bottom diagram without loss of information.  The bottom diagram gives homotopical control that allows us to implement our machine; as noted in \autoref{sillybit}, we might as well erase its curved arrows.  We have noted earlier that the bar construction can often be viewed as a cofibrant approximation, but we have no interest in that interpretation here.   Rather, we just note that \autoref{YDY} and \autoref{different} imply that if an orbital presheaf $Y$ is strictly special, then so is the orbital presheaf $\overline{Y}$.  
Looking back at Sections \ref{A}, \ref{BC} and \ref{DE}, we see that all of our Assumptions are satisfied in the composite adjunction context displayed in the bottom diagram.  Thus Theorems \ref{recprincom} and \ref{adjequivcom} specialize to give the following conclusions.

\begin{thm}\label{recprincom2Ga}  There is a functor $\mathrm{Bar} \colon \bD[\OGop] \rtarr \bD[\OGop] $, written $Y\mapsto \overline{Y}$, and a natural equivalence  $\ze\colon \overline{Y} \rtarr Y$.  If $Y$ is strictly special, then so is $\overline{Y}$ and the unit 
$$\et_{\bD}\colon \overline{Y} \rtarr (\bR\OM^{\infty})_{\bD} (\SI^{\infty}\bL)_{\bD} \overline{Y}$$  
is a group completion and is therefore an equivalence if $Y$ is grouplike.
\end{thm}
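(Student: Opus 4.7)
The plan is to verify the hypotheses of the general composite adjunction machine (Theorems \ref{recprincom} and \ref{adjequivcom}) in the orbital presheaf context of diagram \autoref{adj8} and then specialize. First I would note that Assumptions A through F for the original adjunction $(\SI^{\infty},\OM^{\infty})$ and the monad $\bC$ were already verified in \autoref{SpaceSpectra}. I then need to check the assumptions for the composite adjunction $(\SI^{\infty}\bL, \bR\OM^{\infty})$ and the monad $\bD$. Assumption \ref{ass7} holds by choice of $\bD$ (either $\bR\bC\bL$ or the monads $\Cp$ of \autoref{orbitapps} promised in \cite{KMZ2}), since they are defined to satisfy the compatibility isomorphism $\om\colon \bD\bR \rtarr \bR\bC$. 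Assumption \ref{ass8} holds because weak equivalences in $\OGop$ are defined levelwise and a $G$-map $f$ is a weak equivalence if and only if $\bR f$ is; the functor $\bF_{\bD}$ preserves weak equivalences at least on good objects in the standard model structure. Assumption \ref{ass12} (the analog of \autoref{ass6}) holds by \autoref{different}, since $\bR$ here is levelwise fixed points, which commutes with realization of Reedy cofibrant simplicial $G$-spaces, and $\bL$ trivially commutes with realization.

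Next, I would define $\overline{Y} = B(\bD,\bD,Y)$ using \autoref{Edefn}, and obtain the natural equivalence $\ze\colon \overline{Y}\rtarr Y$ directly from \autoref{extra}, which gives a simplicial homotopy equivalence in the underlying category $\OGop$. The crucial new content to verify is the preservation of strict speciality: if $Y$ is strictly special, then $\overline{Y}$ is strictly special. This is where I would use \autoref{YDY}, which ensures that $\bD Y$ (and iteratively $\bD^{q+1}Y$) remains strictly special, together with the fact that $\ga_{\bR}$ is an isomorphism in the orbital presheaf context (\autoref{different}, since levelwise fixed points commute with geometric realization on Reedy cofibrant simplicial $G$-spaces). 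Applying \autoref{DgammaR} levelwise across the simplicial direction then shows that $|\bR J_*|$ is strictly special whenever each $J_q$ is strictly special, from which strict speciality of $\overline{Y} = B(\bD,\bD,Y)$ follows by writing it as the realization of a simplicial object with $q$-simplices in $\sV_{ss}$.

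For the group completion statement, I would invoke the general composite adjunction recognition principle \autoref{recprincom}. Specifically, \autoref{ass4com} and \autoref{ass5com} together give the approximation theorem and group completion functor in the composite context once we have restricted to strictly special objects (as we have, via the lower half of diagram \autoref{adj8}). Since $\bR$ preserves grouplike objects and group completions (applied levelwise, by \autoref{ass4hazy2}), and since the approximation theorem \autoref{ass5} holds in the underlying context $(\SI^{\infty},\OM^{\infty})$ of \autoref{SpaceSpectra}, the diagram \autoref{compal} gives that $\al_{com}\colon \bD Y \rtarr \bR\OM^{\infty}\SI^{\infty}\bL Y$ is a group completion when $Y$ is strictly special (so that $\bD\et$ in that diagram is an isomorphism). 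Then the identification of $\ga\com B\al_{com}$ with $\et_{\bD}$ from \autoref{Yeah} applied in the composite context, combined with the standard bar construction argument (using $B$ preserves levelwise group completions), shows that $\et_{\bD}\colon \overline{Y} \rtarr (\bR\OM^{\infty})_{\bD}(\SI^{\infty}\bL)_{\bD}\overline{Y}$ is a group completion. The final claim that $\et_{\bD}$ is a weak equivalence when $Y$ is grouplike follows from property (i) of \autoref{screwy}.

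The main obstacle I anticipate is the verification that strict speciality is preserved by the bar construction. Although it reduces formally to the combination of \autoref{YDY} and the commutation of $\bR$ with realization (\autoref{different}), one must be careful that the simplicial object $B_*(\bD,\bD,Y)$ is Reedy cofibrant (or at least good enough) so that $\ga_{\bR}$ really is an isomorphism on the realization. This ultimately requires the cofibration hypotheses implicit in \autoref{bspt1} applied levelwise to orbital presheaves, and the fact that the free $\bD$-algebra functor $\bF_{\bD}$ behaves well with respect to such cofibrations. Once that technical point is handled, the rest of the proof is a formal unwinding of the general machine of \autoref{compmach}.
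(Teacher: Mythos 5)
Your proposal is correct and follows essentially the same route as the paper: verify the composite adjunction Assumptions (Sections \ref{ABC} and \ref{DEF}) in the orbital presheaf setting, use \autoref{YDY} together with \autoref{different} (via $\ga_{\bR}$ being an isomorphism because fixed points commute with realization) to see that the bar construction preserves strict speciality, and then specialize \autoref{recprincom}. The extra care you take about Reedy cofibrancy of $B_*(\bD,\bD,Y)$ matches the cofibration caveats the paper handles implicitly via \autoref{bspt1} and \autoref{different}.
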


\begin{thm}\label{adjequivcom3}  $\big((\SI^{\infty}\bL)_{\bD},(\bR\OM^{\infty})_{\bD}\big)$ induces an adjoint equivalence from the homotopy category of grouplike $\bD$-algebras in $\OGop_{ss}$ to the homotopy category of connective $G$-spectra.
\end{thm}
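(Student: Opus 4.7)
The plan is to derive \autoref{adjequivcom3} as a direct specialization of the general composite adjunction machine (\autoref{adjequivcom}) to the orbital presheaves context. The work consists of verifying that all of our standing Assumptions hold for the triple $(\SI^{\infty}\bL, \bR\OM^{\infty}, \bD)$ when restricted to $\OGop_{ss}$, and then invoking the general theorem. The verification of the Assumptions is assembled in \autoref{Orbcomp}; the key point is that the restriction to $\OGop_{ss}$ is precisely what rescues \autoref{ass8}, since $\bL$ fails to preserve weak equivalences in general but does preserve them on strictly special objects by \autoref{Lss} (equivalently, by \autoref{Gcellwonder}, via the identification of strictly special cofibrant orbital presheaves with cofibrant $G$-spaces). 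The remaining Assumptions C through F for the composite adjunction are inherited from their counterparts for $(\SI^{\infty},\OM^{\infty})$ through \autoref{ass3yes}, \autoref{ass4com}, \autoref{ass5com}, and \autoref{ass12}, using that $\bR$ commutes with geometric realization (\autoref{different}) because passage to $H$-fixed points commutes with realization of Reedy cofibrant simplicial $G$-spaces.

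With the assumptions in place, I would first establish the equivalence on the source side. For any grouplike $\bD$-algebra $Y$ in $\OGop_{ss}$, the bar construction $\overline{Y}$ is again strictly special (by \autoref{YDY} together with \autoref{DgammaR}), so \autoref{recprincom2Ga} supplies a natural zigzag
\[
Y \xleftarrow{\ze} \overline{Y} \xrightarrow{\et_{\bD}} (\bR\OM^{\infty})_{\bD}(\SI^{\infty}\bL)_{\bD}\overline{Y}
\]
in which $\ze$ is a homotopy equivalence and $\et_{\bD}$ is a group completion, hence a weak equivalence. This exhibits the unit as a natural isomorphism on the homotopy category of grouplike strictly special $\bD$-algebras.

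For the counit, I would follow the proof of \autoref{equiv} verbatim in the composite context. Given a connective $G$-spectrum $Z$, the object $(\bR\OM^{\infty})_{\bD}Z$ is a strictly special (by \autoref{Rss}) grouplike $\bD$-algebra, since $\bR$ preserves grouplike objects from the $\bC$-algebra $\OM^{\infty}_{\bC}Z$. The analog of \autoref{NewAssE} in the composite context produces a natural map
\[
\xi \colon (\SI^{\infty}\bL)_{\bD}\overline{(\bR\OM^{\infty})_{\bD}Z} \rtarr Z
\]
factoring $\ze$ as $(\bR\OM^{\infty})_{\bD}\xi \com \et_{\bD}$. Since $\et_{\bD}$ and $\ze$ are weak equivalences, so is $(\bR\OM^{\infty})_{\bD}\xi$. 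Because $\bR$ detects weak equivalences (by \autoref{ass8}) and $\OM^{\infty}$-connective objects are detected by $\OM^{\infty}$ (\autoref{ass3}), and because the source of $\xi$ lies in connective $G$-spectra by \autoref{Econn} and \autoref{ass3yes}, it follows that $\xi$ itself is a weak equivalence.

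The only delicate step is the one already carried out before stating the theorem: ensuring that the restriction to $\OGop_{ss}$ is precisely the subcategory on which $\bL$ is homotopically well behaved, so that the general machine applies without modification. Once that restriction is in place, both sides of the equivalence are inherited from the already-established right-hand Quillen equivalence between $\bC[G\sT]$ and connective $G$-spectra (\autoref{spectra}) via the adjoint equivalence of \autoref{Gcellwonder2}, so one could alternatively argue by pasting the equivalence of \autoref{spectra} with that of \autoref{Gcellwonder2}; the bar-construction argument above is preferable because it exhibits the equivalence explicitly through the derived unit and counit of $\big((\SI^{\infty}\bL)_{\bD}, (\bR\OM^{\infty})_{\bD}\big)$.
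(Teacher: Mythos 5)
Your proposal is correct and follows essentially the same route as the paper: the paper's own proof simply notes (via \autoref{YDY} and \autoref{different}) that the bar construction preserves strictly special orbital presheaves, checks that the standing Assumptions of Sections \ref{ABC} and \ref{DEF} hold in the bottom diagram of \autoref{adj8}, and then quotes the general Theorems \ref{recprincom} and \ref{adjequivcom}, whose proofs are exactly the unit/counit bar-construction argument you spell out. The only cosmetic difference is that you unpack the proof of \autoref{equiv} in the composite context (and sketch the alternative pasting of \autoref{spectra} with \autoref{Gcellwonder2}), whereas the paper cites the general machine directly.
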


\begin{rem}\label{Apps}
We comment on how applications proceed.  We find examples of $\bD$-algebras $Y$.  As said before $Y$ is not strictly special or even special in the cases of  interest.  We therefore replace $Y$ by $\GA Y$ and apply \autoref{recprincom2Ga} with $Y$ replaced by $\GA Y$.   Because $\GA Y$ is strictly special, the $\bD$-algebra structure on $\GA Y$  comes from application of $\bR$ to the induced $\bC$-algebra structure on $\bL \GA Y$.  As said before, there is no contradiction since $\bL\GA Y$ is generally not weakly equivalent to $\bL Y$.   The $G$-spectrum we are after is $\SI^{\infty}_{\bC}(\overline{\bL \GA Y})$ or, equivalently, $(\SI^{\infty}\bL)_{\bD}(\overline{\GA Y})$.  
\end{rem}

\part{The multiplicative theory}

\section{The classical multiplicative theory}\label{Mult1}
\subsection{Monad pairs}\label{pairs}
Consider monads, $(\bC,\mu_{\oplus},\et_{\oplus})$ and 
$(\bJ_0,\mu_{\otimes},\et_{\otimes})$, on a category $\sT$. 
As the notation indicates, we think of $\bC$ as ``additive'' and $\bJ_0$ as ``multiplicative with zero''.  We think of $\sT$ as based and, intuitively, we think of basepoints as zero.  The basepoint of $\bJ_0 X$ for $X\in \sT$ is thought of as zero.

Beck \cite{Beck} gives a monadic distributivity law encoded by an action of 
$\bJ_0$ on $\bC$.  We recall the main features, following \cite[Appendix B]{Rant2}.
Let $\bC[\sT]$ and $\bJ_0[\sT]$ denote the categories of $\bC$-algebras and of
$\bJ_0$-algebras in $\sT$. 

\begin{defn}\label{COact} An action of $\bJ_0$ on $\bC$ is a structure of monad 
on $\bJ_0[\sT]$ induced by the monad $\bC$ on $\sT$. More explicitly, for an action 
 of $\bJ_0$ on $X$, there is a prescribed functorial induced action of $\bJ_0$ 
 on $\bC X$ (and thus on $\bC\bC X$ by iteration) such that
$\et_{\oplus}\colon X\rtarr \bC X$ and $\mu_{\oplus}\colon \bC\bC X\rtarr \bC X$
are maps of $\bJ_0$-algebras.  We then  say that $(\bC,\bJ_0)$ is a monad pair.
\end{defn}

%Recall that the following diagram commutes for composable pairs of functors
%$(B,A)$ and $(D,C)$ and for natural transformations $\al\colon A\rtarr C$ and $\be\colon B\rtarr D$.
%\[ \xymatrix{
%& BC \ar[dr]^{\be C} & \\
%BA \ar[ur]^{B\al} \ar[dr]_{\be A}  & & DC \\
%& DA \ar[ur]_{D\al} &\\} \]
%In the categorical literature the common composite is generally written $\al\be$ or $\be\al$. It is just the horizontal %composition of the $2$-category $\sC\!at$, but we shall be explicit. 

 The proof of the following result is sketched in \cite[Appendix B]{Rant2}; full details are in Beck \cite{Beck}.

\begin{thm}\label{BeckthmD} The following data relating the monads $\bC$ and $\bJ_0$ 
are equivalent.
\begin{enumerate}[(i)]
\item An action of $\bJ_0$ on $\bC$.
\item A natural transformation $\mu\colon \bC\bJ_0\bC\bJ_0\rtarr \bC\bJ_0$ with 
the following properties.
\begin{enumerate}[(a)]
\item $(\bC\bJ_0,\mu,\et)$ is a monad on $\sT$, where 
$\et = \et_{\oplus}\bJ_0 \com \et_{\otimes}\colon \text{Id}\rtarr \bC\bJ_0$.
\item $\bC\et_{\otimes}\colon \bC\rtarr \bC\bJ_0$ and $\et_{\oplus} \colon \bJ_0\rtarr \bC\bJ_0$
are maps of monads.
\item  The following composite is the identity natural transformation.
\[ \xymatrix@1{
\bC\bJ_0 \ar[r]^-{\bC\et_{\oplus} \bJ_0} & \bC\bC\bJ_0\ar[r]^-{\bC\et_{\otimes}\bC\bJ_0 } & \bC\bJ_0\bC\bJ_0
\ar[r]^-{\mu} & \bC\bJ_0 \\} \]
\end{enumerate}
\item A natural transformation $\rh\colon \bJ_0\bC\rtarr \bC\bJ_0$ such that
the following diagrams commute.
\[\xymatrix{
& \bJ_0 \ar[dl]_{\bJ_0\et_{\oplus}} \ar[dr]^{\et_{\oplus}\bJ_0} & \\
 \bJ_0\bC \ar[rr]^-{\rh} & & \bC\bJ_0 \\ 
& \bC \ar[ul]^{\et_{\otimes}\bC} \ar[ur]_{\bC\et_{\otimes}} & \\ } 
\ \ \ \ 
\xymatrix{
\bJ_0\bC\bC \ar[d]_{\bJ_0\mu_{\oplus}} \ar[r]^-{\rh \bC} & \bC\bJ_0\bC \ar[r]^{\bC\rh} 
& \bC\bC\bJ_0 \ar[d]^{\mu_{\oplus}\bJ_0} \\ 
\bJ_0\bC \ar[rr]^-{\rh} &   & \bC\bJ_0 \\ 
\bJ_0\bJ_0\bC \ar[r]_-{\bJ_0\rh} \ar[u]^{\mu_{\otimes}\bC} & \bJ_0\bC\bJ_0 \ar[r]_-{\rh \bJ_0} & \bC\bJ_0\bJ_0.
\ar[u]_{\bC\mu_{\otimes}} \\ }\]
\end{enumerate}
When given such data, the category $\bC[\bJ_0[\sT]]$ of $\bC$-algebras in
$\bJ_0[\sT]$, called $(\bC,\bJ_0)$-algebras in $\sT$,  is isomorphic to the category $\bC\bJ_0[\sT]$ of $\bC\bJ_0$-algebras in $\sT$. 
\end{thm}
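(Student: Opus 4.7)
The heart of the theorem is the equivalence between the notion of a distributive law (iii) and the notion of a composite monad structure (ii); (i) is a repackaging, and the isomorphism $\bC[\bJ_0[\sT]]\iso \bC\bJ_0[\sT]$ drops out of (ii). I would organize the proof as a cycle (iii)$\Rightarrow$(ii)$\Rightarrow$(i)$\Rightarrow$(iii), together with a separate derivation of the algebra isomorphism from (ii).

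For (iii)$\Rightarrow$(ii), given $\rh\colon \bJ_0\bC\rtarr \bC\bJ_0$, I would define
\[
\mu\colon \bC\bJ_0\bC\bJ_0 \xrightarrow{\bC\rh\bJ_0} \bC\bC\bJ_0\bJ_0 \xrightarrow{\mu_{\oplus}\bJ_0\bJ_0}\bC\bJ_0\bJ_0 \xrightarrow{\bC\mu_{\otimes}} \bC\bJ_0,
\]
and then verify (a) by pasting: unitality of $\mu$ uses the two unit triangles of $\rh$, while associativity uses the bottom square of (iii) together with the associativity of $\mu_{\oplus}$ and $\mu_{\otimes}$. Property (b) says $\bC\et_{\otimes}$ and $\et_{\oplus}\bJ_0$ are monad maps; this follows from the two unit triangles for $\rh$ (each kills one of the two occurrences of $\rh$ inside $\mu$). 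Property (c) reduces to a triangle identity for $\rh$. Conversely, for (ii)$\Rightarrow$(iii), I would define
\[
\rh\colon \bJ_0\bC \xrightarrow{(\et_{\oplus}\bJ_0)\cdot (\bC\et_{\otimes})} \bC\bJ_0\bC\bJ_0 \xrightarrow{\mu} \bC\bJ_0
\]
(using (b) to know each factor is a monad map, so their composite sits inside $\bC\bJ_0\bC\bJ_0$). The unit triangles for $\rh$ follow from (c) combined with the unit laws of the monad $\bC\bJ_0$, and the multiplication square for $\rh$ follows from the associativity of $\mu$ together with (b).

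For the passage between (iii) and (i), given a distributive law $\rh$, I would lift $\bC$ to a functor $\widetilde{\bC}\colon \bJ_0[\sT]\rtarr \bJ_0[\sT]$ by equipping $\bC X$, for a $\bJ_0$-algebra $(X,\xi)$, with the action $\bC\xi\com \rh\colon \bJ_0\bC X\rtarr \bC X$. The two $\rh$-diagrams of (iii) supply exactly the unit and associativity axioms of this induced action, while the two triangles for $\rh$ and naturality show that $\et_{\oplus}$ and $\mu_{\oplus}$ lift to natural transformations in $\bJ_0[\sT]$, giving a monad structure on $\widetilde{\bC}$. In the reverse direction, given (i), I would recover $\rh$ from its value on the canonical free $\bJ_0$-algebra: $\bJ_0 X$ carries the action $\mu_{\otimes}$, so the induced $\bJ_0$-action on $\bC\bJ_0 X$ furnishes a map $\bar{\rh}_X\colon \bJ_0\bC\bJ_0 X\rtarr \bC\bJ_0 X$, and I would set $\rh_X = \bar{\rh}_X\com \bJ_0\bC\et_{\otimes}$. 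Naturality of the construction, together with the fact that free $\bJ_0$-algebras detect algebra structure, yields the required diagrams and also shows the two passages are mutually inverse.

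For the final isomorphism $\bC[\bJ_0[\sT]]\iso \bC\bJ_0[\sT]$, given a $\bC\bJ_0$-action $\phi\colon \bC\bJ_0 X\rtarr X$, set $\tha = \phi\com\bC\et_{\otimes}$ and $\xi = \phi\com \et_{\oplus}\bJ_0$; (b) ensures each is an action of the respective monad, and the definition of $\rh$ out of $\mu$ (together with naturality) ensures $\tha$ is a map of $\bJ_0$-algebras. Conversely, send $(X,\tha,\xi)$ to $\phi = \tha\com\bC\xi$; condition (c) and the associativity of $\mu$ supply the monad axioms for $\phi$, and the two passages are easily seen to be mutually inverse. The main obstacle in the whole argument is the associativity verification for $\mu$ in (iii)$\Rightarrow$(ii): it requires a careful hexagonal paste of the multiplication square of $\rh$ with two instances of each of $\mu_{\oplus}$ and $\mu_{\otimes}$, and this is where the combinatorial bookkeeping of the distributive law genuinely earns its keep.
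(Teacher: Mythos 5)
Your plan is correct, and it is the standard Beck distributive-law argument — precisely the proof the paper itself relies on, since it offers no proof of its own but defers to Beck \cite{Beck} and the sketch in \cite[Appendix B]{Rant2}. The only quibbles are cosmetic: for instance, in (ii)$\Rightarrow$(iii) the unit triangles for $\rh$ follow from the unit laws of the monad $\bC\bJ_0$ alone, while (c) is what you need for the multiplicative squares and to see that the roundtrip recovers $\mu$; such misattributions do not affect the structure of the argument.
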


\subsection{Operad pairs}\label{pairs2}

In retrospect,\footnote{When he defined operad pairs \cite{MQR}, the senior author did not know Beck's work.}  the definition of an operad pair $(\sC,\sJ)$ can be summarized as follows.   As usual, we assume that $\sC$ and $\sJ$ are reduced, meaning that $\sC(0)$ and $\sJ(0)$ are each a point (a terminal object of $\sT$); we name these objects $0$ and $1$, respectively.  We assume that $\sT$ is the category of based objects of a category $\sU$ of unbased objects. The $\sC(j)$ and $\sJ(j)$ are objects of $\sU$.   Notationally, we assume that $\sT$ is cartesian monoidal for consistency with the literature, but that is not essential to the mathematics.  We use the following definition to define the monads $\bC$ and $\bJ_0$ conceptually and explicitly. 

\begin{defn}  Recall the category $\LA$ from \autoref{finitecats}.   It is generated by the permutations of the $\ul{n}$ and the ordered injections 
$\si_i\colon \ul{n-1} \rtarr \ul{n}$ that skip $i$.   For a contravariant functor  $T\colon \LA \rtarr \sU$ and a covariant functor 
$S\colon \LA \rtarr \sU$ with actions 
$$  \ta\colon T(\ul{n}) \times \LA(\ul{m},\ul{n})\rtarr T(\ul{m}) \ \ \text{and} \ \ 
\si\colon \LA(\ul{m},\ul{n})\times S(\ul{m}) \rtarr S(\ul{n}),$$  
the categorical tensor product  $T\otimes_{\LA} S$ is the coequalizer

$$ \xymatrix@1{
\coprod_{m, n}  T(\ul{n}) \times \LA(\ul{m}, \ul{n})\times  S(\ul{m})
\ar@<.5ex>[r]^-{\ta\times \id}  \ar@<-.5ex>[r]_-{\id\times \si} 
&  \coprod_j T(\ul{j}) \times S(\ul{j}) \ar[r] &  T\otimes_{\LA} S.\\} $$
More explicitly
$$  T\otimes_{\LA} S = \coprod_j T(j)\times_{\SI_j} S(j)/(y\si_ i, x) \sim (y, \si_i x)$$
for all  $y\in T(\ul{n})$, $x\in S(\ul{n-1})$ and all ordered injections $\si_i\colon \ul{n-1} \rtarr \ul{n}$.
\end{defn}

\begin{defn}\label{OtoJ}   An operad $\sC$ has the underlying contravariant functor $\LA\rtarr \sU$ given by sending $\ul{j}$ to $\sC(j)$; the functoriality is given by the right actions of the 
$\SI_j$ on the objects $\sC(j)$ and the maps $\si_i\colon \sC(n) \rtarr \sC(n-1)$ for ordered injections $\si_i$ specified by
$$  c\si_i = \ga(c; \id^{i-1}\times\, 0 \times \id^{n-i})$$
for $c\in \sC(n)$, where $\id\in \sC(1)$ is the identity element and $0\in \sC(0)$ is the element $0$.   For a based object  $X\in \sT$, with ``basepoint'' understood  to be a map 
$$\io\colon \ast = 0\rtarr X,$$
define a covariant functor  $X^*\colon \LA\rtarr \sU$ by $X^*(\ul{j}) = X^j$ and note that it takes values in $\sT$; the functoriality is given by the permutation left actions of the $\SI_j$ on the $X^j$ and the maps 
$$
 \si_i = \id^{i-1} \times \io \times \id^{j-i}\colon  X^{n-1} \rtarr X^n
$$
for ordered injections $\si_i$.  The monad $\bC$ on $\sT$ associated to $\sC$ is defined by
$$\bC X =  \sC\otimes_{\LA} X^*.$$
The coequalizer is defined in $\sU$ but takes values in $\sT$, with basepoint given by  $\{0\}= \sC(0)\times X^0$;  
$\mu_{\oplus}\colon  \bC\bC X \rtarr \bC X$ is induced by the structure maps 
$$\ga\colon \sC(k) \times \sC(j_1)\times\cdots  \times \sC(j_k)  \rtarr \sC(j), \ \ \text{where} \ \  j = j_1 + \cdots + j_k,$$
and $\et_{\oplus}$ is induced by $\id\colon  X \rtarr {\id}\times X\in \sC(1) \otimes X$.  Less formally,
$$\bC X =  \coprod_j  \sC(j)\times_{\SI_j} X^j/\text{basepoint identifications}.  $$
\end{defn}

We modify the definition of $\bC$ to define the monad $\bJ_0$ associated to $\sJ$.  We change the ground symmetric monoidal structure from cartesian product to smash product.   We define an operad $\sJ_0$, no longer reduced, by adjoining disjoint basepoints $0$ to the $\sJ(j)$, so that $\sJ(j)_0 = \sJ(j)\coprod \{0\}$.   In practice, we start with spaces under $S^0$, so given maps  $e\colon S^0\rtarr X$ and thus basepoints $0$ and $1$, but the formal theory sees only the basepoint $0$; that is, the basepoint $1$ only comes into play when $X$ is a $\sJ$-algebra.  As explained in a bit more detail in \cite[Section 4]{Rant1}, we arrive at the monad in $\sT$ given by
$$\bJ_0 X = \bigvee_{j\geq 0} \sJ(j)_0 \sma_{\SI_j}  X^{(j)},  $$
where $X^{(j)}$ is the $j$-fold smash product defined with respect to the basepoint ${0}$.  For obvious reasons, the construction is only of interest when $0$ and  $1$ are in distinct path components.  The construction is of particular interest when 
 $X = Y_0 =Y\coprod \{0\}$ for an unbased object $Y$, and then
$$\bJ_0 Y_0 = \bigvee_{j\geq 0} \big(\sJ(j) \times_{\SI_j}  Y^{j}\big)_0  $$

\begin{defn}  An action of $\sJ$ on $\sC$ consists of maps
$$\la\colon \sJ(k)\times \sC(j_1)\times \cdots \times \sC(j_k) \rtarr  \sC(j_{\times}), \ \ \text{where} \ \ 
j_{\times} = j_1\cdots j_k, $$
which satisfy distributivity, unity, equivariance, and nullity identities relating the maps $\la$ to the structure maps, permutation group actions, and units of the operads $\sC$ and $\sJ$. These identities are explicitly specified in \cite[Definition 4.2]{Rant1}.  They are precisely those needed to ensure that  $\bJ_0$ acts on $\bC$.  Thus an operad pair $(\sC,\sJ_0)$ has an associated monad pair $(\bC,\bJ_0)$.
\end{defn}

\subsection{Operad pairs in the category of $G$-spaces}\label{pairs3}   So far, we have been general.  We now specialize to $G$-spaces, where we have a canonical operad pair $(\sK,\sL)$.   Here $\sK$, denoted 
$\sK_{\infty}$ in \autoref{SPECTRA}, is the Steiner operad of $G$-spaces in the universe $U_G$, as described in \cite[Example 2.2]{GM3}.  It is the colimit of the Steiner operads $\sK_V$  associated to the finite dimensional sub  $G$-spaces  $V$ of $U_G$.  The $\sK_V$ are described in detail and compared with the little $V$-disk operads $\sD_V$ in 
\cite[Sections 1.1 and 2.3]{GM3}, where the advantages of the $\sK_V$ over the $\sD_V$ are made clear.   The operad $\sL$ is the infinite linear isometries operad in the universe $U_G$.  It was defined in \cite{LMS} and is described in \cite[Example 2.2]{GM3}; we  will recall its definition in the next subsection.   The finite dimensional precursor of the action of $\sL$ on $\sK$ is described in \cite[Definition 1.22]{GM3}.  The action of $\sL$ on $\sK$ is defined nonequivariantly in \cite[Section 3]{Rant1}, and it works the same way equivariantly. 

\begin{rem}\label{dumb} It is an embarrassment that we know of few other operad pairs, a problem that will be alleviated in \cite{Mop}.   Nevertheless, it is sensible to work more generally with an operad pair $(\sC,\sJ)$ that maps by a pair of maps of operads to $(\sK,\sL)$.   Examples might be of  the form  
$(\sC,\sJ) =(\sK\times \sO,\sL\times \sP)$ for an operad pair $(\sO,\sP)$, where $\sO$ and $\sP$ are also $E_{\infty}$ operads.  Dropping the requirement that $\sO$ and $\sP$ are $E_{\infty}$ operads, we can also take $\sO$ to be the commutativity operad $\sN$ and $\sP$ to be any operad.  This makes sense since $\sK\times \sN \iso \sK$ and since any operad $\sP$ acts on $\sN$ (because each $\sN(j)$ is a point).
\end{rem}

\subsection{Review of twisted half-smash products and $\sL$-spectra}\label{SPECTRA2}

Historically, $E_{\infty}$ ring spaces and  $E_{\infty}$ ring spectra were first defined in 1972, although a printed definition did not appear until  \cite{MQR}.   That was well before adequate foundations to do justice to the notions were available.   The later introduction of twisted half-smash products (THPs) gave foundations in the context of spectra and $G$-spectra, as  described in \autoref{SPECTRA}.  While THPs are defined more generally and have other uses, they were originally designed to internalize a simple external smash product of $G$-spectra, and that is how we shall use them here.  Their definition first appeared in print in \cite[Chapter VI]{LMS}. They were applied there and throughout \cite{BMMS}.  

Another decade later, THPs were beautifully redefined conceptually by Cole in the appendix to \cite{EKMM} and, equivariantly, in \cite[Chapter XXII]{EHCT}.  In retrospect, EKMM should be called CEKMM because Cole's appendix, which is written entirely in our present context of \autoref{SPECTRA}, reinterpreted the foundations upon which \cite{EKMM} was built.   Cole never mentioned the word ``operad" in either of the cited references. However,  \cite[Chapter VII]{LMS}, which is entitled ``Operad ring spectra'', gives exactly the equivariant foundations we need and is quite readable, provided that we take Cole's later redefinition of the twisted half-smash product as a foundational black box preliminary to that account.  

We summarize what we need following and modifying the nonequivariant summary in \cite{Rant1}, which gives more background and compares various alternative monads with  isomorphic categories of algebras.   Here we use the linear isometries operad $\sL$. Its $j$th space $\sL(j)$ is the $(G\times \SI_j)$-space of linear isometries $U_G^j \rtarr U_G$, with (left) $G$-action by conjugation and (right) $\SI_j$-action induced by the (left) $\SI_j$-action on $U_G^j$.  It is an $E_{\infty}$ operad \cite[Example 2.3]{GM3}. 

For a $j$-tuple of $G$-spectra $E_i$ indexed on $U_G$,   we have an external product    $\barwedge_i  E_i$, which is a $G$-spectrum indexed on sums ($=$ products) $V_1\oplus \cdots \oplus V_j$ in the universe $U_G^j$. The THP  $\sL(j)\thp \barwedge_i  E_i$ internalizes\footnote{Internalization is treated in general categorical terms in work in progress by Bryce Goldman.}
  this external smash product to what we shall view as the internal smash product of the $E_i$; it is  a $G$-spectrum indexed on $U_G$.  This smash product is not symmetric monoidal.  It is modified to an equivalent internal smash product in the category of $S_G$-modules constructed in \cite{EKMM}, but at the price of losing the homotopically meaningful adjunction 
 $(\SI^{\infty},\OM^{\infty})$ that we require.  As said earlier, the loss is dictated by Lewis's result \cite{Lewis} that we cannot have all of the good properties we want in the same category.   This internal smash product is suitable for defining actions of the operad $\sL$ on a $G$-spectrum $E$.  
 
 \begin{defn} An $E_{\infty}$ ring $G$-spectrum, or $\sL$-spectrum, is a 
$G$-spectrum $E$ with an
action of $\sL$ given by an equivariant, unital, and associative system
of maps
\begin{equation}\label{Eringspec}
\xi_j\colon  \sL(j)\thp E^{[j]}\rtarr E,
\end{equation}
where $E^{[j]}$ denotes the $j$th external smash power of $E$.
\end{defn}

The unspecified diagrams are exactly like those in the original definition of an action  of an operad on a space.  There are canonical maps
\begin{equation}\label{butyes} 
\xymatrix{
\sL(k)\thp (\sL(j_1)\thp E^{[j_1]}\barwedge \cdots \barwedge
\sL(j_k)\thp E^{[j_k]}) \ar[d]^{\iso} \\
(\sL(k)\times \sL(j_1)\times \cdots \times \sL(j_k))\thp E^{[j]}
\ar[d]^{\ga\thp\id} \\
\sL(j)\thp E^{[j]},\\}
\end{equation}
where $j= j_1 +\cdots + j_k$.  We use such maps to make sense of the required diagrams. 

In practice, we usually first construct an $\sL$-prespectrum $T$ and then observe that its spectrification $E = LT$ is an $\sL$-spectrum since 

\[ L(\sL(j)\thp T^{[j]})\iso \sL(j)\thp (LT)^{[j]}. \]

We take $G$-spectra indexed on the zero universe to be based $G$-spaces.   Since $\sL(0)$ is the inclusion $0\rtarr U$, the
 functor $\sL(0)\thp (-)$ from $G$-spaces to $G$-spectra can and must be interpreted as the functor $\SI^{\infty}\colon G\sT\rtarr G\sS$.  Similarly, the zero fold external smash power $E^{[0]}$ can and must be interpreted as the $G$-space $S^0$.  Since $\SI^{\infty}S^0$ is the sphere $G$-spectrum $S_G$, the $0^{th}$ structure map in (\ref{Eringspec}) is a map $e\colon S_G\rtarr E$.   We can either think of the map $e$ 
as preassigned, in which case we think of our ground category as the category $G\sS_e$ of spectra under $S_G$, or we can think of $e =\xi_0$ as part of the structure of an $E_{\infty}$ ring spectrum, in which case we think of our ground category as $G\sS$.  

We take the latter perspective. In analogy with the $G$-space level monad $\bL_0$ we define a monad $\bL_0$
on the category of $G$-spectra by letting
\[  \bL_0 E = \bigvee_{j\geq 0} \sL(j)\thp_{\SI_j} E^{[j]}. \]
The $0^{th}$ term is $S_G$, and $\et\colon S_G\rtarr \bL_0E$ is the inclusion.
The product $\mu$ is induced by passage to orbits and wedges from the canonical maps \autoref{butyes}.  

A central feature of twisted half-smash products is that there is a natural untwisting isomorphism
\begin{equation}\label{untwist}
\sL(j)\thp (\SI^{\infty} X_1\barwedge \cdots \barwedge \SI^{\infty} X_j)
\iso  
\SI^{\infty}(\sL(j)_0\sma X_1\sma\cdots X_j)
\end{equation}
for based $G$-spaces $X_i$.  Using this, we obtain a monadic natural isomorphism
%\footnote{In the language of \cite[Definition 14.1]{Rant1}, this means that $(\SI^{\infty},\iso)$ is a lax and oplax isomorphism of monads between $\bL_0$ on $G$-spaces and $\bL_0$ on $G$-spectra.}
\begin{equation}\label{newmon}
\bL_0\SI^{\infty}X \iso \SI^{\infty} \bL_0X
\end{equation}
relating the space and spectrum level monads $\bL_0$.

Using the subscript $+$, it has become usual to compress notation by defining 
\begin{equation}\label{sigmaplus}
 \SI^{\infty}_+ Y = \SI^{\infty}(Y_+)
\end{equation}
for an unbased $G$-space $Y$, ignoring its given basepoint if it has one.   Then
\begin{equation}\label{oldmon}
\bL_0\SI^{\infty}_+ Y \iso \SI^{\infty}_+  \big(\coprod_{j\geq 1}\sL(j) \times_{\SI_j}  Y^{j}\big).
\end{equation}
The most interesting examples are of this form.

\subsection{The multiplicative context of $G$-spaces and $G$-spectra}\label{MultThy}

We take $\sT$  in our general theory to be the category $\bJ_0[G\sT]$ of $\bJ_0$-algebras in $G\sT$ (alias $\bJ_0$-$G$-spaces), and we take $\sS$ to be the category $\bJ_0[G\sS]$ of $\bJ_0$-algebras in $G\sS$ (alias $\bJ_0$-$G$-spectra), where $(\bC,\bJ_0)$ is a monad pair as in \autoref{pairs}.   As said before, we have two operadic examples.
For the operad pair example treated here, we  take the monad pair associated to a pair $(\sC,\sJ)$ of $E_{\infty}$ operads of $G$-spaces that maps equivalently  to the pair $(\sK,\sL)$, as in \autoref{dumb}.   We are changing the ground categories from $G$-spaces and $G$-spectra to $\bJ_0$-$G$-spaces and $\bJ_0$-$G$-spectra.   Here, for the latter, we can pull back $\bL_0$-$G$-spectra along the projection $\bJ_0\rtarr \bL_0$.  We claim that this puts us in the framework of \autoref{ass1}, so that  we have the following special case of the diagram \autoref{adj5}. 

\begin{equation}\label{adj9} 
\xymatrix{
\bJ_0[G\sT]    \ar@<.5ex>[rr]^{\SI^{\infty}}  \ar@<.5ex>[dr]^{\bF_{\bC} }& & \bJ_0[G\sS]   \ar@<.5ex>[ll]^{\OM^{\infty}}   \ar@<.5ex>[dl]^{\OM^{\infty}_{\bC}}\\
&   \ar@<.5ex>[ul]^{\bU_{\bC}}  \bC\big[\bJ_0[G\sT]\big       ]    \ar@<.5ex>[ur]^{\SI^{\infty}_{\bC}}& \\}
\end{equation}

To prove the claim, we must first prove the following result.  To distinguish notationally, let us write $X$ for based $G$-spaces, $Z$ for $\bJ_0$-$G$-spaces, 
$E$ for $G$-spectra, and $R$ for $\bJ_0$-$G$--spectra, which are $E_{\infty}$ ring $G$-spectra with structure given by $\bJ_0$. 

\begin{prop}\label{formalQ} The functors $\SI^{\infty}$ and $\OM^{\infty}$ take $\bJ_0$-algebras to $\bJ_0$-algebras and the (topological) adjunction
\[  G\sS(\SI^{\infty}X,E)\iso G\sT(X,\OM^{\infty}E) \]
induces a (topological) adjunction
\[  \bJ_0[G\sS](\SI^{\infty}Z,R) \iso \bJ_0[G\sT](Z,\OM^{\infty}R). \]
Thus the monad $Q = \OM^{\infty}\SI^{\infty}$ on $G\sT$ restricts to a monad $Q$ on 
$\bJ_0[G\sT]$ and, when restricted to $\bJ_0$-$G$-spectra, the 
functor $\OM^{\infty}$ takes values in $\bJ_0$-$G$-spaces.
\end{prop}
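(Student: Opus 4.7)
The strategy is to leverage the untwisting isomorphism \autoref{untwist}, extended from $\sL(j)$ to $\sJ(j)$ regarded as a $(G\times \SI_j)$-space over $\sL(j)$ via the operad pair map $(\sC,\sJ)\rtarr (\sK,\sL)$, to produce a natural isomorphism of monads
\[ \tau\colon \SI^{\infty}\bJ_0 \iso \bJ_0 \SI^{\infty} \]
on $G\sT$. Summandwise, $\tau$ is the instance
\[ \SI^{\infty}(\sJ(j)_+ \sma_{\SI_j} Z^{(j)}) \iso \sJ(j)\thp_{\SI_j}(\SI^{\infty} Z)^{[j]} \]
of untwisting, available by naturality of the untwisting isomorphism in the $\SI_j$-equivariant space over $\sL(j)$. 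A check against the canonical maps \autoref{butyes} and the operadic structure of $\sJ$ shows that $\tau$ intertwines the units $\et_{\otimes}$ and products $\mu_{\otimes}$, so that it is a map of monads.

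Given $\tau$, the functor $\SI^{\infty}$ lifts to $\bJ_0[G\sT]\rtarr \bJ_0[G\sS]$ as follows: for a $\bJ_0$-$G$-space $(Z,\ps)$, the induced $\bJ_0$-action on $\SI^{\infty} Z$ is the composite
\[ \SI^{\infty}\ps \com \tau^{-1}\colon \bJ_0 \SI^{\infty} Z \rtarr \SI^{\infty}\bJ_0 Z \rtarr \SI^{\infty} Z. \]
Dually, for a $\bJ_0$-$G$-spectrum $(R,\xi)$, the induced action on $\OM^{\infty} R$ is
\[ \OM^{\infty}\xi \com \OM^{\infty}\bJ_0\epz \com \OM^{\infty}\tau \com \et \colon \bJ_0 \OM^{\infty}R \rtarr \OM^{\infty} R, \]
and the required unit and associativity identities for this action follow from the monad-map condition on $\tau$ combined with the triangle identities for $(\SI^{\infty},\OM^{\infty})$. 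To see that the adjunction lifts, one verifies that a $G$-map $f\colon Z\rtarr \OM^{\infty} R$ is a map of $\bJ_0$-algebras if and only if its adjoint $\tilde f\colon \SI^{\infty}Z\rtarr R$ is a map of $\bJ_0$-algebras; this is a diagram chase using naturality of $\tau$ and the triangle identities, paralleling the proof of \autoref{keyadj}. The final two claims are then formal: $Q = \OM^{\infty}\SI^{\infty}$ is the monad of the lifted adjunction on $\bJ_0[G\sT]$, and $\OM^{\infty}$ takes values in $\bJ_0$-$G$-spaces by the very construction of its lift.

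The main obstacle is the construction of $\tau$ and, in particular, the verification that it is a map of monads. The untwisting isomorphism \autoref{untwist} is stated for $\sL(j)$, but the instance required here applies to the $(G\times \SI_j)$-spaces $\sJ(j)$ viewed as living over $\sL(j)$ via the operad map $\sJ\rtarr \sL$; extracting this generalization from Cole's conceptual redefinition of the twisted half-smash product requires working through the foundations of \cite{LMS, EHCT}. Once $\tau$ is in hand, compatibility with $\et_{\otimes}$ and $\mu_{\otimes}$ reduces, summand by summand, to the compatibility of the canonical maps \autoref{butyes} for the linear isometries operad, pulled back along $\sJ\rtarr \sL$, together with naturality of untwisting.
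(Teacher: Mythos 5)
Your proposal is correct and follows essentially the same route as the paper: the paper's proof also rests on the monadic (untwisting) isomorphism $\SI^{\infty}\bJ_0 \iso \bJ_0\SI^{\infty}$ of \autoref{newmon}, defines the induced actions on $\SI^{\infty}Z$ and $\OM^{\infty}R$ by exactly the composites you write, and concludes by a formal diagram chase (the paper checks that $\et$ and $\epz$ are $\bJ_0$-maps, which is equivalent to your check that adjoints of $\bJ_0$-maps are $\bJ_0$-maps). Your explicit summandwise construction of $\tau$ for $\sJ(j)$ over $\sL(j)$ is just a spelled-out version of what the paper handles by citing \autoref{untwist} together with the pullback along $\bJ_0\rtarr\bL_0$ and the monadicity argument of \cite[Appendix A]{Rant1}.
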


\begin{proof} This is a formal consequence of the fact that the
isomorphism (\ref{newmon}) is monadic, as is explained in
general categorical terms in \cite[Appendix A]{Rant1}.  
If $(Z,\xi)$ is a $\bJ_0$-$G$-space, 
then $\SI^{\infty}Z$ is a $\bJ_0$-$G$-spectrum with structure map
\[ \xymatrix@1{ \bJ_0\SI^{\infty}Z \ar[r]^-{\iso} & 
\SI^{\infty}\bJ_0Z \ar[r]^-{\SI^{\infty}\xi}
& \SI^{\infty}Z.\\} \]
The isomorphism (\ref{newmon}) and the 
adjunction give a natural composite $\de$: 
\[\xymatrix@1{  
\bJ_0\OM^{\infty}E \ar[r]^-{\et} & 
\OM^{\infty}\SI^{\infty}\bJ_0\OM^{\infty}E \ar[r]^-{\iso} 
& \OM^{\infty}\bJ_0\SI^{\infty}\OM^{\infty}E \ar[r]^-{\OM^{\infty}\bJ_0\epz}
& \OM^{\infty}\bJ_0E.\\} \]
If $(R,\xi)$ is a $\bJ_0$-$G$-spectrum, then 
$\OM^{\infty}R$ is a $\bJ_0$-$G$-space with structure map
\[ \xymatrix@1{
\bJ_0\OM^{\infty}R \ar[r]^-{\de} & \OM^{\infty}\bJ_0 R\ar[r]^-{\OM^{\infty}\xi} 
& \OM^{\infty}R.\\} \]
Diagram chases show that the unit $\et$ and counit $\epz$ of the
adjunction are maps of $\bJ_0$-algebras when $Z$ and $R$ are $\bJ_0$-algebras, and the restricted adjunction follows.
\end{proof}

Taking $X = Y_+$, with the basepoint thought of as $0$ and using the notation of \autoref{sigmaplus}, this has the following consequence. 

\begin{cor}\label{themguys} The adjunction of Proposition \ref{formalQ} induces an adjunction
\[ \bJ_0[G\sS](\SI^{\infty}_+Y,R)\iso \bJ[G\sU](Y,\OM^{\infty}R) \]
between the category of $\bJ_0$-$G$-spectra $R$ (alias $E_{\infty}$-ring $G$-spectra) and the category of $\bJ$-$G$-spaces  $Y$, 
where $\bJ$-$G$-spaces are unbased $\bJ$-spaces, that is $\bJ$-spaces without $0$. 
\end{cor}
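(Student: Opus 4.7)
My plan is to derive the corollary by composing the adjunction of \autoref{formalQ} with a second adjunction relating $\bJ$-$G$-spaces (unbased) and $\bJ_0$-$G$-spaces (based), implemented by the adjunction between unbased and based $G$-spaces given by the functor $(-)_+$ on the left and the forgetful functor ``ignore basepoint'' on the right.

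The first step is to compare the monads $\bJ$ and $\bJ_0$ across the $(-)_+$/forget adjunction. For an unbased $G$-space $Y$, the crucial identification is the natural isomorphism
\[  \bJ_0(Y_+) \;\cong\; \big(\coprod_{j\geq 0}\sJ(j) \times_{\SI_j}  Y^{j}\big)_+ \;=\; (\bJ Y)_+, \]
which is immediate from the definition of $\bJ_0$ as an iterated wedge with a disjoint basepoint $0$ adjoined and the fact that smash products based at $0$ of $Y_+$ recover cartesian products of $Y$ with a disjoint basepoint. This isomorphism is compatible with the units and products of $\bJ$ and $\bJ_0$, so $(-)_+$ lifts to a functor $\bJ[G\sU]\rtarr \bJ_0[G\sT]$. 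Conversely, any $\bJ_0$-$G$-space $Z$ has an underlying $\bJ$-action on its entire underlying $G$-space (with basepoint playing the role of $0$), since the $\bJ_0$-structure maps restrict along the inclusion $\sJ(j) \subset \sJ(j)_0$ and the projection $Z^j \rtarr Z^{(j)}$; this defines the forgetful functor $\bJ_0[G\sT] \rtarr \bJ[G\sU]$.

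Next I would show that these functors form an adjoint pair. Given a $\bJ$-equivariant map $f\colon Y \rtarr W$ of unbased $G$-spaces, where $W$ carries a $\bJ$-action coming from a $\bJ_0$-structure, its basepoint extension $f_+\colon Y_+ \rtarr W$ (sending $+$ to the basepoint of $W$) is automatically $\bJ_0$-equivariant: the only new identity to check involves the basepoint, which is forced since $0\in \sJ(j)_0$ acts by collapsing to the basepoint on both sides. Conversely, every $\bJ_0$-equivariant based map $Y_+ \rtarr W$ restricts to a $\bJ$-equivariant unbased map $Y\rtarr W$. These assignments are mutually inverse and natural, giving
\[ \bJ_0[G\sT](Y_+, W) \;\cong\; \bJ[G\sU](Y, W). \]

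Finally, I would apply this to $W = \OM^{\infty}R$, which is a $\bJ_0$-$G$-space by \autoref{formalQ} and hence a $\bJ$-$G$-space by the forgetful functor above, and compose with the adjunction of \autoref{formalQ} to obtain
\[ \bJ_0[G\sS](\SI^{\infty}_+ Y, R) \;=\; \bJ_0[G\sS](\SI^{\infty}Y_+, R) \;\cong\; \bJ_0[G\sT](Y_+,\OM^{\infty}R) \;\cong\; \bJ[G\sU](Y, \OM^{\infty}R). \]
Topological enrichment follows since both adjunctions above are topological. The main obstacle is purely bookkeeping: verifying that the forgetful functor $\bJ_0[G\sT]\rtarr \bJ[G\sU]$ is well defined requires checking that the associativity, unit, and equivariance diagrams for a $\bJ_0$-action indeed restrict to those for a $\bJ$-action after discarding the nullity structure imposed by $0\in \sJ(j)_0$. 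This is straightforward from the definitions but is the only place where one must argue and not merely invoke \autoref{formalQ}.
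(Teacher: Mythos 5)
Your proposal is correct and follows essentially the same route as the paper: lift the based/unbased adjunction $G\sT(Y_+,X)\iso G\sU(Y,iX)$ to an adjunction $\bJ_0[G\sT](Y_+,Z)\iso \bJ[G\sU](Y,Z)$ (using $\bJ_0(Y_+)\iso(\bJ Y)_+$ and the evident forgetful functor), then compose with the adjunction of \autoref{formalQ} at $Z=\OM^{\infty}R$. The only difference is that you spell out the bookkeeping that the paper leaves as "it induces an adjunction," which is a harmless elaboration.
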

\begin{proof}  Recall that we have the obvious adjunction 
\[  G\sT(Y_+,X) \iso G\sU(Y,iX) \]
for based $G$-spaces $X$ and unbased $G$-spaces $Y$, where $i\colon \sT\rtarr \sU$ is given by forgetting basepoints.  It induces an adjunction
\[ \bJ_0[G\sT](Y_+,Z)\iso \bJ[G\sU](Y,Z) \]
for $\bJ$-$G$-spaces $Y$ and $\bJ_0$-$G$-spaces $Z$ regarded on the right as $\bJ$-spaces by forgetting the basepoint $0$.  Taking $Z=\OM^{\infty}R$, the result follows by composing with the adjunction of Proposition \ref{formalQ}.
\end{proof}

Fixing $(\sC,\sJ)$,  we take $(\sC,\sJ)$-$G$-spaces as our $E_{\infty}$ ring $G$-spaces.  Similarly, we take $\bJ_0$-$G$-spectra as our
$E_{\infty}$ ring $G$-spectra.   We then have the following result.

\begin{cor} 
The $0^{th}$ $G$-spaces of $E_{\infty}$ ring $G$-spectra are
naturally $E_{\infty}$ ring $G$-spaces.
\end{cor}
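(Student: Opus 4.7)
The plan is to observe that this corollary is essentially a direct consequence of \autoref{formalQ}, with only a small amount of translation between terminology. Recall that by construction an $E_\infty$ ring $G$-spectrum in the present setup is a $\bJ_0$-$G$-spectrum (for the chosen operad pair $(\sC,\sJ)$ mapping to $(\sK,\sL)$), while an $E_\infty$ ring $G$-space is a $(\sC,\sJ)$-$G$-space, which via the operad-pair-to-monad-pair correspondence of \autoref{pairs2} and the equivalence $\bC[\bJ_0[G\sT]] \iso \bC\bJ_0[G\sT]$ from \autoref{BeckthmD} amounts in particular to a $\bJ_0$-$G$-space. So what must be produced is a natural $\bJ_0$-action on $\OM^\infty R$ for every $\bJ_0$-$G$-spectrum $R$.

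First I would invoke the last sentence of \autoref{formalQ}: when restricted to $\bJ_0$-$G$-spectra, the functor $\OM^\infty$ takes values in $\bJ_0$-$G$-spaces. Concretely, for $(R,\xi)$ a $\bJ_0$-$G$-spectrum, the structure map on $\OM^\infty R$ is the composite
\[
\xymatrix@1{\bJ_0\OM^\infty R \ar[r]^-{\de} & \OM^\infty \bJ_0 R \ar[r]^-{\OM^\infty \xi} & \OM^\infty R,}
\]
where $\de$ is the natural transformation built in the proof of \autoref{formalQ} from the monadic untwisting isomorphism $\bJ_0 \SI^\infty \iso \SI^\infty \bJ_0$ of \autoref{untwist}--\autoref{newmon} together with the unit $\et$ and counit $\epz$ of $(\SI^\infty,\OM^\infty)$.

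Next I would verify, by the same diagram chases implicit in \autoref{formalQ} (or equivalently by the general categorical principle of \cite[Appendix A]{Rant1} that a monadic isomorphism of the form $\bJ_0 \SI^\infty \iso \SI^\infty \bJ_0$ transfers $\bJ_0$-algebra structure across the adjunction), that the composite above satisfies the unit and associativity axioms of a $\bJ_0$-algebra, and is $G$-equivariant. Naturality in $R$ is automatic since $\de$ is a natural transformation, and since for any map $f\colon R \rtarr R'$ of $\bJ_0$-$G$-spectra the compatibility of $\OM^\infty f$ with the displayed action follows from naturality of $\de$ combined with the fact that $f$ intertwines $\xi$ and $\xi'$.

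The only thing to watch is strictly bookkeeping: to identify a $\bJ_0$-$G$-space with an $E_\infty$ ring $G$-space one must remember that the basepoint in $\bJ_0$-algebras plays the role of $0$, while the unit $1$ is supplied by the $\sJ$-action via $\sJ(0) = \{1\}$; this is precisely the discussion following \autoref{pairs2}. There is no genuine obstacle here, since for $R$ a $\bJ_0$-$G$-spectrum the $0$-ary operation $e\colon S_G \rtarr R$ passes under $\OM^\infty$ to a map $S^0 = \OM^\infty S_G \rtarr \OM^\infty R$, giving the required basepoint $1$ distinct from $0$. Thus $\OM^\infty R$ is naturally an $E_\infty$ ring $G$-space, completing the proof.
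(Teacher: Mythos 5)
There is a genuine gap, and it lies in your opening reduction. In this paper an $E_{\infty}$ ring $G$-space is by definition a $(\sC,\sJ)$-$G$-space, that is a $(\bC,\bJ_0)$-algebra, equivalently (by \autoref{BeckthmD}) a $\bC$-algebra in $\bJ_0[G\sT]$. Such an object has a $\bJ_0$-action \emph{in particular}, but having a $\bJ_0$-action is far from sufficient: one also needs the additive $E_{\infty}$ structure and its compatibility (the distributivity encoded in the action of $\bJ_0$ on $\bC$) with the multiplicative one. Your sentence ``so what must be produced is a natural $\bJ_0$-action on $\OM^{\infty}R$'' runs the implication backwards, and everything after it, while correct, only reproduces the content and proof of \autoref{formalQ}; it shows that $\OM^{\infty}R$ is a multiplicative $E_{\infty}$ $G$-space, not an $E_{\infty}$ ring $G$-space.

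What is missing is the additive action and the check that it lives in the multiplicative world. Since $\sC$ maps to the Steiner operad $\sK$, which acts naturally on the infinite loop spaces $\OM^{\infty}E$, the space $\OM^{\infty}R$ does carry a natural $\bC$-action; but to obtain a $(\bC,\bJ_0)$-algebra you must show that, when $R$ is a $\bJ_0$-$G$-spectrum, the action map $\bC\OM^{\infty}R \rtarr \OM^{\infty}R$ is a map of $\bJ_0$-algebras, i.e.\ that $\bC$ restricts to a monad on $\bJ_0[G\sT]$ acting on $\OM^{\infty}$ of $\bJ_0$-$G$-spectra. This is exactly what the paper records in the corollary immediately following this one: the additive action is induced, as in (i) of \autoref{converse}, from the unit and counit of the monad $\OM^{\infty}\SI^{\infty}$, and by \autoref{formalQ} those are maps of $\bJ_0$-algebras when their inputs are $\bJ_0$-algebras. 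With that supplement (naturality in $R$ being clear), the corollary follows; without it your argument establishes strictly less than what is asserted.
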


The following result makes clear that \autoref{ass1} restricts multiplicatively to another instance of \autoref{ass1}. 

\begin{cor} The natural action of the monad $\bC$ in $G\sT$ on $\OM^{\infty}E$ for $G$-spectra $E$ restricts on $\bJ_0$-$G$-spectra $R$ to a natural action of the monad $\bC$ in $\bJ_0[G\sT]$.
\end{cor}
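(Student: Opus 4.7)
The plan is to verify the corollary directly by unfolding the $\bJ_0$-actions on both $\bC\OM^{\infty}R$ and $\OM^{\infty}R$ and checking their compatibility at the operad level. By \autoref{COact} applied to the monad pair $(\bC,\bJ_0)$ produced from the operad pair $(\sC,\sJ)$ as in \autoref{pairs2}, the monad $\bC$ restricts to a monad on $\bJ_0[G\sT]$: for any $\bJ_0$-$G$-space $Z$ with structure map $\xi_Z$, the $\bJ_0$-action on $\bC Z$ is the composite $\bC(\xi_Z)\com \rho_Z$, where $\rho\colon \bJ_0\bC\rtarr \bC\bJ_0$ is the distributivity transformation of \autoref{BeckthmD}(iii). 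By \autoref{formalQ}, for $R$ a $\bJ_0$-$G$-spectrum, $\OM^{\infty}R$ carries a natural $\bJ_0$-action $\xi_R$, derived from the $\bL_0$-structure on $R$ through the monadic untwisting isomorphism \autoref{newmon} (pulled back along the operad map $\sJ\rtarr \sL$). Both source and target of $\vartheta\colon \bC\OM^{\infty}R \rtarr \OM^{\infty}R$ are thus $\bJ_0$-$G$-spaces, and the corollary asserts the commutativity of
\[
\xymatrix{
\bJ_0\bC\OM^{\infty}R \ar[r]^-{\rho} \ar[d]_{\bJ_0\vartheta} & \bC\bJ_0\OM^{\infty}R \ar[r]^-{\bC\xi_R} & \bC\OM^{\infty}R \ar[d]^{\vartheta}\\
\bJ_0\OM^{\infty}R \ar[rr]_-{\xi_R} & & \OM^{\infty}R.\\
}
\]

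The strategy is to trace each ingredient of this square back to its operadic origin and thereby reduce the whole verification to a statement about the operad pair itself. The action $\vartheta$ is assembled, compatibly in the indexing representation $V\subset U_G$, from the canonical $\sK$-action on $\OM^V R(V)$ pulled back along $\sC\rtarr \sK$; the action $\xi_R$ is assembled from the $\sL$-action on $R$ (given by the structure maps of \autoref{Eringspec}) transported to $\OM^{\infty}R$ through the chain of maps constructed in the proof of \autoref{formalQ}, then pulled back along $\sJ\rtarr \sL$; and the distributivity $\rho$ is assembled from the operad pair distributivity maps $\la\colon \sJ(k)\times\sC(j_1)\times\cdots\times\sC(j_k)\rtarr \sC(j_\times)$ of \autoref{pairs2}. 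Since $(\sC,\sJ)$ maps to $(\sK,\sL)$ as operad pairs, it is enough to verify the square for the canonical case $(\sK,\sL)$, and by passage to the colimit over $V$ it is enough to do this at each finite stage $V$.

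The hard part will be precisely this finite-stage diagram chase: one must show that the canonical $\sK$-action on the loop space $\OM^V R(V)$ and the $\sL$-action on $R$ (mediated by the twisted half-smash product of \autoref{SPECTRA2}) are interlinked through $\la$ in exactly the pattern encoded by $\rho$. This is the classical compatibility that motivates the definition of $E_{\infty}$ ring $G$-spectra in \cite[Chapter VII]{LMS} (see also \cite{MQR}), but it is not written down explicitly in the present excerpt and requires a careful unpacking of the untwisting isomorphism \autoref{untwist} together with the chain of adjoint maps in the proof of \autoref{formalQ}. Once established at each $V$, naturality in $V$ makes the colimit step formal, delivering the compatibility for $\OM^{\infty}R$ and completing the proof.
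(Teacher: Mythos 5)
Your setup is correct as far as it goes: the corollary is precisely the assertion that $\vartheta\colon \bC\OM^{\infty}R\rtarr \OM^{\infty}R$ is a map of $\bJ_0$-algebras, where the $\bJ_0$-action on $\bC\OM^{\infty}R$ is $\bC\xi\com\rh$ with $\rh$ the distributivity transformation of \autoref{BeckthmD}. But your proposed proof stops exactly where the work lies. The ``finite-stage diagram chase'' you defer --- that the $\sK$-action on $\OM^{V}R(V)$, the $\sL$-action on $R$ through twisted half-smash products, and the distributivity maps $\la$ of the operad pair interlock in the pattern encoded by $\rh$ --- is the entire content of the statement; appealing to it as ``the classical compatibility that motivates'' the definition in \cite{LMS} does not discharge it, and you say yourself that it is not carried out. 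The reduction from $(\sC,\sJ)$ to $(\sK,\sL)$ also silently uses a naturality of $\rh$ and of the two actions along the map of operad pairs, which is plausible but is one more thing left unverified. As written, this is a plan with its key step missing, not a proof.

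The paper's proof avoids the operad-level chase altogether and is purely formal, which is the point of having set up the machinery earlier. By \autoref{alphamon}(iii) (equivalently \autoref{converse}(i)), the natural action of $\bC$ on $\OM^{\infty}E$ is the pullback along the map of monads $\al\colon \bC\rtarr \OM^{\infty}\SI^{\infty}$ of the action $\OM^{\infty}\epz$ of the adjunction monad; and \autoref{formalQ} has already shown that $\SI^{\infty}$ and $\OM^{\infty}$ carry $\bJ_0$-algebras to $\bJ_0$-algebras and that the unit and counit are maps of $\bJ_0$-algebras on $\bJ_0$-algebra inputs, the untwisting isomorphism \autoref{newmon} being used once, inside that proposition. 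Consequently all the constituents of the pulled-back action are $\bJ_0$-maps when $E=R$ is a $\bJ_0$-$G$-spectrum, and the restriction of the $\bC$-action to $\bJ_0[G\sT]$ is immediate monadic bookkeeping. The hands-on compatibility you propose to verify at each $V$ is exactly what this factorization renders unnecessary; if you want to complete your route instead, you must actually write out that compatibility from \autoref{Eringspec}, \autoref{untwist}, and the chain of maps in the proof of \autoref{formalQ}, which is considerably more work than the paper's two lines.
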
 
\begin{proof} Recall that $\bC$ maps to $\bK$ and $\bK$ acts naturally on $\OM^{\infty} E$.  The action is induced as in (i) of \autoref{converse} from the unit and counit of the monad $\OM^{\infty}\SI^{\infty}$, and those are maps of $\bJ_0$-algebras when their input is given by $\bJ_0$-algebras.
\end{proof}

\subsection{Assumptions B through E and the ring completion theorem}\label{RingThy}

Forgetting from \autoref{adj7}  down to the diagram without  the $\bJ_0$, we define the weak equivalences of \autoref{ass2} to be created in the ground categories of $G$-spaces and $G$-spectra.  We say that an $E_{\infty}$ ring $G$-space is ringlike if it is grouplike as a $G$-space, and we say that a map of $E_{\infty}$ ring $G$-spaces is a ring completion if its underlying map of $G$-spaces is a group completion.  Checking that the group completion functor $(\bG,g)$ preserves $E_{\infty}$ ring $G$-spaces, we have \autoref{ass4} by specialization of its additive version to  $(\bC,\bJ_0)$-algebras. 

The approximation theorem, \autoref{ass5}, concerns the underlying additive structure and is therefore immediate from its additive version.   The essential formal point is that if $X$ is a $\bJ_0$-algebra, then $\al\colon \bC X \rtarr \OM^{\infty}\SI^{\infty} X=  QX $ is a map of $\bJ_0$-$G$-algebras.  Finally, Assumptions \ref{ass2} and \ref{ass3} are verified by checking that the multiplicative structure on the ground categories is preserved by realization.   In other words, everything done additively works with our multiplicative change of ground categories.  For example, we need that, for a simplicial $\bJ_0$-algebra $X_*$, $\ga\colon |\OM^{\infty}X_*|\rtarr  \OM^{\infty}|X_*|$ is a map of $\bJ_0$-$G$-algebras, and that is verified by inspection of linear isometries in the  passage to colimits used in the additive verification.  

\begin{thm}  For $E_{\infty}$ ring $G$-spaces $Y$, there is a natural ring completion  
$$Y \rtarr \OM^{\infty}_{\bC}\bE Y\iso \OM^{\infty}_{\bC}  \SI^{\infty}_{\bC} \bB Y,$$
and any connective $E_{\infty}$ ring $G$-spectrum $E$ is equivalent to  $\SI^{\infty}_{\bC}  \OM^{\infty}_{\bC} E$.
\end{thm}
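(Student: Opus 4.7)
The plan is to specialize Theorems \ref{recprin1} and \ref{equiv} to the multiplicative context of diagram \autoref{adj9}, in which $\sT$ is $\bJ_0[G\sT]$, $\sS$ is $\bJ_0[G\sS]$, and the additive monad $\bC$ acts on $\OM^{\infty}R$ for $\bJ_0$-$G$-spectra $R$ by pullback along $\bC\rtarr \bK$. Everything hinges on verifying Assumptions A through F in this context, after which the two statements of the theorem fall out as the recognition principle and the homotopical monadicity theorem, respectively.

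First I would note that \autoref{ass1} is the content of \autoref{formalQ} together with the corollary asserting that $\bC$ acts on $\OM^{\infty}_{\bC}R$ through $\bJ_0$-algebra maps. Assumptions \ref{ass2} and \ref{ass3} are immediate: weak equivalences, fibrations, and $\OM^{\infty}$-connectivity in $\bJ_0[G\sT]$ and $\bJ_0[G\sS]$ are created by the forgetful functors to $G\sT$ and $G\sS$, so the required preservation properties of $\SI^{\infty}$, $\OM^{\infty}$, $\bF_{\bC}$, and $\bC$ are inherited from their additive counterparts proven in \autoref{SpaceSpectra}.

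For \autoref{ass4}, I define a $(\sC,\sJ)$-$G$-space $Y$ to be ringlike if its underlying $\sC$-$G$-space is grouplike in the sense of \autoref{HopfG}, and a map to be a ring completion if it is a group completion on underlying additive structure. The key verification is that the additive group completion functor $(\bG,g)$ of \autoref{HopfbG} lifts to $(\sC,\sJ)$-$G$-spaces, which reduces to showing that the bar construction $B(\bM,\bC_1,-)$ and $\OM B(-)$ commute with the monad $\bJ_0$, a direct consequence of the operad pair distributivity law for $(\sC,\sJ)$ pulled back from $(\sK,\sL)$. \autoref{ass5} is essentially free: $\al\colon \bC Y\rtarr \OM^{\infty}\SI^{\infty}Y$ is a map of $\bJ_0$-algebras whenever $Y$ is one (it is built from $\et$ and $\vartheta$, which are $\bJ_0$-equivariant by \autoref{formalQ}), so the assertion that $\al$ is a ring completion is by definition the already established assertion that it is a group completion additively.

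For \autoref{ass6}, parts (i)--(vi) are formal. Realization of simplicial $\bJ_0$-$G$-spaces and $\bJ_0$-$G$-spectra is computed in the underlying categories because $\bJ_0$, as a left adjoint, commutes with realization, and the isomorphisms $\SI^{\infty}|\sK_*|\iso |\SI^{\infty}\sK_*|$ and $\bC|\sK_*|\iso |\bC\sK_*|$ lift to $\bJ_0$-algebra maps by naturality. Part (vii) — the central nontrivial statement — requires that $\ga\colon |\OM^{\infty}\sK_*|\rtarr \OM^{\infty}|\sK_*|$ be a weak equivalence of $\bJ_0$-algebras for levelwise connective $\sK_*$. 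The underlying weak equivalence is the additive statement from \autoref{SpaceSpectra}, so the only new point is that $\ga$ is a map of $\bJ_0$-algebras; this is verified by inspecting the linear isometry colimits defining $\OM^{\infty}$ on $\bL_0$-spectra and observing that the pullback along $\bJ_0\rtarr \bL_0$ is compatible with realization. The main obstacle is precisely this last bookkeeping: ensuring that every naturality square and every structure map appearing in the additive proof of \autoref{ass6}(vii) preserves the $\bJ_0$-action, which it does because all the relevant constructions are built from left adjoints and operadic structure that $\bJ_0$ respects. With Assumptions A--F confirmed, \autoref{recprin1} yields the ring completion $Y\rtarr \OM^{\infty}_{\bC}\bE Y \iso \OM^{\infty}_{\bC}\SI^{\infty}_{\bC}\overline{Y}$ and \autoref{equiv} yields the equivalence of homotopy categories, completing the proof.
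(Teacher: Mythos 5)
Your overall strategy coincides with the paper's proof in \autoref{RingThy}: \autoref{ass1} is supplied by \autoref{formalQ} and its corollaries from \autoref{MultThy}, Assumptions B, C, E, and the formal parts of F are inherited from the additive theory of \autoref{SpaceSpectra} (the only genuinely new points being that $\al$ and $\ga$ are maps of $\bJ_0$-algebras, the latter checked by inspecting the linear isometries in the colimits defining $\OM^{\infty}$), and the theorem then drops out by specializing \autoref{recprin1} and \autoref{equiv}. So far this matches the paper almost verbatim.

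The step that would fail as you have written it is your verification of \autoref{ass4}. You claim that $(\bG,g)$ of \autoref{HopfbG} lifts to $(\sC,\sJ)$-$G$-spaces because $B(\bM,\bC_1,-)$ and $\OM B(-)$ ``commute with the monad $\bJ_0$, a direct consequence of the operad pair distributivity law.'' The distributivity law of an operad pair only gives an action of $\bJ_0$ on the full additive monad $\bC$; it gives no action of $\bJ_0$ on the suboperad monad $\bC_1$ or on the free monoid monad $\bM$ (for the canonical pair $(\sK,\sL)$ the action maps $\la$ applied to elements of the $E_1$ suboperad land in components of $\sK$ of higher geometric dimension, not back in the $E_1$ piece), and $\bJ_0$ does not commute with the additive classifying space functor: the multiplication is ``bilinear'' with respect to the addition, so it does not descend to a map $BM\times BM \rtarr BM$. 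This failure of additive deloopings to inherit multiplicative structure is precisely the classical difficulty of multiplicative infinite loop space theory, so no simple commutation argument is available. What the paper's proof requires at this point (and asserts as a check, without offering your reduction) is that $\bG Y = \OM B(\bM,\bC_1,Y)$ carries a natural $\bJ_0$-structure for which $g$ is a $\bJ_0$-map, since in the multiplicative context the category $\sH$ of \autoref{screwy} must lie over $\sT=\bJ_0[G\sT]$. To repair your argument you must either construct that structure by some genuine argument, or reorganize the verification of \autoref{ass4} so that only the underlying additive group completion data is invoked --- note that in the general theory the functor $\bG$ is used solely to prove that $B\al$ is a group completion, which is a statement about underlying $G$-spaces --- but the commutation you invoke is not available.
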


\section{Composite adjunctions and the multiplicative theory}\label{Mult2}
\subsection{Multiplicative structure in the composite adjunction context}\label{Beck2}

Just as we applied our original general context of \autoref{ass1} multiplicatively by incorporating multiplicative structure into the given ground categories $\sT$ and $\sS$, we can apply our composite adjunction context of \autoref{ass6} by additionally incorporating multiplicative structure into the given ground category $\sV$.  Thus suppose that, in addition to \autoref{ass6}, we are given  a monad pair $(\bC,\bJ_0)$ on $\sT$.  Remembering that  $\epz\colon \bL\bR \rtarr  \mathrm{id}$ is an isomorphism, we can transfer $(\bC,\bJ_0)$ to $\sV$ via the adjunction  $(\bL, \bR)$ between $\sV$ and $\sT$. 

\begin{prop}\label{MultRCLRJL}
 Let $\bD = \bR\bC \bL$ and $\bK_0 =\bR\bJ_0\bL$. Then $\bD$ and $\bK_0$ are
  monads on $\sV$, and the action of $\bJ_0$ on $\bC$ induces an action of
  $\bK_0$ on $\bD$, so that $(\bD,\bK_0)$ is a monad pair in $\sV$.  The adjunction $(\bL, \bR)$ restricts to an  adjunction from $\bK_0[\sV]$ to $\bJ_0[\sT]$.  Moreover,  with 
  $$\om = \bR\bC\epz\colon \bD\bR = \bR\bC\bL\bR \iso \bR\bC,$$ 
  the diagrams of \autoref{ass6} are diagrams in  $\bJ_0[G\sT]$ and, with 
  $$\om_{\otimes} = \bR\bJ_0\epz\colon \bK_0 \bR = \bR\bJ_0\bL\bR \iso \bR\bJ_0,$$ 
  the analogous multiplicative diagrams (displayed in \autoref{ass9} below) also commute.
\end{prop}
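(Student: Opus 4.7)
The plan is to assemble everything from two tools already at hand — Propositions \ref{formalMT1} and \ref{formalMT2}, each applied once for $\bC$ and once for $\bJ_0$ — together with the distributive-law reformulation of a monad pair in \autoref{BeckthmD}(iii).  Applying \autoref{formalMT1} to $\bC$ gives that $\bD = \bR\bC\bL$ is a monad on $\sV$ with compatibility isomorphism $\om = \bR\bC\epz\colon \bD\bR\iso \bR\bC$; applying it to $\bJ_0$ gives the analogous data for $\bK_0 = \bR\bJ_0\bL$ with $\om_\otimes = \bR\bJ_0\epz$.  By \autoref{formalMT2}(i), $\bR$ sends $\bC$-algebras (respectively $\bJ_0$-algebras) in $\sT$ to $\bD$-algebras (respectively $\bK_0$-algebras) in $\sV$, and since $\epz$ is an isomorphism, \autoref{formalMT2}(ii) supplies the inverse passage.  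This proves the third claim that $(\bL,\bR)$ restricts to an adjunction between $\bK_0[\sV]$ and $\bJ_0[\sT]$.

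The heart of the argument is to transport the monad-pair structure along the conjugation by $\bR(-)\bL$.  I would use formulation (iii) of \autoref{BeckthmD}: the action of $\bJ_0$ on $\bC$ is equivalent to a distributivity natural transformation $\rh\colon \bJ_0\bC\rtarr \bC\bJ_0$ subject to three compatibility diagrams.  Define $\tilde\rh\colon \bK_0\bD\rtarr \bD\bK_0$ to be the composite
\begin{equation*}
\bR\bJ_0\bL\bR\bC\bL \xrightarrow{\bR\bJ_0\epz\bC\bL} \bR\bJ_0\bC\bL \xrightarrow{\bR\rh\bL} \bR\bC\bJ_0\bL \xrightarrow{(\bR\bC\epz\bJ_0\bL)^{-1}} \bR\bC\bL\bR\bJ_0\bL,
\end{equation*}
that is, conjugate $\rh$ by $\bR(-)\bL$ and insert the two copies of $\bL\bR$ required by the shape of $\bK_0\bD$ and $\bD\bK_0$.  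The three axioms for $\tilde\rh$ of \autoref{BeckthmD}(iii) (compatibility with $\et_\oplus$ and $\et_\otimes$, and the two pentagonal diagrams for $\mu_\oplus$ and $\mu_\otimes$) then reduce to the corresponding axioms for $\rh$ by naturality of $\epz$ and by the explicit descriptions, supplied by \autoref{formalMT1}, of the units and products of $\bD$ and $\bK_0$ in terms of those of $\bC$ and $\bJ_0$.

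For the last two claims, once the monad-pair structure is in place, the constituent natural transformations of the diagrams of \autoref{ass7} for $(\bD,\bC,\om)$ — the unit $\bR\et\bL$, the identification $\om$ itself, and the multiplication comparisons — are each built from $\bR$, $\bL$, $\epz$, and from the natural transformations $\et_\oplus$ and $\mu_\oplus$ of $\bC$, all of which are $\bJ_0$-equivariant by \autoref{formalMT2}(i) and by \autoref{COact}.  Under the equivalence $\bK_0[\sV]\simeq \bJ_0[\sT]$ of the preceding paragraph, the \autoref{ass7} diagrams for $(\bD,\bC,\om)$ thus become commuting diagrams in $\bJ_0[G\sT]$.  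The analogous diagrams for $(\bK_0,\bJ_0,\om_\otimes)$ are verified identically, with the roles of $\bC$ and $\bJ_0$ interchanged.  The main obstacle in the whole plan is the middle step: mechanically verifying the three distributivity axioms for $\tilde\rh$ is straightforward but requires a careful bookkeeping of where copies of $\bL\bR$ are collapsed or inserted, and it is in that diagram chase that the hypothesis $\bL\bR\iso \mathrm{id}$ is used essentially.
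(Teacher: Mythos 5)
Your proposal is correct, and it shares with the paper's proof the essential idea of transporting Beck's data along conjugation by $\bR(-)\bL$, using that $\epz\colon \bL\bR\rtarr\Id$ is an isomorphism; the difference lies in which of the equivalent packages of \autoref{BeckthmD} gets conjugated. The paper uses formulation (ii): it takes the composite-monad product $\mu_{\bC,\bJ_0}\colon \bC\bJ_0\bC\bJ_0\rtarr\bC\bJ_0$ and defines the product on $\bD\bK_0$ as $\bD\bK_0\bD\bK_0\iso \bR\bC\bJ_0\bC\bJ_0\bL \rtarr \bR\bC\bJ_0\bL\iso \bD\bK_0$, which requires no inverses beyond the identifications collapsing $\bL\bR$ and leaves a single formal verification. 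You instead conjugate the distributive law of formulation (iii), defining $\tilde\rh\colon\bK_0\bD\rtarr\bD\bK_0$ with an inserted $(\bR\bC\epz\bJ_0\bL)^{-1}$ and then checking the unit triangles and the two multiplicativity diagrams; this works (the reductions to the axioms for $\rh$ go through by naturality of $\epz$ and the triangle identities, exactly the bookkeeping you flag), but it carries a somewhat heavier verification load than the paper's one-map definition, in exchange for making the distributivity transformation explicit. Your treatment of the restricted adjunction matches the paper's in substance, though the correct citation for passing from a $\bK_0$-algebra $X$ to a $\bJ_0$-structure on $\bL X$ is \autoref{formalMT2}(iii) (which applies precisely because $\bK_0=\bR\bJ_0\bL$), not (ii), which only concerns objects of the form $\bR X$; and your closing paragraph on the \autoref{ass7} and \autoref{ass9} diagrams is at the same level of detail as the paper's ``easy diagram chases give the rest.''
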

\begin{proof}
  The first claim, that $\bD$ and $\bK_0$ are monads, follows from \autoref{formalMT1}.
  To prove the second claim, we use the descriptions in
  \autoref{BeckthmD}(ii). We have $\mu_{\bC,\bJ_0}: \bC\bJ_{0}\bC\bJ_{0} \to
  \bC\bJ_{0}$ for the action of  $\bJ_0$ on $\bC$. It is easy to verify formally that 
\begin{equation*}
  \begin{tikzcd}
  \mu_{\bD,\bK_0}: \bD\bK_{0}\bD\bK_{0} \cong \bR\bC\bJ_{0}\bC\bJ_{0}\bL
  \ar[r,"\bR \mu_{\bC,\bJ_0}\bL"] & \bR\bC\bJ_{0}\bL \cong \bD\bK_{0}
  \end{tikzcd}
\end{equation*}
satisfies \autoref{BeckthmD}(ii), establishing an action of  $\bK_0$ on $\bD$.  For the restricted adjunction, a given action of $\bJ_0$ on $Y$ induces an action of $\bK_0$ on $\bR \bY$ and a given action of $\bK_0$ on $X$ induces an action of $\bJ_0$ on $\bL  X$ since $\bK_0 = \bR\bJ_0\bL$ and $\bL \bR = \id$.  Easy diagram chases give the rest.
\end{proof}

This gives an example of pairs $(\bD,\bK_0)$ for which the following multiplicative elaboration of \autoref{ass6} holds.

\begin{assmcom}\label{ass9}  We assume \autoref{ass6} and assume further that
\begin{enumerate}[(i)] 
\item we have monad pairs $(\bC,\bJ_0)$ in $\sT$ and $(\bD,\bK_0)$ in $\sV$ such that $(\bL,\bR)$ restricts to an adjunction from $\bK_0[\sV]$ to $\bJ_0[\sT]$.
\item the isomorphism $\om\colon \bD\bR \rtarr\bR\bC$ restricts to an isomorphism of functors \linebreak
$\bJ_0[\sT] \rtarr \bK_0[\sV]$ and the  diagrams in \autoref{ass8} are diagrams of functors $\bJ_0[\sT] \rtarr \bK_0[\sV]$. 
\item we have an isomorphism $\om_{\otimes}\colon \bK_0\bR \rtarr \bR\bJ_0$ such that the following  analogous diagrams commute.
\[ \xymatrix{
& \bR  \ar[dl]_{\et \bR}  \ar[dr]^{\bR \et} & \\
\bK_0 \bR \ar[rr]_-{\om_{\otimes}} &  & \bR\bJ_0 \\}
\ \ \ \ \ \ \
\xymatrix{
\bK_0\bK_0\bR \ar[r]^-{\bK_0\bR\bJ_0\om_{\otimes}} \ar[d]_{\mu\bR} & \bK_0\bR\bJ_0 \ar[r]^{\om_{\otimes} \bJ_0} & \bR\bJ_0\bJ_0 \ar[d]^{\bR\mu}\\
\bK_0\bR \ar[rr]_-{\om_{\otimes}} & & \bR\bJ_0\\}
\]
\end{enumerate}
\end{assmcom} 

The example $(\bR\bC\bL,\bR \bJ_0, \bL)$ in $\sV$ derived from a given $(\bC,\bJ_0)$  in $\sT$ is always present, and other examples and applications are known in the categories of operators context.  Examples that give applications in the orbital presheaves context are work in progress.  Given the data in \autoref{ass9}, the diagram \autoref{adj9} embeds into the following diagram, which is a multiplicative version of the general composite adjunction diagram of \autoref{adj6}.  As there, here and below we use dotted arrows when the indicated left adjoint does not necessarily exist, but the context makes sense nevertheless.  

\begin{equation}\label{adj10} 
\xymatrix{
\bK_0[\sV] \ar@<.5ex>[rr]^{\bL}  \ar@<.5ex>[ddrr]^{\bF_{\bD}} 
& & \bJ_0[\sT]  \ar@<.5ex>[ll]^{\bR}    \ar@<.5ex>[rr]^{\SI}  \ar@<.5ex>[dr]^{\bF_{\bC}}& & \bJ_0[\sS]   \ar@<.5ex>[ll]^{\OM}   \ar@<.5ex>[dl]^{\OM_{\bC}}   
\ar@/^4pc/@<.6ex> [ddll]^-{(\bR\OM)_{\bD}}     \\
& & &   \ar@<.5ex>[ul]^{\bU_{\bC}}  \bC[\bJ_0[\sT]]   \ar@<.5ex>[dl]^{\bR}  \ar@<.5ex>[ur]^{\SI_{\bC}}& \\
& & \ar@<.5ex>[uull]^{\bU_{\bD}}  \bD[\bK_0[\sV]] \ar@{-->}@<.5ex>[ur]^{\bL} \ar@/_4pc/@<.6ex> [uurr]^-{(\SI \bL)_{\bD}}& &  \\}
\end{equation}

{As in \autoref{formalMT3}, our assumptions imply that the arrows $\bR$ are obtained by specialization of their underlying non-multiplicative arrows $\bR$.   We have assumed that we have the solid arrow adjunction $(\bL,\bR)$ at the top.   As in the original additive composite adjunction context, the dotted arrow $\bL$ may or may not exist, but the functor $(\SI\bL)_{\bD}$ always exists nonetheless, by application of \autoref{keyadj}. 

From here, just as in \autoref{RingThy} in the context of \autoref{ass1},  the answer to the multiplicative analog of Question \ref{Quest} is almost the same as is discussed in the underlying additive theory in \autoref{ABCDEF}.   Our Assumptions are essentially all satisfied when they are satisfied by the underlying additive structures.  Weak equivalences  are defined by forgetting to the underlying categories ($\sT$, $\sS$, $\sV$).  

Group completion \autoref{ass4} (alias ring completion here) and the approximation theorem (\autoref{ass5}) are seen in the underlying additive theory.   Assumptions \ref{ass7} and \ref{ass8} require that realization in the underlying categories carries over to give realization in the new ground categories ($\bJ_0[\sT]$, $\bJ_0[\sS]$, $\bK_0[\sV]$), and then most of their required properties fall out from their being satisfied in the underlying additive context.  When these Assumptions hold, Theorems \ref{recprincom} and \ref{adjequivcom} hold with $\sV$ replaced by  $\bK_0[\sV]$ and $\sS$ replaced by $\bJ_0[\sS]$.
}

\begin{thm}\label{recprincom2M}  There is a functor $\mathrm{Bar} \colon \bD\big[\bK_0[\sV]\big] \rtarr \bD\big[\bK_0[\sV]\big]$, written  \linebreak
$Y\mapsto \overline{Y}$, and a natural equivalence  $\ze\colon \overline{Y} \rtarr Y$ given by a map of $(\bD,\bK_0)$-algebras.  If $Y$ is special, the unit $\et_{\bD}\colon \overline{Y} \rtarr (\bR\OM)_{\bD} (\SI\bL)_{\bD} \overline{Y}$  is a ring completion and is therefore an equivalence if $Y$ is ringlike.\end{thm}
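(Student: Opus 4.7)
The plan is to apply the homotopical machinery of \autoref{recprincom} verbatim to the new ground categories $\bK_0[\sV]$ and $\bJ_0[\sS]$ in place of $\sV$ and $\sS$, with the monad $\bD$ now regarded as a monad on $\bK_0[\sV]$ via the monad pair structure supplied by \autoref{ass9}. The whole point of Assumption~\ref{ass9} is that \autoref{ass7} and Assumptions~\ref{ass8}, \ref{ass3}, \ref{ass12} all lift from their underlying-additive versions to the multiplicative ones, with weak equivalences, $\OM$-connective objects, grouplike objects, and group completions all created in the underlying categories. So the bulk of the work is simply to check that each step of the proof of \autoref{recprincom} can be performed inside $\bK_0[\sV]$ rather than merely inside $\sV$.

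First, I would define $\overline{Y} = B(\bD,\bD,Y)$ as the geometric realization of the simplicial object in $\bK_0[\sV]$ with $q$-simplices $\bD^{q+1}Y$. Because $(\bD,\bK_0)$ is a monad pair, the monad $\bD$ restricts to a monad on $\bK_0[\sV]$ (Beck's \autoref{BeckthmD} applied to the multiplicative setting), so every $\bD^{q+1}Y$ is naturally a $\bK_0$-algebra, and the face and degeneracy maps $\mu$, $\tha$, $\et$ are maps of $\bK_0$-algebras. The multiplicative version of \autoref{ass12} then ensures that $|B_\ast(\bD,\bD,Y)|$ inherits a $\bK_0$-action making it a $(\bD,\bK_0)$-algebra. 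This gives the functor $\mathrm{Bar}$ landing in $\bD[\bK_0[\sV]]$.

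Next, the extra-degeneracy equivalence $\ze\colon \overline{Y}\rtarr Y$ of \autoref{extra} is constructed simplicially from the maps $\bD^{q+1}Y\rtarr Y$ induced iteratively by $\tha$, each of which is a map of $\bK_0$-algebras; applying \autoref{triv} in the category $\bK_0[\sV]$ yields $\ze$ as a map of $(\bD,\bK_0)$-algebras. The underlying map in $\sV$ is a homotopy equivalence by \autoref{extra}, hence a weak equivalence in $\bD[\bK_0[\sV]]$ by our conventions. The ring completion claim then follows by applying \autoref{recprincom} with $\sV$ replaced by $\bK_0[\sV]$ and $\sS$ replaced by $\bJ_0[\sS]$: the unit $\et_{\bD}$ is a map of $(\bD,\bK_0)$-algebras whose underlying map in $\sV$ is the group completion produced by the additive version of the theorem, and this is precisely what ``ring completion'' means. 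Specializing to ringlike $Y$ gives the weak equivalence by the second clause of \autoref{screwy}(i).

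The main obstacle is verifying the multiplicative refinement of \autoref{ass6}(vi)-(vii), namely that for a simplicial $\bJ_0$-algebra $K_\ast$ in $\sS$, the comparison map $\ga\colon |\OM K_\ast|\rtarr \OM|K_\ast|$ is a map of $\bJ_0$-$G$-spaces (and analogously that $\ga_\bR$ of \autoref{gammaR} respects the $\bK_0$-action when restricted to simplicial $(\bD,\bK_0)$-algebras). This is a compatibility of realization with the structure maps of $\bJ_0$, and it is the only place where the operadic input genuinely enters; it is handled exactly as in \autoref{RingThy}, by inspecting how the linear isometries action passes through the colimits defining realization. Once this is in hand, everything else is formal consequence of the already-established additive composite adjunction machine.
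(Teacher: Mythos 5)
Your proposal is correct and follows essentially the same route as the paper: the paper likewise obtains this theorem as the specialization of \autoref{recprincom} with $\sV$ and $\sS$ replaced by $\bK_0[\sV]$ and $\bJ_0[\sS]$, noting that weak equivalences, connectivity, and (ring $=$ group) completions are created in the underlying categories, that the approximation theorem is purely additive, and that the only substantive new check is that realization and the map $\ga$ respect the multiplicative ($\bJ_0$, resp.\ $\bK_0$) structure, verified as in \autoref{RingThy}. Your extra detail that the bar construction and the extra-degeneracy equivalence $\ze$ live in $(\bD,\bK_0)$-algebras is exactly the intended content, so no changes are needed.
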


\begin{thm}\label{adjequivcom2M}  $\big((\SI\bL)_{\bD},(\bR\OM)_{\bD}\big)$ induces an adjoint equivalence from the homotopy category of special ringlike $ (\bD,\bK_0)$-algebras in $\sV$ to the homotopy category of $\OM$-connective objects of $\bJ_0[\sS]$.
\end{thm}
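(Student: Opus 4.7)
The plan is to mirror the proof of \autoref{adjequivcom} (which itself follows the pattern of \autoref{equiv}), but now carried out inside the multiplicative ground categories $\bK_0[\sV]$, $\bJ_0[\sT]$, and $\bJ_0[\sS]$. Because \autoref{ass9} ensures that the entire composite adjunction diagram \autoref{adj10} lifts compatibly from the underlying additive context, and because weak equivalences and $\OM$-connectivity are created in the underlying categories, the formal bookkeeping of the additive proof applies verbatim once one checks that every structure map occurring in the argument is a morphism of $(\bD,\bK_0)$-algebras or of $\bJ_0$-$G$-spectra, as appropriate.

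First I would invoke \autoref{recprincom2M}: for any special ringlike $(\bD,\bK_0)$-algebra $Y$, the maps
\[
\xymatrix@1{Y & \ar[l]_-{\ze} \overline{Y} \ar[r]^-{\et_{\bD}} & (\bR\OM)_{\bD}(\SI\bL)_{\bD}\overline{Y}}
\]
are both weak equivalences in $\bD[\bK_0[\sV]]$, giving the natural equivalence $Y \simeq (\bR\OM)_{\bD}\bE Y$ on one side of the claimed equivalence of homotopy categories. For the other side, let $Z$ be an $\OM$-connective $\bJ_0$-$G$-spectrum. I would construct the natural map $\xi\colon \bE\, (\bR\OM)_{\bD}Z = (\SI\bL)_{\bD}\overline{(\bR\OM)_{\bD}Z} \rtarr Z$ exactly as in the additive \autoref{NewAssE}, namely as the realization of the map of simplicial objects in $\bJ_0[\sS]$ induced by the counit $\epz\colon \SI\OM Z\rtarr Z$ via \autoref{triv}, and then verify the factorization
\[
\ze = (\bR\OM)_{\bD}\xi \circ \et_{\bD}\colon \overline{(\bR\OM)_{\bD}Z}\rtarr (\bR\OM)_{\bD}Z.
\]
Since $\ze$ is a weak equivalence and, by the recognition principle, $\et_{\bD}$ is a weak equivalence (as $(\bR\OM)_{\bD}Z$ is ringlike and special by \autoref{Rss} in its multiplicative form), $(\bR\OM)_{\bD}\xi$ and hence $\OM\xi$ is a weak equivalence. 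The source of $\xi$ is $\OM$-connective by the multiplicative analog of \autoref{Econn}, and the target $Z$ is $\OM$-connective by hypothesis; by \autoref{ass3}, $\xi$ is therefore a weak equivalence in $\bJ_0[\sS]$.

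The main obstacle will be to verify that every comparison map in this chain is genuinely multiplicative, i.e.\ a morphism in the relevant multiplicative category. Concretely, one needs that $\epz$ coequalizes the pair $\be,\SI\vartheta\colon \SI\bC\OM Z\rightrightarrows \SI\OM Z$ inside $\bJ_0[\sS]$ rather than merely in $\sS$, that the diagonal $B_*(\bC,\bC,(\bR\OM)_{\bD}Z) \rtarr (\bR\OM)_{\bD}Z$ underlying the extra-degeneracy argument for $\ze$ lives in $(\bD,\bK_0)$-algebras, and that the map $\ga\colon |(\bR\OM)_{\bD}K_*| \rtarr (\bR\OM)_{\bD}|K_*|$ is a map of $(\bD,\bK_0)$-algebras. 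All three reduce to the coherence diagrams of \autoref{ass9} together with the multiplicative part (vi) of \autoref{ass12}, which force $\al$, $\be$, $\vartheta$ and $\epz$ to respect the $\bJ_0$-structure. Once this multiplicative lifting of the structural maps is in place, the rest of the argument is bookkeeping inherited directly from the additive composite-adjunction proof, and it yields that $\bE = (\SI\bL)_{\bD}\circ \mathrm{Bar}$ and $(\bR\OM)_{\bD}$ are inverse equivalences on passage to homotopy categories.
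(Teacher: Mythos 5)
Your proposal is correct and follows essentially the same route as the paper: the paper's argument is precisely that, once the multiplicative Assumptions are checked in the ground categories $\bK_0[\sV]$, $\bJ_0[\sT]$, $\bJ_0[\sS]$ (with the only substantive point being that $\ga$ and the other structural maps respect the multiplicative structure, i.e.\ part (vi)), Theorems \ref{recprincom} and \ref{adjequivcom} apply verbatim with the ground categories so replaced. You merely unwind that citation by rerunning the $\et_{\bD}$/$\xi$/connectivity argument of \autoref{equiv} and \autoref{NewAssE} inside the multiplicative categories, which is the same proof in expanded form.
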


\subsection{Monad pairs in the categories of operators context}\label{OpsMult}

We defined categories of operators and their associated monads in \ref{cast2} and \ref{cast4}, working equivariantly as in \cite{MMO}.   The nonequivariant multiplicative elaboration to pairs of categories of operators and their associated monad pairs is developed in 
\cite{Rant2}, but the equivariant elaboration is not in the literature.   We briefly develop it here, largely following the nonequivariant treatment in \cite{Rant2}, especially part of its Section 10.  That part is  essentially written in our present composite of adjunctions context, but it was written before the context was understood.  It appeared there as a component of a more elaborate picture aimed at a machine constructing $E_{\infty}$ ring spectra from bipermutative categories.  Since we intend to treat that differently and equivariantly 
in \cite{ Mop}, we shall be brief here.

We saw earlier that an operad pair $(\sC,\sJ)$ of $G$-spaces gives rise to a monad pair $(\bC,\bJ)$.  The pair $(\sC,\sJ)$ also gives rise to a pair of categories of operators over $\sF$ and a $G$-operad pair $(\sC_G^{fin},\sJ_G)$ gives rise to a pair of categories of operators over $\sF_G$.  These pairs of categories of operators give rise to canonical pairs of monads $(\bD,\bK_0)$ and $(\bD_G,{\bK_G}_0)$ as constructed in \autoref{cast4}.\footnote{$\bK$ and $\sK$ in this subsection are general, not referring to Steiner operads.}   However, as pointed out nonequivariantly in \cite[p. 315]{Rant2}, it is not true that these pairs of monads are monad pairs in Beck's sense prescribed in \autoref{Mult1}:  $\bK_0$ does not act on $\bD$ and ${\bK_G}_0$ does not act on ${\bD_G}_0$.  Moreover, as is easily seen from the definitions, again already nonequivariantly, our left adjoint $\bL$ does {\em{not}} restrict to a functor $\bK_0[\PI[\sT]]\rtarr \bJ_0[\sT]$. 

A way around this was explained in some detail in \cite{Rant2}, and we shall just summarize the main points.
In \cite{Rant2}, all basepoint information was ignored, correcting \cite{MayMult} in which both additive and multiplicative basepoint identifications were made.  The halfway house of using the additive basepoint $0$ and not the multiplicative basepoint $1$ seems preferable.   With this change and corresponding small modifications,\footnote{including considerable change of notation, due to our changed treatment of basepoints.}  we adapt \cite[Sections 9 and 10]{Rant2} to construct a multiplicative monad $\tilde{\bK}_0$, different from the canonical monad $\bK_0$,  that does act on the canonical monad $\bD$.  

The construction of $\tilde{\bK}_0$ is formal.  There is a wreath product construction between categories of operators \cite[Definition 5.1]{Rant2}, and there is a wreath product $\sK_0\int \PI$ with associated monad $\overline{\bK}_0$ in $(\PI\int \PI)$-$G$-spaces \cite[Definition 9.1]{Rant2}.   Its algebras are the $(\sK_0\int \PI)$-algebras.   There is an adjoint pair $(\bL',\bR')$ from $(\PI\int \PI)$-$G$-spaces to 
$\PI$-$G$-spaces.  The monad $\tilde{\bK}_0$ is the conjugate $\bL'\overline{\bK}_0\bR'$ \cite[Definition 9.4]{Rant2}.   It admits a concrete description that starts from an isomorphism $\bL\tilde{\bK}_0 \iso \bJ_0\bL$ \cite[Lemma 10.4]{Rant2} and builds from there, but that is not detailed in \cite{Rant2}.  Since we do not find it illuminating, we shall also not detail it here.   There is an analogous construction starting with monads in $\PI_G[\ul{G\sT}]$ and $(\PI_G\int\PI_G)[\ul{G\sT}]$.

The upshot is that we obtain the following multiplicative version of the diagram \autoref{adj7}. Both the upper and lower parts are examples of the multiplicative composite adjunction context of \autoref{adj10}.  We have just changed ground categories to include the multiplicative structure. 

\begin{equation}\label{adj11} 
\xymatrix{
\tilde{\bK}_0\big[\PI[\ul{G\sT}]\big] \ar@<.5ex>[rr]^{\bL}  \ar@<.5ex>[ddrr]^{\bF_{\bD}}
& & \bJ_0[G\sT]  \ar@<.5ex>[ll]^{\bR}    \ar@<.5ex>[rr]^{\SI^{\infty}}  \ar@<.5ex>[dr]^{\bF_{\bC}}& & \bJ_0[G\sS]   \ar@<.5ex>[ll]^{\OM^{\infty}}   \ar@<.5ex>[dl]^{\OM^{\infty}_{\bC}}   
\ar@/^4pc/@<.6ex> [ddll]^-{(\bR\OM^{\infty})_{\bD}}         \\
& & &   \ar@<.5ex>[ul]^{\bU_{\bC}}  \bC\big[\bJ_0[G\sT]\big]   \ar@<.5ex>[dl]^{\bR}  \ar@<.5ex>[ur]^{\SI^{\infty}_{\bC}}& \\
& & \ar@<.5ex>[uull]^{\bU_{\bD}}  \bD\big[\tilde{\bK}_0[\PI[\ul{G\sT}]]\big] \ar@{-->}@<.5ex>[ur]^{\bL} 
\ar@/_4pc/@<.6ex> [uurr]^-{(\SI^{\infty} \bL)_{\bD}}  & & \\
&  & \ar@<.5ex> [d]^{\bP}  && \\
&  & \ar@<.5ex> [u]^{\bU} & & \\
\tilde{\bK}_{G_0}\big[\PI_G[\ul{G\sT}]\big] \ar@<.5ex>[rr]^{\bL_G}  \ar@<.5ex>[ddrr]^{\bF_{\bD_G}}  
& & {\bJ_G}_0[G\sT]  \ar@<.5ex>[ll]^{\bR_G}    \ar@<.5ex>[rr]^{\SI^{\infty}}  \ar@<.5ex>[dr]^{\bF_{\bC_G^{fin}}}& & {\bJ_G}_0[G\sS]   \ar@<.5ex>[ll]^{\OM^{\infty}}   \ar@<.5ex>[dl]^{\OM^{\infty}_{\bC_G^{fin}}}   
\ar@/^4pc/@<.6ex> [ddll]^-{(\bR_G\OM^{\infty})_{\bD_G}}      \\
& & &   \ar@<.5ex>[ul]^{\bU_{\bC_G^{fin}}}  \bC_G^{fin}\big[{\bJ_G}_0[G\sT]\big]   \ar@<.5ex>[dl]^{\bR_G}  \ar@<.5ex>[ur]^{\SI^{\infty}_{\bC_G^{fin}}}& \\
& & \ar@<.5ex>[uull]^{\bU_{\bD_G}}  \bD_G\big[\tilde{\bK}_{G_0}[\PI_G[\ul{G\sT}]]\big] \ar@{-->}@<.5ex>[ur]^{\bL_G} \ar@/_4pc/@<.6ex> [uurr]^-{(\SI^{\infty} \bL_G)_{\bD_G}}& & \\}
\end{equation}

\vspace{2mm}

The essential point is that, with $\sV = \Pi[\ul{G\sT}]$ or $\sV = \PI_G[\ul{G\sT}]$, both diagrams in \autoref{adj11} are special cases of the diagram \autoref{adj10}.  The assumptions are all verified with no more trouble than in the nonequivariant case.   We state the specializations of Theorems \ref{recprincom} and \ref{adjequivcom} to categories of operators over $\sF$, and we have a compatible analog for categories of operators over $\sF_G$.

\begin{thm}\label{recprincom3Op}  There is a functor $\mathrm{Bar} \colon \bD\big[\tilde{\bK}_0[\PI[G\sT]]\big] \rtarr \bD\big[\tilde{\bK}_0[\PI[G\sT]]\big]$, written 
$Y\mapsto \overline{Y}$, and a natural equivalence  $\ze\colon \overline{Y} \rtarr Y$ given by a map of $(\bD,\tilde{\bK}_0)$-algebras.  If $Y$ is special, the unit $\et_{\bD}\colon \overline{Y} \rtarr (\bR\OM^{\infty})_{\bD} (\SI^{\infty}\bL)_{\bD} \overline{Y}$  is a ring completion and is therefore an equivalence if $Y$ is ringlike.\end{thm}

\begin{thm}\label{adjequivcom3Op}  $\big((\SI^{\infty}\bL)_{\bD},(\bR\OM^{\infty})_{\bD}\big)$ induces an adjoint equivalence from the homotopy category of special ringlike 
$(\bD,\tilde{\bK}_0)$-algebras to the homotopy category of $\OM^{\infty}$-connective $G$-spectra in $\bJ_0[G\sS]$.
\end{thm}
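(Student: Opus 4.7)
The plan is to derive this as a direct specialization of the general multiplicative composite adjunction equivalence \autoref{adjequivcom2M}, so the work reduces to verifying that the categories-of-operators data fit into the multiplicative framework of \autoref{ass9} and that the remaining homotopical Assumptions hold in the multiplicative context.

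First I would set up \autoref{ass9}. The additive data $(\bL,\bR)$, $\bC$, $\bD$, and $\om\colon \bD\bR \iso \bR\bC$ were assembled in Sections \ref{cast4}--\ref{newcatop}, and the monad pair $(\bC,\bJ_0)$ on $G\sT$ comes from the operad pair $(\sC,\sJ)$ via \autoref{pairs2}. As flagged in \autoref{OpsMult}, the crucial nontrivial point is that the naive conjugate $\bR\bJ_0\bL$ does not act on $\bD$ and $\bL$ does not restrict to a functor from $\bR\bJ_0\bL$-algebras to $\bJ_0[G\sT]$. This forces the use of the monad $\bK'_0$ built from the wreath product $\sK_0\int\PI$ as in \cite[Section 10]{Rant2}, engineered precisely so that $\bK'_0$ acts on $\bD$ and so that $\bL$ does restrict to $\bL\colon \bK'_0[\PI[\ul{G\sT}]]\rtarr \bJ_0[G\sT]$. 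The equivariant analogue is formal, since all the categorical ingredients (the wreath product, the intermediate monad $\overline{\bK}_0$, the adjoint pair $(\bL',\bR')$, and $\bK'_0 = \bL'\overline{\bK}_0\bR'$) are $G\sT$-enriched and commute with the prolongation of \autoref{PUone}. The isomorphism $\om_{\otimes}\colon \bK'_0\bR \iso \bR\bJ_0$ and the compatibility diagrams of \autoref{ass9} then follow by diagram chases modelled on \cite[Lemma 10.4]{Rant2}.

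Second, the remaining Assumptions transfer almost formally from the additive case. Weak equivalences and $\OM^{\infty}$-connectivity for the multiplicative ground categories are created by forgetting to the additive categories, so Assumptions \ref{ass8} and \ref{ass3} are inherited from \autoref{newcatop}. The ring completion functor required by \autoref{ass4} comes from the additive group completion functor of \autoref{HopfbG} together with the verification, as in \autoref{RingThy}, that it preserves the $\bK'_0$-structure levelwise. The approximation theorem \autoref{ass5} reduces to the additive approximation theorem once one checks that $\al_{com}\colon \bD Y\rtarr \bR\OM^{\infty}\SI^{\infty}\bL Y$ is a map of $\bK'_0$-algebras, a formal consequence of $(\bC,\bJ_0)$ being a monad pair and of $\SI^{\infty}$, $\OM^{\infty}$ being compatible with $\bJ_0$-structure as in \autoref{formalQ}. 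The simplicial axioms \autoref{ass12} inherit from the additive statements, the only new verification being that realization preserves $\bK'_0$-structure, which is immediate because $\bK'_0$ is built from operadic data for which realization commutes with external smash powers via \autoref{untwist}. With all this in hand, \autoref{adjequivcom2M} applies to yield the asserted equivalence.

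The main obstacle is the careful equivariant translation of the Rant2 construction of $\bK'_0$ together with its action on $\bD$. Although each individual verification is formal, the combinatorial bookkeeping is intricate: one must confirm that the wreath product $\sK_0\int \PI$, the adjoint pair $(\bL',\bR')$, and the induced $\bK'_0$-structure on $\bD$ all behave correctly under $G$-actions and under the passage between $\PI[\ul{G\sT}]$ and $\PI_G[\ul{G\sT}]$ via $(\bP,\bU)$, so that the $\bJ_0$-$G$-spectrum output fits correctly into the diagram \autoref{adj11}.
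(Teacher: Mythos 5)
Your proposal matches the paper's own argument: the paper obtains this theorem in exactly the same way, by observing that the diagram \autoref{adj11}, built with the Rant2 monad $\bK'_0$ (since the canonical conjugate $\bR\bJ_0\bL$ neither acts on $\bD$ nor lets $\bL$ restrict), is a special case of the multiplicative composite adjunction context \autoref{adj10}, noting that the Assumptions are verified with no more trouble than in the nonequivariant additive case, and then specializing \autoref{adjequivcom2M}. One minor slip: the compatibility of realization with the space-level $\bK'_0$-structure rests on realization commuting with products and the wreath-product construction, not on the spectrum-level untwisting isomorphism \autoref{untwist}, but this does not affect the argument.
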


A key use of categories of operators is to allow comparison with Segalic infinite loop space theory and to apply infinite loop space theory to go from symmetric bimonoidal categories to $E_{\infty}$ ring spectra.   For the former, multiplicative elaboration of \cite{MMO} seems entirely doable, but modulo the limitations described in \autoref{Segal}.  We shall not follow that up here.  For the latter, a good definition of symmetric bimonoidal $G$-categories has long eluded us.  That will be given in \cite{May26},  together with a shortcut that circumvents the use of categories of operators in the passage from such categories to $E_{\infty}$ ring $G$-spectra.

\subsection{Monad pairs in the orbital presheaves context}\label{PreMult}

Here we take $\sV= G\sO^{op}[\sT]$ and assume that we have monad pairs there that satisfy the axioms, as in \autoref{Beck2}.\footnote{The construction and application of  such monad pairs is work in progress.} Starting from operads $\sC$, the key additive monads are constructed in \cite{KMZ2}, where they are called $\bC^{pre}$ as in \autoref{orbitapps}.    The model categorical arguments of \autoref{Orbcells}  go through to give model structures on  the categories $\bJ_0[G\sT]$ and $\bD\big[\bK_0[G\sO^{op}[\sT]]\big]$  such that the following enhancement of \autoref{Gcellwonder2} holds.

\begin{prop}\label{Gcellwonder3}  The Quillen equivalence $(\bL,\bR)$ between the categories $\bC[G\sT]$ and  $\bD\big[\OGop_{ss}\big]$ restricts to an isomorphism of categories between the full subcategories of cofibrant  $(\bC,\bJ_0)$-algebras and cofibrant $(\bD,\bK_0)$-algebras.  Therefore  $(\bL,\bR)$  restricts to an equivalence between  $\bC\big[\bJ_0[G\sT]\big]$ and $\bD\big[\bK_0[\OGop_{ss}] \big]$.
\end{prop}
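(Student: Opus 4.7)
The plan is to mirror the proof strategy of \autoref{Gcellwonder2}, which itself mirrored \autoref{Gcellwonder}, but now in the multiplicative ground categories. The starting observation is that, by Beck's theorem (\autoref{BeckthmD}), the category $\bC[\bJ_0[G\sT]]$ of $(\bC,\bJ_0)$-algebras is isomorphic to the category $\bC\bJ_0[G\sT]$ of algebras over the composite monad $\bC\bJ_0$, and similarly $\bD[\bK_0[\OGop]]$ is isomorphic to $\bD\bK_0[\OGop]$. Thus the standard model categorical result (e.g.\ \cite[Theorem 16.2.5]{morecon}) applies to give compactly generated model structures on both categories in which the fibrations and weak equivalences are created by the forgetful functors to the respective underlying categories, and in which the generating (acyclic) cofibrations are $\bF_{\bC\bJ_0}$ (respectively $\bF_{\bD\bK_0}$) applied to those of the ground categories. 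Using \autoref{MultRCLRJL}, the adjunction $(\bL,\bR)$ restricts to an adjunction from $\bK_0[\OGop]$ to $\bJ_0[G\sT]$, and, since $\bR$ preserves fibrations and weak equivalences (which are both created at the level of underlying orbital presheaves), this is a Quillen adjunction, which further restricts to a Quillen adjunction $\bD[\bK_0[\OGop]] \rightleftarrows \bC[\bJ_0[G\sT]]$.

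Next I would use the bijective correspondence between generating (acyclic) cofibrations: because $\bR$ takes the generating (acyclic) cofibrations of $G\sT$ isomorphically to those of $\OGop$ (with inverse $\bL$), and because $\bR$ commutes with the free functors in the sense that $\bR \bF_{\bC\bJ_0} \cong \bF_{\bD\bK_0} \bR$ (using $\bD\bK_0 \bR \cong \bR \bC \bJ_0$, which follows by combining $\om\colon \bD\bR \cong \bR\bC$ and $\om_\otimes\colon \bK_0\bR \cong \bR\bJ_0$ compatibly), $\bR$ carries the generating (acyclic) cofibrations of $\bC[\bJ_0[G\sT]]$ isomorphically to those of $\bD[\bK_0[\OGop]]$, with inverse $\bL$.

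The core of the argument, and where I expect the main technical work to lie, is to verify that both $\bL$ and $\bR$ preserve the cellular constructions: disjoint unions, pushouts of the generating cofibrations along arbitrary maps, and sequential colimits of such pushouts. The functor $\bL$ preserves all colimits as a left adjoint. For $\bR$, the same argument as in \autoref{Gcellwonder} applies: since $\bR$ is in effect passage to fixed points, it commutes with sequential colimits and with pushouts one leg of which is a $G$-cofibration in the classical sense. The crucial point is that in the algebra category, the pushouts along generating cofibrations are computed from pushouts in the ground category using the free algebra functor, and the generating cofibrations of $\bC[\bJ_0[G\sT]]$ remain $G$-cofibrations at the level of underlying $G$-spaces; hence their images are still classical cofibrations and passage to fixed points preserves the relevant pushouts. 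This is the main obstacle because it requires tracking the cellular structure through the composite monad, but it reduces to the verification already done in \autoref{Gcellwonder2} applied levelwise.

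Concluding: since both $\bL$ and $\bR$ preserve the inductive construction of cellular objects and are mutually inverse on generating cofibrations, they give an isomorphism of categories between cellular $(\bC,\bJ_0)$-algebras and cellular $(\bD,\bK_0)$-algebras. Taking retracts then gives the claimed isomorphism of categories between the full subcategories of cofibrant objects. In particular, every cofibrant $(\bD,\bK_0)$-algebra is strictly special as an orbital presheaf, so lies in $\bD[\bK_0[\OGop_{ss}]]$; combining this with the Quillen equivalence and the already-established additive equivalence $\bC[G\sT] \simeq \bD[\OGop_{ss}]$ from \autoref{Gcellwonder2} (which lifts to a multiplicative equivalence by \autoref{MultRCLRJL}), we obtain the final equivalence between $\bC[\bJ_0[G\sT]]$ and $\bD[\bK_0[\OGop_{ss}]]$.
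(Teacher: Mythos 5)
Your proposal is correct and follows essentially the same route as the paper, which offers no separate argument here but simply asserts that the model categorical arguments of \autoref{Orbcells} (the proofs of \autoref{Gcellwonder} and \autoref{Gcellwonder2}) go through in the multiplicative setting; your use of \autoref{BeckthmD} to pass to the composite monads $\bC\bJ_0$ and $\bD\bK_0$, the creation of model structures via the forgetful functors, the matching of generating (acyclic) cofibrations under $\bR$ and $\bL$, the cellular-plus-retracts argument, and the appeal to strict speciality together with \autoref{MultRCLRJL} for the final equivalence is exactly that intended argument, spelled out in more detail than the paper gives.
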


This leads us to a multiplicatively enriched version of the diagram \autoref{adj8}.  In contrast with the categories of operators context, we are only using canonical monads as constructed in \autoref{cast4}.  The top part does not fit into \autoref{ass6}, but it is where interesting examples would live.  After application of 
$\GA$, we land in a context where \autoref{ass6} does hold, by \autoref{MultRCLRJL} and \autoref{formalMT3} applied to both the additive and multiplicative monads.  

\begin{equation}\label{adj12} 
\xymatrix{
\bK_0\big[G\sO^{op}[\sT]\big] \ar@{-->}@<.5ex>[rr]^{\bL}  \ar@<.5ex>[ddrr]^{\bF_{\bD}} 
%\ar@/^3pc/
& & \bJ_0[G\sT]  \ar@<.5ex>[ll]^{\bR}    \ar@<.5ex>[rr]^{\SI^{\infty}}  \ar@<.5ex>[dr]^{\bF_{\bC}}& & \bJ_0[G\sS]   \ar@<.5ex>[ll]^{\OM^{\infty}}   \ar@<.5ex>[dl]^{\OM^{\infty}_{\bC}}   
\ar@/^4pc/@<.6ex> [ddll]^-{(\bR\OM^{\infty})_{\bD}}      \\
& & &   \ar@<.5ex>[ul]^{\bU_{\bC}}  \bC\big[\bJ_0[G\sT]\big]   \ar@<.5ex>[dl]^{\bR}  \ar@<.5ex>[ur]^{\SI^{\infty}_{\bC}}& \\
& & \ar@<.5ex>[uull]^{\bU_{\bD}}  \bD\big[\bK_0[G\sO^{op}[\sT]]\big] \ar@{-->}@<.5ex>[ur]^{\bL} \ar@/_4pc/@<.6ex> [uurr]^-{(\SI^{\infty} \bL)_{\bD}}  & & \\
&  & \ar@<.5ex> [d]^{\GA}  && \\
& & & & \\
{\bK}_0\big[G\sO^{op}[\sT]_{ss}\big] \ar@<.5ex>[rr]^{\bL}  \ar@<.5ex>[ddrr]^{\bF_{\bD}} 
& & \bJ_0[G\sT]  \ar@<.5ex>[ll]^{\bR}    \ar@<.5ex>[rr]^{\SI^{\infty}}  \ar@<.5ex>[dr]^{\bF_{\bC}}& & \bJ_0[G\sS]   \ar@<.5ex>[ll]^{\OM^{\infty}}   \ar@<.5ex>[dl]^{\OM^{\infty}_{\bC}}   
\ar@/^4pc/@<.6ex> [ddll]^-{(\bR\OM^{\infty})_{\bD}}         \\
& & &   \ar@<.5ex>[ul]^{\bU_{\bC}}  \bC\big[\bJ_0[G\sT]\big]   \ar@<.5ex>[dl]^{\bR}  \ar@<.5ex>[ur]^{\SI^{\infty}_{\bC}}& \\
& & \ar@<.5ex>[uull]^{\bU_{\bD}}  \bD\big[{\bK}_0[G\sO^{op}[\sT]_{ss}]\big] \ar@<.5ex>[ur]^{\bL} \ar@/_4pc/@<.6ex> [uurr]^-{(\SI^{\infty} \bL)_{\bD}}& & \\}
\end{equation}

\vspace{2mm}

In this context, we have the following versions of Theorems \ref{recprincom} and \ref{adjequivcom}.

\begin{thm}\label{recprincom4}  There is a functor $\mathrm{Bar} \colon \bD\big[\bK_0\big[G\sO^{op}[\sT]\big] \rtarr \bD\big[\bK_0\big[G\sO^{op}[\sT]\big]$, written  $Y\mapsto \overline{Y}$, and a natural equivalence  $\ze\colon \overline{Y} \rtarr Y$ given by a map of $(\bD,\bK_0)$-algebras.  If $Y$ and therefore $\overline{Y}$ are strictly special, as can be arranged by applying $\GA$, the unit $\et_{\bD}\colon \overline{Y} \rtarr (\bR\OM^{\infty})_{\bD} (\SI^{\infty}\bL)_{\bD} \overline{Y}$  is a ring completion and is therefore an equivalence if $Y$ is ringlike.\end{thm}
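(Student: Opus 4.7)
The plan is to deduce this result by specializing Theorem \ref{recprincom2M} (the general multiplicative composite adjunction recognition principle) to the present orbital presheaves setting, once the multiplicative composite adjunction context of \autoref{ass9} is set up. By Proposition \ref{MultRCLRJL}, the canonical choice $\bD = \bR\bC\bL$ and $\bK_0 = \bR\bJ_0\bL$ yields a monad pair on $G\sO^{op}[\sT]$ with the required isomorphisms $\om$ and $\om_\otimes$, since $\epz\colon \bL\bR \to \Id$ is an isomorphism; the compatibility diagrams of \autoref{ass9} then reduce to their counterparts for $(\bC,\bJ_0)$ in $G\sT$ by conjugation with $\bR(-)\bL$. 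The remaining homotopical Assumptions B--F in their multiplicative form are inherited almost verbatim from the additive orbital presheaves setting of \autoref{Orbcomp} and from the verifications made multiplicatively for $G$-spaces in \autoref{RingThy}: weak equivalences and $\OM^{\infty}$-connectivity are created in the underlying categories, the group/ring completion functor is built levelwise from the space-level $(\bG,g)$ (which already preserves ring structure), and the approximation theorem $\al_{com}$ reduces via the diagram \autoref{compal} to $\bR\al$, where $\al\colon \bC X \to \OM^{\infty}\SI^{\infty} X$ is a ring completion of $(\bC,\bJ_0)$-algebras.

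Granting this, the existence of $\mathrm{Bar}$ and the map $\ze$ of $(\bD,\bK_0)$-algebras is then the multiplicative bar construction $B(\bD,\bD,Y)$, together with the extra degeneracy argument of \autoref{extra}, both of which automatically produce $(\bD,\bK_0)$-algebras because all face and degeneracy operators are built from the monadic data of the pair. The key additional assertion specific to the orbital presheaves context is the claim that if $Y$ is strictly special then so is $\overline{Y}$: this follows by applying \autoref{YDY} levelwise in the simplicial direction, combined with \autoref{DgammaR} and \autoref{different}, which together give that realization commutes with the functors $\bR$, $\bL$, $\bD$, and $\bK_0$ on the relevant Reedy cofibrant simplicial objects. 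The statement that strict specialness can be arranged by applying $\GA$ then uses \autoref{Gcellwonder3}: cellular cofibrant replacement $\GA Y$ of a $(\bD,\bK_0)$-algebra is strictly special, since its underlying orbital presheaf is cofibrant and cofibrant orbital presheaves are strictly special, and $\ze\colon \GA Y \to Y$ is a weak equivalence of $(\bD,\bK_0)$-algebras.

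Once these preparations are in place, the map $\et_{\bD}$ is a ring completion by Theorem \ref{recprincom2M} applied to the (strictly special, and hence special) algebra $\overline{Y}$, and it is an equivalence when $Y$ is ringlike because ring completion of a grouplike object is a weak equivalence by \autoref{screwy}(i). The main obstacle I expect is the simultaneous preservation of the ringlike and strictly special properties under $\mathrm{Bar}$: specifically, checking that the $\bK_0$-action on $|B_*(\bD,\bD,Y)|$ produced by realization of a simplicial $(\bD,\bK_0)$-algebra is compatible with the Beck distributivity datum, so that $\et_{\bD}$ is genuinely a map of $(\bD,\bK_0)$-algebras and induces a multiplicative, not merely additive, group completion. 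This amounts to a careful inspection of how the structure map $\mu\colon \bD\bK_0\bD\bK_0 \to \bD\bK_0$ of \autoref{BeckthmD}(ii) interacts with the two-sided bar construction, which in turn rests on the strict preservation properties of $\bR$ and on the multiplicative part of \autoref{ass12}.
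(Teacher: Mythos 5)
Your proposal is correct and follows essentially the same route as the paper, which gives no separate proof but treats the theorem as the specialization of \autoref{recprincom2M} to the bottom diagram of \autoref{adj12}, with the Assumptions verified via \autoref{MultRCLRJL} and \autoref{formalMT3} (after applying $\GA$, justified by \autoref{Gcellwonder3}), the additive orbital-presheaf verifications of \autoref{Orbcomp} (including preservation of strict specialness by $\mathrm{Bar}$ via \autoref{YDY} and \autoref{different}), and the multiplicative verifications of \autoref{RingThy}. The compatibility of realization with the Beck distributivity datum that you flag as the main obstacle is exactly the point the paper isolates as the only nontrivial multiplicative check, namely part (vi) of the multiplicative form of \autoref{ass6}/\autoref{ass12}, handled as in \autoref{Beck2} and \autoref{RingThy}.
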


\begin{thm}\label{adjequivcom4Ga}  After application of $\GA$, $\big((\SI^{\infty}\bL)_{\bD},(\bR\OM^{\infty})_{\bD} \big)$ induces an adjoint equivalence from the homotopy category of strictly special ringlike $(\bD,\bK_0)$-algebras to the homotopy category of $\OM^{\infty}$-connective $G$-spectra in $\bJ_0[G\sS]$.
\end{thm}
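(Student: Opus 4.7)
The plan is to deduce Theorem \ref{adjequivcom4Ga} from the general multiplicative composite adjunction theorem \autoref{adjequivcom2M} by verifying \autoref{ass9} and the remaining assumptions in the orbital presheaves context, using cofibrant approximation $\GA$ to enforce strict speciality throughout. This parallels the proof of \autoref{adjequivcom3} in the purely additive orbital presheaves context, with the multiplicative structure carried along.

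First, I would set $\sV = G\sO^{op}[\sT]_{ss}$, the full subcategory of strictly special orbital presheaves, and invoke \autoref{MultRCLRJL} together with \autoref{formalMT3} applied to both monads in the pair to conclude that $(\bD,\bK_0)$ is a monad pair on $\sV$ and that \autoref{ass9} holds. In particular, $\om$ and $\om_{\otimes}$ are isomorphisms there, and the dotted arrow $\bL$ exists, so the bottom diagram of \autoref{adj12} is an instance of \autoref{adj10}. The role of $\GA$ is then exactly as in \autoref{Apps}: any $(\bD,\bK_0)$-algebra $Y$ of interest in the top diagram is replaced by $\GA Y$, which is strictly special by \autoref{Gcellwonder3} (now in its multiplicative enhancement); this is the step that lets us pass to the bottom diagram without loss of information in the homotopy category.

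Next I would check the homotopical assumptions. Weak equivalences and $\OM^{\infty}$-connectivity are inherited from the underlying additive categories, giving Assumptions \ref{ass8} and \ref{ass3}. Ringlike objects and ring completions are ring-theoretically defined via the underlying additive group-completion notions (as in \autoref{RingThy}), which gives \autoref{ass4}. The approximation theorem \autoref{ass5} in the composite adjunction form is already known additively in the orbital presheaves context via \autoref{ass5com} and the diagram \autoref{compal}, and the map $\al\colon \bC X \rtarr \OM^{\infty}\SI^{\infty}X$ is automatically a map of $\bJ_0$-algebras whenever $X$ is a $\bJ_0$-algebra (compare \autoref{RingThy}), so the multiplicative version follows. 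Simplicial \autoref{ass12} is verified as in \autoref{different} and \autoref{DgammaR}, where $\bR$ is given by fixed points and realization commutes with fixed points of Reedy cofibrant simplicial $G$-spaces; the multiplicative content, namely that $\ga$ respects the $\bJ_0$-action, is the same linear-isometries inspection as in the additive case.

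With all assumptions in force on the bottom row of \autoref{adj12}, Theorems \ref{recprincom2M} and \ref{adjequivcom2M} apply directly to give \autoref{recprincom4} and the asserted adjoint equivalence between the homotopy category of strictly special ringlike $(\bD,\bK_0)$-algebras and the homotopy category of $\OM^{\infty}$-connective objects of $\bJ_0[G\sS]$. The main obstacle, in my view, is bookkeeping rather than essentially new mathematics: one must confirm at each step that the cofibrant approximation $\GA$ can be chosen to respect the $(\bD,\bK_0)$-structure (the multiplicative half of \autoref{Gcellwonder3}), and that the various natural transformations $\al$, $\ga$, $\ze$, $\et_{\bD}$ constructed additively are in fact maps of $(\bD,\bK_0)$-algebras when the inputs are. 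Both points reduce, as in \autoref{formalQ} and \autoref{RingThy}, to the observation that the underlying additive constructions are built from units and counits that are already multiplicative, so no genuinely new verification is needed beyond what has been done in the category-level analogues.
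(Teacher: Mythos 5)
Your proposal matches the paper's own (very terse) argument: the paper likewise passes to strictly special objects via $\GA$ and \autoref{Gcellwonder3}, notes that after this the bottom diagram of \autoref{adj12} satisfies \autoref{ass9} by \autoref{MultRCLRJL} and \autoref{formalMT3} applied to both monads of the pair, carries the homotopical assumptions over from the additive orbital presheaf case exactly as you describe, and then quotes the general multiplicative composite adjunction theorems \ref{recprincom2M} and \ref{adjequivcom2M}. Your write-up is correct and, if anything, supplies more of the assumption-checking detail than the paper records.
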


\subsection{Composites of composite adjunctions}\label{Finish}

We close by simply observing that our context is broad enough to likely have many other applications.  For an evident example, it is reasonable to compose several adjunctions in the composite adjunction context.  For example, the fixed point adjunction can be composed with the categories of operators adjunction, giving a categorical version of orbital presheaves.    We do not have present applications of that in mind, but it does suggest how a monadically generalized Segal machine might be constructed on categorical orbital presheaves.  Motivic homotopy theory gives a direction in which work is currently taking place \cite{Ajay}.

\bibliographystyle{alpha}
\bibliography{references}

\end{document}